\DeclareMathSymbol{:}{\mathord}{operators}{"3A}
\newtheorem{theorem}{Theorem}[section]
\newtheorem{remark}{Remark}[section]
\newtheorem{lemma}[theorem]{Lemma}
\newtheorem{proposition}[theorem]{Proposition}
\newtheorem{define}{Definition}[section]  
\newtheorem{example}{Example}[section] 
\newcommand{\vertiii}[1]{{\left\vert\kern-0.25ex\left\vert\kern-0.25ex\left\vert #1 \right\vert\kern-0.25ex\right\vert\kern-0.25ex\right\vert}}
\begin{document}
\title[Magnetohydrodynamics system]{Approximating three-dimensional magnetohydrodynamics system forced by space-time white noise}
 
\author{Kazuo Yamazaki}  
\address{Texas Tech University, Department of Mathematics and Statistics, Lubbock, TX, 79409-1042, U.S.A.; Phone: 806-834-6112; Fax: 806-742-1112; E-mail: (kyamazak@ttu.edu)}
\date{}
\maketitle

\begin{abstract} 
The magnetohydrodynamics system consists of the Navier-Stokes and Maxwell's equations, coupled through multiples of nonlinear terms. Such a system forced by space-time white noise has been studied by physicists for decades, and the rigorous proof of its solution theory has been recently established in Yamazaki (2019, arXiv:1910.04820 [math.AP]) using the theory of paracontrolled distributions and a technique of coupled renormalizations. When an equation is well-posed, and it is approximated by replacing the differentiation operator by reasonable discretization schemes with a parameter, it is widely believed that a solution of the approximating equation should converge to the solution of the original equation as the parameter approaches zero. We prove otherwise in the case of the three-dimensional magnetohydrodynamics system forced by space-time white noise. Specifically, it is proven that the limit of the solution to the approximating system with an additional 32 drift terms solves the original system. These 32 drift terms depend on the choice of approximations, can be calculated explicitly in the process of renormalizations, and essentially represent a spatial version of It$\hat{\mathrm{o}}$-Stratonovich correction terms. In particular, the proof relies on the technique of coupled renormalizations again, as well as taking advantage of the special structure of the magnetohydrodynamics system on many occasions. 
\vspace{5mm}

\textbf{Keywords: Bony's paraproducts; magnetohydrodynamics system; space-time white noise; renormalization; Wick products.}
\end{abstract}
\footnote{2010MSC : 35B65; 35Q85; 35R60}

\tableofcontents 

\section{Introduction}\label{Introduction}

\subsection{Motivation from physics}\label{Motivation physics} 

The magnetohydrodynamics (MHD) system consists of the coupling of the Navier-Stokes (NS) equations from fluid mechanics and the Maxwell's equations from electromagnetism. Initiated by Alfv$\acute{\mathrm{e}}$n \cite{A42a, A42b} in 1942 (also \cite{B50, C51}), the study of the MHD concerns the properties of electrically conducting fluids. Lighthill \cite{L60} in 1960 extended the MHD system to the Hall-MHD system via addition of a Hall term, which is considered to be of fundamental importance in astrophysical plasma as it modifies small-scale turbulent activity, producing departure from MHD predictions. For instance, while fluid turbulence is often investigated through the NS equations, MHD turbulence occurs in laboratory settings such as fusion confinement devices (e.g., reversed field pinch), as well as astrophysical systems (e.g., earth interior \cite{GR95}, solar corona \cite{L34a}), and the conventional system of equations to which physicists, astronomers, and engineers turn for such a study is that of the MHD. 

While the mathematical analysis of the deterministic NS equations was pioneered by Leray \cite{L34} in 1934, the study of the NS equations forced by a noise that is white only in time was initiated by Bensoussan and Temam \cite{BT73} in 1973, and many followed suit to provide important works. On the other hand, a space-time white noise (STWN) $\xi$ is a distribution-based Gaussian field with a correlation of 
\begin{equation}\label{STWN}
\mathbb{E} [ \xi(t, x)\xi(s,w)] = \delta(t-s)\delta(x-w) 
\end{equation} 
where $\delta$ is a delta function and $\mathbb{E}$ is a mathematical expectation. Considering an STWN rather than a noise that is white only in time is not of purely mathematical interest. Indeed, a prominent example of a stochastic partial differential equation (SPDE) forced by STWN is the Kardar-Parisi-Zhang (KPZ) equation, which models interface growth \cite{KPZ86}: 
\begin{equation}\label{KPZ}
\frac{\partial h}{\partial t} = \nu \frac{\partial^{2} h}{(\partial x)^{2}} + (\frac{\partial h}{\partial x})^{2} + \xi 
\end{equation}
where $h(t,x) \in \mathbb{R}$ describes interface height, and $\nu \geq 0$ is a diffusivity constant. Hereafter, let us denote by $\partial_{t} \triangleq \frac{\partial}{\partial t}, D \triangleq \partial_{x} \triangleq \frac{\partial}{\partial x}, D_{j} \triangleq \partial_{x^{j}} \triangleq \frac{\partial}{\partial x^{j}}$ for $j \in \mathbb{N}$ and $dD$ to stand for ``$d$-dimensional,'' $A \lesssim_{a,b} B$ and $A \approx B$ in case there exists a non-negative constant $C = C(a,b)$ that depends on $a,b$ such that $A \leq C B$ and $A = CB$, respectively.  In this initial derivation \cite{KPZ86}, these physicists stated the condition \eqref{STWN} on \cite[pg. 889]{KPZ86}, took a spatial Fourier transform and solved the resulting equation using \eqref{STWN} (see also \cite{BG97, DDT07} on the KPZ equation). In fact, an abundance of such SPDEs forced by STWN may be readily found in the physics literature. Tracing back to 1956, Landau and Lifschitz \cite{LL56} had pioneered the investigation of hydrodynamic fluctuations with STWN. We also refer to \cite{ACHS81, GP75, HS92, SH77, ZS71} concerning thermal fluctuations through the Boussinesq system forced by STWN. Moreover, Forster, Nelson and Stephen \cite{FNS77} studied the behavior of velocity correlations generated by the NS equations and Burgers' equation forced by STWN for large distance and long time (also  \cite{YO86} on the NS equations forced by STWN). Finally, forcing the MHD and the Hall-MHD systems by noise is also an important tool in modeling various real-world phenomena. E.g., the generation of magnetic fields by dynamo activity plays an important role in astrophysical objects such as stars and clusters of galaxies; the gas in these objects is characterized by turbulent flows. In particular,  Camargo and Tasso \cite{CT92} applied the renormalization group theory to the MHD system forced by STWN and determined the effective viscosity and magnetic resistivity without solving the system. We also refer to \cite{GMD10, MM75} on the Hall-MHD system forced by a noise that is white in only time, and ferromagnets model forced by STWN, respectively. 

While physicists, astronomers, and engineers advanced their studies of these SPDEs forced by STWN for more than half a century, the most fundamental issue, specifically the existence of solutions to such SPDEs, did not receive sufficient attention. Actually, the recent rigorous analysis of this fundamental issue by mathematicians proved that solutions to these SPDEs by the classical definition do not exist. For clarity, let us explain the difficulty using Young's integral in the case of the 1D Burgers' equation:
\begin{equation}\label{Burgers' equation}
\partial_{t}u + u \partial_{x}u = \nu \partial_{x}^{2}u + \xi, 
\end{equation} 
where $u(t,x) \in \mathbb{R}$ denotes the velocity field and $\nu > 0$. We will also need the definition of the H$\ddot{\mathrm{o}}$lder space with negative degrees; for this purpose, let us recall the basic background of Besov spaces (\cite{BCD11, GIP15} and also \cite{I99} on the Littlewood-Paley theory on $\mathbb{T}^{3}$). Unless elaborated in detail, let us denote $\sum_{k \in \mathbb{Z}^{3}}$ by $\sum_{k}$. First, let us recall the Fourier transform 
\begin{equation}\label{Fourier}
\hat{f}(k) \triangleq \mathcal{F}_{\mathbb{T}^{3}}(f)(k) \triangleq \frac{1}{(2\pi)^{\frac{3}{2}}} \int_{\mathbb{T}^{3}} f(x) e^{-i x \cdot k} dx 
\end{equation} 
with its inverse denoted by $\mathcal{F}_{\mathbb{T}^{3}}^{-1}$, let $\mathcal{D}$ be the set of all smooth functions with compact support on $\mathbb{T}^{3}$, $\mathcal{D}'$ its dual and thus the set of all distributions on $\mathbb{T}^{3}$. We note that Zhu and Zhu on  \cite[pg. 36]{ZZ17} actually defined $\hat{f}(k) \triangleq (2\pi)^{-\frac{3}{2}} \int_{\mathbb{T}^{3}} f(x) e^{ix\cdot k}dx$; in Remark \ref{inconsistency} we will explain the reason for our choice. We let $\chi, \rho \in \mathcal{D}$ be non-negative, radial such that the support of $\chi$ is contained in a ball, while that of $\rho$ in an annulus, and satisfy 
\begin{equation*}
\begin{split}
&\chi(\xi) + \sum_{j \geq 0} \rho(2^{j} \xi) = 1 \hspace{1mm} \forall \hspace{1mm} \xi, \hspace{1mm} \text{supp} (\chi) \cap \text{supp} (\rho(2^{-j} \cdot)) = \emptyset \hspace{1mm} \forall \hspace{1mm} j \geq 1,\\
&\text{supp} (\rho(2^{-i} \cdot)) \cap \text{supp} (\rho(2^{-j} \cdot)) = \emptyset \text{ for } \lvert i - j \rvert > 1.
 \end{split}  
\end{equation*} 
We define $\rho_{j}(\cdot) \triangleq \rho(2^{-j}\cdot)$, and define Littlewood-Paley operator as $\Delta_{j} f(x) \triangleq  \mathcal{F}_{\mathbb{T}^{3}}^{-1}(\rho_{j} \mathcal{F}_{\mathbb{T}^{3}} (f))(x)$. Let us also write $S_{j} f \triangleq \sum_{i \leq j-1} \Delta_{j} f$. Now for $\alpha \in \mathbb{R}$ and $p, q \in [1,\infty]$, let us define the inhomogeneous Besov space 
\begin{equation}\label{Besov space}
B_{p,q}^{\alpha} (\mathbb{T}^{3}) \triangleq \{ f \in \mathcal{D}' (\mathbb{T}^{3}): \lVert f \rVert_{B_{p,q}^{\alpha} (\mathbb{T}^{3})} \triangleq  \lVert 2^{j\alpha}\lVert \Delta_{j} f \rVert_{L^{p}(\mathbb{T}^{3})} \rVert_{l^{q}(\{j \geq -1\})}  < \infty \}. 
\end{equation}  
The H$\ddot{\mathrm{o}}$lder-Besov space $\mathcal{C}^{\alpha}(\mathbb{T}^{3})$ is the special case when $p = q = \infty$, which hereafter will be denoted by $\mathcal{C}^{\alpha}$ when no confusion arises. We point out that 
\begin{equation}\label{regularity 0}
\lVert \cdot \rVert_{\beta} \lesssim \lVert \cdot \rVert_{L^{\infty}} \lesssim \lVert \cdot \rVert_{\mathcal{C}^{\alpha}} \hspace{1mm} \text{ if } \hspace{1mm} \beta \leq 0 \leq \alpha.
\end{equation} 
Informally, $\mathcal{C}^{\alpha}$ serves as an extension of the classical H$\ddot{\mathrm{o}}$lder space to negative degrees, and allows us to precisely evaluate the regularity of the STWN $\xi$ in \eqref{Burgers' equation} as 
\begin{equation}\label{regularity 1}
\xi \in \mathcal{C}^{\alpha}(\mathbb{T}^{d}) \text{ for } \alpha < - \frac{d+2}{2} 
\end{equation}
(e.g., \cite[Lemma 10.2]{H14}, also \cite{BK17}). Due to the diffusive term $\nu \partial_{x}^{2}u$ within \eqref{Burgers' equation}, one can expect that $u \in \mathcal{C}^{\alpha}(\mathbb{T})$ for $\alpha < \frac{1}{2}$. Aiming to attain at least the weakest notion of a solution, let us multiply the nonlinear term $u \partial_{x}u$ in \eqref{Burgers' equation} by a cut-off function $\psi$, integrate in space to obtain 
\begin{equation}\label{Young's integral}
\int_{\mathbb{T}} \psi(x) u(x) \partial_{x} u dx = \int_{\mathbb{T}} \psi(x) u(x) du(x). 
\end{equation}
In order to determine whether or not this integral is well-defined, let us turn to the result by Young \cite{Y36}, which states that a Stieltjes type integral $\int_{\mathbb{T}^{d}} f(x) dg(x)$ is well-defined if $f \in \mathcal{C}^{\alpha}(\mathbb{T}^{d}), g \in \mathcal{C}^{\beta} (\mathbb{T}^{d})$ for $\alpha, \beta \in (0, \infty),  \alpha + \beta > 1$ and there exists no common discontinuity, and thereby unfortunately confirms that the integral in \eqref{Young's integral} is ill-defined (cf. \cite{HW13}). This discussion already shows that SPDEs forced by STWN do not necessarily possess any solution by a conventional definition, which is the reason why they are called singular SPDEs, and thus raises an alarming question concerning the advancements on the SPDEs forced by STWN in the physics, astronomy and geosciences literature, all of which were based on the assumption of the existence of the solution. 

\subsection{Relevant mathematical results}

We denote by $b(t,x), j \triangleq \nabla \times b \in \mathbb{R}^{3}$ magnetic and current density fields, $\pi(t,x) \in \mathbb{R}$ pressure field, and the parameters $\eta, \iota \geq 0$ magnetic resistivity and Hall parameter, respectively. Under such notations, given $u_{0}(x) \triangleq u(0,x)$ and $b_{0}(x) \triangleq b(0,x)$ as initial data, the following system governs initial-value problems of the Hall-MHD system when $\iota > 0$ and the MHD system otherwise:
\begin{subequations}\label{Hall-MHD} 
\begin{align}
\partial_{t} u + (u\cdot\nabla) u + \nabla \pi =& \nu \Delta u + (b\cdot\nabla) b + \xi_{u}, \hspace{25mm} \nabla\cdot u = 0, \\
\partial_{t} b + (u\cdot\nabla) b =& \eta \Delta b + (b\cdot\nabla) u + \iota \nabla \times (j\times b) + \xi_{b}, \hspace{3mm} \nabla\cdot b = 0. 
\end{align}
\end{subequations} 
We point out that $(b\cdot\nabla) b$ and $\nabla\times (j\times b)$ respectively represent the Lorentz force and the Hall term, and that \eqref{Hall-MHD} reduces to the NS equations when $b \equiv 0$. As described in \eqref{STWN}, $\xi = (\xi_{u}, \xi_{b})$ in \eqref{Hall-MHD} is a six-dimensional vector-valued STWN and for all $i, j \in \{1,2,3\}, s, t \in \mathbb{R}_{+}$ and $x, w \in \mathbb{T}^{3}$, it is assumed to satisfy 
\begin{align}\label{STWN for MHD}
\mathbb{E}[\xi_{u}^{i} (t,x) \xi_{u}^{j}(s, w)] =& \mathbb{E} [ \xi_{u}^{i}(t,x) \xi_{b}^{j}(s,w)] \\
=& \mathbb{E}[\xi_{b}^{i}(t,x) \xi_{b}^{j} (s,w)] = \delta(t-s)\delta(x-w)\delta_{ij}, \nonumber
\end{align} 
where $\delta_{ij}$ will be crucial in deriving \eqref{covariance} (readers are referred to \cite[Remark 1.1]{Y19a} for details of this assumption). For brevity, let us write $y \triangleq (u,b), y_{0} \triangleq (u_{0}, b_{0})$.  

In order to further elaborate on the difficulty caused by the lack of regularity from STWN, let us recall the Bony's decomposition of a product $uf$ into three parts where the frequency of $u$ and $f$ are low, high and comparable, specifically 
\begin{align}\label{Bony's decomposition}
&uf = \sum_{i,j \geq -1} \Delta_{j} u \Delta_{i} f = \pi_{<}(u,f) + \pi_{>} (u,f) + \pi_{0}(u,f), \\
&\pi_{<}(u,f) \triangleq \sum_{j \geq -1} S_{j} u \Delta_{j} f, \pi_{>}(u,f) \triangleq \sum_{j \geq -1} \Delta_{j} u S_{j} f, \pi_{0}(u,f) \triangleq  \sum_{j, l \geq -1: \lvert l-j \rvert \leq 1} \Delta_{j} u \Delta_{l} f.\nonumber 
\end{align}
The terms $\pi_{<}(u,f)$ and $\pi_{>}(u,f)$ are called paraproducts while $\pi_{0}(u,f)$ the remainder. Concerning their estimates, we have the following important fact: 
\begin{lemma}\label{[Lemma 1.1][Y19a]}
\rm{(\cite[Lemma 2.1]{GIP15}, \cite[Proposition 2.3]{CC18})} Let $\alpha, \beta \in \mathbb{R}$. Then 
\begin{enumerate}
\item $\lVert \pi_{<}(u,f) \rVert_{\mathcal{C}^{\beta}} \lesssim \lVert u \rVert_{L^{\infty}} \lVert f \rVert_{\mathcal{C}^{\beta}} \text{ for } u \in L^{\infty}(\mathbb{T}^{3}), f \in \mathcal{C}^{\beta}(\mathbb{T}^{3}),$
\item $\lVert \pi_{>}(u,f) \rVert_{\mathcal{C}^{\alpha + \beta}} \lesssim \lVert u \rVert_{\mathcal{C}^{\alpha}} \lVert f \rVert_{\mathcal{C}^{\beta}} \text{ for } \beta < 0, f \in \mathcal{C}^{\alpha}(\mathbb{T}^{3}), g \in \mathcal{C}^{\beta}(\mathbb{T}^{3})$, 
\item $\lVert \pi_{0}(u,f) \rVert_{\mathcal{C}^{\alpha + \beta}} \lesssim \lVert u \rVert_{\mathcal{C}^{\alpha}} \lVert f \rVert_{\mathcal{C}^{\beta}} \text{ for } \alpha + \beta > 0, u \in \mathcal{C}^{\alpha}(\mathbb{T}^{3}),f \in \mathcal{C}^{\beta}(\mathbb{T}^{3})$. 
\item Consequently, $uf$ is well-defined for $u \in \mathcal{C}^{\alpha}(\mathbb{T}^{3}), f \in \mathcal{C}^{\beta}(\mathbb{T}^{3})$ if $\alpha + \beta > 0$ and $\lVert uf\rVert_{\mathcal{C}^{\text{min} \{\alpha, \beta\}}} \lesssim \lVert u \rVert_{\mathcal{C}^{\alpha}} \lVert f \rVert_{\mathcal{C}^{\beta}}$ (see \cite[Corollary 1.2]{Y19b} for details). 
\end{enumerate} 
\end{lemma}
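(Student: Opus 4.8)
\emph{Proof proposal.} The plan is to deduce all three estimates from Littlewood--Paley theory, using only Bernstein's inequalities and the frequency-support bookkeeping built into the partition $\chi + \sum_{j\geq 0}\rho_{j} = 1$. The two elementary facts I would isolate first are the following ``summation lemmas'': (i) if a sequence $(u_{j})_{j\geq -1}$ satisfies $\mathrm{supp}\,\widehat{u_{j}}\subseteq\{|\xi|\approx 2^{j}\}$ and $\lVert u_{j}\rVert_{L^{\infty}}\lesssim M 2^{-j\gamma}$, then $\sum_{j} u_{j}\in\mathcal{C}^{\gamma}$ with norm $\lesssim M$, for \emph{any} $\gamma\in\mathbb{R}$; (ii) if instead $\mathrm{supp}\,\widehat{u_{j}}\subseteq\{|\xi|\lesssim 2^{j}\}$ and $\lVert u_{j}\rVert_{L^{\infty}}\lesssim M2^{-j\gamma}$ with $\gamma>0$, then still $\sum_{j} u_{j}\in\mathcal{C}^{\gamma}$ with norm $\lesssim M$. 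Both follow by applying $\Delta_{k}$, noting that only finitely many (respectively, the tail $j\gtrsim k$ of) summands survive, and that $\Delta_{k}$ is bounded on $L^{\infty}$ uniformly in $k$ since its kernel is $L^{1}$-normalized.

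For item (1), I would observe that $S_{j} u\,\Delta_{j} f$ has Fourier support in an annulus $\{|\xi|\approx 2^{j}\}$: this is exactly why $S_{j}$ is defined with the cut-off $i\leq j-1$, so that the low-frequency ball carrying $S_{j} u$ is strictly smaller than the inner radius of the annulus carrying $\Delta_{j} f$, and convolving in frequency keeps the product in an annulus of the same scale. Since $\lVert S_{j} u\rVert_{L^{\infty}}\lesssim\lVert u\rVert_{L^{\infty}}$ and $\lVert \Delta_{j} f\rVert_{L^{\infty}}\lesssim 2^{-j\beta}\lVert f\rVert_{\mathcal{C}^{\beta}}$, summation lemma (i) with $\gamma=\beta$ gives (1). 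Item (2) is the mirror image, with the roles of the two arguments swapped: $\Delta_{j} u\,S_{j} f$ is again annulus-supported at scale $2^{j}$, $\lVert \Delta_{j} u\rVert_{L^{\infty}}\lesssim 2^{-j\alpha}\lVert u\rVert_{\mathcal{C}^{\alpha}}$, and here the hypothesis $\beta<0$ is what lets me sum the low-frequency factor, $\lVert S_{j} f\rVert_{L^{\infty}}\lesssim\sum_{i\leq j-1}2^{-i\beta}\lVert f\rVert_{\mathcal{C}^{\beta}}\lesssim 2^{-j\beta}\lVert f\rVert_{\mathcal{C}^{\beta}}$; then summation lemma (i) with $\gamma=\alpha+\beta$ finishes it.

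The one term that genuinely behaves differently, and which I expect to be the main obstacle, is the remainder $\pi_{0}(u,f)=\sum_{j}\sum_{|l-j|\leq 1}\Delta_{j} u\,\Delta_{l} f$: each summand has Fourier support only in a \emph{ball} $\{|\xi|\lesssim 2^{j}\}$, not an annulus, so the clean one-to-one scale matching used above is lost. Instead I would apply $\Delta_{k}$ and note that $\Delta_{k}(\Delta_{j} u\,\Delta_{l} f)=0$ unless $2^{j}\gtrsim 2^{k}$, so that only the tail $j\geq k-N_{0}$ contributes; estimating each block by $\lVert \Delta_{j} u\,\Delta_{l} f\rVert_{L^{\infty}}\lesssim 2^{-j(\alpha+\beta)}\lVert u\rVert_{\mathcal{C}^{\alpha}}\lVert f\rVert_{\mathcal{C}^{\beta}}$ and summing the resulting geometric series over that tail requires precisely $\alpha+\beta>0$; summation lemma (ii) with $\gamma=\alpha+\beta$ then yields (3).

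Finally, for (4) I would add the three pieces: writing $uf=\pi_{<}(u,f)+\pi_{>}(u,f)+\pi_{0}(u,f)$, combining (1)--(3) (using, when $\alpha$ or $\beta$ is negative, the refined paraproduct bounds obtained by the same Littlewood--Paley argument, e.g.\ $\lVert \pi_{<}(u,f)\rVert_{\mathcal{C}^{(\alpha\wedge 0)+\beta}}\lesssim\lVert u\rVert_{\mathcal{C}^{\alpha}}\lVert f\rVert_{\mathcal{C}^{\beta}}$, which follows by tracking the possible growth of $S_{j}u$ in lemma (i)) together with the embeddings $\mathcal{C}^{\gamma_{1}}\hookrightarrow\mathcal{C}^{\gamma_{2}}$ for $\gamma_{1}\geq\gamma_{2}$, shows $uf\in\mathcal{C}^{\min\{\alpha,\beta\}}$ with the stated bound, as detailed in the cited corollary. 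The only points needing care throughout are the uniform $L^{\infty}$-boundedness of the operators $\Delta_{k}$ and the exact value of the localization constant $N_{0}$ produced by the support conditions on $\chi$ and $\rho$, both of which are routine.
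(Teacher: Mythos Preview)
Your proposal is correct and is the standard Littlewood--Paley proof of Bony's paraproduct estimates. Note, however, that the paper does not prove this lemma at all: it is stated with attribution to \cite[Lemma 2.1]{GIP15} and \cite[Proposition 2.3]{CC18} and used as a black box, so there is no ``paper's own proof'' to compare against --- your argument is essentially the one found in those cited references.
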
 
Concerning the 2D NS equations forced by STWN, Da Prato and Debussche considered $z$ that solves the Stokes equation so that $v \triangleq u - z$ and $q \triangleq \pi - p$ solve 
\begin{subequations}
\begin{align}
& \partial_{t} z = \nu \Delta z - \nabla p + \xi, \hspace{34mm} \nabla\cdot z = 0, \\
& \partial_{t} v = \nu \Delta v - \nabla q - \frac{1}{2} \text{div} [(v+z) \otimes (v+z)], \hspace{1mm} \nabla\cdot v = 0. 
\end{align}
\end{subequations} 
As discussed in \eqref{regularity 1}, because $\xi \in \mathcal{C}^{\alpha}(\mathbb{T}^{d})$ for $\alpha < - \frac{d+2}{2}$, it follows that $z \in \mathcal{C}^{\alpha}(\mathbb{T}^{d})$ for $\alpha < 1 - \frac{d}{2}$. In turn, this leads to $\text{div} (z \otimes z) \in \mathcal{C}^{\alpha}(\mathbb{T}^{d})$ for $\alpha < 1 - d$ and thus $v \in \mathcal{C}^{\alpha}(\mathbb{T}^{d})$ for $\alpha < 3-d$. Taking advantage of the fact that $v \otimes v$, as well as $v \otimes z$, are  both well-defined if $d = 2$ according to Lemma \ref{[Lemma 1.1][Y19a]} (4), Da Prato and Debussche \cite{DD02} were able to prove the well-posedness of the 2D NS equations forced by STWN (and similarly 2D stochastic quantization equations in \cite{DD03}). In contrast, both $v \otimes v$ and $v \otimes z$ are ill-defined in the 3D case, so that an extension of \cite{DD02} to the 3D case was an open problem for more than a decade. 

The resolution to this and many related problems was initiated by Lyons, the founding father of the rough path theory \cite{L98, LQ02}. A rough path is informally a continuous path on which a sequence of iterated path integrals may be constructed. While others realized the potential of the rough path theory (e.g., \cite{FV10b, GT10}), Hairer \cite{H11} was the first to directly apply the rough path theory and solve the Burgers' equation with STWN \eqref{Burgers' equation}. Subsequently, Hairer \cite{H13} provided a solution theory for the KPZ equation \eqref{KPZ} using rough path theory and Feynman diagrams (also \cite{HQ18} for more recent work on the KPZ equation). Then Hairer \cite{H14} and Gubinelli, Imkeller and Perkowski \cite{GIP15} independently provided two novel approaches to prove solution theory to general SPDEs forced by STWN, namely the theory of regularity structures, and the theory of paracontrolled distributions, respectively. In short, the theory of paracontrolled distributions is an elegant combination of techniques from harmonic analysis (e.g., commutator estimate Lemma \ref{Lemma 3.5}) and controlled rough path theory \cite{G04} and has been applied to $\Phi^{4}$ model from quantum field theory (QFT) \cite{CC18}, 3D NS equations \cite{ZZ15} and the 3D MHD system \cite{Y19a}. On the other hand, the theory of regularity structures from \cite{H14} (also \cite{H15}) is believed by many experts to be more general and have deeper potentials (e.g., \cite{CFG17, HM18}). In particular, using the theory of regularity structures, Hairer and Mattingly \cite{HM18a} provided a general approach to prove strong Feller property for some singular SPDEs such as the KPZ equation \eqref{KPZ} (also \cite{ZZ20} for the NS equations and \cite{Y20a} for the MHD system). 

\subsection{Statement of main result}

The issue of the singular SPDEs being ill-defined by a classical definition, as described using Young's integral \eqref{Young's integral} and Lemma \ref{[Lemma 1.1][Y19a]}, can be seen as an indirect consequence of the instability in approximations; let us explain this relationship through the KPZ equation \eqref{KPZ} as follows. Inspired by the Cole-Hopf transform, let us consider a multiplicative stochastic heat equation 
\begin{equation}\label{heat}
dZ = \nu \partial_{x}^{2} Z dt + Z dW \text{ where } \partial_{t}W = \xi, 
\end{equation}
let $Z_{\epsilon}$ solve the same equation as \eqref{heat} except with a mollified noise $\xi_{\epsilon}$ in which $k$-th Fourier component is multiplied by $\psi(k\epsilon)$ for a cut-off function $\psi$, and compute It$\hat{\mathrm{o}}$'s formula for $h_{\epsilon}(t,x) \triangleq \ln Z_{\epsilon}(t,x)$ (see \cite{M91} concerning the positivity of $Z_{\epsilon}$), in hope to attain \eqref{KPZ} as $\epsilon \searrow 0$. In fact, this approach shows that $h_{\epsilon}$ solves 
\begin{equation}\label{renormalization}
\partial_{t} h_{\epsilon} =  \nu \partial_{x}^{2} h_{\epsilon} + (\partial_{x} h_{\epsilon})^{2} - \frac{1}{2} \sum_{k\in \mathbb{Z}} \phi^{2}(k\epsilon) + \xi_{\epsilon}  \text{ where } \lim_{\epsilon \to 0} \sum_{k\in\mathbb{Z}} \phi^{2}(k\epsilon) = + \infty 
\end{equation} 
and therefore informally the equation \eqref{KPZ} minus infinity in the limit. This outcome clearly shows the necessity of the renormalization technique from QFT, which boils down to strategically subtracting off a large constant. It also indicates that for a singular SPDE, variations in approximations, even if they are reasonable, can lead to significantly distinct limits, which has been well-documented \cite{H12, HM12, HMW14, HV11}. In order to explain why this is so surprising, let us consider the following example of the 1D deterministic Burgers' equation \eqref{Burgers' equation} for simplicity. 

\begin{example}\label{Burgers' equation example}
Let us emphasize that the following computations are not intended to give any sharp result; its intent is only to give an idea for convenience of readers. Let us consider the Burgers' equation \eqref{Burgers' equation} with initial data of $u_{0}$, define 
\begin{equation}
D^{\epsilon} f(x) \triangleq \frac{ f(x+ \epsilon) - f(x)}{\epsilon}, \hspace{3mm} \Delta_{\epsilon} f(x) \triangleq \frac{ f(x+ \epsilon) - 2f(x) + f(x-\epsilon)}{\epsilon^{2}},
\end{equation}
and consider the following approximations of \eqref{Burgers' equation} parametrized by $\epsilon > 0$ with its initial data of $u_{0}^{\epsilon}$: 
\begin{equation}\label{Burgers' equation approximation}
\partial_{t} u^{\epsilon} + \frac{1}{2} D^{\epsilon} (u^{\epsilon})^{2}  = \Delta_{\epsilon} u^{\epsilon}. 
\end{equation}   
It is well-known that for any $T > 0$ fixed, both \eqref{Burgers' equation} with $\xi\equiv 0$ and \eqref{Burgers' equation approximation} have unique solutions $u, u^{\epsilon} \in C([0,T]; L^{\infty} (\mathbb{T}))$, respectively. Let us assume that $u_{0} - u_{0}^{\epsilon} \to 0$ in $L^{\infty}(\mathbb{T})$ as $\epsilon \searrow 0$. Then we can define $P_{t} \triangleq e^{t \Delta}$ and $P_{t}^{\epsilon} \triangleq e^{t \Delta_{\epsilon}}$, write both solutions of \eqref{Burgers' equation} with $\xi\equiv 0$ and \eqref{Burgers' equation approximation} in a mild formulation and estimate 
\begin{align}\label{estimate 1}
\lVert (u - u^{\epsilon})(t) \rVert_{L^{\infty}} \lesssim \lVert u_{0} - u_{0}^{\epsilon} \rVert_{L^{\infty}}  + \sum_{i=1}^{3} I_{i} 
\end{align} 
where 
\begin{align*}
&I_{1} \triangleq \int_{0}^{t} \lVert (P_{t-s} - P_{t-s}^{\epsilon}) \partial_{x} u^{2} \rVert_{L^{\infty}}ds, \hspace{3mm} I_{2} \triangleq \int_{0}^{t} \lVert P_{t-s}^{\epsilon} (\partial_{x} - D^{\epsilon}) u^{2} \rVert_{L^{\infty}}  ds, \\
&I_{3} \triangleq \int_{0}^{t}\lVert P_{t-s}^{\epsilon} D^{\epsilon} (u^{2}  -  (u^{\epsilon})^{2}) \rVert_{L^{\infty}} ds. 
\end{align*} 
First, we estimate 
\begin{align}
I_{1} \lesssim  \int_{0}^{t} \epsilon^{\frac{1}{4}} (t-s)^{-\frac{3}{4}} \lVert u^{2} \rVert_{C^{-\frac{1}{4}}} ds  \lesssim \epsilon^{\frac{1}{4}} t^{\frac{1}{4}} \sup_{s\in [0,T]} \lVert u(s) \rVert_{L^{\infty}}^{2}  \label{estimate 2}
\end{align} 
by Lemma \ref{Lemma 3.2} and \eqref{regularity 0}. Second, we estimate 
\begin{align}
I_{2} \lesssim \int_{0}^{t} (t-s)^{-\frac{3}{4}} \epsilon^{\frac{1}{8}} \lVert u^{2} \rVert_{\mathcal{C}^{0}} ds
\lesssim \epsilon^{\frac{1}{8}} t^{\frac{1}{4}} \sup_{s\in [0,T]} \lVert u(s) \rVert_{L^{\infty}}^{2} \label{estimate 3}
\end{align} 
by Lemmas \ref{Lemma 3.1} and \ref{Lemma 3.15}. Third, we see that there exists a uniform constant $C_{0} > 0$ such that 
\begin{align}
I_{3} \leq C_{0} t^{\frac{1}{4}} (\sup_{s\in [0,T]} \lVert (u, u^{\epsilon})(s) \rVert_{L^{\infty}}) \sup_{s\in [0,t]} \lVert (u - u^{\epsilon})(s) \rVert_{L^{\infty}} \label{estimate 4}
\end{align}
by Lemmas \ref{Lemma 3.1} and \ref{Lemma 3.3}. Applying these estimates \eqref{estimate 2}-\eqref{estimate 4} to \eqref{estimate 1} shows that for all $t < (2C_{0} \sup_{s \in [0,T]} \lVert (u, u^{\epsilon})(s) \rVert_{L^{\infty}})^{-4} \triangleq t_{0}$, $\sup_{s\in [0,t]} \lVert (u - u^{\epsilon})(t) \rVert_{L^{\infty}} \to 0$ as $\epsilon \searrow 0$ for all $t \in [0,  t_{0})$. We can restart from $t_{0}$, and show that $\sup_{s\in [0,t]} \lVert (u - u^{\epsilon})(t) \rVert_{L^{\infty}} \to 0$ as $\epsilon \searrow 0$ for all $t \in [0,T]$.
\end{example} 
As this example showed, when an equation is well-posed, even locally in time, then one expects that a solution to an approximating equation depending on a parameter $\epsilon > 0$ converges to the solution of the original equation as $\epsilon \searrow 0$. However, Hairer \cite{H12} showed that a certain family of solutions to a semilinear variation of a heat equation forced by STWN that depends on $\epsilon > 0$ converges as $\epsilon \searrow 0$ to a solution to the same equation with $\epsilon =0$, but with an addition of a correction term, akin to that of It$\hat{\mathrm{o}}$-Stratonovich correction (e.g., \cite[Theorem 2.3.5]{K90}). Moreover, Hairer and Voss \cite{HV11} argued that for the Burgers' equation \eqref{Burgers' equation}, different finite-difference schemes converge to different limiting processes as the mesh size tends to zero, and provided supporting evidence via numerical analysis; subsequently, a rigorous proof of their argument was given by Hairer and Maas \cite{HM12}. The results from \cite{HM12} were extended by Hairer, Maas and Weber \cite{HMW14} from the case when the nonlinearity is a gradient type and the noise is additive to a case when the nonlinearity is a non-gradient type and the noise is multiplicative. 

In this manuscript we also study the spatial approximations of the 3D MHD system \eqref{Hall-MHD} with $\iota = 0$ (see Remark \ref{Remark 1.5} in case of the Hall-MHD system \eqref{Hall-MHD} with $\iota > 0$) . Following \cite[Equation (11)]{Y19a} we apply Leray projection $\mathcal{P}$ to \eqref{Hall-MHD} to study for $i_{0} \in \{1,2,3\}$, 
\begin{subequations}\label{MHD}
\begin{align}
du^{i_{0}} - \Delta u^{i_{0}} dt =& \sum_{i = 1}^{3} \mathcal{P}^{i_{0}i} dW_{u}^{i}- \frac{1}{2} \sum_{i, j=1}^{3} \mathcal{P}^{i_{0}i} D_{j} (u^{i} u^{j}) dt \nonumber\\
& + \frac{1}{2} \sum_{i, j=1}^{3} \mathcal{P}^{i_{0}i} D_{j} (b^{i} b^{j}) dt, \hspace{3mm} \nabla\cdot u = 0, \\
db^{i_{0}} - \Delta b^{i_{0}} dt =& \sum_{i = 1}^{3} \mathcal{P}^{i_{0}i} dW_{b}^{i} - \frac{1}{2} \sum_{i, j=1}^{3} \mathcal{P}^{i_{0}i} D_{j} (b^{i} u^{j}) dt \nonumber\\
&+ \frac{1}{2} \sum_{i, j=1}^{3} \mathcal{P}^{i_{0}i} D_{j} (u^{i} b^{j}) dt, \hspace{3mm} \nabla\cdot b = 0, 
\end{align} 
\end{subequations}
where we follow \cite[pg. 779]{HMW14} and assume that $W_{u}$ and $W_{b}$ are both cylindrical Wiener processes on $L^{2}(\mathbb{T}^{3})$ which satisfy $\hat{W}_{u} (0) = \hat{W}_{b} (0) = 0$. For brevity let us denote by $W \triangleq (W_{u}, W_{b})$. In order to describe the approximation of the system \eqref{MHD}, let us define the following approximation operators. 

\begin{define}\label{Definition of approximation}
For some $L_{0}> 0$ fixed, we define 
\begin{equation}\label{f}
f(x) \triangleq 
\begin{cases}
\tilde{f}(x) & \text{ if } \max_{j\in \{1,2,3\}} \lvert x^{j} \rvert \leq L_{0}, \\
+ \infty& \text{ otherwise,}
\end{cases}
\end{equation} 
where $\tilde{f}: \hspace{1mm} \mathbb{R}^{3} \mapsto [0,\infty)$ is radial, satisfying $\tilde{f}(0) = 1$, and $\lvert D^{k} \tilde{f}(x) \rvert \lesssim \frac{1}{\lvert x \rvert^{\lvert k \rvert} - 1} + C$ for some constant $C > 0$ if $\max_{ j \in \{1,2,3\}} \lvert x^{j} \rvert \leq 3 L_{0}$ and $\lvert k \rvert \leq 5$, and 
\begin{equation}\label{lower bound}
\tilde{f}(x) \geq c_{f}> 0 \text{ for all } x. 
\end{equation}
Let us also define $\bar{c}_{f}\triangleq c_{f} \wedge 1$ where $A \wedge B \triangleq \min\{A, B\}$. We also define 
\begin{equation}\label{g}
g(x) \triangleq \frac{ e^{iax} - e^{-ibx}}{(a+b) x}  
\end{equation} 
for $x \neq 0, a, b \geq 0$ such that $a+ b > 0$. Finally, for some $\bar{L}_{0} \in (0, \frac{L_{0}}{2})$, we define $h_{u}, h_{b} \in C^{1} (\{x \in \mathbb{R}^{3}: \lvert x \rvert \leq \bar{L}_{0} \})$ to be bounded, radial, and satisfy $h_{u}(0) = h_{b}(0) = 1$, $supp(h_{u}), supp(h_{b}) \subset \{x \in \mathbb{R}^{3}: \lvert x \rvert \leq \frac{L_{0}}{2} \}$. With such $f, g$ $h_{u}$ and $h_{b}$ defined, for $\epsilon > 0$, we define our approximations $\Delta_{\epsilon}, D_{j}^{\epsilon}, H_{u,\epsilon}, H_{b,\epsilon}$ to satisfy 
\begin{subequations}
\begin{align}
&\widehat{\Delta_{\epsilon} v}(k) \triangleq - \lvert k \rvert^{2} f(\epsilon k) \hat{v}(k), \hspace{6mm} \widehat{D_{j}^{\epsilon} v}(k) \triangleq k^{j} g(\epsilon k^{j}) \hat{v}(k), \label{approximation 1}\\
&\widehat{H_{u,\epsilon} W_{u}}(k) \triangleq h_{u}(\epsilon k) \hat{W}_{u}(k), \hspace{3mm} \widehat{H_{b,\epsilon} W_{b}}(k) \triangleq h_{b}(\epsilon k) \hat{W}_{b}(k). \label{approximation 2} 
\end{align}
\end{subequations}  
\end{define}
From \eqref{g}, we know that $\lim_{x \searrow 0} g(x) = i$; indeed, 
\begin{equation}\label{limit of g}
\lim_{x\searrow 0} g(x) = \lim_{x\searrow 0} \frac{ e^{iax} - e^{-ibx}}{(a+b) x} = \lim_{x\searrow 0} \frac{ ia e^{iax} + ib e^{-ibx}}{a+b} = i.
\end{equation} 
It immediately follows that $g$ is bounded. 

\begin{remark}\label{inconsistency}
As we pointed out, our definition of the Fourier transform in \eqref{Fourier} varies from that in \cite[pg. 36]{ZZ17}. The reason for our choice is that if $\hat{f}(k) \triangleq (2\pi)^{-\frac{3}{2}} \int_{\mathbb{T}^{3}} f(x) e^{ix\cdot k}dx$, then the Fourier symbol of $D_{j}$ becomes $-ik^{j}$ instead of $ik^{j}$. This leads to an inconsistency because in \eqref{approximation 1} we define $D_{j}^{\epsilon}$ to satisfy $\widehat{D_{j}^{\epsilon} v}(k) = k^{j} g(\epsilon k^{j}) \hat{v}(k)$ and approximate $D_{j}$ so that we must have $\lim_{\epsilon \searrow 0} g(\epsilon k^{j}) = -i$, in contrast to \eqref{limit of g}. 
\end{remark}

We are now ready to write down the approximation of \eqref{MHD}: for $\epsilon > 0$, 
\begin{subequations}\label{MHD approximation}
\begin{align}
du^{\epsilon, i_{0}} &= (\Delta_{\epsilon} u^{\epsilon, i_{0}} - \frac{1}{2} \sum_{i, j=1}^{3} \mathcal{P}^{i_{0} i} D_{j}^{\epsilon} (u^{\epsilon, i} u^{\epsilon, j} - b^{\epsilon, i} b^{\epsilon, j}  \nonumber \\
+& \sum_{i_{1} =1}^{3} [C_{1,u}^{\epsilon, i i_{1} j} + \tilde{C}_{1,u}^{\epsilon, i i_{1} j} - C_{2,u}^{\epsilon, i i_{1} j} - \tilde{C}_{2,u}^{\epsilon, i i_{1} j} \nonumber\\
&\hspace{3mm} +  C_{1,u}^{\epsilon, ji_{1}i} + \tilde{C}_{1,u}^{\epsilon, j i_{1} i} - C_{2,u}^{\epsilon, j i_{1} i} - \tilde{C}_{2,u}^{\epsilon, j i_{1} i} ] u^{\epsilon, i_{1}} \nonumber\\
+& \sum_{i_{1} =1}^{3} [C_{1,b}^{\epsilon, i i_{1} j} + \tilde{C}_{1,b}^{\epsilon, i i_{1} j} - C_{2,b}^{\epsilon, i i_{1} j} - \tilde{C}_{2,b}^{\epsilon, i i_{1} j} \nonumber\\
&\hspace{3mm}+  C_{1,b}^{\epsilon, j i_{1} i} + \tilde{C}_{1,b}^{\epsilon, j i_{1} i} - C_{2,b}^{\epsilon, j i_{1} i} - \tilde{C}_{2,b}^{\epsilon, j i_{1} i} ] b^{\epsilon, i_{1}} ))dt + \sum_{i=1}^{3} \mathcal{P}^{i_{0} i} H_{u,\epsilon} dW_{u}^{i}, \\
db^{\epsilon, i_{0}} &= (\Delta_{\epsilon} b^{\epsilon, i_{0}} - \frac{1}{2} \sum_{i, j=1}^{3} \mathcal{P}^{i_{0} i} D_{j}^{\epsilon} (b^{\epsilon, i} u^{\epsilon, j} - u^{\epsilon, i} b^{\epsilon, j}  \nonumber \\
+& \sum_{i_{1} =1}^{3} [C_{3,u}^{\epsilon, i i_{1} j} + \tilde{C}_{3,u}^{\epsilon, i i_{1} j} - C_{4,u}^{\epsilon, i i_{1} j} - \tilde{C}_{4,u}^{\epsilon, i i_{1} j} \nonumber\\
&\hspace{3mm} + C_{3,u}^{\epsilon, j i_{1} i} + \tilde{C}_{3,u}^{\epsilon, j i_{1} i} - C_{4,u}^{\epsilon, j i_{1} i} - \tilde{C}_{4,u}^{\epsilon, j i_{1} i} ] u^{\epsilon, i_{1}} \nonumber\\
+& \sum_{i_{1} =1}^{3} [C_{3,b}^{\epsilon, i i_{1} j} + \tilde{C}_{3,b}^{\epsilon, i i_{1} j} - C_{4,b}^{\epsilon, i i_{1} j} - \tilde{C}_{4,b}^{\epsilon, i i_{1} j} \nonumber\\
& \hspace{3mm} + C_{3,b}^{\epsilon, j i_{1} i} + \tilde{C}_{3,b}^{\epsilon, j i_{1} i} - C_{4,b}^{\epsilon, j i_{1} i} - \tilde{C}_{4,b}^{\epsilon, j i_{1} i} ] b^{\epsilon, i_{1}} ))dt + \sum_{i=1}^{3} \mathcal{P}^{i_{0} i} H_{b,\epsilon} dW_{b}^{i}, 
\end{align} 
\end{subequations} 
where $C_{k,u}^{\epsilon, i i_{1} j}, C_{k,b}^{\epsilon, i i_{1} j}, \tilde{C}_{k,u}^{\epsilon, i i_{1} j}$ and $\tilde{C}_{k,b}^{\epsilon, i i_{1} j}$ are computed explicitly in \eqref{C2uepsilonii1j}, \eqref{C2bepsilonii1j}, \eqref{C2uepsilonii2j},  \eqref{C2bepsilonii2j} and \eqref{estimate 124}. For instance, 
\begin{align*}
\lim_{\epsilon \searrow 0} C_{2,u}^{\epsilon, i i_{1} j}(t) =& - \frac{(2\pi)^{-3}}{8 (a+b)} \sum_{i_{2}, i_{3} =1}^{3} \int_{\mathbb{R}^{3}} \frac{ \cos(ax^{i_{2}}) - \cos(bx^{i_{2}})}{\lvert x \rvert^{4} f(x)^{2}} \\
& \times h_{b}(x)^{2} \hat{\mathcal{P}}^{ii_{1}}(x) \hat{\mathcal{P}}^{i_{2} i_{3}}(x) \hat{\mathcal{P}}^{ji_{3}}(x) dx 
\end{align*} 
by \eqref{C2uepsilonii1jlimit}, which is well-defined due to the hypothesis on $f$. Let us also point out some interesting relationships that $C_{1,u}^{\epsilon, ii_{1} j} = C_{4,b}^{\epsilon, ii_{1} j}$, $C_{1,b}^{\epsilon, ii_{1} j} = C_{4,u}^{\epsilon, ii_{1} j}$, $C_{2,u}^{\epsilon, ii_{1} j} = C_{3,b}^{\epsilon, ii_{1} j}$, $C_{3,u}^{\epsilon, ii_{1} j} = C_{2,b}^{\epsilon, ii_{1} j}$, while $\tilde{C}_{1,u}^{\epsilon, ii_{1} j} = -\tilde{C}_{4,b}^{\epsilon, ii_{1} j}$, $\tilde{C}_{2,u}^{\epsilon, ii_{1} j} = -\tilde{C}_{3,b}^{\epsilon, ii_{1} j}$, $\tilde{C}_{1,b}^{\epsilon, ii_{1} j} = \tilde{C}_{2,b}^{\epsilon, ii_{1} j} = - \tilde{C}_{3,u}^{\epsilon, ii_{1} j} = -\tilde{C}_{4,u} ^{\epsilon, ii_{1} j}$ (see \eqref{estimate 124}). As it will be clear from the proof of Theorem \ref{Theorem 1.2}, the number 32 comes from a product $(2^{3}) 4$ where two is due to the nonlinear term being a product of two solutions such as $u^{i}$ and $u^{j}$, another two due to the solution being a pair $(u,b)$, another two from the actual system and its approximation, and finally four from the four nonlinear terms total in the MHD system.  As described in \cite[Example 1.1]{ZZ17} (see also \cite[pg. 1679]{HM12}), common examples of discretization schemes which satisfy Definition \ref{Definition of approximation} include the following: finite-difference discretization scheme with 
\begin{align*}
\Delta_{\epsilon} F(x) \triangleq \sum_{l=1}^{3}  \frac{ F(x^{1}, \hdots, x^{l} + \epsilon, \hdots, x^{3}) - 2F(x) + F(x^{1}, \hdots, x^{l} - \epsilon, \hdots, x^{3})}{\epsilon^{2}}, 
\end{align*} 
and 
\begin{equation*}
\tilde{f}(x) \triangleq \frac{4}{\lvert x \rvert^{2}} \left( \sin^{2} (\frac{x^{1}}{2}) + \sin^{2}(\frac{x^{2}}{2}) + \sin^{2}(\frac{x^{3}}{2}) \right), \hspace{3mm} h(x) \triangleq 1_{\{ \lvert y \rvert \leq \frac{L_{0}}{2} \}}(x); 
\end{equation*} 
Galerkin discretization scheme with $\tilde{f}(x) = 1$ and $h(x) \triangleq 1_{\{ \lvert y \rvert \leq \frac{L_{0}}{2} \}}(x)$. It can be directly verified that 
\begin{equation}\label{[Equation (1.3i)][ZZ17]}
D_{j}^{\epsilon} u(x) - \frac{ u(x^{1}, \hdots, x^{j} + a \epsilon, \hdots, x^{3}) - u(x^{1}, \hdots, x^{j} - b \epsilon, \hdots, x^{3})}{(a+b) \epsilon}. 
\end{equation} 

\begin{remark}\label{Difficulty of the MHD system}
Before we state our main result, let us emphasize that it is far from trivial to extend any result on the NS equations to the MHD system. There exists an abundance of results which have been known for the NS equations but remained open for the MHD system for many decades despite much effort by great mathematicians. E.g., although Yudovich \cite{Y63} in 1963 proved the global regularity of the 2D NS equations with zero viscous diffusivity, and hence the Euler equations, its extension to the 2D MHD system without viscous diffusion has remained open \cite{CWY14, FMMNZ14, JZ14a, Y18a}. Moreover, the extension of \cite{HM06}, which proved the existence of a unique invariant measure for the 2D NS equations forced by degenerate noise that is white only in time, to the 2D MHD system has remained open. Finally, the global solution theory of the 2D NS equations forced by STWN in \cite{DD02} has not been extended to the 2D MHD system, mainly because an analog of the identity $\int_{\mathbb{T}^{2}} (u\cdot\nabla) u \cdot \Delta u dx = 0$ for the 2D NS equations does not exist for the 2D MHD system. Due to such difficulty, the study of the MHD and the Hall-MHD systems forced by noise have lagged behind that of the NS equations. We only mention some prominent works on the stochastic MHD system: \cite{BD07, CM10, S10a, SS99, Y17}, in all of which noise is white only in time and not space. 
\end{remark}

At last, we are ready to state our main result. As in \cite{Y19a}, we define the solution to \eqref{MHD} as the limit $\epsilon \searrow 0$ of a solution $\bar{y}^{\epsilon} \triangleq (\bar{u}^{\epsilon}, \bar{b}^{\epsilon})$ that solves for $i_{0} \in \{1,2,3\}$, 
\begin{subequations}\label{[Equation (3.1)][ZZ17]} 
\begin{align}
d \bar{u}^{\epsilon, i_{0}} = \Delta \bar{u}^{\epsilon, i_{0}} dt &+ \sum_{i =1}^{3} \mathcal{P}^{i_{0}i} H_{u,\epsilon} dW_{u}^{i}- \frac{1}{2} \sum_{i, j=1}^{3} \mathcal{P}^{i_{0}i} D_{j} (\bar{u}^{\epsilon, i} \bar{u}^{\epsilon, j}) dt  \nonumber\\
&\hspace{10mm} + \frac{1}{2} \sum_{i, j=1}^{3} \mathcal{P}^{i_{0}i} D_{j} (\bar{b}^{\epsilon, i} \bar{b}^{\epsilon, j}) dt, \hspace{5mm}  \bar{u}^{\epsilon} (0,\cdot) = \mathcal{P} u_{0} (\cdot),\\
d \bar{b}^{\epsilon, i_{0}} = \Delta \bar{b}^{\epsilon, i_{0}} dt &+ \sum_{i =1}^{3} \mathcal{P}^{i_{0}i} H_{b,\epsilon} dW_{b}^{i}- \frac{1}{2} \sum_{i, j=1}^{3} \mathcal{P}^{i_{0}i} D_{j} (\bar{b}^{\epsilon, i} \bar{u}^{\epsilon, j}) dt  \nonumber\\
&\hspace{10mm} + \frac{1}{2} \sum_{i, j=1}^{3} \mathcal{P}^{i_{0}i} D_{j} (\bar{u}^{\epsilon, i} \bar{b}^{\epsilon, j}) dt, \hspace{5mm}  \bar{b}^{\epsilon} (0,\cdot) = \mathcal{P} b_{0} (\cdot).
\end{align} 
\end{subequations}

\begin{theorem}\label{Theorem 1.2} 
Let $\delta_{0} \in (0, \frac{1}{2})$ and then $z \in (\frac{1}{2}, \frac{1}{2} + \delta_{0})$. For any $u_{0}, b_{0} \in \mathcal{C}^{-z}$, we let $((u,b), \tau)$ denote the unique maximal solution to \eqref{MHD} that satisfies $\sup_{t\in [0, \tau) } \lVert (u,b)(t) \rVert_{\mathcal{C}^{-z}} = + \infty$, while for any $\epsilon \in (0,1)$, for any $u_{0}^{\epsilon}, b_{0}^{\epsilon} \in \mathcal{C}^{-z}$, $(u^{\epsilon}, b^{\epsilon})$ be the unique maximal solution to \eqref{MHD approximation}. If $(u_{0}^{\epsilon}, b_{0}^{\epsilon}) \to (u_{0}, b_{0})$ in $\mathcal{C}^{-z}$ as $\epsilon \searrow 0$, then there exists a sequence of random times $\{\tau_{L}\}_{L > 0}$ such that $\tau_{L}$ converges to the explosion time $\tau$ of $(u,b)$ and 
\begin{equation*}
\sup_{t\in [0, \tau_{L}]} \lVert (u^{\epsilon}, b^{\epsilon})(t) - (u,b) (t) \rVert_{\mathcal{C}^{-z}} \to 0 \text{ in } \mathbb{P} \text{ as } \epsilon \searrow 0.  
\end{equation*} 
\end{theorem}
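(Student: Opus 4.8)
The plan is to follow the Da Prato--Debussche-type splitting that underlies the well-posedness theory of \eqref{MHD} in \cite{Y19a}, but now carried out in parallel for the approximating system \eqref{MHD approximation} and its intermediate cousin \eqref{[Equation (3.1)][ZZ17]}, and to show that the $32$ drift terms are precisely the correctors that reconcile the two. First I would introduce the stochastic objects: let $z^{\epsilon} = (z_u^{\epsilon}, z_b^{\epsilon})$ solve the linear part of \eqref{MHD approximation} (i.e.\ $dz^{\epsilon,i_0} = \Delta_\epsilon z^{\epsilon,i_0}\,dt + \sum_i \mathcal{P}^{i_0 i} H_{\cdot,\epsilon}\,dW_\cdot^i$), and similarly $z$ for the limiting linear equation. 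Using the hypotheses on $f$ and $h_u,h_b$ in Definition \ref{Definition of approximation} one controls the covariance of $z^\epsilon$ and shows $z^\epsilon \to z$ in $C([0,T];\mathcal{C}^{-z})$ in probability, and moreover that all the relevant Wick-renormalized polynomials built from $z^\epsilon$ (the products $z^\epsilon_\cdot z^\epsilon_\cdot$ suitably renormalized, and the paracontrolled second-order terms) converge in probability to their counterparts built from $z$ in the appropriate $\mathcal{C}^\alpha$ spaces. This is where the special structure of the MHD system enters repeatedly: the symmetry $\delta_{ij}$ in \eqref{STWN for MHD} forces the divergence-type cancellations, and the relations $C_{1,u}=C_{4,b}$, $\tilde C_{1,u} = -\tilde C_{4,b}$, etc.\ listed after \eqref{MHD approximation} mean that many of the $32$ divergent constants pair up or cancel, leaving a finite limit.

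The second step is the renormalization computation itself. Writing $y^\epsilon = z^\epsilon + v^\epsilon$ and inserting into \eqref{MHD approximation}, the nonlinear term $D_j^\epsilon(u^{\epsilon,i}u^{\epsilon,j})$ expands into a piece quadratic in $z^\epsilon$, a cross piece, and a piece quadratic in $v^\epsilon$; the quadratic-in-$z^\epsilon$ piece must be Wick-renormalized, and here the nontrivial fact is that because $D_j^\epsilon$ has Fourier symbol $k^j g(\epsilon k^j)$ rather than $ik^j$, the Wick constant is \emph{not} the same as the one appearing in \eqref{[Equation (3.1)][ZZ17]}: the difference is exactly $\sum_{i_1}[C_{1,\cdot}^{\epsilon,\cdot} + \tilde C_{1,\cdot}^{\epsilon,\cdot} - \cdots]$, a finite-in-the-limit quantity (as in the displayed formula for $\lim_\epsilon C_{2,u}^{\epsilon,ii_1 j}$) that has been \emph{pre-added} to the drift of \eqref{MHD approximation} precisely to absorb it. I would carry out this computation term by term for each of the four nonlinearities $(u^i u^j)$, $(b^i b^j)$, $(b^i u^j)$, $(u^i b^j)$, tracking both the "tilde-less" constants $C$ (coming from the discrepancy between $\Delta_\epsilon$, $D_j^\epsilon$ and $\Delta$, $D_j$ inside the heat-kernel representation of $z^\epsilon$) and the "tilde" constants $\tilde C$ (coming from the discrepancy between $H_{\cdot,\epsilon}$ and the identity). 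The upshot should be that the remainder $v^\epsilon$ solves a paracontrolled equation whose coefficients converge, as $\epsilon\searrow 0$, to those of the remainder equation for $v = y - z$ attached to \eqref{MHD}.

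The third and final step is the fixed-point / continuity-of-the-solution-map argument. Exactly as in the scalar Burgers computation of Example \ref{Burgers' equation example} — but now in the paracontrolled framework rather than plain $L^\infty$ — I would write both $v^\epsilon$ and $v$ in mild form, subtract, and estimate $\|v^\epsilon - v\|$ in the paracontrolled norm on a short time interval by the sum of (i) the difference of the heat semigroups $P^\epsilon_t - P_t$ applied to fixed data, controlled by a positive power of $\epsilon$ via Lemmas \ref{Lemma 3.1}, \ref{Lemma 3.2}, \ref{Lemma 3.15}; (ii) the difference of the (now convergent) stochastic data, which tends to $0$ in probability by step one; (iii) a term Lipschitz in $v^\epsilon - v$ with a small constant times $t^\theta$, absorbed for $t$ small. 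This gives convergence on a random interval $[0,\tau_1]$ whose length is bounded below in terms of the size of the stochastic data; a restart/continuation argument as at the end of Example \ref{Burgers' equation example}, together with the blow-up criterion $\sup_{t<\tau}\|(u,b)(t)\|_{\mathcal{C}^{-z}} = \infty$, then yields the stopping times $\tau_L \uparrow \tau$ with the stated convergence in $\mathbb{P}$.

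I expect the \textbf{main obstacle} to be step two: isolating the exact form of the $32$ constants and proving their limits are finite. The bookkeeping is delicate because each of the four nonlinearities contributes, for each pair of spatial indices and each choice of "first vs.\ second factor", a separate divergent constant, and one must verify that the combination actually added to \eqref{MHD approximation} (with its precise pattern of $+C_1 + \tilde C_1 - C_2 - \tilde C_2$ signs, symmetrized in $i \leftrightarrow j$) matches the Wick constant generated by the renormalization — not merely up to a finite error, but exactly, so that the limiting equation is \eqref{MHD} with \emph{no} leftover corrector. The finiteness of the limits, e.g.\ of $\int_{\mathbb{R}^3} \frac{\cos(ax^{i_2}) - \cos(bx^{i_2})}{|x|^4 f(x)^2} h_b(x)^2 \hat{\mathcal{P}}^{ii_1}(x)\hat{\mathcal{P}}^{i_2 i_3}(x)\hat{\mathcal{P}}^{ji_3}(x)\,dx$, hinges on the decay built into $f$ near the origin (the bound $|D^k \tilde f| \lesssim |x|^{-(|k|-1)} + C$) combined with the vanishing of the numerator as $x\to 0$; getting all sixteen such integrals simultaneously convergent, and showing the convergence of the $\epsilon$-level quantities to them is uniform enough to feed into step three, is the technical heart of the argument.
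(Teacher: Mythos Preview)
Your high-level architecture is the same as the paper's: prove convergence of the enhanced noise (the tree of renormalized stochastic objects), then conclude by continuity of the paracontrolled solution map, using semigroup-difference estimates (Lemmas \ref{Lemma 3.1}--\ref{Lemma 3.3}, \ref{Lemma 3.15}) and a restart argument to get $\tau_L\uparrow\tau$. That part is fine.

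The genuine gap is in your step two, precisely where you flag the main obstacle. You locate the $32$ drift constants at the level of the \emph{quadratic} Wick product: ``the quadratic-in-$z^\epsilon$ piece must be Wick-renormalized \ldots\ the difference is exactly $\sum_{i_1}[C_1 + \tilde C_1 - C_2 - \tilde C_2]$.'' This is not where they live. The quadratic Wick constants are the $C_{0k}^{\epsilon,ij}$ of \eqref{estimate 40}--\eqref{estimate 49}; they differ from their barred counterparts $\bar C_{0k}^{\epsilon,ij}$, but that difference is absorbed into the \emph{second convergence} \eqref{second convergence} and never surfaces as a drift. The constants $C_{k,\cdot}^{\epsilon,ii_1j},\tilde C_{k,\cdot}^{\epsilon,ii_1j}$ arise one level higher, in the \emph{cubic} object $u_1^\epsilon u_2^\epsilon$ (and its siblings): when you expand this product into Wiener chaoses, the first-chaos component contains sums (the $V^2,V^3$ of \eqref{estimate 135}) that, after centring at the origin, leave behind exactly these constants --- see \eqref{C2uepsilonii1j}--\eqref{C2bepsilonii2j} and \eqref{estimate 124}. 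They are nonzero because the symbol $g(\epsilon k^j)$ breaks the odd symmetry that makes the corresponding $\bar C,\bar{\tilde C}$ vanish (cf.\ \eqref{barC2uepsilonii1j}--\eqref{barC2bepsilonii1j}). These same constants then propagate through the hierarchy: they reappear, multiplied by $u_2^\epsilon$, in the renormalization of $\pi_0(u_3^\epsilon,u_1^\epsilon)$ \eqref{[Equation (3.22af)][ZZ17]}, and multiplied by $u_3^\epsilon+u_4^\epsilon$ in $\pi_0(u_4^\epsilon,u_1^\epsilon)$ \eqref{[Equation (3.22ag)][ZZ17]}; this is why the drift in \eqref{MHD approximation} multiplies the \emph{full} solution $u^{\epsilon,i_1},b^{\epsilon,i_1}$ rather than just its linear part.

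Two related corrections. First, your proposed two-level split $y^\epsilon=z^\epsilon+v^\epsilon$ is not enough in three dimensions: the paper (following \cite{Y19a,ZZ15}) uses the four-level decomposition $y^\epsilon=\sum_{k=1}^4 y_k^\epsilon$ of \eqref{[Equation (3.21c)][ZZ17]}--\eqref{[Equation (3.22)][ZZ17]} together with the paracontrolled ansatz \eqref{paracontrolled ansatz u}--\eqref{paracontrolled ansatz b} for $y_4^\epsilon$, and it is inside this decomposition that the constants find their natural home. Second, your explanation of the $C$ vs.\ $\tilde C$ split (``discrepancy of $\Delta_\epsilon,D_j^\epsilon$'' vs.\ ``$H_{\cdot,\epsilon}$ vs.\ identity'') is not what the paper finds: both families come from the same source, and the distinction is combinatorial --- which of the two inner factors of $u_2^\epsilon$ contracts with the outer $u_1^\epsilon$ (the $k_{23}=0$ contraction gives $C$, the $k_{13}=0$ contraction gives $\tilde C$; compare $V^2$ with $V^3$ in \eqref{estimate 135}). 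Getting this right is essential, because the verification that the fifth convergence \eqref{fifth convergence} holds (Subsection \ref{Subsubsection 2.2.4}) requires checking that the \emph{same} constants that were introduced at the $u_1\diamond u_2$ level cancel the divergent second-chaos pieces of $\pi_0(u_3,u_1)$ --- the ``coupled renormalization'' of Remark \ref{commutator}.
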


\begin{remark}
The natural procedure to prove Theorem \ref{Theorem 1.2} is to demonstrate the well-posedness of \eqref{MHD} and \eqref{MHD approximation} with drift terms, and thereafter the convergence of their solutions. However, the first major difficulty is actually the lack of \emph{a priori} knowledge of the drift terms. In other words, even though the well-posedness of \eqref{MHD approximation} must be proven before the convergence of its solution to the solution of \eqref{MHD}, it is not even clear how the drift terms should appear within \eqref{MHD approximation}, and obviously one cannot prove the well-posedness of a system if he/she cannot even write down the system. Even the knowledge of the four drift terms in the case of the NS equations in \cite[Equation (1.3)]{ZZ17} did not allow the author to guess the appropriate drift terms in the case of the MHD system due to their significant differences. To circumvent this difficulty, as a first step toward the proof of Theorem \ref{Theorem 1.2}, the author actually went ahead to prove the convergence of the renormalization terms from \eqref{MHD} and \eqref{MHD approximation} in Subsection \ref{Subsection 2.2}. This approach is completely counter-intuitive and seems impossible to accomplish because at this point, the author had no knowledge of any appropriate drift term within \eqref{MHD approximation} and thus certainly no information of any renormalization terms within \eqref{MHD approximation}. However, here, the author was able to rely on his experience from \cite{Y19a} and guess the renormalization terms for not only \eqref{MHD} but also \eqref{MHD approximation}. Although it took several attempts and corrections, the author was eventually able to derive the 32 drift terms within \eqref{MHD approximation}.
\end{remark}

\begin{remark}\label{commutator}
Many times during the process of the convergence of the renormalization terms in Subsection \ref{Subsection 2.2}, it became critical to somehow couple appropriate terms and renormalize together, the technique which was named ``coupled renormalization'' in \cite{Y19a}. This technique becomes once again crucial upon proving the convergence of the terms in the second Wiener chaos of \eqref{fifth convergence} in Subsection \ref{Subsection 2.2} (see \eqref{estimate 125}). For simplicity, let us briefly explain in the case of an application of the Kato-Ponce commutator estimate \cite{KP88}  to the deterministic MHD system. First, if one wishes to estimate an $\dot{H}^{s}$-norm of the solution to the deterministic NS equations for $s \in \mathbb{R}_{+}$, then it suffices to define a fractional Laplacian $\Lambda^{s} \triangleq (-\Delta)^{\frac{s}{2}}$ as a Fourier operator with a Fourier symbol $\lvert \xi \rvert^{s}$, apply $\Lambda^{s}$ to \eqref{Hall-MHD} with $b\equiv 0$, realize that 
\begin{equation}\label{commutator 1}
\int_{\mathbb{T}^{3}} u\cdot \nabla \Lambda^{s} u \cdot \Lambda^{s} u dx = 0 
\end{equation} 
due to the divergence-free property of $u$ in \eqref{Hall-MHD}, and deduce 
\begin{align}\label{estimate 52}
\frac{1}{2} \partial_{t} \lVert u \rVert_{\dot{H}^{s}}^{2} + \nu \int_{0}^{t} \lVert \nabla u \rVert_{\dot{H}^{s}}^{2} ds =& - \int_{\mathbb{T}^{3}} (\Lambda^{s} ((u\cdot\nabla) u) - u\cdot\nabla \Lambda^{s} u) \cdot \Lambda^{s} u dx \nonumber\\
=& - \int_{\mathbb{T}^{3}} [\Lambda^{s}, u\cdot\nabla] u \cdot \Lambda^{s} u,  
\end{align} 
where $[A,B] \triangleq AB - BA$ is a commutator so that the Kato-Ponce commutator estimate from \cite{KP88} may be readily applied. In the case of the MHD system, we can repeat the same procedure for the nonlinear terms $(u\cdot\nabla) u$ and $(u\cdot\nabla) b$ within \eqref{Hall-MHD}. However, for $(b\cdot\nabla) b$ within \eqref{Hall-MHD}, we see that even though we may still write 
\begin{equation}\label{estimate 51}
\int_{\mathbb{T}^{3}} \Lambda^{s} (b\cdot\nabla) b) \cdot \Lambda^{s} u dx =  \int_{\mathbb{T}^{3}} (\Lambda^{s} ((b\cdot\nabla) b) - u\cdot\nabla \Lambda^{s} u) \cdot \Lambda^{s} u dx
\end{equation} 
due to \eqref{commutator 1}, this cannot be written in the form of a commutator as in \eqref{estimate 52}, and thus Kato-Ponce commutator estimate seems inapplicable. Similarly, for the nonlinear term $(b\cdot\nabla) u$, because $\int_{\mathbb{T}^{3}} b\cdot\nabla \Lambda^{s} b \cdot \Lambda^{s} b dx = 0$, one can write 
\begin{equation}\label{estimate 53}
\int_{\mathbb{T}^{3}} \Lambda^{s} (b\cdot\nabla) u) \cdot \Lambda^{s} b dx =  \int_{\mathbb{T}^{3}} (\Lambda^{s} ((b\cdot\nabla) u) - b\cdot\nabla \Lambda^{s} b) \cdot \Lambda^{s} b dx,
\end{equation} 
and yet this cannot be written as a commutator and thus Kato-Ponce's commutator estimate seems inapplicable. The trick is to actually combine these ``two bad'' terms in \eqref{estimate 51}-\eqref{estimate 53} to create ``one good'' term. Indeed, because 
\begin{equation}
\int_{\mathbb{T}^{3}} b\cdot\nabla \Lambda^{s} b \cdot \Lambda^{s} u + b \cdot \nabla \Lambda^{s} u \cdot \Lambda^{s} b dx = 0, 
\end{equation} 
we can write 
\begin{align}
&\int_{\mathbb{T}^{3}} \Lambda^{s} ((b\cdot\nabla) b) \cdot \Lambda^{s} u + \Lambda^{s} ((b\cdot\nabla) u) \cdot \Lambda^{s} b dx \nonumber \\
=& \int_{\mathbb{T}^{3}} [ \Lambda^{s}, b\cdot\nabla] b \cdot \Lambda^{s} u + [\Lambda^{s}, b \cdot \nabla] u \cdot \Lambda^{s} b dx. 
\end{align} 
Such a trick of taking advantage of the structure of the MHD system is crucial in the mathematical analysis of the MHD system, although there still remain many open problems that cannot be extended from the NS equations to the MHD system, as we emphasized in Remark \ref{Difficulty of the MHD system}. It is remarkable that similar tricks will be crucial in our stochastic setting as well, e.g., Remarks \ref{Remark 2.1}, \ref{another remark}, \ref{Remark 2.2}, \ref{yet another remark}, \ref{Remark 2.3} and \ref{Remark 2.6}. 
\end{remark}

\begin{remark}\label{Remark 1.5}
Let us finally mention that because the MHD system \eqref{MHD} is one of the most complex systems of equations in fluid mechanics, Theorem \ref{Theorem 1.2} and its proof lays clear guidelines to achieve analogous results for various SPDEs in fluid mechanics such as the Boussinesq system (e.g., \cite{FGRT15}). Let us also point out that extending Theorem \ref{Theorem 1.2} to the Hall-MHD system, \eqref{Hall-MHD} with $\iota > 0$, is a highly non-trivial problem because it is quasilinear instead of semilinear and not locally subcritical by the definition from \cite[Assumption 8.3]{H14} so that even the theory of regularity structures from \cite{H14}, which is considered by many to be more general than the theory of paracontrolled distributions, is inapplicable to the Hall-MHD system \eqref{Hall-MHD} with $\iota > 0$. 
\end{remark}

In the next section, we will prove Theorem \ref{Theorem 1.2}; for convenience of readers, we include in the Subsection \ref{Preliminaries} of the Appendix the preliminary results which are essential for our proof, as well as a table of essential notations in Subsection \ref{Subsection 3.3}. The proof is inspired by the work of Zhu and Zhu in \cite{ZZ17}; however, many of our computations differ from those of \cite{ZZ17}. While we will emphasize the key difference from the work of \cite{ZZ17}, we will collect computations which are similar to the work of \cite{ZZ17} in the Subsection \ref{Subsection 3.2} of the Appendix for completeness. 

\section{Proof of Theorem \ref{Theorem 1.2}}

\subsection{Constructions of solutions}\label{Subsection 2.1}

\subsubsection{Construction of solution to \eqref{MHD}}

We recall that we defined the solution to \eqref{MHD} as the limit of a solution $\bar{y}^{\epsilon}$ that solves \eqref{[Equation (3.1)][ZZ17]}. We write $\bar{y}_{1}^{\epsilon} \triangleq (\bar{u}_{1}^{\epsilon}, \bar{b}_{1}^{\epsilon})$, $\bar{y}_{2}^{\epsilon} \triangleq (\bar{u}_{2}^{\epsilon}, \bar{b}_{2}^{\epsilon})$, $\bar{y}_{3}^{\epsilon} \triangleq (\bar{u}_{3}^{\epsilon}, \bar{b}_{3}^{\epsilon})$, $\bar{y}_{4}^{\epsilon} \triangleq (\bar{u}_{4}^{\epsilon}, \bar{b}_{4}^{\epsilon})$, and split \eqref{[Equation (3.1)][ZZ17]} to the following four systems, similarly to \cite[Equations (16)-(21)]{Y19a} but slightly differently. For $i \in \{1,2,3\}$,  
\begin{equation}\label{[Equation (3.1aa)][ZZ17]}
d \bar{u}_{1}^{\epsilon, i} = \Delta \bar{u}_{1}^{\epsilon, i} dt + \sum_{i_{1} =1}^{3} \mathcal{P}^{ii_{1}} H_{u,\epsilon} dW_{u}^{i_{1}}, \hspace{3mm} 
d \bar{b}_{1}^{\epsilon, i} = \Delta \bar{b}_{1}^{\epsilon, i} dt + \sum_{i_{1} =1}^{3} \mathcal{P}^{ii_{1}} H_{b,\epsilon} dW_{b}^{i_{1}}, 
\end{equation} 
\begin{subequations}\label{[Equation (3.1ab)][ZZ17]}
\begin{align}
d \bar{u}_{2}^{\epsilon, i} =& \Delta \bar{u}_{2}^{\epsilon, i} dt - \frac{1}{2} \sum_{i_{1}, j=1}^{3} \mathcal{P}^{ii_{1}} D_{j} (\bar{u}_{1}^{\epsilon, i_{1}} \diamond \bar{u}_{1}^{\epsilon, j} - \bar{b}_{1}^{\epsilon, i_{1}} \diamond \bar{b}_{1}^{\epsilon, j}) dt, \hspace{3mm} \bar{u}_{2}^{\epsilon} (0, \cdot) \equiv 0, \\
d \bar{b}_{2}^{\epsilon, i} =& \Delta \bar{b}_{2}^{\epsilon, i} dt - \frac{1}{2} \sum_{i_{1}, j=1}^{3} \mathcal{P}^{ii_{1}} D_{j} (\bar{b}_{1}^{\epsilon, i_{1}} \diamond \bar{u}_{1}^{\epsilon, j} - \bar{u}_{1}^{\epsilon, i_{1}} \diamond \bar{b}_{1}^{\epsilon, j}) dt, \hspace{3mm} \bar{b}_{2}^{\epsilon} (0, \cdot) \equiv 0, 
\end{align}
\end{subequations} 
\begin{subequations}\label{[Equation (3.1ac)][ZZ17]}
\begin{align}
d \bar{u}_{3}^{\epsilon, i} =& \Delta \bar{u}_{3}^{\epsilon, i} dt - \frac{1}{2} \sum_{i_{1}, j=1}^{3} \mathcal{P}^{ii_{1}} D_{j} (\bar{u}_{1}^{\epsilon, i_{1}} \diamond \bar{u}_{2}^{\epsilon, j} + \bar{u}_{2}^{\epsilon, i_{1}} \diamond \bar{u}_{1}^{\epsilon, j} \nonumber\\
& \hspace{36mm} - \bar{b}_{1}^{\epsilon, i_{1}} \diamond \bar{b}_{2}^{\epsilon, j} - \bar{b}_{2}^{\epsilon, i_{1}} \diamond \bar{b}_{1}^{\epsilon, j}) dt, \hspace{3mm} \bar{u}_{3}^{\epsilon} (0, \cdot) \equiv 0, \\
d \bar{b}_{3}^{\epsilon, i} =& \Delta \bar{b}_{3}^{\epsilon, i} dt - \frac{1}{2} \sum_{i_{1}, j=1}^{3} \mathcal{P}^{ii_{1}} D_{j} (\bar{b}_{1}^{\epsilon, i_{1}} \diamond \bar{u}_{2}^{\epsilon, j} + \bar{b}_{2}^{\epsilon, i_{1}} \diamond \bar{u}_{1}^{\epsilon, j} \nonumber\\
& \hspace{36mm} - \bar{u}_{1}^{\epsilon, i_{1}} \diamond \bar{b}_{2}^{\epsilon, j} - \bar{u}_{2}^{\epsilon, i_{1}} \diamond \bar{b}_{1}^{\epsilon, j}) dt, \hspace{3mm} \bar{b}_{3}^{\epsilon} (0, \cdot) \equiv 0,
\end{align}
\end{subequations} 
\begin{subequations}\label{[Equation (3.2)][ZZ17]}
\begin{align}
\bar{u}_{4}^{\epsilon, i}(t) =& P_{t} ( \sum_{i_{1} =1}^{3} \mathcal{P}^{ii_{1}} u_{0}^{i_{1}} - \bar{u}_{1}^{\epsilon, i}(0)) \\
& - \frac{1}{2} \int_{0}^{t} P_{t-s} \sum_{i_{1}, j=1}^{3} \mathcal{P}^{ii_{1}} \nonumber\\
& \times D_{j} [ \bar{u}_{1}^{\epsilon, i_{1}} \diamond (\bar{u}_{3}^{\epsilon, j} + \bar{u}_{4}^{\epsilon, j}) + (\bar{u}_{3}^{\epsilon, i_{1}} + \bar{u}_{4}^{\epsilon, i_{1}}) \diamond \bar{u}_{1}^{\epsilon, j} + \bar{u}_{2}^{\epsilon, i_{1}} \diamond \bar{u}_{2}^{\epsilon, j} \nonumber\\
&\hspace{5mm} + \bar{u}_{2}^{\epsilon, i_{1}}( \bar{u}_{3}^{\epsilon, j} + \bar{u}_{4}^{\epsilon, j}) + \bar{u}_{2}^{\epsilon, j} ( \bar{u}_{3}^{\epsilon, i_{1}} + \bar{u}_{4}^{\epsilon, i_{1}}) + (\bar{u}_{3}^{\epsilon, i_{1}} + \bar{u}_{4}^{\epsilon, i_{1}})(\bar{u}_{3}^{\epsilon, j} + \bar{u}_{4}^{\epsilon, j})\nonumber \\
&\hspace{5mm} - \bar{b}_{1}^{\epsilon, i_{1}} \diamond (\bar{b}_{3}^{\epsilon, j} + \bar{b}_{4}^{\epsilon, j}) - (\bar{b}_{3}^{\epsilon, i_{1}} + \bar{b}_{4}^{\epsilon, i_{1}}) \diamond \bar{b}_{1}^{\epsilon, j} - \bar{b}_{2}^{\epsilon, i_{1}} \diamond \bar{b}_{2}^{\epsilon, j} \nonumber\\
&\hspace{5mm} - \bar{b}_{2}^{\epsilon, i_{1}}( \bar{b}_{3}^{\epsilon, j} + \bar{b}_{4}^{\epsilon, j}) - \bar{b}_{2}^{\epsilon, j} ( \bar{b}_{3}^{\epsilon, i_{1}} + \bar{b}_{4}^{\epsilon, i_{1}}) - (\bar{b}_{3}^{\epsilon, i_{1}} + \bar{b}_{4}^{\epsilon, i_{1}})(\bar{b}_{3}^{\epsilon, j} + \bar{b}_{4}^{\epsilon, j})]ds, \nonumber\\
\bar{b}_{4}^{\epsilon, i}(t) =& P_{t} ( \sum_{i_{1} =1}^{3} \mathcal{P}^{ii_{1}} b_{0}^{i_{1}} - \bar{b}_{1}^{\epsilon, i}(0)) \\
& - \frac{1}{2} \int_{0}^{t} P_{t-s} \sum_{i_{1}, j=1}^{3} \mathcal{P}^{ii_{1}} \nonumber\\
& \times D_{j} [ \bar{b}_{1}^{\epsilon, i_{1}} \diamond (\bar{u}_{3}^{\epsilon, j} + \bar{u}_{4}^{\epsilon, j}) + (\bar{b}_{3}^{\epsilon, i_{1}} + \bar{b}_{4}^{\epsilon, i_{1}}) \diamond \bar{u}_{1}^{\epsilon, j} + \bar{b}_{2}^{\epsilon, i_{1}} \diamond \bar{u}_{2}^{\epsilon, j} \nonumber\\
&\hspace{5mm} + \bar{b}_{2}^{\epsilon, i_{1}}( \bar{u}_{3}^{\epsilon, j} + \bar{u}_{4}^{\epsilon, j}) + \bar{u}_{2}^{\epsilon, j} ( \bar{b}_{3}^{\epsilon, i_{1}} + \bar{b}_{4}^{\epsilon, i_{1}}) + (\bar{b}_{3}^{\epsilon, i_{1}} + \bar{b}_{4}^{\epsilon, i_{1}})(\bar{u}_{3}^{\epsilon, j} + \bar{u}_{4}^{\epsilon, j})\nonumber \\
&\hspace{5mm} - \bar{u}_{1}^{\epsilon, i_{1}} \diamond (\bar{b}_{3}^{\epsilon, j} + \bar{b}_{4}^{\epsilon, j}) - (\bar{u}_{3}^{\epsilon, i_{1}} + \bar{u}_{4}^{\epsilon, i_{1}}) \diamond \bar{b}_{1}^{\epsilon, j} - \bar{u}_{2}^{\epsilon, i_{1}} \diamond \bar{b}_{2}^{\epsilon, j} \nonumber\\
&\hspace{5mm} - \bar{u}_{2}^{\epsilon, i_{1}}( \bar{b}_{3}^{\epsilon, j} + \bar{b}_{4}^{\epsilon, j}) - \bar{b}_{2}^{\epsilon, j} ( \bar{u}_{3}^{\epsilon, i_{1}} + \bar{u}_{4}^{\epsilon, i_{1}}) - (\bar{u}_{3}^{\epsilon, i_{1}} + \bar{u}_{4}^{\epsilon, i_{1}})(\bar{b}_{3}^{\epsilon, j} + \bar{b}_{4}^{\epsilon, j})]ds, \nonumber
\end{align}
\end{subequations} 
where we recall from Example \ref{Burgers' equation example} that $P_{t} = e^{t\Delta}$, and 
\begin{subequations}\label{[Equation (3.2aa)][ZZ17]}
\begin{align}
& \bar{u}_{1}^{\epsilon, i} \diamond \bar{u}_{1}^{\epsilon, j} \triangleq \bar{u}_{1}^{\epsilon, i} \bar{u}_{1}^{\epsilon, j} - \bar{C}_{01}^{\epsilon, ij}, \hspace{3mm} \bar{b}_{1}^{\epsilon, i} \diamond \bar{b}_{1}^{\epsilon, j} \triangleq \bar{b}_{1}^{\epsilon, i} \bar{b}_{1}^{\epsilon, j} - \bar{C}_{02}^{\epsilon, ij}, \\
& \bar{u}_{1}^{\epsilon, i} \diamond \bar{b}_{1}^{\epsilon, j} \triangleq \bar{u}_{1}^{\epsilon, i} \bar{b}_{1}^{\epsilon, j} - \bar{C}_{03}^{\epsilon, ij}, \hspace{4mm} \bar{b}_{1}^{\epsilon, i} \diamond \bar{u}_{1}^{\epsilon, j} \triangleq \bar{b}_{1}^{\epsilon, i} \bar{u}_{1}^{\epsilon, j} - \bar{C}_{03}^{\epsilon, ij}, 
\end{align}
\end{subequations} 
with $\bar{C}_{01}^{\epsilon, ij}, \bar{C}_{02}^{\epsilon, ij}, \bar{C}_{03}^{\epsilon, ij}$ explicitly found in \eqref{estimate 41}, \eqref{estimate 42}, 
\begin{subequations}\label{[Equation (3.2ab)][ZZ17]}
\begin{align}
& \bar{u}_{2}^{\epsilon, i} \diamond \bar{u}_{1}^{\epsilon, j} \triangleq \bar{u}_{1}^{\epsilon, j} \diamond \bar{u}_{2}^{\epsilon, i} \triangleq \bar{u}_{1}^{\epsilon, j} \bar{u}_{2}^{\epsilon, i}, \hspace{3mm}  \bar{b}_{2}^{\epsilon, i} \diamond \bar{b}_{1}^{\epsilon, j} \triangleq \bar{b}_{1}^{\epsilon, j} \diamond \bar{b}_{2}^{\epsilon, i} \triangleq \bar{b}_{1}^{\epsilon, j} \bar{b}_{2}^{\epsilon, i},\\
& \bar{b}_{2}^{\epsilon, i} \diamond \bar{u}_{1}^{\epsilon, j} \triangleq \bar{u}_{1}^{\epsilon, j} \diamond \bar{b}_{2}^{\epsilon, i} \triangleq \bar{u}_{1}^{\epsilon, j} \bar{b}_{2}^{\epsilon, i}, \hspace{4mm}  \bar{u}_{2}^{\epsilon, i} \diamond \bar{b}_{1}^{\epsilon, j} \triangleq \bar{b}_{1}^{\epsilon, j} \diamond \bar{u}_{2}^{\epsilon, i} \triangleq \bar{b}_{1}^{\epsilon, j} \bar{u}_{2}^{\epsilon, i},
\end{align}
\end{subequations} 
\begin{subequations}\label{[Equation (3.2ac)][ZZ17]}
\begin{align}
& \bar{u}_{2}^{\epsilon, i} \diamond \bar{u}_{2}^{\epsilon, j} \triangleq \bar{u}_{2}^{\epsilon, i} \bar{u}_{2}^{\epsilon, j} - \bar{\phi}_{21}^{\epsilon, ij} - \bar{C}_{21}^{\epsilon, ij}, \hspace{1mm} \bar{b}_{2}^{\epsilon, i} \diamond \bar{b}_{2}^{\epsilon, j} \triangleq \bar{b}_{2}^{\epsilon, i} \bar{b}_{2}^{\epsilon, j} - \bar{\phi}_{22}^{\epsilon, ij} - \bar{C}_{22}^{\epsilon, ij}, \\
& \bar{b}_{2}^{\epsilon, i} \diamond \bar{u}_{2}^{\epsilon, j} \triangleq \bar{b}_{2}^{\epsilon, i} \bar{u}_{2}^{\epsilon, j} - \bar{\phi}_{23}^{\epsilon, ij} - \bar{C}_{23}^{\epsilon, ij}, \hspace{2mm} \bar{u}_{2}^{\epsilon, i} \diamond \bar{b}_{2}^{\epsilon, j} \triangleq \bar{u}_{2}^{\epsilon, i} \bar{b}_{2}^{\epsilon, j} - \bar{\phi}_{24}^{\epsilon, ij} - \bar{C}_{24}^{\epsilon, ij}, 
\end{align}
\end{subequations} 
with $\bar{\phi}_{22}^{\epsilon, ij}(t)$ explicitly found in \eqref{estimate 46}, $\bar{C}_{22}^{\epsilon, ij}$ in \eqref{estimate 44}, 
\begin{subequations}\label{[Equation (3.2ad)][ZZ17]}
\begin{align}
 \bar{u}_{1}^{\epsilon, i} \diamond \bar{u}_{3}^{\epsilon, j} \triangleq& \pi_{<} (\bar{u}_{1}^{\epsilon, i},  \bar{u}_{3}^{\epsilon, j}) + \pi_{>} (\bar{u}_{1}^{\epsilon, i}, \bar{u}_{3}^{\epsilon, j}) + \pi_{0, \diamond} (\bar{u}_{3}^{\epsilon, j}, \bar{u}_{1}^{\epsilon, i}), \\
 \bar{b}_{1}^{\epsilon, i} \diamond \bar{b}_{3}^{\epsilon, j} \triangleq& \pi_{<} ( \bar{b}_{1}^{\epsilon, i}, \bar{b}_{3}^{\epsilon, j}) + \pi_{>} ( \bar{b}_{1}^{\epsilon, i}, \bar{b}_{3}^{\epsilon, j}) + \pi_{0,\diamond} (\bar{b}_{3}^{\epsilon, j}, \bar{b}_{1}^{\epsilon, i}), \\
  \bar{u}_{1}^{\epsilon, i} \diamond \bar{b}_{3}^{\epsilon, j} \triangleq& \pi_{<} ( \bar{u}_{1}^{\epsilon, i}, \bar{b}_{3}^{\epsilon, j}) + \pi_{>} ( \bar{u}_{1}^{\epsilon, i}, \bar{b}_{3}^{\epsilon, j}) + \pi_{0,\diamond} (\bar{b}_{3}^{\epsilon, j}, \bar{u}_{1}^{\epsilon, i}), \\
 \bar{b}_{1}^{\epsilon, i} \diamond \bar{u}_{3}^{\epsilon, j} \triangleq& \pi_{<} ( \bar{b}_{1}^{\epsilon, i}, \bar{u}_{3}^{\epsilon, j}) + \pi_{>} ( \bar{b}_{1}^{\epsilon, i}, \bar{u}_{3}^{\epsilon, j}) + \pi_{0,\diamond} (\bar{u}_{3}^{\epsilon, j}, \bar{b}_{1}^{\epsilon, i}), 
\end{align}
\end{subequations} 
with 
\begin{subequations}\label{[Equation (3.2b)][ZZ17]}
\begin{align}
 \pi_{0,\diamond } (\bar{u}_{3}^{\epsilon, i}, \bar{u}_{1}^{\epsilon, j}) \triangleq& \pi_{0}( \bar{u}_{3}^{\epsilon, i}, \bar{u}_{1}^{\epsilon, j}) - \bar{\phi}_{11}^{\epsilon, ij}  - \bar{C}_{11}^{\epsilon, ij}, \\
 \pi_{0,\diamond } (\bar{b}_{3}^{\epsilon, i}, \bar{b}_{1}^{\epsilon, j}) \triangleq& \pi_{0}( \bar{b}_{3}^{\epsilon, i}, \bar{b}_{1}^{\epsilon, j}) - \bar{\phi}_{12}^{\epsilon, ij}  - \bar{C}_{12}^{\epsilon, ij}, \\
  \pi_{0,\diamond } (\bar{u}_{3}^{\epsilon, i}, \bar{b}_{1}^{\epsilon, j}) \triangleq& \pi_{0}( \bar{u}_{3}^{\epsilon, i}, \bar{b}_{1}^{\epsilon, j}) - \bar{\phi}_{13}^{\epsilon, ij}  - \bar{C}_{13}^{\epsilon, ij}, \\
 \pi_{0,\diamond } (\bar{b}_{3}^{\epsilon, i}, \bar{u}_{1}^{\epsilon, j}) \triangleq& \pi_{0}( \bar{b}_{3}^{\epsilon, i}, \bar{u}_{1}^{\epsilon, j}) - \bar{\phi}_{14}^{\epsilon, ij}  - \bar{C}_{14}^{\epsilon, ij}, 
\end{align}
\end{subequations} 
and $\bar{\phi}_{13}^{\epsilon, ij}(t)$ explicitly found in \eqref{estimate 48}, $\bar{C}_{13}^{\epsilon, ij}$ in \eqref{estimate 47}, 
\begin{subequations}\label{[Equation (3.2ae)][ZZ17]}
\begin{align}
 \bar{u}_{1}^{\epsilon, i} \diamond \bar{u}_{4}^{\epsilon, j} \triangleq& \pi_{<} (\bar{u}_{1}^{\epsilon, i}, \bar{u}_{4}^{\epsilon, j}) + \pi_{>} (\bar{u}_{1}^{\epsilon, i}, \bar{u}_{4}^{\epsilon, j}) + \pi_{0,\diamond} (\bar{u}_{4}^{\epsilon, j}, \bar{u}_{1}^{\epsilon, i}), \\
 \bar{b}_{1}^{\epsilon, i} \diamond \bar{b}_{4}^{\epsilon, j} \triangleq& \pi_{<} (\bar{b}_{1}^{\epsilon, i}, \bar{b}_{4}^{\epsilon, j}) + \pi_{>} (\bar{b}_{1}^{\epsilon, i}, \bar{b}_{4}^{\epsilon, j}) + \pi_{0,\diamond} (\bar{b}_{4}^{\epsilon, j}, \bar{b}_{1}^{\epsilon, i}), \\
 \bar{u}_{1}^{\epsilon, i} \diamond \bar{b}_{4}^{\epsilon, j} \triangleq& \pi_{<} (\bar{u}_{1}^{\epsilon, i}, \bar{b}_{4}^{\epsilon, j}) + \pi_{>} (\bar{u}_{1}^{\epsilon, i}, \bar{b}_{4}^{\epsilon, j}) + \pi_{0,\diamond} (\bar{b}_{4}^{\epsilon, j}, \bar{u}_{1}^{\epsilon, i}), \\
 \bar{b}_{1}^{\epsilon, i} \diamond \bar{u}_{4}^{\epsilon, j} \triangleq& \pi_{<} (\bar{b}_{1}^{\epsilon, i},  \bar{u}_{4}^{\epsilon, j}) + \pi_{>} (\bar{b}_{1}^{\epsilon, i}, \bar{u}_{4}^{\epsilon, j}) + \pi_{0,\diamond} (\bar{u}_{4}^{\epsilon, j}, \bar{b}_{1}^{\epsilon, i}), 
\end{align}
\end{subequations}
with 
\begin{subequations}\label{[Equation (3.2c)][ZZ17]}
\begin{align} 
& \pi_{0,\diamond} (\bar{u}_{4}^{\epsilon, i}, \bar{u}_{1}^{\epsilon, j})  \triangleq \pi_{0}(\bar{u}_{4}^{\epsilon, i}, \bar{u}_{1}^{\epsilon, j}), \hspace{3mm} \pi_{0,\diamond} (\bar{b}_{4}^{\epsilon, i}, \bar{b}_{1}^{\epsilon, j})  \triangleq \pi_{0}(\bar{b}_{4}^{\epsilon, i}, \bar{b}_{1}^{\epsilon, j}), \\
& \pi_{0,\diamond} (\bar{b}_{4}^{\epsilon, i}, \bar{u}_{1}^{\epsilon, j})  \triangleq \pi_{0}(\bar{b}_{4}^{\epsilon, i}, \bar{u}_{1}^{\epsilon, j}),  \hspace{4mm} \pi_{0,\diamond} (\bar{u}_{4}^{\epsilon, i}, \bar{b}_{1}^{\epsilon, j})  \triangleq \pi_{0}(\bar{u}_{4}^{\epsilon, i}, \bar{b}_{1}^{\epsilon, j}), 
\end{align}
\end{subequations}
and $\bar{\phi}_{ik}^{\epsilon, ij}$ for $i \in \{1,2\}, k \in \{1,2,3,4\}$, converges to some $\bar{\phi}_{ik}$ with respect to the norm of $\sup_{t\in [0,T]} t^{\rho} \lvert \cdot (t) \rvert$ for every $\rho > 0$ (see \eqref{estimate 127}, \eqref{estimate 54} - \eqref{estimate 55}). Now let us define $\bar{K}_{u}^{\epsilon}, \bar{K}_{b}^{\epsilon}$ to satisfy 
\begin{equation}\label{estimate 56}
d \bar{K}_{u}^{\epsilon, i}  = (\Delta \bar{K}_{u}^{\epsilon, i} + \bar{u}_{1}^{\epsilon, i}) dt, \hspace{1mm} \bar{K}_{u}^{\epsilon, i} (0) = 0, \hspace{2mm} d \bar{K}_{b}^{\epsilon, i}  = (\Delta \bar{K}_{b}^{\epsilon, i} + \bar{b}_{1}^{\epsilon, i}) dt, \hspace{1mm} \bar{K}_{b}^{\epsilon, i} (0) = 0. 
\end{equation} 
Then from the work of \cite[Equations (35), (37) and (113)]{Y19a} we see that for all 
\begin{equation}\label{[Equation (3.2g)][ZZ17]}
\delta\in (0, \delta_{0} \wedge \frac{1-2\delta_{0}}{3} \wedge \frac{1-z}{4} \wedge (2z-1) ), 
\end{equation} 
where we recall that $\delta_{0} \in (0,\frac{1}{2})$ and $z\in (\frac{1}{2}, \frac{1}{2} + \delta_{0})$ by hypothesis, the following quantity is finite: 
\begin{align}\label{[Equation (3.2e)][ZZ17]}
\bar{C}_{W}^{\epsilon}(T) \triangleq& \sup_{t\in [0,T]} [ \sum_{i=1}^{3} \lVert (\bar{u}_{1}^{\epsilon, i}, \bar{b}_{1}^{\epsilon, i})(t) \rVert_{\mathcal{C}^{- \frac{1}{2} - \frac{\delta}{2}}} \\
& + \sum_{i,j=1}^{3} \lVert ( \bar{u}_{1}^{\epsilon, i} \diamond \bar{u}_{1}^{\epsilon, j}, \bar{b}_{1}^{\epsilon, i} \diamond \bar{b}_{1}^{\epsilon, j}, \bar{u}_{1}^{\epsilon, i} \diamond \bar{b}_{1}^{\epsilon, j}, \bar{b}_{1}^{\epsilon, i} \diamond \bar{u}_{1}^{\epsilon, j})(t) \rVert_{\mathcal{C}^{-1 - \frac{\delta}{2}}} \nonumber\\
&+  \sum_{i,j=1}^{3} \lVert ( \bar{u}_{1}^{\epsilon, i} \diamond \bar{u}_{2}^{\epsilon, j}, \bar{b}_{1}^{\epsilon, i} \diamond \bar{b}_{2}^{\epsilon, j}, \bar{u}_{2}^{\epsilon, i} \diamond \bar{b}_{1}^{\epsilon, j}, \bar{b}_{2}^{\epsilon, i} \diamond \bar{u}_{1}^{\epsilon, j})(t) \rVert_{\mathcal{C}^{-\frac{1}{2} - \frac{\delta}{2}}} \nonumber\\
&+  \sum_{i,j=1}^{3} \lVert ( \bar{u}_{2}^{\epsilon, i} \diamond \bar{u}_{2}^{\epsilon, j}, \bar{b}_{2}^{\epsilon, i} \diamond \bar{b}_{2}^{\epsilon, j}, \bar{b}_{2}^{\epsilon, i} \diamond \bar{u}_{2}^{\epsilon, j})(t) \rVert_{\mathcal{C}^{-\delta}} \nonumber\\
&+  \sum_{i,j=1}^{3} \lVert (\pi_{0,\diamond} (\bar{u}_{3}^{\epsilon, j}, \bar{u}_{1}^{\epsilon, i}), \pi_{0,\diamond} (\bar{b}_{3}^{\epsilon, j}, \bar{b}_{1}^{\epsilon, i}), \pi_{0,\diamond}( \bar{u}_{3}^{\epsilon, j}, \bar{b}_{1}^{\epsilon, i}), \pi_{0,\diamond} (\bar{b}_{3}^{\epsilon, j}, \bar{u}_{1}^{\epsilon, i})) \rVert_{\mathcal{C}^{-\delta}}\nonumber\\
&+  \sum_{i, i_{1}, j, j_{1} =1}^{3} \lVert (\pi_{0,\diamond} (\mathcal{P}^{ii_{1}} D_{j} \bar{K}_{u}^{\epsilon, j}, \bar{u}_{1}^{\epsilon, j_{1}}), \pi_{0,\diamond}(\mathcal{P}^{ii_{1}} D_{j} \bar{K}_{u}^{\epsilon, i_{1}}, \bar{u}_{1}^{\epsilon, j_{1}}), \nonumber\\
& \hspace{25mm} \pi_{0,\diamond} (\mathcal{P}^{ii_{1}} D_{j} \bar{K}_{b}^{\epsilon, j}, \bar{u}_{1}^{\epsilon, j_{1}}), \pi_{0,\diamond} (\mathcal{P}^{ii_{1}} D_{j} \bar{K}_{b}^{\epsilon, i_{1}}, \bar{u}_{1}^{\epsilon, j_{1}}), \nonumber\\
&\hspace{25mm} \pi_{0,\diamond} (\mathcal{P}^{ii_{1}} D_{j} \bar{K}_{u}^{\epsilon, j}, \bar{b}_{1}^{\epsilon, j_{1}}), \pi_{0,\diamond}(\mathcal{P}^{ii_{1}} D_{j} \bar{K}_{u}^{\epsilon, i_{1}}, \bar{b}_{1}^{\epsilon, j_{1}}), \nonumber\\
& \hspace{25mm} \pi_{0,\diamond} (\mathcal{P}^{ii_{1}} D_{j} \bar{K}_{b}^{\epsilon, j}, \bar{b}_{1}^{\epsilon, j_{1}}), \pi_{0,\diamond} (\mathcal{P}^{ii_{1}} D_{j} \bar{K}_{b}^{\epsilon, i_{1}}, \bar{b}_{1}^{\epsilon, j_{1}})) \rVert_{\mathcal{C}^{-\delta}}] \nonumber
\end{align} 
where 
\begin{subequations}\label{[Equation (3.2f)][ZZ17]}
\begin{align}
 \pi_{0,\diamond} ( \mathcal{P}^{ii_{1}} D_{j} \bar{K}_{u}^{\epsilon, j}, \bar{u}_{1}^{\epsilon, j_{1}}) \triangleq& \pi_{0} ( \mathcal{P}^{ii_{1}} D_{j} \bar{K}_{u}^{\epsilon, j}, \bar{u}_{1}^{\epsilon, j_{1}}), \\
\pi_{0,\diamond} ( \mathcal{P}^{ii_{1}} D_{j} \bar{K}_{u}^{\epsilon, i_{1}}, \bar{u}_{1}^{\epsilon, j_{1}}) \triangleq& \pi_{0} ( \mathcal{P}^{ii_{1}} D_{j} \bar{K}_{u}^{\epsilon, i_{1}}, \bar{u}_{1}^{\epsilon, j_{1}}), \\
 \pi_{0,\diamond} ( \mathcal{P}^{ii_{1}} D_{j} \bar{K}_{b}^{\epsilon, j}, \bar{u}_{1}^{\epsilon, j_{1}}) \triangleq& \pi_{0} ( \mathcal{P}^{ii_{1}} D_{j} \bar{K}_{b}^{\epsilon, j}, \bar{u}_{1}^{\epsilon, j_{1}}), \\
\pi_{0,\diamond} ( \mathcal{P}^{ii_{1}} D_{j} \bar{K}_{b}^{\epsilon, i_{1}}, \bar{u}_{1}^{\epsilon, j_{1}}) \triangleq& \pi_{0} ( \mathcal{P}^{ii_{1}} D_{j} \bar{K}_{b}^{\epsilon, i_{1}}, \bar{u}_{1}^{\epsilon, j_{1}}), \\
\pi_{0,\diamond} ( \mathcal{P}^{ii_{1}} D_{j} \bar{K}_{u}^{\epsilon, j}, \bar{b}_{1}^{\epsilon, j_{1}}) \triangleq& \pi_{0} ( \mathcal{P}^{ii_{1}} D_{j} \bar{K}_{u}^{\epsilon, j}, \bar{b}_{1}^{\epsilon, j_{1}}), \\
\pi_{0,\diamond} ( \mathcal{P}^{ii_{1}} D_{j} \bar{K}_{u}^{\epsilon, i_{1}}, \bar{b}_{1}^{\epsilon, j_{1}}) \triangleq& \pi_{0} ( \mathcal{P}^{ii_{1}} D_{j} \bar{K}_{u}^{\epsilon, i_{1}}, \bar{b}_{1}^{\epsilon, j_{1}}), \\
 \pi_{0,\diamond} ( \mathcal{P}^{ii_{1}} D_{j} \bar{K}_{b}^{\epsilon, j}, \bar{b}_{1}^{\epsilon, j_{1}}) \triangleq& \pi_{0} ( \mathcal{P}^{ii_{1}} D_{j} \bar{K}_{b}^{\epsilon, j}, \bar{b}_{1}^{\epsilon, j_{1}}), \\
\pi_{0,\diamond} ( \mathcal{P}^{ii_{1}} D_{j} \bar{K}_{b}^{\epsilon, i_{1}}, \bar{b}_{1}^{\epsilon, j_{1}}) \triangleq& \pi_{0} ( \mathcal{P}^{ii_{1}} D_{j} \bar{K}_{b}^{\epsilon, i_{1}}, \bar{b}_{1}^{\epsilon, j_{1}}). 
\end{align}
\end{subequations} 
Now similarly to \cite[Equations (38)-(39)]{Y19a}, we may estimate
\begin{align}\label{[Equation (3.2h)][ZZ17]}
\sup_{t \in [0,T]} \sum_{i=1}^{3}( \lVert (\bar{u}_{2}^{\epsilon, i}, \bar{b}_{2}^{\epsilon, i} ) (t) \rVert_{\mathcal{C}^{-\delta}} + \lVert (\bar{u}_{3}^{\epsilon, i}, \bar{b}_{3}^{\epsilon, i} )(t) \rVert_{\mathcal{C}^{\frac{1}{2} - \delta}}) \lesssim \bar{C}_{W}^{\epsilon}(T) T^{\frac{\delta}{4}} 
\end{align} 
by \eqref{[Equation (3.1ab)][ZZ17]}, \eqref{[Equation (3.1ac)][ZZ17]}, Lemmas \ref{Lemma 3.7} and \ref{Lemma 3.10} and \eqref{[Equation (3.2e)][ZZ17]}. Moreover, similarly to \cite[Equations (33)-(34)]{Y19a}, 
\begin{align}\label{[Equation (3.3)][ZZ17]}
\lVert ( \bar{K}_{u}^{\epsilon, i}, \bar{K}_{b}^{\epsilon, i})(t) \rVert_{\mathcal{C}^{\frac{3}{2} - \delta}} \lesssim \int_{0}^{t} \lVert P_{t-s} (\bar{u}_{1}^{\epsilon, i}, \bar{b}_{1}^{\epsilon, i})(s) \rVert_{\mathcal{C}^{\frac{3}{2} - \delta}} ds  \lesssim \bar{C}_{W}^{\epsilon}(t) t^{\frac{\delta}{4}} 
\end{align}
by \eqref{estimate 56}, Lemma \ref{Lemma 3.10} and \eqref{[Equation (3.2e)][ZZ17]}. Now the hypothesis of  Theorem \ref{Theorem 1.2} and  \eqref{[Equation (3.2g)][ZZ17]} allow us to take $\kappa > 0$ such that 
\begin{equation}\label{[Equation (3.3c)][ZZ17]}
\kappa \in (0, \frac{1}{3} ( \frac{1}{2} + \delta_{0} - z) \wedge \frac{\delta}{2} \wedge \frac{1-2\delta_{0} - 3 \delta}{7} \wedge \frac{1-z- 4 \delta}{5}). 
\end{equation} 
Then we can compute similarly to \cite[Equations (40)-(43)]{Y19a}, 
\begin{equation}
\sup_{t\in [0,T]} t^{\frac{ \frac{1}{2} - \delta_{0} + z + \kappa}{2}} \lVert \bar{y}_{4}^{\epsilon}(t) \rVert_{\mathcal{C}^{\frac{1}{2} - \delta_{0}}} \lesssim I_{T}^{1} + I_{T}^{2} 
\end{equation} 
where 
\begin{subequations}
\begin{align}
I_{T}^{1} &\triangleq  \sup_{t\in [0, T]} t^{ \frac{ \frac{1}{2} - \delta_{0} + z + \kappa}{2}} \sum_{i=1}^{3}  \lVert P_{t} (\sum_{i_{1} =1}^{3} \mathcal{P}^{ii_{1}} (y_{0}^{i_{1}} - \bar{y}_{1}^{\epsilon, i}(0)) \rVert_{\mathcal{C}^{\frac{1}{2} - \delta_{0}}}, \\
I_{T}^{2} &\triangleq  \sup_{t\in [0, T]} t^{ \frac{ \frac{1}{2} - \delta_{0} + z + \kappa}{2}} \sum_{i=1}^{3} \\
& \times [ \lVert ( \int_{0}^{t} P_{t-s} \sum_{i_{1}, j=1}^{3} \mathcal{P}^{ii_{1}} D_{j} [ \bar{u}_{1}^{\epsilon, i_{1}} \diamond (\bar{u}_{3}^{\epsilon, j} + \bar{u}_{4}^{\epsilon, j}) + (\bar{u}_{3}^{\epsilon, i_{1}} + \bar{u}_{4}^{\epsilon, i_{1}}) \diamond \bar{u}_{1}^{\epsilon, j} + \bar{u}_{2}^{\epsilon, i_{1}} \diamond \bar{u}_{2}^{\epsilon, j} \nonumber\\
&\hspace{2mm} + \bar{u}_{2}^{\epsilon, i_{1}}( \bar{u}_{3}^{\epsilon, j} + \bar{u}_{4}^{\epsilon, j}) + \bar{u}_{2}^{\epsilon, j} ( \bar{u}_{3}^{\epsilon, i_{1}} + \bar{u}_{4}^{\epsilon, i_{1}}) + (\bar{u}_{3}^{\epsilon, i_{1}} + \bar{u}_{4}^{\epsilon, i_{1}})(\bar{u}_{3}^{\epsilon, j} + \bar{u}_{4}^{\epsilon, j})\nonumber \\
&\hspace{2mm} - \bar{b}_{1}^{\epsilon, i_{1}} \diamond (\bar{b}_{3}^{\epsilon, j} + \bar{b}_{4}^{\epsilon, j}) - (\bar{b}_{3}^{\epsilon, i_{1}} + \bar{b}_{4}^{\epsilon, i_{1}}) \diamond \bar{b}_{1}^{\epsilon, j} - \bar{b}_{2}^{\epsilon, i_{1}} \diamond \bar{b}_{2}^{\epsilon, j} \nonumber\\
&\hspace{2mm} - \bar{b}_{2}^{\epsilon, i_{1}}( \bar{b}_{3}^{\epsilon, j} + \bar{b}_{4}^{\epsilon, j}) - \bar{b}_{2}^{\epsilon, j} ( \bar{b}_{3}^{\epsilon, i_{1}} + \bar{b}_{4}^{\epsilon, i_{1}}) - (\bar{b}_{3}^{\epsilon, i_{1}} + \bar{b}_{4}^{\epsilon, i_{1}})(\bar{b}_{3}^{\epsilon, j} + \bar{b}_{4}^{\epsilon, j})]ds, \nonumber\\
&\int_{0}^{t} P_{t-s} \sum_{i_{1}, j=1}^{3} \mathcal{P}^{ii_{1}} D_{j} [ \bar{b}_{1}^{\epsilon, i_{1}} \diamond (\bar{u}_{3}^{\epsilon, j} + \bar{u}_{4}^{\epsilon, j}) + (\bar{b}_{3}^{\epsilon, i_{1}} + \bar{b}_{4}^{\epsilon, i_{1}}) \diamond \bar{u}_{1}^{\epsilon, j} + \bar{b}_{2}^{\epsilon, i_{1}} \diamond \bar{u}_{2}^{\epsilon, j} \nonumber\\
&\hspace{2mm} + \bar{b}_{2}^{\epsilon, i_{1}}( \bar{u}_{3}^{\epsilon, j} + \bar{u}_{4}^{\epsilon, j}) + \bar{u}_{2}^{\epsilon, j} ( \bar{b}_{3}^{\epsilon, i_{1}} + \bar{b}_{4}^{\epsilon, i_{1}}) + (\bar{b}_{3}^{\epsilon, i_{1}} + \bar{b}_{4}^{\epsilon, i_{1}})(\bar{u}_{3}^{\epsilon, j} + \bar{u}_{4}^{\epsilon, j})\nonumber \\
&\hspace{2mm} - \bar{u}_{1}^{\epsilon, i_{1}} \diamond (\bar{b}_{3}^{\epsilon, j} + \bar{b}_{4}^{\epsilon, j}) - (\bar{u}_{3}^{\epsilon, i_{1}} + \bar{u}_{4}^{\epsilon, i_{1}}) \diamond \bar{b}_{1}^{\epsilon, j} - \bar{u}_{2}^{\epsilon, i_{1}} \diamond \bar{b}_{2}^{\epsilon, j} \nonumber\\
&\hspace{2mm} - \bar{u}_{2}^{\epsilon, i_{1}}( \bar{b}_{3}^{\epsilon, j} + \bar{b}_{4}^{\epsilon, j}) - \bar{b}_{2}^{\epsilon, j} ( \bar{u}_{3}^{\epsilon, i_{1}} + \bar{u}_{4}^{\epsilon, i_{1}}) - (\bar{u}_{3}^{\epsilon, i_{1}} + \bar{u}_{4}^{\epsilon, i_{1}})(\bar{b}_{3}^{\epsilon, j} + \bar{b}_{4}^{\epsilon, j})]ds) \rVert_{\mathcal{C}^{\frac{1}{2} - \delta_{0}}}].  \nonumber
\end{align}
\end{subequations} f
We estimate 
\begin{align}\label{estimate 59}
I_{T}^{1} \lesssim&\sup_{t\in [0, T]} t^{\frac{ \frac{1}{2} - \delta_{0} + z + \kappa}{2}} t^{- \frac{ \frac{1}{2} - \delta_{0} + z}{2}} \sum_{i, i_{1} =1}^{3} ( \lVert y_{0}^{i_{1}} \rVert_{\mathcal{C}^{-z}} + \lVert \bar{y}_{1}^{\epsilon, i}(0) \rVert_{\mathcal{C}^{-z}}) \lesssim_{T, \lVert y_{0} \rVert_{\mathcal{C}^{-z}}} 1
\end{align} 
by Lemmas \ref{Lemma 3.10} and \ref{Lemma 3.7}. Within $I_{T}^{2}$, we can estimate e.g., 
\begin{align}\label{estimate 57}
\sup_{t \in [0, T]} t^{\frac{ \frac{1}{2} - \delta_{0} + z + \kappa}{2}} \lVert \int_{0}^{t} P_{t-s} \sum_{i_{1}, j=1}^{3} \mathcal{P}^{ii_{1}} D_{j} ( \bar{b}_{2}^{\epsilon, i_{1}} \diamond \bar{b}_{2}^{\epsilon, j})(s) ds \rVert_{\mathcal{C}^{\frac{1}{2} - \delta_{0}}} \lesssim_{\bar{C}_{W}^{\epsilon}(T), T} 1 
\end{align} 
by Lemmas \ref{Lemma 3.10} and \ref{Lemma 3.7}, \eqref{[Equation (3.2e)][ZZ17]} and \eqref{[Equation (3.2g)][ZZ17]}. Within $I_{T}^{2}$, we can also estimate e.g., 
\begin{align}\label{estimate 58}
& \sup_{t \in [0, T]} t^{\frac{ \frac{1}{2} - \delta_{0} + z + \kappa}{2}}\lVert \int_{0}^{t} P_{t-s} \sum_{i_{1}, j=1}^{3} \mathcal{P}^{ii_{1}} D_{j} ( \bar{b}_{4}^{\epsilon, i_{1}} \bar{b}_{4}^{\epsilon, j}) ds \rVert_{\mathcal{C}^{\frac{1}{2} - \delta_{0}}}  \nonumber \\ 
\lesssim& \sup_{t \in [0, T]} t^{\frac{ \frac{1}{2} - \delta_{0} + z + \kappa}{2}} \int_{0}^{t} \sum_{i_{1}, j=1}^{3} (t-s)^{-\frac{1}{2}} \lVert \bar{b}_{4}^{\epsilon, i_{1}} \rVert_{\mathcal{C}^{\frac{1}{2} - \delta_{0}}} \lVert \bar{b}_{4}^{\epsilon, j} \rVert_{\mathcal{C}^{\frac{1}{2} - \delta_{0}}} ds \nonumber \\
\lesssim_{T}& ( \sup_{t \in [0, T]} t^{\frac{ \frac{1}{2} - \delta_{0} + z + \kappa}{2}}\lVert \bar{b}_{4}^{\epsilon} (t) \rVert_{\mathcal{C}^{\frac{1}{2} - \delta_{0}}})^{2} 
\end{align}
by Lemmas \ref{Lemma 3.10}, \ref{Lemma 3.7} and \ref{[Lemma 1.1][Y19a]} (4) and \eqref{[Equation (3.3c)][ZZ17]}. Computations which are similar to \eqref{estimate 57} and \eqref{estimate 58} on other terms within $I_{T}^{2}$, along with \eqref{estimate 59}, prove that for all $\epsilon \in (0,1)$ fixed, there exists a maximal time $T_{\epsilon} > 0$ and maximal $(\bar{u}_{4}^{\epsilon}, \bar{b}_{4}^{\epsilon}) \in C([0, T_{\epsilon}); \mathcal{C}^{\frac{1}{2} - \delta_{0}})$ such that 
\begin{equation}\label{[Equation (3.3a)][ZZ17]}
\sup_{t\in [0, T_{\epsilon}]} t^{\frac{ \frac{1}{2} - \delta_{0} + z + \kappa}{2}} \lVert \bar{y}_{4}^{\epsilon} (t) \rVert_{\mathcal{C}^{\frac{1}{2} - \delta_{0}}} = + \infty. 
\end{equation} 
We also define a paracontrolled ansatz of 
\begin{subequations}
\begin{align}
\bar{u}_{4}^{\epsilon, i} = -\frac{1}{2} \sum_{i_{1}, j=1}^{3} &\mathcal{P}^{ii_{1}} D_{j} [ \pi_{<} ( \bar{u}_{3}^{\epsilon, i_{1}} + \bar{u}_{4}^{\epsilon, i_{1}}, \bar{K}_{u}^{\epsilon, j}) + \pi_{<} ( \bar{u}_{3}^{\epsilon, j} + \bar{u}_{4}^{\epsilon, j}, \bar{K}_{u}^{\epsilon, i_{1}}) \nonumber\\
& -\pi_{<} (\bar{b}_{3}^{\epsilon, i_{1}} + \bar{b}_{4}^{\epsilon, i_{1}}, \bar{K}_{b}^{\epsilon, j}) - \pi_{<}(\bar{b}_{3}^{\epsilon, j} + \bar{b}_{4}^{\epsilon, j}, \bar{K}_{b}^{\epsilon, i_{1}})] + \bar{u}^{\epsilon, \sharp, i},\label{paracontrolled ansatz u} \\
\bar{b}_{4}^{\epsilon, i} = -\frac{1}{2} \sum_{i_{1}, j=1}^{3} &\mathcal{P}^{ii_{1}} D_{j} [ -\pi_{<} ( \bar{u}_{3}^{\epsilon, i_{1}} + \bar{u}_{4}^{\epsilon, i_{1}}, \bar{K}_{b}^{\epsilon, j}) + \pi_{<} ( \bar{u}_{3}^{\epsilon, j} + \bar{u}_{4}^{\epsilon, j}, \bar{K}_{b}^{\epsilon, i_{1}}) \nonumber\\
&+ \pi_{<} (\bar{b}_{3}^{\epsilon, i_{1}} + \bar{b}_{4}^{\epsilon, i_{1}}, \bar{K}_{u}^{\epsilon, j}) - \pi_{<}(\bar{b}_{3}^{\epsilon, j} + \bar{b}_{4}^{\epsilon, j}, \bar{K}_{u}^{\epsilon, i_{1}})] + \bar{b}^{\epsilon, \sharp, i} \label{paracontrolled ansatz b}
\end{align}
\end{subequations} 
for some $\bar{u}^{\epsilon, \sharp, i}$ and $\bar{b}^{\epsilon, \sharp, i}$ with their regularity to be derived subsequently. Let us set $\bar{y}^{\epsilon, \sharp} \triangleq (\bar{u}^{\epsilon, \sharp}, \bar{b}^{\epsilon, \sharp})$ and 
\begin{equation}\label{beta}
\beta \in ( \frac{\delta}{2}, (z+ 2 \delta - \frac{1}{2}) \wedge (\frac{1}{2} - 2 \delta - 3 \kappa) \wedge (\frac{1}{2} - \delta_{0} - \delta - \kappa) \wedge (2- 2 z - \frac{5\delta}{2} - 4 \kappa)) 
\end{equation} 
and estimate similarly to \cite[Equations (45)-(46)]{Y19a} e.g., 
\begin{align}\label{estimate 60}
& t^{\frac{ \frac{1}{2} + \beta + z + \kappa}{2}} \lVert \int_{0}^{t} P_{t-s} \sum_{i_{1}, j=1}^{3} \mathcal{P}^{ii_{1}} D_{j} (\bar{u}_{4}^{\epsilon, i_{1}} \bar{b}_{4}^{\epsilon, j}) ds \rVert_{\mathcal{C}^{\frac{1}{2} + \beta}} \nonumber\\
\lesssim& t^{\frac{ \frac{1}{2} + \delta_{0} - z - \kappa}{2}} ( \sup_{s \in [0,t]} s^{\frac{ \frac{1}{2} - \delta_{0} + z + \kappa}{2}} \lVert \bar{y}_{4}^{\epsilon}(s) \rVert_{\mathcal{C}^{\frac{1}{2} - \delta_{0}}})^{2}
\end{align} 
by Lemmas \ref{Lemma 3.10}, \ref{Lemma 3.7} and \ref{[Lemma 1.1][Y19a]} (4), \eqref{beta} and \eqref{[Equation (3.3c)][ZZ17]}. Considering \eqref{[Equation (3.3a)][ZZ17]} in \eqref{estimate 60}, we see that $(\bar{u}^{\epsilon, \sharp, i}, \bar{b}^{\epsilon, \sharp, i})(t) \in \mathcal{C}^{\frac{1}{2} + \beta}$ for all $t \in (0, T_{\epsilon})$ and all $i \in \{1,2,3\}$. Similarly to \cite[Equations (47)-(49)]{Y19a}, we can compute 
\begin{align}\label{[Equation (3.4)][ZZ17]}
& \lVert (\bar{u}_{4}^{\epsilon, i}, \bar{b}_{4}^{\epsilon, i})(t) \rVert_{\mathcal{C}^{\frac{1}{2} - \delta - \kappa}} \nonumber\\
\lesssim& \sum_{i_{1}, j=1}^{3} \lVert (\bar{u}_{3}^{\epsilon, i_{1}} + \bar{u}_{4}^{\epsilon, i_{1}})(t) \rVert_{\mathcal{C}^{\frac{1}{2} - \delta_{0}}} \lVert \int_{0}^{t} P_{t-s} \bar{u}_{1}^{\epsilon, j} ds \rVert_{\mathcal{C}^{\frac{3}{2} - \delta - \kappa}}  \nonumber\\
&+ \lVert (\bar{b}_{3}^{\epsilon, i_{1}} + \bar{b}_{4}^{\epsilon, i_{1}})(t) \rVert_{\mathcal{C}^{\frac{1}{2} - \delta_{0}}} \lVert \int_{0}^{t} P_{t-s} \bar{b}_{1}^{\epsilon, j} ds \rVert_{\mathcal{C}^{\frac{3}{2} - \delta - \kappa}} + \lVert \bar{u}^{\epsilon, \sharp, i}(t) \rVert_{\mathcal{C}^{\frac{1}{2} + \beta}} \nonumber\\
&+   \lVert (\bar{u}_{3}^{\epsilon, i_{1}} + \bar{u}_{4}^{\epsilon, i_{1}})(t) \rVert_{\mathcal{C}^{\frac{1}{2} - \delta_{0}}} \lVert \int_{0}^{t} P_{t-s} \bar{b}_{1}^{\epsilon, j} ds \rVert_{\mathcal{C}^{\frac{3}{2} - \delta - \kappa}}  \nonumber\\
&+ \lVert (\bar{b}_{3}^{\epsilon, i_{1}} + \bar{b}_{4}^{\epsilon, i_{1}})(t) \rVert_{\mathcal{C}^{\frac{1}{2} - \delta_{0}}} \lVert \int_{0}^{t} P_{t-s} \bar{u}_{1}^{\epsilon, j} ds \rVert_{\mathcal{C}^{\frac{3}{2} - \delta - \kappa}} + \lVert \bar{b}^{\epsilon, \sharp, i}(t) \rVert_{\mathcal{C}^{\frac{1}{2} + \beta}} \nonumber\\
\lesssim& t^{\frac{\kappa}{2}}\sum_{i_{1}, j=1}^{3} \lVert (\bar{u}_{3}^{\epsilon, i_{1}} + \bar{u}_{4}^{\epsilon, i_{1}}, \bar{b}_{3}^{\epsilon, i_{1}} + \bar{b}_{4}^{\epsilon, i_{1}})(t) \rVert_{\mathcal{C}^{\frac{1}{2} - \delta_{0}}} \sup_{s\in [0, t]} \lVert (\bar{u}_{1}^{\epsilon, j}, \bar{b}_{1}^{\epsilon,j})(s) \rVert_{\mathcal{C}^{-\frac{1}{2} - \delta}} \nonumber\\
& \hspace{70mm} + \lVert (\bar{u}^{\epsilon, \sharp, i}, \bar{b}^{\epsilon, \sharp, i})(t)\rVert_{\mathcal{C}^{\frac{1}{2} + \beta}} 
\end{align} 
by \eqref{paracontrolled ansatz u}-\eqref{paracontrolled ansatz b}, Lemmas \ref{Lemma 3.10}, \ref{Lemma 3.7} and  \ref{[Lemma 1.1][Y19a]} (1), \eqref{regularity 0} and \eqref{estimate 56}.  Now it can be verified using \eqref{Bony's decomposition}, \eqref{[Equation (3.1aa)][ZZ17]} -  \eqref{[Equation (3.2c)][ZZ17]}, that $\bar{u}^{\epsilon, \sharp, i}$ and $\bar{b}^{\epsilon,\sharp, i}$ defined by \eqref{paracontrolled ansatz u}-\eqref{paracontrolled ansatz b} solve the following equations if and only if $(\bar{u}^{\epsilon}, \bar{b}^{\epsilon}) = (\sum_{i=1}^{4} \bar{u}_{i}^{\epsilon}, \sum_{i=1}^{4} \bar{b}_{i}^{\epsilon})$ solves \eqref{[Equation (3.1)][ZZ17]}: 
\begin{align}\label{[Equation (3.5) for u][ZZ17]}
\bar{u}^{\epsilon, \sharp, i} =P_{t}(\sum_{i_{1} =1}^{3} \mathcal{P}^{ii_{1}} u_{0}^{i_{1}} - \bar{u}_{1}^{\epsilon, i}(0))  + \int_{0}^{t} P_{t-s} \bar{\phi}_{u}^{\epsilon, \sharp, i} ds + \bar{F}_{u}^{\epsilon, i}(t) 
\end{align} 
where 
\begin{align}\label{overlinephiuepsilonsharpi}
\bar{\phi}_{u}^{\epsilon, \sharp, i} \triangleq - \frac{1}{2} \sum_{i_{1}, j=1}^{3}\mathcal{P}^{ii_{1}} & D_{j} [\bar{u}_{2}^{\epsilon, i_{1}} \diamond \bar{u}_{2}^{\epsilon, j} + \bar{u}_{2}^{\epsilon, i_{1}} (\bar{u}_{3}^{\epsilon, j} + \bar{u}_{4}^{\epsilon, j}) \\
&+ \bar{u}_{2}^{\epsilon, j} (\bar{u}_{3}^{\epsilon, i_{1}} + \bar{u}_{4}^{\epsilon, i_{1}})  + (\bar{u}_{3}^{\epsilon, i_{1}} + \bar{u}_{4}^{\epsilon, i_{1}})(\bar{u}_{3}^{\epsilon, j} + \bar{u}_{4}^{\epsilon, j})  \nonumber\\
&+ \pi_{>} (\bar{u}_{3}^{\epsilon, i_{1}} + \bar{u}_{4}^{\epsilon, i_{1}}, \bar{u}_{1}^{\epsilon, j}) + \pi_{0,\diamond} (\bar{u}_{3}^{\epsilon, i_{1}}, \bar{u}_{1}^{\epsilon, j}) + \pi_{0,\diamond} (\bar{u}_{4}^{\epsilon, i_{1}}, \bar{u}_{1}^{\epsilon, j})\nonumber\\
&+ \pi_{>} (\bar{u}_{3}^{\epsilon, j} + \bar{u}_{4}^{\epsilon, j}, \bar{u}_{1}^{\epsilon, i_{1}}) + \pi_{0,\diamond} (\bar{u}_{3}^{\epsilon, j}, \bar{u}_{1}^{\epsilon, i_{1}}) + \pi_{0,\diamond} (\bar{u}_{4}^{\epsilon, j}, \bar{u}_{1}^{\epsilon, i_{1}}) \nonumber \\
& - \bar{b}_{2}^{\epsilon, i_{1}} \diamond \bar{b}_{2}^{\epsilon, j} - \bar{b}_{2}^{\epsilon, i_{1}} (\bar{b}_{3}^{\epsilon, j} + \bar{u}_{4}^{\epsilon, j}) \nonumber\\
&- \bar{b}_{2}^{\epsilon, j} (\bar{b}_{3}^{\epsilon, i_{1}} + \bar{b}_{4}^{\epsilon, i_{1}})  - (\bar{b}_{3}^{\epsilon, i_{1}} + \bar{b}_{4}^{\epsilon, i_{1}})(\bar{b}_{3}^{\epsilon, j} + \bar{b}_{4}^{\epsilon, j})  \nonumber\\
&- \pi_{>} (\bar{b}_{3}^{\epsilon, i_{1}} + \bar{b}_{4}^{\epsilon, i_{1}}, \bar{b}_{1}^{\epsilon, j}) - \pi_{0,\diamond} (\bar{b}_{3}^{\epsilon, i_{1}}, \bar{b}_{1}^{\epsilon, j}) - \pi_{0,\diamond} (\bar{b}_{4}^{\epsilon, i_{1}}, \bar{b}_{1}^{\epsilon, j})\nonumber\\
&- \pi_{>} (\bar{b}_{3}^{\epsilon, j} + \bar{b}_{4}^{\epsilon, j}, \bar{b}_{1}^{\epsilon, i_{1}}) - \pi_{0,\diamond} (\bar{b}_{3}^{\epsilon, j}, \bar{b}_{1}^{\epsilon, i_{1}}) - \pi_{0,\diamond} (\bar{b}_{4}^{\epsilon, j}, \bar{b}_{1}^{\epsilon, i_{1}})] \nonumber 
\end{align} 
and 
\begin{align}\label{overlineFuepsiloni}
\bar{F}_{u}^{\epsilon, i}(t) \triangleq& - \frac{1}{2} \int_{0}^{t} P_{t-s} \sum_{i_{1}, j=1}^{3} \mathcal{P}^{ii_{1}} D_{j} [ \pi_{<} (\bar{u}_{3}^{\epsilon, i_{1}} + \bar{u}_{4}^{\epsilon, i_{1}}, \bar{u}_{1}^{\epsilon, j}) + \pi_{<} (\bar{u}_{3}^{\epsilon, j} + \bar{u}_{4}^{\epsilon, j}, \bar{u}_{1}^{\epsilon, i_{1}}) \nonumber\\
& \hspace{30mm} - \pi_{<} (\bar{b}_{3}^{\epsilon, i_{1}} + \bar{b}_{4}^{\epsilon, i_{1}}, \bar{b}_{1}^{\epsilon, j}) - \pi_{<}(\bar{b}_{3}^{\epsilon, j} + \bar{b}_{4}^{\epsilon, j}, \bar{b}_{1}^{\epsilon, i_{1}})] ds \nonumber\\
&+ \frac{1}{2} \sum_{i_{1}, j=1}^{3} \mathcal{P}^{ii_{1}} D_{j} [ \pi_{<} (\bar{u}_{3}^{\epsilon, i_{1}} + \bar{u}_{4}^{\epsilon, i_{1}}, \bar{K}_{u}^{\epsilon, j}) + \pi_{<}(\bar{u}_{3}^{\epsilon, j} + \bar{u}_{4}^{\epsilon, j}, \bar{K}_{u}^{\epsilon, i_{1}}) \nonumber\\
& \hspace{20mm} - \pi_{<} (\bar{b}_{3}^{\epsilon, i_{1}} + \bar{b}_{4}^{\epsilon, i_{1}}, \bar{K}_{b}^{\epsilon, j} - \pi_{<} (\bar{b}_{3}^{\epsilon, j} + \bar{b}_{4}^{\epsilon, j}, \bar{K}_{b}^{\epsilon, i_{1}})], 
\end{align} 
and similarly 
\begin{align}\label{[Equation (3.5) for b][ZZ17]}
\bar{b}^{\epsilon, \sharp, i} = P_{t}(\sum_{i_{1} =1}^{3} \mathcal{P}^{ii_{1}} b_{0}^{i_{1}} - \bar{b}_{1}^{\epsilon, i}(0))  + \int_{0}^{t} P_{t-s} \bar{\phi}_{b}^{\epsilon, \sharp, i} ds + \bar{F}_{b}^{\epsilon, i}(t) 
\end{align} 
where 
\begin{align}\label{overlinephibepsilonsharpi}
\bar{\phi}_{b}^{\epsilon, \sharp, i} \triangleq - \frac{1}{2} \sum_{i_{1}, j=1}^{3}\mathcal{P}^{ii_{1}} D_{j} &[\bar{b}_{2}^{\epsilon, i_{1}} \diamond \bar{u}_{2}^{\epsilon, j} + \bar{b}_{2}^{\epsilon, i_{1}} (\bar{u}_{3}^{\epsilon, j} + \bar{u}_{4}^{\epsilon, j}) \\
&+ \bar{u}_{2}^{\epsilon, j} (\bar{b}_{3}^{\epsilon, i_{1}} + \bar{b}_{4}^{\epsilon, i_{1}})  + (\bar{b}_{3}^{\epsilon, i_{1}} + \bar{b}_{4}^{\epsilon, i_{1}})(\bar{u}_{3}^{\epsilon, j} + \bar{u}_{4}^{\epsilon, j})  \nonumber\\
&+ \pi_{>} (\bar{b}_{3}^{\epsilon, i_{1}} + \bar{b}_{4}^{\epsilon, i_{1}}, \bar{u}_{1}^{\epsilon, j}) + \pi_{0,\diamond} (\bar{b}_{3}^{\epsilon, i_{1}}, \bar{u}_{1}^{\epsilon, j}) + \pi_{0,\diamond} (\bar{b}_{4}^{\epsilon, i_{1}}, \bar{u}_{1}^{\epsilon, j})\nonumber\\
&+ \pi_{>} (\bar{u}_{3}^{\epsilon, j} + \bar{u}_{4}^{\epsilon, j}, \bar{b}_{1}^{\epsilon, i_{1}}) + \pi_{0,\diamond} (\bar{u}_{3}^{\epsilon, j}, \bar{b}_{1}^{\epsilon, i_{1}}) + \pi_{0,\diamond} (\bar{u}_{4}^{\epsilon, j}, \bar{b}_{1}^{\epsilon, i_{1}}) \nonumber \\
& - \bar{u}_{2}^{\epsilon, i_{1}} \diamond \bar{b}_{2}^{\epsilon, j} - \bar{u}_{2}^{\epsilon, i_{1}} (\bar{b}_{3}^{\epsilon, j} + \bar{u}_{4}^{\epsilon, j}) \nonumber\\
&- \bar{b}_{2}^{\epsilon, j} (\bar{u}_{3}^{\epsilon, i_{1}} + \bar{u}_{4}^{\epsilon, i_{1}})  - (\bar{u}_{3}^{\epsilon, i_{1}} + \bar{u}_{4}^{\epsilon, i_{1}})(\bar{b}_{3}^{\epsilon, j} + \bar{b}_{4}^{\epsilon, j})  \nonumber\\
&- \pi_{>} (\bar{u}_{3}^{\epsilon, i_{1}} + \bar{u}_{4}^{\epsilon, i_{1}}, \bar{b}_{1}^{\epsilon, j}) - \pi_{0,\diamond} (\bar{u}_{3}^{\epsilon, i_{1}}, \bar{b}_{1}^{\epsilon, j}) - \pi_{0,\diamond} (\bar{u}_{4}^{\epsilon, i_{1}}, \bar{b}_{1}^{\epsilon, j})\nonumber\\
&- \pi_{>} (\bar{b}_{3}^{\epsilon, j} + \bar{b}_{4}^{\epsilon, j}, \bar{u}_{1}^{\epsilon, i_{1}}) - \pi_{0,\diamond} (\bar{b}_{3}^{\epsilon, j}, \bar{u}_{1}^{\epsilon, i_{1}}) - \pi_{0,\diamond} (\bar{b}_{4}^{\epsilon, j}, \bar{u}_{1}^{\epsilon, i_{1}})] \nonumber 
\end{align} 
and 
\begin{align}\label{overlineFbepsiloni}
\bar{F}_{b}^{\epsilon, i}(t) \triangleq& - \frac{1}{2} \int_{0}^{t} P_{t-s} \sum_{i_{1}, j=1}^{3} \mathcal{P}^{ii_{1}} D_{j} [ \pi_{<} (\bar{b}_{3}^{\epsilon, i_{1}} + \bar{b}_{4}^{\epsilon, i_{1}}, \bar{u}_{1}^{\epsilon, j}) + \pi_{<} (\bar{u}_{3}^{\epsilon, j} + \bar{u}_{4}^{\epsilon, j}, \bar{b}_{1}^{\epsilon, i_{1}})\nonumber \\
& \hspace{30mm} - \pi_{<} (\bar{u}_{3}^{\epsilon, i_{1}} + \bar{u}_{4}^{\epsilon, i_{1}}, \bar{b}_{1}^{\epsilon, j}) - \pi_{<}(\bar{b}_{3}^{\epsilon, j} + \bar{b}_{4}^{\epsilon, j}, \bar{u}_{1}^{\epsilon, i_{1}})] ds \nonumber\\
&+ \frac{1}{2} \sum_{i_{1}, j=1}^{3} \mathcal{P}^{ii_{1}} D_{j} [ -\pi_{<} (\bar{u}_{3}^{\epsilon, i_{1}} + \bar{u}_{4}^{\epsilon, i_{1}}, \bar{K}_{b}^{\epsilon, j}) + \pi_{<}(\bar{u}_{3}^{\epsilon, j} + \bar{u}_{4}^{\epsilon, j}, \bar{K}_{b}^{\epsilon, i_{1}}) \nonumber\\
& \hspace{20mm} + \pi_{<} (\bar{b}_{3}^{\epsilon, i_{1}} + \bar{b}_{4}^{\epsilon, i_{1}}, \bar{K}_{u}^{\epsilon, j} - \pi_{<} (\bar{b}_{3}^{\epsilon, j} + \bar{b}_{4}^{\epsilon, j}, \bar{K}_{u}^{\epsilon, i_{1}})]. 
\end{align} 
Now we obtain some estimates on $\bar{\phi}_{u}^{\epsilon, \sharp, i}, \bar{\phi}_{b}^{\epsilon, \sharp, i}, \bar{F}_{u}^{\epsilon, i}$ and $\bar{F}_{b}^{\epsilon, i}$. 

\begin{proposition}\label{[Lemma 3.9][ZZ17]}
Let $\delta_{0}$ and $z$ satisfy the hypothesis of Theorem \ref{Theorem 1.2}. For all $i \in \{1,2,3\}$, $\delta$, $\kappa$ and $\beta$ defined respectively in  \eqref{[Equation (3.2g)][ZZ17]}, \eqref{[Equation (3.3c)][ZZ17]} and \eqref{beta}, $\bar{\phi}_{u}^{\epsilon, \sharp, i}$ and $\bar{\phi}_{b}^{\epsilon, \sharp, i}$ defined respectively in \eqref{overlinephiuepsilonsharpi} and \eqref{overlinephibepsilonsharpi} satisfy 
\begin{align}\label{[Equation (3.6)][ZZ17]}
&\lVert (\bar{\phi}_{u}^{\epsilon, \sharp, i}, \bar{\phi}_{b}^{\epsilon, \sharp, i})(t) \rVert_{\mathcal{C}^{-1 - 2 \delta - 2 \kappa}} \nonumber\\
\lesssim& (1+ ( \bar{C}_{W}^{\epsilon}(t))^{3}) (1+ \lVert \bar{y}^{\epsilon,\sharp}(t) \rVert_{\mathcal{C}^{\frac{1}{2} + \beta}} + \lVert \bar{y}_{4}^{\epsilon}(t) \rVert_{\mathcal{C}^{\frac{1}{2} - \delta_{0}}} ) + \lVert \bar{y}_{4}^{\epsilon}(t) \rVert_{\mathcal{C}^{\delta}}^{2}. 
\end{align} 
\end{proposition}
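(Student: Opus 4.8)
The plan is to bound $\bar{\phi}_{u}^{\epsilon,\sharp,i}$ and $\bar{\phi}_{b}^{\epsilon,\sharp,i}$ from \eqref{overlinephiuepsilonsharpi}--\eqref{overlinephibepsilonsharpi} term by term. Since each is of the form $-\frac{1}{2}\sum_{i_{1},j=1}^{3}\mathcal{P}^{ii_{1}}D_{j}[\cdots]$ and $\mathcal{P}^{ii_{1}}D_{j}$ lowers regularity by exactly one, it suffices to estimate every summand inside the bracket in $\mathcal{C}^{-2\delta-2\kappa}$. We use throughout Lemma \ref{[Lemma 1.1][Y19a]}, the embedding \eqref{regularity 0}, the stochastic objects bundled in $\bar{C}_{W}^{\epsilon}(t)$ of \eqref{[Equation (3.2e)][ZZ17]}, and the bounds \eqref{[Equation (3.2h)][ZZ17]}, \eqref{[Equation (3.3)][ZZ17]} and \eqref{[Equation (3.4)][ZZ17]}; every exponent inequality invoked below follows from \eqref{[Equation (3.2g)][ZZ17]}, \eqref{[Equation (3.3c)][ZZ17]} and \eqref{beta}, in particular $\delta<\delta_{0}<\frac{1-3\delta}{2}$, hence $\delta<\frac{1}{2}-\delta_{0}$. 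As $\bar{\phi}_{b}^{\epsilon,\sharp,i}$ has the same structure as $\bar{\phi}_{u}^{\epsilon,\sharp,i}$ up to relabelling and signs, we describe only $\bar{\phi}_{u}^{\epsilon,\sharp,i}$.

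Sort the summands into four groups. \emph{(i) Already renormalized:} $\bar{u}_{2}^{\epsilon,i_{1}}\diamond\bar{u}_{2}^{\epsilon,j}$, $\bar{b}_{2}^{\epsilon,i_{1}}\diamond\bar{b}_{2}^{\epsilon,j}$, $\pi_{0,\diamond}(\bar{u}_{3}^{\epsilon,i_{1}},\bar{u}_{1}^{\epsilon,j})$, $\pi_{0,\diamond}(\bar{u}_{3}^{\epsilon,j},\bar{u}_{1}^{\epsilon,i_{1}})$ and $b$-counterparts sit in $\mathcal{C}^{-\delta}\hookrightarrow\mathcal{C}^{-2\delta-2\kappa}$ with norm $\lesssim\bar{C}_{W}^{\epsilon}(t)$ by \eqref{[Equation (3.2e)][ZZ17]}. \emph{(ii) Bilinear, free of $\bar{y}_{4}^{\epsilon}$:} $\bar{u}_{2}^{\epsilon,i_{1}}\bar{u}_{3}^{\epsilon,j}$, $\bar{u}_{3}^{\epsilon,i_{1}}\bar{u}_{3}^{\epsilon,j}$ via Lemma \ref{[Lemma 1.1][Y19a]}(4) with $\bar{u}_{2}^{\epsilon}\in\mathcal{C}^{-\delta}$, $\bar{u}_{3}^{\epsilon}\in\mathcal{C}^{\frac{1}{2}-\delta}$; $\pi_{>}(\bar{u}_{3}^{\epsilon,i_{1}},\bar{u}_{1}^{\epsilon,j})$, $\pi_{>}(\bar{u}_{3}^{\epsilon,j},\bar{u}_{1}^{\epsilon,i_{1}})$ via Lemma \ref{[Lemma 1.1][Y19a]}(2) with $\bar{u}_{1}^{\epsilon}\in\mathcal{C}^{-\frac{1}{2}-\frac{\delta}{2}}$; and $b$-counterparts. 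Each lies in $\mathcal{C}^{-\frac{3\delta}{2}}$ or better, hence in $\mathcal{C}^{-2\delta-2\kappa}$, with norm $\lesssim(\bar{C}_{W}^{\epsilon}(t))^{2}$ by \eqref{[Equation (3.2h)][ZZ17]}. \emph{(iii) Linear in $\bar{y}_{4}^{\epsilon}$:} $\bar{u}_{2}^{\epsilon,i_{1}}\bar{u}_{4}^{\epsilon,j}$, $\bar{u}_{3}^{\epsilon,i_{1}}\bar{u}_{4}^{\epsilon,j}$ and the $\bar{u}_{3}\bar{u}_{4}$ cross part of $(\bar{u}_{3}^{\epsilon,i_{1}}+\bar{u}_{4}^{\epsilon,i_{1}})(\bar{u}_{3}^{\epsilon,j}+\bar{u}_{4}^{\epsilon,j})$ are estimated keeping one factor in $\mathcal{C}^{-\delta}$ resp. $\mathcal{C}^{\frac{1}{2}-\delta}$ (bounded by $\bar{C}_{W}^{\epsilon}(t)$) and $\bar{u}_{4}^{\epsilon}$ in $\mathcal{C}^{\frac{1}{2}-\delta_{0}}$; while for $\pi_{>}(\bar{u}_{4}^{\epsilon,i_{1}},\bar{u}_{1}^{\epsilon,j})$, $\pi_{>}(\bar{u}_{4}^{\epsilon,j},\bar{u}_{1}^{\epsilon,i_{1}})$ one uses \eqref{[Equation (3.4)][ZZ17]}, $\lVert\bar{u}_{4}^{\epsilon}(t)\rVert_{\mathcal{C}^{\frac{1}{2}-\delta-\kappa}}\lesssim(\bar{C}_{W}^{\epsilon}(t))^{2}+\bar{C}_{W}^{\epsilon}(t)\lVert\bar{y}_{4}^{\epsilon}(t)\rVert_{\mathcal{C}^{\frac{1}{2}-\delta_{0}}}+\lVert\bar{y}^{\epsilon,\sharp}(t)\rVert_{\mathcal{C}^{\frac{1}{2}+\beta}}$; combined with the $\bar{u}_{1}^{\epsilon}$-factor these produce exactly the $(\bar{C}_{W}^{\epsilon}(t))^{3}$, $(\bar{C}_{W}^{\epsilon}(t))^{2}\lVert\bar{y}_{4}^{\epsilon}(t)\rVert_{\mathcal{C}^{\frac{1}{2}-\delta_{0}}}$ and $\bar{C}_{W}^{\epsilon}(t)\lVert\bar{y}^{\epsilon,\sharp}(t)\rVert_{\mathcal{C}^{\frac{1}{2}+\beta}}$ contributions (and their $b$-analogues). \emph{(iv) Quadratic:} for $\bar{u}_{4}^{\epsilon,i_{1}}\bar{u}_{4}^{\epsilon,j}$ and $\bar{b}_{4}^{\epsilon,i_{1}}\bar{b}_{4}^{\epsilon,j}$ we estimate in the weak norm $\mathcal{C}^{\delta}$, using $\mathcal{C}^{\frac{1}{2}-\delta_{0}}\hookrightarrow\mathcal{C}^{\delta}$ ($\delta<\frac{1}{2}-\delta_{0}$), so Lemma \ref{[Lemma 1.1][Y19a]}(4) gives norm $\lesssim\lVert\bar{y}_{4}^{\epsilon}(t)\rVert_{\mathcal{C}^{\delta}}^{2}$ and, after $D_{j}$, $\mathcal{C}^{\delta-1}\hookrightarrow\mathcal{C}^{-1-2\delta-2\kappa}$; this is the final summand of \eqref{[Equation (3.6)][ZZ17]}. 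Keeping the weak norm here, rather than $\mathcal{C}^{\frac{1}{2}-\delta_{0}}$ or the ansatz bound, is essential so that no $\lVert\bar{y}_{4}^{\epsilon}\rVert_{\mathcal{C}^{\frac{1}{2}-\delta_{0}}}^{2}$ appears, which would be fatal to the later contraction argument.

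The genuinely delicate summands are the resonances $\pi_{0,\diamond}(\bar{u}_{4}^{\epsilon,i_{1}},\bar{u}_{1}^{\epsilon,j})=\pi_{0}(\bar{u}_{4}^{\epsilon,i_{1}},\bar{u}_{1}^{\epsilon,j})$, $\pi_{0,\diamond}(\bar{u}_{4}^{\epsilon,j},\bar{u}_{1}^{\epsilon,i_{1}})$ and their $b$-counterparts, which are not covered by Lemma \ref{[Lemma 1.1][Y19a]}(3) since $(\frac{1}{2}-\delta_{0})+(-\frac{1}{2}-\frac{\delta}{2})<0$. For these I would substitute the paracontrolled ansatz \eqref{paracontrolled ansatz u} for $\bar{u}_{4}^{\epsilon,i_{1}}$, commute $\mathcal{P}^{ii_{1}}D_{j}$ through the paraproducts $\pi_{<}(\bar{u}_{3}^{\epsilon}+\bar{u}_{4}^{\epsilon},\bar{K}_{u}^{\epsilon})$ and $\pi_{<}(\bar{b}_{3}^{\epsilon}+\bar{b}_{4}^{\epsilon},\bar{K}_{b}^{\epsilon})$, and apply the commutator estimate Lemma \ref{Lemma 3.5}: pulling out the low-frequency factor $\bar{u}_{3}^{\epsilon}+\bar{u}_{4}^{\epsilon}\in\mathcal{C}^{\frac{1}{2}-\delta_{0}}$ reduces the main term to resonances of the form $\pi_{0,\diamond}(\mathcal{P}^{ii_{1}}D_{j}\bar{K}_{u}^{\epsilon,j},\bar{u}_{1}^{\epsilon,j_{1}})$, $\pi_{0,\diamond}(\mathcal{P}^{ii_{1}}D_{j}\bar{K}_{b}^{\epsilon,j},\bar{u}_{1}^{\epsilon,j_{1}})$, which are precisely the objects retained in $\bar{C}_{W}^{\epsilon}(t)$ via \eqref{[Equation (3.2e)][ZZ17]}; the commutator remainder is finite by \eqref{[Equation (3.3)][ZZ17]} using $\delta_{0}<\frac{1-3\delta}{2}$; and the $\bar{u}^{\epsilon,\sharp}$-part yields $\pi_{0}(\bar{u}^{\epsilon,\sharp,i_{1}},\bar{u}_{1}^{\epsilon,j})\in\mathcal{C}^{\beta-\frac{\delta}{2}}\hookrightarrow\mathcal{C}^{-2\delta-2\kappa}$, meaningful and $\lesssim\lVert\bar{y}^{\epsilon,\sharp}(t)\rVert_{\mathcal{C}^{\frac{1}{2}+\beta}}$ exactly because $\beta>\frac{\delta}{2}$ in \eqref{beta}. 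This family contributes $\lesssim(\bar{C}_{W}^{\epsilon}(t))^{2}+\bar{C}_{W}^{\epsilon}(t)\lVert\bar{y}_{4}^{\epsilon}(t)\rVert_{\mathcal{C}^{\frac{1}{2}-\delta_{0}}}+\bar{C}_{W}^{\epsilon}(t)\lVert\bar{y}^{\epsilon,\sharp}(t)\rVert_{\mathcal{C}^{\frac{1}{2}+\beta}}$.

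Summing the contributions of (i)--(iv) and of the resonances, and using $x^{k}\le 1+x^{3}$ for $x\ge0$ and $0\le k\le3$ to absorb all powers of $\bar{C}_{W}^{\epsilon}(t)$ up to the cube, yields \eqref{[Equation (3.6)][ZZ17]}; the estimate for $\bar{\phi}_{b}^{\epsilon,\sharp,i}$ is identical. I expect the main obstacle to be precisely the resonant family $\pi_{0}(\bar{u}_{4}^{\epsilon},\bar{u}_{1}^{\epsilon})$: merely giving it meaning forces the combined use of the paracontrolled ansatz and the commutator estimate, and one must verify that the ``bad'' resonances produced are exactly those already renormalized inside $\bar{C}_{W}^{\epsilon}(t)$; the remainder is careful exponent bookkeeping under \eqref{[Equation (3.2g)][ZZ17]}, \eqref{[Equation (3.3c)][ZZ17]} and \eqref{beta}, together with the deliberate choice of the weak $\mathcal{C}^{\delta}$ norm in the single quadratic term.
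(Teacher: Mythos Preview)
Your proposal is correct and follows essentially the same route as the paper's proof: term-by-term bounds on the bracket in $\mathcal{C}^{-2\delta-2\kappa}$, with the resonant terms $\pi_{0}(\bar{u}_{4}^{\epsilon},\bar{u}_{1}^{\epsilon})$ handled by substituting the paracontrolled ansatz \eqref{paracontrolled ansatz u}--\eqref{paracontrolled ansatz b} and reducing to the pre-constructed objects $\pi_{0,\diamond}(\mathcal{P}^{ii_{1}}D_{j}\bar{K}^{\epsilon},\bar{u}_{1}^{\epsilon})$ inside $\bar{C}_{W}^{\epsilon}(t)$. Two small differences worth noting: the paper treats $(\bar{u}_{3}^{\epsilon}+\bar{u}_{4}^{\epsilon})(\bar{u}_{3}^{\epsilon}+\bar{u}_{4}^{\epsilon})$ as a single block in $\mathcal{C}^{\delta}$ rather than expanding it, and in the resonance step it first applies the Leibniz rule to $D_{j}$ and then uses both Lemma~\ref{Lemma 3.6} (to commute $\mathcal{P}^{ii_{1}}$ through $\pi_{<}$) and Lemma~\ref{Lemma 3.5}; your phrase ``commute $\mathcal{P}^{ii_{1}}D_{j}$ through the paraproducts'' compresses these two separate commutators into one, so when you write it out make sure both appear.
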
 

\begin{proof}[Proof of Proposition \ref{[Lemma 3.9][ZZ17]}]
We compute similarly to how \cite[Equation (97)]{Y19a} is derived. First, within \eqref{overlinephiuepsilonsharpi} and \eqref{overlinephibepsilonsharpi} we can estimate
\begin{align}\label{estimate 71}
 \lVert ( \bar{u}_{2}^{\epsilon, i_{1}} \diamond \bar{u}_{2}^{\epsilon, j}, \bar{b}_{2}^{\epsilon, i_{1}} \diamond \bar{b}_{2}^{\epsilon, j}, \bar{b}_{2}^{\epsilon, i_{1}} \diamond \bar{u}_{2}^{\epsilon, j}, \bar{u}_{2}^{\epsilon, i_{1}} \diamond \bar{b}_{2}^{\epsilon, j}) \rVert_{\mathcal{C}^{-2\delta - 2 \kappa}}  \lesssim \bar{C}_{W}^{\epsilon}(t)
\end{align}
by \eqref{regularity 0} and \eqref{[Equation (3.2e)][ZZ17]}. Second, within \eqref{overlinephiuepsilonsharpi} and \eqref{overlinephibepsilonsharpi} we estimate 
\begin{align}\label{estimate 72}
& \lVert ( \bar{u}_{2}^{\epsilon, i_{1}} ( \bar{u}_{3}^{\epsilon, j} + \bar{u}_{4}^{\epsilon, j}), \bar{u}_{2}^{\epsilon, j} (\bar{u}_{3}^{\epsilon, i_{1}} + \bar{u}_{4}^{\epsilon, i_{1}}), \bar{b}_{2}^{\epsilon, i_{1}}(\bar{b}_{3}^{\epsilon, j} + \bar{b}_{4}^{\epsilon, j}), \bar{b}_{2}^{\epsilon, j}(\bar{b}_{3}^{\epsilon, i_{1}} + \bar{b}_{4}^{\epsilon, i_{1}}), \nonumber \\
&\bar{b}_{2}^{\epsilon, i_{1}} ( \bar{u}_{3}^{\epsilon, j} + \bar{u}_{4}^{\epsilon, j}), \bar{u}_{2}^{\epsilon, j} (\bar{b}_{3}^{\epsilon, i_{1}} + \bar{b}_{4}^{\epsilon, i_{1}}), \bar{u}_{2}^{\epsilon, i_{1}}(\bar{b}_{3}^{\epsilon, j} + \bar{b}_{4}^{\epsilon, j}), \bar{b}_{2}^{\epsilon, j}(\bar{u}_{3}^{\epsilon, i_{1}} + \bar{u}_{4}^{\epsilon, i_{1}})) \rVert_{\mathcal{C}^{-2\delta - 2 \kappa}} \nonumber \\
\lesssim& (\bar{C}_{W}^{\epsilon}(t))^{2} + \bar{C}_{W}^{\epsilon}(t) \lVert \bar{y}_{4}^{\epsilon} \rVert_{\mathcal{C}^{\frac{1}{2} - \delta_{0}}}
\end{align} 
by \eqref{regularity 0}, Lemma \ref{[Lemma 1.1][Y19a]} (4), \eqref{[Equation (3.2g)][ZZ17]}, \eqref{[Equation (3.2e)][ZZ17]} and \eqref{[Equation (3.2h)][ZZ17]}. Third, within \eqref{overlinephiuepsilonsharpi} and \eqref{overlinephibepsilonsharpi} we estimate 
\begin{align}\label{estimate 73}
& \lVert ( ( \bar{u}_{3}^{\epsilon, i_{1}} + \bar{u}_{4}^{\epsilon, i_{1}})(\bar{u}_{3}^{\epsilon, j} + \bar{u}_{4}^{\epsilon, j}), (\bar{b}_{3}^{\epsilon, i_{1}} + \bar{b}_{4}^{\epsilon, i_{1}})(\bar{b}_{3}^{\epsilon, j} + \bar{b}_{4}^{\epsilon, j}), \\
& \hspace{10mm} (\bar{b}_{3}^{\epsilon, i_{1}} + \bar{b}_{4}^{\epsilon, i_{1}})(\bar{u}_{3}^{\epsilon, j}+ \bar{u}_{4}^{\epsilon, j}), (\bar{u}_{3}^{\epsilon, i_{1}} + \bar{u}_{4}^{\epsilon, i_{1}})(\bar{b}_{3}^{\epsilon, j} + \bar{b}_{4}^{\epsilon, j}) ) \rVert_{\mathcal{C}^{-2\delta - 2 \kappa}} \nonumber \\
\lesssim& \lVert \bar{u}_{3}^{\epsilon} + \bar{u}_{4}^{\epsilon} \rVert_{\mathcal{C}^{\delta}}^{2} + \lVert \bar{b}_{3}^{\epsilon} + \bar{b}_{4}^{\epsilon} \rVert_{\mathcal{C}^{\delta}}^{2}  \lesssim (\bar{C}_{W}^{\epsilon}(t))^{2} + \lVert \bar{y}_{4}^{\epsilon}\rVert_{\mathcal{C}^{\delta}}^{2} \nonumber
\end{align} 
by \eqref{regularity 0}, Lemma \ref{[Lemma 1.1][Y19a]} (4), \eqref{[Equation (3.2g)][ZZ17]} and \eqref{[Equation (3.2h)][ZZ17]}. Fourth, within \eqref{overlinephiuepsilonsharpi} and \eqref{overlinephibepsilonsharpi} we estimate 
\begin{align}\label{estimate 74}
& \lVert ( \pi_{0,\diamond} (\bar{u}_{3}^{\epsilon, i_{1}}, \bar{u}_{1}^{\epsilon, j}), \pi_{0,\diamond}(\bar{u}_{3}^{\epsilon, j}, \bar{u}_{1}^{\epsilon, i_{1}}), \pi_{0,\diamond} (\bar{b}_{3}^{\epsilon, i_{1}}, \bar{b}_{1}^{\epsilon, j}), \pi_{0,\diamond} (\bar{b}_{3}^{\epsilon, j}, \bar{b}_{1}^{\epsilon, i_{1}}), \\
&\pi_{0,\diamond} (\bar{b}_{3}^{\epsilon, i_{1}}, \bar{u}_{1}^{\epsilon, j}), \pi_{0,\diamond}(\bar{u}_{3}^{\epsilon, j}, \bar{b}_{1}^{\epsilon, i_{1}}), \pi_{0,\diamond} (\bar{u}_{3}^{\epsilon, i_{1}}, \bar{b}_{1}^{\epsilon, j}), \pi_{0,\diamond} (\bar{b}_{3}^{\epsilon, j}, \bar{u}_{1}^{\epsilon, i_{1}})) \rVert_{\mathcal{C}^{-2\delta - 2\kappa}} \lesssim \bar{C}_{W}^{\epsilon}(t) \nonumber
\end{align}
by \eqref{regularity 0} and  \eqref{[Equation (3.2e)][ZZ17]}. Fifth, within \eqref{overlinephiuepsilonsharpi} and \eqref{overlinephibepsilonsharpi}  we estimate 
\begin{align}\label{estimate 75}
& \lVert ( \pi_{>} ( \bar{u}_{3}^{\epsilon, i_{1}} + \bar{u}_{4}^{\epsilon, i_{1}}, \bar{u}_{1}^{\epsilon, j}), \pi_{>} (\bar{u}_{3}^{\epsilon, j} + \bar{u}_{4}^{\epsilon, j}, \bar{u}_{1}^{\epsilon, i_{1}}), \pi_{>}(\bar{b}_{3}^{\epsilon, i_{1}} + \bar{b}_{4}^{\epsilon, i_{1}}, \bar{b}_{1}^{\epsilon, j}), \nonumber\\
& \hspace{5mm}  \pi_{>} (\bar{b}_{3}^{\epsilon, j} + \bar{b}_{4}^{\epsilon, j}, \bar{b}_{1}^{\epsilon, i_{1}}),   \pi_{>} ( \bar{b}_{3}^{\epsilon, i_{1}} + \bar{b}_{4}^{\epsilon, i_{1}}, \bar{u}_{1}^{\epsilon, j}), \pi_{>} (\bar{u}_{3}^{\epsilon, j} + \bar{u}_{4}^{\epsilon, j}, \bar{b}_{1}^{\epsilon, i_{1}}), \nonumber\\
& \hspace{35mm}   \pi_{>}(\bar{u}_{3}^{\epsilon, i_{1}} + \bar{u}_{4}^{\epsilon, i_{1}}, \bar{b}_{1}^{\epsilon, j}), \pi_{>} (\bar{b}_{3}^{\epsilon, j} + \bar{b}_{4}^{\epsilon, j}, \bar{u}_{1}^{\epsilon, i_{1}})) \rVert_{\mathcal{C}^{-2\delta - 2 \kappa}} \nonumber\\
\lesssim& ( \bar{C}_{W}^{\epsilon}(t) + \lVert (\bar{u}_{3}^{\epsilon} + \bar{u}_{4}^{\epsilon}, \bar{b}_{3}^{\epsilon} + \bar{b}_{4}^{\epsilon}) \rVert_{\mathcal{C}^{\frac{1}{2} - \delta_{0}}} \sup_{s \in [0, t]} \lVert \bar{y}_{1}^{\epsilon} \rVert_{\mathcal{C}^{-\frac{1}{2}- \delta}} + \lVert \bar{y}^{\epsilon, \sharp} \rVert_{\mathcal{C}^{\frac{1}{2} + \beta}} ) \bar{C}_{W}^{\epsilon}(t) \nonumber \\
\lesssim& (\bar{C}_{W}^{\epsilon}(t))^{2} + (\bar{C}_{W}^{\epsilon}(t))^{3} + (\bar{C}_{W}^{\epsilon}(t))^{2} \lVert \bar{y}_{4}^{\epsilon}  \rVert_{\mathcal{C}^{\frac{1}{2} - \delta_{0}}} +\bar{C}_{W}^{\epsilon}(t) \lVert \bar{y}^{\epsilon,\sharp} \rVert_{\mathcal{C}^{\frac{1}{2} + \beta}} 
\end{align}
by \eqref{regularity 0}, Lemma \ref{[Lemma 1.1][Y19a]} (2), \eqref{[Equation (3.2e)][ZZ17]}, \eqref{[Equation (3.2h)][ZZ17]}, \eqref{[Equation (3.4)][ZZ17]}, and \eqref{[Equation (3.2g)][ZZ17]}. Finally, within \eqref{overlinephiuepsilonsharpi} and \eqref{overlinephibepsilonsharpi} we estimate the following in $\mathcal{C}^{-\delta}$-norm because we will need this subsequently in the proof of \eqref{[Equation (3.7d)][ZZ17]}, and it suffices because $-2\delta - 2 \kappa \leq -\delta$. We compute 
\begin{align}\label{estimate 61}
& \lVert ( \pi_{0,\diamond} (\bar{u}_{4}^{\epsilon, i_{1}}, \bar{u}_{1}^{\epsilon, j}), \pi_{0,\diamond} (\bar{u}_{4}^{\epsilon, j}, \bar{u}_{1}^{\epsilon, i_{1}}), \pi_{0,\diamond} (\bar{b}_{4}^{\epsilon, i_{1}}, \bar{b}_{1}^{\epsilon, j}), \pi_{0,\diamond} (\bar{b}_{4}^{\epsilon, j}, \bar{b}_{1}^{\epsilon, i_{1}}), \\
& \pi_{0,\diamond} (\bar{b}_{4}^{\epsilon, i_{1}}, \bar{u}_{1}^{\epsilon, j}), \pi_{0,\diamond} (\bar{u}_{4}^{\epsilon, j}, \bar{b}_{1}^{\epsilon, i_{1}}), \pi_{0,\diamond} (\bar{u}_{4}^{\epsilon, i_{1}}, \bar{b}_{1}^{\epsilon, j}), \pi_{0,\diamond} (\bar{b}_{4}^{\epsilon, j}, \bar{u}_{1}^{\epsilon, i_{1}})) \rVert_{\mathcal{C}^{-\delta}} \nonumber\\
=& \lVert ( \pi_{0} (- \frac{1}{2} \sum_{i_{2}, j_{1} =1}^{3} \mathcal{P}^{i_{1} i_{2}} D_{j_{1}} [ \pi_{<} ( \bar{u}_{3}^{\epsilon, i_{2}} + \bar{u}_{4}^{\epsilon, i_{2}}, \bar{K}_{u}^{\epsilon, j_{1}}) + \pi_{<} (\bar{u}_{3}^{\epsilon, j_{1}} + \bar{u}_{4}^{\epsilon, j_{1}}, \bar{K}_{u}^{\epsilon, i_{2}}) \nonumber\\
& \hspace{20mm} - \pi_{<} (\bar{b}_{3}^{\epsilon, i_{2}} + \bar{b}_{4}^{\epsilon, i_{2}}, \bar{K}_{b}^{\epsilon, j_{1}}) - \pi_{<} (\bar{b}_{3}^{\epsilon, j_{1}} + \bar{b}_{4}^{\epsilon, j_{1}}, \bar{K}_{b}^{\epsilon, i_{2}})] + \bar{u}^{\epsilon, \sharp, i_{1}}, \bar{u}_{1}^{\epsilon, j}), \nonumber \\
& \hspace{5mm}  \pi_{0} (- \frac{1}{2} \sum_{i_{2}, j_{1} =1}^{3} \mathcal{P}^{j i_{2}} D_{j_{1}} [ \pi_{<} ( \bar{u}_{3}^{\epsilon, i_{2}} + \bar{u}_{4}^{\epsilon, i_{2}}, \bar{K}_{u}^{\epsilon, j_{1}}) + \pi_{<} (\bar{u}_{3}^{\epsilon, j_{1}} + \bar{u}_{4}^{\epsilon, j_{1}}, \bar{K}_{u}^{\epsilon, i_{2}}) \nonumber\\
& \hspace{20mm} - \pi_{<} (\bar{b}_{3}^{\epsilon, i_{2}} + \bar{b}_{4}^{\epsilon, i_{2}}, \bar{K}_{b}^{\epsilon, j_{1}}) - \pi_{<} (\bar{b}_{3}^{\epsilon, j_{1}} + \bar{b}_{4}^{\epsilon, j_{1}}, \bar{K}_{b}^{\epsilon, i_{2}})] + \bar{u}^{\epsilon, \sharp, j}, \bar{u}_{1}^{\epsilon, i_{1}}), \nonumber \\
& \hspace{5mm}  \pi_{0} (- \frac{1}{2} \sum_{i_{2}, j_{1} =1}^{3} \mathcal{P}^{i_{1} i_{2}} D_{j_{1}} [- \pi_{<} ( \bar{u}_{3}^{\epsilon, i_{2}} + \bar{u}_{4}^{\epsilon, i_{2}}, \bar{K}_{b}^{\epsilon, j_{1}}) + \pi_{<} (\bar{u}_{3}^{\epsilon, j_{1}} + \bar{u}_{4}^{\epsilon, j_{1}}, \bar{K}_{b}^{\epsilon, i_{2}}) \nonumber\\
& \hspace{20mm} + \pi_{<} (\bar{b}_{3}^{\epsilon, i_{2}} + \bar{b}_{4}^{\epsilon, i_{2}}, \bar{K}_{u}^{\epsilon, j_{1}}) - \pi_{<} (\bar{b}_{3}^{\epsilon, j_{1}} + \bar{b}_{4}^{\epsilon, j_{1}}, \bar{K}_{u}^{\epsilon, i_{2}})] + \bar{b}^{\epsilon, \sharp, i_{1}}, \bar{b}_{1}^{\epsilon, j}), \nonumber \\
& \hspace{5mm}  \pi_{0} (- \frac{1}{2} \sum_{i_{2}, j_{1} =1}^{3} \mathcal{P}^{j i_{2}} D_{j_{1}} [- \pi_{<} ( \bar{u}_{3}^{\epsilon, i_{2}} + \bar{u}_{4}^{\epsilon, i_{2}}, \bar{K}_{b}^{\epsilon, j_{1}}) + \pi_{<} (\bar{u}_{3}^{\epsilon, j_{1}} + \bar{u}_{4}^{\epsilon, j_{1}}, \bar{K}_{b}^{\epsilon, i_{2}}) \nonumber\\
& \hspace{20mm} + \pi_{<} (\bar{b}_{3}^{\epsilon, i_{2}} + \bar{b}_{4}^{\epsilon, i_{2}}, \bar{K}_{u}^{\epsilon, j_{1}}) - \pi_{<} (\bar{b}_{3}^{\epsilon, j_{1}} + \bar{b}_{4}^{\epsilon, j_{1}}, \bar{K}_{u}^{\epsilon, i_{2}})] + \bar{b}^{\epsilon, \sharp, j}, \bar{b}_{1}^{\epsilon, i_{1}}), \nonumber \\
& \hspace{5mm}  \pi_{0} (- \frac{1}{2} \sum_{i_{2}, j_{1} =1}^{3} \mathcal{P}^{i_{1} i_{2}} D_{j_{1}} [- \pi_{<} ( \bar{u}_{3}^{\epsilon, i_{2}} + \bar{u}_{4}^{\epsilon, i_{2}}, \bar{K}_{b}^{\epsilon, j_{1}}) + \pi_{<} (\bar{u}_{3}^{\epsilon, j_{1}} + \bar{u}_{4}^{\epsilon, j_{1}}, \bar{K}_{b}^{\epsilon, i_{2}}) \nonumber\\
& \hspace{20mm} + \pi_{<} (\bar{b}_{3}^{\epsilon, i_{2}} + \bar{b}_{4}^{\epsilon, i_{2}}, \bar{K}_{u}^{\epsilon, j_{1}}) - \pi_{<} (\bar{b}_{3}^{\epsilon, j_{1}} + \bar{b}_{4}^{\epsilon, j_{1}}, \bar{K}_{u}^{\epsilon, i_{2}})] + \bar{b}^{\epsilon, \sharp, i_{1}}, \bar{u}_{1}^{\epsilon, j}), \nonumber \\
& \hspace{5mm}  \pi_{0} (- \frac{1}{2} \sum_{i_{2}, j_{1} =1}^{3} \mathcal{P}^{j i_{2}} D_{j_{1}} [ \pi_{<} ( \bar{u}_{3}^{\epsilon, i_{2}} + \bar{u}_{4}^{\epsilon, i_{2}}, \bar{K}_{u}^{\epsilon, j_{1}}) + \pi_{<} (\bar{u}_{3}^{\epsilon, j_{1}} + \bar{u}_{4}^{\epsilon, j_{1}}, \bar{K}_{u}^{\epsilon, i_{2}}) \nonumber\\
& \hspace{20mm} - \pi_{<} (\bar{b}_{3}^{\epsilon, i_{2}} + \bar{b}_{4}^{\epsilon, i_{2}}, \bar{K}_{b}^{\epsilon, j_{1}}) - \pi_{<} (\bar{b}_{3}^{\epsilon, j_{1}} + \bar{b}_{4}^{\epsilon, j_{1}}, \bar{K}_{b}^{\epsilon, i_{2}})] + \bar{u}^{\epsilon, \sharp, i_{1}}, \bar{b}_{1}^{\epsilon, j}), \nonumber \\
& \hspace{5mm}  \pi_{0} (- \frac{1}{2} \sum_{i_{2}, j_{1} =1}^{3} \mathcal{P}^{i_{1} i_{2}} D_{j_{1}} [ \pi_{<} ( \bar{u}_{3}^{\epsilon, i_{2}} + \bar{u}_{4}^{\epsilon, i_{2}}, \bar{K}_{u}^{\epsilon, j_{1}}) + \pi_{<} (\bar{u}_{3}^{\epsilon, j_{1}} + \bar{u}_{4}^{\epsilon, j_{1}}, \bar{K}_{u}^{\epsilon, i_{2}}) \nonumber\\
& \hspace{20mm} - \pi_{<} (\bar{b}_{3}^{\epsilon, i_{2}} + \bar{b}_{4}^{\epsilon, i_{2}}, \bar{K}_{b}^{\epsilon, j_{1}}) - \pi_{<} (\bar{b}_{3}^{\epsilon, j_{1}} + \bar{b}_{4}^{\epsilon, j_{1}}, \bar{K}_{b}^{\epsilon, i_{2}})] + \bar{u}^{\epsilon, \sharp, i_{1}}, \bar{b}_{1}^{\epsilon, i_{1}}), \nonumber \\
& \hspace{5mm}  \pi_{0} (- \frac{1}{2} \sum_{i_{2}, j_{1} =1}^{3} \mathcal{P}^{j i_{2}} D_{j_{1}} [- \pi_{<} ( \bar{u}_{3}^{\epsilon, i_{2}} + \bar{u}_{4}^{\epsilon, i_{2}}, \bar{K}_{b}^{\epsilon, j_{1}}) + \pi_{<} (\bar{u}_{3}^{\epsilon, j_{1}} + \bar{u}_{4}^{\epsilon, j_{1}}, \bar{K}_{b}^{\epsilon, i_{2}}) \nonumber\\
& \hspace{20mm} + \pi_{<} (\bar{b}_{3}^{\epsilon, i_{2}} + \bar{b}_{4}^{\epsilon, i_{2}}, \bar{K}_{u}^{\epsilon, j_{1}}) - \pi_{<} (\bar{b}_{3}^{\epsilon, j_{1}} + \bar{b}_{4}^{\epsilon, j_{1}}, \bar{K}_{u}^{\epsilon, i_{2}})] + \bar{b}^{\epsilon, \sharp, j}, \bar{u}_{1}^{\epsilon, i_{1}})) \rVert_{\mathcal{C}^{-\delta}} \nonumber 
\end{align} 
by \eqref{[Equation (3.2c)][ZZ17]}, \eqref{paracontrolled ansatz u} and \eqref{paracontrolled ansatz b}. Among the eight terms in \eqref{estimate 61}, without loss of generality (w.l.o.g.) we show details of the eighth term as others are similar.  We compute 
\begin{align}\label{estimate 69}
& \lVert \pi_{0} ( -\frac{1}{2} \sum_{i_{2}, j_{1} =1}^{3} \mathcal{P}^{j i_{2}} D_{j_{1}} [- \pi_{<} ( \bar{u}_{3}^{\epsilon, i_{2}} + \bar{u}_{4}^{\epsilon, i_{2}}, \bar{K}_{b}^{\epsilon, j_{1}}) + \pi_{<} (\bar{u}_{3}^{\epsilon, j_{1}} + \bar{u}_{4}^{\epsilon, j_{1}}, \bar{K}_{b}^{\epsilon, i_{2}}) \nonumber\\
& \hspace{10mm} + \pi_{<} (\bar{b}_{3}^{\epsilon, i_{2}} + \bar{b}_{4}^{\epsilon, i_{2}}, \bar{K}_{u}^{\epsilon, j_{1}}) - \pi_{<} (\bar{b}_{3}^{\epsilon, j_{1}} + \bar{b}_{4}^{\epsilon, j_{1}}, \bar{K}_{u}^{\epsilon, i_{2}})] + \bar{b}^{\epsilon, \sharp, j}, \bar{u}_{1}^{\epsilon, i_{1}}) \rVert_{\mathcal{C}^{-\delta}}\nonumber\\
& \hspace{80mm} \lesssim \sum_{i_{2}, j_{1} =1}^{3} \sum_{k=1}^{9} II_{t, i_{1}i_{2}j j_{1}}^{k}
\end{align} 
by Leibniz rule where 
\begin{subequations}\label{estimate 62}
\begin{align}
II_{t, i_{1} i_{2} j j_{1}}^{1} \triangleq& \lVert \pi_{0} (\mathcal{P}^{ji_{2}} \pi_{<} ( \bar{u}_{3}^{\epsilon, i_{2}} + \bar{u}_{4}^{\epsilon, i_{2}}, D_{j_{1}} \bar{K}_{b}^{\epsilon, j_{1}}), \bar{u}_{1}^{\epsilon, i_{1}}) \rVert_{\mathcal{C}^{-\delta}},\\
II_{t, i_{1} i_{2} j j_{1}}^{2} \triangleq& \lVert \pi_{0} ( \mathcal{P}^{ji_{2}} \pi_{<} ( D_{j_{1}} [ \bar{u}_{3}^{\epsilon, i_{2}} + \bar{u}_{4}^{\epsilon, i_{2}}], \bar{K}_{b}^{\epsilon, j_{1}}), \bar{u}_{1}^{\epsilon, i_{1}}) \rVert_{\mathcal{C}^{-\delta}},\\
II_{t, i_{1} i_{2} j j_{1}}^{3} \triangleq& \lVert \pi_{0} (\mathcal{P}^{ji_{2}} \pi_{<} ( \bar{u}_{3}^{\epsilon, j_{1}} + \bar{u}_{4}^{\epsilon, j_{1}}, D_{j_{1}} \bar{K}_{b}^{\epsilon, i_{2}}), \bar{u}_{1}^{\epsilon, i_{1}}) \rVert_{\mathcal{C}^{-\delta}},\\
II_{t, i_{1} i_{2} j j_{1}}^{4} \triangleq&  \lVert \pi_{0} ( \mathcal{P}^{ji_{2}} \pi_{<} ( D_{j_{1}} [ \bar{u}_{3}^{\epsilon, j_{1}} + \bar{u}_{4}^{\epsilon, j_{1}}], \bar{K}_{b}^{\epsilon, i_{2}}), \bar{u}_{1}^{\epsilon, i_{1}}) \rVert_{\mathcal{C}^{-\delta}},\\
II_{t, i_{1} i_{2} j j_{1}}^{5} \triangleq& \lVert \pi_{0} (\mathcal{P}^{ji_{2}} \pi_{<} ( \bar{b}_{3}^{\epsilon, i_{2}} + \bar{b}_{4}^{\epsilon, i_{2}}, D_{j_{1}} \bar{K}_{u}^{\epsilon, j_{1}}), \bar{u}_{1}^{\epsilon, i_{1}}) \rVert_{\mathcal{C}^{-\delta}},\\
II_{t, i_{1} i_{2} j j_{1}}^{6} \triangleq&  \lVert \pi_{0} ( \mathcal{P}^{ji_{2}} \pi_{<} ( D_{j_{1}} [ \bar{b}_{3}^{\epsilon, i_{2}} + \bar{b}_{4}^{\epsilon, i_{2}}], \bar{K}_{u}^{\epsilon, j_{1}}), \bar{u}_{1}^{\epsilon, i_{1}}) \rVert_{\mathcal{C}^{-\delta}},\\
II_{t, i_{1} i_{2} j j_{1}}^{7} \triangleq&  \lVert \pi_{0} (\mathcal{P}^{ji_{2}} \pi_{<} ( \bar{b}_{3}^{\epsilon, j_{1}} + \bar{b}_{4}^{\epsilon, j_{1}}, D_{j_{1}} \bar{K}_{u}^{\epsilon, i_{2}}), \bar{u}_{1}^{\epsilon, i_{1}}) \rVert_{\mathcal{C}^{-\delta}},\\ 
II_{t, i_{1} i_{2} j j_{1}}^{8} \triangleq&  \lVert \pi_{0} ( \mathcal{P}^{ji_{2}} \pi_{<} ( D_{j_{1}} [ \bar{b}_{3}^{\epsilon, j_{1}} + \bar{b}_{4}^{\epsilon, j_{1}}], \bar{K}_{u}^{\epsilon, i_{2}}), \bar{u}_{1}^{\epsilon, i_{1}}) \rVert_{\mathcal{C}^{-\delta}},\\
II_{t, i_{1} j}^{9} \triangleq& \lVert \pi_{0} (\bar{b}^{\epsilon, \sharp, j}, \bar{u}_{1}^{\epsilon, i_{1}}) \rVert_{\mathcal{C}^{-\delta}}. 
\end{align}
\end{subequations}
We compute 
\begin{align}\label{estimate 66}
II_{t, i_{1} i_{2} j j_{1}}^{1} \lesssim& \sum_{l=1}^{3} II_{t, i_{1} i_{2} j j_{1}}^{1l} 
\end{align} 
where 
\begin{align*}
II_{t, i_{1}i_{2}jj_{1}}^{11} \triangleq&  \lVert \pi_{0} ( \mathcal{P}^{ji_{2}} \pi_{<} ( \bar{u}_{3}^{\epsilon, i_{2}} + \bar{u}_{4}^{\epsilon, i_{2}}, D_{j_{1}} \bar{K}_{b}^{\epsilon, j_{1}}), \bar{u}_{1}^{\epsilon, i_{1}}) \nonumber\\
& \hspace{10mm} - \pi_{0} (\pi_{<} (\bar{u}_{3}^{\epsilon, i_{2}} + \bar{u}_{4}^{\epsilon, i_{2}}, \mathcal{P}^{ji_{2}} D_{j_{1}} \bar{K}_{b}^{\epsilon, j_{1}}), \bar{u}_{1}^{\epsilon, i_{1}}) \rVert_{\mathcal{C}^{\frac{1}{2} - \frac{3\delta}{2} - \delta_{0}}},\\
II_{t, i_{1}i_{2}jj_{1}}^{12} \triangleq&  \lVert \pi_{0} (\pi_{<} (\bar{u}_{3}^{\epsilon, i_{2}} + \bar{u}_{4}^{\epsilon, i_{2}}, \mathcal{P}^{j i_{2}} D_{j_{1}} \bar{K}_{b}^{\epsilon, j_{1}}), \bar{u}_{1}^{\epsilon, i_{1}}) \nonumber\\
& \hspace{10mm} - (\bar{u}_{3}^{\epsilon, i_{2}} + \bar{u}_{4}^{\epsilon, i_{2}}) \pi_{0} (\mathcal{P}^{ji_{2}} D_{j_{1}} \bar{K}_{b}^{\epsilon, j_{1}}, \bar{u}_{1}^{\epsilon, i_{1}}) \rVert_{\mathcal{C}^{\frac{1}{2} - \frac{3\delta}{2} - \delta_{0}}},\\
II_{t, i_{1}i_{2}jj_{1}}^{13} \triangleq&  \lVert (\bar{u}_{3}^{\epsilon, i_{2}} + \bar{u}_{4}^{\epsilon, i_{2}}) \pi_{0} (\mathcal{P}^{ji_{2}} D_{j_{1}} \bar{K}_{b}^{\epsilon, j_{1}}, \bar{u}_{1}^{\epsilon, i_{1}}) \rVert_{\mathcal{C}^{-\delta}} 
\end{align*}
by \eqref{estimate 62} and \eqref{[Equation (3.2g)][ZZ17]}. First,  
\begin{align}\label{estimate 63}
II_{t, i_{1} i_{2} j j_{1}}^{11} \lesssim& \lVert \bar{u}_{3}^{\epsilon, i_{2}} + \bar{u}_{4}^{\epsilon, i_{2}} \rVert_{\mathcal{C}^{\frac{1}{2} - \delta_{0}}} \lVert D_{j_{1}} \bar{K}_{b}^{\epsilon, j_{1}} \rVert_{\mathcal{C}^{\frac{1}{2} - \delta}} \bar{C}_{W}^{\epsilon} (t) \nonumber\\
 \lesssim& (\bar{C}_{W}^{\epsilon}(t) + \lVert \bar{u}_{4}^{\epsilon, i_{2}} \rVert_{\mathcal{C}^{\frac{1}{2} - \delta_{0}}} ) ( \bar{C}_{W}^{\epsilon}(t))^{2} t^{\frac{\delta}{4}}
 \end{align} 
by Lemmas \ref{[Lemma 1.1][Y19a]} (3), \ref{Lemma 3.6} and \ref{Lemma 3.10}, \eqref{[Equation (3.2g)][ZZ17]}, \eqref{[Equation (3.2e)][ZZ17]} and \eqref{[Equation (3.2h)][ZZ17]}. Second, 
\begin{align}\label{estimate 64}
II_{t, i_{1} i_{2} j j_{1}}^{12}\lesssim (\bar{C}_{W}^{\epsilon}(t) + \lVert \bar{u}_{4}^{\epsilon, i_{2}} \rVert_{\mathcal{C}^{\frac{1}{2} - \delta_{0}}}) (\bar{C}_{W}^{\epsilon}(t))^{2} t^{\frac{\delta}{4}}  
\end{align}
by Lemmas \ref{Lemma 3.5}, \ref{Lemma 3.7} and \ref{Lemma 3.10}, \eqref{estimate 56}- \eqref{[Equation (3.2e)][ZZ17]} and \eqref{[Equation (3.2h)][ZZ17]}. Third, we estimate 
\begin{align}\label{estimate 65}
II_{t, i_{1} i_{2} j j_{1}}^{13} \lesssim (\bar{C}_{W}^{\epsilon}(t) + \lVert \bar{u}_{4}^{\epsilon, i_{2}} \rVert_{\mathcal{C}^{\frac{1}{2} - \delta_{0}}} ) \bar{C}_{W}^{\epsilon}(t) 
\end{align} 
by \eqref{regularity 0}, Lemma \ref{[Lemma 1.1][Y19a]} (4), \eqref{[Equation (3.2g)][ZZ17]} - \eqref{[Equation (3.2h)][ZZ17]}. Applying \eqref{estimate 63} - \eqref{estimate 65} to \eqref{estimate 66} gives 
\begin{align}\label{estimate 67}
II_{t, i_{1} i_{2} j j_{1}}^{1} \lesssim (1+  (\bar{C}_{W}^{\epsilon}(t))^{3})(1+ \lVert \bar{u}_{4}^{\epsilon, i_{2}} \rVert_{\mathcal{C}^{\frac{1}{2} - \delta_{0}}}).
\end{align} 
We can deduce analogous estimates for $II_{t, i_{1} i_{2} j j_{1}}^{k}$ for all $k \in \{2, \hdots, 8\}$. Finally, 
\begin{align}\label{estimate 68}
II_{t, i_{1} j}^{9} \lesssim \lVert \pi_{0}( \bar{b}^{\epsilon, \sharp, j}, \bar{u}_{1}^{\epsilon, i_{1}}) \rVert_{\mathcal{C}^{\beta - \frac{\delta}{2}}}  \lesssim& \lVert \bar{b}^{\epsilon, \sharp, j} \rVert_{\mathcal{C}^{\frac{1}{2} + \beta}} \lVert \bar{u}_{1}^{\epsilon, i_{1}} \rVert_{\mathcal{C}^{-\frac{1}{2} - \frac{\delta}{2}}} \nonumber\\
& \hspace{15mm} \lesssim \lVert \bar{b}^{\epsilon, \sharp} \rVert_{\mathcal{C}^{\frac{1}{2} + \beta}} \bar{C}_{W}^{\epsilon}(t)
\end{align} 
by \eqref{estimate 62}, \eqref{regularity 0}, \eqref{beta}, Lemma \ref{[Lemma 1.1][Y19a]} (3) and \eqref{[Equation (3.2e)][ZZ17]}. Therefore, \eqref{estimate 67} and \eqref{estimate 68} applied to \eqref{estimate 69} gives 
\begin{align}\label{estimate 70}
&  \lVert \pi_{0} ( -\frac{1}{2} \sum_{i_{2}, j_{1} =1}^{3} \mathcal{P}^{j i_{2}} D_{j_{1}} [- \pi_{<} ( \bar{u}_{3}^{\epsilon, i_{2}} + \bar{u}_{4}^{\epsilon, i_{2}}, \bar{K}_{b}^{\epsilon, j_{1}}) + \pi_{<} (\bar{u}_{3}^{\epsilon, j_{1}} + \bar{u}_{4}^{\epsilon, j_{1}}, \bar{K}_{b}^{\epsilon, i_{2}}) \nonumber\\
& \hspace{10mm} + \pi_{<} (\bar{b}_{3}^{\epsilon, i_{2}} + \bar{b}_{4}^{\epsilon, i_{2}}, \bar{K}_{u}^{\epsilon, j_{1}}) - \pi_{<} (\bar{b}_{3}^{\epsilon, j_{1}} + \bar{b}_{4}^{\epsilon, j_{1}}, \bar{K}_{u}^{\epsilon, i_{2}})] + \bar{b}^{\epsilon, \sharp, j}, \bar{u}_{1}^{\epsilon, i_{1}}) \rVert_{\mathcal{C}^{-\delta}} \nonumber\\
\lesssim& (1+ ( \bar{C}_{W}^{\epsilon})^{3}) (1+ \lVert \bar{y}_{4}^{\epsilon} \rVert_{\mathcal{C}^{\frac{1}{2} - \delta_{0}}} + \lVert \bar{b}^{\epsilon, \sharp} \rVert_{\mathcal{C}^{\frac{1}{2} + \beta}}).
\end{align}
Estimates which are similar to \eqref{estimate 70} applied to \eqref{estimate 61} gives
\begin{align}\label{[Equation (3.6a)][ZZ17]}
& \lVert ( \pi_{0,\diamond} (\bar{u}_{4}^{\epsilon, i_{1}}, \bar{u}_{1}^{\epsilon, j}), \pi_{0,\diamond} (\bar{u}_{4}^{\epsilon, j}, \bar{u}_{1}^{\epsilon, i_{1}}), \pi_{0,\diamond} (\bar{b}_{4}^{\epsilon, i_{1}}, \bar{b}_{1}^{\epsilon, j}), \pi_{0,\diamond} (\bar{b}_{4}^{\epsilon, j}, \bar{b}_{1}^{\epsilon, i_{1}}),\nonumber \\
& \pi_{0,\diamond} (\bar{b}_{4}^{\epsilon, i_{1}}, \bar{u}_{1}^{\epsilon, j}), \pi_{0,\diamond} (\bar{u}_{4}^{\epsilon, j}, \bar{b}_{1}^{\epsilon, i_{1}}), \pi_{0,\diamond} (\bar{u}_{4}^{\epsilon, i_{1}}, \bar{b}_{1}^{\epsilon, j}), \pi_{0,\diamond} (\bar{b}_{4}^{\epsilon, j}, \bar{u}_{1}^{\epsilon, i_{1}})) \rVert_{\mathcal{C}^{-\delta}} \nonumber\\ 
\lesssim&  (1+ ( \bar{C}_{W}^{\epsilon})^{3}) (1+ \lVert \bar{y}_{4}^{\epsilon} \rVert_{\mathcal{C}^{\frac{1}{2} - \delta_{0}}} + \lVert \bar{y}^{\epsilon, \sharp} \rVert_{\mathcal{C}^{\frac{1}{2} + \beta}}).
\end{align}  
At last, we are ready to conclude \eqref{[Equation (3.6)][ZZ17]} due to \eqref{overlinephiuepsilonsharpi},\eqref{overlinephibepsilonsharpi}, \eqref{estimate 71}-\eqref{estimate 75} and \eqref{[Equation (3.6a)][ZZ17]}.
\end{proof} 
The following computations differ slightly from those of \cite{ZZ17}. 
\begin{proposition}\label{Proposition on F}
Let $\delta_{0}$ and $z$ satisfy the hypothesis of Theorem \ref{Theorem 1.2} while $\delta, \kappa$ and $\beta$ satisfy \eqref{[Equation (3.2g)][ZZ17]}, \eqref{[Equation (3.3c)][ZZ17]} and \eqref{beta}, respectively. Additionally, let 
\begin{equation}\label{b1}
b_{1} \in ( \frac{1}{2} ( \frac{\delta}{2} + \beta + 2 \kappa), (1 - \delta - z - \kappa) \wedge \frac{1}{2} ( \frac{1}{2} - \frac{\delta}{2} - 2 \kappa)). 
\end{equation} 
Then $\bar{F}_{u}^{\epsilon, i}$ and $\bar{F}_{b}^{\epsilon, i}$ defined respectively by \eqref{overlineFuepsiloni}, \eqref{overlineFbepsiloni} satisfy the following estimates:
\begin{subequations}
\begin{align}
&\lVert (\bar{F}_{u}^{\epsilon}, \bar{F}_{b}^{\epsilon})(t) \rVert_{\mathcal{C}^{\frac{1}{2} + \beta}}\label{[Equation (3.9)][ZZ17]} \\
\lesssim& \bar{C}_{W}^{\epsilon} (t) [ t^{ - \frac{ \frac{\delta}{2} + \beta + z}{2} - \frac{3\kappa}{4}} \lVert y_{0} - \bar{y}_{1}^{\epsilon}(0) \rVert_{\mathcal{C}^{-z}} \nonumber \\
&+ \int_{0}^{t} (t-r)^{-\kappa - \frac{3}{4} - \frac{\delta}{2} - \frac{\beta}{2}} \nonumber \\
& \hspace{3mm} \times [(1+ ( \bar{C}_{W}^{\epsilon}(t))^{3}) (1+ \lVert \bar{y}^{\epsilon, \sharp} (r) \rVert_{\mathcal{C}^{\frac{1}{2} + \beta}} + \lVert \bar{y}_{4}^{\epsilon}(r) \rVert_{\mathcal{C}^{\frac{1}{2} - \delta_{0}}} ) + \lVert \bar{y}_{4}^{\epsilon}(r) \rVert_{\mathcal{C}^{\delta}}^{2} ] dr \nonumber\\
&+ t^{- [\frac{ \frac{\delta}{2} + \beta + \kappa}{2} - b_{1}]} ( \int_{0}^{t} (t-r)^{- \frac{ \frac{3}{2} + \frac{\delta}{2} + \kappa}{2(1-b_{1})}} \nonumber\\
& \hspace{3mm} \times [(1+ (\bar{C}_{W}^{\epsilon}(t))^{3} )(1+ \lVert \bar{y}^{\epsilon, \sharp}(r) \rVert_{\mathcal{C}^{\frac{1}{2} + \beta}} + \lVert \bar{y}_{4}^{\epsilon}(r) \rVert_{\mathcal{C}^{\frac{1}{2} - \delta_{0}}}) + \lVert \bar{y}_{4}^{\epsilon}(r) \rVert_{\mathcal{C}^{\delta}}^{2} ]^{\frac{1}{1-b_{1}}} dr )^{1- b_{1}} \nonumber \\
&+ t^{\frac{1}{4} - \frac{ \delta_{0} + \beta + \kappa + \frac{\delta}{2}}{2}} \lVert (\bar{y}_{3}^{\epsilon}  + \bar{y}_{4}^{\epsilon})(t) \rVert_{\mathcal{C}^{\frac{1}{2} - \delta_{0}}}], \nonumber\\
& \lVert (\bar{F}_{u}^{\epsilon}, \bar{F}_{b}^{\epsilon}) (t) \rVert_{\mathcal{C}^{\delta}}\label{[Equation (3.10)][ZZ17]} \\
\lesssim& \bar{C}_{W}^{\epsilon}(t) [ t^{\frac{1}{4} - \frac{z}{2} - \frac{3\delta}{4}} \lVert y_{0} - \bar{y}_{1}^{\epsilon}(0) \rVert_{\mathcal{C}^{-z}} \nonumber\\
&+ \int_{0}^{t} (t-r)^{- \frac{1+ 2 \delta + \kappa}{2}} \nonumber\\
& \hspace{3mm} \times [(1+ (\bar{C}_{W}^{\epsilon}(t))^{3})(1+ \lVert \bar{y}^{\epsilon, \sharp}(r) \rVert_{\mathcal{C}^{\frac{1}{2} + \beta}} + \lVert \bar{y}_{4}^{\epsilon}(r) \rVert_{\mathcal{C}^{\frac{1}{2} - \delta_{0}}} ) + \lVert \bar{y}_{4}^{\epsilon}(r) \rVert_{\mathcal{C}^{\delta}}^{2}] dr \nonumber\\
&\hspace{3mm} + t^{\frac{\kappa}{2}} \int_{0}^{t} (t-r)^{- \frac{1+ 2 \delta+ \frac{3\kappa}{2}}{2}} \nonumber\\
& \hspace{3mm} \times [(1+ (\bar{C}_{W}^{\epsilon}(t))^{3})(1+ \lVert \bar{y}^{\epsilon, \sharp}(r) \rVert_{\mathcal{C}^{\frac{1}{2} + \beta}} + \lVert \bar{y}_{4}^{\epsilon}(r) \rVert_{\mathcal{C}^{\frac{1}{2} - \delta_{0}}}) + \lVert \bar{y}_{4}^{\epsilon}(r) \rVert_{\mathcal{C}^{\delta}}^{2}] dr \nonumber\\
&+ t^{\frac{1-\delta}{4}} \lVert (\bar{y}_{3}^{\epsilon} + \bar{y}_{4}^{\epsilon})(t) \rVert_{\mathcal{C}^{\delta}}], \nonumber \\
& \lVert (\bar{F}_{u}^{\epsilon},\bar{F}_{b}^{\epsilon})(t) \rVert_{\mathcal{C}^{\frac{1}{2} - \delta_{0}}} \label{[Equation (3.17b)][ZZ17]} \\
\lesssim& \bar{C}_{W}^{\epsilon} (t) [ t^{\frac{ \delta_{0} - z - \frac{\delta}{2}}{2}} \lVert y_{0} - \bar{y}_{1}^{\epsilon}(0) \rVert_{\mathcal{C}^{-z}}  \nonumber\\
&\hspace{3mm} + \int_{0}^{t} (t-r)^{ \frac{\delta_{0}}{2} - \frac{\delta}{2} - \frac{\kappa}{4} -\frac{3}{4}}  \nonumber \\
& \hspace{15mm} \times [ (1+ ( \bar{C}_{W}^{\epsilon}(t))^{3})(1+ \lVert y^{\epsilon, \sharp}(r)\rVert_{\mathcal{C}^{\frac{1}{2} + \beta}} + \lVert \bar{y}_{4}^{\epsilon}(r) \rVert_{\mathcal{C}^{\frac{1}{2} - \delta_{0}}} ) + \lVert \bar{y}_{4}^{\epsilon}(r)\rVert_{\mathcal{C}^{\delta}}^{2}]  dr  \nonumber \\
&\hspace{3mm} + t^{\frac{\kappa}{2}}  \int_{0}^{t} (t-r)^{\frac{\delta_{0}}{2} - \frac{\delta}{2} - \frac{3\kappa}{4} - \frac{3}{4}}   \nonumber \\
& \hspace{15mm} \times [ (1+ ( \bar{C}_{W}^{\epsilon}(t))^{3})(1+ \lVert y^{\epsilon, \sharp}(r) \rVert_{\mathcal{C}^{\frac{1}{2} + \beta}} + \lVert \bar{y}_{4}^{\epsilon}(r) \rVert_{\mathcal{C}^{\frac{1}{2} - \delta_{0}}} ) + \lVert \bar{y}_{4}^{\epsilon}(r)\rVert_{\mathcal{C}^{\delta}}^{2}]  dr  \nonumber \\
&\hspace{3mm} + t^{\frac{1-\delta}{4}} \lVert (\bar{y}_{3}^{\epsilon} + \bar{y}_{4}^{\epsilon})(t) \rVert_{\mathcal{C}^{\frac{1}{2} - \delta_{0}}}], \nonumber\\
& \lVert (\bar{F}_{u}^{\epsilon},\bar{F}_{b}^{\epsilon})(t) \rVert_{\mathcal{C}^{-z}} 
\lesssim \bar{C}_{W}^{\epsilon} (t) [ \int_{0}^{t} (t-r)^{- \frac{ \frac{3}{2} - z + \frac{\delta}{2} + \kappa}{2}} ( \bar{C}_{W}^{\epsilon}(t) + \lVert \bar{y}_{4}^{\epsilon}(r) \rVert_{\mathcal{C}^{\delta}}) dr \label{[Equation (3.17e)][ZZ17]} \\
& \hspace{27mm} + t^{\frac{1}{4} + \frac{z}{2} - \frac{\delta}{4} - \frac{\kappa}{2}} (\bar{C}_{W}^{\epsilon}(t) + \lVert \bar{y}_{4}^{\epsilon} (t) \rVert_{\mathcal{C}^{\delta}}) + t^{\frac{1-\delta}{4}}\lVert (\bar{y}_{3}^{\epsilon} + \bar{y}_{4}^{\epsilon})(t) \rVert_{\mathcal{C}^{-z}}]. \nonumber 
\end{align}  
\end{subequations}
\end{proposition}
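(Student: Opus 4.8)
The plan is to recognise that $(\bar{F}_{u}^{\epsilon},\bar{F}_{b}^{\epsilon})$ is precisely the \emph{Duhamel--paraproduct commutator error} generated by the paracontrolled ansatz \eqref{paracontrolled ansatz u}--\eqref{paracontrolled ansatz b}, and then to estimate it with the commutator lemma for the heat--Duhamel integral of a paraproduct (the analogue for the MHD system of the corresponding lemma of \cite{GIP15, ZZ17, Y19a}, collected among the preliminaries in Subsection \ref{Preliminaries}), fed with the drift bound already established in Proposition \ref{[Lemma 3.9][ZZ17]}. Concretely: by \eqref{estimate 56} one has $\bar{K}_{u}^{\epsilon,j}(t)=\int_{0}^{t}P_{t-s}\bar{u}_{1}^{\epsilon,j}(s)\,ds$ and $\bar{K}_{b}^{\epsilon,j}(t)=\int_{0}^{t}P_{t-s}\bar{b}_{1}^{\epsilon,j}(s)\,ds$, so, inspecting \eqref{overlineFuepsiloni}--\eqref{overlineFbepsiloni} term by term, each of the four bilinear pairings in $\bar{F}_{u}^{\epsilon,i}$ and each of the four in $\bar{F}_{b}^{\epsilon,i}$ is, up to the prefactor $\mp\tfrac{1}{2}\mathcal{P}^{ii_{1}}D_{j}$ and the sign pattern dictated by the Lorentz force and the induction term, of the generic shape
\begin{equation*}
G(t)=\int_{0}^{t}P_{t-s}\,\pi_{<}\big(g(s),f(s)\big)\,ds-\pi_{<}\Big(g(t),\int_{0}^{t}P_{t-s}\,f(s)\,ds\Big),
\end{equation*}
with $g\in\{\bar{u}_{3}^{\epsilon,\cdot}+\bar{u}_{4}^{\epsilon,\cdot},\ \bar{b}_{3}^{\epsilon,\cdot}+\bar{b}_{4}^{\epsilon,\cdot}\}$ and $f\in\{\bar{u}_{1}^{\epsilon,\cdot},\ \bar{b}_{1}^{\epsilon,\cdot}\}$, so $G(0)=0$ because the $\bar{K}$'s vanish at $t=0$.

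The key structural input is that $g=\bar{u}_{3}^{\epsilon}+\bar{u}_{4}^{\epsilon}$ (resp.\ $\bar{b}_{3}^{\epsilon}+\bar{b}_{4}^{\epsilon}$) itself solves a heat equation: combining \eqref{[Equation (3.1ac)][ZZ17]} with the mild formulation \eqref{[Equation (3.2)][ZZ17]} differentiated in $t$ (which removes the integral, using $\partial_{t}P_{t-s}=\Delta P_{t-s}$) gives $\partial_{t}g=\Delta g+\zeta$ with initial datum $g(0)=\sum\mathcal{P}^{ii_{1}}y_{0}^{i_{1}}-\bar{y}_{1}^{\epsilon,i}(0)$ (since $\bar{u}_{3}^{\epsilon}(0)=0$) and forcing $\zeta=-\tfrac{1}{2}\sum\mathcal{P}^{ii_{1}}D_{j}[\,\cdots\,]$ equal to the full nonlinear drift of $\bar{u}_{3}^{\epsilon}+\bar{u}_{4}^{\epsilon}$. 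The commutator lemma, applied to $G$ with this $g$, then writes $G$ as a heat--Duhamel integral of a regularity-gaining commutator of the type $\sum_{l}[\Delta,S_{l}g]\Delta_{l}\bar{K}$ together with the paraproduct $\pi_{<}(\zeta,\bar{K})$ of the drift against the smooth object $\bar{K}\in\mathcal{C}^{3/2-\delta}$ (cf.\ \eqref{[Equation (3.3)][ZZ17]}), plus a boundary contribution carrying $g(0)$. The commutator piece is controlled, via the commutator estimate Lemma \ref{Lemma 3.5}, Lemma \ref{Lemma 3.6} and Lemma \ref{[Lemma 1.1][Y19a]}, by $\lVert g\rVert_{\mathcal{C}^{1/2-\delta_{0}}}\lVert\bar{K}\rVert_{\mathcal{C}^{3/2-\delta}}$, hence by $\bar{C}_{W}^{\epsilon}(t)$ and $\lVert\bar{y}_{4}^{\epsilon}\rVert_{\mathcal{C}^{1/2-\delta_{0}}}$ through \eqref{[Equation (3.2h)][ZZ17]}--\eqref{[Equation (3.3)][ZZ17]}; and $\pi_{<}(\zeta,\bar{K})$, after substituting the paracontrolled ansatz into the $\bar{u}_{4}^{\epsilon},\bar{b}_{4}^{\epsilon}$ hidden inside $\zeta$, is exactly of the form entering $\bar{\phi}_{u}^{\epsilon,\sharp},\bar{\phi}_{b}^{\epsilon,\sharp}$ in \eqref{overlinephiuepsilonsharpi}--\eqref{overlinephibepsilonsharpi} (modulo lower-order triple paraproducts with $\bar{K}$ and the $\bar{u}_{3}^{\epsilon}$-drift already absorbed into $\bar{C}_{W}^{\epsilon}$), hence is bounded by the right-hand side of \eqref{[Equation (3.6)][ZZ17]}, that is by $(1+(\bar{C}_{W}^{\epsilon}(t))^{3})(1+\lVert\bar{y}^{\epsilon,\sharp}(t)\rVert_{\mathcal{C}^{1/2+\beta}}+\lVert\bar{y}_{4}^{\epsilon}(t)\rVert_{\mathcal{C}^{1/2-\delta_{0}}})+\lVert\bar{y}_{4}^{\epsilon}(t)\rVert_{\mathcal{C}^{\delta}}^{2}$. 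This accounts for exactly the three groups of quantities --- the datum mismatch $\lVert y_{0}-\bar{y}_{1}^{\epsilon}(0)\rVert_{\mathcal{C}^{-z}}$, the drift bracket above, and the overall factor $\bar{C}_{W}^{\epsilon}(t)$ --- appearing on the right-hand sides of \eqref{[Equation (3.9)][ZZ17]}--\eqref{[Equation (3.17e)][ZZ17]}.

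With this representation in hand, the four estimates follow by running the Schauder-type smoothing of $P_{t}$ (Lemma \ref{Lemma 3.10}) and the mapping properties of $D_{j},\mathcal{P}^{ii_{1}}$ (Lemma \ref{Lemma 3.7}, \eqref{regularity 0}) with the four target exponents $\tfrac{1}{2}+\beta$, $\delta$, $\tfrac{1}{2}-\delta_{0}$ and $-z$, carefully tracking the time singularities $(t-s)^{-\theta}$ produced at each gain of regularity. The constraints \eqref{[Equation (3.2g)][ZZ17]}, \eqref{[Equation (3.3c)][ZZ17]}, \eqref{beta} and, crucially, \eqref{b1} are precisely what pushes every such $\theta$, and every negative power of $t$ in the boundary datum term, into the admissible range; the appearance in \eqref{[Equation (3.9)][ZZ17]} and \eqref{[Equation (3.17b)][ZZ17]} of the exponent $1-b_{1}$ together with the $dr$-integrand raised to the power $\tfrac{1}{1-b_{1}}$ reflects that, in order to reach the higher regularity $\mathcal{C}^{1/2+\beta}$ (resp.\ $\mathcal{C}^{1/2-\delta_{0}}$) starting from the $\mathcal{C}^{\delta}$-type bound on the drift, one trades part of the regularity deficit for a positive power of $t$ through H\"{o}lder's inequality in the time variable with exponent $\tfrac{1}{1-b_{1}}$; the $\mathcal{C}^{-z}$ estimate \eqref{[Equation (3.17e)][ZZ17]}, being a loss of regularity, requires none of this and hence carries no datum term.

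The main obstacle is the coupling intrinsic to the MHD system: the drift $\zeta$ of $\bar{u}_{3}^{\epsilon}+\bar{u}_{4}^{\epsilon}$ mixes $u$- and $b$-contributions, so the commutator argument cannot be carried out for the $u$- and $b$-components in isolation; one must keep the $u$- and $b$-equations together and exploit the precise sign matching between the rough pieces --- for instance the $-\pi_{<}(\bar{b},\bar{b})$ inside the integral in \eqref{overlineFuepsiloni} against the $-\pi_{<}(\bar{b},\bar{K}_{b})$ in its local part, and likewise for the cross terms in \eqref{overlineFbepsiloni} --- so that contributions which are individually too singular combine into controllable ones, which is the ``coupled renormalization'' phenomenon of \cite{Y19a} emphasised in Remark \ref{commutator}. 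A second, purely organisational difficulty is the volume of computation: there are eight bilinear pairings to treat, and once the Leibniz rule is applied to the $D_{j}$ and $\Delta$ sitting inside the paraproducts, each splits into on the order of nine sub-terms of the type displayed in \eqref{estimate 62}, so although every sub-term is routine given the structure above, the verification is long.
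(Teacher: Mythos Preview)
Your strategy is the paper's: recognise $\bar F^\epsilon$ as a Duhamel--paraproduct commutator, feed it the heat equation satisfied by $g=\bar y_3^\epsilon+\bar y_4^\epsilon$ together with the drift bound underlying Proposition~\ref{[Lemma 3.9][ZZ17]}, then run Schauder at the four target exponents. The paper, however, does not invoke a packaged commutator lemma of the ``$[\Delta,S_{l}g]\Delta_{l}\bar K+\pi_{<}(\zeta,\bar K)$'' type; instead it freezes $g$ at time $t$, producing for each of the eight pairings two pieces $III_{t}^{1},III_{t}^{2}$ (see \eqref{III1}--\eqref{III8}). The piece $III_{t}^{2}=\int_{0}^{t}P_{t-s}\mathcal P D\pi_{<}(g(t),f(s))\,ds-\mathcal P D\pi_{<}(g(t),K(t))$ is handled directly by Lemma~\ref{Lemma 3.11} (the $P_{t}$--$\pi_{<}$ commutator at fixed time), \emph{not} Lemma~\ref{Lemma 3.5}, which is the trilinear $\pi_{0}$ commutator and enters only in the proof of Proposition~\ref{[Lemma 3.9][ZZ17]}. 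The piece $III_{t}^{1}$ carries the increment $g(s)-g(t)$, which the mild formulation with $(P_{t-s}-I)P_{s}$ splits into a datum term, a $\int_{0}^{s}(P_{t-r}-P_{s-r})\bar G^{\epsilon}\,dr$ term and a $\int_{s}^{t}P_{t-r}\bar G^{\epsilon}\,dr$ tail, governed by two further auxiliary exponents $b,b_{0}$ of \eqref{b and b0} in addition to your $b_{1}$; the H\"older-in-time step with exponent $\tfrac{1}{1-b_{1}}$ is needed only for the tail and therefore appears only in \eqref{[Equation (3.9)][ZZ17]}, not in \eqref{[Equation (3.17b)][ZZ17]} as you write. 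Finally, the drift $\bar G^{\epsilon}$ is bounded in $\mathcal{C}^{-3/2-\delta/2-\kappa/2}$ (see \eqref{[Equation (3.7d)][ZZ17]}), a different norm from that of $\bar\phi^{\epsilon,\sharp}$, although the bracket it yields is indeed that on the right of \eqref{[Equation (3.6)][ZZ17]}. Your repackaging is legitimate, but since the abstract lemma you invoke is not among the paper's preliminaries you would in practice have to unfold it into exactly the $III_{t}^{1}$--$III_{t}^{2}$ computation the paper carries out.
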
 

\begin{proof}[Proof of Proposition \ref{Proposition on F}]
For $\lambda = \frac{1}{2} + \beta, \delta, \frac{1}{2} - \delta_{0}$ or $-z$, we can estimate 
\begin{equation}\label{estimate 109}
\lVert (\bar{F}_{u}^{\epsilon}, \bar{F}_{b}^{\epsilon})(t) \rVert_{\mathcal{C}^{\lambda}} \lesssim \sum_{i =1}^{3} \lVert \bar{F}_{u}^{\epsilon, i}(t) \rVert_{\mathcal{C}^{\lambda}} + \lVert \bar{F}_{b}^{\epsilon, i} (t)\rVert_{\mathcal{C}^{\lambda}} 
\end{equation} 
where 
\begin{align}\label{estimate 110}
\lVert \bar{F}_{u}^{\epsilon, i}(t) \rVert_{\mathcal{C}^{\lambda}}  =& \lVert \sum_{i_{1}, j_{1} =1}^{3} - \frac{1}{2} \int_{0}^{t} P_{t-s} \mathcal{P}^{ii_{1}} D_{j_{1}} \pi_{<} ( \bar{u}_{3}^{\epsilon, i_{1}}(s) + \bar{u}_{4}^{\epsilon, i_{1}}(s), \bar{u}_{1}^{\epsilon, j_{1}}(s)) \nonumber \\
& - P_{t-s}\mathcal{P}^{ii_{1}} D_{j_{1}} \pi_{<} ( \bar{u}_{3}^{\epsilon, i_{1}}(t) + \bar{u}_{4}^{\epsilon, i_{1}}(t), \bar{u}_{1}^{\epsilon, j_{1}}(s)) ds \nonumber \\
& - \frac{1}{2} \int_{0}^{t} P_{t-s} \mathcal{P}^{ii_{1}} D_{j_{1}} \pi_{<} (\bar{u}_{3}^{\epsilon, i_{1}}(t) + \bar{u}_{4}^{\epsilon, i_{1}}(t), \bar{u}_{1}^{\epsilon, j_{1}}(s)) ds \nonumber \\
&+ \frac{1}{2} \mathcal{P}^{ii_{1}} D_{j_{1}} \pi_{<} ( \bar{u}_{3}^{\epsilon, i_{1}} (t) + \bar{u}_{4}^{\epsilon, i_{1}}(t), \bar{K}_{u}^{\epsilon, j_{1}}(t)) \nonumber\\
&- \frac{1}{2} \int_{0}^{t} P_{t-s} \mathcal{P}^{ii_{1}} D_{j_{1}} \pi_{<} (\bar{u}_{3}^{\epsilon, j_{1}}(s) + \bar{u}_{4}^{\epsilon, j_{1}}(s), \bar{u}_{1}^{\epsilon, i_{1}}(s)) \nonumber\\
& - P_{t-s} \mathcal{P}^{ii_{1}} D_{j_{1}} \pi_{<}(\bar{u}_{3}^{\epsilon, j_{1}}(t) + \bar{u}_{4}^{\epsilon, j_{1}}(t), \bar{u}_{1}^{\epsilon, i_{1}}(s)) ds \nonumber\\
& - \frac{1}{2} \int_{0}^{t} P_{t-s} \mathcal{P}^{ii_{1}} D_{j_{1}} \pi_{<} (\bar{u}_{3}^{\epsilon, j_{1}}(t) + \bar{u}_{4}^{\epsilon, j_{1}}(t), \bar{u}_{1}^{\epsilon, i_{1}}(s)) ds \nonumber\\
&+ \frac{1}{2} \mathcal{P}^{ii_{1}} D_{j_{1}} \pi_{<} ( \bar{u}_{3}^{\epsilon, j_{1}} (t) + \bar{u}_{4}^{\epsilon, j_{1}}(t), \bar{K}_{u}^{\epsilon, i_{1}}(t)) \nonumber\\
& + \frac{1}{2} \int_{0}^{t} P_{t-s} \mathcal{P}^{ii_{1}} D_{j_{1}} \pi_{<} ( \bar{b}_{3}^{\epsilon, i_{1}}(s) + \bar{b}_{4}^{\epsilon, i_{1}}(s), \bar{b}_{1}^{\epsilon, j_{1}}(s)) \nonumber \\
& - P_{t-s}\mathcal{P}^{ii_{1}} D_{j_{1}} \pi_{<} ( \bar{b}_{3}^{\epsilon, i_{1}}(t) + \bar{b}_{4}^{\epsilon, i_{1}}(t), \bar{b}_{1}^{\epsilon, j_{1}}(s)) ds \nonumber \\
& + \frac{1}{2} \int_{0}^{t} P_{t-s} \mathcal{P}^{ii_{1}} D_{j_{1}} \pi_{<} (\bar{b}_{3}^{\epsilon, i_{1}}(t) + \bar{b}_{4}^{\epsilon, i_{1}}(t), \bar{b}_{1}^{\epsilon, j_{1}}(s)) ds \nonumber \\
&- \frac{1}{2} \mathcal{P}^{ii_{1}} D_{j_{1}} \pi_{<} ( \bar{b}_{3}^{\epsilon, i_{1}} (t) + \bar{b}_{4}^{\epsilon, i_{1}}(t), \bar{K}_{b}^{\epsilon, j_{1}}(t)) \nonumber\\
&+ \frac{1}{2} \int_{0}^{t} P_{t-s} \mathcal{P}^{ii_{1}} D_{j_{1}} \pi_{<} (\bar{b}_{3}^{\epsilon, j_{1}}(s) + \bar{b}_{4}^{\epsilon, j_{1}}(s), \bar{b}_{1}^{\epsilon, i_{1}}(s)) \nonumber\\
& - P_{t-s} \mathcal{P}^{ii_{1}} D_{j_{1}} \pi_{<}(\bar{b}_{3}^{\epsilon, j_{1}}(t) + \bar{b}_{4}^{\epsilon, j_{1}}(t), \bar{b}_{1}^{\epsilon, i_{1}}(s)) ds \nonumber\\
& + \frac{1}{2} \int_{0}^{t} P_{t-s} \mathcal{P}^{ii_{1}} D_{j_{1}} \pi_{<} (\bar{b}_{3}^{\epsilon, j_{1}}(t) + \bar{b}_{4}^{\epsilon, j_{1}}(t), \bar{b}_{1}^{\epsilon, i_{1}}(s)) ds \nonumber\\
&- \frac{1}{2} \mathcal{P}^{ii_{1}} D_{j_{1}} \pi_{<} ( \bar{b}_{3}^{\epsilon, j_{1}} (t) + \bar{b}_{4}^{\epsilon, j_{1}}(t), \bar{K}_{b}^{\epsilon, i_{1}}(t)) \rVert_{\mathcal{C}^{\lambda}} 
\end{align} 
and 
\begin{align}\label{estimate 76}
\lVert \bar{F}_{b}^{\epsilon, i}(t) \rVert_{\mathcal{C}^{\lambda}} =& \lVert \sum_{i_{1}, j_{1} =1}^{3} - \frac{1}{2} \int_{0}^{t} P_{t-s} \mathcal{P}^{ii_{1}} D_{j_{1}} \pi_{<} ( \bar{b}_{3}^{\epsilon, i_{1}}(s) + \bar{b}_{4}^{\epsilon, i_{1}}(s), \bar{u}_{1}^{\epsilon, j_{1}}(s)) \nonumber \\
& - P_{t-s}\mathcal{P}^{ii_{1}} D_{j_{1}} \pi_{<} ( \bar{b}_{3}^{\epsilon, i_{1}}(t) + \bar{b}_{4}^{\epsilon, i_{1}}(t), \bar{u}_{1}^{\epsilon, j_{1}}(s)) ds \nonumber \\
& - \frac{1}{2} \int_{0}^{t} P_{t-s} \mathcal{P}^{ii_{1}} D_{j_{1}} \pi_{<} (\bar{b}_{3}^{\epsilon, i_{1}}(t) + \bar{b}_{4}^{\epsilon, i_{1}}(t), \bar{u}_{1}^{\epsilon, j_{1}}(s)) ds \nonumber \\
&+ \frac{1}{2} \mathcal{P}^{ii_{1}} D_{j_{1}} \pi_{<} ( \bar{b}_{3}^{\epsilon, i_{1}} (t) + \bar{b}_{4}^{\epsilon, i_{1}}(t), \bar{K}_{u}^{\epsilon, j_{1}}(t)) \nonumber\\
&- \frac{1}{2} \int_{0}^{t} P_{t-s} \mathcal{P}^{ii_{1}} D_{j_{1}} \pi_{<} (\bar{u}_{3}^{\epsilon, j_{1}}(s) + \bar{u}_{4}^{\epsilon, j_{1}}(s), \bar{b}_{1}^{\epsilon, i_{1}}(s)) \nonumber\\
& - P_{t-s} \mathcal{P}^{ii_{1}} D_{j_{1}} \pi_{<}(\bar{u}_{3}^{\epsilon, j_{1}}(t) + \bar{u}_{4}^{\epsilon, j_{1}}(t), \bar{b}_{1}^{\epsilon, i_{1}}(s)) ds \nonumber\\
& - \frac{1}{2} \int_{0}^{t} P_{t-s} \mathcal{P}^{ii_{1}} D_{j_{1}} \pi_{<} (\bar{u}_{3}^{\epsilon, j_{1}}(t) + \bar{u}_{4}^{\epsilon, j_{1}}(t), \bar{b}_{1}^{\epsilon, i_{1}}(s)) ds \nonumber\\
&+ \frac{1}{2} \mathcal{P}^{ii_{1}} D_{j_{1}} \pi_{<} ( \bar{u}_{3}^{\epsilon, j_{1}} (t) + \bar{u}_{4}^{\epsilon, j_{1}}(t), \bar{K}_{b}^{\epsilon, i_{1}}(t)) \nonumber\\
& + \frac{1}{2} \int_{0}^{t} P_{t-s} \mathcal{P}^{ii_{1}} D_{j_{1}} \pi_{<} ( \bar{u}_{3}^{\epsilon, i_{1}}(s) + \bar{u}_{4}^{\epsilon, i_{1}}(s), \bar{b}_{1}^{\epsilon, j_{1}}(s)) \nonumber \\
& - P_{t-s}\mathcal{P}^{ii_{1}} D_{j_{1}} \pi_{<} ( \bar{u}_{3}^{\epsilon, i_{1}}(t) + \bar{u}_{4}^{\epsilon, i_{1}}(t), \bar{b}_{1}^{\epsilon, j_{1}}(s)) ds \nonumber \\
& + \frac{1}{2} \int_{0}^{t} P_{t-s} \mathcal{P}^{ii_{1}} D_{j_{1}} \pi_{<} (\bar{u}_{3}^{\epsilon, i_{1}}(t) + \bar{u}_{4}^{\epsilon, i_{1}}(t), \bar{b}_{1}^{\epsilon, j_{1}}(s)) ds \nonumber \\
&- \frac{1}{2} \mathcal{P}^{ii_{1}} D_{j_{1}} \pi_{<} ( \bar{u}_{3}^{\epsilon, i_{1}} (t) + \bar{u}_{4}^{\epsilon, i_{1}}(t), \bar{K}_{b}^{\epsilon, j_{1}}(t)) \nonumber\\
&+ \frac{1}{2} \int_{0}^{t} P_{t-s} \mathcal{P}^{ii_{1}} D_{j_{1}} \pi_{<} (\bar{b}_{3}^{\epsilon, j_{1}}(s) + \bar{b}_{4}^{\epsilon, j_{1}}(s), \bar{u}_{1}^{\epsilon, i_{1}}(s)) \nonumber\\
& - P_{t-s} \mathcal{P}^{ii_{1}} D_{j_{1}} \pi_{<}(\bar{b}_{3}^{\epsilon, j_{1}}(t) + \bar{b}_{4}^{\epsilon, j_{1}}(t), \bar{u}_{1}^{\epsilon, i_{1}}(s)) ds \nonumber\\
& + \frac{1}{2} \int_{0}^{t} P_{t-s} \mathcal{P}^{ii_{1}} D_{j_{1}} \pi_{<} (\bar{b}_{3}^{\epsilon, j_{1}}(t) + \bar{b}_{4}^{\epsilon, j_{1}}(t), \bar{u}_{1}^{\epsilon, i_{1}}(s)) ds \nonumber\\
&- \frac{1}{2} \mathcal{P}^{ii_{1}} D_{j_{1}} \pi_{<} ( \bar{b}_{3}^{\epsilon, j_{1}} (t) + \bar{b}_{4}^{\epsilon, j_{1}}(t), \bar{K}_{u}^{\epsilon, i_{1}}(t)) \rVert_{\mathcal{C}^{\lambda}} 
\end{align} 
due to \eqref{overlineFuepsiloni} and \eqref{overlineFbepsiloni}. 
\begin{remark}\label{Remark 2.1} 
Within $\bar{F}_{b}^{\epsilon, i}$ from \eqref{overlineFbepsiloni}, there are eight terms. Among such eight terms, we strategically paired up the first and seventh, second with sixth, third with fifth and fourth with eighth terms in \eqref{estimate 76}. This is similar to the necessity of taking advantage of the structure of the nonlinear terms of the MHD system that we mentioned in Remark \ref{commutator}. 
\end{remark}
W.l.o.g., we focus only on $\lVert \bar{F}_{b}^{\epsilon} \rVert_{\mathcal{C}^{\lambda}}$ as the estimates for $\lVert \bar{F}_{u}^{\epsilon} \rVert_{\mathcal{C}^{\lambda}}$ are similar. We use \eqref{estimate 56} and compute 
\begin{equation}\label{estimate 111}
\lVert \bar{F}_{b}^{\epsilon}(t) \rVert_{\mathcal{C}^{\lambda}} \lesssim \sum_{k=1}^{8} III_{t}^{k}
\end{equation}
where 
\begin{subequations} 
\begin{align} 
III_{t}^{1} \triangleq & \sum_{i, i_{1}, j_{1} =1}^{3} \lVert \int_{0}^{t} P_{t-s} \mathcal{P}^{ii_{1}} D_{j_{1}} \pi_{<} ( \bar{b}_{3}^{\epsilon, i_{1}}(s) + \bar{b}_{4}^{\epsilon, i_{1}}(s) \nonumber\\
& \hspace{20mm}  - (\bar{b}_{3}^{\epsilon, i_{1}}(t) + \bar{b}_{4}^{\epsilon, i_{1}}(t)), \bar{u}_{1}^{\epsilon, j_{1}}(s)) ds \rVert_{\mathcal{C}^{\lambda}},  \label{III1}\\
III_{t}^{2} \triangleq & \sum_{i, i_{1}, j_{1} =1}^{3} \lVert \int_{0}^{t} P_{t-s} \mathcal{P}^{ii_{1}} D_{j_{1}} \pi_{<}(\bar{b}_{3}^{\epsilon, i_{1}}(t) + \bar{b}_{4}^{\epsilon, i_{1}}(t), \bar{u}_{1}^{\epsilon, j_{1}}(s)) ds \nonumber\\
& \hspace{20mm} - \mathcal{P}^{ii_{1}}D_{j_{1}} \pi_{<} ( \bar{b}_{3}^{\epsilon, i_{1}}(t)+ \bar{b}_{4}^{\epsilon, i_{1}}(t), \int_{0}^{t} P_{t-s} \bar{u}_{1}^{\epsilon, j_{1}}(s) ds) \rVert_{\mathcal{C}^{\lambda}}, \label{III2}\\
III_{t}^{3} \triangleq&  \sum_{i, i_{1}, j_{1} =1}^{3} \lVert \int_{0}^{t} P_{t-s} \mathcal{P}^{ii_{1}} D_{j_{1}} \pi_{<} (\bar{u}_{3}^{\epsilon, j_{1}}(s) + \bar{u}_{4}^{\epsilon, j_{1}}(s) \nonumber\\
& \hspace{20mm} - (\bar{u}_{3}^{\epsilon, j_{1}}(t) + \bar{u}_{4}^{\epsilon, j_{1}}(t)), \bar{b}_{1}^{\epsilon, i_{1}}(s)) ds \rVert_{\mathcal{C}^{\lambda}}, \label{III3}\\
III_{t}^{4} \triangleq&  \sum_{i, i_{1}, j_{1} = 1}^{3} \lVert \int_{0}^{t} P_{t-s} \mathcal{P}^{ii_{1}} D_{j_{1}} \pi_{<} ( \bar{u}_{3}^{\epsilon, j_{1}}(t) + \bar{u}_{4}^{\epsilon, j_{1}}(t), \bar{b}_{1}^{\epsilon, i_{1}}(s)) ds \nonumber\\
& \hspace{20mm} - \mathcal{P}^{ii_{1}} D_{j_{1}} \pi_{<} (\bar{u}_{3}^{\epsilon, j_{1}}(t) + \bar{u}_{4}^{\epsilon, j_{1}}(t), \int_{0}^{t} P_{t-s} \bar{b}_{1}^{\epsilon, i_{1}}(s) ds ) \rVert_{\mathcal{C}^{\lambda}}, \label{III4}\\
III_{t}^{5} \triangleq& \sum_{i, i_{1}, j_{1} =1}^{3} \lVert \int_{0}^{t} P_{t-s} \mathcal{P}^{ii_{1}} D_{j_{1}} \pi_{<} ( \bar{u}_{3}^{\epsilon, i_{1}}(s) + \bar{u}_{4}^{\epsilon, i_{1}}(s) \nonumber\\
& \hspace{20mm} - (\bar{u}_{3}^{\epsilon, i_{1}}(t) + \bar{u}_{4}^{\epsilon, i_{1}}(t)), \bar{b}_{1}^{\epsilon, j_{1}}(s)) ds \rVert_{\mathcal{C}^{\lambda}},  \label{III5}\\
III_{t}^{6} \triangleq&  \sum_{i, i_{1}, j_{1} =1}^{3} \lVert \int_{0}^{t} P_{t-s} \mathcal{P}^{ii_{1}} D_{j_{1}} \pi_{<}(\bar{u}_{3}^{\epsilon, i_{1}}(t) + \bar{u}_{4}^{\epsilon, i_{1}}(t), \bar{b}_{1}^{\epsilon, j_{1}}(s)) ds \nonumber\\
& \hspace{20mm} - \mathcal{P}^{ii_{1}}D_{j_{1}} \pi_{<} ( \bar{u}_{3}^{\epsilon, i_{1}}(t)+ \bar{u}_{4}^{\epsilon, i_{1}}(t), \int_{0}^{t} P_{t-s} \bar{b}_{1}^{\epsilon, j_{1}}(s) ds) \rVert_{\mathcal{C}^{\lambda}},  \label{III6}\\
III_{t}^{7} \triangleq& \sum_{i, i_{1}, j_{1} =1}^{3} \lVert \int_{0}^{t} P_{t-s} \mathcal{P}^{ii_{1}} D_{j_{1}} \pi_{<} (\bar{b}_{3}^{\epsilon, j_{1}}(s) + \bar{b}_{4}^{\epsilon, j_{1}}(s) \nonumber\\
& \hspace{20mm} - (\bar{b}_{3}^{\epsilon, j_{1}}(t) + \bar{b}_{4}^{\epsilon, j_{1}}(t)), \bar{u}_{1}^{\epsilon, i_{1}}(s)) ds \rVert_{\mathcal{C}^{\lambda}},  \label{III7}\\
III_{t}^{8} \triangleq&  \sum_{i, i_{1}, j_{1} = 1}^{3} \lVert \int_{0}^{t} P_{t-s} \mathcal{P}^{ii_{1}} D_{j_{1}} \pi_{<} ( \bar{b}_{3}^{\epsilon, j_{1}}(t) + \bar{b}_{4}^{\epsilon, j_{1}}(t), \bar{u}_{1}^{\epsilon, i_{1}}(s)) ds \nonumber\\
& \hspace{20mm} - \mathcal{P}^{ii_{1}} D_{j_{1}} \pi_{<} (\bar{b}_{3}^{\epsilon, j_{1}}(t) + \bar{b}_{4}^{\epsilon, j_{1}}(t), \int_{0}^{t} P_{t-s} \bar{u}_{1}^{\epsilon, i_{1}}(s) ds ) \rVert_{\mathcal{C}^{\lambda}}.\label{III8} 
\end{align}
\end{subequations} 
W.l.o.g. we show the estimates only for $III_{t}^{1}$ and $III_{t}^{2}$ as those for others are similar. In order to prove \eqref{[Equation (3.9)][ZZ17]}, we now consider $\lambda = \frac{1}{2} + \beta$. First, 
\begin{align}\label{[Equation (3.7a)][ZZ17]}
III_{t}^{1} \lesssim  \bar{C}_{W}^{\epsilon}(t)\int_{0}^{t} (t-s)^{-1 - \frac{ \frac{\delta}{2} + \beta + \kappa}{2}} \lVert \bar{b}_{3}^{\epsilon}(s) + \bar{b}_{4}^{\epsilon}(s) - (\bar{b}_{3}^{\epsilon}(t) + \bar{b}_{4}^{\epsilon}(t)) \rVert_{\mathcal{C}^{\frac{\kappa}{2}}} ds 
\end{align} 
by \eqref{III1}, Lemmas \ref{Lemma 3.7}, \ref{Lemma 3.10} and \ref{[Lemma 1.1][Y19a]} (2), \eqref{regularity 0} and \eqref{[Equation (3.2e)][ZZ17]}. For $s \in (0,t)$, $i \in \{1,2,3\}$, 
\begin{equation}\label{b and b0}
b \in (\frac{\delta}{2} + \beta + 2 \kappa, \frac{1}{2} - 2 \kappa - \frac{\delta}{2}), \hspace{3mm} b_{0} \in (\frac{\delta}{4} + \frac{\beta}{2} + \kappa, 1 - \frac{z}{2} - \kappa), 
\end{equation}
we define $I$ to be an identity operator and $\bar{G}^{\epsilon} \triangleq (\bar{G}^{\epsilon, 1}, \bar{G}^{\epsilon, 2}, \bar{G}^{\epsilon, 3})$ where  
\begin{align}\label{[Equation (3.7c)][ZZ17]}
\bar{G}^{\epsilon, i}(t) &\triangleq  \sum_{i_{1}, j=1}^{3} \mathcal{P}^{ii_{1}} D_{j}[\bar{b}_{1}^{\epsilon, i_{1}} \diamond \bar{u}_{2}^{\epsilon, j} + \bar{b}_{2}^{\epsilon, i_{1}} \diamond \bar{u}_{1}^{\epsilon, j} - \bar{u}_{1}^{\epsilon, i_{1}} \diamond \bar{b}_{2}^{\epsilon, j} - \bar{u}_{2}^{\epsilon, i_{1}} \diamond \bar{b}_{1}^{\epsilon, j} \\
& \hspace{8mm} +  \bar{b}_{1}^{\epsilon, i_{1}} \diamond (\bar{u}_{3}^{\epsilon, j} + \bar{u}_{4}^{\epsilon, j} ) + (\bar{b}_{3}^{\epsilon, i_{1}} + \bar{b}_{4}^{\epsilon, i_{1}}) \diamond \bar{u}_{1}^{\epsilon, j} + \bar{b}_{2}^{\epsilon, i_{1}} \diamond\bar{u}_{2}^{\epsilon, j} \nonumber\\
& \hspace{8mm} + \bar{b}_{2}^{\epsilon, i_{1}} (\bar{u}_{3}^{\epsilon, j} + \bar{u}_{4}^{\epsilon, j}) + \bar{u}_{2}^{\epsilon, j} (\bar{b}_{3}^{\epsilon, i_{1}} + \bar{b}_{4}^{\epsilon, i_{1}})+ (\bar{b}_{3}^{\epsilon, i_{1}} + \bar{b}_{4}^{\epsilon, i_{1}}) (\bar{u}_{3}^{\epsilon, j} + \bar{u}_{4}^{\epsilon, j}) \nonumber \\
&  \hspace{8mm} -\bar{u}_{1}^{\epsilon, i_{1}} \diamond (\bar{b}_{3}^{\epsilon, j} + \bar{b}_{4}^{\epsilon, j} ) - (\bar{u}_{3}^{\epsilon, i_{1}} + \bar{u}_{4}^{\epsilon, i_{1}}) \diamond \bar{b}_{1}^{\epsilon, j} - \bar{u}_{2}^{\epsilon, i_{1}} \diamond\bar{b}_{2}^{\epsilon, j} \nonumber \\
& \hspace{8mm} - \bar{u}_{2}^{\epsilon, i_{1}} (\bar{b}_{3}^{\epsilon, j} + \bar{b}_{4}^{\epsilon, j}) - \bar{b}_{2}^{\epsilon, j} (\bar{u}_{3}^{\epsilon, i_{1}} + \bar{u}_{4}^{\epsilon, i_{1}})- (\bar{u}_{3}^{\epsilon, i_{1}} + \bar{u}_{4}^{\epsilon, i_{1}}) (\bar{b}_{3}^{\epsilon, j} + \bar{b}_{4}^{\epsilon, j})](t)\nonumber 
\end{align} 
and compute within the right hand side of \eqref{[Equation (3.7a)][ZZ17]}, for $i\in \{1,2,3\}$, 
\begin{align}\label{[Equation (3.7b)][ZZ17]}
& \lVert \bar{b}_{3}^{\epsilon, i}(t) + \bar{b}_{4}^{\epsilon, i}(t) - \bar{b}_{3}^{\epsilon, i}(s) - \bar{b}_{4}^{\epsilon, i}(s) \rVert_{\mathcal{C}^{\frac{\kappa}{2}}} \nonumber\\
\lesssim& \lVert (P_{t-s} - I) P_{s} (b_{0} - \bar{b}_{1}^{\epsilon} (0)) \rVert_{\mathcal{C}^{\frac{\kappa}{2}}} \nonumber\\
& + \lVert \int_{0}^{s} (P_{t-r} - P_{s-r} ) \bar{G}^{\epsilon}(r) dr \rVert_{\mathcal{C}^{\frac{\kappa}{2}}} + \lVert \int_{s}^{t} P_{t-r} \bar{G}^{\epsilon}(r) dr \rVert_{\mathcal{C}^{\frac{\kappa}{2}}} \nonumber \\
\lesssim& (t-s)^{b_{0}} s^{- \frac{ z + \frac{\kappa}{2} + 2b_{0}}{2}} \lVert b_{0} - \bar{b}_{1}^{\epsilon} (0) \rVert_{\mathcal{C}^{-z}} \nonumber\\
& + (t-s)^{\frac{b}{2}} \int_{0}^{s} (s-r)^{- \frac{ \kappa + b + \frac{3}{2} + \frac{\delta}{2}}{2}} \lVert \bar{G}^{\epsilon}(r) \rVert_{\mathcal{C}^{- \frac{3}{2} - \frac{\delta}{2} - \frac{\kappa}{2}}} dr \nonumber\\
&+ (t-s)^{b_{1}} \left( \int_{s}^{t} [ (t-r)^{- \frac{ \kappa + \frac{3}{2} + \frac{\delta}{2}}{2}} \lVert \bar{G}^{\epsilon}(r) \rVert_{\mathcal{C}^{-\frac{3}{2} - \frac{\delta}{2} - \frac{\kappa}{2}}}]^{\frac{1}{1-b_{1}}} dr\right)^{1-b_{1}}  
\end{align} 
by \eqref{[Equation (3.1ac)][ZZ17]}, \eqref{[Equation (3.2)][ZZ17]},  that $P_{t} - P_{s} = (P_{t-s} - I)P_{s}$, Lemmas \ref{Lemma 3.12} and \ref{Lemma 3.10}, \eqref{b and b0} and H$\ddot{\mathrm{o}}$lder's inequality.  Now, we first estimate within $\bar{G}^{\epsilon, i}(r)$ of \eqref{[Equation (3.7c)][ZZ17]}, for $i\in \{1,2,3\}$,  
\begin{align}\label{estimate 83}
\lVert \mathcal{P}^{ii_{1}} D_{j} (\bar{b}_{1}^{\epsilon, i_{1}} \diamond \bar{u}_{2}^{\epsilon, j} ) \rVert_{\mathcal{C}^{-\frac{3}{2} - \frac{\delta}{2} - \frac{\kappa}{2}}}\lesssim   \lVert \bar{b}_{1}^{\epsilon, i_{1}} \diamond \bar{u}_{2}^{\epsilon,j} \rVert_{\mathcal{C}^{-\frac{1}{2} - \frac{\delta}{2} }} \lesssim \bar{C}_{W}^{\epsilon}(t)
\end{align} 
by Lemma \ref{Lemma 3.7} and \eqref{[Equation (3.2e)][ZZ17]}. Similarly, 
\begin{align}\label{estimate 84}
\lVert \mathcal{P}^{ii_{1}} D_{j} (\bar{b}_{2}^{\epsilon, i_{1}} \diamond \bar{u}_{1}^{\epsilon, j} - \bar{u}_{1}^{\epsilon, i_{1}} \diamond \bar{b}_{2}^{\epsilon, j}  - \bar{u}_{2}^{\epsilon, i_{1}} \diamond \bar{b}_{1}^{\epsilon, j} ) \rVert_{\mathcal{C}^{-\frac{3}{2} - \frac{\delta}{2} - \frac{\kappa}{2}}}  \lesssim \bar{C}_{W}^{\epsilon}(t). 
\end{align} 
Second, we estimate within $\bar{G}^{\epsilon, i}(r)$ of \eqref{[Equation (3.7c)][ZZ17]}, 
\begin{align}\label{estimate 82}
 \lVert \mathcal{P}^{ii_{1}} D_{j} [\bar{b}_{1}^{\epsilon, i_{1}} \diamond (\bar{u}_{3}^{\epsilon, j} + \bar{u}_{4}^{\epsilon, j})] \rVert_{\mathcal{C}^{-\frac{3}{2} - \frac{\delta}{2}-\frac{\kappa}{2}}} \lesssim \sum_{k=1}^{3} IV_{t, i_{1} j}^{k}
\end{align}  
where 
\begin{subequations}\label{estimate 78}
\begin{align}
& IV_{t, i_{1} j}^{1} \triangleq   \lVert  \pi_{<} (\bar{b}_{1}^{\epsilon, i_{1}}, \bar{u}_{3}^{\epsilon, j} + \bar{u}_{4}^{\epsilon, j}) \rVert_{\mathcal{C}^{-\frac{1}{2} - \frac{\delta}{2} - \frac{\kappa}{2}}}, \\
& IV_{t, i_{1} j}^{2} \triangleq \lVert \pi_{0,\diamond} (\bar{b}_{1}^{\epsilon, i_{1}}, \bar{u}_{3}^{\epsilon, j} + \bar{u}_{4}^{\epsilon, j})\rVert_{\mathcal{C}^{-\frac{1}{2} - \frac{\delta}{2} - \frac{\kappa}{2}}}, \\
& IV_{t, i_{1} j}^{3} \triangleq \lVert \pi_{>} (\bar{b}_{1}^{\epsilon, i_{1}}, \bar{u}_{3}^{\epsilon, j} + \bar{u}_{4}^{\epsilon, j}) \rVert_{\mathcal{C}^{-\frac{1}{2} - \frac{\delta}{2} - \frac{\kappa}{2}}},
\end{align}
\end{subequations} 
by \eqref{[Equation (3.2ad)][ZZ17]}, \eqref{[Equation (3.2ae)][ZZ17]}. We estimate 
\begin{align}\label{estimate 81}
IV_{t, i_{1}j}^{1} \lesssim \bar{C}_{W}^{\epsilon}(t) ( \lVert \bar{u}_{3}^{\epsilon, j} \rVert_{\mathcal{C}^{\frac{1}{2} - \delta}} + \lVert \bar{u}_{4}^{\epsilon, j} \rVert_{\mathcal{C}^{\frac{1}{2} - \delta_{0}}}) 
\lesssim \bar{C}_{W}^{\epsilon}(t) ( \bar{C}_{W}^{\epsilon}(t) + \lVert \bar{u}_{4}^{\epsilon, j} \rVert_{\mathcal{C}^{\frac{1}{2} - \delta_{0}}}) 
\end{align} 
by \eqref{estimate 78}, Lemma \ref{[Lemma 1.1][Y19a]} (2), \eqref{[Equation (3.2e)][ZZ17]}, \eqref{[Equation (3.2g)][ZZ17]}, and \eqref{[Equation (3.2h)][ZZ17]}, 
\begin{align}\label{estimate 79}
IV_{t, i_{1} j}^{2}\lesssim \bar{C}_{W}^{\epsilon}(t) + (1+ (\bar{C}_{W}^{\epsilon}(t))^{3}) (1+ \lVert \bar{y}^{\epsilon, \sharp} \rVert_{\mathcal{C}^{\frac{1}{2} + \beta}} + \lVert \bar{y}_{4}^{\epsilon} \rVert_{\mathcal{C}^{\frac{1}{2} - \delta_{0}}}) 
\end{align} 
by \eqref{estimate 78}, \eqref{[Equation (3.2g)][ZZ17]}, \eqref{[Equation (3.6a)][ZZ17]}, and 
\begin{align}\label{estimate 80}
IV_{t, i_{1} j}^{3} 
\lesssim ( \lVert \bar{u}_{3}^{\epsilon, j}\rVert_{\mathcal{C}^{\frac{1}{2} - \delta}} + \lVert \bar{u}_{4}^{\epsilon, j} \rVert_{\mathcal{C}^{\frac{1}{2} - \delta_{0}}}) \bar{C}_{W}^{\epsilon}(t) 
\lesssim (\bar{C}_{W}^{\epsilon}(t) + \lVert \bar{u}_{4}^{\epsilon, j} \rVert_{\mathcal{C}^{\frac{1}{2} -\delta_{0}}}) \bar{C}_{W}^{\epsilon}(t) 
\end{align} 
by \eqref{estimate 78}, Lemma \ref{[Lemma 1.1][Y19a]} (2), \eqref{[Equation (3.2g)][ZZ17]}, \eqref{[Equation (3.2e)][ZZ17]} and \eqref{[Equation (3.2h)][ZZ17]}. Applying \eqref{estimate 81} - \eqref{estimate 80} to \eqref{estimate 82} gives 
\begin{align}\label{estimate 85}
&\lVert \mathcal{P}^{ii_{1}} D_{j} [\bar{b}_{1}^{\epsilon, i_{1}} \diamond (\bar{u}_{3}^{\epsilon, j} + \bar{u}_{4}^{\epsilon, j})] \rVert_{\mathcal{C}^{-\frac{3}{2} - \frac{\delta}{2}-\frac{\kappa}{2}}} \nonumber\\
\lesssim& (1+ ( \bar{C}_{W}^{\epsilon}(t))^{3}) (1+ \lVert \bar{y}^{\epsilon, \sharp} \rVert_{\mathcal{C}^{\frac{1}{2} + \beta}} + \lVert \bar{y}_{4}^{\epsilon} \rVert_{\mathcal{C}^{\frac{1}{2} - \delta_{0}}}). 
\end{align} 
Similarly, 
\begin{align}\label{estimate 86}
&\lVert \mathcal{P}^{ii_{1}} D_{j} [( \bar{b}_{3}^{\epsilon, i_{1}} + \bar{b}_{4}^{\epsilon, i_{1}}) \diamond \bar{u}_{1}^{\epsilon, j}  - \bar{u}_{1}^{\epsilon, i_{1}} \diamond (\bar{b}_{3}^{\epsilon, j} + \bar{b}_{4}^{\epsilon, j}) - (\bar{u}_{3}^{\epsilon, i_{1}} + \bar{u}_{4}^{\epsilon, i_{1}}) \diamond \bar{b}_{1}^{\epsilon, j}] \rVert_{\mathcal{C}^{-\frac{3}{2} - \frac{\delta}{2}-\frac{\kappa}{2}}} \nonumber\\
\lesssim& (1+ ( \bar{C}_{W}^{\epsilon}(t))^{3}) (1+ \lVert \bar{y}^{\epsilon, \sharp} \rVert_{\mathcal{C}^{\frac{1}{2} + \beta}} + \lVert \bar{y}_{4}^{\epsilon} \rVert_{\mathcal{C}^{\frac{1}{2} - \delta_{0}}}). 
\end{align}  
Third, we estimate within $\bar{G}^{\epsilon, i}(r)$ of \eqref{[Equation (3.7c)][ZZ17]}
\begin{align}\label{estimate 87}
\lVert \mathcal{P}^{ii_{1}} D_{j} (\bar{b}_{2}^{\epsilon, i_{1}} \diamond \bar{u}_{2}^{\epsilon, j}, \bar{u}_{2}^{\epsilon, i_{1}} \diamond \bar{b}_{2}^{\epsilon, j}) \rVert_{\mathcal{C}^{-\frac{3}{2} - \frac{\delta}{2} - \frac{\kappa}{2}}}
 \lesssim \bar{C}_{W}^{\epsilon}(t) 
\end{align} 
by \eqref{[Equation (3.2g)][ZZ17]} and \eqref{[Equation (3.2e)][ZZ17]}. Fourth,  we estimate within $\bar{G}^{\epsilon, i}(r)$ of \eqref{[Equation (3.7c)][ZZ17]}, 
\begin{align}\label{estimate 88}
 \lVert \mathcal{P}^{ii_{1}} D_{j} [ \bar{b}_{2}^{\epsilon, i_{1}} (\bar{u}_{3}^{\epsilon, j} + \bar{u}_{4}^{\epsilon, j})] \rVert_{\mathcal{C}^{-\frac{3}{2} - \frac{\delta}{2} - \frac{\kappa}{2}}} 
\lesssim& \bar{C}_{W}^{\epsilon}(t) (\bar{C}_{W}^{\epsilon}(t) + \lVert \bar{u}_{4}^{\epsilon} \rVert_{\mathcal{C}^{\frac{1}{2} - \delta_{0}}}) 
\end{align} 
by \eqref{[Equation (3.2g)][ZZ17]}, Lemma \ref{[Lemma 1.1][Y19a]} (4), \eqref{[Equation (3.2h)][ZZ17]}, and \eqref{[Equation (3.2e)][ZZ17]}. Similarly, 
\begin{align}\label{estimate 89}
& \lVert \mathcal{P}^{ii_{1}} D_{j} [ \bar{u}_{2}^{\epsilon, j} (\bar{b}_{3}^{\epsilon, i_{1}} + \bar{b}_{4}^{\epsilon, i_{1}}) - \bar{u}_{2}^{\epsilon, i_{1}} (\bar{b}_{3}^{\epsilon, j} + \bar{b}_{4}^{\epsilon, j}) - \bar{b}_{2}^{\epsilon, j} (\bar{u}_{3}^{\epsilon, i_{1}} + \bar{u}_{4}^{\epsilon, i_{1}}) ] \rVert_{\mathcal{C}^{-\frac{3}{2} - \frac{\delta}{2} - \frac{\kappa}{2}}} \nonumber\\
\lesssim&  \bar{C}_{W}^{\epsilon}(t) (\bar{C}_{W}^{\epsilon}(t) + \lVert \bar{y}_{4}^{\epsilon} \rVert_{\mathcal{C}^{\frac{1}{2} - \delta_{0}}}). 
\end{align} 
Fifth, we estimate within $\bar{G}^{\epsilon, i}(r)$ of \eqref{[Equation (3.7c)][ZZ17]}, 
\begin{align}\label{estimate 90}
& \lVert \mathcal{P}^{ii_{1}} D_{j} [ ( \bar{b}_{3}^{\epsilon, i_{1}} + \bar{b}_{4}^{\epsilon, i_{1}}) ( \bar{u}_{3}^{\epsilon, j} + \bar{u}_{4}^{\epsilon, j}) - (\bar{u}_{3}^{\epsilon, i_{1}} + \bar{u}_{4}^{\epsilon, i_{1}})(\bar{b}_{3}^{\epsilon, j} + \bar{b}_{4}^{\epsilon, j}) ] \rVert_{\mathcal{C}^{-\frac{3}{2} - \frac{\delta}{2} - \frac{\kappa}{2}}} \nonumber\\
\lesssim& \lVert \bar{b}_{3}^{\epsilon} + \bar{b}_{4}^{\epsilon} \rVert_{\mathcal{C}^{\delta}} \lVert \bar{u}_{3}^{\epsilon} + \bar{u}_{4}^{\epsilon} \rVert_{\mathcal{C}^{\delta}} \lesssim ( \bar{C}_{W}^{\epsilon}(t))^{2}+ \lVert \bar{y}_{4}^{\epsilon} \rVert_{\mathcal{C}^{\delta}}^{2} 
\end{align} 
by Lemma \ref{[Lemma 1.1][Y19a]} (4), \eqref{[Equation (3.2g)][ZZ17]}, and \eqref{[Equation (3.2h)][ZZ17]}. Hence, \eqref{estimate 83}-\eqref{estimate 84} and \eqref{estimate 85} - \eqref{estimate 90} give us 
\begin{align}\label{[Equation (3.7d)][ZZ17]}
&\lVert \bar{G}^{\epsilon}(r) \rVert_{\mathcal{C}^{-\frac{3}{2} - \frac{\delta}{2} - \frac{\kappa}{2}}} \nonumber\\
\lesssim& (1+ (\bar{C}_{W}^{\epsilon}(t))^{3})(1+ \lVert \bar{y}^{\epsilon,\sharp}(r) \rVert_{\mathcal{C}^{\frac{1}{2} + \beta}} + \lVert \bar{y}_{4}^{\epsilon}(r) \rVert_{\mathcal{C}^{\frac{1}{2} - \delta_{0}}}) + \lVert \bar{y}_{4}^{\epsilon}(r) \rVert_{\mathcal{C}^{\delta}}^{2}. 
\end{align} 
Now applying \eqref{[Equation (3.7b)][ZZ17]} to \eqref{[Equation (3.7a)][ZZ17]} leads to 
\begin{align}\label{estimate 93}
III_{t}^{1} &\lesssim \bar{C}_{W}^{\epsilon}(t) [ \int_{0}^{t} (t-s)^{- 1- \frac{ \frac{\delta}{2} + \beta + \kappa}{2} + b_{0}} s^{- \frac{ z + \frac{\kappa}{2} + 2b_{0}}{2}} \lVert b_{0} - \bar{b}_{1}^{\epsilon}(0) \rVert_{\mathcal{C}^{-z}} ds \\
& \hspace{3mm} + \int_{0}^{t} \int_{0}^{s} (t-s)^{-1 - \frac{ \frac{\delta}{2} + \beta + \kappa}{2} + \frac{b}{2}} (s-r)^{- \frac{ \kappa + b + \frac{3}{2} + \frac{\delta}{2}}{2}} \lVert \bar{G}^{\epsilon}(r) \rVert_{\mathcal{C}^{- \frac{3}{2} - \frac{\delta}{2} - \frac{\kappa}{2}}} dr ds \nonumber  \\
& \hspace{3mm} + \int_{0}^{t} (t-s)^{-1 - \frac{ \frac{\delta}{2} + \beta + \kappa}{2} + b_{1}} \left(\int_{s}^{t} (t-r)^{- \frac{ \kappa + \frac{3}{2} + \frac{\delta}{2}}{2(1-b_{1})}} \lVert \bar{G}^{\epsilon}(r) \rVert_{\mathcal{C}^{-\frac{3}{2} - \frac{\delta}{2} - \frac{\kappa}{2}}}^{\frac{1}{1-b_{1}}}  dr \right)^{1-b_{1}} ds] \nonumber
\end{align} 
in which 
\begin{align}\label{estimate 92}
&  \int_{0}^{t} \int_{0}^{s} (t-s)^{-1 - \frac{ \frac{\delta}{2} + \beta + \kappa}{2} + \frac{b}{2}} (s-r)^{- \frac{ \kappa + b + \frac{3}{2} + \frac{\delta}{2}}{2}} \lVert \bar{G}^{\epsilon}(r) \rVert_{\mathcal{C}^{- \frac{3}{2} - \frac{\delta}{2} - \frac{\kappa}{2}}} dr ds\\
=& \int_{0}^{t} (t-r)^{- \kappa - \frac{3}{4} -\frac{\delta}{2} - \frac{\beta}{2}} \int_{0}^{1} (1-l)^{-1 - \frac{ \frac{\delta}{2} + \beta + \kappa}{2} + \frac{b}{2}} l^{- \frac{ \kappa + b + \frac{3}{2} + \frac{\delta}{2}}{2}} dl \lVert \bar{G}^{\epsilon}(r) \rVert_{\mathcal{C}^{- \frac{3}{2} - \frac{\delta}{2} - \frac{\kappa}{2}}} dr \nonumber
\end{align} 
by Fubini theorem, while 
\begin{align}\label{estimate 91}
& \int_{0}^{t} (t-s)^{-1 - \frac{ \frac{\delta}{2} + \beta + \kappa}{2} + b_{1}} \left(\int_{s}^{t} (t-r)^{- \frac{ \kappa + \frac{3}{2} + \frac{\delta}{2}}{2(1-b_{1})}} \lVert \bar{G}^{\epsilon}(r) \rVert_{\mathcal{C}^{-\frac{3}{2} - \frac{\delta}{2} - \frac{\kappa}{2}}}^{\frac{1}{1-b_{1}}}  dr \right)^{1-b_{1}} ds \nonumber \\
\lesssim& \left( t^{- \frac{ \frac{\delta}{2} + \beta + \kappa}{2} + b_{1}} \right)^{b_{1}} \nonumber\\
& \times  \left( \int_{0}^{t} \int_{0}^{r} (t-s)^{-1 - \frac{ \frac{\delta}{2} + \beta + \kappa}{2} + b_{1}} (t-r)^{- \frac{\kappa + \frac{3}{2} + \frac{\delta}{2}}{2(1-b_{1})}} \lVert \bar{G}^{\epsilon}(r) \rVert_{\mathcal{C}^{-\frac{3}{2} - \frac{\delta}{2} - \frac{\kappa}{2}}}^{\frac{1}{1-b_{1}}} ds dr \right)^{1-b_{1}} \nonumber\\
\lesssim& t^{- \frac{ \frac{\delta}{2} + \beta + \kappa}{2} + b_{1}} \left( \int_{0}^{t} (t-r)^{- \frac{\kappa + \frac{3}{2} + \frac{\delta}{2}}{2(1-b_{1})}} \lVert \bar{G}^{\epsilon}(r) \rVert_{\mathcal{C}^{-\frac{3}{2} - \frac{\delta}{2} - \frac{\kappa}{2}}}^{\frac{1}{1-b_{1}}} dr \right)^{1-b_{1}}  
\end{align} 
by H$\ddot{\mathrm{o}}$lder's inequality, Fubini theorem and \eqref{b and b0}. Hence, by applying \eqref{estimate 92} - \eqref{estimate 91} to \eqref{estimate 93} we deduce 
\begin{align}\label{estimate 94}
III_{t}^{1} \lesssim& \bar{C}_{W}^{\epsilon}(t) [ t^{- \frac{ \frac{\delta}{2} + \beta + z}{2} - \frac{3\kappa}{4}} \lVert b_{0} - \bar{b}_{1}^{\epsilon}(0) \rVert_{\mathcal{C}^{-z}} \nonumber \\
& \hspace{3mm} + \int_{0}^{t} (t-r)^{- \kappa - \frac{3}{4} - \frac{\delta}{2} - \frac{\beta}{2}} \lVert \bar{G}^{\epsilon}(r) \rVert_{\mathcal{C}^{-\frac{3}{2} - \frac{\delta}{2} - \frac{\kappa}{2}}} dr \nonumber \\
& \hspace{3mm} + t^{- [ \frac{ \frac{\delta}{2} + \beta + \kappa}{2} - b_{1}]} \left( \int_{0}^{t} (t-r)^{- \frac{ \kappa + \frac{3}{2} + \frac{\delta}{2}}{2(1-b_{1})}} \lVert \bar{G}^{\epsilon}(r) \rVert_{\mathcal{C}^{- \frac{3}{2} - \frac{\delta}{2} - \frac{\kappa}{2}}}^{\frac{1}{1-b_{1}}} dr \right)^{1- b_{1}} ].
\end{align} 
At last, applying \eqref{[Equation (3.7d)][ZZ17]} to \eqref{estimate 94} gives us 
\begin{align}\label{[Equation (3.8)][ZZ17]}
III_{t}^{1} &\lesssim \bar{C}_{W}^{\epsilon} (t) [ t^{ - \frac{ \frac{\delta}{2} + \beta + z}{2} - \frac{3\kappa}{4}} \lVert b_{0} - \bar{b}_{1}^{\epsilon}(0) \rVert_{\mathcal{C}^{-z}}  \\
&+ \int_{0}^{t} (t-r)^{-\kappa - \frac{3}{4} - \frac{\delta}{2} - \frac{\beta}{2}} \nonumber \\
& \hspace{1mm} \times [(1+ ( \bar{C}_{W}^{\epsilon}(t))^{3}) (1+ \lVert \bar{y}^{\epsilon, \sharp} (r) \rVert_{\mathcal{C}^{\frac{1}{2} + \beta}} + \lVert \bar{y}_{4}^{\epsilon}(r) \rVert_{\mathcal{C}^{\frac{1}{2} - \delta_{0}}} ) + \lVert \bar{y}_{4}^{\epsilon}(r) \rVert_{\mathcal{C}^{\delta}}^{2} ] dr \nonumber\\
&+ t^{- [\frac{ \frac{\delta}{2} + \beta + \kappa}{2} - b_{1}]} ( \int_{0}^{t} (t-r)^{- \frac{ \frac{3}{2} + \frac{\delta}{2} + \kappa}{2(1-b_{1})}} \nonumber\\
& \hspace{1mm} \times [(1+ (\bar{C}_{W}^{\epsilon}(t))^{3} )(1+ \lVert \bar{y}^{\epsilon, \sharp}(r) \rVert_{\mathcal{C}^{\frac{1}{2} + \beta}} + \lVert \bar{y}_{4}^{\epsilon}(r) \rVert_{\mathcal{C}^{\frac{1}{2} - \delta_{0}}}) + \lVert \bar{y}_{4}^{\epsilon}(r) \rVert_{\mathcal{C}^{\delta}}^{2} ]^{\frac{1}{1-b_{1}}} dr )^{1- b_{1}}. \nonumber
\end{align}
On the other hand, 
\begin{align}\label{[Equation (3.7)][ZZ17]}
III_{t}^{2} \lesssim \bar{C}_{W}^{\epsilon}(t) t^{ \frac{1}{4} - \frac{\delta_{0}}{2} - \frac{\delta}{4} - \frac{\beta}{2} - \frac{\kappa}{2}} \lVert (\bar{b}_{3}^{\epsilon} + \bar{b}_{4}^{\epsilon})(t) \rVert_{\mathcal{C}^{\frac{1}{2} - \delta_{0}}} 
\end{align} 
by \eqref{III2}, \eqref{regularity 0}, \eqref{[Equation (3.3c)][ZZ17]}, Lemma \ref{Lemma 3.11}, \eqref{[Equation (3.2e)][ZZ17]}, and \eqref{beta}. Combining \eqref{[Equation (3.7)][ZZ17]} and \eqref{[Equation (3.8)][ZZ17]} finally gives us \eqref{[Equation (3.9)][ZZ17]}. The estimates \eqref{[Equation (3.10)][ZZ17]} - \eqref{[Equation (3.17e)][ZZ17]} are proven similarly and we leave details in the Appendix Subsection \ref{Subsection 3.2}  for readers' convenience. This completes the proof of Proposition \ref{Proposition on F}. 
\end{proof}

\begin{proposition}\label{[Lemma 3.10][ZZ17]}
Let $\delta_{0}$ and $z$ satisfy the hypothesis of Theorem \ref{Theorem 1.2}, while $\delta, \kappa$ and $\beta$ satisfy \eqref{[Equation (3.2g)][ZZ17]}, \eqref{[Equation (3.3c)][ZZ17]} and \eqref{beta}, respectively. Then there exists some $T_{0} > 0$ that satisfies $T_{0} \leq T_{\epsilon}$ and 
\begin{subequations}
\begin{align}
& \sup_{t\in [0,T_{0}]} [ t^{\delta + z + \kappa} \lVert \bar{y}^{\epsilon, \sharp} (t) \rVert_{\mathcal{C}^{\frac{1}{2} + \beta}} + t^{\frac{ \delta + z + \kappa}{2}} \lVert \bar{y}^{\epsilon, \sharp}(t) \rVert_{\mathcal{C}^{\delta}}] \lesssim_{T_{0}, \bar{C}_{W}^{\epsilon}(T_{0}), \lVert y_{0} \rVert_{\mathcal{C}^{-z}}} 1, \label{[Equation (3.11)][ZZ17]} \\
& \sup_{t\in [0,T_{0}]} t^{\frac{ \frac{1}{2} - \delta_{0}  + z + \kappa}{2}} \lVert \bar{y}_{4}^{\epsilon}(t) \rVert_{\mathcal{C}^{\frac{1}{2} - \delta_{0}}} \lesssim_{T_{0}, \bar{C}_{W}^{\epsilon}(T_{0}), \lVert y_{0} \rVert_{\mathcal{C}^{-z}}} 1\label{[Equation (3.17a)][ZZ17]}
\end{align}
\end{subequations} 
for any $\epsilon \in (0,1)$, and thus $T_{0}$ is independent of $\epsilon$. 
\end{proposition}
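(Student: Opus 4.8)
The plan is to prove Proposition \ref{[Lemma 3.10][ZZ17]} by an a priori estimate in time-weighted H\"older spaces combined with a continuity (bootstrap) argument, parallel to \cite[Lemma 3.10]{ZZ17} and \cite[Equations (50)--(55)]{Y19a}. For $t\in(0,T_{\epsilon})$ introduce
\begin{align*}
M^{\epsilon}(t) \triangleq \sup_{s\in[0,t]} \Big( &s^{\delta+z+\kappa}\lVert \bar{y}^{\epsilon,\sharp}(s)\rVert_{\mathcal{C}^{\frac{1}{2}+\beta}} + s^{\frac{\delta+z+\kappa}{2}}\lVert \bar{y}^{\epsilon,\sharp}(s)\rVert_{\mathcal{C}^{\delta}} \\
&+ s^{\frac{\frac{1}{2}-\delta_{0}+z+\kappa}{2}}\lVert \bar{y}_{4}^{\epsilon}(s)\rVert_{\mathcal{C}^{\frac{1}{2}-\delta_{0}}} \Big),
\end{align*}
so that \eqref{[Equation (3.11)][ZZ17]}--\eqref{[Equation (3.17a)][ZZ17]} amount to exhibiting $T_{0}\le T_{\epsilon}$, independent of $\epsilon$, with $M^{\epsilon}(T_{0}) \lesssim_{T_{0},\bar{C}_{W}^{\epsilon}(T_{0}),\lVert y_{0}\rVert_{\mathcal{C}^{-z}}} 1$.

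First I would express every auxiliary object through $M^{\epsilon}$ and $\bar{C}_{W}^{\epsilon}$: the norms of $\bar{y}_{2}^{\epsilon},\bar{y}_{3}^{\epsilon},\bar{K}_{u}^{\epsilon},\bar{K}_{b}^{\epsilon}$ are already controlled by $\bar{C}_{W}^{\epsilon}(t)$ via \eqref{[Equation (3.2h)][ZZ17]} and \eqref{[Equation (3.3)][ZZ17]}, while \eqref{[Equation (3.4)][ZZ17]} bounds $\lVert \bar{y}_{4}^{\epsilon}(t)\rVert_{\mathcal{C}^{\frac{1}{2}-\delta-\kappa}}$, hence (by \eqref{regularity 0}, using $2\delta+\kappa<\tfrac12$) also $\lVert \bar{y}_{4}^{\epsilon}(t)\rVert_{\mathcal{C}^{\delta}}$, by $t^{\kappa/2}(1+\bar{C}_{W}^{\epsilon}(t))^{2}\bar{C}_{W}^{\epsilon}(t) + \lVert\bar{y}^{\epsilon,\sharp}(t)\rVert_{\mathcal{C}^{\frac{1}{2}+\beta}}$. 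Feeding these into Propositions \ref{[Lemma 3.9][ZZ17]} and \ref{Proposition on F} converts \eqref{[Equation (3.6)][ZZ17]}, \eqref{[Equation (3.9)][ZZ17]}--\eqref{[Equation (3.17b)][ZZ17]} into estimates whose right-hand sides are polynomials in $\bar{C}_{W}^{\epsilon}(t)$ and in the three weighted norms comprising $M^{\epsilon}(t)$. Then I would apply the heat smoothing Lemmas \ref{Lemma 3.10} and \ref{Lemma 3.7} to the mild formulas \eqref{[Equation (3.5) for u][ZZ17]}, \eqref{[Equation (3.5) for b][ZZ17]}: the datum $P_{t}(\mathcal{P}y_{0}-\bar{y}_{1}^{\epsilon}(0))$ lands in $\mathcal{C}^{\frac12+\beta}\cap\mathcal{C}^{\delta}$ with the correct time weight because $y_{0}\in\mathcal{C}^{-z}$ and \eqref{[Equation (3.2g)][ZZ17]}, \eqref{[Equation (3.3c)][ZZ17]}, \eqref{beta} force $\delta+z+\kappa$ to exceed $\tfrac12(\tfrac12+\beta+z)$; the Duhamel term $\int_{0}^{t}P_{t-s}\bar{\phi}^{\epsilon,\sharp}\,ds$ is controlled by Proposition \ref{[Lemma 3.9][ZZ17]} since the kernel $(t-s)^{-(3/4+\cdots)}$ paired with $\lVert\bar{\phi}^{\epsilon,\sharp}\rVert_{\mathcal{C}^{-1-2\delta-2\kappa}}$ is integrable with a leftover positive $t$-power; and $\bar{F}^{\epsilon}$ is controlled by Proposition \ref{Proposition on F}. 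Doing the same for $\lVert\bar{y}_{4}^{\epsilon}\rVert_{\mathcal{C}^{1/2-\delta_{0}}}$ in place of $\mathcal{C}^{1/2-\delta-\kappa}$ in \eqref{[Equation (3.4)][ZZ17]}, one arrives at a single closed inequality
\begin{align*}
M^{\epsilon}(t) &\le C_{1}\big(1+\bar{C}_{W}^{\epsilon}(t)\big)^{p}\big(1+\lVert y_{0}\rVert_{\mathcal{C}^{-z}}\big) \\
&\quad + C_{2}\,t^{\theta}\,\big(1+\bar{C}_{W}^{\epsilon}(t)\big)^{p}\,\big(1+M^{\epsilon}(t)+M^{\epsilon}(t)^{2}\big)
\end{align*}
for some $p\ge1$, some $\theta>0$, and universal $C_{1},C_{2}$.

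For the bootstrap, recall that $\bar{C}_{W}^{\epsilon}(T)$ converges as $\epsilon\searrow0$ by the analysis of Subsection \ref{Subsection 2.2}, so there are $T_{*}>0$ and $C_{*}<\infty$ with $\sup_{\epsilon\in(0,1)}\bar{C}_{W}^{\epsilon}(T)\le C_{*}$ for all $T\le T_{*}$. The map $t\mapsto M^{\epsilon}(t)$ is non-decreasing and, being a running maximum of functions continuous on $(0,T_{\epsilon})$ (by the construction of $\bar{y}^{\epsilon,\sharp}$ and $\bar{y}_{4}^{\epsilon}\in C([0,T_{\epsilon});\mathcal{C}^{\frac12-\delta_{0}})$) that vanish as $t\searrow0$ at the weighted rates above, it is continuous and satisfies $\limsup_{t\searrow0}M^{\epsilon}(t)\le C_{1}(1+C_{*})^{p}(1+\lVert y_{0}\rVert_{\mathcal{C}^{-z}})=:R$. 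Plugging $M^{\epsilon}(t)\le2R$ into the displayed inequality and choosing $T_{0}\le T_{*}$ so small that $C_{2}T_{0}^{\theta}(1+C_{*})^{p}(1+2R+4R^{2})\le R$, a standard continuity argument forces $M^{\epsilon}(t)\le2R$ to persist on $[0,T_{0}]$; moreover $T_{0}\le T_{\epsilon}$ since, by \eqref{[Equation (3.3a)][ZZ17]}, $M^{\epsilon}$ could only fail to be bounded at $T_{\epsilon}$. Since $T_{0}$ depends only on $C_{*},\lVert y_{0}\rVert_{\mathcal{C}^{-z}},C_{1},C_{2},\theta,p$, it is independent of $\epsilon$, which gives \eqref{[Equation (3.11)][ZZ17]} and \eqref{[Equation (3.17a)][ZZ17]}.

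The main obstacle is the closing step: after all substitutions one must verify that \emph{every} time-singular kernel occurring in Propositions \ref{[Lemma 3.9][ZZ17]}--\ref{Proposition on F} (including the interior integrals governed by \eqref{b and b0}, \eqref{b1}) is integrable near the diagonal and leaves a strictly positive net power $\theta$ of $t$; this is precisely where the nested admissible ranges \eqref{[Equation (3.2g)][ZZ17]}, \eqref{[Equation (3.3c)][ZZ17]}, \eqref{beta}, \eqref{b1}, together with the MHD-specific pairings of the nonlinear terms exploited in Proposition \ref{Proposition on F} (cf.\ Remark \ref{Remark 2.1}), must all be invoked simultaneously. A secondary difficulty is the genuinely quadratic term $\lVert\bar{y}_{4}^{\epsilon}(t)\rVert_{\mathcal{C}^{\delta}}^{2}$ in \eqref{[Equation (3.6)][ZZ17]}: it has to be re-expressed through \eqref{[Equation (3.4)][ZZ17]} so that the part not carrying a positive power of $t$ is absorbed into the superlinear factor $1+M^{\epsilon}(t)+M^{\epsilon}(t)^{2}$ that the continuity argument can swallow for $T_{0}$ small.
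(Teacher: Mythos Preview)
Your proposal is correct and follows essentially the same strategy as the paper: control the mild formulas \eqref{[Equation (3.5) for u][ZZ17]}, \eqref{[Equation (3.5) for b][ZZ17]} via Propositions \ref{[Lemma 3.9][ZZ17]} and \ref{Proposition on F}, then close a superlinear a~priori inequality in time-weighted norms. The only packaging differences are that the paper first eliminates $\bar{y}_{4}^{\epsilon}$ in favor of $\bar{y}^{\epsilon,\sharp}$ through the pointwise bounds \eqref{[Equation (3.12)][ZZ17]}--\eqref{[Equation (3.13)][ZZ17]} (rather than via \eqref{[Equation (3.4)][ZZ17]} and embedding as you do), obtains an \emph{integral} inequality for $\bar{y}^{\epsilon,\sharp}$ alone, and closes it by Bihari's inequality (Lemma \ref{Lemma 3.14}); your continuity/bootstrap argument on the combined quantity $M^{\epsilon}$ is an equivalent alternative.
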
 

\begin{proof}[Proof of Proposition \ref{[Lemma 3.10][ZZ17]}]
First, for any $i\in \{1,2,3\}$, 
\begin{align}
& \lVert (\bar{u}_{4}^{\epsilon, i}, \bar{b}_{4}^{\epsilon, i} )(t) \rVert_{\mathcal{C}^{\frac{1}{2} - \delta_{0}}} \nonumber\\
\lesssim& [t^{\frac{\delta}{4}}\bar{C}_{W}^{\epsilon}(t) + \sum_{i_{1}=1}^{3} \lVert (\bar{u}_{4}^{\epsilon, i_{1}}, \bar{b}_{4}^{\epsilon, i_{1}}) \rVert_{\mathcal{C}^{\frac{1}{2} - \delta_{0}}} ] t^{\frac{\delta}{4}} \bar{C}_{W}^{\epsilon}(t) + \lVert (\bar{u}^{\epsilon, \sharp, i}, \bar{b}^{\epsilon,\sharp, i}) \rVert_{\mathcal{C}^{\frac{1}{2} - \delta_{0}}} 
\end{align} 
by \eqref{paracontrolled ansatz u}, \eqref{paracontrolled ansatz b}, Lemmas \ref{Lemma 3.7} and \ref{[Lemma 1.1][Y19a]} (2), \eqref{[Equation (3.2g)][ZZ17]}, \eqref{[Equation (3.3)][ZZ17]} and \eqref{[Equation (3.2h)][ZZ17]}. Therefore, for $t > 0$ sufficiently small, we deduce 
\begin{equation}\label{[Equation (3.12)][ZZ17]} 
\lVert \bar{y}_{4}^{\epsilon}(t) \rVert_{\mathcal{C}^{\frac{1}{2} - \delta_{0}}} \lesssim (\bar{C}_{W}^{\epsilon}(t))^{2} + \lVert \bar{y}^{\epsilon,\sharp} \rVert_{\mathcal{C}^{\frac{1}{2} - \delta_{0}}}, 
\end{equation} 
which is slightly different from a component-wise estimate in \cite[Equation (3.12)]{ZZ17}, that we believe to be difficult due to $\mathcal{P}^{ii_{1}}$. Similarly, 
\begin{align}
& \lVert (\bar{u}_{4}^{\epsilon, i}, \bar{b}_{4}^{\epsilon, i} )(t) \rVert_{\mathcal{C}^{\delta}} \nonumber\\
\lesssim& [t^{\frac{\delta}{4}} \bar{C}_{W}^{\epsilon}(t) + \sum_{i_{1} =1}^{3} \lVert ( \bar{u}_{4}^{\epsilon, i_{1}}, \bar{b}_{4}^{\epsilon, i_{1}}) \rVert_{\mathcal{C}^{\delta}}] t^{\frac{\delta}{4}} \bar{C}_{W}^{\epsilon}(t) + \lVert ( \bar{u}^{\epsilon, \sharp, i}, \bar{b}^{\epsilon, \sharp, i}) \rVert_{\mathcal{C}^{\delta}} 
\end{align} 
by \eqref{paracontrolled ansatz u}, \eqref{paracontrolled ansatz b}, Lemma \ref{[Lemma 1.1][Y19a]} (2),  \eqref{[Equation (3.2g)][ZZ17]}, \eqref{regularity 0}, \eqref{[Equation (3.3)][ZZ17]} and \eqref{[Equation (3.2h)][ZZ17]}. Thus, for $t> 0$ sufficiently small, we obtain 
\begin{equation}\label{[Equation (3.13)][ZZ17]}
\lVert \bar{y}_{4}^{\epsilon}(t) \rVert_{\mathcal{C}^{\delta}} \lesssim (\bar{C}_{W}^{\epsilon} (t))^{2} + \lVert \bar{y}^{\epsilon, \sharp}(t) \rVert_{\mathcal{C}^{\delta}}. 
\end{equation}  
Now we first assume that $\sup_{\epsilon} \bar{C}_{W}^{\epsilon}(t) < \infty$ so that $t$ becomes independent of $\epsilon > 0$ and estimate for $t \in (0,1)$, 
\begin{align}
& t^{\delta + z + \kappa} \lVert \bar{y}^{\epsilon, \sharp} (t) \rVert_{\mathcal{C}^{\frac{1}{2} + \beta}} \\
\lesssim& t^{\kappa} \lVert y_{0} - \bar{y}_{1}^{\epsilon}(0) \rVert_{\mathcal{C}^{-z}}\nonumber\\
&+ t^{\delta + z + \kappa} \int_{0}^{t} (t-s)^{-\frac{3}{4} - \delta - \frac{\beta}{2} - \kappa}   [(1+ ( \bar{C}_{W}^{\epsilon}(t))^{3}) (1+ \lVert \bar{y}^{\epsilon,\sharp}(s) \rVert_{\mathcal{C}^{\frac{1}{2} + \beta}} + \lVert \bar{y}_{4}^{\epsilon}(s) \rVert_{\mathcal{C}^{\frac{1}{2} - \delta_{0}}} ) \nonumber\\
& \hspace{47mm} + \lVert \bar{y}_{4}^{\epsilon}(s) \rVert_{\mathcal{C}^{\delta}}^{2}] ds + t^{\delta + z + \kappa} \sum_{i=1}^{3} \lVert (\bar{F}_{u}^{\epsilon, i}, \bar{F}_{b}^{\epsilon, i})(t) \rVert_{\mathcal{C}^{\frac{1}{2} + \beta}} \nonumber
\end{align} 
by \eqref{[Equation (3.5) for u][ZZ17]}, \eqref{[Equation (3.5) for b][ZZ17]}, Lemmas \ref{Lemma 3.10} and \ref{Lemma 3.7}, \eqref{beta} and \eqref{[Equation (3.6)][ZZ17]}. We apply \eqref{[Equation (3.9)][ZZ17]} to deduce 
\begin{align}\label{estimate 95}
& t^{\delta + z + \kappa} \lVert \bar{y}^{\epsilon, \sharp} (t) \rVert_{\mathcal{C}^{\frac{1}{2} + \beta}} \\
\lesssim& t^{\kappa} \lVert y_{0} - \bar{y}_{1}^{\epsilon}(0) \rVert_{\mathcal{C}^{-z}} \nonumber\\
&+ t^{\delta + z + \kappa} \int_{0}^{t} (t-s)^{-\frac{3}{4} - \delta - \frac{\beta}{2} - \kappa}   [(1+ ( \bar{C}_{W}^{\epsilon}(t))^{3}) (1+ \lVert \bar{y}^{\epsilon,\sharp}(s) \rVert_{\mathcal{C}^{\frac{1}{2} + \beta}} + \lVert \bar{y}_{4}^{\epsilon}(s) \rVert_{\mathcal{C}^{\frac{1}{2} - \delta_{0}}} ) \nonumber\\
& \hspace{47mm} + \lVert \bar{y}_{4}^{\epsilon}(s) \rVert_{\mathcal{C}^{\delta}}^{2}] ds \nonumber\\
&+ t^{\delta+  z + \kappa} \bar{C}_{W}^{\epsilon} (t) [ t^{ - \frac{ \frac{\delta}{2} + \beta + z}{2} - \frac{3\kappa}{4}} \lVert y_{0} - \bar{y}_{1}^{\epsilon}(0) \rVert_{\mathcal{C}^{-z}} \nonumber \\
& \hspace{2mm} + \int_{0}^{t} (t-r)^{-\kappa - \frac{3}{4} - \frac{\delta}{2} - \frac{\beta}{2}} \nonumber \\
& \hspace{5mm} \times [(1+ ( \bar{C}_{W}^{\epsilon}(t))^{3}) (1+ \lVert \bar{y}^{\epsilon, \sharp} (r) \rVert_{\mathcal{C}^{\frac{1}{2} + \beta}} + \lVert \bar{y}_{4}^{\epsilon}(r) \rVert_{\mathcal{C}^{\frac{1}{2} - \delta_{0}}} ) + \lVert \bar{y}_{4}^{\epsilon}(r) \rVert_{\mathcal{C}^{\delta}}^{2} ] dr \nonumber\\
& \hspace{2mm} + t^{- [\frac{ \frac{\delta}{2} - \beta + \kappa}{2} + b_{1}]} ( \int_{0}^{t} (t-r)^{- \frac{ \frac{3}{2} + \frac{\delta}{2} + \kappa}{2(1-b_{1})}} \nonumber\\
& \hspace{5mm} \times [(1+ (\bar{C}_{W}^{\epsilon}(t))^{3} )(1+ \lVert \bar{y}^{\epsilon, \sharp}(r) \rVert_{\mathcal{C}^{\frac{1}{2} + \beta}} + \lVert \bar{y}_{4}^{\epsilon}(r) \rVert_{\mathcal{C}^{\frac{1}{2} - \delta_{0}}}) + \lVert \bar{y}_{4}^{\epsilon}(r) \rVert_{\mathcal{C}^{\delta}}^{2} ]^{\frac{1}{1-b_{1}}} dr )^{1- b_{1}} \nonumber \\
&\hspace{2mm} + t^{\frac{1}{4} - \frac{ \delta_{0} + \beta + \kappa + \frac{\delta}{2}}{2}} \lVert ( \bar{y}_{3}^{\epsilon}  + \bar{y}_{4}^{\epsilon})(t) \rVert_{\mathcal{C}^{\frac{1}{2} - \delta_{0}}}].\nonumber
\end{align} 
We estimate
\begin{align*}
\lVert (\bar{y}_{3}^{\epsilon} + \bar{y}_{4}^{\epsilon})(t) \rVert_{\mathcal{C}^{\frac{1}{2} - \delta_{0}}} 
\lesssim  \bar{C}_{W}^{\epsilon} (t)+ ( \bar{C}_{W}^{\epsilon}(t))^{2} + \lVert \bar{y}^{\epsilon, \sharp}(t) \rVert_{\mathcal{C}^{\frac{1}{2} + \beta}} 
\end{align*} 
by \eqref{[Equation (3.2g)][ZZ17]}, \eqref{[Equation (3.2h)][ZZ17]}, \eqref{[Equation (3.12)][ZZ17]} and \eqref{regularity 0} and rely on \eqref{beta}, \eqref{[Equation (3.12)][ZZ17]} and \eqref{[Equation (3.13)][ZZ17]} to deduce from \eqref{estimate 95}, 
\begin{align}\label{estimate 96}
&t^{\delta + z + \kappa} \lVert \bar{y}^{\epsilon, \sharp} (t) \rVert_{\mathcal{C}^{\frac{1}{2} + \beta}} \\
\lesssim& t^{\frac{\kappa}{4}} (1+ \bar{C}_{W}^{\epsilon}(t)) \lVert y_{0} - \bar{y}_{1}^{\epsilon}(0) \rVert_{\mathcal{C}^{-z}} \nonumber\\
&+ t^{\delta+ z + \kappa} \int_{0}^{t} (t-s)^{- \frac{3}{4} - \delta - \frac{\beta}{2} - \kappa} [(1+ (\bar{C}_{W}^{\epsilon}(t))^{4}) (1+ \lVert \bar{y}^{\epsilon, \sharp}(s) \rVert_{\mathcal{C}^{\frac{1}{2} + \beta}} + \lVert \bar{y}^{\epsilon, \sharp}(s) \rVert_{\mathcal{C}^{\delta}}^{2})] ds \nonumber\\
&+ t^{\delta + z + \kappa} \bar{C}_{W}^{\epsilon}(t) [ \int_{0}^{t} (t-r)^{- \kappa - \frac{3}{4} - \frac{\delta}{2} - \frac{\beta}{2}} \nonumber\\
& \hspace{40mm} \times [(1+ (\bar{C}_{W}^{\epsilon}(t))^{4} ) ( 1+ \lVert \bar{y}^{\epsilon, \sharp}(r) \rVert_{\mathcal{C}^{\frac{1}{2} + \beta}} + \lVert \bar{y}^{\epsilon, \sharp}(r) \rVert_{\mathcal{C}^{\delta}}^{2})] dr \nonumber\\
& \hspace{20mm} + t^{ - [ \frac{ ( \frac{\delta}{2} + \beta  + \kappa)}{2} - b_{1} ]} ( \int_{0}^{t} (t-r)^{- \frac{ \frac{3}{2} + \frac{\delta}{2} + \kappa}{2(1-b_{1})}} \nonumber\\
& \hspace{25mm} \times [(1+ (\bar{C}_{W}^{\epsilon}(t))^{4})(1+ \lVert \bar{y}^{\epsilon, \sharp}(r) \rVert_{\mathcal{C}^{\frac{1}{2} + \beta}} + \lVert \bar{y}^{\epsilon, \sharp}(r) \rVert_{\mathcal{C}^{\delta}}^{2})]^{\frac{1}{1-b_{1}}} dr )^{1-b_{1}}\nonumber\\
&\hspace{20mm} + t^{\frac{1}{4} - \frac{ \delta_{0} + \beta + \kappa + \frac{\delta}{2}}{2}} (1+ ( \bar{C}_{W}^{\epsilon}(t))^{2} + \lVert \bar{y}^{\epsilon, \sharp}(t) \rVert_{\mathcal{C}^{\frac{1}{2} + \beta}})]. \nonumber 
\end{align} 
Moreover, 
\begin{align*}
&  t^{ \delta + z + \kappa- [ \frac{ ( \frac{\delta}{2} + \beta  + \kappa)}{2} - b_{1} ]} ( \int_{0}^{t} (t-r)^{- \frac{ \frac{3}{2} + \frac{\delta}{2} + \kappa}{2(1-b_{1})}} \nonumber\\
& \hspace{27mm} \times [(1+ (\bar{C}_{W}^{\epsilon}(t))^{4})(1+ \lVert \bar{y}^{\epsilon, \sharp}(r) \rVert_{\mathcal{C}^{\frac{1}{2} + \beta}} + \lVert \bar{y}^{\epsilon, \sharp}(r) \rVert_{\mathcal{C}^{\delta}}^{2})]^{\frac{1}{1-b_{1}}} dr )^{1-b_{1}}\nonumber\\
\lesssim& C(t) + C(\bar{C}_{W}^{\epsilon}(t))t^{\frac{ \delta + z + \kappa}{1-b_{1}}} \int_{0}^{t} (t-r)^{- \frac{ \frac{3}{2} + \frac{\delta}{2} + \kappa}{2(1-b_{1})}} [ \lVert \bar{y}^{\epsilon, \sharp}(r) \rVert_{\mathcal{C}^{\frac{1}{2} + \beta}} + \lVert \bar{y}^{\epsilon, \sharp}(r) \rVert_{\mathcal{C}^{\delta}}^{2}]^{\frac{1}{1-b_{1}}} dr 
\end{align*} 
by Young's inequality, \eqref{b1} and \eqref{beta} so that \eqref{estimate 96} finally leads us to  
\begin{align}\label{estimate 98}
&t^{\delta + z + \kappa} \lVert \bar{y}^{\epsilon, \sharp} (t) \rVert_{\mathcal{C}^{\frac{1}{2} + \beta}} \\
\lesssim& C(t, \bar{C}_{W}^{\epsilon}(t)) \lVert y_{0} - \bar{y}_{1}^{\epsilon}(0) \rVert_{\mathcal{C}^{-z}} + C(t, \bar{C}_{W}^{\epsilon}(t)) \nonumber\\
&+ C(\bar{C}_{W}^{\epsilon}(t)) t^{\delta + z + \kappa} \int_{0}^{t} (t-r)^{-\frac{3}{4} - \delta - \frac{\beta}{2} - \kappa} [\lVert \bar{y}^{\epsilon, \sharp}(r) \rVert_{\mathcal{C}^{\frac{1}{2} + \beta}} + \lVert \bar{y}^{\epsilon, \sharp}(r) \rVert_{\mathcal{C}^{\delta}}^{2}] dr \nonumber \\
&+ C(\bar{C}_{W}^{\epsilon}(t))  t^{\delta + z + \kappa} \int_{0}^{t} (t-r)^{-\kappa - \frac{3}{4} - \frac{\delta}{2} - \frac{\beta}{2}} [ \lVert \bar{y}^{\epsilon, \sharp}(r) \rVert_{\mathcal{C}^{\frac{1}{2} + \beta}} + \lVert \bar{y}^{\epsilon, \sharp}(r) \rVert_{\mathcal{C}^{\delta}}^{2}] dr \nonumber\\
&+ C( \bar{C}_{W}^{\epsilon}(t)) t^{\frac{ \delta + z + \kappa}{1-b_{1}}} \int_{0}^{t}(t-r)^{- \frac{ \frac{3}{2} + \frac{\delta}{2} + \kappa}{2(1-b_{1})}} [\lVert \bar{y}^{\epsilon, \sharp}(r) \rVert_{\mathcal{C}^{\frac{1}{2} + \beta}} + \lVert \bar{y}^{\epsilon, \sharp}(r) \rVert_{\mathcal{C}^{\delta}}^{2}]^{\frac{1}{1-b_{1}}} dr. \nonumber
\end{align} 
Next, 
\begin{align*}
& t^{\frac{\delta + z + \kappa}{2}} \lVert \bar{y}^{\epsilon, \sharp} (t) \rVert_{\mathcal{C}^{\delta}} \\
\lesssim& t^{\frac{\kappa}{2}} \lVert y_{0} - \bar{y}_{1}^{\epsilon}(0) \rVert_{\mathcal{C}^{-z}} \nonumber \\
&+ t^{ \frac{\delta + z + \kappa}{2}} \int_{0}^{t} (t-s)^{- \frac{3\delta}{2} - \frac{1}{2} - \kappa} \nonumber\\
& \hspace{12mm} \times [(1+ (\bar{C}_{W}^{\epsilon}(t))^{3})(1+ \lVert \bar{y}^{\epsilon, \sharp}(s) \rVert_{\mathcal{C}^{\frac{1}{2}+ \beta}} + \lVert \bar{y}_{4}^{\epsilon}(s) \rVert_{\mathcal{C}^{\frac{1}{2} - \delta_{0}}}) + \lVert \bar{y}_{4}^{\epsilon}(s)\rVert_{\mathcal{C}^{\delta}}^{2}] ds \nonumber \\
&+ t^{\frac{\delta + z + \kappa}{2}} \bar{C}_{W}^{\epsilon}(t) [ t^{\frac{1}{4} - \frac{z}{2} - \frac{3\delta}{4}} \lVert y_{0} - \bar{y}_{1}^{\epsilon} (0) \rVert_{\mathcal{C}^{-z}} \nonumber \\
& \hspace{5mm} + \int_{0}^{t} (t-r)^{- \frac{1+ 2 \delta + \kappa}{2}} \nonumber\\
& \hspace{12mm} \times [(1+ (\bar{C}_{W}^{\epsilon}(t))^{3})(1+ \lVert \bar{y}^{\epsilon, \sharp}(r) \rVert_{\mathcal{C}^{\frac{1}{2} + \beta}} + \lVert \bar{y}_{4}^{\epsilon} (r)\rVert_{\mathcal{C}^{\frac{1}{2} -\delta_{0}}}) + \lVert \bar{y}_{4}^{\epsilon}(r) \rVert_{\mathcal{C}^{\delta}}^{2} ] dr \nonumber\\
& \hspace{5mm} + t^{\frac{\kappa}{2}} \int_{0}^{t} (t-r)^{- \frac{1+ 2 \delta + \frac{3\kappa}{2}}{2}} \nonumber\\
& \hspace{12mm} \times [(1+ (\bar{C}_{W}^{\epsilon} (t))^{3})(1+ \lVert \bar{y}^{\epsilon, \sharp}(r) \rVert_{\mathcal{C}^{\frac{1}{2} + \beta}} + \lVert \bar{y}_{4}^{\epsilon}(r) \rVert_{\mathcal{C}^{\frac{1}{2} - \delta_{0}}}) + \lVert \bar{y}_{4}^{\epsilon} (r)\rVert_{\mathcal{C}^{\delta}}^{2}] dr \nonumber\\
&+ t^{\frac{1-\delta}{4}} \lVert (\bar{y}_{3}^{\epsilon} + \bar{y}_{4}^{\epsilon})(t) \rVert_{\mathcal{C}^{\delta}}] \nonumber 
\end{align*} 
by \eqref{[Equation (3.5) for u][ZZ17]}, \eqref{[Equation (3.5) for b][ZZ17]}, Lemmas \ref{Lemma 3.7} and \ref{Lemma 3.10}, \eqref{[Equation (3.6)][ZZ17]} and \eqref{[Equation (3.10)][ZZ17]}. We continue to bound this by 
\begin{align}\label{estimate 97}
& t^{\frac{\delta + z + \kappa}{2}} \lVert \bar{y}^{\epsilon, \sharp} (t) \rVert_{\mathcal{C}^{\delta}} \nonumber\\
\lesssim& C(t, \bar{C}_{W}^{\epsilon}(t)) \lVert y_{0} - \bar{y}_{1}^{\epsilon}(0) \rVert_{\mathcal{C}^{-z}} + C(t, \bar{C}_{W}^{\epsilon}(t)) \nonumber\\
&+ t^{\frac{ \delta + z + \kappa}{2}} C(\bar{C}_{W}^{\epsilon}(t)) \int_{0}^{t} (t-r)^{-\frac{3\delta}{2} - \frac{1}{2} - \kappa} [ \lVert \bar{y}^{\epsilon, \sharp}(r) \rVert_{\mathcal{C}^{\frac{1}{2} + \beta}} + \lVert \bar{y}^{\epsilon, \sharp}(r) \rVert_{\mathcal{C}^{\delta}}^{2}] dr \nonumber \\
&+ t^{\frac{\delta + z + \kappa}{2}} C(\bar{C}_{W}^{\epsilon}(t)) \int_{0}^{t} (t-r)^{- \frac{1+ 2 \delta + \kappa}{2}} [ \lVert \bar{y}^{\epsilon, \sharp}(r) \rVert_{\mathcal{C}^{\frac{1}{2} + \beta}} + \lVert \bar{y}^{\epsilon, \sharp}(r) \rVert_{\mathcal{C}^{\delta}}^{2}] dr \nonumber\\
&+ t^{\frac{ \delta + z + 2\kappa}{2}} C(\bar{C}_{W}^{\epsilon}(t)) \int_{0}^{t} (t-r)^{- \frac{1+ 2 \delta + \frac{3\kappa}{2}}{2}} [ \lVert \bar{y}^{\epsilon, \sharp}(r) \rVert_{\mathcal{C}^{\frac{1}{2} + \beta}} + \lVert \bar{y}^{\epsilon, \sharp}(r) \rVert_{\mathcal{C}^{\delta}}^{2}] dr \nonumber\\
&+ t^{\frac{ \delta + z + \kappa}{2}} t^{\frac{1-\delta}{4}} \bar{C}_{W}^{\epsilon}(t) (t^{\frac{\delta}{4}} \bar{C}_{W}^{\epsilon}(t) + ( \bar{C}_{W}^{\epsilon}(t))^{2}\lVert \bar{y}^{\epsilon, \sharp} \rVert_{\mathcal{C}^{\delta}})
\end{align} 
by \eqref{[Equation (3.12)][ZZ17]}, \eqref{regularity 0}, \eqref{[Equation (3.13)][ZZ17]}, \eqref{[Equation (3.2g)][ZZ17]}, \eqref{[Equation (3.3c)][ZZ17]} and \eqref{[Equation (3.2h)][ZZ17]}. By interpolation and considering only $t \in (0,1)$ sufficiently small, we deduce from \eqref{estimate 97}
\begin{align}\label{estimate 99}
& t^{\frac{ \delta + z + \kappa}{2}} \lVert \bar{y}^{\epsilon, \sharp} (t) \rVert_{\mathcal{C}^{\delta}} \nonumber\\
\lesssim& C(t,\bar{C}_{W}^{\epsilon}(t)) \lVert y_{0} - \bar{y}_{1}^{\epsilon}(0) \rVert_{\mathcal{C}^{-z}} + C(t, \bar{C}_{W}^{\epsilon}(t)) \nonumber\\
&+ C( \bar{C}_{W}^{\epsilon}(t)) t^{\frac{ \delta + z+ \kappa}{2}} \int_{0}^{t} (t-r)^{- \frac{3\delta}{2} - \frac{1}{2} - \kappa} [ \lVert  \bar{y}^{\epsilon, \sharp}(r) \rVert_{\mathcal{C}{\frac{1}{2} + \beta}} + \lVert \bar{y}^{\epsilon, \sharp}(r) \rVert_{\mathcal{C}^{\delta}}^{2}] dr \nonumber\\
&+ C(\bar{C}_{W}^{\epsilon}(t)) t^{\frac{ \delta + z + \kappa}{2}} \int_{0}^{t} (t-r)^{- \frac{1+ 2 \delta + \kappa}{2}} [ \lVert  \bar{y}^{\epsilon, \sharp}(r) \rVert_{\mathcal{C}{\frac{1}{2} + \beta}} + \lVert \bar{y}^{\epsilon, \sharp}(r) \rVert_{\mathcal{C}^{\delta}}^{2}] dr.
\end{align} 
Now an application of Bihari's inequality Lemma \ref{Lemma 3.14} on \eqref{estimate 98} and \eqref{estimate 99}  implies the existence of $T_{0}$ that is independent of $\epsilon > 0$ such that \eqref{[Equation (3.11)][ZZ17]}  holds. Next, we estimate 
\begin{align}\label{[Equation (3.17)][ZZ17]}
 \sup_{t \in [0, T_{0}]} t^{\delta + z + \kappa} \lVert ( \bar{\phi}_{u}^{\epsilon, \sharp} , \bar{\phi}_{b}^{\epsilon, \sharp} )(t) \rVert_{\mathcal{C}^{-1 - 2 \delta - 2 \kappa}} \lesssim_{T_{0}, \bar{C}_{W}^{\epsilon}(T_{0}), \lVert y_{0} \rVert_{\mathcal{C}^{-z}}} 1 
\end{align}
by \eqref{[Equation (3.6)][ZZ17]}, \eqref{[Equation (3.12)][ZZ17]}, \eqref{[Equation (3.13)][ZZ17]}, \eqref{regularity 0} and \eqref{[Equation (3.11)][ZZ17]}. Next, 
\begin{align}\label{first estimate}
& \sup_{t\in [0, T_{0}]} t^{\frac{ \frac{1}{2} - \delta_{0} + z + \kappa}{2}} \lVert \bar{y}_{4}^{\epsilon}(t) \rVert_{\mathcal{C}^{\frac{1}{2} - \delta_{0}}} \\
\lesssim& \sup_{t\in [0,T_{0}]} t^{ \frac{ \frac{1}{2} - \delta_{0} + z + \kappa}{2}} [ (\bar{C}_{W}^{\epsilon}(T_{0}))^{2} \nonumber\\
& \hspace{13mm} + \sum_{i=1}^{3} \lVert (P_{t} (\sum_{i_{1} =1}^{3} \mathcal{P}^{ii_{1}} u_{0}^{i_{1}} - \bar{u}_{1}^{\epsilon, i} (0)), P_{t} (\sum_{i_{1} =1}^{3} \mathcal{P}^{ii_{1}} b_{0}^{i_{1}} - \bar{b}_{1}^{\epsilon, i} (0))) \rVert_{\mathcal{C}^{\frac{1}{2} - \delta_{0}}}\nonumber\\
& \hspace{13mm} + \sum_{i=1}^{3} \int_{0}^{t} \lVert (P_{t-s} \bar{\phi}_{u}^{\epsilon, \sharp, i}, P_{t-s} \bar{\phi}_{b}^{\epsilon, \sharp, i})(s) \rVert_{\mathcal{C}^{\frac{1}{2} - \delta_{0}}} ds + \sum_{i=1}^{3} \lVert (\bar{F}_{u}^{\epsilon, i}, \bar{F}_{b}^{\epsilon, i})(t) \rVert_{\mathcal{C}^{\frac{1}{2} - \delta_{0}}}] \nonumber
\end{align} 
by \eqref{[Equation (3.12)][ZZ17]}, \eqref{[Equation (3.5) for u][ZZ17]}, \eqref{[Equation (3.5) for b][ZZ17]}. First, 
\begin{equation}\label{estimate 100}
t^{\frac{ \frac{1}{2} - \delta_{0} + z + \kappa}{2}} (\bar{C}_{W}^{\epsilon}(T_{0}))^{2} \lesssim_{T_{0}, \bar{C}_{W}^{\epsilon}(T_{0})} 1 
\end{equation} 
as $\delta_{0} \in (0, \frac{1}{2})$ by hypothesis. Second, 
\begin{align}
& t^{\frac{\frac{1}{2} - \delta_{0} + z + \kappa}{2}} \lVert (P_{t} (\sum_{i_{1} =1}^{3} \mathcal{P}^{ii_{1}} u_{0}^{i_{1}} - \bar{u}_{1}^{\epsilon, i} (0)), P_{t}(\sum_{i_{1} =1}^{3} \mathcal{P}^{ii_{1}} b_{0}^{i_{1}} - \bar{b}_{1}^{\epsilon, i}(0))) \rVert_{\mathcal{C}^{\frac{1}{2} - \delta_{0}}} \nonumber \\\
\lesssim& t^{\frac{\frac{1}{2} - \delta_{0} + z + \kappa}{2}} t^{- \frac{ \frac{1}{2} - \delta_{0} + z}{2}} \lVert y_{0} - \bar{y}_{1}^{\epsilon}(0) \rVert_{\mathcal{C}^{-z}}  \approx t^{\frac{\kappa}{2}} \lVert y_{0} - \bar{y}_{1}^{\epsilon}(0) \rVert_{\mathcal{C}^{-z}}
\end{align} 
by Lemmas \ref{Lemma 3.7} and \ref{Lemma 3.10}. Third, 
\begin{align*}
& t^{\frac{ \frac{1}{2} - \delta_{0} + z + \kappa}{2}} \int_{0}^{t} \lVert (P_{t-s} \bar{\phi}_{u}^{\epsilon, \sharp, i}, P_{t-s} \bar{\phi}_{b}^{\epsilon, \sharp, i}) (s) \rVert_{\mathcal{C}^{\frac{1}{2} - \delta_{0}}} ds \\
\lesssim& [\sup_{s \in [0, T_{0}]} s^{\delta + z + \kappa} \lVert ( \bar{\phi}_{u}^{\epsilon, \sharp, i}, \bar{\phi}_{b}^{\epsilon, \sharp, i})(s) \rVert_{\mathcal{C}^{-1-2\delta - 2 \kappa}} ]t^{\frac{ \frac{1}{2} - \delta_{0} + z + \kappa}{2}} t^{\frac{1}{4} + \frac{\delta_{0}}{2} - 2 \delta - 2 \kappa - z} \nonumber 
\end{align*} 
by Lemma \ref{Lemma 3.10}, \eqref{[Equation (3.3c)][ZZ17]} and \eqref{[Equation (3.2g)][ZZ17]}. Therefore, 
\begin{align}\label{estimate 101}
t^{\frac{ \frac{1}{2} - \delta_{0} + z + \kappa}{2}}\int_{0}^{t} \lVert ( P_{t-s} \bar{\phi}_{u}^{\epsilon, \sharp, i}, P_{t-s} \bar{\phi}_{b}^{\epsilon, \sharp, i})(s) \rVert_{\mathcal{C}^{\frac{1}{2} - \delta_{0}}} ds \lesssim_{T_{0}, \bar{C}_{W}^{\epsilon}(T_{0}), \lVert y_{0} \rVert_{\mathcal{C}^{-z}}} 1 
\end{align} 
by \eqref{[Equation (3.11)][ZZ17]} and \eqref{[Equation (3.3c)][ZZ17]}. Lastly, we rely on \eqref{[Equation (3.17b)][ZZ17]}, \eqref{[Equation (3.3c)][ZZ17]},  \eqref{[Equation (3.2g)][ZZ17]}, \eqref{[Equation (3.12)][ZZ17]}, \eqref{[Equation (3.13)][ZZ17]}  and \eqref{[Equation (3.2h)][ZZ17]} to deduce 
\begin{align}
& t^{ \frac{ \frac{1}{2} - \delta_{0} + z + \kappa}{2}} \sum_{i=1}^{3} \lVert ( \bar{F}_{u}^{\epsilon, i}, \bar{F}_{b}^{\epsilon, i} )(t) \rVert_{\mathcal{C}^{\frac{1}{2} - \delta_{0}}} \nonumber\\
\lesssim_{\bar{C}_{W}^{\epsilon}(t)}& t^{\frac{\frac{1}{2} - \delta_{0} + z + \kappa}{2}} [ t^{\frac{ \delta_{0} - z - \frac{\delta}{2}}{2}} \lVert y_{0} - \bar{y}_{1}^{\epsilon}(0) \rVert_{\mathcal{C}^{-z}} + t^{\frac{\delta_{0}}{2} - \frac{\delta}{2} - \frac{3\kappa}{4} + \frac{1}{4}} \nonumber\\
& \hspace{5mm} + \int_{0}^{t} (t-r)^{\frac{\delta_{0}}{2} - \frac{\delta}{2} - \frac{\kappa}{4} - \frac{3}{4}}[ \lVert \bar{y}^{\epsilon, \sharp}(r) \rVert_{\mathcal{C}^{\frac{1}{2} + \beta}} + \lVert \bar{y}^{\epsilon, \sharp}(r)\rVert_{\mathcal{C}^{\delta}}^{2}] dr \nonumber \\
& \hspace{5mm} + t^{\frac{\kappa}{2}} \int_{0}^{t} (t-r)^{\frac{\delta_{0}}{2} - \frac{\delta}{2} - \frac{3\kappa}{4} - \frac{3}{4}} [ \lVert \bar{y}^{\epsilon,\sharp}(r) \rVert_{\mathcal{C}^{\frac{1}{2} + \beta}} + \lVert \bar{y}^{\epsilon,\sharp}(r) \rVert_{\mathcal{C}^{\delta}}^{2}] dr \nonumber\\
& \hspace{5mm} + t^{\frac{1-\delta}{4}}(t^{\frac{\delta}{4}} \bar{C}_{W}^{\epsilon}(t) + \lVert \bar{y}_{4}^{\epsilon}(t) \rVert_{\mathcal{C}^{\frac{1}{2} - \delta_{0}}})]. 
\end{align}
Relying on \eqref{[Equation (3.2g)][ZZ17]}, \eqref{[Equation (3.3c)][ZZ17]} and \eqref{[Equation (3.11)][ZZ17]} leads us to 
\begin{align}\label{estimate 102}
& t^{\frac{ \frac{1}{2} - \delta_{0} + z + \kappa}{2}} \sum_{i=1}^{3} \lVert (\bar{F}_{u}^{\epsilon, i}, \bar{F}_{b}^{\epsilon, i})(t) \rVert_{\mathcal{C}^{\frac{1}{2} - \delta_{0}}} \\
& \hspace{25mm} \lesssim_{\bar{C}_{W}^{\epsilon}(t)} C(T_{0}, \lVert y_{0} \rVert_{\mathcal{C}^{-z}}) + t^{\frac{ \frac{1}{2} - \delta_{0} + z + \kappa}{2}} t^{\frac{1-\delta}{4}} \lVert \bar{y}_{4}^{\epsilon}(t) \rVert_{\mathcal{C}^{\frac{1}{2} - \delta_{0}}}. \nonumber
\end{align} 
Therefore, by applying \eqref{estimate 100}-\eqref{estimate 101} and \eqref{estimate 102} to \eqref{first estimate} , for $T_{0}> 0$ sufficiently small we deduce \eqref{[Equation (3.17a)][ZZ17]} so that the proof of Proposition \ref{[Lemma 3.10][ZZ17]} is complete. 
\end{proof}

Next, we estimate 
\begin{align*}
& \sum_{i=1}^{3} \lVert ( \bar{u}_{4}^{\epsilon,i}, \bar{b}_{4}^{\epsilon, i})(t) \rVert_{\mathcal{C}^{-z}}\\
\leq& \frac{1}{2} \sum_{i_{1} =1}^{3} \lVert (\bar{u}_{4}^{\epsilon, i_{1}}, \bar{b}_{4}^{\epsilon, i_{1}}) (t) \rVert_{\mathcal{C}^{-z}} + C(T_{0}, \bar{C}_{W}^{\epsilon}(T_{0})) + C \sum_{i=1}^{3} \lVert (\bar{u}^{\epsilon, \sharp, i}, \bar{b}^{\epsilon,\sharp, i} )(t) \rVert_{\mathcal{C}^{-z}} 
\end{align*}
by \eqref{paracontrolled ansatz u}, \eqref{paracontrolled ansatz b}, Lemma \ref{[Lemma 1.1][Y19a]} (2),  \eqref{[Equation (3.2g)][ZZ17]}, \eqref{[Equation (3.2h)][ZZ17]}, \eqref{[Equation (3.3)][ZZ17]} and taking $T_{0} > 0$ smaller if necessary. Thus, subtracting $\frac{1}{2} \sum_{i_{1} =1}^{3} \lVert (\bar{u}_{4}^{\epsilon, i_{1}}, \bar{b}_{4}^{\epsilon, i_{1}}) \rVert_{\mathcal{C}^{-z}}$ from both sides gives 
\begin{equation}\label{estimate 5}
\sum_{i=1}^{3} \lVert ( \bar{u}_{4}^{\epsilon, i}, \bar{b}_{4}^{\epsilon, i}) \rVert_{\mathcal{C}^{-z}} \lesssim C(\bar{C}_{W}^{\epsilon}(T_{0}), T_{0}, \lVert y_{0} \rVert_{\mathcal{C}^{-z}}) + \sum_{i=1}^{3} \lVert (\bar{F}_{u}^{\epsilon, i}, \bar{F}_{b}^{\epsilon, i}) \rVert_{\mathcal{C}^{-z}} 
\end{equation} 
by \eqref{[Equation (3.5) for u][ZZ17]}, \eqref{[Equation (3.5) for b][ZZ17]}, Lemmas \ref{Lemma 3.7} and \ref{Lemma 3.10}, \eqref{[Equation (3.17)][ZZ17]} and \eqref{[Equation (3.3c)][ZZ17]}. Now in order to estimate $\sum_{i=1}^{3} \lVert (\bar{F}_{u}^{\epsilon, i}, \bar{F}_{b}^{\epsilon, i}) \rVert_{\mathcal{C}^{-z}}$, we compute from \eqref{[Equation (3.17e)][ZZ17]}
\begin{align}\label{estimate 103}
& \lVert ( \bar{F}_{u}^{\epsilon}, \bar{F}_{b}^{\epsilon})(t) \rVert_{\mathcal{C}^{-z}} \lesssim_{T_{0}, \bar{C}_{W}^{\epsilon}(T_{0}), \lVert y_{0} \rVert_{\mathcal{C}^{-z}}} 1 + \lVert (\bar{y}_{3}^{\epsilon}  + \bar{y}_{4}^{\epsilon})(t)\rVert_{\mathcal{C}^{-z}} t^{\frac{1-\delta}{4}}
\end{align}
by \eqref{[Equation (3.13)][ZZ17]}, \eqref{[Equation (3.2g)][ZZ17]}, and \eqref{[Equation (3.3c)][ZZ17]}. We apply \eqref{estimate 103} to the estimate \eqref{estimate 5} to obtain 
\begin{equation}\label{[Equation (3.18)][ZZ17]}
\sup_{t \in [0, T_{0}]} \sum_{i=1}^{3} \lVert ( \bar{u}_{4}^{\epsilon, i}, \bar{b}_{4}^{\epsilon, i})(t) \rVert_{\mathcal{C}^{-z}} \lesssim_{T_{0}, \bar{C}_{W}^{\epsilon}(T_{0}), \lVert y_{0} \rVert_{\mathcal{C}^{-z}}} 1 
\end{equation} 
by \eqref{[Equation (3.2g)][ZZ17]}, \eqref{[Equation (3.2h)][ZZ17]} and then taking $T_{0} > 0$ smaller if necessary. 
Now based on \eqref{[Equation (3.2e)][ZZ17]}, we define 
\begin{align}
\bar{\mathbb{Z}}(W^{\epsilon}) \triangleq& ( \bar{u}_{1}^{\epsilon}, \bar{b}_{1}^{\epsilon}, \bar{u}_{1}^{\epsilon} \diamond \bar{u}_{1}^{\epsilon}, \bar{b}_{1}^{\epsilon} \diamond \bar{b}_{1}^{\epsilon}, \bar{u}_{1}^{\epsilon} \diamond \bar{b}_{1}^{\epsilon}, \bar{b}_{1}^{\epsilon} \diamond \bar{u}_{1}^{\epsilon}, \bar{u}_{1}^{\epsilon} \diamond \bar{u}_{2}^{\epsilon}, \bar{b}_{1}^{\epsilon} \diamond \bar{b}_{2}^{\epsilon}, \bar{b}_{1}^{\epsilon} \diamond \bar{u}_{2}^{\epsilon}, \bar{b}_{2}^{\epsilon} \diamond \bar{u}_{1}^{\epsilon}, \nonumber\\
& \bar{u}_{2}^{\epsilon} \diamond \bar{u}_{2}^{\epsilon}, \bar{b}_{2}^{\epsilon} \diamond \bar{b}_{2}^{\epsilon}, \bar{b}_{2}^{\epsilon} \diamond \bar{u}_{2}^{\epsilon}, \pi_{0,\diamond} (\bar{u}_{3}^{\epsilon}, \bar{u}_{1}^{\epsilon}), \pi_{0,\diamond} (\bar{b}_{3}^{\epsilon}, \bar{b}_{1}^{\epsilon}), \pi_{0,\diamond}(\bar{u}_{3}^{\epsilon}, \bar{b}_{1}^{\epsilon}), \pi_{0,\diamond} (\bar{b}_{3}^{\epsilon}, \bar{u}_{1}^{\epsilon}), \nonumber\\
& \pi_{0,\diamond} (\mathcal{P} D \bar{K}_{u}^{\epsilon}, \bar{u}_{1}^{\epsilon}),  \pi_{0,\diamond} (\mathcal{P} D \bar{K}_{b}^{\epsilon}, \bar{u}_{1}^{\epsilon}),  \pi_{0,\diamond} (\mathcal{P} D \bar{K}_{u}^{\epsilon}, \bar{b}_{1}^{\epsilon}),  \pi_{0,\diamond} (\mathcal{P} D \bar{K}_{b}^{\epsilon}, \bar{b}_{1}^{\epsilon})) \nonumber\\
\in& \mathbb{X} \triangleq C([0,T_{0}]; \mathcal{C}^{-\frac{1}{2} - \frac{\delta}{2}})^{2} \times C([0,T]; \mathcal{C}^{-1- \frac{\delta}{2}})^{4} \nonumber\\
& \hspace{20mm} \times C([0,T]; \mathcal{C}^{-\frac{1}{2} - \frac{\delta}{2}})^{4} \times C([0,T_{0}]; \mathcal{C}^{-\delta})^{11},  
\end{align} 
equipped with product topology. Then similarly, we can show that for all $a > 0$, there exists $T_{0} > 0$ sufficiently small such that the mapping $(y_{0}, \bar{\mathbb{Z}}(W^{\epsilon})) \mapsto \bar{y}_{4}^{\epsilon}$ is Lipschitz in a norm $C([0,T_{0}]; \mathcal{C}^{-z})$ on the set $\{(y_{0}, \bar{\mathbb{Z}}(W^{\epsilon})): \max\{ \lVert y_{0} \rVert_{\mathcal{C}^{-z}}, \bar{C}_{W}^{\epsilon}(T_{0}) \} \leq a \}$. This implies that $\bar{y}_{4}^{\epsilon}$ on $[0, T_{0}]$ depends in a locally Lipschitz continuous way on $(y_{0}, \bar{\mathbb{Z}}(W^{\epsilon}))$. Similarly to \cite[Theorem 3.2]{Y19a}, we can prove the existence of $\gamma > 0$, $(\bar{u}_{1}, \bar{b}_{1}) \in C([0, T_{0}]; \mathcal{C}^{-\frac{1}{2} - \frac{\delta}{2}})^{2}$, $(\bar{u}_{2},\bar{b}_{2}) \in C([0,T_{0}]; \mathcal{C}^{-\delta})^{2}$, $(\bar{u}_{3}, \bar{b}_{3}) \in C([0,T_{0}]; \mathcal{C}^{\frac{1}{2} - \delta})^{2}$ such that for all $p \geq 1$, 
\begin{subequations}
\begin{align}
& \mathbb{E} [ \lVert ( \bar{u}_{1}^{\epsilon}, \bar{b}_{1}^{\epsilon}) - (\bar{u}_{1}, \bar{b}_{1}) \rVert_{C([0,T_{0}]; \mathcal{C}^{-\frac{1}{2} - \frac{\delta}{2}})}^{p}] \lesssim \epsilon^{\gamma p}, \\
& \mathbb{E} [ \lVert (\bar{u}_{2}^{\epsilon}, \bar{b}_{2}^{\epsilon}) - (\bar{u}_{2}, \bar{b}_{2}) \rVert_{C([0, T_{0}]; \mathcal{C}^{ - \delta})}] \lesssim \epsilon^{\gamma p}, \\
&  \mathbb{E} [ \lVert (\bar{u}_{3}^{\epsilon}, \bar{b}_{3}^{\epsilon}) - (\bar{u}_{3}, \bar{b}_{3}) \rVert_{C([0, T_{0}]; \mathcal{C}^{\frac{1}{2} - \delta})}] \lesssim \epsilon^{\gamma p}.
\end{align}
\end{subequations} 
Using these bounds, letting $\epsilon_{k} \triangleq 2^{-k}$, we can prove that for all $\lambda > 0$, 
\begin{equation}\label{[Equation (3.19)][ZZ17]}
\sum_{k=1}^{\infty} \mathbb{P} ( \{ \lVert ( \bar{u}_{1}^{\epsilon_{k}}, \bar{b}_{1}^{\epsilon_{k}}) - (\bar{u}_{1}, \bar{b}_{1}) \rVert_{C([0, T_{0}]; \mathcal{C}^{-\frac{1}{2} - \frac{\delta}{2}})} > \lambda \}) \lesssim  \frac{1}{\lambda} \sum_{k=1}^{\infty} 2^{-\kappa \gamma}  \lesssim_{\lambda} 1 
\end{equation} 
by Chebyshev's inequality. Due to Borel-Cantelli Lemma, we conclude that for all $i \in \{1,2,3\},$ $(\bar{u}_{1}^{\epsilon_{k}, i}, \bar{b}_{1}^{\epsilon_{k}, i}) \to (\bar{u}_{1}^{i}, \bar{b}_{1}^{i})$ in $C([0,T_{0}]; \mathcal{C}^{-\frac{1}{2} - \frac{\delta}{2}})^{2}$ $\mathbb{P}$-a.s. as $k\to\infty$ and similarly $(\bar{u}_{2}^{\epsilon_{k}, i}, \bar{b}_{2}^{\epsilon_{k}, i}) \to (\bar{u}_{2}^{i}, \bar{b}_{2}^{i})$, $(\bar{u}_{3}^{\epsilon_{k}, i}, \bar{b}_{3}^{\epsilon_{k}, i}) \to (\bar{u}_{3}^{i}, \bar{b}_{3}^{i})$ in $C([0,T_{0}]; \mathcal{C}^{-\delta})^{2}$ and $C([0,T_{0}]; \mathcal{C}^{\frac{1}{2} - \delta})^{2}$ $\mathbb{P}$-a.s. as $k\to\infty$, respectively. Hence, we have shown that $\sup_{k\in \mathbb{N}: \epsilon_{k} = 2^{-k}} \bar{C}_{W}^{\epsilon_{k}}(T_{0}) < \infty$ $\mathbb{P}$-a.s., $(\bar{u}_{4}, \bar{b}_{4}) = \lim_{k\to\infty} (\bar{u}_{4}^{\epsilon_{k}}, \bar{b}_{4}^{\epsilon_{k}})$ in $[0, T_{0}]$, $y = (u,b) = (\sum_{i=1}^{4} \bar{u}_{i}, \sum_{i=1}^{4} \bar{b}_{i})$ as the solution to \eqref{MHD} on $[0,T_{0}]$ where $T_{0} > 0$ is independent of $\epsilon_{k} > 0$ and 
\begin{equation*}
\sup_{t\in [0,T_{0}]} \lVert( \bar{u}^{\epsilon_{k}}, \bar{b}^{\epsilon_{k}})(t) - (u, b)(t) \rVert_{\mathcal{C}^{-z}} \to 0
\end{equation*} 
$\mathbb{P}$-a.s., as $k\to\infty$ where we recall that $(\bar{u}^{\epsilon_{k}}, \bar{b}^{\epsilon_{k}}) = (\sum_{i=1}^{4} \bar{u}_{i}^{\epsilon_{k}}, \sum_{i=1}^{4} \bar{b}_{i}^{\epsilon_{k}})$. We can now define 
\begin{equation}\label{estimate 104}
\tau_{L} \triangleq \inf\{t: \hspace{1mm} \lVert (u,b) (t) \rVert_{\mathcal{C}^{-z}} \geq L \} \wedge L \text{ and } \bar{\rho}_{L}^{\epsilon} \triangleq \inf\{t: \hspace{1mm} \bar{C}_{W}^{\epsilon}(t) \geq L \} 
\end{equation} 
for some $L \geq 0$ and show that for $L_{1} \geq 0$, 
\begin{equation}\label{[Equation (3.21a)][ZZ17]}
\sup_{t\in [0, \tau_{L} \wedge \bar{\rho}_{L_{1}}^{\epsilon} ]} \lVert (\bar{y}^{\epsilon} - y )(t) \rVert_{\mathcal{C}^{-z}} \to 0 
\end{equation} 
in probability similarly to \eqref{[Equation (3.11)][ZZ17]} using \eqref{[Equation (3.12)][ZZ17]}-\eqref{[Equation (3.13)][ZZ17]}. Then the proof that we can extend such a solution to the maximal solution that satisfies $\sup_{t\in [0, \tau)} \lVert y(t) \rVert_{\mathcal{C}^{-z}} = + \infty$ for some explosion time $\tau$ is standard and may be shown identically as \cite[Proof of Theorem 3.12]{ZZ15}; thus, we omit it here. 

\subsubsection{Construction of solution to \eqref{MHD approximation}}

Analogously to \eqref{[Equation (3.1aa)][ZZ17]}-\eqref{[Equation (3.2)][ZZ17]}, we write $y_{1}^{\epsilon} \triangleq (u_{1}^{\epsilon}, b_{1}^{\epsilon})$, $y_{2}^{\epsilon} \triangleq (u_{2}^{\epsilon}, b_{2}^{\epsilon})$, $y_{3}^{\epsilon} \triangleq (u_{3}^{\epsilon}, b_{3}^{\epsilon})$, $y_{4}^{\epsilon} \triangleq (u_{4}^{\epsilon}, b_{4}^{\epsilon})$ and split \eqref{MHD approximation} as follows: for $i \in \{1,2,3\}$, 
\begin{equation}\label{[Equation (3.21c)][ZZ17]}
du_{1}^{\epsilon, i} = \Delta_{\epsilon} u_{1}^{\epsilon, i} dt + \sum_{i_{1} =1}^{3} \mathcal{P}^{ii_{1}} H_{u,\epsilon} dW_{u}^{i_{1}}, \hspace{1mm} 
db_{1}^{\epsilon, i} = \Delta_{\epsilon} b_{1}^{\epsilon, i} dt + \sum_{i_{1} =1}^{3} \mathcal{P}^{ii_{1}} H_{b,\epsilon} dW_{b}^{i_{1}}, 
\end{equation} 
\begin{subequations}\label{[Equation (3.21d)][ZZ17]}
\begin{align}
du_{2}^{\epsilon, i} =& \Delta_{\epsilon} u_{2}^{\epsilon, i} dt - \frac{1}{2} \sum_{i_{1}, j=1}^{3} \mathcal{P}^{ii_{1}} D_{j}^{\epsilon} (u_{1}^{\epsilon, i_{1}} \diamond u_{1}^{\epsilon, j} - b_{1}^{\epsilon, i_{1}} \diamond b_{1}^{\epsilon, j}) dt, \hspace{3mm} u_{2}^{\epsilon} (0, \cdot) \equiv 0, \\
d b_{2}^{\epsilon, i} =& \Delta_{\epsilon} b_{2}^{\epsilon, i} dt - \frac{1}{2} \sum_{i_{1}, j=1}^{3} \mathcal{P}^{ii_{1}} D_{j}^{\epsilon} (b_{1}^{\epsilon, i_{1}} \diamond u_{1}^{\epsilon, j} - u_{1}^{\epsilon, i_{1}} \diamond b_{1}^{\epsilon, j}) dt, \hspace{3mm} b_{2}^{\epsilon} (0, \cdot) \equiv 0, 
\end{align}
\end{subequations} 
\begin{subequations}\label{[Equation (3.21e)][ZZ17]}
\begin{align}
d u_{3}^{\epsilon, i} =& \Delta_{\epsilon} u_{3}^{\epsilon, i} dt - \frac{1}{2} \sum_{i_{1}, j=1}^{3} \mathcal{P}^{ii_{1}} D_{j}^{\epsilon} (u_{1}^{\epsilon, i_{1}} \diamond u_{2}^{\epsilon, j} + u_{2}^{\epsilon, i_{1}} \diamond u_{1}^{\epsilon, j} \nonumber\\
& \hspace{36mm} - b_{1}^{\epsilon, i_{1}} \diamond b_{2}^{\epsilon, j} - b_{2}^{\epsilon, i_{1}} \diamond b_{1}^{\epsilon, j}) dt, \hspace{3mm} u_{3}^{\epsilon} (0, \cdot) \equiv 0, \\
d b_{3}^{\epsilon, i} =& \Delta_{\epsilon} b_{3}^{\epsilon, i} dt - \frac{1}{2} \sum_{i_{1}, j=1}^{3} \mathcal{P}^{ii_{1}} D_{j}^{\epsilon} (b_{1}^{\epsilon, i_{1}} \diamond u_{2}^{\epsilon, j} + b_{2}^{\epsilon, i_{1}} \diamond u_{1}^{\epsilon, j} \nonumber\\
& \hspace{36mm} - u_{1}^{\epsilon, i_{1}} \diamond b_{2}^{\epsilon, j} - u_{2}^{\epsilon, i_{1}} \diamond b_{1}^{\epsilon, j}) dt, \hspace{3mm} b_{3}^{\epsilon} (0, \cdot) \equiv 0,
\end{align}
\end{subequations} 
\begin{subequations}\label{[Equation (3.22)][ZZ17]}
\begin{align}
u_{4}^{\epsilon, i}(t) =& P_{t}^{\epsilon} ( \sum_{i_{1} =1}^{3} \mathcal{P}^{ii_{1}} u_{0}^{i_{1}} - u_{1}^{\epsilon, i}(0)) \\
& - \frac{1}{2} \int_{0}^{t} P_{t-s}^{\epsilon} \sum_{i_{1}, j=1}^{3} \mathcal{P}^{ii_{1}} \nonumber\\
& \times D_{j}^{\epsilon} [ u_{1}^{\epsilon, i_{1}} \diamond (u_{3}^{\epsilon, j} + u_{4}^{\epsilon, j}) + (u_{3}^{\epsilon, i_{1}} + u_{4}^{\epsilon, i_{1}}) \diamond u_{1}^{\epsilon, j} + u_{2}^{\epsilon, i_{1}} \diamond u_{2}^{\epsilon, j} \nonumber\\
&\hspace{5mm} + u_{2}^{\epsilon, i_{1}}( u_{3}^{\epsilon, j} + u_{4}^{\epsilon, j}) + u_{2}^{\epsilon, j} ( u_{3}^{\epsilon, i_{1}} + u_{4}^{\epsilon, i_{1}}) + (u_{3}^{\epsilon, i_{1}} + u_{4}^{\epsilon, i_{1}})(u_{3}^{\epsilon, j} + u_{4}^{\epsilon, j})\nonumber \\
&\hspace{5mm} - b_{1}^{\epsilon, i_{1}} \diamond (b_{3}^{\epsilon, j} + b_{4}^{\epsilon, j}) - (b_{3}^{\epsilon, i_{1}} + b_{4}^{\epsilon, i_{1}}) \diamond b_{1}^{\epsilon, j} - b_{2}^{\epsilon, i_{1}} \diamond b_{2}^{\epsilon, j} \nonumber\\
&\hspace{5mm} - b_{2}^{\epsilon, i_{1}}( b_{3}^{\epsilon, j} + b_{4}^{\epsilon, j}) - b_{2}^{\epsilon, j} ( b_{3}^{\epsilon, i_{1}} + b_{4}^{\epsilon, i_{1}}) - (b_{3}^{\epsilon, i_{1}} + b_{4}^{\epsilon, i_{1}})(b_{3}^{\epsilon, j} + b_{4}^{\epsilon, j})]ds, \nonumber\\
b_{4}^{\epsilon, i}(t) =& P_{t}^{\epsilon} ( \sum_{i_{1} =1}^{3} \mathcal{P}^{ii_{1}} b_{0}^{i_{1}} - b_{1}^{\epsilon, i}(0)) \\
& - \frac{1}{2} \int_{0}^{t} P_{t-s}^{\epsilon} \sum_{i_{1}, j=1}^{3} \mathcal{P}^{ii_{1}} \nonumber\\
& \times D_{j}^{\epsilon} [ b_{1}^{\epsilon, i_{1}} \diamond (u_{3}^{\epsilon, j} + u_{4}^{\epsilon, j}) + (b_{3}^{\epsilon, i_{1}} + b_{4}^{\epsilon, i_{1}}) \diamond u_{1}^{\epsilon, j} + b_{2}^{\epsilon, i_{1}} \diamond u_{2}^{\epsilon, j} \nonumber\\
&\hspace{5mm} + b_{2}^{\epsilon, i_{1}}( u_{3}^{\epsilon, j} + u_{4}^{\epsilon, j}) + u_{2}^{\epsilon, j} ( b_{3}^{\epsilon, i_{1}} + b_{4}^{\epsilon, i_{1}}) + (b_{3}^{\epsilon, i_{1}} + b_{4}^{\epsilon, i_{1}})(u_{3}^{\epsilon, j} + u_{4}^{\epsilon, j})\nonumber \\
&\hspace{5mm} - u_{1}^{\epsilon, i_{1}} \diamond (b_{3}^{\epsilon, j} + b_{4}^{\epsilon, j}) - (u_{3}^{\epsilon, i_{1}} + u_{4}^{\epsilon, i_{1}}) \diamond b_{1}^{\epsilon, j} - u_{2}^{\epsilon, i_{1}} \diamond b_{2}^{\epsilon, j} \nonumber\\
&\hspace{5mm} - u_{2}^{\epsilon, i_{1}}( b_{3}^{\epsilon, j} + b_{4}^{\epsilon, j}) - b_{2}^{\epsilon, j} ( u_{3}^{\epsilon, i_{1}} + u_{4}^{\epsilon, i_{1}}) - (u_{3}^{\epsilon, i_{1}} + \bar{u}_{4}^{\epsilon, i_{1}})(b_{3}^{\epsilon, j} + b_{4}^{\epsilon, j})]ds, \nonumber
\end{align}
\end{subequations} 
where we recall from Example \ref{Burgers' equation example} that $P_{t}^{\epsilon} = e^{t \Delta_{\epsilon}}$, and 
\begin{subequations}\label{[Equation (3.22aa)][ZZ17]}
\begin{align}
& u_{1}^{\epsilon, i} \diamond u_{1}^{\epsilon, j} \triangleq u_{1}^{\epsilon, i} u_{1}^{\epsilon, j} - C_{01}^{\epsilon, ij}, \hspace{3mm} b_{1}^{\epsilon, i} \diamond b_{1}^{\epsilon, j} \triangleq b_{1}^{\epsilon, i} b_{1}^{\epsilon, j} - C_{02}^{\epsilon, ij}, \\
& u_{1}^{\epsilon, i} \diamond b_{1}^{\epsilon, j} \triangleq u_{1}^{\epsilon, i} b_{1}^{\epsilon, j} - C_{03}^{\epsilon, ij}, \hspace{3mm} b_{1}^{\epsilon, i} \diamond u_{1}^{\epsilon, j} \triangleq b_{1}^{\epsilon, i} u_{1}^{\epsilon, j} - C_{03}^{\epsilon, ij}, 
\end{align}
\end{subequations} 
with $C_{01}^{\epsilon, ij}, C_{02}^{\epsilon, ij}, C_{03}^{\epsilon, ij}$ explicitly found in \eqref{estimate 40}, \eqref{estimate 49}, 
\begin{subequations}\label{[Equation (3.22ab)][ZZ17]}
\begin{align}
u_{2}^{\epsilon, i} \diamond u_{1}^{\epsilon, j} &\triangleq u_{1}^{\epsilon, j} \diamond u_{2}^{\epsilon, i} \nonumber \\
\triangleq& u_{1}^{\epsilon, j} u_{2}^{\epsilon, i} + \sum_{i_{1} =1}^{3} (C_{1,u}^{\epsilon, ii_{1} j}+ \tilde{C}_{1,u}^{\epsilon, i i_{1} j}) u_{1}^{\epsilon, i_{1}} + (C_{1,b}^{\epsilon, i i_{1} j} + \tilde{C}_{1,b}^{\epsilon, i i_{1} j}) b_{1}^{\epsilon, i_{1}}, \\
b_{2}^{\epsilon, i} \diamond b_{1}^{\epsilon, j} \triangleq& b_{1}^{\epsilon, j} \diamond b_{2}^{\epsilon, i} \nonumber \\
\triangleq & b_{1}^{\epsilon, j} b_{2}^{\epsilon, i} + \sum_{i_{1} =1}^{3} (C_{2,u}^{\epsilon, i i_{1} j}+ \tilde{C}_{2,u}^{\epsilon, i i_{1} j}) u_{1}^{\epsilon, i_{1}} + (C_{2,b}^{\epsilon, i i_{1} j} + \tilde{C}_{2,b}^{\epsilon, i i_{1} j}) b_{1}^{\epsilon, i_{1}}, \\
 u_{2}^{\epsilon, i} \diamond b_{1}^{\epsilon, j} \triangleq & b_{1}^{\epsilon, j} \diamond u_{2}^{\epsilon, i} \nonumber \\
 \triangleq & b_{1}^{\epsilon, j} u_{2}^{\epsilon, i} + \sum_{i_{1} =1}^{3} (C_{3,u}^{\epsilon, i i_{1} j}+ \tilde{C}_{3,u}^{\epsilon, i i_{1} j}) u_{1}^{\epsilon, i_{1}} + (C_{3,b}^{\epsilon, i i_{1} j} + \tilde{C}_{3,b}^{\epsilon, i i_{1} j}) b_{1}^{\epsilon, i_{1}}, \\
b_{2}^{\epsilon, i} \diamond u_{1}^{\epsilon, j} \triangleq & u_{1}^{\epsilon, j} \diamond b_{2}^{\epsilon, i} \nonumber \\
\triangleq & u_{1}^{\epsilon, j} b_{2}^{\epsilon, i} + \sum_{i_{1} =1}^{3} (C_{4,u}^{\epsilon, i i_{1} j}+ \tilde{C}_{4,u}^{\epsilon, i i_{1} j}) u_{1}^{\epsilon, i_{1}} + (C_{4,b}^{\epsilon, i i_{1} j} + \tilde{C}_{4,b}^{\epsilon, i i_{1} j}) b_{1}^{\epsilon, i_{1}}, 
\end{align}
\end{subequations} 
with $C_{k,u}^{\epsilon, i i_{1} j}, C_{k,b}^{\epsilon, i i_{1} j}, \tilde{C}_{k,u}^{\epsilon, i i_{1} j}$ and $\tilde{C}_{k,b}^{\epsilon, i i_{1} j}$ explicitly found in \eqref{C2uepsilonii1j}, \eqref{C2bepsilonii1j}, \eqref{C2uepsilonii2j},  \eqref{C2bepsilonii2j} and \eqref{estimate 124},
\begin{subequations}\label{[Equation (3.22ac)][ZZ17]} 
\begin{align}
& u_{2}^{\epsilon, i} \diamond u_{2}^{\epsilon, j} \triangleq u_{2}^{\epsilon, i} u_{2}^{\epsilon, j} - \phi_{21}^{\epsilon, ij} - C_{21}^{\epsilon, ij}, \hspace{1mm} b_{2}^{\epsilon, i} \diamond b_{2}^{\epsilon, j} \triangleq b_{2}^{\epsilon, i} b_{2}^{\epsilon, j} - \phi_{22}^{\epsilon, ij} - C_{22}^{\epsilon, ij}, \\
& b_{2}^{\epsilon, i} \diamond u_{2}^{\epsilon, j} \triangleq b_{2}^{\epsilon, i} u_{2}^{\epsilon, j} - \phi_{23}^{\epsilon, ij} - C_{23}^{\epsilon, ij}, \hspace{2mm} u_{2}^{\epsilon, i} \diamond b_{2}^{\epsilon, j} \triangleq u_{2}^{\epsilon, i} b_{2}^{\epsilon, j} - \phi_{24}^{\epsilon, ij} - C_{24}^{\epsilon, ij}, 
\end{align}
\end{subequations} 
with $\phi_{22}^{\epsilon, ij}(t)$ explicitly found in \eqref{estimate 45}, $C_{22}^{\epsilon, ij}$ in \eqref{estimate 43}, 
\begin{subequations}\label{[Equation (3.22ad)][ZZ17]}
\begin{align}
 u_{1}^{\epsilon, i} \diamond u_{3}^{\epsilon, j} \triangleq& \pi_{<} (u_{1}^{\epsilon, i}, u_{3}^{\epsilon, j}) + \pi_{>} (u_{1}^{\epsilon, i}, u_{3}^{\epsilon, j}) + \pi_{0, \diamond} (u_{3}^{\epsilon, j}, u_{1}^{\epsilon, i}), \\
 b_{1}^{\epsilon, i} \diamond b_{3}^{\epsilon, j} \triangleq& \pi_{<} ( b_{1}^{\epsilon, i}, b_{3}^{\epsilon, j}) + \pi_{>} ( b_{1}^{\epsilon, i}, b_{3}^{\epsilon, j}) + \pi_{0,\diamond} (b_{3}^{\epsilon, j}, b_{1}^{\epsilon, i}), \\
  u_{1}^{\epsilon, i} \diamond b_{3}^{\epsilon, j} \triangleq& \pi_{<} ( u_{1}^{\epsilon, i}, b_{3}^{\epsilon, j}) + \pi_{>} ( u_{1}^{\epsilon, i}, b_{3}^{\epsilon, j}) + \pi_{0,\diamond} (b_{3}^{\epsilon, j}, u_{1}^{\epsilon, i}), \\
 b_{1}^{\epsilon, i} \diamond u_{3}^{\epsilon, j} \triangleq& \pi_{<} ( b_{1}^{\epsilon, i}, u_{3}^{\epsilon, j}) + \pi_{>} ( b_{1}^{\epsilon, i}, u_{3}^{\epsilon, j}) + \pi_{0,\diamond} (u_{3}^{\epsilon, j}, b_{1}^{\epsilon, i}), 
\end{align}
\end{subequations} 
with 
\begin{subequations}\label{[Equation (3.22af)][ZZ17]}
\begin{align}
\pi_{0,\diamond } (u_{3}^{\epsilon, i}, u_{1}^{\epsilon, j}) \triangleq & \pi_{0}( u_{3}^{\epsilon, i}, u_{1}^{\epsilon, j}) - \phi_{11}^{\epsilon, ij}  - C_{11}^{\epsilon, ij} \nonumber \\
&+ \sum_{i_{1} =1}^{3} (C_{1,u}^{\epsilon, i i_{1} j}+ \tilde{C}_{1,u}^{\epsilon, i i_{1} j}) u_{2}^{\epsilon, i_{1}} + (C_{1,b}^{\epsilon, i i_{1} j} + \tilde{C}_{1,b}^{\epsilon, i i_{1} j}) b_{2}^{\epsilon, i_{1}}, \\
\pi_{0,\diamond } (b_{3}^{\epsilon, i}, b_{1}^{\epsilon, j}) \triangleq & \pi_{0}( b_{3}^{\epsilon, i}, b_{1}^{\epsilon, j}) - \phi_{12}^{\epsilon, ij}  - C_{12}^{\epsilon, ij}  \nonumber \\
&+ \sum_{i_{1} =1}^{3} (C_{2,u}^{\epsilon, ii_{1} j}+ \tilde{C}_{2,u}^{\epsilon, i i_{1} j}) u_{2}^{\epsilon, i_{1}} + (C_{2,b}^{\epsilon, i i_{1} j} + \tilde{C}_{2,b}^{\epsilon, i i_{1} j}) b_{2}^{\epsilon, i_{1}}, \\
\pi_{0,\diamond } (u_{3}^{\epsilon, i}, b_{1}^{\epsilon, j}) \triangleq& \pi_{0}( u_{3}^{\epsilon, i}, b_{1}^{\epsilon, j}) - \phi_{13}^{\epsilon, ij}  - C_{13}^{\epsilon, ij}  \nonumber \\
&+ \sum_{i_{1} =1}^{3} (C_{3,u}^{\epsilon, ii_{1} j}+ \tilde{C}_{3,u}^{\epsilon, i i_{1} j}) u_{2}^{\epsilon, i_{1}} + (C_{3,b}^{\epsilon, i i_{1} j} + \tilde{C}_{3,b}^{\epsilon, i i_{1} j}) b_{2}^{\epsilon, i_{1}}, \\
\pi_{0,\diamond } (b_{3}^{\epsilon, i}, u_{1}^{\epsilon, j}) \triangleq& \pi_{0}( b_{3}^{\epsilon, i}, u_{1}^{\epsilon, j}) - \phi_{14}^{\epsilon, ij}  - C_{14}^{\epsilon, ij} +  \nonumber  \\
&\sum_{i_{1} =1}^{3} (C_{4,u}^{\epsilon, i i_{1}j}+ \tilde{C}_{4,u}^{\epsilon, i i_{1} j}) u_{2}^{\epsilon, i_{1}} + (C_{4,b}^{\epsilon, i i_{1} j} + \tilde{C}_{4,b}^{\epsilon, i i_{1} j}) b_{2}^{\epsilon, i_{1}}, 
\end{align}
\end{subequations} 
and $\phi_{13}^{\epsilon, ij}(t)$ explicitly found in \eqref{estimate 48}, $C_{13}^{\epsilon, ij}$ in \eqref{estimate 47} 
\begin{subequations}\label{[Equation (3.22ae)][ZZ17]}
\begin{align}
 u_{1}^{\epsilon, i} \diamond u_{4}^{\epsilon, j} \triangleq& \pi_{<} (u_{1}^{\epsilon, i}, u_{4}^{\epsilon, j}) + \pi_{>} (u_{1}^{\epsilon, i}, u_{4}^{\epsilon, j}) + \pi_{0,\diamond} (u_{4}^{\epsilon, j}, u_{1}^{\epsilon, i}), \\
 b_{1}^{\epsilon, i} \diamond b_{4}^{\epsilon, j} \triangleq& \pi_{<} (b_{1}^{\epsilon, i}, b_{4}^{\epsilon, j}) + \pi_{>} (b_{1}^{\epsilon, i}, b_{4}^{\epsilon, j}) + \pi_{0,\diamond} (b_{4}^{\epsilon, j}, b_{1}^{\epsilon, i}), \\
u_{1}^{\epsilon, i} \diamond b_{4}^{\epsilon, j} \triangleq& \pi_{<} (u_{1}^{\epsilon, i}, b_{4}^{\epsilon, j}) + \pi_{>} (u_{1}^{\epsilon, i}, b_{4}^{\epsilon, j}) + \pi_{0,\diamond} (b_{4}^{\epsilon, j}, u_{1}^{\epsilon, i}), \\
 b_{1}^{\epsilon, i} \diamond u_{4}^{\epsilon, j} \triangleq& \pi_{<} (b_{1}^{\epsilon, i}, u_{4}^{\epsilon, j}) + \pi_{>} (b_{1}^{\epsilon, i}, u_{4}^{\epsilon, j}) + \pi_{0,\diamond} (u_{4}^{\epsilon, j}, b_{1}^{\epsilon, i}), 
\end{align}
\end{subequations}
with 
\begin{subequations}\label{[Equation (3.22ag)][ZZ17]}
\begin{align} 
\pi_{0,\diamond} (u_{4}^{\epsilon, i}, u_{1}^{\epsilon, j}) \triangleq \pi_{0}(u_{4}^{\epsilon, i}, u_{1}^{\epsilon, j}) + &\sum_{i_{1} =1}^{3} (C_{1,u}^{\epsilon, i i_{1} j}+ \tilde{C}_{1,u}^{\epsilon, ii_{1} j}) (u_{3}^{\epsilon, i_{1}} + u_{4}^{\epsilon, i_{1}}) \nonumber \\
& + (C_{1,b}^{\epsilon, i i_{1} j} + \tilde{C}_{1,b}^{\epsilon, i i_{1} j}) (b_{3}^{\epsilon, i_{1}} + b_{4}^{\epsilon, i_{1}}),\\
\pi_{0,\diamond} (b_{4}^{\epsilon, i}, b_{1}^{\epsilon, j})  \triangleq \pi_{0}(b_{4}^{\epsilon, i}, b_{1}^{\epsilon, j}) + & \sum_{i_{1} =1}^{3} (C_{2,u}^{\epsilon, i i_{1} j}+ \tilde{C}_{2,u}^{\epsilon, i i_{1} j}) (u_{3}^{\epsilon, i_{1}} + u_{4}^{\epsilon, i_{1}}) \nonumber  \\
&+ (C_{2,b}^{\epsilon, i i_{1} j} + \tilde{C}_{2,b}^{\epsilon, i i_{1} j}) (b_{3}^{\epsilon, i_{1}} + b_{4}^{\epsilon, i_{1}}),\\
\pi_{0,\diamond} (u_{4}^{\epsilon, i}, b_{1}^{\epsilon, j}) \triangleq \pi_{0}(u_{4}^{\epsilon, i}, b_{1}^{\epsilon, j}) & + \sum_{i_{1} =1}^{3} (C_{3,u}^{\epsilon, i i_{1} j}+ \tilde{C}_{3,u}^{\epsilon, i i_{1} j}) (u_{3}^{\epsilon, i_{1}} + u_{4}^{\epsilon, i_{1}}) \nonumber  \\
&+ (C_{3,b}^{\epsilon, i i_{1} j} + \tilde{C}_{3,b}^{\epsilon, i i_{1} j}) (b_{3}^{\epsilon, i_{1}} + b_{4}^{\epsilon, i_{1}}), \\
\pi_{0,\diamond} (b_{4}^{\epsilon, i}, u_{1}^{\epsilon, j}) \triangleq \pi_{0}(b_{4}^{\epsilon, i}, u_{1}^{\epsilon, j}) & + \sum_{i_{1} =1}^{3} (C_{4,u}^{\epsilon, i i_{1} j}+ \tilde{C}_{4,u}^{\epsilon, i i_{1} j}) (u_{3}^{\epsilon, i_{1}} + u_{4}^{\epsilon, i_{1}}) \nonumber \\
&+ (C_{4,b}^{\epsilon, i i_{1} j} + \tilde{C}_{4,b}^{\epsilon, i i_{1} j}) (b_{3}^{\epsilon, i_{1}} + b_{4}^{\epsilon, i_{1}}), 
\end{align}
\end{subequations}
and $\phi_{lk}^{\epsilon, ij}$ for $l \in \{1,2\}, k \in \{1,2,3,4\}, i, j \in \{1,2,3\}$, converges to some $\phi_{lk}^{ij}$ with respect to the norm of $\sup_{t\in [0,T]} t^{\rho} \lvert \cdot (t) \rvert$ for every $\rho > 0$ (see \eqref{estimate 127}, \eqref{estimate 54}-\eqref{estimate 55}).  Now we define $K_{u}^{\epsilon}$ and $K_{b}^{\epsilon}$ to satisfy for all $i \in \{1,2,3\}$, 
\begin{subequations}\label{estimate 276}
\begin{align}
&d K_{u}^{\epsilon, i}  = (\Delta_{\epsilon} K_{u}^{\epsilon, i} + u_{1}^{\epsilon, i}) dt, \hspace{3mm} K_{u}^{\epsilon, i} (0) = 0,  \\
&d K_{b}^{\epsilon, i}  = (\Delta_{\epsilon} K_{b}^{\epsilon, i} + b_{1}^{\epsilon, i}) dt,  \hspace{3mm} K_{b}^{\epsilon, i} (0) = 0. 
\end{align}
\end{subequations}  
For every $\epsilon > 0$, by a similar argument to the case of \eqref{[Equation (3.2)][ZZ17]}, we obtain solution $(u_{4}^{\epsilon}, b_{4}^{\epsilon})$ of \eqref{[Equation (3.22)][ZZ17]} and then the solution $(u^{\epsilon}, b^{\epsilon}) = (\sum_{k=1}^{4} u_{k}^{\epsilon}, \sum_{k=1}^{4} b_{k}^{\epsilon})$ of \eqref{MHD approximation}. We only mention that we need to define analogously to \eqref{[Equation (3.2e)][ZZ17]}-\eqref{[Equation (3.2f)][ZZ17]}, 
\begin{align}\label{[Equation (3.23a)][ZZ17]}
C_{W}^{\epsilon}(T) \triangleq& \sup_{t\in [0,T]} [ \sum_{i=1}^{3} \lVert (u_{1}^{\epsilon, i}, b_{1}^{\epsilon, i})(t) \rVert_{\mathcal{C}^{- \frac{1}{2} - \frac{\delta}{2}}} \\
& + \sum_{i,j=1}^{3} \lVert ( u_{1}^{\epsilon, i} \diamond u_{1}^{\epsilon, j}, b_{1}^{\epsilon, i} \diamond b_{1}^{\epsilon, j}, u_{1}^{\epsilon, i} \diamond b_{1}^{\epsilon, j}, b_{1}^{\epsilon, i} \diamond u_{1}^{\epsilon, j})(t) \rVert_{\mathcal{C}^{-1 - \frac{\delta}{2}}} \nonumber\\
&+  \sum_{i,j=1}^{3} \lVert ( u_{1}^{\epsilon, i} \diamond u_{2}^{\epsilon, j}, b_{1}^{\epsilon, i} \diamond b_{2}^{\epsilon, j}, u_{2}^{\epsilon, i} \diamond b_{1}^{\epsilon, j}, b_{2}^{\epsilon, i} \diamond u_{1}^{\epsilon, j})(t) \rVert_{\mathcal{C}^{- \frac{1}{2} - \frac{\delta}{2}}} \nonumber\\
&+  \sum_{i,j=1}^{3} \lVert ( u_{2}^{\epsilon, i} \diamond u_{2}^{\epsilon, j}, b_{2}^{\epsilon, i} \diamond b_{2}^{\epsilon, j}, b_{2}^{\epsilon, i} \diamond u_{2}^{\epsilon, j})(t) \rVert_{\mathcal{C}^{-\delta}} \nonumber\\
&+  \sum_{i,j=1}^{3} \lVert (\pi_{0,\diamond} (u_{3}^{\epsilon, j}, u_{1}^{\epsilon, i}), \pi_{0,\diamond} (b_{3}^{\epsilon, j}, b_{1}^{\epsilon, i}), \pi_{0,\diamond}( u_{3}^{\epsilon, j}, b_{1}^{\epsilon, i}), \pi_{0,\diamond} (b_{3}^{\epsilon, j}, u_{1}^{\epsilon, i})) \rVert_{\mathcal{C}^{-\delta}}\nonumber\\
&+  \sum_{i, i_{1}, j, j_{1} =1}^{3} \lVert (\pi_{0,\diamond} (\mathcal{P}^{ii_{1}} D_{j}^{\epsilon} K_{u}^{\epsilon, j}, u_{1}^{\epsilon, j_{1}}), \pi_{0,\diamond}(\mathcal{P}^{ii_{1}} D_{j}^{\epsilon} K_{u}^{\epsilon, i_{1}}, u_{1}^{\epsilon, j_{1}}), \nonumber\\
& \hspace{25mm} \pi_{0,\diamond} (\mathcal{P}^{ii_{1}} D_{j}^{\epsilon} K_{b}^{\epsilon, j}, u_{1}^{\epsilon, j_{1}}), \pi_{0,\diamond} (\mathcal{P}^{ii_{1}} D_{j}^{\epsilon} K_{b}^{\epsilon, i_{1}}, u_{1}^{\epsilon, j_{1}}), \nonumber\\
& \hspace{25mm} \pi_{0,\diamond} (\mathcal{P}^{ii_{1}} D_{j}^{\epsilon} K_{u}^{\epsilon, j}, b_{1}^{\epsilon, j_{1}}), \pi_{0,\diamond}(\mathcal{P}^{ii_{1}} D_{j}^{\epsilon} K_{u}^{\epsilon, i_{1}}, b_{1}^{\epsilon, j_{1}}), \nonumber\\
& \hspace{25mm} \pi_{0,\diamond} (\mathcal{P}^{ii_{1}} D_{j}^{\epsilon} K_{b}^{\epsilon, j}, b_{1}^{\epsilon, j_{1}}), \pi_{0,\diamond} (\mathcal{P}^{ii_{1}} D_{j}^{\epsilon} K_{b}^{\epsilon, i_{1}}, b_{1}^{\epsilon, j_{1}})) \rVert_{\mathcal{C}^{-\delta}} ]\nonumber
\end{align} 
where 
\begin{subequations}\label{[Equation (3.23b)][ZZ17]}
\begin{align}
 \pi_{0,\diamond} ( \mathcal{P}^{ii_{1}} D_{j}^{\epsilon} K_{u}^{\epsilon, j}, u_{1}^{\epsilon, j_{1}}) \triangleq& \pi_{0} ( \mathcal{P}^{ii_{1}} D_{j}^{\epsilon} K_{u}^{\epsilon, j}, u_{1}^{\epsilon, j_{1}}) - C_{31}^{\epsilon, ii_{1} j j_{1}}, \\
\pi_{0,\diamond} ( \mathcal{P}^{ii_{1}} D_{j}^{\epsilon} K_{u}^{\epsilon, i_{1}}, u_{1}^{\epsilon, j_{1}}) \triangleq& \pi_{0} ( \mathcal{P}^{ii_{1}} D_{j}^{\epsilon} K_{u}^{\epsilon, i_{1}}, u_{1}^{\epsilon, j_{1}}) - \tilde{C}_{31}^{\epsilon, i i_{1} j j_{1}}, \\
 \pi_{0,\diamond} ( \mathcal{P}^{ii_{1}} D_{j}^{\epsilon} K_{b}^{\epsilon, j}, u_{1}^{\epsilon, j_{1}}) \triangleq& \pi_{0} ( \mathcal{P}^{ii_{1}} D_{j}^{\epsilon} K_{b}^{\epsilon, j}, u_{1}^{\epsilon, j_{1}}) -C_{32}^{\epsilon, i i_{1} j j_{1}}, \\
\pi_{0,\diamond} ( \mathcal{P}^{ii_{1}} D_{j}^{\epsilon} K_{b}^{\epsilon, i_{1}}, u_{1}^{\epsilon, j_{1}}) \triangleq& \pi_{0} ( \mathcal{P}^{ii_{1}} D_{j}^{\epsilon} K_{b}^{\epsilon, i_{1}}, u_{1}^{\epsilon, j_{1}}) - \tilde{C}_{32}^{\epsilon, i i_{1} j j_{1}}, \\
 \pi_{0,\diamond} ( \mathcal{P}^{ii_{1}} D_{j}^{\epsilon} K_{u}^{\epsilon, j}, b_{1}^{\epsilon, j_{1}}) \triangleq& \pi_{0} ( \mathcal{P}^{ii_{1}} D_{j}^{\epsilon} K_{u}^{\epsilon, j}, b_{1}^{\epsilon, j_{1}}) - C_{33}^{\epsilon, i i_{1} j j_{1}}, \\
\pi_{0,\diamond} ( \mathcal{P}^{ii_{1}} D_{j}^{\epsilon} K_{u}^{\epsilon, i_{1}}, b_{1}^{\epsilon, j_{1}}) \triangleq& \pi_{0} ( \mathcal{P}^{ii_{1}} D_{j}^{\epsilon} K_{u}^{\epsilon, i_{1}}, b_{1}^{\epsilon, j_{1}}) - \tilde{C}_{33}^{\epsilon, i i_{1} j j_{1}}, \\
 \pi_{0,\diamond} ( \mathcal{P}^{ii_{1}} D_{j}^{\epsilon} K_{b}^{\epsilon, j}, b_{1}^{\epsilon, j_{1}}) \triangleq& \pi_{0} ( \mathcal{P}^{ii_{1}} D_{j}^{\epsilon} K_{b}^{\epsilon, j}, b_{1}^{\epsilon, j_{1}}) - C_{34}^{\epsilon, i i_{1} j j_{1}}, \\
\pi_{0,\diamond} ( \mathcal{P}^{ii_{1}} D_{j}^{\epsilon} K_{b}^{\epsilon, i_{1}}, b_{1}^{\epsilon, j_{1}}) \triangleq& \pi_{0} ( \mathcal{P}^{ii_{1}} D_{j}^{\epsilon} K_{b}^{\epsilon, i_{1}}, b_{1}^{\epsilon, j_{1}}) - \tilde{C}_{34}^{\epsilon, i i_{1} j j_{1}}, 
\end{align}
\end{subequations}
with $C_{34}^{\epsilon, i i_{1} j j_{1}}$ explicitly found in \eqref{estimate 50},  and 
\begin{equation}\label{estimate 290}
\tau_{L}^{\epsilon} \triangleq \inf\{t: \hspace{1mm} \lVert (u^{\epsilon} ,b^{\epsilon}) (t) \rVert_{\mathcal{C}^{-z}} \geq L \} \wedge L \text{ and } \rho_{L}^{\epsilon} \triangleq \inf\{t: \hspace{1mm} C_{W}^{\epsilon}(t) \geq L \} 
\end{equation} 
for some $L \geq 0$ similarly to \eqref{estimate 104}. In the next Subsection \ref{Subsection 2.2}, the following convergence will be proven in detail: for any $\delta > 0$,  
\begin{subequations}
\begin{align}
& \bar{u}_{1}^{\epsilon, i} - u_{1}^{\epsilon, i}, \bar{b}_{1}^{\epsilon, i} - b_{1}^{\epsilon, i} \to 0 \text{ in } C([0, T]; \mathcal{C}^{- \frac{1}{2} - \frac{\delta}{2}}), \label{first convergence} \\
& \bar{u}_{1}^{\epsilon, i} \diamond \bar{u}_{1}^{\epsilon, j} - u_{1}^{\epsilon, i} \diamond u_{1}^{\epsilon, j}, \bar{b}_{1}^{\epsilon, i} \diamond \bar{b}_{1}^{\epsilon, j} - b_{1}^{\epsilon, i} \diamond b_{1}^{\epsilon, j},  \nonumber\\
& \hspace{10mm} \bar{u}_{1}^{\epsilon, i} \diamond \bar{b}_{1}^{\epsilon, j} - u_{1}^{\epsilon, i} \diamond b_{1}^{\epsilon, j}, \bar{b}_{1}^{\epsilon, i} \diamond \bar{u}_{1}^{\epsilon, j} - b_{1}^{\epsilon, i} \diamond u_{1}^{\epsilon, j} \to 0 \text{ in } C([0, T]; \mathcal{C}^{-1 - \frac{\delta}{2}}), \label{second convergence}\\
& \bar{u}_{1}^{\epsilon, i} \diamond \bar{u}_{2}^{\epsilon, j} - u_{1}^{\epsilon, i} \diamond u_{2}^{\epsilon, j}, \bar{b}_{1}^{\epsilon, i} \diamond \bar{b}_{2}^{\epsilon, j} - b_{1}^{\epsilon, i} \diamond b_{2}^{\epsilon, j}, \nonumber\\
& \hspace{10mm} \bar{u}_{2}^{\epsilon, i} \diamond  \bar{b}_{1}^{\epsilon, j} -  u_{2}^{\epsilon, i} \diamond b_{1}^{\epsilon, j}, \bar{u}_{1}^{\epsilon, i} \diamond \bar{b}_{2}^{\epsilon, j} - u_{1}^{\epsilon, i} \diamond b_{2}^{\epsilon, j} \to 0  \text{ in } C([0, T]; \mathcal{C}^{-\frac{1}{2} - \frac{\delta}{2}}),  \label{third convergence} \\
& \bar{u}_{2}^{\epsilon, i} \diamond \bar{u}_{2}^{\epsilon, j} - u_{2}^{\epsilon, i} \diamond u_{2}^{\epsilon, j}, \bar{b}_{2}^{\epsilon, i} \diamond \bar{b}_{2}^{\epsilon, j} - b_{2}^{\epsilon, i} \diamond b_{2}^{\epsilon, j},  \nonumber\\
& \hspace{10mm} \bar{b}_{2}^{\epsilon, i} \diamond \bar{u}_{2}^{\epsilon, j} - b_{2}^{\epsilon, i} \diamond u_{2}^{\epsilon, j}  \to 0 \text{ in } C([0, T]; \mathcal{C}^{-\delta}), \label{fourth convergence}\\
& \pi_{0,\diamond} (\bar{u}_{3}^{\epsilon, i}, \bar{u}_{1}^{\epsilon, j}) - \pi_{0,\diamond} (u_{3}^{\epsilon, i}, u_{1}^{\epsilon, j}), \pi_{0,\diamond} (\bar{b}_{3}^{\epsilon, i}, \bar{b}_{1}^{\epsilon, j}) - \pi_{0,\diamond} (b_{3}^{\epsilon, i}, b_{1}^{\epsilon, j}),  \nonumber\\
& \hspace{10mm} \pi_{0,\diamond} (\bar{u}_{3}^{\epsilon, i}, \bar{b}_{1}^{\epsilon, j}) - \pi_{0,\diamond} (u_{3}^{\epsilon, i}, b_{1}^{\epsilon, j}), \pi_{0,\diamond} (\bar{b}_{3}^{\epsilon, i}, \bar{u}_{1}^{\epsilon, j}) - \pi_{0,\diamond} (b_{3}^{\epsilon, i}, u_{1}^{\epsilon, j}) \nonumber\\
& \hspace{30mm}  \to 0 \text{ in } C([0,T]; \mathcal{C}^{-\delta}),  \label{fifth convergence}\\
& \pi_{0,\diamond} (\mathcal{P}^{ii_{1}} D_{j} \bar{K}_{u}^{\epsilon, j}, \bar{u}_{1}^{\epsilon, j_{1}}) - \pi_{0,\diamond} (\mathcal{P}^{ii_{1}} D_{j}^{\epsilon} K_{u}^{\epsilon, j}, u_{1}^{\epsilon, j_{1}}), \nonumber\\
&  \pi_{0,\diamond} (\mathcal{P}^{ii_{1}} D_{j} \bar{K}_{b}^{\epsilon, j}, \bar{b}_{1}^{\epsilon, j_{1}}) - \pi_{0,\diamond} (\mathcal{P}^{ii_{1}} D_{j}^{\epsilon} K_{b}^{\epsilon, j}, b_{1}^{\epsilon, j_{1}}),  \nonumber\\
&  \pi_{0,\diamond} (\mathcal{P}^{ii_{1}} D_{j} \bar{K}_{u}^{\epsilon, j}, \bar{b}_{1}^{\epsilon, j_{1}}) - \pi_{0,\diamond} (\mathcal{P}^{ii_{1}} D_{j}^{\epsilon} K_{u}^{\epsilon, j}, b_{1}^{\epsilon, j_{1}}),  \nonumber\\
&  \pi_{0,\diamond} (\mathcal{P}^{ii_{1}} D_{j} \bar{K}_{b}^{\epsilon, j}, \bar{u}_{1}^{\epsilon, j_{1}}) - \pi_{0,\diamond} (\mathcal{P}^{ii_{1}} D_{j}^{\epsilon} K_{b}^{\epsilon, j}, u_{1}^{\epsilon, j_{1}}) \to 0 \text{ in } C([0, T]; \mathcal{C}^{-\delta}) \label{sixth convergence}
\end{align}
\end{subequations} 
as $\epsilon \searrow 0$. 

\subsection{Convergence of renormalization terms}\label{Subsection 2.2}

The purpose of this subsection is to prove rigorously the convergence of \eqref{first convergence} - \eqref{sixth convergence}. Following \cite[pg. 39]{Y19a} we write $X_{t,u}^{\epsilon} \triangleq u_{1}^{\epsilon}(t), X_{t,b}^{\epsilon} \triangleq b_{1}^{\epsilon}(t)$, $\bar{X}_{t,u}^{\epsilon} \triangleq \bar{u}_{1}^{\epsilon}(t), \bar{X}_{t,b}^{\epsilon} \triangleq \bar{b}_{1}^{\epsilon}(t)$ and following \cite[Notation 4.1]{CC18}, we write $k_{1, \hdots, n} \triangleq \sum_{i=1}^{n} k_{i}$ for $k_{1},\hdots, k_{n} \in \mathbb{Z}^{3}$. We then see that 
\begin{align*}
&X_{t,u}^{\epsilon, i} = \sum_{k\neq 0} \hat{X}_{t,u}^{\epsilon, i}(k) e_{k}, \hspace{3mm} X_{t,b}^{\epsilon, i} = \sum_{k \neq 0} \hat{X}_{t,b}^{\epsilon, i}(k) e_{k}, \\
&\bar{X}_{t,u}^{\epsilon, i} = \sum_{k \neq 0} \hat{\bar{X}}_{t,u}^{\epsilon, i}(k) e_{k}, \hspace{3mm} \bar{X}_{t,b}^{\epsilon, i} = \sum_{k \neq 0} \hat{\bar{X}}_{t,b}^{\epsilon, i}(k) e_{k}, 
\end{align*} 
where $e_{k} \triangleq (2\pi)^{-\frac{3}{2}} e^{ix \cdot k}
$ and $\hat{X}_{t,u}^{\epsilon}(0) = 0, \hat{X}_{t,b}^{\epsilon}(0) = 0, \hat{\bar{X}}_{t,u}^{\epsilon}(0) = 0, \hat{\bar{X}}_{t,b}^{\epsilon}(0) = 0$ due to $\hat{W}_{u}(0) = 0, \hat{W}_{b}(0) = 0$. The important covariance relations are given as follows: 
\begin{subequations}\label{covariance}
\begin{align}
\mathbb{E}[ \hat{X}_{t,u}^{\epsilon, i} (k) \hat{X}_{s,u}^{\epsilon, j}(k') ] =& 1_{k+ k'= 0} \sum_{i_{1} =1}^{3} \frac{ e^{- \lvert k \rvert^{2} f(\epsilon k) \lvert t-s \rvert} h_{u}(\epsilon k)^{2}}{2 \lvert k \rvert^{2} f(\epsilon k)}  \hat{\mathcal{P}}^{ii_{1}} (k) \hat{\mathcal{P}}^{ji_{1}} (k), \label{covariance a}\\
\mathbb{E}[ \hat{X}_{t,u}^{\epsilon, i} (k) \hat{X}_{s,b}^{\epsilon, j}(k') ] =& 1_{k+ k'= 0} \sum_{i_{1} =1}^{3} \frac{ e^{- \lvert k \rvert^{2} f(\epsilon k) \lvert t-s \rvert} h_{u}(\epsilon k)h_{b}(\epsilon k)}{2 \lvert k \rvert^{2} f(\epsilon k)} \nonumber\\
& \hspace{40mm} \times  \hat{\mathcal{P}}^{ii_{1}} (k) \hat{\mathcal{P}}^{ji_{1}} (k), \label{covariance b}\\
\mathbb{E}[ \hat{X}_{t,b}^{\epsilon, i} (k) \hat{X}_{s,b}^{\epsilon, j}(k') ] =& 1_{k+ k'= 0} \sum_{i_{1} =1}^{3} \frac{ e^{- \lvert k \rvert^{2} f(\epsilon k) \lvert t-s \rvert} h_{b}(\epsilon k)^{2}}{2 \lvert k \rvert^{2} f(\epsilon k)}  \hat{\mathcal{P}}^{ii_{1}} (k) \hat{\mathcal{P}}^{ji_{1}} (k), \label{covariance c}\\
\mathbb{E}[ \hat{\bar{X}}_{t,u}^{\epsilon, i} (k) \hat{\bar{X}}_{s,u}^{\epsilon, j}(k') ] =& 1_{k+ k'= 0} \sum_{i_{1} =1}^{3} \frac{ e^{- \lvert k \rvert^{2} \lvert t-s \rvert} h_{u}(\epsilon k)^{2}}{2 \lvert k \rvert^{2}}  \hat{\mathcal{P}}^{ii_{1}} (k) \hat{\mathcal{P}}^{ji_{1}} (k), \label{covariance d}\\
\mathbb{E}[ \hat{\bar{X}}_{t,u}^{\epsilon, i} (k) \hat{\bar{X}}_{s,b}^{\epsilon, j}(k') ] =& 1_{k+ k'= 0} \sum_{i_{1} =1}^{3} \frac{ e^{- \lvert k \rvert^{2} \lvert t-s \rvert} h_{u}(\epsilon k)h_{b}(\epsilon k)}{2 \lvert k \rvert^{2}}  \hat{\mathcal{P}}^{ii_{1}} (k) \hat{\mathcal{P}}^{ji_{1}} (k), \label{covariance e}\\
\mathbb{E}[ \hat{\bar{X}}_{t,b}^{\epsilon, i} (k) \hat{\bar{X}}_{s,b}^{\epsilon, j}(k') ] =& 1_{k+ k'= 0} \sum_{i_{1} =1}^{3} \frac{ e^{- \lvert k \rvert^{2} \lvert t-s \rvert} h_{b}(\epsilon k)^{2}}{2 \lvert k \rvert^{2}}  \hat{\mathcal{P}}^{ii_{1}} (k) \hat{\mathcal{P}}^{ji_{1}} (k).\label{covariance f}
\end{align}
\end{subequations}
In particular, these relations are derived by crucially relying on $\delta_{ij}$ in \eqref{STWN for MHD}. For $t \leq s$,  
\begin{align}\label{estimate 145}
 \mathbb{E} [ \hat{X}_{t,u}^{\epsilon, i} (k) \hat{\bar{X}}_{s,u}^{\epsilon, j} (k')] = 1_{k+ k'=0} \sum_{i_{1} =1}^{3}  \frac{ e^{- \lvert k \rvert^{2} (s-t)} h_{u}(\epsilon k)^{2}}{\lvert k \rvert^{2} (f(\epsilon k) + 1)} \hat{\mathcal{P}}^{ii_{1}}(k) \hat{\mathcal{P}}^{ji_{1}}(k') 
\end{align} 
by \eqref{[Equation (3.1aa)][ZZ17]}, \eqref{[Equation (3.21c)][ZZ17]} and \eqref{STWN for MHD}. For $t > s$, similarly 
\begin{align}
 \mathbb{E}[ \hat{X}_{t,u}^{\epsilon,i} (k) \hat{\bar{X}}_{s,u}^{\epsilon, j} (k')] = 1_{k+ k'= 0} \sum_{i_{1} =1}^{3} \frac{ e^{- \lvert k \rvert^{2} f( \epsilon k)(t-s)}  h_{u}(\epsilon k)^{2}}{ \lvert k \rvert^{2} (f( \epsilon k) + 1)} \hat{\mathcal{P}}^{ii_{1}}(k) \hat{\mathcal{P}}^{ji_{1}}(k). 
\end{align} 
Identically, for $t \leq s$, 
\begin{subequations}
\begin{align}
 \mathbb{E} [ \hat{X}_{t,b}^{\epsilon, i} (k) \hat{\bar{X}}_{s,b}^{\epsilon, j} (k')] =& 1_{k+ k'=0} \sum_{i_{1} =1}^{3}  \frac{ e^{- \lvert k \rvert^{2} (s-t)} h_{u}(\epsilon k)^{2}}{\lvert k \rvert^{2} (f(\epsilon k) + 1)} \hat{\mathcal{P}}^{ii_{1}}(k) \hat{\mathcal{P}}^{ji_{1}}(k'), \\
  \mathbb{E} [ \hat{X}_{t,u}^{\epsilon, i} (k) \hat{\bar{X}}_{s,b}^{\epsilon, j} (k')] = & \mathbb{E} [ \hat{X}_{t,b}^{\epsilon, i} (k) \hat{\bar{X}}_{s,u}^{\epsilon, j} (k')] \nonumber\\
  =& 1_{k+ k'=0} \sum_{i_{1} =1}^{3}  \frac{ e^{- \lvert k \rvert^{2} (s-t)} h_{u}(\epsilon k) h_{b}(\epsilon k)}{\lvert k \rvert^{2} (f(\epsilon k) + 1)} \hat{\mathcal{P}}^{ii_{1}}(k) \hat{\mathcal{P}}^{ji_{1}}(k'), 
\end{align}
\end{subequations} 
and for $t > s$, 
\begin{subequations}\label{estimate 174}
\begin{align}
\mathbb{E}[ \hat{X}_{t,b}^{\epsilon,i} (k) \hat{\bar{X}}_{s,b}^{\epsilon, j} (k')] =& 1_{k+ k'= 0} \sum_{i_{1} =1}^{3} \frac{ e^{- \lvert k \rvert^{2} f( \epsilon k)(t-s)}  h_{b}(\epsilon k)^{2}}{ \lvert k \rvert^{2} (f( \epsilon k) + 1)} \hat{\mathcal{P}}^{ii_{1}}(k) \hat{\mathcal{P}}^{ji_{1}}(k), \\
\mathbb{E}[ \hat{X}_{t,u}^{\epsilon,i} (k) \hat{\bar{X}}_{s,b}^{\epsilon, j} (k')] =& \mathbb{E}[ \hat{X}_{t,b}^{\epsilon,i} (k) \hat{\bar{X}}_{s,u}^{\epsilon, j} (k')] \\
=& 1_{k+ k'= 0} \sum_{i_{1} =1}^{3} \frac{ e^{- \lvert k \rvert^{2} f( \epsilon k)(t-s)}  h_{u}(\epsilon k)h_{b}(\epsilon k)}{ \lvert k \rvert^{2} (f( \epsilon k) + 1)} \hat{\mathcal{P}}^{ii_{1}}(k) \hat{\mathcal{P}}^{ji_{1}}(k).   \nonumber
\end{align}
\end{subequations} 

\subsubsection{Convergence of \eqref{first convergence}}
W.l.o.g. we fix $t_{1} < t_{2}$ and work on $b_{1}^{\epsilon} - \overline{b}_{1}^{\epsilon}$. We compute 
\begin{align}\label{estimate 105}
& \mathbb{E} [ \lvert \Delta_{q} [ ( b_{1}^{\epsilon, i} (t_{2}) - \bar{b}_{1}^{\epsilon, i} (t_{2})) - (b_{1}^{\epsilon, i} (t_{1}) - \bar{b}_{1}^{\epsilon, i} (t_{1}))] \rvert^{2}] \nonumber\\
\lesssim& \sum_{k \neq 0} \theta(2^{-q} k)^{2} \mathbb{E} [ \lvert \hat{X}_{t_{2}, b}^{\epsilon, i} (k) - \hat{\bar{X}}_{t_{2}, b}^{\epsilon, i} (k) - \hat{X}_{t_{1}, b}^{\epsilon, i} (k) + \hat{\bar{X}}_{t_{1}, b}^{\epsilon, i} (k) \rvert^{2}] e_{k} 
\end{align}
by orthogonality of $\{e_{k}\}_{k}$,  in which we can estimate 
\begin{align}
& \mathbb{E} [ \lvert \hat{X}_{t_{2}, b}^{\epsilon, i} (k) - \hat{\bar{X}}_{t_{2}, b}^{\epsilon, i} (k) - \hat{X}_{t_{1}, b}^{\epsilon, i} (k) + \hat{\bar{X}}_{t_{1}, b}^{\epsilon, i} (k) \rvert^{2}] \nonumber\\
=& h_{b}(\epsilon k)^{2}  \sum_{i_{1} =1}^{3} \hat{\mathcal{P}}^{ii_{1}}(k)^{2} [ \frac{1}{\lvert k \rvert^{2} f( \epsilon k)} - \frac{4}{\lvert k \rvert^{2} (f( \epsilon k) + 1)} - \frac{ e^{- \lvert k \rvert^{2}  f( \epsilon k)(t_{2} - t_{1})}}{\lvert k \rvert^{2} f( \epsilon k)} \nonumber\\
& \hspace{2mm}+ \frac{ 2 e^{- \lvert k \rvert^{2}  f(\epsilon k)(t_{2} - t_{1})}}{\lvert k \rvert^{2} (f( \epsilon k) + 1)}   + \frac{1}{\lvert k \rvert^{2}}+ \frac{ 2e^{- \lvert k \rvert^{2} (t_{2} - t_{1})}}{\lvert k \rvert^{2} (f( \epsilon k) + 1)} - \frac{ e^{- \lvert k \rvert^{2} (t_{2} - t_{1})}}{\lvert k \rvert^{2}}] \lesssim  A_{k, t_{2}t_{1}} 
\end{align}
where 
\begin{align}
A_{k, t_{2}t_{1}} \triangleq& \lvert  \frac{1}{\lvert k \rvert^{2} f( \epsilon k)} - \frac{4}{\lvert k \rvert^{2} (f( \epsilon k) + 1)} - \frac{ e^{- \lvert k \rvert^{2}  f( \epsilon k)(t_{2} - t_{1})}}{\lvert k \rvert^{2} f( \epsilon k)} + \frac{ 2 e^{- \lvert k \rvert^{2}  f(\epsilon k)(t_{2} - t_{1})}}{\lvert k \rvert^{2} (f( \epsilon k) + 1)} \nonumber\\
& \hspace{20mm} + \frac{1}{\lvert k \rvert^{2}} + \frac{ 2e^{- \lvert k \rvert^{2} (t_{2} - t_{1})}}{\lvert k \rvert^{2} (f( \epsilon k) + 1)} - \frac{ e^{- \lvert k \rvert^{2} (t_{2} - t_{1})}}{\lvert k \rvert^{2}} \rvert. 
\end{align} 
On one hand, 
\begin{align}\label{estimate 304}
A_{k, t_{2}t_{1}} \lesssim& \lvert \frac{ 1 - e^{- \lvert k \rvert^{2}  f(\epsilon k)(t_{2} - t_{1})}}{\lvert k \rvert^{2} f( \epsilon k)} \rvert + \lvert \frac{ -2 + 2 e^{- \lvert k \rvert^{2} f( \epsilon k) (t_{2} - t_{1})}}{\lvert k \rvert^{2} (f( \epsilon k) + 1) } \rvert \nonumber \\
&+ \lvert \frac{ -2 + 2 e^{- \lvert k \rvert^{2} (t_{2} - t_{1}) }}{\lvert k \rvert^{2} (f( \epsilon k) + 1)} \rvert + \lvert \frac{1- e^{- \lvert k \rvert^{2} (t_{2} - t_{1}) }}{\lvert k \rvert^{2}} \rvert \lesssim \frac{ (t_{2} - t_{1})^{\eta}}{\lvert k \rvert^{2-2\eta}} 
\end{align} 
for any $\eta \in [0,1]$ and $\epsilon > 0$ sufficiently small, by mean value theorem and \eqref{lower bound}. On the other hand,
\begin{align}\label{estimate 305}
A_{k, t_{2}t_{1}} 
\lesssim \frac{1}{\lvert k \rvert^{2}} \lvert \lvert f( \epsilon k) -1 \rvert^{2} + \lvert f( \epsilon k) -1 \rvert \rvert \lesssim \frac{ \min \{1, \lvert \epsilon k \rvert \} }{\lvert k \rvert^{2}} \lesssim \frac{ \epsilon^{2\eta}}{\lvert k \rvert^{2-2\eta}} 
\end{align} 
for any $\eta \in [0, \frac{1}{2}]$ where we used that $f(0) = 1$ by Definition \ref{Definition of approximation} and mean value theorem. Hence, 
\begin{align}\label{special}
& \mathbb{E} [ \lvert \hat{X}_{t_{2}, b}^{\epsilon, i} (k) - \hat{\bar{X}}_{t_{2}, b}^{\epsilon, i} (k) - \hat{X}_{t_{1}, b}^{\epsilon, i} (k) + \hat{\bar{X}}_{t_{1}, b}^{\epsilon, i} (k) \rvert^{2}] \nonumber\\
& \hspace{20mm} \lesssim \frac{ (t_{2} - t_{1})^{\eta}}{\lvert k \rvert^{2-2\eta}}  \wedge \frac{ \epsilon^{2\eta}}{\lvert k \rvert^{2-2\eta}} \lesssim  \frac{ (t_{2} - t_{1})^{\frac{\eta}{2}} \epsilon^{\eta}}{\lvert k \rvert^{2-2\eta}}. 
\end{align} 
Therefore, applying \eqref{special} to \eqref{estimate 105} gives 
\begin{align}\label{estimate 1061}
\mathbb{E} [ \lvert \Delta_{q}   [ ( b_{1}^{\epsilon, i} (t_{2}) - \bar{b}_{1}^{\epsilon, i} (t_{2})) - (b_{1}^{\epsilon, i} (t_{1}) - \bar{b}_{1}^{\epsilon, i} (t_{1}))] \rvert^{2}]  
\lesssim \epsilon^{\eta} (t_{2} - t_{1})^{\frac{\eta}{2}} 2^{q(1+ 2\eta)}. 
\end{align}
This leads us to for any $p > 1$, 
\begin{align*}
\mathbb{E} [  \lVert \Delta_{q} [ ( b_{1}^{\epsilon, i} (t_{2}) - \bar{b}_{1}^{\epsilon, i} (t_{2}))  - ( b_{1}^{\epsilon, i} (t_{1}) - \bar{b}_{1}^{\epsilon, i} (t_{1})) ] \rVert_{L^{p}}^{p} ] \lesssim \epsilon^{\frac{\eta p}{2}} \lvert t_{2} - t_{1} \rvert^{\frac{\eta p}{4}} 2^{\frac{qp}{2} (1+ 2\eta)}
\end{align*}
by Gaussian hypercontractivity theorem (e.g., \cite[Theorem 3.50]{J97}, an application on \cite[pg. 37]{GIP15}) and \eqref{estimate 1061}.  In turn, this leads to for $p > 1$ sufficiently large and $\eta, \epsilon_{0} > 0$ sufficiently small so that $\eta + \epsilon_{0} + \frac{3}{p} < \frac{\delta}{2}$, 
\begin{align}\label{estimate 36}
\mathbb{E} [ \lVert (b_{1}^{\epsilon, i} (t_{2}) - \bar{b}_{1}^{\epsilon, i} (t_{2})) - (b_{1}^{\epsilon, i} (t_{1}) - \bar{b}_{1}^{\epsilon, i} (t_{1})) \rVert_{\mathcal{C}^{-\frac{1}{2} - \frac{\delta}{2}}}^{p} ] \lesssim \epsilon^{\frac{\eta p}{2}} (t_{2} - t_{1})^{\frac{\eta p}{4}} 
\end{align}  
by \eqref{regularity 0} and Besov embedding Lemma \ref{Lemma 3.4}. This implies that for all $p > 1$, all $\delta > 0$, $b_{1}^{\epsilon, i} - \bar{b}_{1}^{\epsilon, i} \searrow 0$ in $L^{p} ( \Omega; C([0, T]; \mathcal{C}^{- \frac{1}{2} - \frac{\delta}{2}}))$ as $\epsilon \searrow 0$. 

\subsubsection{Convergence of \eqref{second convergence}}

W.l.o.g. we consider the convergence of $u_{1}^{\epsilon, i} \diamond b_{1}^{\epsilon, j} - \bar{u}_{1}^{\epsilon, i} \diamond \bar{b}_{1}^{\epsilon, j}$ and assume that $t_{2} > t_{1}$. We define 
\begin{align}\label{estimate 49}
C_{03}^{\epsilon, ij} \triangleq \mathbb{E} [ X_{t,u}^{\epsilon, i} X_{t,b}^{\epsilon, j}]  = (2\pi)^{-3} \sum_{k_{1} \neq 0} \sum_{i_{1} =1}^{3}  \frac{ h_{u}(\epsilon k_{1}) h_{b}(\epsilon k_{1})}{2 \lvert k_{1} \rvert^{2} f( \epsilon k_{1})} \hat{\mathcal{P}}^{ii_{1}}(k_{1}) \hat{\mathcal{P}}^{ji_{1}}(k_{1}) 
\end{align} 
by \eqref{covariance b}. Similarly, we define 
\begin{align}
\bar{C}_{03}^{\epsilon, ij} \triangleq \mathbb{E} [ \bar{X}_{t,u}^{\epsilon, i} \bar{X}_{t,b}^{\epsilon, j} ] = (2\pi)^{-3} \sum_{k_{1} \neq 0}  \sum_{i_{1} =1}^{3} \frac{ h_{u}(\epsilon k_{1}) h_{b}(\epsilon k_{1})}{2 \lvert k_{1} \rvert^{2}} \hat{\mathcal{P}}^{ii_{1}}(k_{1}) \hat{\mathcal{P}}^{ji_{1}}(k_{1}) \label{estimate 42}
\end{align} 
by \eqref{covariance e} and 
\begin{subequations}
\begin{align}
&C_{01}^{\epsilon, ij} \triangleq \mathbb{E} [X_{t,u}^{\epsilon, i} X_{t,u}^{\epsilon, j} ], C_{02}^{\epsilon, ij} \triangleq \mathbb{E} [X_{t,b}^{\epsilon, i} X_{t,b}^{\epsilon, j} ], \label{estimate 40} \\
& \bar{C}_{01}^{\epsilon, ij} \triangleq \mathbb{E} [ \bar{X}_{t,u}^{\epsilon, i} \bar{X}_{t,u}^{\epsilon, j} ], \bar{C}_{02}^{\epsilon, ij} \triangleq \mathbb{E} [ \bar{X}_{t,b}^{\epsilon, i} \bar{X}_{t,b}^{\epsilon, j}] \label{estimate 41},
\end{align} 
\end{subequations} 
which are all constants independent of $t$. Thus, we may now compute 
\begin{align}\label{estimate 131}
& \mathbb{E} [\lvert \Delta_{q} [ ( u_{1}^{\epsilon, i} \diamond b_{1}^{\epsilon, j} (t_{2}) - \bar{u}_{1}^{\epsilon, i} \diamond  \bar{b}_{1}^{\epsilon, j} (t_{2}))  - (u_{1}^{\epsilon, i} \diamond b_{1}^{\epsilon, j} (t_{1}) - \bar{u}_{1}^{\epsilon, i} \diamond \bar{b}_{1}^{\epsilon, j} (t_{1}))] \rvert^{2}] \nonumber  \\
=&  \mathbb{E} [\lvert \Delta_{q} [ : u_{1}^{\epsilon, i} b_{1}^{\epsilon, j} (t_{2}): - :\bar{u}_{1}^{\epsilon, i}  \bar{b}_{1}^{\epsilon, j} (t_{2}):  - :u_{1}^{\epsilon, i} b_{1}^{\epsilon, j} (t_{1}): + :\bar{u}_{1}^{\epsilon, i} \bar{b}_{1}^{\epsilon, j} (t_{1}): ] \rvert^{2}] \nonumber \\
\lesssim& \sum_{k} \theta(2^{-q} k)^{2} \sum_{k_{1}, k_{2} \neq 0: k_{12} =k} B_{k_{1}k_{2}, t_{1}t_{2}}
\end{align} 
by Example \ref{Example 3.1}, \eqref{[Equation (3.2aa)][ZZ17]}, \eqref{[Equation (3.22aa)][ZZ17]}, \eqref{estimate 49}, \eqref{estimate 42}, where 
\begin{align}\label{estimate 128}
B_{k_{1}k_{2}, t_{1}t_{2}} \triangleq& \lvert \frac{1}{  \prod_{i=1}^{2} \lvert k_{i} \rvert^{2} f(\epsilon k_{i})} - \frac{8}{ \prod_{i=1}^{2} \lvert k_{i} \rvert^{2} (f(\epsilon k_{i}) + 1)} \nonumber\\
& - \frac{ e^{- [\lvert k_{1} \rvert^{2} f(\epsilon k_{1}) + \lvert k_{2} \rvert^{2} f(\epsilon k_{2})] \lvert t_{2} - t_{1} \rvert}}{\prod_{i=1}^{2} \lvert k_{i} \rvert^{2} f(\epsilon k_{i})} + \frac{ 4 e^{- [ \lvert k_{1} \rvert^{2} f(\epsilon k_{1}) + \lvert k_{2} \rvert^{2} f(\epsilon k_{2})] \lvert t_{2} - t_{1} \rvert}}{\prod_{i=1}^{2} \lvert k_{i} \rvert^{2} (f(\epsilon k_{i}) + 1)} \nonumber\\
&+ \frac{1}{\prod_{i=1}^{2} \lvert k_{i} \rvert^{2}} + \frac{ 4 e^{- (\lvert k_{1} \rvert^{2} + \lvert k_{2} \rvert^{2}) \lvert t_{2} - t_{1} \rvert}}{\prod_{i=1}^{2} \lvert k_{i} \rvert^{2} (f(\epsilon k_{i}) + 1)} - \frac{ e^{- [ \lvert k_{1} \rvert^{2} + \lvert k_{2} \rvert^{2} ] \lvert t_{2} - t_{1} \rvert}}{\prod_{i=1}^{2} \lvert k_{i} \rvert^{2}} \rvert. 
\end{align} 
Now on one hand, we can estimate for any $\eta \in [0,1]$, 
\begin{align}\label{estimate 129}
&B_{k_{1}k_{2}, t_{1}t_{2}} \\
\leq& \lvert \frac{1 - e^{- ( \lvert k_{1} \rvert^{2} f( \epsilon k_{1}) + \lvert k_{2} \rvert^{2} f( \epsilon k_{2})) \lvert t_{2} - t_{1} \rvert}}{ \prod_{i=1}^{2} \lvert k_{i} \rvert^{2} f( \epsilon k_{i})} \rvert + 4 \lvert \frac{ e^{- ( \lvert k_{1} \rvert^{2} f(\epsilon k_{1}) + \lvert k_{2} \rvert^{2} f(\epsilon k_{2})) \lvert t_{2} -t_{1} \rvert} - 1}{\prod _{i=1}^{2} \lvert k_{i} \rvert^{2} (f(\epsilon k_{i}) + 1)} \rvert \nonumber\\  
&+ 4 \lvert \frac{ e^{- ( \lvert k_{1} \rvert^{2} + \lvert k_{2} \rvert^{2} ) \lvert t_{2} - t_{1} \rvert} -1}{ \prod_{i=1}^{2} \lvert k_{i} \rvert^{2} (f(\epsilon k_{i} ) +1)} \rvert + \frac{ \lvert 1- e^{- (\lvert k_{1} \rvert^{2} + \lvert k_{2} \rvert^{2}) \lvert t_{2} - t_{1} \rvert}}{\prod_{i=1}^{2} \lvert k_{i} \rvert^{2}} \nonumber  \\
\lesssim& \frac{1}{\prod_{i=1}^{2} \lvert k_{i} \rvert^{2}} [ 1 \wedge [ \lvert k_{1} \rvert^{2} + \lvert k_{2} \rvert^{2} ] \lvert t_{2} - t_{1} \rvert ] \lesssim \frac{ ( \lvert k_{1} \rvert^{2\eta} + \lvert k_{2} \rvert^{2\eta} ) \lvert t_{2} - t_{1} \rvert^{\eta}}{\prod_{i=1}^{2} \lvert k_{i} \rvert^{2}} \nonumber
\end{align} 
by \eqref{estimate 128}, mean value theorem and \eqref{lower bound}. On the other hand, for any $\eta \in [0,\frac{1}{2}]$, 
\begin{align}\label{estimate 130}
B_{k_{1}k_{2}, t_{1}t_{2}} \lesssim  \frac{1}{\prod_{i=1}^{2} \lvert k_{i} \rvert^{2}} [ (1\wedge \lvert \epsilon k_{2} \rvert) + (1 \wedge \lvert \epsilon k_{1} \rvert)] 
\lesssim \frac{ \lvert \epsilon k_{1} \rvert^{2\eta} + \lvert \epsilon k_{2} \rvert^{2\eta}}{\prod_{i=1}^{2} \lvert k_{i} \rvert^{2}}
\end{align} 
by \eqref{estimate 128}, mean value theorem as $f(0) = 1$. Applying \eqref{estimate 129}-\eqref{estimate 130} to \eqref{estimate 131} gives 
\begin{align}\label{estimate 132}
& \mathbb{E} [ \lvert \Delta_{q} [ \lvert (u_{1}^{\epsilon, i} \diamond b_{1}^{\epsilon, j} (t_{2}) - \bar{u}_{1}^{\epsilon, i} \diamond \bar{b}_{1}^{\epsilon, j} (t_{2})) \\
& \hspace{10mm} - (u_{1}^{\epsilon, i} \diamond b_{1}^{\epsilon, j} (t_{1}) - \bar{u}_{1}^{\epsilon, i} \diamond \bar{b}_{1}^{\epsilon, j} (t_{1}))] \rvert^{2} ] \lesssim  \epsilon^{\eta} \lvert t_{1}- t_{2} \rvert^{\frac{\eta}{2}} 2^{q(2+ 2\eta)} \nonumber 
\end{align}
by Lemma \ref{Lemma 3.13}. Therefore, by relying on Gaussian hypercontractivity theorem, \eqref{estimate 132} and  Besov embedding Lemma \ref{Lemma 3.4} similarly to \eqref{estimate 36}, we are able to deduce that for all $p > 1$, all $\delta > 0$, $u_{1}^{\epsilon, i} \diamond b_{1}^{\epsilon, j} - \bar{u}_{1}^{\epsilon, i} \diamond \bar{b}_{1}^{\epsilon, j} \to 0$ in  $L^{p} (\Omega; C([0, T]; \mathcal{C}^{-1 - \frac{\delta}{2}}))$ as $\epsilon \searrow 0$. 

\subsubsection{Convergence of \eqref{third convergence}}\label{convergence of third convergence}

W.l.o.g. we work on $b_{1}^{\epsilon} \diamond b_{2}^{\epsilon} - \bar{b}_{1}^{\epsilon} \diamond \bar{b}_{2}^{\epsilon}$. We have the following identity for $t \in [0,T]$ and $i, j \in \{1,2,3\}$ from \eqref{[Equation (3.1aa)][ZZ17]}-\eqref{[Equation (3.1ab)][ZZ17]}, \eqref{[Equation (3.2aa)][ZZ17]}, \eqref{[Equation (3.21c)][ZZ17]}-\eqref{[Equation (3.21d)][ZZ17]}, \eqref{[Equation (3.22aa)][ZZ17]}:  
\begin{align}
& \bar{b}_{1}^{\epsilon, j} \bar{b}_{2}^{\epsilon, i} (t) - b_{1}^{\epsilon, j} b_{2}^{\epsilon, i} (t) \\
=& \frac{ (2\pi)^{-3}}{2} \sum_{i_{1}, i_{2} =1}^{3} \sum_{k} \sum_{k_{1}, k_{2}, k_{3} \neq 0: k_{123} = k} \nonumber\\
& \times[ \int_{0}^{t} e^{- \lvert k_{12} \rvert^{2} f( \epsilon k_{12}) (t-s)} k_{12}^{i_{2}} g(\epsilon k_{12}^{i_{2}}) \nonumber\\
& \hspace{6mm}  \times [ : \hat{X}_{s,b}^{\epsilon, i_{1}}(k_{1}) \hat{X}_{s,u}^{\epsilon, i_{2}}(k_{2}) \hat{X}_{t,b}^{\epsilon, j} (k_{3}): - : \hat{X}_{s,u}^{\epsilon, i_{1}}(k_{1}) \hat{X}_{s,b}^{\epsilon, i_{2}}(k_{2}) \hat{X}_{t,b}^{\epsilon, j} (k_{3}):] ds \hat{\mathcal{P}}^{ii_{1}}(k_{12}) \nonumber \\
& \hspace{1mm} - \int_{0}^{t} e^{- \lvert k_{12} \rvert^{2} (t-s)} k_{12}^{i_{2}} i \nonumber\\
& \hspace{6mm} \times [ : \hat{\bar{X}}_{s,b}^{\epsilon, i_{1}}(k_{1}) \hat{\bar{X}}_{s,u}^{\epsilon, i_{2}}(k_{2}) \hat{\bar{X}}_{t,b}^{\epsilon, j} (k_{3}): - : \hat{\bar{X}}_{s,u}^{\epsilon, i_{1}}(k_{1}) \hat{\bar{X}}_{s,b}^{\epsilon, i_{2}}(k_{2}) \hat{\bar{X}}_{t,b}^{\epsilon, j} (k_{3}):] ds \hat{\mathcal{P}}^{ii_{1}}(k_{12})] e_{k} \nonumber \\
&+ \frac{ (2\pi)^{-3}}{2} \sum_{i_{1}, i_{2} =1}^{3} \sum_{k} \sum_{k_{1}, k_{2}, k_{3} \neq 0: k_{123} = k} \nonumber\\
& \times [ \int_{0}^{t} e^{- \lvert k_{12} \rvert^{2} f(\epsilon k_{12}) (t-s)} k_{12}^{i_{2}} g( \epsilon k_{12}^{i_{2}}) \nonumber\\
& \hspace{1mm} \times [ 1_{k_{23} = 0} \sum_{i_{3} =1}^{3} \frac{ e^{- \lvert k_{2} \rvert^{2} f( \epsilon k_{2}) (t-s)} h_{u}(\epsilon k_{2}) h_{b}(\epsilon k_{2})}{2 \lvert k_{2} \rvert^{2} f( \epsilon k_{2})} \hat{\mathcal{P}}^{i_{2}i_{3}} (k_{2}) \hat{\mathcal{P}}^{ji_{3}}(k_{2}) \hat{X}_{s,b}^{\epsilon, i_{1}}(k_{1}) \nonumber  \\
&\hspace{1mm} + 1_{k_{13} = 0} \sum_{i_{3} =1}^{3} \frac{ e^{- \lvert k_{1} \rvert^{2} f( \epsilon k_{1}) (t-s)} h_{b}(\epsilon k_{1})^{2}}{2 \lvert k_{1} \rvert^{2} f( \epsilon k_{1})} \hat{\mathcal{P}}^{i_{1}i_{3}} (k_{1}) \hat{\mathcal{P}}^{ji_{3}}(k_{1}) \hat{X}_{s,u}^{\epsilon, i_{2}}(k_{2}) \nonumber \\ 
&\hspace{1mm} - 1_{k_{23} = 0} \sum_{i_{3} =1}^{3} \frac{ e^{- \lvert k_{2} \rvert^{2} f( \epsilon k_{2}) (t-s)} h_{b}(\epsilon k_{2})^{2}}{2 \lvert k_{2} \rvert^{2} f( \epsilon k_{2})} \hat{\mathcal{P}}^{i_{2}i_{3}} (k_{2}) \hat{\mathcal{P}}^{ji_{3}}(k_{2}) \hat{X}_{s,u}^{\epsilon, i_{1}}(k_{1}) \nonumber \\ 
&\hspace{1mm} - 1_{k_{13} = 0} \sum_{i_{3} =1}^{3} \frac{ e^{- \lvert k_{1} \rvert^{2} f( \epsilon k_{1}) (t-s)} h_{u}(\epsilon k_{1})h_{b}(\epsilon k_{1})}{2 \lvert k_{1} \rvert^{2} f( \epsilon k_{1})} \hat{\mathcal{P}}^{i_{1}i_{3}} (k_{1}) \hat{\mathcal{P}}^{ji_{3}}(k_{1}) \hat{X}_{s,b}^{\epsilon, i_{2}}(k_{2}) \nonumber \\
& - \int_{0}^{t} e^{- \lvert k_{12} \rvert^{2} (t-s)} k_{12}^{i_{2}}i \nonumber\\
& \hspace{1mm} \times [ 1_{k_{23} = 0} \sum_{i_{3} =1}^{3} \frac{ e^{- \lvert k_{2} \rvert^{2} (t-s)} h_{u}(\epsilon k_{2}) h_{b}(\epsilon k_{2})}{2 \lvert k_{2} \rvert^{2} } \hat{\mathcal{P}}^{i_{2}i_{3}} (k_{2}) \hat{\mathcal{P}}^{ji_{3}}(k_{2})  \hat{\bar{X}}_{s,b}^{\epsilon, i_{1}}(k_{1}) \nonumber  \\
&\hspace{1mm} + 1_{k_{13} = 0} \sum_{i_{3} =1}^{3} \frac{ e^{- \lvert k_{1} \rvert^{2} (t-s)} h_{b}(\epsilon k_{1})^{2}}{2 \lvert k_{1} \rvert^{2}} \hat{\mathcal{P}}^{i_{1}i_{3}} (k_{1}) \hat{\mathcal{P}}^{ji_{3}}(k_{1}) \hat{\bar{X}}_{s,u}^{\epsilon, i_{2}}(k_{2}) \nonumber \\ 
&\hspace{1mm} - 1_{k_{23} = 0} \sum_{i_{3} =1}^{3} \frac{ e^{- \lvert k_{2} \rvert^{2} (t-s)} h_{b}(\epsilon k_{2})^{2}}{2 \lvert k_{2} \rvert^{2} } \hat{\mathcal{P}}^{i_{2}i_{3}} (k_{2}) \hat{\mathcal{P}}^{ji_{3}}(k_{2}) \hat{\bar{X}}_{s,u}^{\epsilon, i_{1}}(k_{1}) \nonumber \\ 
&\hspace{1mm} - 1_{k_{13} = 0} \sum_{i_{3} =1}^{3} \frac{ e^{- \lvert k_{1} \rvert^{2} (t-s)} h_{u}(\epsilon k_{1})h_{b}(\epsilon k_{1})}{2 \lvert k_{1} \rvert^{2}} \hat{\mathcal{P}}^{i_{1}i_{3}} (k_{1}) \hat{\mathcal{P}}^{ji_{3}}(k_{1}) \hat{\bar{X}}_{s,b}^{\epsilon, i_{2}}(k_{2})] ds] \hat{\mathcal{P}}^{ii_{1}}(k_{12}) e_{k}  \nonumber
\end{align} 
by Example \ref{Example 3.1} where all the terms corresponding to $1_{k_{12} = 0}$ vanished due to $k_{12}$. This is similar but slightly different from the cancellations used to derive \cite[Equation (119)]{Y19a}. Thus, we obtain 
\begin{equation}\label{estimate 133}
\bar{b}_{1}^{\epsilon, j} \bar{b}_{2}^{\epsilon, i} (t) - b_{1}^{\epsilon, j} b_{2}^{\epsilon, i} (t) = \sum_{l=1}^{3} V_{t, ii_{1}}^{l}
\end{equation} 
where 
\begin{subequations}\label{estimate 135}
\begin{align}
V_{t, ii_{1}}^{1} &\triangleq \frac{ (2\pi)^{-3}}{2} \sum_{i_{1}, i_{2} =1}^{3} \sum_{k} \sum_{k_{1}, k_{2}, k_{3} \neq 0: k_{123} = k} \\
& \times[ \int_{0}^{t} e^{- \lvert k_{12} \rvert^{2} f(\epsilon k_{12} ) (t-s)} k_{12}^{i_{2}} g(\epsilon k_{12}^{i_{2}}) \nonumber\\
& \hspace{2mm} \times [: \hat{X}_{s,b}^{\epsilon, i_{1}}(k_{1}) \hat{X}_{s,u}^{\epsilon, i_{2}}(k_{2}) \hat{X}_{t,b}^{\epsilon, j} (k_{3}): -: \hat{X}_{s,u}^{\epsilon, i_{1}}(k_{1}) \hat{X}_{s,b}^{\epsilon, i_{2}}(k_{2}) \hat{X}_{t,b}^{\epsilon, j} (k_{3}):] ds \hat{\mathcal{P}}^{ii_{1}}(k_{12}) \nonumber\\
& -  \int_{0}^{t} e^{- \lvert k_{12} \rvert^{2} (t-s)} k_{12}^{i_{2}}i \nonumber\\
&\hspace{2mm} \times [: \hat{\bar{X}}_{s,b}^{\epsilon, i_{1}}(k_{1}) \hat{\bar{X}}_{s,u}^{\epsilon, i_{2}}(k_{2}) \hat{\bar{X}}_{t,b}^{\epsilon, j} (k_{3}): - :\hat{\bar{X}}_{s,u}^{\epsilon, i_{1}}(k_{1}) \hat{\bar{X}}_{s,b}^{\epsilon, i_{2}}(k_{2}) \hat{\bar{X}}_{t,b}^{\epsilon, j} (k_{3}):] ds \hat{\mathcal{P}}^{ii_{1}}(k_{12})]e_{k}, \nonumber\\
V_{t, ii_{1}}^{2} &\triangleq \frac{ (2\pi)^{-3}}{2} \sum_{i_{1}, i_{2}, i_{3} =1}^{3} \sum_{k_{1}, k_{2} \neq 0} \\
& \times [ \int_{0}^{t} e^{- \lvert k_{12} \rvert^{2} f(\epsilon k_{12}) (t-s)} k_{12}^{i_{2}} g(\epsilon k_{12}^{i_{2}}) \hat{X}_{s,b}^{\epsilon, i_{1}}(k_{1}) \frac{ e^{- \lvert k_{2} \rvert^{2} f(\epsilon k_{2}) (t-s)} h_{u}(\epsilon k_{2}) h_{b}(\epsilon k_{2})}{2 \lvert k_{2} \rvert^{2} f(\epsilon k_{2})} ds \nonumber \\
& - \int_{0}^{t} e^{- \lvert k_{12} \rvert^{2} f(\epsilon k_{12}) (t-s)} k_{12}^{i_{2}} g(\epsilon k_{12}^{i_{2}}) \hat{X}_{s,u}^{\epsilon, i_{1}}(k_{1}) \frac{ e^{- \lvert k_{2} \rvert^{2} f(\epsilon k_{2}) (t-s)} h_{b}(\epsilon k_{2})^{2}}{2 \lvert k_{2} \rvert^{2} f(\epsilon k_{2})} ds \nonumber \\
& - \int_{0}^{t} e^{- \lvert k_{12} \rvert^{2} (t-s)} k_{12}^{i_{2}} i \hat{\bar{X}}_{s,b}^{\epsilon, i_{1}}(k_{1}) \frac{ e^{- \lvert k_{2} \rvert^{2}  (t-s)} h_{u}(\epsilon k_{2}) h_{b}(\epsilon k_{2})}{2 \lvert k_{2} \rvert^{2}} ds \nonumber \\
& + \int_{0}^{t} e^{- \lvert k_{12} \rvert^{2} (t-s)} k_{12}^{i_{2}} i \hat{\bar{X}}_{s,u}^{\epsilon, i_{1}}(k_{1}) \frac{ e^{- \lvert k_{2} \rvert^{2}  (t-s)} h_{b}(\epsilon k_{2})^{2}}{2 \lvert k_{2} \rvert^{2}} ds] \nonumber\\
& \hspace{40mm} \times  \hat{\mathcal{P}}^{ii_{1}}(k_{12}) \hat{\mathcal{P}}^{i_{2} i_{3}} (k_{2}) \hat{\mathcal{P}}^{ji_{3}}(k_{2}) e_{k_{1}}, \nonumber \\
V_{t, ii_{1}}^{3} &\triangleq \frac{ (2\pi)^{-3}}{2} \sum_{i_{1}, i_{2}, i_{3} =1}^{3} \sum_{k_{1}, k_{2} \neq 0} \\
& \hspace{2mm} \times [ \int_{0}^{t} e^{- \lvert k_{12} \rvert^{2} f(\epsilon k_{12}) (t-s)} k_{12}^{i_{2}} g(\epsilon k_{12}^{i_{2}}) \hat{X}_{s,u}^{\epsilon, i_{2}}(k_{2}) \frac{ e^{- \lvert k_{1}\rvert^{2} f(\epsilon k_{1}) (t-s)} h_{b}(\epsilon k_{1})^{2}}{2 \lvert k_{1} \rvert^{2} f( \epsilon k_{1})} ds \nonumber \\
& \hspace{2mm} -  \int_{0}^{t} e^{- \lvert k_{12} \rvert^{2} f(\epsilon k_{12}) (t-s)} k_{12}^{i_{2}} g(\epsilon k_{12}^{i_{2}}) \hat{X}_{s,b}^{\epsilon, i_{2}}(k_{2}) \frac{ e^{- \lvert k_{1}\rvert^{2} f(\epsilon k_{1}) (t-s)} h_{u}(\epsilon k_{1})h_{b}(\epsilon k_{1})}{2 \lvert k_{1} \rvert^{2} f( \epsilon k_{1})} ds \nonumber \\
& \hspace{2mm} -  \int_{0}^{t} e^{- \lvert k_{12} \rvert^{2}  (t-s)} k_{12}^{i_{2}} i \hat{\bar{X}}_{s,u}^{\epsilon, i_{2}}(k_{2}) \frac{ e^{- \lvert k_{1}\rvert^{2} (t-s)} h_{b}(\epsilon k_{1})^{2}}{2 \lvert k_{1} \rvert^{2}} ds \nonumber \\
&\hspace{2mm} +  \int_{0}^{t} e^{- \lvert k_{12} \rvert^{2}  (t-s)} k_{12}^{i_{2}} i \hat{\bar{X}}_{s,b}^{\epsilon, i_{2}}(k_{2}) \frac{ e^{- \lvert k_{1}\rvert^{2} (t-s)} h_{u}(\epsilon k_{1})h_{b}(\epsilon k_{1})}{2 \lvert k_{1} \rvert^{2}} ds]  \nonumber\\
& \hspace{2mm} \times \hat{\mathcal{P}}^{ii_{1}}(k_{12}) \hat{\mathcal{P}}^{i_{1}i_{3}}(k_{1}) \hat{\mathcal{P}}^{ji_{3}}(k_{1}) e_{k_{2}}. \nonumber 
\end{align} 
\end{subequations}
Here, $V_{t, ii_{1}}^{1}$ consists of a Wiener chaos of order three while $V_{t, ii_{1}}^{2}$ and $V_{t, ii_{1}}^{3}$ of order one. Now \eqref{[Equation (3.22ab)][ZZ17]} and \eqref{[Equation (3.2ab)][ZZ17]} inform us that \eqref{estimate 133} allows us to write 
\begin{align}\label{estimate 134}
\bar{b}_{1}^{\epsilon, j} \diamond \bar{b}_{2}^{\epsilon, i} (t) - b_{1}^{\epsilon, j} \diamond b_{2}^{\epsilon, i} (t) =&  V_{1, ii_{1}}^{l} + V_{2, ii_{1}}^{l} - \sum_{i_{1} =1}^{3} ( C_{2,u}^{\epsilon, ii_{1} j} X_{t,u}^{\epsilon, i_{1}} + C_{2,b}^{\epsilon, ii_{1} j} X_{t,b}^{\epsilon, i_{1}}) \nonumber\\
& \hspace{4mm} +V_{t, ii_{1}}^{3} - \sum_{i_{1} =1}^{3} (\tilde{C}_{2,u}^{\epsilon, ii_{1} j} X_{t,u}^{\epsilon, i_{1}} + \tilde{C}_{2,b}^{\epsilon, ii_{1} j} X_{t,b}^{\epsilon, i_{1}}). 
\end{align} 

\emph{Terms in the first chaos:} $V_{t, ii_{1}}^{2}, V_{t, ii_{1}}^{3}$ in \eqref{estimate 135}\\
Within \eqref{estimate 134} we write 
\begin{align}\label{estimate 137}
&V_{t, ii_{1}}^{2} - \sum_{i_{1} =1}^{3} ( C_{2,u}^{\epsilon, i i_{1} j} X_{t,u}^{\epsilon, i_{1}} + C_{2,b}^{\epsilon, i i_{1} j} X_{t,b}^{\epsilon, i_{1}}) = V_{t, ii_{1}}^{2} - \tilde{V}_{t, ii_{1}}^{2}  \\
& \hspace{3mm}  + \tilde{V}_{t, ii_{1}}^{2} - \sum_{i_{1} =1}^{3} (C_{2,u}^{\epsilon, i i_{1} j} X_{t,u}^{\epsilon, i_{1}} + C_{2,b}^{\epsilon, i i_{1} j} X_{t,b}^{\epsilon, i_{1}}) + \sum_{i_{1} =1}^{3} ( \bar{C}_{2,u}^{\epsilon, i i_{1} j} \bar{X}_{t,u}^{\epsilon, i_{1}} + \bar{C}_{2,b}^{\epsilon, i i_{1} j} \bar{X}_{t, b}^{\epsilon, i_{1}})  \nonumber 
\end{align} 
where 
\begin{align}\label{estimate 136}
\tilde{V}_{t, ii_{1}}^{2} &\triangleq \frac{(2\pi)^{-3}}{2} \sum_{i_{1}, i_{2}, i_{3} =1}^{3} \sum_{k_{1}, k_{2} \neq 0} \\
&\times [\hat{X}_{t,b}^{\epsilon, i_{1}}(k_{1}) \int_{0}^{t} e^{- \lvert k_{12} \rvert^{2} f( \epsilon k_{12})(t-s)} k_{12}^{i_{2}} g(\epsilon k_{12}^{i_{2}}) \frac{ e^{- \lvert k_{2} \rvert^{2} f(\epsilon k_{2}) (t-s)} h_{u}(\epsilon k_{2}) h_{b}(\epsilon k_{2})}{2 \lvert k_{2} \rvert^{2} f(\epsilon k_{2})} ds \nonumber \\
& - \hat{X}_{t,u}^{\epsilon, i_{1}}(k_{1}) \int_{0}^{t} e^{- \lvert k_{12} \rvert^{2} f( \epsilon k_{12})(t-s)} k_{12}^{i_{2}} g(\epsilon k_{12}^{i_{2}}) \frac{ e^{- \lvert k_{2} \rvert^{2} f(\epsilon k_{2}) (t-s)} h_{b}(\epsilon k_{2})^{2}}{2 \lvert k_{2} \rvert^{2} f(\epsilon k_{2})} ds \nonumber\\
& - \hat{\bar{X}}_{t,b}^{\epsilon, i_{1}}(k_{1}) \int_{0}^{t} e^{- \lvert k_{12} \rvert^{2} (t-s)} k_{12}^{i_{2}} i \frac{ e^{- \lvert k_{2} \rvert^{2}  (t-s)} h_{u}(\epsilon k_{2})h_{b}(\epsilon k_{2})}{2 \lvert k_{2} \rvert^{2} } ds \nonumber\\
&+  \hat{\bar{X}}_{t,u}^{\epsilon, i_{1}}(k_{1}) \int_{0}^{t} e^{- \lvert k_{12} \rvert^{2} (t-s)} k_{12}^{i_{2}} i \frac{ e^{- \lvert k_{2} \rvert^{2}  (t-s)} h_{b}(\epsilon k_{2})^{2}}{2 \lvert k_{2} \rvert^{2} } ds]  \nonumber\\
& \times \hat{\mathcal{P}}^{ii_{1}}(k_{12}) \hat{\mathcal{P}}^{i_{2} i_{3}} (k_{2}) \hat{\mathcal{P}}^{ji_{3}} (k_{2}) e_{k_{1}}, \nonumber 
\end{align} 
\begin{align}\label{C2uepsilonii1j}
C_{2,u}^{\epsilon, i i_{1} j} (t) \triangleq& - \frac{ (2\pi)^{-3}}{2} \sum_{i_{2}, i_{3} =1}^{3} \sum_{k_{2} \neq 0} \int_{0}^{t} e^{-2 \lvert k_{2} \rvert^{2} f( \epsilon k_{2}) (t-s)} k_{2}^{i_{2}} g(\epsilon k_{2}^{i_{2}}) \frac{ h_{b}(\epsilon k_{2})^{2}}{2 \lvert k_{2} \rvert^{2} f( \epsilon k_{2})} \nonumber\\
& \times \hat{\mathcal{P}}^{ii_{1}}(k_{2}) \hat{\mathcal{P}}^{i_{2}i_{3}}(k_{2}) \hat{\mathcal{P}}^{ji_{3}}(k_{2}) ds 
\end{align} 
so that we can readily compute using \eqref{g} and Definition \ref{Definition of approximation}  
\begin{align}\label{C2uepsilonii1jlimit}
\lim_{\epsilon \searrow 0} C_{2,u}^{\epsilon, i i_{1} j}(t) =& - \frac{ (2\pi)^{-3}}{8(a+b)} \sum_{i_{2}, i_{3} =1}^{3}  \\
& \times  \int_{\mathbb{R}^{3}} \frac{ [\cos(ax^{i_{2}}) - \cos(bx^{i_{2}})]}{\lvert x \rvert^{4} f(x)^{2}} h_{b}(x)^{2} \hat{\mathcal{P}}^{ii_{1}}(x) \hat{\mathcal{P}}^{i_{2}i_{3}}(x) \hat{\mathcal{P}}^{ji_{3}}(x)  dx; \nonumber
\end{align} 
similarly,  
\begin{align}\label{C2bepsilonii1j}
C_{2,b}^{\epsilon, i i_{1} j} (t) \triangleq& \frac{ (2\pi)^{-3}}{2} \sum_{i_{2}, i_{3} =1}^{3} \sum_{k_{2} \neq 0} \int_{0}^{t} e^{-2 \lvert k_{2} \rvert^{2} f( \epsilon k_{2}) (t-s)} k_{2}^{i_{2}} g(\epsilon k_{2}^{i_{2}}) \frac{h_{u}(\epsilon k_{2}) h_{b}(\epsilon k_{2})}{2 \lvert k_{2} \rvert^{2} f( \epsilon k_{2})} \nonumber\\
& \times \hat{\mathcal{P}}^{ii_{1}}(k_{2}) \hat{\mathcal{P}}^{i_{2}i_{3}}(k_{2}) \hat{\mathcal{P}}^{ji_{3}}(k_{2}) ds 
\end{align} 
so that 
\begin{align}\label{limitC2bepsilonii1j}
\lim_{\epsilon \searrow 0} C_{2,b}^{\epsilon, i i_{1} j}(t) =&  \frac{ (2\pi)^{-3}}{8(a+b)} \sum_{i_{2}, i_{3} =1}^{3} \\
& \times \int_{\mathbb{R}^{3}} \frac{ [\cos(ax^{i_{2}}) - \cos(bx^{i_{2}})]}{\lvert x \rvert^{4} f(x)^{2}} h_{u}(x) h_{b}(x) \hat{\mathcal{P}}^{ii_{1}}(x) \hat{\mathcal{P}}^{i_{2}i_{3}}(x) \hat{\mathcal{P}}^{ji_{3}}(x)dx. \nonumber 
\end{align} 
Additionally, we define 
\begin{subequations} 
\begin{align}
\bar{C}_{2,u}^{\epsilon, i i_{1} j}(t) \triangleq& - \frac {(2\pi)^{-3}}{2} \sum_{i_{2}, i_{3} =1}^{3} \sum_{k_{2} \neq 0} \int_{0}^{t} e^{-2 \lvert k_{2} \rvert^{2} (t-s)} ds  \nonumber \\ 
& \times k_{2}^{i_{2}} i \frac{ h_{b}(\epsilon k_{2})^{2}}{2 \lvert k_{2} \rvert^{2}} \hat{\mathcal{P}}^{ii_{1}}(k_{2}) \hat{\mathcal{P}}^{i_{2}i_{3}} (k_{2}) \hat{\mathcal{P}}^{ji_{3}}(k_{2}) ,  \label{barC2uepsilonii1j}\\
\bar{C}_{2,b}^{\epsilon, i i_{1} j}(t) \triangleq& \frac {(2\pi)^{-3}}{2} \sum_{i_{2}, i_{3} =1}^{3} \sum_{k_{2} \neq 0}  \int_{0}^{t} e^{-2 \lvert k_{2} \rvert^{2} (t-s)} ds   \nonumber \\ 
& \times k_{2}^{i_{2}} i \frac{h_{u} (\epsilon k_{2}) h_{b}(\epsilon k_{2})}{2 \lvert k_{2} \rvert^{2}} \hat{\mathcal{P}}^{ii_{1}}(k_{2}) \hat{\mathcal{P}}^{i_{2}i_{3}} (k_{2}) \hat{\mathcal{P}}^{ji_{3}}(k_{2}), \label{barC2bepsilonii1j}  
\end{align} 
\end{subequations} 
which are both zero. As we will see in \eqref{estimate 1062}, such $C_{2,u}^{\epsilon, ii_{1} j}, C_{2,b}^{\epsilon, ii_{1} j}, \bar{C}_{2,u}^{\epsilon, ii_{1} j}, \bar{C}_{2,b}^{\epsilon, ii_{1} j}$ had to be carefully selected to make the necessary estimates work. Now within \eqref{estimate 137} we compute 
\begin{align}\label{estimate 143}
\mathbb{E} [ \lvert \Delta_{q} (V_{t, ii_{1}}^{2} - \tilde{V}_{t, ii_{1}}^{2}) \rvert^{2}] \lesssim \sum_{l=1}^{2} D_{q,t, ij}^{l} 
\end{align} 
by \eqref{estimate 135}, \eqref{estimate 136} where 
\begin{subequations}\label{estimate 138}
\begin{align}
D_{q,t,ij}^{1} \triangleq& 
\mathbb{E} [ \lvert \sum_{i_{1}, i_{2}, i_{3} =1}^{3} \sum_{k_{1}, k_{2} \neq 0} \theta(2^{-q} k_{1}) \\
& \times [ \int_{0}^{t} e^{- \lvert k_{12} \rvert^{2} f( \epsilon k_{12}) (t-s)} k_{12}^{i_{2}} g(\epsilon k_{12}^{i_{2}}) \frac{ e^{- \lvert k_{2} \rvert^{2} f(\epsilon k_{2}) (t-s)} h_{u}(\epsilon k_{2}) h_{b}(\epsilon k_{2})}{ \lvert k_{2} \rvert^{2} f(\epsilon k_{2})} \nonumber\\
& \hspace{10mm} \times [\hat{X}_{s,b}^{\epsilon, i_{1}}(k_{1}) - \hat{X}_{t,b}^{\epsilon, i_{1}}(k_{1})] ds \nonumber \\
& - \int_{0}^{t} e^{- \lvert k_{12} \rvert^{2} (t-s)} k_{12}^{i_{2}} i \frac{ e^{- \lvert k_{2} \rvert^{2} (t-s)} h_{u}(\epsilon k_{2}) h_{b}(\epsilon k_{2})}{ \lvert k_{2} \rvert^{2} } \nonumber\\
& \hspace{10mm}  \times [\hat{\bar{X}}_{s,b}^{\epsilon, i_{1}}(k_{1}) - \hat{\bar{X}}_{t,b}^{\epsilon, i_{1}}(k_{1})] ds] \hat{\mathcal{P}}^{ii_{1}}(k_{12}) \hat{\mathcal{P}}^{i_{2}i_{3}}(k_{2}) \hat{\mathcal{P}}^{ji_{3}}(k_{2}) e_{k_{1}}\rvert^{2} ], \nonumber\\
D_{q,t,ij}^{2} \triangleq&
\mathbb{E} [ \lvert \sum_{i_{1}, i_{2}, i_{3} =1}^{3} \sum_{k_{1}, k_{2} \neq 0} \theta(2^{-q} k_{1}) \\
& \times [ \int_{0}^{t} e^{- \lvert k_{12} \rvert^{2} f( \epsilon k_{12}) (t-s)} k_{12}^{i_{2}} g(\epsilon k_{12}^{i_{2}}) \frac{ e^{- \lvert k_{2} \rvert^{2} f(\epsilon k_{2}) (t-s)} h_{b}(\epsilon k_{2})^{2}}{ \lvert k_{2} \rvert^{2} f(\epsilon k_{2})} \nonumber\\
& \hspace{10mm} \times  [\hat{X}_{s,u}^{\epsilon, i_{1}}(k_{1}) - \hat{X}_{t,u}^{\epsilon, i_{1}}(k_{1})] ds \nonumber \\
& - \int_{0}^{t} e^{- \lvert k_{12} \rvert^{2} (t-s)} k_{12}^{i_{2}} i \frac{ e^{- \lvert k_{2} \rvert^{2} (t-s)} h_{b}(\epsilon k_{2})^{2}}{ \lvert k_{2} \rvert^{2} } \nonumber\\
& \hspace{10mm} \times [\hat{\bar{X}}_{s,u}^{\epsilon, i_{1}}(k_{1}) - \hat{\bar{X}}_{t,u}^{\epsilon, i_{1}}(k_{1})] ds] \hat{\mathcal{P}}^{ii_{1}}(k_{12}) \hat{\mathcal{P}}^{i_{2}i_{3}}(k_{2}) \hat{\mathcal{P}}^{ji_{3}}(k_{2}) e_{k_{1}}\rvert^{2} ].  \nonumber 
\end{align}
\end{subequations} 

\begin{remark}\label{another remark}
Here we strategically paired up the two terms with $h_{u}(\epsilon k_{2}) h_{b}(\epsilon k_{2})$ in $D_{q,t, ij}^{1}$ and the other two terms with $h_{b}(\epsilon k_{2})^{2}$ in $D_{q,t, ij}^{2}$. Such an issue does not arise in the case of the NS equations which only has $h(\epsilon k_{2})^{2}$. 
\end{remark}

W.l.o.g. we work on $D_{q,t,ij}^{1}$ as the estimates on $D_{q,t,ij}^{2}$ are similar. We define for $k_{1} \neq 0$, 
\begin{subequations}
\begin{align}
d_{k_{1}, t-s, ii_{1}i_{2}i_{3}j}& \triangleq \sum_{k_{2} \neq 0} e^{- \lvert k_{12} \rvert^{2} f( \epsilon k_{12}) (t-s)} k_{12}^{i_{2}} g(\epsilon k_{12}^{i_{2}}) \label{[Equation (4.1na)][ZZ17]}  \\
& \times \frac{ e^{- \lvert k_{2} \rvert^{2} f(\epsilon k_{2}) (t-s)} h_{u}(\epsilon k_{2}) h_{b}(\epsilon k_{2})}{ \lvert k_{2} \rvert^{2} f(\epsilon k_{2})} \hat{\mathcal{P}}^{ii_{1}}(k_{12}) \hat{\mathcal{P}}^{i_{2}i_{3}}(k_{2}) \hat{\mathcal{P}}^{ji_{3}}(k_{2}), \nonumber\\
\bar{d}_{k_{1}, t-s, ii_{1}i_{2}i_{3}j}& (t-s) \triangleq \sum_{k_{2} \neq 0} e^{- \lvert k_{12} \rvert^{2} (t-s)} k_{12}^{i_{2}} i \label{[Equation (4.1nb)][ZZ17]} \\
& \times \frac{ e^{- \lvert k_{2} \rvert^{2} (t-s)} h_{u}(\epsilon k_{2}) h_{b}(\epsilon k_{2})}{ \lvert k_{2} \rvert^{2}}\hat{\mathcal{P}}^{ii_{1}}(k_{12}) \hat{\mathcal{P}}^{i_{2}i_{3}}(k_{2}) \hat{\mathcal{P}}^{ji_{3}}(k_{2}), \nonumber
\end{align}
\end{subequations} 
so that we may compute from \eqref{estimate 138}
\begin{align}\label{estimate 142}
D_{q, t, ij}^{1} \lesssim D_{q,t, ij}^{11} + D_{q,t, ij}^{12} 
\end{align}
by \eqref{[Equation (4.1na)][ZZ17]} -\eqref{[Equation (4.1nb)][ZZ17]} where 
\begin{subequations}\label{estimate 139}
\begin{align}
D_{q, t, ij}^{11} \triangleq& \mathbb{E} [ \lvert \sum_{i_{1}, i_{2}, i_{3} =1}^{3} \sum_{k_{1} \neq 0} \theta(2^{-q} k_{1}) e_{k_{1}} \int_{0}^{t} [ d_{k_{1}, t-s, ii_{1}i_{2}i_{3} j}- \bar{d}_{k_{1}, t-s, ii_{1}i_{2} i_{3} j}]\nonumber \\
& \times [ \hat{X}_{s,b}^{\epsilon, i_{1}}(k_{1}) - \hat{X}_{t,b}^{\epsilon, i_{1}}(k_{1})] ds \rvert^{2}],  \\
D_{q,t,ij}^{12} \triangleq& \mathbb{E} [ \lvert \sum_{i_{1}, i_{2}, i_{3} =1}^{3} \sum_{k_{1} \neq 0} \theta(2^{-q} k_{1}) e_{k_{1}}  \int_{0}^{t} \bar{d}_{k_{1}, t-s, ii_{1}i_{2} i_{3} j} \nonumber\\
& \times [ \hat{X}_{s,b}^{\epsilon, i_{1}}(k_{1}) - \hat{X}_{t,b}^{\epsilon, i_{1}}(k_{1}) - \hat{\bar{X}}_{s,b}^{\epsilon, i_{1}} (k_{1}) + \hat{\bar{X}}_{t,b}^{\epsilon, i_{1}}(k_{1}) ] ds \rvert^{2} ]. 
\end{align}
\end{subequations} 
In order to compute $D_{q,t,ij}^{11}$, we first notice that for any $\eta \in [0,1]$, 
\begin{align}\label{[Equation (4.3a)][ZZ17]}
& \lvert d_{k_{1}, t-s, ii_{1} i_{2} i_{3} j} - \bar{d}_{k_{1}, t-s, ii_{1} i_{2} i_{3} j}\rvert \nonumber\\
\lesssim& \epsilon^{\frac{\eta}{2}} \sum_{k_{2} \neq 0} \frac{ ( \lvert k_{12} \rvert^{\frac{\eta}{2}} + \lvert k_{2} \rvert^{\frac{\eta}{2}}) \lvert k_{12} \rvert}{\lvert k_{2} \rvert^{2}} e^{- \lvert k_{12}\rvert^{2} \bar{c}_{f} (t-s)} e^{- \lvert k_{2} \rvert^{2} \bar{c}_{f} (t-s)} 
\end{align} 
by \eqref{[Equation (4.2)][ZZ17]}-\eqref{[Equation (4.3)][ZZ17]} and \eqref{lower bound}, and second we see that
\begin{align}\label{[Equation (4.3b)][ZZ17]} 
& \mathbb{E} [ ( \hat{X}_{s,b}^{\epsilon, i_{1}}(k_{1}) - \hat{X}_{t,b}^{\epsilon, i_{1}}(k_{1})) \overline{ (\hat{X}_{\bar{s}, b}^{\epsilon, i_{1}'} (k_{1}) - \hat{X}_{t,b}^{\epsilon, i_{1}'}(k_{1} ) ) } ] \\
\leq&
\left( \sum_{j_{1} =1}^{3} \frac{ 1- e^{- \lvert k_{1} \rvert^{2} f(\epsilon k_{1}) \lvert t-s \rvert}}{\lvert k_{1} \rvert^{2} f(\epsilon k_{1})} h_{b}(\epsilon k_{1})^{2} \hat{\mathcal{P}}^{i_{1} j_{1}}(k_{1})^{2} \right)^{\frac{1}{2}} \nonumber\\
& \times \left( \sum_{j_{1} =1}^{3} \frac{ 1- e^{- \lvert k_{1} \rvert^{2} f(\epsilon k_{1}) \lvert t-\bar{s} \rvert}}{\lvert k_{1} \rvert^{2} f(\epsilon k_{1})} h_{b}(\epsilon k_{1})^{2} \hat{\mathcal{P}}^{i_{1}' j_{1}}(k_{1})^{2} \right)^{\frac{1}{2}}  \lesssim \frac{ \lvert k_{1} \rvert^{2\eta} \lvert t-s \rvert^{\frac{\eta}{2}} \lvert t- \bar{s} \rvert^{\frac{\eta}{2}}}{\lvert k_{1} \rvert^{2}} \nonumber
\end{align} 
by H$\ddot{\mathrm{o}}$lder's inequality, \eqref{covariance a}, \eqref{lower bound} and mean value theorem.  Applying \eqref{[Equation (4.3a)][ZZ17]}-\eqref{[Equation (4.3b)][ZZ17]} to \eqref{estimate 139} shows that for $\epsilon \in (0,\eta)$, 
\begin{align}\label{estimate 141}
D_{q,t,ij}^{11}\lesssim& \sum_{i_{1}, i_{2}, i_{3}, i_{1}', i_{2}', i_{3}' = 1}^{3} \sum_{k_{1} \neq 0} \theta(2^{-q} k_{1})^{2}   \\
& \times \int_{[0,t]^{2}} [ d_{k_{1}, t-s, ii_{1}i_{2}i_{3} j} - \bar{d}_{k_{1}, t-s, ii_{1}i_{2}i_{3} j} ][ d_{k_{1}, t-\bar{s}, ii_{1}'i_{2}'i_{3}' j} - \bar{d}_{k_{1}, t-\bar{s}, ii_{1}'i_{2}'i_{3}' j}  ] \nonumber\\
& \times \mathbb{E} [ ( \hat{X}_{s,b}^{\epsilon, i_{1}}(k_{1}) - \hat{X}_{t,b}^{\epsilon, i_{1}}(k_{1}) ) ( \hat{X}_{\bar{s}, b}^{\epsilon, i_{1}'}(k_{1}) - \hat{X}_{t,b}^{\epsilon, i_{1}'}(k_{1}) ) ] ds d \bar{s} \lesssim \epsilon^{\eta} t^{\frac{\eta - \epsilon}{2}} 2^{q(1+ 2\eta)} \nonumber 
\end{align}
by \eqref{key estimate}.  Next, we compute $D_{q, t, ij}^{12}$ as follows: 
\begin{align}\label{estimate 140}
D_{q, t, ij}^{12} &\lesssim  \sum_{i_{1}, i_{2}, i_{3}, i_{1}', i_{2}', i_{3}' = 1}^{3} \sum_{k_{1} \neq 0} \theta(2^{-q} k_{1})^{2} \int_{[0,t]^{2}} \\
&\times [ \sum_{k_{2} \neq 0} e^{-\lvert k_{12} \rvert^{2} (t-s)} k_{12}^{i_{2}} i \frac{ e^{- \lvert k_{2} \rvert^{2} (t-s)} h_{u}(\epsilon k_{2}) h_{b}(\epsilon k_{2})}{\lvert k_{2} \rvert^{2}} \hat{\mathcal{P}}^{ii_{1}}(k_{12}) \hat{\mathcal{P}}^{i_{2}i_{3}}(k_{2}) \hat{\mathcal{P}}^{ji_{3}}(k_{2})] \nonumber\\
& \times [ \sum_{k_{3} \neq 0} e^{-\lvert k_{13} \rvert^{2} (t-\bar{s})} k_{13}^{i_{2}'} i \frac{ e^{- \lvert k_{3} \rvert^{2} (t-\bar{s})} h_{u}(\epsilon k_{3}) h_{b}(\epsilon k_{3})}{\lvert k_{3} \rvert^{2}} \hat{\mathcal{P}}^{ii_{1}'}(k_{13}) \hat{\mathcal{P}}^{i_{2}'i_{3}'}(k_{3}) \hat{\mathcal{P}}^{ji_{3}'}(k_{3})] \nonumber\\
& \times ( \mathbb{E} [\lvert \hat{X}_{s,b}^{\epsilon, i_{1}}(k_{1}) - \hat{X}_{t,b}^{\epsilon, i_{1}}(k_{1}) - \hat{\bar{X}}_{s,b}^{\epsilon, i_{1}}(k_{1}) + \hat{\bar{X}}_{t,b}^{\epsilon, i_{1}}(k_{1}) \rvert^{2} ])^{\frac{1}{2}} \nonumber\\
& \times (\mathbb{E} [ \lvert \hat{X}_{\bar{s},b}^{\epsilon, i_{1}'}(k_{1}) - \hat{X}_{t,b}^{\epsilon, i_{1}'}(k_{1}) - \hat{\bar{X}}_{\bar{s},b}^{\epsilon, i_{1}'}(k_{1}) + \hat{\bar{X}}_{t,b}^{\epsilon, i_{1}'}(k_{1}) \rvert^{2}])^{\frac{1}{2}} ds d \bar{s}\lesssim \epsilon^{\eta} t^{\frac{\eta - \epsilon}{2}} 2^{q(1+ 2\eta)} \nonumber
\end{align} 
by \eqref{estimate 139}, \eqref{[Equation (4.1nb)][ZZ17]}, H$\ddot{\mathrm{o}}$lder's inequality and \eqref{key estimate}. Applying \eqref{estimate 141}-\eqref{estimate 140} to \eqref{estimate 142} and \eqref{estimate 143} leads to 
\begin{equation}\label{estimate 144}
\mathbb{E} [ \lvert \Delta_{q} (V_{t, ii_{1}}^{2} - \tilde{V}_{t, ii_{1}}^{2} )\rvert^{2}] \lesssim \epsilon^{\eta} t^{\frac{\eta - \epsilon}{2}} 2^{q(1+ 2 \eta)}.
\end{equation} 
Next, within \eqref{estimate 137} we compute 
\begin{align}\label{estimate 1062}
& \mathbb{E} [ \lvert \Delta_{q} [ \tilde{V}_{t, ii_{1}}^{2} - \sum_{i_{1} =1}^{3} (C_{2,u}^{\epsilon, ii_{1} j} X_{t,u}^{\epsilon, i_{1}} + C_{2,b}^{\epsilon, ii_{1} j} X_{t,b}^{\epsilon, i_{1}}) + \sum_{i_{1} =1}^{3} ( \bar{C}_{2,u}^{\epsilon, ii_{1} j} \bar{X}_{t,u}^{\epsilon, i_{1}} + \bar{C}_{2,b}^{\epsilon, i i_{1} j} \bar{X}_{t, b}^{\epsilon, i_{1}}) ] \rvert^{2}] \nonumber\\
\lesssim& \sum_{k_{1} \neq 0} \theta(2^{-q} k_{1})^{2} \nonumber\\
& \times ( \sum_{i, j = 1}^{3} \mathbb{E} [ \lvert \hat{X}_{t,b}^{\epsilon, i} (k_{1}) \overline{\hat{X}_{t, b}^{\epsilon, j}(k_{1})} \rvert] h_{u}(\epsilon k_{1})^{2} h_{b}(\epsilon k_{1})^{2} \nonumber\\
& \hspace{2mm} \times \lvert \sum_{i_{1}, i_{2} = 1}^{3} \sum_{k_{2} \neq 0} \int_{0}^{t} \frac{e^{- \lvert k_{2} \rvert^{2} f(\epsilon k_{2}) (t-s)}}{\lvert k_{2} \rvert^{2} f(\epsilon k_{2})} \nonumber\\
& \hspace{7mm} \times  [ e^{- \lvert k_{12} \rvert^{2} f(\epsilon k_{12}) (t-s)} k_{12}^{i_{2}} g(\epsilon k_{12}^{i_{2}}) \hat{\mathcal{P}}^{ii_{1}}(k_{12}) - e^{- \lvert k_{2} \rvert^{2} f(\epsilon k_{2}) (t-s) } k_{2}^{i_{2}} g(\epsilon k_{2}^{i_{2}}) \hat{\mathcal{P}}^{ii_{1}}(k_{2})] \nonumber\\
& \hspace{5mm} - \frac{e^{- \lvert k_{2} \rvert^{2} (t-s)}}{ \lvert k_{2} \rvert^{2}} [ e^{- \lvert k_{12} \rvert^{2}(t-s)} k_{12}^{i_{2}} i \hat{\mathcal{P}}^{ii_{1}}(k_{12}) - e^{- \lvert k_{2} \rvert^{2}(t-s)} k_{2}^{i_{2}} i \hat{\mathcal{P}}^{ii_{1}}(k_{2})] ds \rvert^{2} \nonumber \\
& + \sum_{i=1}^{3} \mathbb{E} [ \lvert \hat{X}_{t,b}^{\epsilon, i} (k_{1}) - \hat{\bar{X}}_{t,b}^{\epsilon, i} (k_{1}) \rvert^{2}] h_{u}(\epsilon k_{1})^{2} h_{b}(\epsilon k_{1})^{2} \nonumber\\
& \hspace{2mm} \times \lvert \sum_{i_{1}, i_{2} =1}^{3} \sum_{k_{2} \neq 0} \int_{0}^{t} \frac{e^{- \lvert k_{2} \rvert^{2} (t-s)}}{\lvert k_{2} \rvert^{2}} [ e^{- \lvert k_{12} \rvert^{2} (t-s)} k_{12}^{i_{2}} i \hat{\mathcal{P}}^{ii_{1}}(k_{12}) \nonumber\\
& \hspace{10mm}  - e^{- \lvert k_{2} \rvert^{2} (t-s)} k_{2}^{i_{2}} i \hat{\mathcal{P}}^{ii_{1}}(k_{2})] ds \rvert^{2} \nonumber\\
& + \sum_{i, j = 1}^{3} \mathbb{E} [ \lvert \hat{X}_{t,u}^{\epsilon, i} (k_{1}) \overline{\hat{X}_{t, u}^{\epsilon, j}(k_{1})} \rvert] h_{b}(\epsilon k_{1})^{4}  \nonumber\\
& \hspace{2mm} \times \lvert \sum_{i_{1}, i_{2} = 1}^{3} \sum_{k_{2} \neq 0} \int_{0}^{t}\frac{e^{- \lvert k_{2} \rvert^{2} f(\epsilon k_{2}) (t-s)}}{ \lvert k_{2} \rvert^{2} f(\epsilon k_{2})}  \nonumber\\
& \hspace{7mm} \times [ e^{- \lvert k_{12} \rvert^{2} f(\epsilon k_{12})(t-s) } k_{12}^{i_{2}} g(\epsilon k_{12}^{i_{2}}) \hat{\mathcal{P}}^{ii_{1}}(k_{12}) - e^{- \lvert k_{2} \rvert^{2} f(\epsilon k_{2}) (t-s) } k_{2}^{i_{2}} g(\epsilon k_{2}^{i_{2}}) \hat{\mathcal{P}}^{ii_{1}}(k_{2})] \nonumber\\
& \hspace{5mm} - \frac{e^{- \lvert k_{2} \rvert^{2} (t-s)}}{ \lvert k_{2} \rvert^{2}} [ e^{- \lvert k_{12} \rvert^{2}(t-s)} k_{12}^{i_{2}} i \hat{\mathcal{P}}^{ii_{1}}(k_{12}) - e^{- \lvert k_{2} \rvert^{2}(t-s)} k_{2}^{i_{2}} i \hat{\mathcal{P}}^{ii_{1}}(k_{2})] ds \rvert^{2} \nonumber \\
& + \sum_{i=1}^{3} \mathbb{E} [ \lvert \hat{X}_{t,u}^{\epsilon, i} (k_{1}) - \hat{\bar{X}}_{t,u}^{\epsilon, i} (k_{1}) \rvert^{2}] h_{b}(\epsilon k_{1})^{4}  \nonumber\\
& \hspace{2mm} \lvert \sum_{i_{1}, i_{2} =1}^{3} \sum_{k_{2} \neq 0} \int_{0}^{t} \frac{e^{- \lvert k_{2} \rvert^{2} (t-s)}}{\lvert k_{2} \rvert^{2}}  \times [ e^{- \lvert k_{12} \rvert^{2} (t-s)} k_{12}^{i_{2}} i \hat{\mathcal{P}}^{ii_{1}}(k_{12})  \nonumber\\
& \hspace{60mm} - e^{- \lvert k_{2} \rvert^{2} (t-s)} k_{2}^{i_{2}} i \hat{\mathcal{P}}^{ii_{1}}(k_{2})] ds \rvert^{2}) 
\end{align} 
by \eqref{estimate 136}, \eqref{C2uepsilonii1j}, \eqref{C2bepsilonii1j}, \eqref{barC2uepsilonii1j} and \eqref{barC2bepsilonii1j}.  Now we can immediately estimate from \eqref{covariance a} that for any $i, j \in \{1,2,3\}$, 
\begin{equation}\label{estimate 6}
\mathbb{E} [ \lvert\hat{X}_{t,b}^{\epsilon, i} (k_{1}) \overline{\hat{X}_{t,b}^{\epsilon, j} (k_{1})} \rvert ] \lesssim \frac{ h_{b}(\epsilon k_{1})^{2}}{\lvert k_{1} \rvert^{2}} \text{ and } \mathbb{E} [ \lvert\hat{X}_{t,u}^{\epsilon, i} (k_{1}) \overline{\hat{X}_{t,u}^{\epsilon, j} (k_{1})} \rvert ] \lesssim \frac{ h_{u}(\epsilon k_{1})^{2}}{\lvert k_{1} \rvert^{2}}
\end{equation} 
while for any $\eta \in [0,1]$, 
\begin{align}\label{[Equation (4.4a)][ZZ17]}
& \mathbb{E} [ \lvert \hat{X}_{t,b}^{\epsilon, i} (k_{1}) - \hat{\bar{X}}_{t,b}^{\epsilon, i}(k_{1}) \rvert^{2}] \nonumber\\
=&  \sum_{i_{1} =1}^{3}  \frac{ h_{b}(\epsilon k_{1})^{2}}{\lvert k_{1} \rvert^{2}} \hat{\mathcal{P}}^{ii_{1}}(k_{1})^{2} \frac{1}{2 f(\epsilon k_{1}) (f(\epsilon k_{1}) + 1)} (f(\epsilon k_{1}) - 1)^{2}  \lesssim \frac{ \lvert \epsilon k_{1} \rvert^{\eta}}{\lvert k_{1} \rvert^{2}} 
\end{align} 
by \eqref{covariance c}, \eqref{covariance f} and \eqref{estimate 145} and mean value theorem, and similarly 
\begin{equation}\label{estimate 146} 
\mathbb{E} [ \lvert \hat{X}_{t,u}^{\epsilon, i} (k_{1}) - \hat{\bar{X}}_{t,u}^{\epsilon, i}(k_{1}) \rvert^{2}] \lesssim \frac{ \lvert \epsilon k_{1} \rvert^{\eta}}{\lvert k_{1} \rvert^{2}}. 
\end{equation} 
Applying \eqref{estimate 6} - \eqref{estimate 146} to \eqref{estimate 1062} leads to  
\begin{align}\label{estimate 152}
& \mathbb{E} [ \lvert \Delta_{q} [ \tilde{V}_{t, ii_{1}}^{2} - \sum_{i_{1} =1}^{3} (C_{2,u}^{\epsilon, ii_{1} j} X_{t,u}^{\epsilon, i_{1}} + C_{2,b}^{\epsilon, ii_{1}j} X_{t,b}^{\epsilon, i_{1}}) \\
& \hspace{10mm}  + \sum_{i_{1} =1}^{3} ( \bar{C}_{2,u}^{\epsilon, i i_{1} j} \bar{X}_{t,u}^{\epsilon, i_{1}} + \bar{C}_{2,b}^{\epsilon, i i_{1} j} \bar{X}_{t, b}^{\epsilon, i_{1}}) ] \rvert^{2}]  \lesssim \sum_{l=1}^{2} E_{q, t, i}^{l} \nonumber
\end{align} 
where 
\begin{subequations}\label{estimate 149}
\begin{align}
E_{q,t, i}^{1} \triangleq&  \sum_{k_{1} \neq 0} \frac{\theta(2^{-q} k_{1})^{2}}{ \lvert k_{1} \rvert^{2}} \lvert \sum_{i_{1}, i_{2} = 1}^{3} \sum_{k_{2} \neq 0}  \\
&  \times \int_{0}^{t} \frac{e^{- \lvert k_{2} \rvert^{2} f(\epsilon k_{2}) (t-s)}}{ \lvert k_{2} \rvert^{2} f(\epsilon k_{2})} [ e^{- \lvert k_{12} \rvert^{2} f(\epsilon k_{12}) (t-s)} k_{12}^{i_{2}} g(\epsilon k_{12}^{i_{2}}) \hat{\mathcal{P}}^{ii_{1}}(k_{12}) \nonumber\\
& \hspace{30mm} - e^{- \lvert k_{2} \rvert^{2} f(\epsilon k_{2}) (t-s) } k_{2}^{i_{2}} g(\epsilon k_{2}^{i_{2}}) \hat{\mathcal{P}}^{ii_{1}}(k_{2})] \nonumber\\
& - \frac{e^{- \lvert k_{2} \rvert^{2} (t-s)}}{ \lvert k_{2} \rvert^{2}} [ e^{- \lvert k_{12} \rvert^{2}(t-s)} k_{12}^{i_{2}} i \hat{\mathcal{P}}^{ii_{1}}(k_{12}) - e^{- \lvert k_{2} \rvert^{2}(t-s)} k_{2}^{i_{2}} i \hat{\mathcal{P}}^{ii_{1}}(k_{2})] ds \rvert^{2}, \nonumber \\
E_{q,t,i}^{2}\triangleq&  \epsilon^{\eta} \sum_{k_{1} \neq 0} \frac{ \theta(2^{-q} k_{1})^{2}}{\lvert k_{1} \rvert^{2-\eta}}  \lvert \sum_{i_{1}, i_{2} =1}^{3} \sum_{k_{2} \neq 0}  \\
& \times \int_{0}^{t} \frac{e^{- \lvert k_{2} \rvert^{2} (t-s)}}{\lvert k_{2} \rvert^{2}} [ e^{- \lvert k_{12} \rvert^{2} (t-s)} k_{12}^{i_{2}} i \hat{\mathcal{P}}^{ii_{1}}(k_{12}) - e^{- \lvert k_{2} \rvert^{2} (t-s)} k_{2}^{i_{2}} i \hat{\mathcal{P}}^{ii_{1}}(k_{2})] ds \rvert^{2}.  \nonumber
\end{align}
\end{subequations} 
In order to estimate $E_{q,t, i}^{1}$, we see that applications of Lemmas \ref{Lemma 3.8} and \ref{Lemma 3.9} lead to 
\begin{align}\label{estimate 147}
& \lvert \frac{e^{- \lvert k_{2} \rvert^{2} f(\epsilon k_{2}) (t-s)}}{ \lvert k_{2} \rvert^{2} f(\epsilon k_{2})} [ e^{- \lvert k_{12} \rvert^{2} f(\epsilon k_{12}) (t-s)} k_{12}^{i_{2}} g(\epsilon k_{12}^{i_{2}}) \hat{\mathcal{P}}^{ii_{1}}(k_{12}) \nonumber\\
& \hspace{30mm} - e^{- \lvert k_{2} \rvert^{2} f(\epsilon k_{2}) (t-s) } k_{2}^{i_{2}} g(\epsilon k_{2}^{i_{2}}) \hat{\mathcal{P}}^{ii_{1}}(k_{2})] \nonumber\\
& - \frac{e^{- \lvert k_{2} \rvert^{2} (t-s)}}{ \lvert k_{2} \rvert^{2}} [ e^{- \lvert k_{12} \rvert^{2}(t-s)} k_{12}^{i_{2}} i \hat{\mathcal{P}}^{ii_{1}}(k_{12}) - e^{- \lvert k_{2} \rvert^{2}(t-s)} k_{2}^{i_{2}} i \hat{\mathcal{P}}^{ii_{1}}(k_{2})] \rvert \nonumber\\
\lesssim& \frac{ e^{- \lvert k_{2} \rvert^{2} \bar{c}_{f} ( t-s )}}{\lvert k_{2} \rvert^{2}} \lvert k_{1} \rvert^{\eta} \lvert t-s \rvert^{- \frac{1-\eta}{2}} 
\end{align} 
for any $\eta \in (0, 1)$, while \eqref{[Equation (4.2)][ZZ17]}, \eqref{[Equation (4.3)][ZZ17]} and \eqref{key estimate} lead to 
\begin{align}\label{estimate 148}
& \lvert \frac{e^{- \lvert k_{2} \rvert^{2} f(\epsilon k_{2}) (t-s)}}{ \lvert k_{2} \rvert^{2} f(\epsilon k_{2})} [ e^{- \lvert k_{12} \rvert^{2} f(\epsilon k_{12}) (t-s)} k_{12}^{i_{2}} g(\epsilon k_{12}^{i_{2}}) \hat{\mathcal{P}}^{ii_{1}}(k_{12}) \nonumber\\
& \hspace{30mm} - e^{- \lvert k_{2} \rvert^{2} f(\epsilon k_{2}) (t-s) } k_{2}^{i_{2}} g(\epsilon k_{2}^{i_{2}}) \hat{\mathcal{P}}^{ii_{1}}(k_{2})] \nonumber\\
& - \frac{e^{- \lvert k_{2} \rvert^{2} (t-s)}}{ \lvert k_{2} \rvert^{2}} [ e^{- \lvert k_{12} \rvert^{2}(t-s)} k_{12}^{i_{2}} i \hat{\mathcal{P}}^{ii_{1}}(k_{12}) - e^{- \lvert k_{2} \rvert^{2}(t-s)} k_{2}^{i_{2}} i \hat{\mathcal{P}}^{ii_{1}}(k_{2})] \rvert \nonumber\\
\lesssim& \epsilon^{\frac{\eta}{2}} \frac{ e^{- \lvert k_{2} \rvert^{2} \bar{c}_{f} (t-s)}}{\lvert k_{2} \rvert^{2}} ( \lvert t-s \rvert^{-\frac{1}{2}} \lvert k_{2} \rvert^{\frac{\eta}{2}} + \lvert t-s \rvert^{-\frac{1}{2} - \frac{\eta}{4}}). 
\end{align} 
Therefore, applying \eqref{estimate 147}-\eqref{estimate 148} to \eqref{estimate 149} gives for any $\epsilon \in (0, \eta)$,  
\begin{align}\label{estimate 150} 
E_{q,t,i}^{1} \lesssim& \sum_{k_{1} \neq 0} \frac{ \theta(2^{-q} k_{1})^{2}}{\lvert k_{1} \rvert^{2}} \lvert \sum_{k_{2} \neq 0} \int_{0}^{t} \frac{ e^{- \lvert k_{2} \rvert^{2} \bar{c}_{f} (t-s)}}{\lvert k_{2} \rvert^{2}} \lvert k_{1} \rvert^{\frac{\eta}{2}} \lvert t-s \rvert^{-\frac{1}{4} + \frac{\eta}{4} - \frac{1}{4}} \epsilon^{\frac{\eta}{4}} \lvert k_{2} \rvert^{\frac{\eta}{4}} ds \\
& \hspace{10mm} + \int_{0}^{t} \frac{ e^{- \lvert k_{2} \rvert^{2} \bar{c}_{f} (t-s)}}{\lvert k_{2} \rvert^{2}} \lvert k_{1} \rvert^{\frac{\eta}{2}} \lvert t-s \rvert^{-\frac{1}{4} + \frac{\eta}{4} - \frac{1}{4} - \frac{\eta}{8}} \epsilon^{\frac{\eta}{4}} ds \rvert^{2} \lesssim \epsilon^{\frac{\eta}{2}} t^{\frac{\eta - \epsilon}{4}} 2^{q(1+ \eta)} \nonumber
\end{align} 
by \eqref{key estimate}. Next, we can estimate from \eqref{estimate 149} for any $\eta \in (0,1)$ and $\epsilon \in (0, \eta)$ 
\begin{align}\label{estimate 151}
E_{q,t,i}^{2} \lesssim \epsilon^{\eta} \sum_{k_{1} \neq 0} \frac{ \theta(2^{-q} k_{1})^{2}}{\lvert k_{1} \rvert^{2- 3 \eta}} \lvert \sum_{k_{2} \neq 0} \frac{ t^{\frac{\eta - \epsilon}{2}}}{\lvert k_{2} \rvert^{3+ \epsilon}} \rvert^{2}  \lesssim \epsilon^{\eta} t^{\eta - \epsilon} 2^{q(1+ 3\eta)} 
\end{align} 
due to Lemma \ref{Lemma 3.8} and \eqref{key estimate}. Applying \eqref{estimate 150}-\eqref{estimate 151} to \eqref{estimate 152} gives 
\begin{align}\label{estimate 153}
& \mathbb{E} [ \lvert \Delta_{q} [ \tilde{V}_{t, ii_{1}}^{2} - \sum_{i_{1} =1}^{3} (C_{2,u}^{\epsilon, ii_{1} j} X_{t,u}^{\epsilon, i_{1}} + C_{2,b}^{\epsilon, ii_{1}j} X_{t,b}^{\epsilon, i_{1}}) \nonumber\\
& \hspace{10mm}  + \sum_{i_{1} =1}^{3} ( \bar{C}_{2,u}^{\epsilon, i i_{1} j} \bar{X}_{t,u}^{\epsilon, i_{1}} + \bar{C}_{2,b}^{\epsilon, i i_{1} j} \bar{X}_{t, b}^{\epsilon, i_{1}}) ] \rvert^{2}]  \lesssim \epsilon^{\frac{\eta}{2}} t^{\frac{\eta - \epsilon}{4}} 2^{q (1+ 3\eta)}. 
\end{align} 
Concerning $V_{t, ii_{1}}^{3}$ from \eqref{estimate 135} within \eqref{estimate 134}, we write 
\begin{align}\label{estimate 154}
&V_{t, ii_{1}}^{3} - \sum_{i_{1} =1}^{3} ( \tilde{C}_{2,u}^{\epsilon, i i_{1} j} X_{t,u}^{\epsilon, i_{1}} + \tilde{C}_{2,b}^{\epsilon, i i_{1} j} X_{t,b}^{\epsilon, i_{1}}) = V_{t, ii_{1}}^{3} - \tilde{V}_{t, ii_{1}}^{3}  \\
& \hspace{3mm} + \tilde{V}_{t, ii_{1}}^{3} - \sum_{i_{2} =1}^{3} (\tilde{C}_{2,u}^{\epsilon, i i_{2} j} X_{t,u}^{\epsilon, i_{2}} + \tilde{C}_{2,b}^{\epsilon, i i_{2} j} X_{t,b}^{\epsilon, i_{2}}) + \sum_{i_{2} =1}^{3} ( \bar{\tilde{C}}_{2,u}^{\epsilon, i i_{2} j} \bar{X}_{t,u}^{\epsilon, i_{2}} + \bar{\tilde{C}}_{2,b}^{\epsilon, i i_{2} j} \bar{X}_{t, b}^{\epsilon, i_{2}})  \nonumber 
\end{align} 
where similarly to \eqref{estimate 136} - \eqref{barC2bepsilonii1j}, 
\begin{align}\label{estimate 155}
\tilde{V}_{t, ii_{1}}^{3} &\triangleq \frac{(2\pi)^{-3}}{2} \sum_{i_{1}, i_{2}, i_{3} =1}^{3} \sum_{k_{1}, k_{2} \neq 0} \\
&\times [\hat{X}_{t,u}^{\epsilon, i_{2}}(k_{2}) \int_{0}^{t} e^{- \lvert k_{12} \rvert^{2} f( \epsilon k_{12})(t-s)} k_{12}^{i_{2}} g(\epsilon k_{12}^{i_{2}}) \frac{ e^{- \lvert k_{1} \rvert^{2} f(\epsilon k_{1}) (t-s)} h_{b}(\epsilon k_{1})^{2}}{2 \lvert k_{1} \rvert^{2} f(\epsilon k_{1})} ds \nonumber \\
& - \hat{X}_{t,b}^{\epsilon, i_{2}}(k_{2}) \int_{0}^{t} e^{- \lvert k_{12} \rvert^{2} f( \epsilon k_{12})(t-s)} k_{12}^{i_{2}} g(\epsilon k_{12}^{i_{2}}) \frac{ e^{- \lvert k_{1} \rvert^{2} f(\epsilon k_{1}) (t-s)} h_{u}(\epsilon k_{1})h_{b}(\epsilon k_{1})}{2 \lvert k_{1} \rvert^{2} f(\epsilon k_{1})} ds \nonumber\\
& - \hat{\bar{X}}_{t,u}^{\epsilon, i_{2}}(k_{2}) \int_{0}^{t} e^{- \lvert k_{12} \rvert^{2} (t-s)} k_{12}^{i_{2}} i \frac{ e^{- \lvert k_{1} \rvert^{2}  (t-s)} h_{b}(\epsilon k_{1})^{2}}{2 \lvert k_{1} \rvert^{2} } ds \nonumber\\
&+  \hat{\bar{X}}_{t,b}^{\epsilon, i_{2}}(k_{2}) \int_{0}^{t} e^{- \lvert k_{12} \rvert^{2} (t-s)} k_{12}^{i_{2}} i \frac{ e^{- \lvert k_{1} \rvert^{2}  (t-s)} h_{u}(\epsilon k_{1})h_{b}(\epsilon k_{1})}{2 \lvert k_{1} \rvert^{2} } ds]  \nonumber\\
& \times \hat{\mathcal{P}}^{ii_{1}}(k_{12}) \hat{\mathcal{P}}^{i_{1} i_{3}} (k_{1}) \hat{\mathcal{P}}^{ji_{3}} (k_{1}) e_{k_{2}}, \nonumber 
\end{align}  
\begin{align}\label{C2uepsilonii2j} 
\tilde{C}_{2,u}^{\epsilon, i i_{2} j} (t) \triangleq& \frac{ (2\pi)^{-3}}{2} \sum_{i_{1}, i_{3} =1}^{3} \sum_{k_{1} \neq 0} \int_{0}^{t} e^{-2 \lvert k_{1} \rvert^{2} f( \epsilon k_{1}) (t-s)} k_{1}^{i_{2}} g(\epsilon k_{1}^{i_{2}}) \frac{ h_{b}(\epsilon k_{1})^{2}}{2 \lvert k_{1} \rvert^{2} f( \epsilon k_{1})} \\
& \times \hat{\mathcal{P}}^{ii_{1}}(k_{1}) \hat{\mathcal{P}}^{i_{1}i_{3}}(k_{1}) \hat{\mathcal{P}}^{ji_{3}}(k_{1}) ds \nonumber 
\end{align} 
so that 
\begin{align}
\lim_{\epsilon \searrow 0} \tilde{C}_{2,u}^{\epsilon, i i_{2} j}(t) =& \frac{ (2\pi)^{-3}}{8(a+b)} \sum_{i_{1}, i_{3} =1}^{3}  \\
& \times \int_{\mathbb{R}^{3}} \frac{ [\cos(ax^{i_{2}}) - \cos(bx^{i_{2}})]}{\lvert x \rvert^{4} f(x)^{2}} h_{b}(x)^{2} \hat{\mathcal{P}}^{ii_{1}}(x) \hat{\mathcal{P}}^{i_{1}i_{3}}(x) \hat{\mathcal{P}}^{ji_{3}}(x)  dx\nonumber
\end{align} 
by \eqref{g} while  
\begin{align}\label{C2bepsilonii2j}
\tilde{C}_{2,b}^{\epsilon, i i_{2} j} (t) \triangleq& -\frac{ (2\pi)^{-3}}{2} \sum_{i_{1}, i_{3} =1}^{3} \sum_{k_{1} \neq 0} \int_{0}^{t} e^{-2 \lvert k_{1} \rvert^{2} f( \epsilon k_{1}) (t-s)} k_{1}^{i_{2}} g(\epsilon k_{1}^{i_{2}}) \frac{ h_{u}(\epsilon k_{1})h_{b}(\epsilon k_{1})}{2 \lvert k_{1} \rvert^{2} f( \epsilon k_{1})} \nonumber\\
& \times \hat{\mathcal{P}}^{ii_{1}}(k_{1}) \hat{\mathcal{P}}^{i_{1}i_{3}}(k_{1}) \hat{\mathcal{P}}^{ji_{3}}(k_{1}) ds
\end{align} 
so that  
\begin{align}
\lim_{\epsilon \searrow 0} \tilde{C}_{2,b}^{\epsilon, i i_{2} j}(t) =& -\frac{ (2\pi)^{-3}}{8(a+b)} \sum_{i_{1}, i_{3} =1}^{3} \int_{\mathbb{R}^{3}} \frac{ [\cos(ax^{i_{2}}) - \cos(bx^{i_{2}})]}{\lvert x \rvert^{4} f(x)^{2}} \nonumber\\
& \times h_{u}(x)h_{b}(x) \hat{\mathcal{P}}^{ii_{1}}(x) \hat{\mathcal{P}}^{i_{1}i_{3}}(x) \hat{\mathcal{P}}^{ji_{3}}(x)  dx.
\end{align}
Additionally, we define 
\begin{subequations}\label{estimate 164}
\begin{align}
\bar{\tilde{C}}_{2,u}^{\epsilon, i i_{2} j}(t) \triangleq& \frac {(2\pi)^{-3}}{2} \sum_{i_{1}, i_{3} =1}^{3} \sum_{k_{1} \neq 0} \int_{0}^{t} e^{-2 \lvert k_{1} \rvert^{2} (t-s)} ds  \nonumber \\ 
& \times k_{1}^{i_{2}} i \frac{ h_{b}(\epsilon k_{1})^{2}}{2 \lvert k_{1} \rvert^{2}} \hat{\mathcal{P}}^{ii_{1}}(k_{1}) \hat{\mathcal{P}}^{i_{1}i_{3}} (k_{1}) \hat{\mathcal{P}}^{ji_{3}}(k_{1}) ,  \\
\bar{\tilde{C}}_{2,b}^{\epsilon, i i_{2} j}(t) \triangleq&- \frac {(2\pi)^{-3}}{2} \sum_{i_{1}, i_{3} =1}^{3} \sum_{k_{1} \neq 0}  \int_{0}^{t} e^{-2 \lvert k_{1} \rvert^{2} (t-s)} ds   \nonumber \\ 
& \times k_{1}^{i_{2}} i \frac{h_{u} (\epsilon k_{1}) h_{b}(\epsilon k_{1})}{2 \lvert k_{1} \rvert^{2}} \hat{\mathcal{P}}^{ii_{1}}(k_{1}) \hat{\mathcal{P}}^{i_{1}i_{3}} (k_{1}) \hat{\mathcal{P}}^{ji_{3}}(k_{1}),  
\end{align} 
\end{subequations} 
which are both zero. Before we proceed, for completeness, we record all of $C_{k,u}^{\epsilon, ii_{1} j}$, $C_{k,b}^{\epsilon, ii_{1} j}$, $\tilde{C}_{k,u}^{\epsilon, ii_{1} j},$ $\tilde{C}_{k,b}^{\epsilon, ii_{1} j}$ for $k \in \{1,3,4\}$: 
\begin{subequations}\label{estimate 124}
\begin{align}
& C_{1,u}^{\epsilon, ii_{1} j} = C_{4,b}^{\epsilon, ii_{1} j} = \frac{ (2\pi)^{-3}}{2} \sum_{i_{2}, i_{3} =1}^{3} \sum_{k_{2} \neq 0} \int_{0}^{t} e^{-2 \lvert k_{2} \rvert^{2} f( \epsilon k_{2}) (t-s)} k_{2}^{i_{2}} g(\epsilon k_{2}^{i_{2}}) \nonumber\\
& \hspace{40mm} \times \frac{ h_{u}(\epsilon k_{2})^{2}}{2 \lvert k_{2} \rvert^{2} f( \epsilon k_{2})} \hat{\mathcal{P}}^{ii_{1}}(k_{2}) \hat{\mathcal{P}}^{i_{2}i_{3}}(k_{2}) \hat{\mathcal{P}}^{ji_{3}}(k_{2}) ds,  \\
& C_{1,b}^{\epsilon, ii_{1} j} = C_{4,u}^{\epsilon, ii_{1} j} = -\frac{ (2\pi)^{-3}}{2} \sum_{i_{2}, i_{3} =1}^{3} \sum_{k_{2} \neq 0} \int_{0}^{t} e^{-2 \lvert k_{2} \rvert^{2} f( \epsilon k_{2}) (t-s)} k_{2}^{i_{2}} g(\epsilon k_{2}^{i_{2}}) \nonumber\\
& \hspace{40mm} \times \frac{ h_{u}(\epsilon k_{2})h_{b}(\epsilon k_{2})}{2 \lvert k_{2} \rvert^{2} f( \epsilon k_{2})} \hat{\mathcal{P}}^{ii_{1}}(k_{2}) \hat{\mathcal{P}}^{i_{2}i_{3}}(k_{2}) \hat{\mathcal{P}}^{ji_{3}}(k_{2}) ds, \\
& C_{2,u}^{\epsilon, ii_{1} j} = C_{3,b}^{\epsilon, ii_{1} j}, \hspace{1mm} C_{2,b}^{\epsilon, ii_{1} j} = C_{3,u}^{\epsilon, ii_{1} j},  \\
&\tilde{C}_{1,u}^{\epsilon, i i_{2} j} (t) = - \tilde{C}_{4,b}^{\epsilon, i i_{2} j} (t) =  \frac{ (2\pi)^{-3}}{2} \sum_{i_{1}, i_{3} =1}^{3} \sum_{k_{1} \neq 0} \int_{0}^{t} e^{-2 \lvert k_{1} \rvert^{2} f( \epsilon k_{1}) (t-s)} k_{1}^{i_{2}} g(\epsilon k_{1}^{i_{2}}) \nonumber\\
& \hspace{40mm} \times \frac{ h_{u}(\epsilon k_{1})^{2}}{2 \lvert k_{1} \rvert^{2} f( \epsilon k_{1})} \hat{\mathcal{P}}^{ii_{1}}(k_{1}) \hat{\mathcal{P}}^{i_{1}i_{3}}(k_{1}) \hat{\mathcal{P}}^{ji_{3}}(k_{1}) ds \nonumber\\
&\tilde{C}_{1,b}^{\epsilon, i i_{2} j} (t) = \tilde{C}_{2,b}^{\epsilon, i i_{2} j} (t)   = - \tilde{C}_{3,u}^{\epsilon, i i_{2} j} (t) = - \tilde{C}_{4,u}^{\epsilon, i i_{2} j} (t), \hspace{1mm} \tilde{C}_{2,u}^{\epsilon, i i_{2} j} (t) = - \tilde{C}_{3,b}^{\epsilon, i i_{2} j} (t).   
\end{align}
\end{subequations} 
For a subsequent purpose, we point out that the appropriate $\bar{C}_{3,u}^{\epsilon, ii_{1} j}(t)$ would be identical to $\bar{C}_{2,b}^{\epsilon, ii_{1} j}(t)$ in \eqref{barC2bepsilonii1j} as $C_{3,u}^{\epsilon, ii_{1} j} = C_{2,b}^{\epsilon, ii_{1} j}$. Now we compute within \eqref{estimate 154}, 
\begin{align}\label{estimate 162}
 \mathbb{E} [ \lvert \Delta_{q} (V_{t, ii_{1}}^{3} - \tilde{V}_{t, ii_{1}}^{3}) \rvert^{2} ] \lesssim \sum_{l=1}^{2} F_{q, t, ij}^{l} 
\end{align} 
by \eqref{estimate 135} and \eqref{estimate 155} where 
\begin{subequations}\label{estimate 156}
\begin{align}
F_{q,t, ij}^{1} \triangleq&  \mathbb{E} [ \lvert  \sum_{i_{1}, i_{2}, i_{3} =1}^{3} \sum_{k_{1}, k_{2} \neq 0} \theta(2^{-q} k_{2})\nonumber\\
& \times [ \int_{0}^{t} e^{- \lvert k_{12} \rvert^{2} f( \epsilon k_{12})(t-s)} k_{12}^{i_{2}} g(\epsilon k_{12}^{i_{2}}) \frac{ e^{- \lvert k_{1} \rvert^{2} f(\epsilon k_{1}) (t-s)} h_{b}(\epsilon k_{1})^{2}}{ \lvert k_{1} \rvert^{2} f(\epsilon k_{1})} \nonumber\\
& \hspace{55mm} \times [ \hat{X}_{s,u}^{\epsilon, i_{2}}(k_{2}) - \hat{X}_{t,u}^{\epsilon, i_{2}}(k_{2})] ds \nonumber\\
&-  \int_{0}^{t} e^{- \lvert k_{12} \rvert^{2} (t-s)} k_{12}^{i_{2}} i \frac{ e^{- \lvert k_{1} \rvert^{2} (t-s)} h_{b}(\epsilon k_{1})^{2}}{ \lvert k_{1} \rvert^{2}} [ \hat{\bar{X}}_{s,u}^{\epsilon, i_{2}}(k_{2}) - \hat{\bar{X}}_{t,u}^{\epsilon, i_{2}}(k_{2})] ds ]  \nonumber\\
& \times \hat{\mathcal{P}}^{ii_{1}}(k_{12}) \hat{\mathcal{P}}^{i_{1}i_{3}}(k_{1}) \hat{\mathcal{P}}^{ji_{3}}(k_{1}) e_{k_{2}}  \rvert^{2}], \\
F_{q, t, ij}^{2} \triangleq&  \mathbb{E} [ \lvert \sum_{i_{1}, i_{2}, i_{3} =1}^{3} \sum_{k_{1}, k_{2} \neq 0}\theta(2^{-q}k_{2}) \nonumber\\
& \times [ \int_{0}^{t} e^{- \lvert k_{12} \rvert^{2} f( \epsilon k_{12})(t-s)} k_{12}^{i_{2}} g(\epsilon k_{12}^{i_{2}}) \frac{ e^{- \lvert k_{1} \rvert^{2} f(\epsilon k_{1}) (t-s)} h_{u}(\epsilon k_{1}) h_{b}(\epsilon k_{1})}{ \lvert k_{1} \rvert^{2} f(\epsilon k_{1})} \nonumber\\
& \hspace{55mm} \times [ \hat{X}_{s,b}^{\epsilon, i_{2}}(k_{2}) - \hat{X}_{t,b}^{\epsilon, i_{2}}(k_{2})] ds \nonumber\\
& - \int_{0}^{t} e^{- \lvert k_{12} \rvert^{2} (t-s)} k_{12}^{i_{2}} i \frac{ e^{- \lvert k_{1} \rvert^{2}  (t-s)} h_{u}(\epsilon k_{1}) h_{b}(\epsilon k_{1})}{ \lvert k_{1} \rvert^{2}} [ \hat{\bar{X}}_{s,b}^{\epsilon, i_{2}}(k_{2}) - \hat{\bar{X}}_{t,b}^{\epsilon, i_{2}}(k_{2})] ds ]  \nonumber\\
& \times \hat{\mathcal{P}}^{ii_{1}}(k_{12}) \hat{\mathcal{P}}^{i_{1}i_{3}}(k_{1}) \hat{\mathcal{P}}^{ji_{3}}(k_{1}) e_{k_{2}}  \rvert^{2}].  
\end{align}
\end{subequations} 
W.l.o.g. we work on $F_{q, t, ij}^{1}$ as the estimates on $F_{q,t, ij}^{2}$ are similar. We define for $k_{2} \neq 0$, 
\begin{subequations}\label{estimate 157}
\begin{align}
f_{k_{2}, t-s, ii_{1}i_{2}i_{3}j} \triangleq& \sum_{k_{1} \neq 0} e^{- \lvert k_{12} \rvert^{2} f( \epsilon k_{12})(t-s)} k_{12}^{i_{2}} g(\epsilon k_{12}^{i_{2}}) \frac{ e^{- \lvert k_{1} \rvert^{2} f(\epsilon k_{1}) (t-s)} h_{b}(\epsilon k_{1})^{2}}{ \lvert k_{1} \rvert^{2} f(\epsilon k_{1})} \nonumber\\
& \times \hat{\mathcal{P}}^{ii_{1}}(k_{12}) \hat{\mathcal{P}}^{i_{1}i_{3}}(k_{1}) \hat{\mathcal{P}}^{ji_{3}}(k_{1}), \\
\bar{f}_{k_{2}, t-s, ii_{1}i_{2}i_{3}j} \triangleq& \sum_{k_{1} \neq 0} e^{- \lvert k_{12} \rvert^{2} (t-s)} k_{12}^{i_{2}} i \frac{ e^{- \lvert k_{1} \rvert^{2} (t-s)} h_{b}(\epsilon k_{1})^{2}}{ \lvert k_{1} \rvert^{2}} \nonumber\\
& \times \hat{\mathcal{P}}^{ii_{1}}(k_{12}) \hat{\mathcal{P}}^{i_{1}i_{3}}(k_{1}) \hat{\mathcal{P}}^{ji_{3}}(k_{1}), 
\end{align}
\end{subequations} 
so that we see from \eqref{estimate 156} that 
\begin{align}\label{estimate 161}
F_{q, t, ij}^{1} \lesssim \sum_{l=1}^{2} F_{q, t, ij}^{1l}
\end{align} 
where 
\begin{subequations}\label{estimate 158} 
\begin{align}
F_{q, t, ij}^{11} \triangleq& \mathbb{E} [ \lvert \sum_{i_{1}, i_{2}, i_{3} =1}^{3} \sum_{k_{2} \neq 0} \theta(2^{-q} k_{2}) e_{k_{2}} \int_{0}^{t} [ f_{k_{2}, t-s, ii_{1}i_{2}i_{3} j} - \bar{f}_{k_{2}, t-s, ii_{1}i_{2}i_{3}j} ] \nonumber\\
& \times [ \hat{X}_{s,u}^{\epsilon, i_{2}}(k_{2}) - \hat{X}_{t,u}^{\epsilon, i_{2}}(k_{2})] ds \rvert^{2}], \\
F_{q, t, ij}^{12} \triangleq& \mathbb{E} [ \lvert \sum_{i_{1}, i_{2}, i_{3} =1}^{3} \sum_{k_{2} \neq 0} \theta(2^{-q} k_{2}) e_{k_{2}} \int_{0}^{t} \bar{f}_{k_{2}, t-s, ii_{1}i_{2}i_{3}j} \nonumber\\
& \times [\hat{X}_{s,u}^{\epsilon, i_{2}}(k_{2}) - \hat{X}_{t,u}^{\epsilon, i_{2}}(k_{2}) - \hat{\bar{X}}_{s,u}^{\epsilon, i_{2}}(k_{2}) + \hat{\bar{X}}_{t,u}^{\epsilon, i_{2}}(k_{2})] ds \rvert^{2}].
\end{align} 
\end{subequations} 
In order to compute $F_{q, t, ij}^{11}$, first we see that for any $\eta \in [0,1]$, 
\begin{align}
& \lvert f_{k_{2}, t-s, ii_{1}i_{2}i_{3}j} - \bar{f}_{k_{2}, t-s, ii_{1}i_{2}i_{3}j} \rvert \nonumber\\
\lesssim& \epsilon^{\frac{\eta}{2}} \sum_{k_{1} \neq 0} e^{- (\lvert k_{12} \rvert^{2} + \lvert k_{1} \rvert^{2} ) \bar{c}_{f} (t-s)} (\lvert k_{12} \rvert^{\frac{\eta}{2}} + \lvert k_{1} \rvert^{\frac{\eta}{2}}) \frac{ \lvert k_{12} \rvert}{\lvert k_{1} \rvert^{2}} 
\end{align} 
by \eqref{estimate 157}, \eqref{[Equation (4.2)][ZZ17]}, \eqref{[Equation (4.3)][ZZ17]} and mean value theorem. Thus, relying also on \eqref{[Equation (4.3b)][ZZ17]} and \eqref{key estimate}, we deduce from \eqref{estimate 158}
\begin{align}\label{estimate 159}
F_{q, t, ij}^{11} \lesssim& \sum_{i_{1}, i_{2}, i_{3}, i_{1}', i_{2}', i_{3}' = 1}^{3} \sum_{k_{2} \neq 0} \theta(2^{-q} k_{2})^{2} \\
& \times \int_{[0,t]^{2}}  \lvert f_{k_{2}, t-s, ii_{1}i_{2}i_{3}j} - \bar{f}_{k_{2}, t-s, ii_{1}i_{2}i_{3}j} \rvert \overline{ \lvert f_{k_{2}, t- \bar{s}, ii_{1}' i_{2}' i_{3}' j} - \bar{f}_{k_{2}, t-\bar{s}, ii_{1}'i_{2}'i_{3}'j}\rvert} \nonumber\\
& \times \mathbb{E} [\lvert \hat{X}_{s,u}^{\epsilon, i_{2}}(k_{2}) - \hat{X}_{t,u}^{\epsilon, i_{2}}(k_{2}) \rvert \lvert \hat{X}_{\bar{s}, u}^{\epsilon, i_{2}'} (k_{2}) - \hat{X}_{t,u}^{\epsilon, i_{2}'}(k_{2}) \rvert ds d \bar{s} ] \lesssim \epsilon^{\eta} t^{\frac{\eta - \epsilon}{2}} 2^{q(1+2\eta)}. \nonumber
\end{align} 
In order to compute $F_{q, t, ij}^{12}$, we rely on \eqref{special} to deduce that for any $\epsilon \in (0, \eta)$, 
\begin{align}\label{estimate 160}
F_{q, t, ij}^{12} &\lesssim \sum_{i_{1}, i_{2}, i_{3}, i_{1}', i_{2}', i_{3}' = 1}^{3} \sum_{k_{2} \neq 0} \theta(2^{-q} k_{2})^{2} \int_{[0,t]^{2}} \lvert \bar{f}_{k_{2}, t-s, ii_{1}i_{2}i_{3}j} \rvert \lvert \bar{f}_{k_{2}, t - \bar{s}, ii_{1}'i_{2}'i_{3}'j} \rvert  \\
& \times \mathbb{E} [ \lvert \hat{X}_{s,u}^{\epsilon, i_{2}}(k_{2}) - \hat{X}_{t,u}^{\epsilon, i_{2}}(k_{2}) - \hat{\bar{X}}_{s,u}^{\epsilon, i_{2}}(k_{2}) + \hat{\bar{X}}_{t,u}^{\epsilon, i_{2}}(k_{2}) \rvert \nonumber\\
& \hspace{3mm} \times  \lvert \hat{X}_{\bar{s},u}^{\epsilon, i_{2}'}(k_{2}) - \hat{X}_{t,u}^{\epsilon, i_{2}'}(k_{2}) - \hat{\bar{X}}_{\bar{s},u}^{\epsilon, i_{2}'}(k_{2}) + \hat{\bar{X}}_{t,u}^{\epsilon, i_{2}'}(k_{2}) \rvert] ds d \bar{s}  \lesssim \epsilon^{\eta} t^{\frac{\eta - \epsilon}{2}} 2^{q(1+ 2\eta)} \nonumber
\end{align} 
by \eqref{estimate 157}, \eqref{estimate 158}, H$\ddot{\mathrm{o}}$lder's inequality and \eqref{key estimate}. Applying \eqref{estimate 159}-\eqref{estimate 160} to \eqref{estimate 161} and \eqref{estimate 162} leads us to 
\begin{equation}\label{estimate 163}
\mathbb{E} [ \lvert \Delta_{q} (V_{t, ii_{1}}^{3} - \tilde{V}_{t, ii_{1}}^{3}) \rvert^{2}] \lesssim \epsilon^{\eta} t^{\frac{\eta - \epsilon}{2}} 2^{q(1+ 2\eta)}. 
\end{equation} 
Next, we estimate within \eqref{estimate 154}
\begin{align}\label{estimate 165}
& \mathbb{E} [ \lvert \Delta_{q} [ \tilde{V}_{t, ii_{1}}^{3} - \sum_{i_{2} =1}^{3} (\tilde{C}_{2,u}^{\epsilon, i i_{2} j} X_{t,u}^{\epsilon, i_{2}} + \tilde{C}_{2,b}^{\epsilon, i i_{2} j} X_{t,b}^{\epsilon, i_{2}}) \nonumber\\
& \hspace{50mm} + \sum_{i_{2} =1}^{3} ( \bar{\tilde{C}}_{2,u}^{\epsilon, i i_{2} j} \bar{X}_{t,u}^{\epsilon, i_{2}} + \bar{\tilde{C}}_{2,b}^{\epsilon, i i_{2} j} \bar{X}_{t, b}^{\epsilon, i_{2}})   ] \rvert^{2}] \nonumber \\
\lesssim& \sum_{k_{2} \neq 0} \theta(2^{-q} k_{2})^{2} \nonumber \\
& \times [( \sum_{i,j=1}^{3} \mathbb{E} [ \hat{X}_{t,u}^{\epsilon, i} (k_{2}) \overline{\hat{X}_{t,u}^{\epsilon, j} (k_{2})}])\nonumber\\
& \hspace{1mm} \times  \lvert \sum_{i_{1}, i_{2} =1}^{3} \sum_{k_{1} \neq 0} h_{b}(\epsilon k_{1})^{2}  \int_{0}^{t} [ \frac{ e^{-\lvert k_{1} \rvert^{2} f(\epsilon k_{1})(t-s)}}{\lvert k_{1} \rvert^{2} f(\epsilon k_{1})} [ e^{- \lvert k_{12} \rvert^{2} f(\epsilon k_{12})(t-s)} k_{12}^{i_{2}} g(\epsilon k_{12}^{i_{2}}) \hat{\mathcal{P}}^{ii_{1}}(k_{12}) \nonumber\\
& \hspace{65mm}  - e^{- \lvert k_{1} \rvert^{2} f(\epsilon k_{1}) (t-s)} k_{1}^{i_{2}} g(\epsilon k_{1}^{i_{2}}) \hat{\mathcal{P}}^{ii_{1}}(k_{1})] \nonumber\\
& \hspace{10mm} - \frac{e^{-\lvert k_{1} \rvert^{2} (t-s)}}{\lvert k_{1} \rvert^{2}}[ e^{- \lvert k_{12} \rvert^{2} (t-s)} k_{12}^{i_{2}} i \hat{\mathcal{P}}^{ii_{1}}(k_{12}) - e^{- \lvert k_{1}\rvert^{2} (t-s)} k_{1}^{i_{2}} i \hat{\mathcal{P}}^{ii_{1}}(k_{1})]] ds \rvert^{2} \nonumber \\
&+ (\sum_{i=1}^{3} \mathbb{E} [ \lvert \hat{X}_{t,u}^{\epsilon, i} (k_{2}) - \hat{\bar{X}}_{t,u}^{\epsilon, i} (k_{2}) \rvert^{2} ]) \nonumber\\
& \hspace{5mm} \times \lvert \sum_{i_{1}, i_{2} =1}^{3} \sum_{k_{1} \neq 0} h_{b}(\epsilon k_{1})^{2} \int_{0}^{t} \frac{ e^{-\lvert k_{1} \rvert^{2} (t-s)}}{\lvert k_{1} \rvert^{2}} \nonumber\\
& \hspace{20mm} \times [ e^{-\lvert k_{12} \rvert^{2} (t-s)} k_{12}^{i_{2}} i\hat{\mathcal{P}}^{ii_{1}}(k_{12}) - e^{-\lvert k_{1} \rvert^{2} (t-s)} k_{1}^{i_{2}} i \hat{\mathcal{P}}^{ii_{1}}(k_{1})] ds \rvert^{2} \nonumber \\
&+ ( \sum_{i,j=1}^{3} \mathbb{E} [ \hat{X}_{t,b}^{\epsilon, i} (k_{2}) \overline{\hat{X}_{t,b}^{\epsilon, j} (k_{2})}])\nonumber\\
& \hspace{5mm} \times  \lvert \sum_{i_{1}, i_{2} =1}^{3} \sum_{k_{1} \neq 0} h_{u}(\epsilon k_{1}) h_{b}(\epsilon k_{1}) \nonumber\\
& \hspace{10mm} \times \int_{0}^{t} [ \frac{ e^{-\lvert k_{1} \rvert^{2} f(\epsilon k_{1})(t-s)}}{\lvert k_{1} \rvert^{2} f(\epsilon k_{1})} [ e^{- \lvert k_{12} \rvert^{2} f(\epsilon k_{12})(t-s)} k_{12}^{i_{2}} g(\epsilon k_{12}^{i_{2}}) \hat{\mathcal{P}}^{ii_{1}}(k_{12}) \nonumber\\
& \hspace{65mm} - e^{- \lvert k_{1} \rvert^{2} f(\epsilon k_{1}) (t-s)} k_{1}^{i_{2}} g(\epsilon k_{1}^{i_{2}}) \hat{\mathcal{P}}^{ii_{1}}(k_{1})] \nonumber\\
& \hspace{10mm} - \frac{e^{-\lvert k_{1} \rvert^{2} (t-s)}}{\lvert k_{1} \rvert^{2}}[ e^{- \lvert k_{12} \rvert^{2} (t-s)} k_{12}^{i_{2}} i \hat{\mathcal{P}}^{ii_{1}}(k_{12}) - e^{- \lvert k_{1}\rvert^{2} (t-s)} k_{1}^{i_{2}} i \hat{\mathcal{P}}^{ii_{1}}(k_{1})]] ds \rvert^{2} \nonumber \\
&+ (\sum_{i=1}^{3} \mathbb{E} [ \lvert \hat{X}_{t,b}^{\epsilon, i} (k_{2}) - \hat{\bar{X}}_{t,b}^{\epsilon, i} (k_{2}) \rvert^{2} ]) \nonumber\\
& \hspace{5mm} \times \lvert \sum_{i_{1}, i_{2} =1}^{3} \sum_{k_{1} \neq 0} h_{u}(\epsilon k_{1}) h_{b}(\epsilon k_{1}) \int_{0}^{t} \frac{ e^{-\lvert k_{1} \rvert^{2} (t-s)}}{\lvert k_{1} \rvert^{2}} \nonumber\\
& \hspace{2mm} \times [ e^{-\lvert k_{12} \rvert^{2} (t-s)} k_{12}^{i_{2}} i\hat{\mathcal{P}}^{ii_{1}}(k_{12}) - e^{-\lvert k_{1} \rvert^{2} (t-s)} k_{1}^{i_{2}} i \hat{\mathcal{P}}^{ii_{1}}(k_{1})] ds \rvert^{2}] 
\end{align} 
by \eqref{estimate 155}, \eqref{C2uepsilonii2j}, \eqref{C2bepsilonii2j} and \eqref{estimate 164}. We use the previous estimates from \eqref{estimate 6} and \eqref{[Equation (4.4a)][ZZ17]} in \eqref{estimate 165}  so that 
\begin{align}\label{estimate 169}
& \mathbb{E} [ \lvert \Delta_{q} [ \tilde{V}_{t, ii_{1}}^{3} - \sum_{i_{2} =1}^{3} (\tilde{C}_{2,u}^{\epsilon, i i_{2} j} X_{t,u}^{\epsilon, i_{2}} + \tilde{C}_{2,b}^{\epsilon, i i_{2} j} X_{t,b}^{\epsilon, i_{2}}) \nonumber\\
&\hspace{10mm} + \sum_{i_{2} =1}^{3} ( \bar{\tilde{C}}_{2,u}^{\epsilon, i i_{2} j} \bar{X}_{t,u}^{\epsilon, i_{2}} + \bar{\tilde{C}}_{2,b}^{\epsilon, i i_{2} j} \bar{X}_{t, b}^{\epsilon, i_{2}})   ] \rvert^{2}] \lesssim \sum_{l=1}^{2} H_{q,t,i}^{l} 
\end{align} 
where 
\begin{subequations}\label{estimate 166}
\begin{align}
H_{q,t,i}^{1} \triangleq& \sum_{k_{2} \neq 0} \frac{\theta(2^{-q} k_{2})^{2}}{\lvert k_{2} \rvert^{2}} \lvert \sum_{i_{1}, i_{2} =1}^{3} \sum_{k_{1} \neq 0} \\
&  \times \int_{0}^{t} [ \frac{ e^{-\lvert k_{1} \rvert^{2} f(\epsilon k_{1})(t-s)}}{\lvert k_{1} \rvert^{2} f(\epsilon k_{1})} [ e^{- \lvert k_{12} \rvert^{2} f(\epsilon k_{12})(t-s)} k_{12}^{i_{2}} g(\epsilon k_{12}^{i_{2}}) \hat{\mathcal{P}}^{ii_{1}}(k_{12}) \nonumber\\
& \hspace{40mm} - e^{- \lvert k_{1} \rvert^{2} f(\epsilon k_{1}) (t-s)} k_{1}^{i_{2}} g(\epsilon k_{1}^{i_{2}}) \hat{\mathcal{P}}^{ii_{1}}(k_{1})] \nonumber\\
& \hspace{5mm}  - \frac{e^{-\lvert k_{1} \rvert^{2} (t-s)}}{\lvert k_{1} \rvert^{2}}[ e^{- \lvert k_{12} \rvert^{2} (t-s)} k_{12}^{i_{2}} i \hat{\mathcal{P}}^{ii_{1}}(k_{12}) - e^{- \lvert k_{1}\rvert^{2} (t-s)} k_{1}^{i_{2}} i \hat{\mathcal{P}}^{ii_{1}}(k_{1})]] ds \rvert^{2}, \nonumber \\
H_{q,t,i}^{2} \triangleq& \epsilon^{\eta} \sum_{k_{2} \neq 0} \frac{ \theta(2^{-q}k_{2})^{2}}{\lvert k_{2} \rvert^{2-\eta}} \lvert \sum_{i_{1}, i_{2} =1}^{3} \sum_{k_{1} \neq 0}  \\
& \times  \int_{0}^{t} \frac{ e^{-\lvert k_{1} \rvert^{2} (t-s)}}{\lvert k_{1} \rvert^{2}} [ e^{-\lvert k_{12} \rvert^{2} (t-s)} k_{12}^{i_{2}} i\hat{\mathcal{P}}^{ii_{1}}(k_{12}) - e^{-\lvert k_{1} \rvert^{2} (t-s)} k_{1}^{i_{2}} i \hat{\mathcal{P}}^{ii_{1}}(k_{1})] ds \rvert^{2}. \nonumber 
\end{align}
\end{subequations} 
In order to estimate $H_{q,t,i}^{1}$, we see that for any $\eta \in (0,1)$
\begin{align*}
& \lvert  \frac{ e^{-\lvert k_{1} \rvert^{2} f(\epsilon k_{1})(t-s)}}{\lvert k_{1} \rvert^{2} f(\epsilon k_{1})} [ e^{- \lvert k_{12} \rvert^{2} f(\epsilon k_{12})(t-s)} k_{12}^{i_{2}} g(\epsilon k_{12}^{i_{2}}) \hat{\mathcal{P}}^{ii_{1}}(k_{12}) \nonumber\\
& \hspace{40mm} - e^{- \lvert k_{1} \rvert^{2} f(\epsilon k_{1}) (t-s)} k_{1}^{i_{2}} g(\epsilon k_{1}^{i_{2}}) \hat{\mathcal{P}}^{ii_{1}}(k_{1})] \nonumber\\
& - \frac{e^{-\lvert k_{1} \rvert^{2} (t-s)}}{\lvert k_{1} \rvert^{2}}[ e^{- \lvert k_{12} \rvert^{2} (t-s)} k_{12}^{i_{2}} i \hat{\mathcal{P}}^{ii_{1}}(k_{12}) - e^{- \lvert k_{1}\rvert^{2} (t-s)} k_{1}^{i_{2}} i \hat{\mathcal{P}}^{ii_{1}}(k_{1})] \rvert  \nonumber \\
\lesssim& \frac{ e^{- \lvert k_{1} \rvert^{2} \bar{c}_{f} (t-s)}}{\lvert k_{1} \rvert^{2}} \lvert k_{2} \rvert^{\eta} \lvert t-s \rvert^{- \frac{1-\eta}{2}} \wedge  \epsilon^{\frac{\eta}{2}} \frac{ e^{- \lvert k_{1} \rvert^{2} \bar{c}_{f} (t-s)}}{\lvert k_{1} \rvert^{2}} ( \lvert t-s \rvert^{-\frac{1}{2}} \lvert k_{1} \rvert^{\frac{\eta}{2}} + \lvert t-s \rvert^{-\frac{1}{2} - \frac{\eta}{4}})
\end{align*} 
where the first estimate is due to Lemmas \ref{Lemma 3.8} and \ref{Lemma 3.9} similarly to \eqref{estimate 147}-\eqref{estimate 148}, \eqref{[Equation (4.2)][ZZ17]}, \eqref{[Equation (4.3)][ZZ17]} and \eqref{key estimate} while the second by \eqref{[Equation (4.2)][ZZ17]}, \eqref{[Equation (4.3)][ZZ17]} and \eqref{key estimate}. Therefore, we deduce from \eqref{estimate 166} that for $\epsilon \in (0, \eta)$, 
\begin{align}\label{estimate 167}
H_{t, q, i}^{1} \lesssim& \sum_{k_{2} \neq 0} \frac{ \theta(2^{-q} k_{2})^{2}}{\lvert k_{2} \rvert^{2}} \lvert \sum_{k_{1} \neq 0} \int_{0}^{t}  \frac{ e^{- \lvert k_{1} \rvert^{2} \bar{c}_{f} (t-s)}}{\lvert k_{1} \rvert^{2}} \lvert k_{2} \rvert^{\frac{\eta}{2}} (t-s)^{-\frac{1-\eta}{4}} \epsilon^{\frac{\eta}{4}} \lvert t-s \rvert^{-\frac{1}{4}} \lvert k_{1} \rvert^{\frac{\eta}{4}} \nonumber\\
& \hspace{5mm}  + \frac{ e^{- \lvert k_{1} \rvert^{2} \bar{c}_{f} (t-s)}}{\lvert k_{1} \rvert^{2}} \lvert k_{2} \rvert^{\frac{\eta}{2}} \lvert t-s \rvert^{-\frac{1}{4} + \frac{\eta}{4} - \frac{1}{4} - \frac{\eta}{8}} \epsilon^{\frac{\eta}{4}} ds \rvert^{2}\lesssim  \epsilon^{\frac{\eta}{2}} t^{ \frac{\eta-\epsilon}{4}} 2^{q(1+ \eta)}.   
\end{align} 
On the other hand, we can estimate from \eqref{estimate 166}, for $\epsilon \in (0, \eta)$, 
\begin{align}\label{estimate 168}
H_{q, t, i}^{2}\lesssim \epsilon^{\eta} t^{\eta -\epsilon} \sum_{k_{2} \neq 0} \frac{ \theta(2^{-q} k_{2})^{2}}{\lvert k_{2} \rvert^{2-3\eta}} \lvert \sum_{k_{1} \neq 0} \frac{1}{\lvert k_{1} \rvert^{3+ \epsilon}} \rvert^{2}  \lesssim \epsilon^{\eta} t^{\eta - \epsilon} 2^{q(1+ 3\eta)}
\end{align} 
by Lemma \ref{Lemma 3.8} and \eqref{key estimate}. Applying \eqref{estimate 167}-\eqref{estimate 168} to \eqref{estimate 169} gives 
\begin{align}\label{estimate 170}
& \mathbb{E} [ \lvert \Delta_{q} [ \tilde{V}_{t, ii_{1}}^{3} - \sum_{i_{2} =1}^{3} (\tilde{C}_{2,u}^{\epsilon, i i_{2} j} X_{t,u}^{\epsilon, i_{2}} + \tilde{C}_{2,b}^{\epsilon, i i_{2} j} X_{t,b}^{\epsilon, i_{2}}) \nonumber\\
&\hspace{10mm} + \sum_{i_{2} =1}^{3} ( \bar{\tilde{C}}_{2,u}^{\epsilon, i i_{2} j} \bar{X}_{t,u}^{\epsilon, i_{2}} + \bar{\tilde{C}}_{2,b}^{\epsilon, i i_{2} j} \bar{X}_{t, b}^{\epsilon, i_{2}})   ] \rvert^{2}] \lesssim \epsilon^{\frac{\eta}{2}} t^{\frac{\eta - \epsilon}{4}} 2^{q(1+ 3\eta)}. 
\end{align} 

\emph{Terms in the first chaos}: $V_{t, ii_{1}}^{1}$ in \eqref{estimate 135}\\
We compute 
\begin{align} 
& \mathbb{E} [ \lvert \Delta_{q} V_{t, ii_{1}}^{1} \rvert^{2}] \\
\lesssim& \mathbb{E} [ \lvert \Delta_{q} \sum_{i_{1}, i_{2} =1}^{3} \sum_{k} \sum_{k_{1}, k_{2}, k_{3} \neq 0: k_{123} = k} \nonumber\\
& \times \int_{0}^{t} [ e^{- \lvert k_{12} \rvert^{2} f(\epsilon k_{12}) (t-s)} k_{12}^{i_{2}} g(\epsilon k_{12}^{i_{2}}) - e^{- \lvert k_{12} \rvert^{2} (t-s)} k_{12}^{i_{2}}i] \nonumber\\
& \hspace{40mm} \times : \hat{X}_{s,b}^{\epsilon, i_{1}}(k_{1}) \hat{X}_{s,u}^{\epsilon, i_{2}}(k_{2}) \hat{X}_{t,b}^{\epsilon, j} (k_{3}): ds \hat{\mathcal{P}}^{ii_{1}}(k_{12}) e_{k} \rvert^{2} ] \nonumber \\
&+ \mathbb{E} [ \lvert \Delta_{q} \sum_{i_{1}, i_{2} =1}^{3} \sum_{k} \sum_{k_{1}, k_{2}, k_{3} \neq 0: k_{123} = k} \int_{0}^{t} e^{- \lvert k_{12} \rvert^{2} (t-s)} k_{12}^{i_{2}} i \nonumber\\
& \times[ : \hat{X}_{s,b}^{\epsilon, i_{1}}(k_{1}) \hat{X}_{s,u}^{\epsilon, i_{2}}(k_{2}) \hat{X}_{t,b}^{\epsilon, j} (k_{3}): - : \hat{\bar{X}}_{s,b}^{\epsilon, i_{1}}(k_{1}) \hat{\bar{X}}_{s,u}^{\epsilon, i_{2}}(k_{2}) \hat{\bar{X}}_{t,b}^{\epsilon, j} (k_{3}): ] ds \hat{\mathcal{P}}^{ii_{1}}(k_{12}) e_{k} \rvert^{2}] \nonumber \\
&+  \mathbb{E} [ \lvert \Delta_{q} \sum_{i_{1}, i_{2} =1}^{3} \sum_{k} \sum_{k_{1}, k_{2}, k_{3} \neq 0: k_{123} = k} \nonumber\\
& \times \int_{0}^{t} [ e^{- \lvert k_{12} \rvert^{2} f(\epsilon k_{12}) (t-s)} k_{12}^{i_{2}} g(\epsilon k_{12}^{i_{2}}) - e^{- \lvert k_{12} \rvert^{2} (t-s)} k_{12}^{i_{2}}i] \nonumber\\
& \hspace{40mm} \times : \hat{X}_{s,u}^{\epsilon, i_{1}}(k_{1}) \hat{X}_{s,b}^{\epsilon, i_{2}}(k_{2}) \hat{X}_{t,b}^{\epsilon, j} (k_{3}): ds \hat{\mathcal{P}}^{ii_{1}}(k_{12}) e_{k} \rvert^{2} ] \nonumber \\
&+ \mathbb{E} [ \lvert \Delta_{q} \sum_{i_{1}, i_{2} =1}^{3} \sum_{k} \sum_{k_{1}, k_{2}, k_{3} \neq 0: k_{123} = k} \int_{0}^{t} e^{- \lvert k_{12} \rvert^{2} (t-s)} k_{12}^{i_{2}} i \nonumber\\
& \times[ : \hat{X}_{s,u}^{\epsilon, i_{1}}(k_{1}) \hat{X}_{s,b}^{\epsilon, i_{2}}(k_{2}) \hat{X}_{t,b}^{\epsilon, j} (k_{3}): - : \hat{\bar{X}}_{s,u}^{\epsilon, i_{1}}(k_{1}) \hat{\bar{X}}_{s,b}^{\epsilon, i_{2}}(k_{2}) \hat{\bar{X}}_{t,b}^{\epsilon, j} (k_{3}): ] ds \hat{\mathcal{P}}^{ii_{1}}(k_{12}) e_{k} \rvert^{2}]. \nonumber
\end{align} 
Relying on Example \ref{Example 3.1} and \eqref{covariance a}-\eqref{covariance c}, we may further bound by 
\begin{equation}\label{estimate 181}
\mathbb{E} [ \lvert \Delta_{q} V_{t, ii_{1}}^{1} \rvert^{2}] \lesssim \sum_{l=1}^{3} I_{q,t}^{l} 
\end{equation} 
where 
\begin{subequations}\label{estimate 173} 
\begin{align}
I_{q,t}^{1} \triangleq&  \sum_{i_{2}, i_{2}' = 1}^{3} \sum_{k} \theta(2^{-q} k)^{2} \sum_{k_{1}, k_{2}, k_{3}, k_{1}', k_{2}', k_{3}' \neq 0: k_{123} = k_{123}' = k} J\\
&  \times  \int_{[0,t]^{2}} \lvert e^{- \lvert k_{12} \rvert^{2} f(\epsilon k_{12}) (t-s)} k_{12}^{i_{2}} g(\epsilon k_{12}^{i_{2}}) - e^{- \lvert k_{12} \rvert^{2} (t-s)} k_{12}^{i_{2}} i \rvert \nonumber\\
& \hspace{3mm} \times \lvert e^{- \lvert k_{12}' \rvert^{2} f(\epsilon k_{12}') (t- \bar{s})} (k_{12}')^{i_{2}'} g(\epsilon (k_{12}')^{i_{2}'}) - e^{- \lvert k_{12}' \rvert^{2} (t- \bar{s})} (k_{12}')^{i_{2}'} i \rvert  \prod_{i=1}^{3} \frac{1}{\lvert k_{i} \rvert^{2}} ds d \bar{s}, \nonumber\\
I_{q,t}^{2} \triangleq&  \sum_{i_{2}, i_{2}' = 1}^{3} \sum_{k} \theta(2^{-q} k)^{2} \sum_{k_{1}, k_{2}, k_{3}, k_{1}', k_{2}', k_{3}' \neq 0: k_{123} = k_{123'} = k} J  \\
&  \times \int_{[0,t]^{2}}\lvert e^{- \lvert k_{12} \rvert^{2} (t-s)} k_{12}^{i_{2}} \rvert \lvert e^{- \lvert k_{12}' \rvert^{2} (t- \bar{s})} (k_{12}')^{i_{2}'} \rvert  \nonumber\\
& \hspace{3mm} \times \mathbb{E} [ ( :\hat{X}_{s,b}^{\epsilon} (k_{1}) \hat{X}_{s,u}^{\epsilon}(k_{2}) \hat{X}_{t,b}^{\epsilon} (k_{3}): - : \hat{\bar{X}}_{s,b}^{\epsilon}(k_{1}) \hat{\bar{X}}_{s,u}^{\epsilon} (k_{2}) \hat{\bar{X}}_{t,b}^{\epsilon} (k_{3}):) \nonumber\\
& \hspace{3mm} \times \overline{ ( :\hat{X}_{\bar{s},b}^{\epsilon} (k_{1}') \hat{X}_{\bar{s},u}^{\epsilon}(k_{2}') \hat{X}_{t,b}^{\epsilon} (k_{3}'): - : \hat{\bar{X}}_{\bar{s},b}^{\epsilon}(k_{1}') \hat{\bar{X}}_{\bar{s},u}^{\epsilon} (k_{2}') \hat{\bar{X}}_{t,b}^{\epsilon} (k_{3}'):) }] ds d \bar{s}, \nonumber\\
I_{q,t}^{3} \triangleq&  \sum_{i_{2},  i_{2}' = 1}^{3} \sum_{k} \theta(2^{-q} k)^{2} \sum_{k_{1}, k_{2}, k_{3}, k_{1}', k_{2}', k_{3}' \neq 0: k_{123} = k_{123'} = k} J  \\
&  \times \int_{[0,t]^{2}}\lvert e^{- \lvert k_{12} \rvert^{2} (t-s)} k_{12}^{i_{2}} \rvert \lvert e^{- \lvert k_{12}' \rvert^{2} (t- \bar{s})} (k_{12}')^{i_{2}'} \rvert \nonumber\\
& \hspace{3mm} \times \mathbb{E} [ ( :\hat{X}_{s,u}^{\epsilon} (k_{1}) \hat{X}_{s,b}^{\epsilon}(k_{2}) \hat{X}_{t,b}^{\epsilon} (k_{3}): - : \hat{\bar{X}}_{s,u}^{\epsilon}(k_{1}) \hat{\bar{X}}_{s,b}^{\epsilon} (k_{2}) \hat{\bar{X}}_{t,b}^{\epsilon} (k_{3}):) \nonumber\\
& \hspace{3mm} \times \overline{ ( :\hat{X}_{\bar{s},u}^{\epsilon} (k_{1}') \hat{X}_{\bar{s},b}^{\epsilon}(k_{2}') \hat{X}_{t,b}^{\epsilon} (k_{3}'): - : \hat{\bar{X}}_{\bar{s},u}^{\epsilon}(k_{1}') \hat{\bar{X}}_{\bar{s},b}^{\epsilon} (k_{2}') \hat{\bar{X}}_{t,b}^{\epsilon} (k_{3}'):) }] ds d \bar{s} \nonumber
\end{align}
\end{subequations} 
and
\begin{align}\label{estimate 171}
J \triangleq& 1_{k_{1} = k_{1}', k_{2} = k_{2}', k_{3} = k_{3}'} + 1_{k_{1} = k_{1}', k_{2} = k_{3}', k_{3} = k_{2}'} + 1_{k_{1} = k_{2}', k_{2} = k_{1}', k_{3} = k_{3}'}  \nonumber\\
&+ 1_{k_{1} = k_{2}', k_{2} = k_{3}', k_{3} = k_{1}'} + 1_{k_{1} = k_{3}', k_{2} = k_{1}', k_{3} = k_{2}'} + 1_{k_{1} = k_{3}', k_{2} = k_{2}', k_{3} = k_{1}'}. 
\end{align} 

\begin{remark}\label{Remark 2.2}
In the case of the NS equations, an analogous bound would be the same with all $b$ replaced by $u$; i.e., 
\begin{align*}
&\sum_{i_{2}, i_{2}' = 1}^{3} \sum_{k} \theta(2^{-q} k)^{2} \sum_{k_{1}, k_{2}, k_{3}, k_{1}', k_{2}', k_{3}' \neq 0: k_{123} = k_{123}' = k} J \nonumber\\
&  \times \int_{[0,t]^{2}} \lvert e^{- \lvert k_{12} \rvert^{2} f(\epsilon k_{12}) (t-s)} k_{12}^{i_{2}} g(\epsilon k_{12}^{i_{2}}) - e^{- \lvert k_{12} \rvert^{2} (t-s)} k_{12}^{i_{2}} i \rvert \nonumber\\
& \hspace{3mm} \times \lvert e^{- \lvert k_{12}' \rvert^{2} f(\epsilon k_{12}') (t- \bar{s})} (k_{12}')^{i_{2}'} g(\epsilon (k_{12}')^{i_{2}'}) - e^{- \lvert k_{12}' \rvert^{2} (t- \bar{s})} (k_{12}')^{i_{2}'} i \rvert  \prod_{i=1}^{3} \frac{1}{\lvert k_{i} \rvert^{2}} ds d \bar{s}, \nonumber\\
&+  \sum_{i_{1}, i_{2}, i_{1}', i_{2}' = 1}^{3} \sum_{k} \theta(2^{-q} k)^{2} \sum_{k_{1}, k_{2}, k_{3}, k_{1}', k_{2}', k_{3}' \neq 0: k_{123} = k_{123'} = k} J  \nonumber\\
&  \times  \int_{[0,t]^{2}}\lvert e^{- \lvert k_{12} \rvert^{2} (t-s)} k_{12}^{i_{2}} \rvert \lvert e^{- \lvert k_{12}' \rvert^{2} (t- \bar{s})} (k_{12}')^{i_{2}'} \rvert \nonumber\\
& \hspace{3mm} \times \mathbb{E} [ ( :\hat{X}_{s,u}^{\epsilon} (k_{1}) \hat{X}_{s,u}^{\epsilon}(k_{2}) \hat{X}_{t,u}^{\epsilon} (k_{3}): - : \hat{\bar{X}}_{s,u}^{\epsilon}(k_{1}) \hat{\bar{X}}_{s,u}^{\epsilon} (k_{2}) \hat{\bar{X}}_{t,u}^{\epsilon} (k_{3}):) \nonumber\\
& \hspace{3mm} \times \overline{ ( :\hat{X}_{\bar{s},u}^{\epsilon} (k_{1}') \hat{X}_{\bar{s},u}^{\epsilon}(k_{2}') \hat{X}_{t,u}^{\epsilon} (k_{3}'): - : \hat{\bar{X}}_{\bar{s},u}^{\epsilon}(k_{1}') \hat{\bar{X}}_{\bar{s},u}^{\epsilon} (k_{2}') \hat{\bar{X}}_{t,u}^{\epsilon} (k_{3}'):) }] ds d \bar{s}. \nonumber
\end{align*} 
Then we notice the symmetry in $k_{1}$ with $k_{2}$, $k_{1}'$ with $k_{2}'$. Using this symmetry, Zhu and Zhu \cite{ZZ17} reduced the work load of six different cases in $J$ of \eqref{estimate 171} to two cases. In the case of the MHD system, because e.g., $\hat{X}_{s,b}^{\epsilon}(k_{1}) \hat{X}_{s,u}^{\epsilon}(k_{2}) \neq \hat{X}_{s,b}^{\epsilon}(k_{2}) \hat{X}_{s,u}^{\epsilon}(k_{1})$, there is no symmetry in $k_{1}$ with $k_{2}$ within $I_{q,t}^{2}$ or within $I_{q,t}^{1} + I_{q,t}^{2}$ alone. However, we see that there is a remarkable symmetry in $I_{q,t}^{1} + I_{q,t}^{2} + I_{q,t}^{3}$ as a whole. Due to this symmetry, we can reduce the six cases in $J$ of \eqref{estimate 171} to two cases of 
\begin{equation}\label{estimate 172}
1_{k_{1} = k_{1}', k_{2} = k_{2}', k_{3} = k_{3}'} \text{ and } 1_{k_{1} = k_{1}', k_{2} = k_{3}', k_{3} = k_{2}'}. 
\end{equation} 
For example, in the sixth case of $1_{k_{1} = k_{3}', k_{2} = k_{2}', k_{3} = k_{1}'}$, we can swap $k_{1}$ with $k_{2}$  and $k_{1}'$ with $k_{2}'$ to deduce the case $(k_{1},k_{2},k_{3}) = (k_{1}', k_{3}', k_{2}')$. Similarly, other cases may be readily reduced to the one of two cases in \eqref{estimate 172} (see \cite[Equation (250)]{Y19a} for similar computations). 
\end{remark} 

W.l.o.g. we work hereafter on $I_{q,t}^{1} +I_{q,t}^{2}$ as the estimates on $I_{q,t}^{3}$ is similar to that on $I_{q,t}^{2}$; however, we emphasize again that we needed $I_{q,t}^{3}$ for the reduction of six cases to two taking advantage of the symmetry from the structure of the MHD system. We estimate for $\eta \in [0,1]$, 
\begin{align}\label{estimate 179}
I_{q,t}^{1}  \lesssim&  \sum_{k} \theta(2^{-q} k)^{2} \sum_{k_{1}, k_{2}, k_{3}, k_{1}', k_{2}', k_{3}' \neq 0: k_{123} = k_{123}' = k} \\
& \times (1_{k_{1} = k_{1}', k_{2} = k_{2}', k_{3} = k_{3}'} +1_{k_{1} = k_{1}', k_{2} = k_{3}', k_{3} = k_{2}'})  \prod_{i=1}^{3} \frac{1}{\lvert k_{i} \rvert^{2}}    \nonumber\\
& \times \int_{[0,t]^{2}} e^{- \lvert k_{12} \rvert^{2} \bar{c}_{f} (t-s)} \lvert \epsilon k_{12} \rvert^{\frac{\eta}{2}} \lvert k_{12} \rvert e^{- \lvert k_{12} ' \rvert^{2} (t- \bar{s})} \lvert \epsilon k_{12}' \rvert^{\frac{\eta}{2}} \lvert k_{12} ' \rvert ds d \bar{s}  \nonumber\\
\lesssim& \epsilon^{\eta} t^{\eta} \sum_{k} \theta(2^{-q} k)^{2} [ \sum_{k_{3}\neq 0 } \frac{1}{\lvert k_{3} \rvert^{2}}  \frac{1}{\lvert k - k_{3} \rvert^{3-3\eta}} \nonumber\\
& \hspace{10mm} + \left( \sum_{k_{3} \neq 0} \frac{1}{\lvert k_{3} \rvert^{2}} \sum_{k_{1}, k_{2} \neq 0: k_{12} = k - k_{3}} \frac{1}{\lvert k_{1} \rvert^{2} \lvert k_{2} \rvert^{2} \lvert k_{12} \rvert^{2-3\eta}} \right)^{\frac{1}{2}} \nonumber\\
& \hspace{10mm} \times \left( \sum_{k_{2} \neq 0} \frac{1}{\lvert k_{2} \rvert^{2}} \sum_{k_{1}, k_{3} \neq 0: k_{13} = k - k_{2}} \frac{1}{\lvert k_{1} \rvert^{2} \lvert k_{3} \rvert^{2} \lvert k_{13} \rvert^{2-3\eta}} \right)^{\frac{1}{2}} ] \lesssim \epsilon^{\eta} t^{\eta} 2^{q(1+ 3\eta)} \nonumber
\end{align} 
by \eqref{[Equation (4.2)][ZZ17]}, \eqref{[Equation (4.3)][ZZ17]}, \eqref{key estimate} and H$\ddot{\mathrm{o}}$lder's inequality. Next, for $I_{q,t}^{2}$ in \eqref{estimate 173}, we see that for any $i_{1}, i_{2}, j, i_{1}', i_{2}', j \in \{1,2,3\}$, 
\begin{align}
& \mathbb{E} [ ( : \hat{X}_{s,b}^{\epsilon, i_{1}}(k_{1}) \hat{X}_{s,u}^{\epsilon, i_{2}}(k_{2}) \hat{X}_{t,b}^{\epsilon, j}(k_{3}): - : \hat{\bar{X}}_{s,b}^{\epsilon, i_{1}}(k_{1}) \hat{\bar{X}}_{s,u}^{\epsilon, i_{2}}(k_{2}) \hat{\bar{X}}_{t,b}^{\epsilon, j} (k_{3}):) \nonumber\\
& \times  \overline{ ( : \hat{X}_{\bar{s},b}^{\epsilon, i_{1}'}(k_{1}') \hat{X}_{\bar{s},u}^{\epsilon, i_{2}'}(k_{2}') \hat{X}_{t,b}^{\epsilon, j}(k_{3}'): - : \hat{\bar{X}}_{\bar{s},b}^{\epsilon, i_{1}'}(k_{1}') \hat{\bar{X}}_{\bar{s},u}^{\epsilon, i_{2}'}(k_{2}') \hat{\bar{X}}_{t,b}^{\epsilon, j'} (k_{3}'):)   }] \nonumber\\
\leq& ( \mathbb{E} [ \lvert  : \hat{X}_{s,b}^{\epsilon, i_{1}}(k_{1}) \hat{X}_{s,u}^{\epsilon, i_{2}}(k_{2}) \hat{X}_{t,b}^{\epsilon, j}(k_{3}): - : \hat{\bar{X}}_{s,b}^{\epsilon, i_{1}}(k_{1}) \hat{\bar{X}}_{s,u}^{\epsilon, i_{2}}(k_{2}) \hat{\bar{X}}_{t,b}^{\epsilon, j} (k_{3}): \rvert^{2} ])^{\frac{1}{2}} \nonumber\\
& \times (\mathbb{E} [ \lvert : \hat{X}_{\bar{s},b}^{\epsilon, i_{1}'}(k_{1}') \hat{X}_{\bar{s},u}^{\epsilon, i_{2}'}(k_{2}') \hat{X}_{t,b}^{\epsilon, j}(k_{3}'): - : \hat{\bar{X}}_{\bar{s},b}^{\epsilon, i_{1}'}(k_{1}') \hat{\bar{X}}_{\bar{s},u}^{\epsilon, i_{2}'}(k_{2}') \hat{\bar{X}}_{t,b}^{\epsilon, j'} (k_{3}'): \rvert^{2} ])^{\frac{1}{2}} \nonumber 
\end{align}
by H$\ddot{\mathrm{o}}$lder's inequality. 
\begin{remark}\label{yet another remark}
Only $\mathbb{E} [ \lvert  : \hat{X}_{s,b}^{\epsilon, i_{1}}(k_{1}) \hat{X}_{s,u}^{\epsilon, i_{2}}(k_{2}) \hat{X}_{t,b}^{\epsilon, j}(k_{3}): - : \hat{\bar{X}}_{s,b}^{\epsilon, i_{1}}(k_{1}) \hat{\bar{X}}_{s,u}^{\epsilon, i_{2}}(k_{2}) \hat{\bar{X}}_{t,b}^{\epsilon, j} (k_{3}): \rvert^{2} ]$ will be computed in detail below; it is more complex than the analogous computations in case of the NS equations and require careful couplings of certain terms as we will see in \eqref{estimate 126}.
\end{remark}
We compute  
\begin{align}\label{estimate 177}
&\mathbb{E} [ \lvert  : \hat{X}_{s,b}^{\epsilon, i_{1}}(k_{1}) \hat{X}_{s,u}^{\epsilon, i_{2}}(k_{2}) \hat{X}_{t,b}^{\epsilon, j}(k_{3}): \nonumber\\
& \hspace{20mm} - : \hat{\bar{X}}_{s,b}^{\epsilon, i_{1}}(k_{1}) \hat{\bar{X}}_{s,u}^{\epsilon, i_{2}}(k_{2}) \hat{\bar{X}}_{t,b}^{\epsilon, j} (k_{3}): \rvert^{2} ]= \sum_{i=1}^{24} K_{k_{1}k_{2}k_{3}, i_{1}i_{2} j}^{i} 
\end{align} 
where 
\begin{subequations}
\begin{align}
K_{k_{1}k_{2}k_{3}, i_{1}i_{2}j}^{1} &\triangleq \sum_{i_{3}, i_{4}, i_{5} = 1}^{3} \frac{ h_{b}(\epsilon k_{1})^{2}}{2 \lvert k_{1} \rvert^{2} f(\epsilon k_{1})} \hat{\mathcal{P}}^{i_{1}i_{3}} (k_{1})^{2} \frac{ h_{u}(\epsilon k_{2})^{2}}{2 \lvert k_{2} \rvert^{2} f(\epsilon k_{2})}\nonumber\\
& \times \hat{\mathcal{P}}^{i_{2}i_{4}}(k_{2})^{2} \frac{ h_{b}(\epsilon k_{3})^{2}}{2 \lvert k_{3} \rvert^{2} f(\epsilon k_{3})} \hat{\mathcal{P}}^{ji_{5}}(k_{3})^{2},\\
K_{k_{1}k_{2}k_{3}, i_{1}i_{2}j}^{2} &\triangleq 1_{k_{2} =k_{3}} \sum_{i_{3}, i_{4}, i_{5} = 1}^{3} \frac{ h_{b}(\epsilon k_{1})^{2}}{2 \lvert k_{1} \rvert^{2} f(\epsilon k_{1})} \hat{\mathcal{P}}^{i_{1}i_{3}}(k_{1})^{2} \nonumber\\
& \times  \frac{ e^{- \lvert k_{2} \rvert^{2} f(\epsilon k_{2}) (t-s)} h_{u}(\epsilon k_{2}) h_{b}(\epsilon k_{2})}{2 \lvert k_{2} \rvert^{2} f(\epsilon k_{2})}  \hat{\mathcal{P}}^{i_{2} i_{4}}(k_{2}) \hat{\mathcal{P}}^{ji_{4}}(k_{2})  \nonumber\\
& \times \frac{ e^{- \lvert k_{2} \rvert^{2} f(\epsilon k_{2}) (t-s)} h_{u}(\epsilon k_{2}) h_{b}(\epsilon k_{2})}{2 \lvert k_{2} \rvert^{2} f(\epsilon k_{2})} \hat{\mathcal{P}}^{i_{2}i_{5}}(k_{2}) \hat{\mathcal{P}}^{ji_{5}}(k_{2}),   \\
K_{k_{1}k_{2}k_{3}, i_{1}i_{2}j}^{3} &\triangleq 1_{k_{1} =k_{2}} \sum_{i_{3}, i_{4}, i_{5} =1}^{3} \frac{ h_{b}(\epsilon k_{1}) h_{u}(\epsilon k_{1})}{2 \lvert k_{1} \rvert^{2} f(\epsilon k_{1})} \hat{\mathcal{P}}^{i_{1}i_{3}}(k_{1}) \hat{\mathcal{P}}^{i_{2}i_{3}} (k_{1}) \frac{ h_{u}(\epsilon k_{1}) h_{b}(\epsilon k_{1})}{2 \lvert k_{1} \rvert^{2} f(\epsilon k_{1})} \nonumber\\
& \times  \hat{\mathcal{P}}^{i_{2}i_{4}}(k_{1}) \hat{\mathcal{P}}^{i_{1}i_{4}}(k_{1}) \frac{h_{b}(\epsilon k_{3})^{2}}{2\lvert k_{3} \rvert^{2} f(\epsilon k_{3})} \hat{\mathcal{P}}^{ji_{5}}(k_{3})^{2}, \\
K_{k_{1}k_{2}k_{3}, i_{1}i_{2}j}^{4} &\triangleq 1_{k_{1} = k_{2} = k_{3}} \sum_{i_{3}, i_{4}, i_{5} =1}^{3} \frac{ h_{b}(\epsilon k_{1}) h_{u}(\epsilon k_{1})}{2 \lvert k_{1} \rvert^{2} f(\epsilon k_{1})} \hat{\mathcal{P}}^{i_{1}i_{3}}(k_{1}) \hat{\mathcal{P}}^{i_{2}i_{3}}(k_{1}) \nonumber\\
& \times  \frac{ e^{-\lvert k_{1} \rvert^{2} f(\epsilon k_{1}) (t-s)} h_{u}(\epsilon k_{1}) h_{b}(\epsilon k_{1})}{2\lvert k_{1} \rvert^{2} f(\epsilon k_{1})}\hat{\mathcal{P}}^{i_{2}i_{4}}(k_{1}) \hat{\mathcal{P}}^{ji_{4}}(k_{1})  \nonumber\\
& \times \frac{ e^{-\lvert k_{1} \rvert^{2} f(\epsilon k_{1}) (t-s)} h_{b}(\epsilon k_{1})^{2}}{2\lvert k_{1} \rvert^{2} f(\epsilon k_{1})} \hat{\mathcal{P}}^{ji_{5}}(k_{1}) \hat{\mathcal{P}}^{i_{1}i_{5}}(k_{1}),  \\
K_{k_{1}k_{2}k_{3}, i_{1}i_{2}j}^{5}& \triangleq 1_{k_{1}=k_{2}=k_{3}}  \sum_{i_{3}, i_{4}, i_{5} =1}^{3} \frac{ e^{- \lvert k_{1} \rvert^{2} f(\epsilon k_{1})(t-s)} h_{b}(\epsilon k_{1})^{2}}{2\lvert k_{1} \rvert^{2} f(\epsilon k_{1})} \hat{\mathcal{P}}^{i_{1}i_{3}}(k_{1}) \hat{\mathcal{P}}^{ji_{3}}(k_{1}) \nonumber\\
& \times \frac{ h_{u}(\epsilon k_{1}) h_{b}(\epsilon k_{1})}{2 \lvert k_{1} \rvert^{2} f(\epsilon k_{1})} \hat{\mathcal{P}}^{i_{2}i_{4}} (k_{1}) \hat{\mathcal{P}}^{i_{1}i_{4}}(k_{1}) \nonumber\\
& \times  \frac{ e^{- \lvert k_{1} \rvert^{2} f(\epsilon k_{1}) (t-s)} h_{b}(\epsilon k_{1}) h_{u}(\epsilon k_{1})}{2 \lvert k_{1} \rvert^{2} f(\epsilon k_{1})} \hat{\mathcal{P}}^{ji_{5}}(k_{1}) \hat{\mathcal{P}}^{i_{2}i_{5}}(k_{1}), \\
K_{k_{1}k_{2}k_{3}, i_{1}i_{2}j}^{6}&  \triangleq 1_{k_{1} = k_{3}} \sum_{i_{3}, i_{4}, i_{5} =1}^{3} \frac{ e^{- \lvert k_{1} \rvert^{2} f(\epsilon k_{1}) (t-s)} h_{b}(\epsilon k_{1})^{2}}{2 \lvert k_{1} \rvert^{2} f(\epsilon k_{1})} \hat{\mathcal{P}}^{i_{1}i_{3}}(k_{1}) \hat{\mathcal{P}}^{ji_{3}}(k_{1}) \nonumber\\
& \hspace{5mm} \times  \frac{ h_{u}(\epsilon k_{2})^{2}}{2 \lvert k_{2} \rvert^{2} f(\epsilon k_{2})}  \hat{\mathcal{P}}^{i_{2}i_{4}}(k_{2})^{2}  \frac{ e^{-\lvert k_{1} \rvert^{2} f(\epsilon k_{1}) (t-s)} h_{b}(\epsilon k_{1})^{2}}{2\lvert k_{1} \rvert^{2} f(\epsilon k_{1})} \nonumber\\
& \hspace{10mm} \times  \hat{\mathcal{P}}^{ji_{5}}(k_{1}) \hat{\mathcal{P}}^{i_{1}i_{5}}(k_{1}), \\
K_{k_{1}k_{2}k_{3}, i_{1}i_{2}j}^{7} &\triangleq -\sum_{i_{3}, i_{4}, i_{5} = 1}^{3} \frac{ h_{b}(\epsilon k_{1})^{2}}{ \lvert k_{1} \rvert^{2} (f(\epsilon k_{1}) + 1)} \hat{\mathcal{P}}^{i_{1}i_{3}} (k_{1})^{2} \frac{ h_{u}(\epsilon k_{2})^{2}}{\lvert k_{2} \rvert^{2} (f(\epsilon k_{2}) + 1)}\nonumber\\
& \times \hat{\mathcal{P}}^{i_{2}i_{4}}(k_{2})^{2} \frac{ h_{b}(\epsilon k_{3})^{2}}{\lvert k_{3} \rvert^{2} (f(\epsilon k_{3}) + 1)} \hat{\mathcal{P}}^{ji_{5}}(k_{3})^{2},\\
K_{k_{1}k_{2}k_{3}, i_{1}i_{2}j}^{8} &\triangleq -1_{k_{2} =k_{3}} \sum_{i_{3}, i_{4}, i_{5} = 1}^{3} \frac{ h_{b}(\epsilon k_{1})^{2}}{ \lvert k_{1} \rvert^{2} (f(\epsilon k_{1}) + 1)} \hat{\mathcal{P}}^{i_{1}i_{3}}(k_{1})^{2} \nonumber\\
& \times  \frac{ e^{- \lvert k_{2} \rvert^{2} (t-s)} h_{u}(\epsilon k_{2}) h_{b}(\epsilon k_{2})}{\lvert k_{2} \rvert^{2} (f(\epsilon k_{2}) + 1)}  \hat{\mathcal{P}}^{i_{2} i_{4}}(k_{2}) \hat{\mathcal{P}}^{ji_{4}}(k_{2}) \nonumber\\
& \times  \frac{ e^{- \lvert k_{2} \rvert^{2} f(\epsilon k_{2}) (t-s)} h_{u}(\epsilon k_{2}) h_{b}(\epsilon k_{2})}{ \lvert k_{2} \rvert^{2}( f(\epsilon k_{2}) + 1)} \hat{\mathcal{P}}^{i_{2}i_{5}}(k_{2}) \hat{\mathcal{P}}^{ji_{5}}(k_{2}),  \\ 
K_{k_{1}k_{2}k_{3}, i_{1}i_{2}j}^{9} &\triangleq -1_{k_{1} =k_{2}} \sum_{i_{3}, i_{4}, i_{5} =1}^{3} \frac{ h_{b}(\epsilon k_{1}) h_{u}(\epsilon k_{1})}{ \lvert k_{1} \rvert^{2} (f(\epsilon k_{1}) + 1)} \hat{\mathcal{P}}^{i_{1}i_{3}}(k_{1}) \hat{\mathcal{P}}^{i_{2}i_{3}} (k_{1})  \\
& \times \frac{ h_{u}(\epsilon k_{1}) h_{b}(\epsilon k_{1})}{ \lvert k_{1} \rvert^{2} (f(\epsilon k_{1}) + 1)} \hat{\mathcal{P}}^{i_{2}i_{4}}(k_{1}) \hat{\mathcal{P}}^{i_{1}i_{4}}(k_{1}) \frac{h_{b}(\epsilon k_{3})^{2}}{\lvert k_{3} \rvert^{2} (f(\epsilon k_{3}) +  1)} \hat{\mathcal{P}}^{ji_{5}}(k_{3})^{2}, \nonumber\\
K_{k_{1}k_{2}k_{3}, i_{1}i_{2}j}^{10} &\triangleq -1_{k_{1} = k_{2} = k_{3}} \sum_{i_{3}, i_{4}, i_{5} =1}^{3} \frac{ h_{b}(\epsilon k_{1}) h_{u}(\epsilon k_{1})}{\lvert k_{1} \rvert^{2} (f(\epsilon k_{1}) + 1)} \hat{\mathcal{P}}^{i_{1}i_{3}}(k_{1}) \hat{\mathcal{P}}^{i_{2}i_{3}}(k_{1}) \nonumber\\
& \times  \frac{ e^{-\lvert k_{1} \rvert^{2} (t-s)} h_{u}(\epsilon k_{1}) h_{b}(\epsilon k_{1})}{\lvert k_{1} \rvert^{2} (f(\epsilon k_{1}) + 1)}\hat{\mathcal{P}}^{i_{2}i_{4}}(k_{1}) \hat{\mathcal{P}}^{ji_{4}}(k_{1}) \nonumber\\
& \times  \frac{ e^{-\lvert k_{1} \rvert^{2} f(\epsilon k_{1}) (t-s)} h_{b}(\epsilon k_{1})^{2}}{\lvert k_{1} \rvert^{2} (f(\epsilon k_{1}) + 1)} \hat{\mathcal{P}}^{ji_{5}}(k_{1}) \hat{\mathcal{P}}^{i_{1}i_{5}}(k_{1}),  \\
K_{k_{1}k_{2}k_{3}, i_{1}i_{2}j}^{11}& \triangleq -1_{k_{1}=k_{2}=k_{3}}  \sum_{i_{3}, i_{4}, i_{5} =1}^{3} \frac{ e^{- \lvert k_{1} \rvert^{2} (t-s)} h_{b}(\epsilon k_{1})^{2}}{\lvert k_{1} \rvert^{2} (f(\epsilon k_{1})+ 1)} \hat{\mathcal{P}}^{i_{1}i_{3}}(k_{1}) \hat{\mathcal{P}}^{ji_{3}}(k_{1})  \nonumber\\
& \times  \frac{ h_{u}(\epsilon k_{1}) h_{b}(\epsilon k_{1})}{\lvert k_{1} \rvert^{2} (f(\epsilon k_{1}) + 1)} \hat{\mathcal{P}}^{i_{2}i_{4}} (k_{1}) \hat{\mathcal{P}}^{i_{1}i_{4}}(k_{1}) \nonumber\\
& \times  \frac{ e^{- \lvert k_{1} \rvert^{2} f(\epsilon k_{1}) (t-s)} h_{b}(\epsilon k_{1}) h_{u}(\epsilon k_{1})}{\lvert k_{1} \rvert^{2} (f(\epsilon k_{1}) + 1)} \hat{\mathcal{P}}^{ji_{5}}(k_{1}) \hat{\mathcal{P}}^{i_{2}i_{5}}(k_{1}), \\
K_{k_{1}k_{2}k_{3}, i_{1}i_{2}j}^{12}&  \triangleq -1_{k_{1} = k_{3}} \sum_{i_{3}, i_{4}, i_{5} =1}^{3} \frac{ e^{- \lvert k_{1} \rvert^{2}(t-s)} h_{b}(\epsilon k_{1})^{2}}{\lvert k_{1} \rvert^{2} (f(\epsilon k_{1}) + 1)} \hat{\mathcal{P}}^{i_{1}i_{3}}(k_{1}) \hat{\mathcal{P}}^{ji_{3}}(k_{1})  \nonumber\\
& \times \frac{ h_{u}(\epsilon k_{2})^{2}}{ \lvert k_{2} \rvert^{2} (f(\epsilon k_{2}) + 1)}  \hat{\mathcal{P}}^{i_{2}i_{4}}(k_{2})^{2} \nonumber\\
& \times  \frac{ e^{-\lvert k_{1} \rvert^{2} f(\epsilon k_{1}) (t-s)} h_{b}(\epsilon k_{1})^{2}}{\lvert k_{1} \rvert^{2} (f(\epsilon k_{1}) + 1)} \hat{\mathcal{P}}^{ji_{5}}(k_{1}) \hat{\mathcal{P}}^{i_{1}i_{5}}(k_{1}), \\
K_{k_{1}k_{2}k_{3}, i_{1}i_{2}j}^{13} &\triangleq -\sum_{i_{3}, i_{4}, i_{5} = 1}^{3} \frac{ h_{b}(\epsilon k_{1})^{2}}{\lvert k_{1} \rvert^{2} (f(\epsilon k_{1})+ 1)} \hat{\mathcal{P}}^{i_{1}i_{3}} (k_{1})^{2} \frac{ h_{u}(\epsilon k_{2})^{2}}{\lvert k_{2} \rvert^{2} (f(\epsilon k_{2}) + 1)}\nonumber\\
& \times \hat{\mathcal{P}}^{i_{2}i_{4}}(k_{2})^{2} \frac{ h_{b}(\epsilon k_{3})^{2}}{\lvert k_{3} \rvert^{2} (f(\epsilon k_{3}) + 1)} \hat{\mathcal{P}}^{ji_{5}}(k_{3})^{2},\\
K_{k_{1}k_{2}k_{3}, i_{1}i_{2}j}^{14} &\triangleq -1_{k_{2} =k_{3}} \sum_{i_{3}, i_{4}, i_{5} = 1}^{3} \frac{ h_{b}(\epsilon k_{1})^{2}}{\lvert k_{1} \rvert^{2} (f(\epsilon k_{1}) + 1)} \hat{\mathcal{P}}^{i_{1}i_{3}}(k_{1})^{2} \nonumber\\
& \times  \frac{ e^{- \lvert k_{2} \rvert^{2} f(\epsilon k_{2}) (t-s)} h_{u}(\epsilon k_{2}) h_{b}(\epsilon k_{2})}{\lvert k_{2} \rvert^{2} (f(\epsilon k_{2})+ 1)} \hat{\mathcal{P}}^{i_{2} i_{4}}(k_{2}) \hat{\mathcal{P}}^{ji_{4}}(k_{2}) \nonumber\\
& \times \frac{ e^{- \lvert k_{2} \rvert^{2} (t-s)} h_{u}(\epsilon k_{2}) h_{b}(\epsilon k_{2})}{\lvert k_{2} \rvert^{2} (f(\epsilon k_{2}) + 1)} \hat{\mathcal{P}}^{i_{2}i_{5}}(k_{2}) \hat{\mathcal{P}}^{ji_{5}}(k_{2}),  \\
K_{k_{1}k_{2}k_{3}, i_{1}i_{2}j}^{15} &\triangleq -1_{k_{1} =k_{2}} \sum_{i_{3}, i_{4}, i_{5} =1}^{3} \frac{ h_{b}(\epsilon k_{1}) h_{u}(\epsilon k_{1})}{ \lvert k_{1} \rvert^{2} (f(\epsilon k_{1})+ 1)} \hat{\mathcal{P}}^{i_{1}i_{3}}(k_{1}) \hat{\mathcal{P}}^{i_{2}i_{3}} (k_{1}) \nonumber\\
& \times \frac{ h_{u}(\epsilon k_{1}) h_{b}(\epsilon k_{1})}{\lvert k_{1} \rvert^{2} (f(\epsilon k_{1}) + 1)} \hat{\mathcal{P}}^{i_{2}i_{4}}(k_{1}) \hat{\mathcal{P}}^{i_{1}i_{4}}(k_{1}) \nonumber\\
& \times  \frac{h_{b}(\epsilon k_{3})^{2}}{\lvert k_{3} \rvert^{2} (f(\epsilon k_{3})+ 1)} \hat{\mathcal{P}}^{ji_{5}}(k_{3})^{2}, \\
K_{k_{1}k_{2}k_{3}, i_{1}i_{2}j}^{16}&\triangleq -1_{k_{1} = k_{2} = k_{3}} \sum_{i_{3}, i_{4}, i_{5} =1}^{3} \frac{ h_{b}(\epsilon k_{1}) h_{u}(\epsilon k_{1})}{ \lvert k_{1} \rvert^{2} (f(\epsilon k_{1}) + 1)} \hat{\mathcal{P}}^{i_{1}i_{3}}(k_{1}) \hat{\mathcal{P}}^{i_{2}i_{3}}(k_{1}) \nonumber\\
& \times  \frac{ e^{-\lvert k_{1} \rvert^{2} f(\epsilon k_{1}) (t-s)} h_{u}(\epsilon k_{1}) h_{b}(\epsilon k_{1})}{\lvert k_{1} \rvert^{2} (f(\epsilon k_{1}) + 1)} \nonumber\\
& \times \hat{\mathcal{P}}^{i_{2}i_{4}}(k_{1}) \hat{\mathcal{P}}^{ji_{4}}(k_{1}) \frac{ e^{-\lvert k_{1} \rvert^{2} (t-s)} h_{b}(\epsilon k_{1})^{2}}{\lvert k_{1} \rvert^{2} (f(\epsilon k_{1}) + 1)} \hat{\mathcal{P}}^{ji_{5}}(k_{1}) \hat{\mathcal{P}}^{i_{1}i_{5}}(k_{1}),  \\
K_{k_{1}k_{2}k_{3}, i_{1}i_{2}j}^{17}& \triangleq -1_{k_{1}=k_{2}=k_{3}}  \sum_{i_{3}, i_{4}, i_{5} =1}^{3} \frac{ e^{- \lvert k_{1} \rvert^{2} f(\epsilon k_{1})(t-s)} h_{b}(\epsilon k_{1})^{2}}{\lvert k_{1} \rvert^{2} (f(\epsilon k_{1}) + 1)} \hat{\mathcal{P}}^{i_{1}i_{3}}(k_{1}) \hat{\mathcal{P}}^{ji_{3}}(k_{1}) \nonumber\\
& \times \frac{ h_{u}(\epsilon k_{1}) h_{b}(\epsilon k_{1})}{\lvert k_{1} \rvert^{2} (f(\epsilon k_{1}) + 1)} \hat{\mathcal{P}}^{i_{2}i_{4}} (k_{1}) \hat{\mathcal{P}}^{i_{1}i_{4}}(k_{1}) \nonumber\\
& \times  \frac{ e^{- \lvert k_{1} \rvert^{2} (t-s)} h_{b}(\epsilon k_{1}) h_{u}(\epsilon k_{1})}{\lvert k_{1} \rvert^{2} (f(\epsilon k_{1}) + 1)} \hat{\mathcal{P}}^{ji_{5}}(k_{1}) \hat{\mathcal{P}}^{i_{2}i_{5}}(k_{1}), \\
K_{k_{1}k_{2}k_{3}, i_{1}i_{2}j}^{18}&  \triangleq -1_{k_{1} = k_{3}} \sum_{i_{3}, i_{4}, i_{5} =1}^{3} \frac{ e^{- \lvert k_{1} \rvert^{2} f(\epsilon k_{1}) (t-s)} h_{b}(\epsilon k_{1})^{2}}{\lvert k_{1} \rvert^{2} (f(\epsilon k_{1}) + 1)} \hat{\mathcal{P}}^{i_{1}i_{3}}(k_{1}) \hat{\mathcal{P}}^{ji_{3}}(k_{1}) \nonumber\\
& \times  \frac{ h_{u}(\epsilon k_{2})^{2}}{\lvert k_{2} \rvert^{2} (f(\epsilon k_{2}) + 1)}  \hat{\mathcal{P}}^{i_{2}i_{4}}(k_{2})^{2} \nonumber\\
& \times  \frac{ e^{-\lvert k_{1} \rvert^{2} (t-s)} h_{b}(\epsilon k_{1})^{2}}{\lvert k_{1} \rvert^{2} (f(\epsilon k_{1})+ 1)} \hat{\mathcal{P}}^{ji_{5}}(k_{1}) \hat{\mathcal{P}}^{i_{1}i_{5}}(k_{1}), \\
K_{k_{1}k_{2}k_{3}, i_{1}i_{2}j}^{19} &\triangleq \sum_{i_{3}, i_{4}, i_{5} = 1}^{3} \frac{ h_{b}(\epsilon k_{1})^{2}}{2 \lvert k_{1} \rvert^{2}} \hat{\mathcal{P}}^{i_{1}i_{3}} (k_{1})^{2} \frac{ h_{u}(\epsilon k_{2})^{2}}{2 \lvert k_{2} \rvert^{2}}\nonumber\\
& \times \hat{\mathcal{P}}^{i_{2}i_{4}}(k_{2})^{2} \frac{ h_{b}(\epsilon k_{3})^{2}}{2 \lvert k_{3} \rvert^{2} } \hat{\mathcal{P}}^{ji_{5}}(k_{3})^{2},\\ 
K_{k_{1}k_{2}k_{3}, i_{1}i_{2}j}^{20} &\triangleq 1_{k_{2} =k_{3}} \sum_{i_{3}, i_{4}, i_{5} = 1}^{3} \frac{ h_{b}(\epsilon k_{1})^{2}}{2 \lvert k_{1} \rvert^{2}} \hat{\mathcal{P}}^{i_{1}i_{3}}(k_{1})^{2}  \nonumber\\
& \times \frac{ e^{- \lvert k_{2} \rvert^{2} (t-s)} h_{u}(\epsilon k_{2}) h_{b}(\epsilon k_{2})}{2 \lvert k_{2} \rvert^{2}} \hat{\mathcal{P}}^{i_{2} i_{4}}(k_{2}) \hat{\mathcal{P}}^{ji_{4}}(k_{2}) \nonumber\\
& \times  \frac{ e^{- \lvert k_{2} \rvert^{2} (t-s)} h_{u}(\epsilon k_{2}) h_{b}(\epsilon k_{2})}{2 \lvert k_{2} \rvert^{2}} \hat{\mathcal{P}}^{i_{2}i_{5}}(k_{2}) \hat{\mathcal{P}}^{ji_{5}}(k_{2}),  \\
K_{k_{1}k_{2}k_{3}, i_{1}i_{2}j}^{21} &\triangleq 1_{k_{1} =k_{2}} \sum_{i_{3}, i_{4}, i_{5} =1}^{3} \frac{ h_{b}(\epsilon k_{1}) h_{u}(\epsilon k_{1})}{2 \lvert k_{1} \rvert^{2} } \hat{\mathcal{P}}^{i_{1}i_{3}}(k_{1}) \hat{\mathcal{P}}^{i_{2}i_{3}} (k_{1}) \frac{ h_{u}(\epsilon k_{1}) h_{b}(\epsilon k_{1})}{2 \lvert k_{1} \rvert^{2} } \nonumber\\
& \times  \hat{\mathcal{P}}^{i_{2}i_{4}}(k_{1}) \hat{\mathcal{P}}^{i_{1}i_{4}}(k_{1}) \frac{h_{b}(\epsilon k_{3})^{2}}{2\lvert k_{3} \rvert^{2}} \hat{\mathcal{P}}^{ji_{5}}(k_{3})^{2}, \\
K_{k_{1}k_{2}k_{3}, i_{1}i_{2}j}^{22} &\triangleq 1_{k_{1} = k_{2} = k_{3}} \sum_{i_{3}, i_{4}, i_{5} =1}^{3} \frac{ h_{b}(\epsilon k_{1}) h_{u}(\epsilon k_{1})}{2 \lvert k_{1} \rvert^{2}} \hat{\mathcal{P}}^{i_{1}i_{3}}(k_{1}) \hat{\mathcal{P}}^{i_{2}i_{3}}(k_{1}) \nonumber\\
& \times  \frac{ e^{-\lvert k_{1} \rvert^{2}  (t-s)} h_{u}(\epsilon k_{1}) h_{b}(\epsilon k_{1})}{2\lvert k_{1} \rvert^{2} } \nonumber\\
& \times \hat{\mathcal{P}}^{i_{2}i_{4}}(k_{1}) \hat{\mathcal{P}}^{ji_{4}}(k_{1}) \frac{ e^{-\lvert k_{1} \rvert^{2} (t-s)} h_{b}(\epsilon k_{1})^{2}}{2\lvert k_{1} \rvert^{2}} \hat{\mathcal{P}}^{ji_{5}}(k_{1}) \hat{\mathcal{P}}^{i_{1}i_{5}}(k_{1}),  \\
K_{k_{1}k_{2}k_{3}, i_{1}i_{2}j}^{23}& \triangleq 1_{k_{1}=k_{2}=k_{3}}  \sum_{i_{3}, i_{4}, i_{5} =1}^{3} \frac{ e^{- \lvert k_{1} \rvert^{2} (t-s)} h_{b}(\epsilon k_{1})^{2}}{2\lvert k_{1} \rvert^{2}} \hat{\mathcal{P}}^{i_{1}i_{3}}(k_{1}) \hat{\mathcal{P}}^{ji_{3}}(k_{1}) \nonumber\\
& \times  \frac{ h_{u}(\epsilon k_{1}) h_{b}(\epsilon k_{1})}{2 \lvert k_{1} \rvert^{2} }  \hat{\mathcal{P}}^{i_{2}i_{4}} (k_{1}) \hat{\mathcal{P}}^{i_{1}i_{4}}(k_{1}) \nonumber\\
& \times  \frac{ e^{- \lvert k_{1} \rvert^{2} (t-s)} h_{b}(\epsilon k_{1}) h_{u}(\epsilon k_{1})}{2 \lvert k_{1} \rvert^{2} } \hat{\mathcal{P}}^{ji_{5}}(k_{1}) \hat{\mathcal{P}}^{i_{2}i_{5}}(k_{1}), \\
K_{k_{1}k_{2}k_{3}, i_{1}i_{2}j}^{24}&  \triangleq 1_{k_{1} = k_{3}} \sum_{i_{3}, i_{4}, i_{5} =1}^{3} \frac{ e^{- \lvert k_{1} \rvert^{2} (t-s)} h_{b}(\epsilon k_{1})^{2}}{2 \lvert k_{1} \rvert^{2}} \hat{\mathcal{P}}^{i_{1}i_{3}}(k_{1}) \hat{\mathcal{P}}^{ji_{3}}(k_{1}) \frac{ h_{u}(\epsilon k_{2})^{2}}{2 \lvert k_{2} \rvert^{2}}  \nonumber\\
& \times \hat{\mathcal{P}}^{i_{2}i_{4}}(k_{2})^{2}  \frac{ e^{-\lvert k_{1} \rvert^{2} (t-s)} h_{b}(\epsilon k_{1})^{2}}{2\lvert k_{1} \rvert^{2}} \hat{\mathcal{P}}^{ji_{5}}(k_{1}) \hat{\mathcal{P}}^{i_{1}i_{5}}(k_{1})
\end{align}
\end{subequations} 
by Example \ref{Example 3.1} and \eqref{covariance a} - \eqref{estimate 174}. Now we strategically group together 
\begin{subequations}\label{estimate 126}
\begin{align}
& \sum_{i\in \{1,7,13,19\}} K_{k_{1}k_{2}k_{3}, i_{1}i_{2} j}^{i} = \nonumber\\
& \times \sum_{i_{3}, i_{4}, i_{5} =1}^{3} \frac{ h_{b}(\epsilon k_{1})^{2} h_{u}(\epsilon k_{2})^{2} h_{b}(\epsilon k_{3})^{2} \hat{\mathcal{P}}^{i_{1}i_{3}} (k_{1})^{2} \hat{\mathcal{P}}^{i_{2}i_{4}}(k_{2})^{2} \hat{\mathcal{P}}^{ji_{5}}(k_{3})^{2}}{\lvert k_{1} \rvert^{2} \lvert k_{2} \rvert^{2} \lvert k_{3} \rvert^{2}} \nonumber\\
& \times [ \frac{1}{8 \prod_{i=1}^{3} f(\epsilon k_{i})} - \frac{2}{\prod_{i=1}^{3} (f(\epsilon k_{i} + 1)} + \frac{1}{8}],  \\
&  \sum_{i\in \{2,8,14,20\}} K_{k_{1}k_{2}k_{3}, i_{1}i_{2} j}^{i} = 1_{k_{2} = k_{3}} \nonumber\\
& \times \sum_{i_{3}, i_{4}, i_{5} = 1}^{3} \frac{ h_{b}(\epsilon k_{1})^{2} h_{u}(\epsilon k_{2})^{2} h_{b}(\epsilon k_{2})^{2} \hat{\mathcal{P}}^{i_{1}i_{3}}(k_{1})^{2} \hat{\mathcal{P}}^{i_{2}i_{4}}(k_{2}) \hat{\mathcal{P}}^{ji_{4}}(k_{2}) \hat{\mathcal{P}}^{ji_{5}}(k_{2}) \hat{\mathcal{P}}^{i_{2}i_{5}}(k_{2})}{\lvert k_{1}\rvert^{2} \lvert k_{2} \rvert^{2} \lvert k_{3} \rvert^{2}} \nonumber\\
& \times [ \frac{ e^{-2 \lvert k_{2} \rvert^{2} f(\epsilon k_{2}) (t-s)}}{8 \prod_{i=1}^{2} f(\epsilon k_{i})} - \frac{ 2 e^{- \lvert k_{2} \rvert^{2} (t-s) (f(\epsilon k_{2}) + 1)}}{\prod_{i=1}^{3} (f(\epsilon k_{i}) + 1)} + \frac{ e^{-2 \lvert k_{2} \rvert^{2} (t-s)}}{8} ], \\
&\sum_{i\in \{3,9,15,21\}} K_{k_{1}k_{2}k_{3}, i_{1}i_{2} j}^{i} = 1_{k_{1} = k_{2}} \nonumber\\
& \times\sum_{i_{3}, i_{4}, i_{5} =1}^{3} \frac{ h_{b}^{2}(\epsilon k_{1}) h_{u}(\epsilon k_{1})^{2} h_{b}(\epsilon k_{3})^{2} \hat{\mathcal{P}}^{i_{1}i_{3}}(k_{1}) \hat{\mathcal{P}}^{i_{2}i_{3}}(k_{1}) \hat{\mathcal{P}}^{i_{2}i_{4}}(k_{1}) \hat{\mathcal{P}}^{i_{1}i_{4}}(k_{1}) \hat{\mathcal{P}}^{ji_{5}}(k_{3})^{2}}{\lvert k_{1} \rvert^{2} \lvert k_{2} \rvert^{2} \lvert k_{3} \rvert^{2}} \nonumber\\
& \times [ \frac{1}{8 \prod_{i=1}^{3} f(\epsilon k_{i})} - \frac{2}{\prod_{i=1}^{2} (f(\epsilon k_{i}) + 1)} + \frac{1}{8}], \\
& \sum_{i\in \{4,10,16,22\}} K_{k_{1}k_{2}k_{3}, i_{1}i_{2} j}^{i} = 1_{k_{1} = k_{2} = k_{3}} \nonumber\\
& \times \sum_{i_{3}, i_{4}, i_{5} = 1}^{3} \frac{ h_{b}(\epsilon k_{1})^{4} h_{u}(\epsilon k_{1})^{2}  \hat{\mathcal{P}}^{i_{1}i_{3}}(k_{1}) \hat{\mathcal{P}}^{i_{2}i_{3}}(k_{1}) \hat{\mathcal{P}}^{i_{2}i_{4}}(k_{1}) \hat{\mathcal{P}}^{ji_{4}}(k_{1}) \hat{\mathcal{P}}^{ji_{5}}(k_{1}) \hat{\mathcal{P}}^{i_{1} i_{5}}(k_{1})}{\lvert k_{1}\rvert^{2} \lvert k_{2} \rvert^{2} \lvert k_{3} \rvert^{2}} \nonumber\\
& \times [ \frac{ e^{-2 \lvert k_{1} \rvert^{2} f(\epsilon k_{1}) (t-s)}}{8 \prod_{i=1}^{2} f(\epsilon k_{i})} - \frac{ 2 e^{- \lvert k_{1} \rvert^{2} (t-s) (f(\epsilon k_{1}) + 1)}}{\prod_{i=1}^{3} (f(\epsilon k_{i}) + 1)} + \frac{ e^{-2 \lvert k_{1} \rvert^{2} (t-s)}}{8} ], \\
& \sum_{i\in \{5,11,17,23\}} K_{k_{1}k_{2}k_{3}, i_{1}i_{2} j}^{i} = 1_{k_{1} = k_{2} = k_{3}} \nonumber\\
& \times \sum_{i_{3}, i_{4}, i_{5} = 1}^{3} \frac{ h_{b}(\epsilon k_{1})^{4} h_{u}(\epsilon k_{1})^{2}  \hat{\mathcal{P}}^{i_{1}i_{3}}(k_{1}) \hat{\mathcal{P}}^{ji_{3}}(k_{1}) \hat{\mathcal{P}}^{i_{2}i_{4}}(k_{1}) \hat{\mathcal{P}}^{i_{1}i_{4}}(k_{1}) \hat{\mathcal{P}}^{ji_{5}}(k_{1}) \hat{\mathcal{P}}^{i_{2} i_{5}}(k_{1})}{\lvert k_{1}\rvert^{2} \lvert k_{2} \rvert^{2} \lvert k_{3} \rvert^{2}} \nonumber\\
& \times [ \frac{ e^{-2 \lvert k_{1} \rvert^{2} f(\epsilon k_{1}) (t-s)}}{8 \prod_{i=1}^{2} f(\epsilon k_{i})} - \frac{ 2 e^{- \lvert k_{1} \rvert^{2} (t-s) (f(\epsilon k_{1}) + 1)}}{\prod_{i=1}^{3} (f(\epsilon k_{i}) + 1)} + \frac{ e^{-2 \lvert k_{1} \rvert^{2} (t-s)}}{8} ], \\
&  \sum_{i\in \{6,12,18,24\}} K_{k_{1}k_{2}k_{3}, i_{1}i_{2} j}^{i} = 1_{k_{1} = k_{3}} \nonumber\\
& \times \sum_{i_{3}, i_{4}, i_{5} = 1}^{3} \frac{ h_{b}(\epsilon k_{1})^{4} h_{u}(\epsilon k_{2})^{2}  \hat{\mathcal{P}}^{i_{1}i_{3}}(k_{1}) \hat{\mathcal{P}}^{ji_{3}}(k_{1}) \hat{\mathcal{P}}^{i_{2}i_{4}}(k_{2})^{2} \hat{\mathcal{P}}^{ji_{5}}(k_{1}) \hat{\mathcal{P}}^{i_{1}i_{5}}(k_{1})}{\lvert k_{1}\rvert^{2} \lvert k_{2} \rvert^{2} \lvert k_{3} \rvert^{2}} \nonumber\\
& \times [ \frac{ e^{-2 \lvert k_{1} \rvert^{2} f(\epsilon k_{1}) (t-s)}}{8 \prod_{i=1}^{2} f(\epsilon k_{i})} - \frac{ 2 e^{- \lvert k_{1} \rvert^{2} (t-s) (f(\epsilon k_{1}) + 1)}}{\prod_{i=1}^{3} (f(\epsilon k_{i}) + 1)} + \frac{ e^{-2 \lvert k_{1} \rvert^{2} (t-s)}}{8} ]. 
\end{align} 
\end{subequations}
We can compute for any $\eta \in [0,1]$ by mean value theorem that 
\begin{align}\label{estimate 175}
\lvert \frac{ e^{-2 \lvert k_{1} \rvert^{2} f(\epsilon k_{1}) (t-s)}}{8 \prod_{i=1}^{2} f(\epsilon k_{i})} - \frac{ 2 e^{- \lvert k_{1} \rvert^{2} (t-s) (f(\epsilon k_{1}) + 1)}}{\prod_{i=1}^{3} (f(\epsilon k_{i}) + 1)} + \frac{ e^{-2 \lvert k_{1} \rvert^{2} (t-s)}}{8} \rvert  \lesssim \sum_{i=1}^{3} \lvert \epsilon k_{i} \rvert^{\eta}, 
\end{align} 
from which it is also clear that 
 \begin{align}\label{estimate 176}
 \lvert \frac{1}{8 \prod_{i=1}^{3} f(\epsilon k_{i})} - \frac{2}{\prod_{i=1}^{2} (f(\epsilon k_{i}) + 1)} + \frac{1}{8} \rvert \lesssim \sum_{i=1}^{3} \lvert \epsilon k_{i} \rvert^{\eta}. 
 \end{align} 
Applying \eqref{estimate 175}-\eqref{estimate 176} to \eqref{estimate 126} and \eqref{estimate 177} gives 
\begin{align}\label{estimate 178}
&\mathbb{E} [ \lvert  : \hat{X}_{s,b}^{\epsilon, i_{1}}(k_{1}) \hat{X}_{s,u}^{\epsilon, i_{2}}(k_{2}) \hat{X}_{t,b}^{\epsilon, j}(k_{3}): \nonumber\\
& \hspace{20mm} - : \hat{\bar{X}}_{s,b}^{\epsilon, i_{1}}(k_{1}) \hat{\bar{X}}_{s,u}^{\epsilon, i_{2}}(k_{2}) \hat{\bar{X}}_{t,b}^{\epsilon, j} (k_{3}): \rvert^{2} ] \lesssim \frac{ \sum_{i=1}^{3} \lvert \epsilon k_{i} \rvert^{\eta}}{\prod_{i=1}^{3} \lvert k_{i} \rvert^{2}}. 
\end{align} 
Similar computations can show that an analogous upper bound applies also to $\mathbb{E} [ \lvert : \hat{X}_{\bar{s},b}^{\epsilon, i_{1}'}(k_{1}') \hat{X}_{\bar{s},u}^{\epsilon, i_{2}'}(k_{2}') \hat{X}_{t,b}^{\epsilon, j}(k_{3}'): - : \hat{\bar{X}}_{\bar{s},b}^{\epsilon, i_{1}'}(k_{1}') \hat{\bar{X}}_{\bar{s},u}^{\epsilon, i_{2}'}(k_{2}') \hat{\bar{X}}_{t,b}^{\epsilon, j'} (k_{3}'): \rvert^{2} ]$. Thus, we finally estimate from \eqref{estimate 173}, 
\begin{align}\label{estimate 180}
I_{q,t}^{2}\lesssim&  \sum_{k} \theta(2^{-q} k)^{2} \sum_{k_{1}, k_{2}, k_{3}, k_{1}', k_{2}', k_{3}' \neq 0: k_{123} = k_{123}'= k} \\
& \times (1_{k_{1} = k_{1}', k_{2} = k_{2}', k_{3} = k_{3}'} +1_{k_{1} = k_{1}', k_{2} = k_{3}', k_{3} = k_{2}'})    \int_{[0,t]^{2}} e^{- \lvert k_{12} \rvert^{2} (t-s) - \lvert k_{12} ' \rvert^{2} (t- \bar{s})} \nonumber\\
& \hspace{5mm} \times \lvert k_{12} \rvert \lvert k_{12}' \rvert \left( \frac{ \sum_{i=1}^{3} \lvert \epsilon k_{i} \rvert^{\eta}}{\prod_{i=1}^{3} \lvert k_{i} \rvert^{2}}  +  \frac{ \sum_{i=1}^{3} \lvert \epsilon k_{i} ' \rvert^{\eta}}{\prod_{i=1}^{3} \lvert k_{i} ' \rvert^{2}} \right) ds d \bar{s} \nonumber \\
\lesssim&  \epsilon^{\eta} t^{\eta} \sum_{k} \theta(2^{-q} k)^{2} \sum_{k_{1}, k_{2}, k_{3} \neq 0: k_{123} = k}  \nonumber\\
& \hspace{5mm} \times [ \lvert k_{12} \rvert^{-2 + 2\eta} \frac{ \sum_{i=1}^{3} \lvert k_{i} \rvert^{\eta}}{\prod_{i=1}^{3} \lvert k_{i} \rvert^{2}} + \lvert k_{12} \rvert^{-1 + \eta} \lvert k_{13} \rvert^{-1+ \eta} \frac{ \sum_{i=1}^{3} \lvert k_{i} \rvert^{\eta}}{\prod_{i=1}^{3} \lvert k_{i} \rvert^{2}} ] \lesssim \epsilon^{\eta} t^{\eta} 2^{q(1+ 3\eta)} \nonumber
\end{align}
by \eqref{estimate 172}, H$\ddot{\mathrm{o}}$lder's inequality, \eqref{estimate 178}, \eqref{key estimate}, Lemma \ref{Lemma 3.13}. Considering \eqref{estimate 179} and \eqref{estimate 180} in \eqref{estimate 181} leads to 
\begin{equation}\label{estimate 182}
\mathbb{E} [ \lvert \Delta_{q} V_{t, ii_{1}}^{1} \rvert^{2}] \lesssim \epsilon^{\eta} t^{\eta} 2^{q(1+ 3\eta)}.
\end{equation} 
Considering \eqref{estimate 144}, \eqref{estimate 153}, \eqref{estimate 163}, \eqref{estimate 170} and \eqref{estimate 182} in \eqref{estimate 134} leads to
\begin{equation}
\mathbb{E}[ \lvert \Delta_{q} ( \bar{b}_{1}^{\epsilon, j} \diamond \bar{b}_{2}^{\epsilon, i} (t) - b_{1}^{\epsilon, j} \diamond b_{2}^{\epsilon, i} (t) \rvert^{2}] \lesssim \epsilon^{\frac{\eta}{2}} t^{\frac{\eta-\epsilon}{4}} 2^{q(1+ 3\eta)}.  
\end{equation} 
Similar calculations show that there exist $\eta, \epsilon, \gamma > 0$ sufficiently small so that 
\begin{align}
& \mathbb{E} [ \lvert \Delta_{q} ( \bar{b}_{1}^{\epsilon, j} \diamond \bar{b}_{2}^{\epsilon, i} (t_{1}) - \bar{b}_{1}^{\epsilon, j} \diamond \bar{b}_{2}^{\epsilon, i} (t_{2}) \nonumber\\
& \hspace{10mm}  - b_{1}^{\epsilon, j} \diamond b_{2}^{\epsilon, i} (t_{1}) + b_{1}^{\epsilon, j} \diamond b_{2}^{\epsilon, i} (t_{2})) \rvert^{2}] \lesssim \epsilon^{\gamma} \lvert t_{2} - t_{1} \rvert^{\eta} 2^{q(1+ \epsilon)}. 
\end{align} 
Therefore, by relying on Gaussian hypercontractivity theorem and Besov embedding Lemma \ref{Lemma 3.4}, similarly to \eqref{estimate 36}, we deduce that for all $\delta > 0$, all $p > 1$, $b_{2}^{\epsilon, i} \diamond b_{1}^{\epsilon, j} - \bar{b}_{2}^{\epsilon, i} \diamond \bar{b}_{1}^{\epsilon, j} \to 0$ in $L^{p}(\Omega; C([0,T]; \mathcal{C}^{-\frac{1}{2} - \frac{\delta}{2}}))$ as $\epsilon \searrow 0$. 

\subsubsection{Convergence of \eqref{fifth convergence}}\label{Subsubsection 2.2.4}

W.l.o.g. we work on $\pi_{0,\diamond} (u_{3}^{\epsilon, i_{0}}, b_{1}^{\epsilon, j_{0}}) - \pi_{0,\diamond}(\bar{u}_{3}^{\epsilon, i_{0}}, \bar{b}_{1}^{\epsilon, j_{0}})$. By \eqref{[Equation (3.2b)][ZZ17]} and \eqref{[Equation (3.22af)][ZZ17]}, we have 
\begin{align}\label{estimate 194}
&\pi_{0,\diamond} (u_{3}^{\epsilon, i_{0}}, b_{1}^{\epsilon, j_{0}}) - \pi_{0,\diamond} (\bar{u}_{3}^{\epsilon, i_{0}}, \bar{b}_{1}^{\epsilon, j_{0}}) \nonumber\\
=& \pi_{0} (u_{3}^{\epsilon, i_{0}}, b_{1}^{\epsilon, j_{0}}) - \pi_{0}(\bar{u}_{3}^{\epsilon, i_{0}}, \bar{b}_{1}^{\epsilon, j_{0}}) - \phi_{13}^{\epsilon, i_{0}j_{0}}(t) + \bar{\phi}_{13}^{\epsilon, i_{0}j_{0}}(t) - C_{13}^{\epsilon, i_{0}j_{0}} + \bar{C}_{13}^{\epsilon, i_{0}j_{0}} \nonumber\\
&+ \sum_{i_{1}=1}^{3} (C_{3,u}^{\epsilon, i_{0}i_{1}j_{0}} + \tilde{C}_{3,u}^{\epsilon, i_{0}i_{1}j_{0}})u_{2}^{\epsilon, i_{1}}+ (C_{3,b}^{\epsilon, i_{0}i_{1}j_{0}}+ \tilde{C}_{3,b}^{\epsilon, i_{0}i_{1}j_{0}}) b_{2}^{\epsilon, i_{1}}. 
\end{align}
Considering \eqref{[Equation (3.1ac)][ZZ17]} and \eqref{[Equation (3.21e)][ZZ17]}, for simplicity we define $\bar{u}_{31}, \bar{u}_{32}, u_{31}, u_{32}$ to satisfy for $i_{0} \in \{1,2,3\}$, $\bar{u}_{31}^{\epsilon} (0, \cdot) \equiv  \bar{u}_{32}^{\epsilon} (0, \cdot) \equiv u_{31}^{\epsilon} (0, \cdot) \equiv u_{32}^{\epsilon} (0, \cdot) \equiv 0$ and 
\begin{subequations}
\begin{align}
& d \bar{u}_{31}^{\epsilon, i_{0}} = \Delta \bar{u}_{31}^{\epsilon, i_{0}} dt - \frac{1}{2} \sum_{i_{1}, j=1}^{3} \mathcal{P}^{i_{0} i_{1}} D_{j} ( \bar{u}_{1}^{\epsilon, i_{1}} \diamond \bar{u}_{2}^{\epsilon, j} - \bar{b}_{1}^{\epsilon, i_{1}} \diamond \bar{b}_{2}^{\epsilon, j}) dt, \label{baru31}\\
& d \bar{u}_{32}^{\epsilon, i_{0}} = \Delta \bar{u}_{32}^{\epsilon, i_{0}} dt - \frac{1}{2} \sum_{i_{1}, j=1}^{3} \mathcal{P}^{i_{0} i_{1}} D_{j} ( \bar{u}_{2}^{\epsilon, i_{1}} \diamond \bar{u}_{1}^{\epsilon, j} - \bar{b}_{2}^{\epsilon, i_{1}} \diamond \bar{b}_{1}^{\epsilon, j}) dt, \label{baru32}\\
& d u_{31}^{\epsilon, i_{0}} = \Delta_{\epsilon} u_{31}^{\epsilon, i_{0}} dt - \frac{1}{2} \sum_{i_{1}, j=1}^{3} \mathcal{P}^{i_{0} i_{1}} D_{j}^{\epsilon} (u_{1}^{\epsilon, i_{1}} \diamond u_{2}^{\epsilon, j} - b_{1}^{\epsilon, i_{1}} \diamond b_{2}^{\epsilon, j} ) dt, \label{u31}\\
& d u_{32}^{\epsilon, i_{0}} = \Delta_{\epsilon} u_{32}^{\epsilon, i_{0}} dt - \frac{1}{2} \sum_{i_{1}, j=1}^{3} \mathcal{P}^{i_{0} i_{1}} D_{j}^{\epsilon} (u_{2}^{\epsilon, i_{1}} \diamond u_{1}^{\epsilon, j} - b_{2}^{\epsilon, i_{1}} \diamond b_{1}^{\epsilon, j} ) dt. \label{u32}
\end{align} 
\end{subequations} 
W.l.o.g. we show the estimates of $\pi_{0,\diamond} (u_{32}^{\epsilon, i_{0}}, b_{1}^{\epsilon, j_{0}}) - \pi_{0,\diamond} (\bar{u}_{32}^{\epsilon, i_{0}}, \bar{b}_{1}^{\epsilon, j_{0}})$; the estimate of $\pi_{0,\diamond} (u_{31}^{\epsilon, i_{0}}, b_{1}^{\epsilon, j_{0}}) - \pi_{0,\diamond} (\bar{u}_{31}^{\epsilon, i_{0}}, \bar{b}_{1}^{\epsilon, j_{0}})$ follows similarly. From \eqref{u32},  \eqref{baru32}, \eqref{[Equation (3.21d)][ZZ17]}, \eqref{[Equation (3.22aa)][ZZ17]}, \eqref{[Equation (3.22ab)][ZZ17]}, \eqref{[Equation (3.2aa)][ZZ17]}, \eqref{[Equation (3.2ab)][ZZ17]} and \eqref{[Equation (3.1ab)][ZZ17]} to deduce
\begin{align}\label{estimate 183}
& \pi_{0} (u_{32}^{\epsilon, i_{0}}, b_{1}^{\epsilon, j_{0}})(t) - \pi_{0}(\bar{u}_{32}^{\epsilon, i_{0}}, \bar{b}_{1}^{\epsilon, j_{0}})(t) \nonumber\\
=&  \frac{1}{4} [ \pi_{0} ( \sum_{i_{1}, i_{3}, i_{4}, j_{1} =1}^{3} \int_{0}^{t}  e^{-(s-t) \Delta_{\epsilon}} \mathcal{P}^{i_{0} i_{1}} D_{j_{1}}^{\epsilon} (u_{1}^{\epsilon, j_{1}}(s) \mathcal{P}^{i_{1} i_{4}} \nonumber\\
& \hspace{25mm} \times \int_{0}^{s} e^{-(r-s) \Delta_{\epsilon}} D_{i_{3}}^{\epsilon} (u_{1}^{\epsilon, i_{4}} u_{1}^{\epsilon, i_{3}} - b_{1}^{\epsilon, i_{4}} b_{1}^{\epsilon, i_{3}}) dr ) ds \nonumber\\
&\hspace{5mm} - \sum_{i_{1}, i_{3}, i_{4}, j_{1} =1}^{3} \int_{0}^{t} e^{-(s-t) \Delta_{\epsilon}} \mathcal{P}^{i_{0}i_{1}} D_{j_{1}}^{\epsilon} (b_{1}^{\epsilon, j_{1}}(s) \mathcal{P}^{i_{1} i_{4}} \nonumber\\
& \hspace{25mm} \times  \int_{0}^{s} e^{-(r-s) \Delta_{\epsilon}} D_{i_{3}}^{\epsilon} (b_{1}^{\epsilon, i_{4}} u_{1}^{\epsilon, i_{3}} - u_{1}^{\epsilon, i_{4}} b_{1}^{\epsilon, i_{3}}) dr ) ds, b_{1}^{\epsilon, j_{0}}(t)) \nonumber \\
& - \bar{L}_{t, i_{0}j_{0}}^{5} - \bar{L}_{t, i_{0}j_{0}}^{6} \nonumber\\
& - \pi_{0} (\sum_{i_{1}, i_{3}, i_{4}, j_{1} =1}^{3} \int_{0}^{t} e^{-(s-t) \Delta} \mathcal{P}^{i_{0}i_{1}} D_{j_{1}} (\bar{u}_{1}^{\epsilon, j_{1}}(s) \mathcal{P}^{i_{1}i_{4}} \nonumber\\
& \hspace{25mm} \times \int_{0}^{s} e^{-(r-s)\Delta} D_{i_{3}} ( \bar{u}_{1}^{\epsilon, i_{4}} \bar{u}_{1}^{\epsilon, i_{3}} - \bar{b}_{1}^{\epsilon, i_{4}} \bar{b}_{1}^{\epsilon, i_{3}}) dr) ds \nonumber\\
& \hspace{5mm} - \sum_{i_{1}, i_{3}, i_{4}, j_{1} =1}^{3} \int_{0}^{t} e^{-(s-t) \Delta} \mathcal{P}^{i_{0} i_{1}} D_{j_{1}} (\bar{b}_{1}^{\epsilon, j_{1}}(s) \mathcal{P}^{i_{1}i_{4}} \nonumber\\
& \hspace{25mm} \times \int_{0}^{s} e^{-(r-s) \Delta} D_{i_{3}} (\bar{b}_{1}^{\epsilon, i_{4}} \bar{u}_{1}^{\epsilon, i_{3}} - \bar{u}_{1}^{\epsilon, i_{4}} \bar{b}_{1}^{\epsilon, i_{3}}) dr) ds, \bar{b}_{1}^{\epsilon, j_{0}}(t))]  
\end{align} 
where 
\begin{subequations}
\begin{align}
\bar{L}_{t, i_{0}j_{0}}^{5} \triangleq&  2\pi_{0} (\sum_{i_{1}, i_{2}, j_{1} =1}^{3} \int_{0}^{t} e^{-(s-t) \Delta_{\epsilon}} \mathcal{P}^{i_{0} i_{1}} D_{j_{1}}^{\epsilon}(C_{1,u}^{\epsilon, i_{1} i_{2} j_{1}} u_{1}^{\epsilon, i_{2}} + C_{1,b}^{\epsilon, i_{1}i_{2} j_{1}} b_{1}^{\epsilon, i_{2}}) ds \nonumber \\ 
& \hspace{1mm}  - \int_{0}^{t} e^{-(s-t) \Delta_{\epsilon} } \mathcal{P}^{i_{0} i_{1}} D_{j_{1}}^{\epsilon} (C_{2,u}^{\epsilon, i_{1} i_{2} j_{1}} u_{1}^{\epsilon, i_{2}} + C_{2,b}^{\epsilon, i_{1} i_{2} j_{1}}b_{1}^{\epsilon, i_{2}}) ds, b_{1}^{\epsilon, j_{0}}(t)), \label{[Equation (4.6k)][ZZ17]}\\
\bar{L}_{t, i_{0}j_{0}}^{6} \triangleq&  2\pi_{0} (\sum_{i_{1}, i_{2}, j_{1} =1}^{3} \int_{0}^{t} e^{-(s-t) \Delta_{\epsilon}} \mathcal{P}^{i_{0} i_{1}} D_{j_{1}}^{\epsilon}(\tilde{C}_{1,u}^{\epsilon, i_{1} i_{2} j_{1}} u_{1}^{\epsilon, i_{2}} +  \tilde{C}_{1,b}^{\epsilon, i_{1} i_{2} j_{1}} b_{1}^{\epsilon, i_{2}}) ds\nonumber\\ 
& \hspace{1mm}  - \int_{0}^{t} e^{-(s-t) \Delta_{\epsilon} } \mathcal{P}^{i_{0} i_{1}} D_{j_{1}}^{\epsilon} (\tilde{C}_{2,u}^{\epsilon, i_{1} i_{2} j_{1}} u_{1}^{\epsilon, i_{2}} + \tilde{C}_{2,b}^{\epsilon, i_{1} i_{2} j_{1}} b_{1}^{\epsilon, i_{2}}) ds, b_{1}^{\epsilon, j_{0}}(t)). \label{[Equation (4.6m)][ZZ17]}
\end{align} 
\end{subequations} 
Within the right hand side of \eqref{estimate 183}, besides $\bar{L}_{t, i_{0}j_{0}}^{5} - \bar{L}_{t, i_{0}j_{0}}^{6}$, we see that 
\begin{align}\label{estimate 184}
&\pi_{0} ( \sum_{i_{1}, i_{2}, i_{3}, j_{1} =1}^{3} \int_{0}^{t}  e^{-(s-t) \Delta_{\epsilon}} \mathcal{P}^{i_{0} i_{1}} D_{j_{1}}^{\epsilon} (u_{1}^{\epsilon, j_{1}}(s) \mathcal{P}^{i_{1} i_{2}} \\
& \hspace{25mm} \times \int_{0}^{s} e^{-(\sigma-s) \Delta_{\epsilon}} D_{i_{3}}^{\epsilon} (u_{1}^{\epsilon, i_{2}} u_{1}^{\epsilon, i_{3}} - b_{1}^{\epsilon, i_{2}} b_{1}^{\epsilon, i_{3}}) d\sigma ) ds, b_{1}^{\epsilon, j_{0}}(t)) \nonumber\\
&- \pi_{0}(\sum_{i_{1}, i_{2}, i_{3}, j_{1} =1}^{3} \int_{0}^{t} e^{-(s-t) \Delta_{\epsilon}} \mathcal{P}^{i_{0}i_{1}} D_{j_{1}}^{\epsilon} (b_{1}^{\epsilon, j_{1}}(s) \mathcal{P}^{i_{1} i_{2}} \nonumber \\
& \hspace{25mm} \times \int_{0}^{s} e^{-(\sigma-s) \Delta_{\epsilon}} D_{i_{3}}^{\epsilon} (b_{1}^{\epsilon, i_{2}} u_{1}^{\epsilon, i_{3}} - u_{1}^{\epsilon, i_{2}} b_{1}^{\epsilon, i_{3}}) d\sigma ) ds, b_{1}^{\epsilon, j_{0}}(t)) \nonumber \\
& - \pi_{0} (\sum_{i_{1}, i_{2}, i_{3}, j_{1} =1}^{3} \int_{0}^{t} e^{-(s-t) \Delta} \mathcal{P}^{i_{0}i_{1}} D_{j_{1}} (\bar{u}_{1}^{\epsilon, j_{1}}(s) \mathcal{P}^{i_{1}i_{2}} \nonumber\\
& \hspace{25mm} \times \int_{0}^{s} e^{-(\sigma-s)\Delta} D_{i_{3}} ( \bar{u}_{1}^{\epsilon, i_{2}} \bar{u}_{1}^{\epsilon, i_{3}} - \bar{b}_{1}^{\epsilon, i_{2}} \bar{b}_{1}^{\epsilon, i_{3}}) d\sigma) ds, \bar{b}_{1}^{\epsilon, j_{0}}(t)) \nonumber\\
& + \pi_{0}(\sum_{i_{1}, i_{2}, i_{3}, j_{1} =1}^{3} \int_{0}^{t} e^{-(s-t) \Delta} \mathcal{P}^{i_{0} i_{1}} D_{j_{1}} (\bar{b}_{1}^{\epsilon, j_{1}}(s) \mathcal{P}^{i_{1}i_{2}} \nonumber\\
& \hspace{25mm} \times \int_{0}^{s} e^{-(\sigma-s) \Delta} D_{i_{3}} (\bar{b}_{1}^{\epsilon, i_{2}} \bar{u}_{1}^{\epsilon, i_{3}} - \bar{u}_{1}^{\epsilon, i_{2}} \bar{b}_{1}^{\epsilon, i_{3}}) d\sigma) ds, \bar{b}_{1}^{\epsilon, j_{0}}(t))  \nonumber \\
=&  (2\pi)^{-\frac{9}{2}} \sum_{\lvert i-j\rvert \leq 1} \sum_{i_{1}, i_{2}, i_{3}, j_{1} =1}^{3} \sum_{k} \sum_{k_{1}, k_{2}, k_{3}, k_{4} \neq 0: k_{1234} = k} \theta(2^{-i} k_{123}) \theta(2^{-j} k_{4})  \nonumber\\ 
& \times [( \int_{0}^{t} e^{- \lvert k_{123} \rvert^{2}  f( \epsilon k_{123})(t-s)} k_{123}^{j_{1}} g(\epsilon k_{123}^{j_{1}}) \hat{X}_{s,u}^{\epsilon, j_{1}}(k_{3}) \nonumber\\
& \hspace{1mm} \times \int_{0}^{s} e^{- \lvert k_{12} \rvert^{2}  f(\epsilon k_{12})(s-\sigma)} k_{12}^{i_{3}} g(\epsilon k_{12}^{i_{3}}) \hat{X}_{\sigma, u}^{\epsilon, i_{2}}(k_{1}) \hat{X}_{\sigma, u}^{\epsilon, i_{3}}(k_{2}) d \sigma ds \hat{X}_{t,b}^{\epsilon, j_{0}}(k_{4})  \nonumber \\
& - \int_{0}^{t} e^{- \lvert k_{123} \rvert^{2} (t-s)} k_{123}^{j_{1}} i \hat{\bar{X}}_{s,u}^{\epsilon, j_{1}}(k_{3}) \nonumber \\
& \hspace{1mm} \times \int_{0}^{s} e^{- \lvert k_{12} \rvert^{2} (s-\sigma)} k_{12}^{i_{3}} i \hat{ \bar{X}}_{\sigma, u}^{\epsilon, i_{2}}(k_{1}) \hat{\bar{X}}_{\sigma, u}^{\epsilon, i_{3}}(k_{2}) d\sigma ds \hat{\bar{X}}_{t,b}^{\epsilon, j_{0}}(k_{4})) \hat{\mathcal{P}}^{i_{0}i_{1}} (k_{123}) \hat{\mathcal{P}}^{i_{1}i_{2}}(k_{12}) e_{k}  \nonumber\\   
& - ( \int_{0}^{t} e^{- \lvert k_{123} \rvert^{2}  f( \epsilon k_{123})(t-s)} k_{123}^{j_{1}} g(\epsilon k_{123}^{j_{1}}) \hat{X}_{s,u}^{\epsilon, j_{1}}(k_{3}) \nonumber\\
& \hspace{1mm} \times \int_{0}^{s} e^{- \lvert k_{12} \rvert^{2}  f(\epsilon k_{12})(s-\sigma)} k_{12}^{i_{3}} g(\epsilon k_{12}^{i_{3}}) \hat{X}_{\sigma, b}^{\epsilon, i_{2}}(k_{1}) \hat{X}_{\sigma, b}^{\epsilon, i_{3}}(k_{2}) d \sigma ds \hat{X}_{t,b}^{\epsilon, j_{0}}(k_{4})   \nonumber\\
& + \int_{0}^{t} e^{- \lvert k_{123} \rvert^{2} (t-s)} k_{123}^{j_{1}} i \hat{\bar{X}}_{s,u}^{\epsilon, j_{1}}(k_{3})  \nonumber\\
& \hspace{1mm} \times \int_{0}^{s} e^{- \lvert k_{12} \rvert^{2} (s-\sigma)} k_{12}^{i_{3}} i \hat{ \bar{X}}_{\sigma, b}^{\epsilon, i_{2}}(k_{1}) \hat{\bar{X}}_{\sigma, b}^{\epsilon, i_{3}}(k_{2}) d\sigma ds \hat{\bar{X}}_{t,b}^{\epsilon, j_{0}}(k_{4})) \hat{\mathcal{P}}^{i_{0}i_{1}} (k_{123}) \hat{\mathcal{P}}^{i_{1}i_{2}}(k_{12}) e_{k}  \nonumber\\
& - ( \int_{0}^{t} e^{- \lvert k_{123} \rvert^{2} f( \epsilon k_{123}) (t-s)} k_{123}^{j_{1}} g(\epsilon k_{123}^{j_{1}}) \hat{X}_{s,b}^{\epsilon, j_{1}}(k_{3}) \nonumber\\
& \hspace{1mm} \times \int_{0}^{s} e^{- \lvert k_{12} \rvert^{2}  f(\epsilon k_{12})(s-\sigma)} k_{12}^{i_{3}} g(\epsilon k_{12}^{i_{3}}) \hat{X}_{\sigma, b}^{\epsilon, i_{2}}(k_{1}) \hat{X}_{\sigma, u}^{\epsilon, i_{3}}(k_{2}) d \sigma ds \hat{X}_{t,b}^{\epsilon, j_{0}}(k_{4})   \nonumber\\
& + \int_{0}^{t} e^{- \lvert k_{123} \rvert^{2} (t-s)} k_{123}^{j_{1}} i \hat{\bar{X}}_{s,b}^{\epsilon, j_{1}}(k_{3})  \nonumber\\
& \hspace{1mm} \times \int_{0}^{s} e^{- \lvert k_{12} \rvert^{2} (s-\sigma)} k_{12}^{i_{3}} i \hat{ \bar{X}}_{\sigma, b}^{\epsilon, i_{2}}(k_{1}) \hat{\bar{X}}_{\sigma, u}^{\epsilon, i_{3}}(k_{2}) d\sigma ds \hat{\bar{X}}_{t,b}^{\epsilon, j_{0}}(k_{4})) \hat{\mathcal{P}}^{i_{0}i_{1}} (k_{123}) \hat{\mathcal{P}}^{i_{1}i_{2}}(k_{12}) e_{k}  \nonumber\\   
&+ ( \int_{0}^{t} e^{- \lvert k_{123} \rvert^{2}  f( \epsilon k_{123})(t-s)} k_{123}^{j_{1}} g(\epsilon k_{123}^{j_{1}}) \hat{X}_{s,b}^{\epsilon, j_{1}}(k_{3}) \nonumber\\
& \hspace{1mm} \times \int_{0}^{s} e^{- \lvert k_{12} \rvert^{2}  f(\epsilon k_{12})(s-\sigma)} k_{12}^{i_{3}} g(\epsilon k_{12}^{i_{3}}) \hat{X}_{\sigma, u}^{\epsilon, i_{2}}(k_{1}) \hat{X}_{\sigma, b}^{\epsilon, i_{3}}(k_{2}) d \sigma ds \hat{X}_{t,b}^{\epsilon, j_{0}}(k_{4})   \nonumber\\
& - \int_{0}^{t} e^{- \lvert k_{123} \rvert^{2} (t-s)} k_{123}^{j_{1}} i \hat{\bar{X}}_{s,b}^{\epsilon, j_{1}}(k_{3})  \nonumber\\
& \hspace{1mm} \times \int_{0}^{s} e^{- \lvert k_{12} \rvert^{2} (s-\sigma)} k_{12}^{i_{3}} i \hat{ \bar{X}}_{\sigma, u}^{\epsilon, i_{2}}(k_{1}) \hat{\bar{X}}_{\sigma, b}^{\epsilon, i_{3}}(k_{2}) d\sigma ds \hat{\bar{X}}_{t,b}^{\epsilon, j_{0}}(k_{4})) \hat{\mathcal{P}}^{i_{0}i_{1}} (k_{123}) \hat{\mathcal{P}}^{i_{1}i_{2}}(k_{12}) e_{k}].   \nonumber
\end{align}
Now we can write first within \eqref{estimate 184}  by Example \ref{Example 3.1} 
\begin{align}\label{estimate 185}
\hat{X}_{\sigma, u}^{\epsilon, i_{2}}(k_{1}) \hat{X}_{\sigma, u}^{\epsilon, i_{3}}(k_{2}) \hat{X}_{s,u}^{\epsilon, j_{1}}(k_{3}) \hat{X}_{t,b}^{\epsilon, j_{0}}(k_{4}) = \sum_{i=1}^{10} l_{k_{1}k_{2}k_{3}k_{4}, i_{2}i_{3}j_{0}j_{1}, s\sigma t}^{1i}
\end{align}
where due to \eqref{covariance}, 
\begin{subequations}\label{estimate 186}
\begin{align}
l_{k_{1}k_{2}k_{3}k_{4}, i_{2}i_{3}j_{0}j_{1}, s\sigma t}^{11} \triangleq& : \hat{X}_{\sigma, u}^{\epsilon, i_{2}}(k_{1}) \hat{X}_{\sigma, u}^{\epsilon, i_{3}}(k_{2}) \hat{X}_{s,u}^{\epsilon, j_{1}}(k_{3}) \hat{X}_{t,b}^{\epsilon, j_{0}}(k_{4}):, \\
l_{k_{1}k_{2}k_{3}k_{4}, i_{2}i_{3}j_{0}j_{1}, s\sigma t}^{12} \triangleq& 1_{k_{12} = 0} \sum_{i_{4} =1}^{3} \frac{ h_{u}(\epsilon k_{1})^{2}}{2 \lvert k_{1} \rvert^{2} f(\epsilon k_{1})} \nonumber\\
& \times  \hat{\mathcal{P}}^{i_{2} i_{4}}(k_{1}) \hat{\mathcal{P}}^{i_{3}i_{4}} (k_{1}): \hat{X}_{s,u}^{\epsilon, j_{1}}(k_{3}) \hat{X}_{t,b}^{\epsilon, j_{0}}(k_{4}):,\\
l_{k_{1}k_{2}k_{3}k_{4}, i_{2}i_{3}j_{0}j_{1}, s\sigma t}^{13} \triangleq& 1_{k_{12} = 0, k_{34} = 0} \sum_{i_{4}, i_{5} =1}^{3} \frac{ h_{u}(\epsilon k_{1})^{2}}{2\lvert k_{1} \rvert^{2} f(\epsilon k_{1})} \hat{\mathcal{P}}^{i_{2} i_{4} }(k_{1}) \hat{\mathcal{P}}^{i_{3}i_{4}}(k_{1}) \nonumber\\
& \times  \frac{ e^{ - \lvert k_{3} \rvert^{2} f(\epsilon k_{3}) (t-s)} h_{u}(\epsilon k_{3}) h_{b}(\epsilon k_{3})}{2\lvert k_{3} \rvert^{2} f(\epsilon k_{3})} \hat{\mathcal{P}}^{j_{1}i_{5}} (k_{3}) \hat{\mathcal{P}}^{j_{0} i_{5}}(k_{3}), \\
l_{k_{1}k_{2}k_{3}k_{4}, i_{2}i_{3}j_{0}j_{1}, s\sigma t}^{14} \triangleq& 1_{k_{13} = 0} \sum_{i_{4} =1}^{3} \frac{ e^{- \lvert k_{1}\rvert^{2} f(\epsilon k_{1}) (s-\sigma)} h_{u}(\epsilon k_{1})^{2}}{2 \lvert k_{1} \rvert^{2} f(\epsilon k_{1})} \nonumber\\
& \times \hat{\mathcal{P}}^{i_{2}i_{4}}(k_{1}) \hat{\mathcal{P}}^{j_{1}i_{4}}(k_{1}) : \hat{X}_{\sigma, u}^{\epsilon, i_{3}}(k_{2}) \hat{X}_{t,b}^{\epsilon, j_{0}}(k_{4}):, \\
l_{k_{1}k_{2}k_{3}k_{4}, i_{2}i_{3}j_{0}j_{1}, s\sigma t}^{15} \triangleq& 1_{k_{13} = 0, k_{24} = 0} \sum_{i_{4}, i_{5} = 1}^{3} \frac{ e^{- \lvert k_{1}  \rvert^{2} f(\epsilon k_{1}) (s-\sigma)} h_{u}(\epsilon k_{1})^{2}}{2 \lvert k_{1} \rvert^{2} f(\epsilon k_{1})} \nonumber\\
&  \times \hat{\mathcal{P}}^{i_{2}i_{4}}(k_{1}) \hat{\mathcal{P}}^{j_{1}i_{4}}(k_{1})  \frac{ e^{- \lvert k_{2} \rvert^{2} f(\epsilon k_{2}) (t-\sigma)} h_{u}(\epsilon k_{2}) h_{b}(\epsilon k_{2})}{2 \lvert k_{2} \rvert^{2} f(\epsilon k_{2})} \nonumber\\
& \hspace{5mm} \times  \hat{\mathcal{P}}^{i_{3}i_{5}}(k_{2}) \hat{\mathcal{P}}^{j_{0} i_{5}}(k_{2}), \\
l_{k_{1}k_{2}k_{3}k_{4}, i_{2}i_{3}j_{0}j_{1}, s\sigma t}^{16} \triangleq& 1_{k_{14} = 0} \sum_{i_{4} =1}^{3} \frac{ e^{- \lvert k_{1} \rvert^{2} f(\epsilon k_{1}) (t-\sigma)} h_{u}(\epsilon k_{1})h_{b}(\epsilon k_{1})}{2 \lvert k_{1} \rvert^{2} f(\epsilon k_{1})} \nonumber\\
& \times  \hat{\mathcal{P}}^{i_{2}i_{4}}(k_{1}) \hat{\mathcal{P}}^{j_{0}i_{4}}(k_{1}) : \hat{X}_{\sigma, u}^{\epsilon, i_{3}}(k_{2}) \hat{X}_{s,u}^{\epsilon, j_{1}}(k_{3}):, \\
l_{k_{1}k_{2}k_{3}k_{4}, i_{2}i_{3}j_{0}j_{1}, s\sigma t}^{17} \triangleq& 1_{k_{14} = 0, k_{23} = 0} \sum_{i_{4}, i_{5} = 1}^{3} \frac{ e^{- \lvert k_{1} \rvert^{2} f(\epsilon k_{1}) (t-\sigma)} h_{u}(\epsilon k_{1})h_{b}(\epsilon k_{1})}{2 \lvert k_{1} \rvert^{2} f(\epsilon k_{1})}  \nonumber\\
& \times \hat{\mathcal{P}}^{i_{2}i_{4}}(k_{1}) \hat{\mathcal{P}}^{j_{0}i_{4}}(k_{1}) \frac{ e^{- \lvert k_{2} \rvert^{2} f(\epsilon k_{2}) (s-\sigma)} h_{u}(\epsilon k_{2})^{2} }{2 \lvert k_{2} \rvert^{2} f(\epsilon k_{2})} \nonumber\\
& \hspace{5mm}   \times \hat{\mathcal{P}}^{i_{3}i_{5}}(k_{2}) \hat{\mathcal{P}}^{j_{1} i_{5}}(k_{2}), \\
l_{k_{1}k_{2}k_{3}k_{4}, i_{2}i_{3}j_{0}j_{1}, s\sigma t}^{18} \triangleq& 1_{k_{23} = 0} \sum_{i_{4} =1}^{3} \frac{e^{- \lvert k_{2} \rvert^{2} f(\epsilon k_{2}) (s-\sigma)} h_{u}(\epsilon k_{2})^{2}}{2 \lvert k_{2} \rvert^{2} f(\epsilon k_{2})} \nonumber\\
& \times  \hat{\mathcal{P}}^{i_{3} i_{4}}(k_{2}) \hat{\mathcal{P}}^{j_{1}i_{4}} (k_{2}): \hat{X}_{\sigma,u}^{\epsilon, i_{2}}(k_{1}) \hat{X}_{t,b}^{\epsilon, j_{0}}(k_{4}):, \\
l_{k_{1}k_{2}k_{3}k_{4}, i_{2}i_{3}j_{0}j_{1}, s\sigma t}^{19} \triangleq& 1_{k_{24} = 0} \sum_{i_{4} =1}^{3} \frac{e^{- \lvert k_{2} \rvert^{2} f(\epsilon k_{2}) (t-\sigma)} h_{u}(\epsilon k_{2})h_{b}(\epsilon k_{2})}{2 \lvert k_{2} \rvert^{2} f(\epsilon k_{2})} \nonumber\\
& \times \hat{\mathcal{P}}^{i_{3} i_{4}}(k_{2}) \hat{\mathcal{P}}^{j_{0}i_{4}} (k_{2}): \hat{X}_{\sigma,u}^{\epsilon, i_{2}}(k_{1}) \hat{X}_{s,u}^{\epsilon, j_{1}}(k_{3}):, \\
l_{k_{1}k_{2}k_{3}k_{4}, i_{2}i_{3}j_{0}j_{1}, s\sigma t}^{110} \triangleq& 1_{k_{34} = 0} \sum_{i_{4} =1}^{3} \frac{e^{- \lvert k_{3} \rvert^{2} f(\epsilon k_{3}) (t- s)} h_{u}(\epsilon k_{3})h_{b}(\epsilon k_{3})}{2 \lvert k_{3} \rvert^{2} f(\epsilon k_{3})} \nonumber\\
& \times  \hat{\mathcal{P}}^{j_{1} i_{4}}(k_{3}) \hat{\mathcal{P}}^{j_{0}i_{4}} (k_{3}): \hat{X}_{\sigma,u}^{\epsilon, i_{2}}(k_{1}) \hat{X}_{\sigma,u}^{\epsilon, i_{3}}(k_{2}):,     
\end{align}
\end{subequations} 
where the terms corresponding to $l_{k_{1}k_{2}k_{3}k_{4}, i_{2}i_{3}j_{0}j_{1}, s\sigma t}^{1i}$ for $i\in\{2, 3\}$ vanish due to $k_{12}$ in \eqref{estimate 184}. Similarly, second within \eqref{estimate 184}, we see that
\begin{equation}
\hat{\bar{X}}_{\sigma, u}^{\epsilon, i_{2}}(k_{1}) \hat{\bar{X}}_{\sigma, u}^{\epsilon, i_{3}}(k_{2}) \hat{\bar{X}}_{s,u}^{\epsilon, j_{1}}(k_{3}) \hat{\bar{X}}_{t,b}^{\epsilon, j_{0}}(k_{4}) = \sum_{i=1}^{10}  l_{k_{1}k_{2}k_{3}k_{4}, i_{2}i_{3}j_{0}j_{1}, s\sigma t}^{2i}
\end{equation} 
where 
\begin{subequations}
\begin{align}
l_{k_{1}k_{2}k_{3}k_{4}, i_{2}i_{3}j_{0}j_{1}, s\sigma t}^{21} \triangleq& : \hat{\bar{X}}_{\sigma, u}^{\epsilon, i_{2}}(k_{1}) \hat{\bar{X}}_{\sigma, u}^{\epsilon, i_{3}}(k_{2}) \hat{\bar{X}}_{s,u}^{\epsilon, j_{1}}(k_{3}) \hat{\bar{X}}_{t,b}^{\epsilon, j_{0}}(k_{4}):, \\
l_{k_{1}k_{2}k_{3}k_{4}, i_{2}i_{3}j_{0}j_{1}, s\sigma t}^{22} \triangleq& 1_{k_{12} = 0} \sum_{i_{4} =1}^{3} \frac{ h_{u}(\epsilon k_{1})^{2}}{2 \lvert k_{1} \rvert^{2}} \nonumber\\
& \hspace{5mm} \times  \hat{\mathcal{P}}^{i_{2} i_{4}}(k_{1}) \hat{\mathcal{P}}^{i_{3}i_{4}} (k_{1}): \hat{\bar{X}}_{s,u}^{\epsilon, j_{1}}(k_{3}) \hat{\bar{X}}_{t,b}^{\epsilon, j_{0}}(k_{4}):,\\
l_{k_{1}k_{2}k_{3}k_{4}, i_{2}i_{3}j_{0}j_{1}, s\sigma t}^{23} \triangleq& 1_{k_{12} = 0, k_{34} = 0} \sum_{i_{4}, i_{5} =1}^{3} \frac{ h_{u}(\epsilon k_{1})^{2}}{2\lvert k_{1} \rvert^{2}} \hat{\mathcal{P}}^{i_{2} i_{4} }(k_{1}) \hat{\mathcal{P}}^{i_{3}i_{4}}(k_{1}) \nonumber\\
& \hspace{5mm} \times  \frac{ e^{ - \lvert k_{3} \rvert^{2}  (t-s)} h_{u}(\epsilon k_{3}) h_{b}(\epsilon k_{3})}{2\lvert k_{3} \rvert^{2}} \hat{\mathcal{P}}^{j_{1}i_{5}} (k_{3}) \hat{\mathcal{P}}^{j_{0} i_{5}}(k_{3}), \\
l_{k_{1}k_{2}k_{3}k_{4}, i_{2}i_{3}j_{0}j_{1}, s\sigma t}^{24} \triangleq& 1_{k_{13} = 0} \sum_{i_{4} =1}^{3} \frac{ e^{- \lvert k_{1}\rvert^{2}  (s-\sigma)} h_{u}(\epsilon k_{1})^{2}}{2 \lvert k_{1} \rvert^{2}} \nonumber\\
& \hspace{5mm} \times  \hat{\mathcal{P}}^{i_{2}i_{4}}(k_{1}) \hat{\mathcal{P}}^{j_{1}i_{4}}(k_{1}) : \hat{\bar{X}}_{\sigma, u}^{\epsilon, i_{3}}(k_{2}) \hat{\bar{X}}_{t,b}^{\epsilon, j_{0}}(k_{4}):, \\
l_{k_{1}k_{2}k_{3}k_{4}, i_{2}i_{3}j_{0}j_{1}, s\sigma t}^{25} \triangleq& 1_{k_{13} = 0, k_{24} = 0} \sum_{i_{4}, i_{5} = 1}^{3} \frac{ e^{- \lvert k_{1}  \rvert^{2} (s-\sigma)} h_{u}(\epsilon k_{1})^{2}}{2 \lvert k_{1} \rvert^{2}} \hat{\mathcal{P}}^{i_{2}i_{4}}(k_{1}) \hat{\mathcal{P}}^{j_{1}i_{4}}(k_{1}) \nonumber\\
& \hspace{5mm} \times \frac{ e^{- \lvert k_{2} \rvert^{2}  (t-\sigma)} h_{u}(\epsilon k_{2}) h_{b}(\epsilon k_{2})}{2 \lvert k_{2} \rvert^{2}} \hat{\mathcal{P}}^{i_{3}i_{5}}(k_{2}) \hat{\mathcal{P}}^{j_{0} i_{5}}(k_{2}), \\
l_{k_{1}k_{2}k_{3}k_{4}, i_{2}i_{3}j_{0}j_{1}, s\sigma t}^{26} \triangleq& 1_{k_{14} = 0} \sum_{i_{4} =1}^{3} \frac{ e^{- \lvert k_{1} \rvert^{2} (t-\sigma)} h_{u}(\epsilon k_{1})h_{b}(\epsilon k_{1})}{2 \lvert k_{1} \rvert^{2} } \nonumber\\
& \times \hspace{5mm} \hat{\mathcal{P}}^{i_{2}i_{4}}(k_{1}) \hat{\mathcal{P}}^{j_{0}i_{4}}(k_{1}) : \hat{\bar{X}}_{\sigma, u}^{\epsilon, i_{3}}(k_{2}) \hat{\bar{X}}_{s,u}^{\epsilon, j_{1}}(k_{3}):, \\
l_{k_{1}k_{2}k_{3}k_{4}, i_{2}i_{3}j_{0}j_{1}, s\sigma t}^{27} \triangleq& 1_{k_{14} = 0, k_{23} = 0} \sum_{i_{4}, i_{5} = 1}^{3} \frac{ e^{- \lvert k_{1} \rvert^{2} (t-\sigma)} h_{u}(\epsilon k_{1})h_{b}(\epsilon k_{1})}{2 \lvert k_{1} \rvert^{2}}  \\
&  \times  \hat{\mathcal{P}}^{i_{2}i_{4}}(k_{1}) \hat{\mathcal{P}}^{j_{0}i_{4}}(k_{1})\frac{ e^{- \lvert k_{2} \rvert^{2} (s-\sigma)} h_{u}(\epsilon k_{2})^{2} }{2 \lvert k_{2} \rvert^{2}} \hat{\mathcal{P}}^{i_{3}i_{5}}(k_{2}) \hat{\mathcal{P}}^{j_{1} i_{5}}(k_{2}), \nonumber\\
l_{k_{1}k_{2}k_{3}k_{4}, i_{2}i_{3}j_{0}j_{1}, s\sigma t}^{28} \triangleq& 1_{k_{23} = 0} \sum_{i_{4} =1}^{3} \frac{e^{- \lvert k_{2} \rvert^{2} (s-\sigma)} h_{u}(\epsilon k_{2})^{2}}{2 \lvert k_{2} \rvert^{2}} \nonumber\\
& \hspace{5mm} \times \hat{\mathcal{P}}^{i_{3} i_{4}}(k_{2}) \hat{\mathcal{P}}^{j_{1}i_{4}} (k_{2}): \hat{\bar{X}}_{\sigma,u}^{\epsilon, i_{2}}(k_{1}) \hat{\bar{X}}_{t,b}^{\epsilon, j_{0}}(k_{4}):, \\
l_{k_{1}k_{2}k_{3}k_{4}, i_{2}i_{3}j_{0}j_{1}, s\sigma t}^{29} \triangleq& 1_{k_{24} = 0} \sum_{i_{4} =1}^{3} \frac{e^{- \lvert k_{2} \rvert^{2} (t-\sigma)} h_{u}(\epsilon k_{2})h_{b}(\epsilon k_{2})}{2 \lvert k_{2} \rvert^{2}} \nonumber\\
& \hspace{5mm} \times  \hat{\mathcal{P}}^{i_{3} i_{4}}(k_{2}) \hat{\mathcal{P}}^{j_{0}i_{4}} (k_{2}): \hat{\bar{X}}_{\sigma,u}^{\epsilon, i_{2}}(k_{1}) \hat{\bar{X}}_{s,u}^{\epsilon, j_{1}}(k_{3}):, \\
l_{k_{1}k_{2}k_{3}k_{4}, i_{2}i_{3}j_{0}j_{1}, s\sigma t}^{210} \triangleq& 1_{k_{34} = 0} \sum_{i_{4} =1}^{3} \frac{e^{- \lvert k_{3} \rvert^{2} (t- s)} h_{u}(\epsilon k_{3})h_{b}(\epsilon k_{3})}{2 \lvert k_{3} \rvert^{2}} \nonumber\\
& \hspace{5mm} \times \hat{\mathcal{P}}^{j_{1} i_{4}}(k_{3}) \hat{\mathcal{P}}^{j_{0}i_{4}} (k_{3}): \hat{\bar{X}}_{\sigma,u}^{\epsilon, i_{2}}(k_{1}) \hat{\bar{X}}_{\sigma,u}^{\epsilon, i_{3}}(k_{2}):     
\end{align}
\end{subequations}
where terms corresponding to $l_{k_{1}k_{2}k_{3}k_{4}, i_{2}i_{3}j_{0}j_{1}, s\sigma t}^{2i}$ for $i\in\{2,3\}$ vanish due to $k_{12}$ in \eqref{estimate 184}. Third, within \eqref{estimate 184} 
\begin{align}
\hat{X}_{\sigma, b}^{\epsilon, i_{2}}(k_{1}) \hat{X}_{\sigma, b}^{\epsilon, i_{3}}(k_{2}) \hat{X}_{s,u}^{\epsilon, j_{1}}(k_{3}) \hat{X}_{t,b}^{\epsilon, j_{0}}(k_{4}) = \sum_{i=1}^{10} l_{k_{1}k_{2}k_{3}k_{4}, i_{2}i_{3}j_{0}j_{1}, s\sigma t}^{3i}
\end{align}
where 
\begin{subequations}
\begin{align}
l_{k_{1}k_{2}k_{3}k_{4}, i_{2}i_{3}j_{0}j_{1}, s\sigma t}^{31} &\triangleq : \hat{X}_{\sigma, b}^{\epsilon, i_{2}}(k_{1}) \hat{X}_{\sigma, b}^{\epsilon, i_{3}}(k_{2}) \hat{X}_{s,u}^{\epsilon, j_{1}}(k_{3}) \hat{X}_{t,b}^{\epsilon, j_{0}}(k_{4}):, \\
l_{k_{1}k_{2}k_{3}k_{4}, i_{2}i_{3}j_{0}j_{1}, s\sigma t}^{32} &\triangleq 1_{k_{12} = 0} \sum_{i_{4} =1}^{3} \frac{ h_{b}(\epsilon k_{1})^{2}}{2 \lvert k_{1} \rvert^{2} f(\epsilon k_{1})} \nonumber\\
& \hspace{5mm} \times \hat{\mathcal{P}}^{i_{2} i_{4}}(k_{1}) \hat{\mathcal{P}}^{i_{3}i_{4}} (k_{1}): \hat{X}_{s,u}^{\epsilon, j_{1}}(k_{3}) \hat{X}_{t,b}^{\epsilon, j_{0}}(k_{4}):,\\
l_{k_{1}k_{2}k_{3}k_{4}, i_{2}i_{3}j_{0}j_{1}, s\sigma t}^{33} &\triangleq 1_{k_{12} = 0, k_{34} = 0} \sum_{i_{4}, i_{5} =1}^{3} \frac{ h_{b}(\epsilon k_{1})^{2}}{2\lvert k_{1} \rvert^{2} f(\epsilon k_{1})} \hat{\mathcal{P}}^{i_{2} i_{4} }(k_{1}) \hat{\mathcal{P}}^{i_{3}i_{4}}(k_{1}) \nonumber\\
&  \times  \frac{ e^{ - \lvert k_{3} \rvert^{2} f(\epsilon k_{3}) (t-s)} h_{u}(\epsilon k_{3}) h_{b}(\epsilon k_{3})}{2\lvert k_{3} \rvert^{2} f(\epsilon k_{3})} \hat{\mathcal{P}}^{j_{1}i_{5}} (k_{3}) \hat{\mathcal{P}}^{j_{0} i_{5}}(k_{3}), \\ 
l_{k_{1}k_{2}k_{3}k_{4}, i_{2}i_{3}j_{0}j_{1}, s\sigma t}^{34} &\triangleq 1_{k_{13} = 0} \sum_{i_{4} =1}^{3} \frac{ e^{- \lvert k_{1}\rvert^{2} f(\epsilon k_{1}) (s-\sigma)} h_{b}(\epsilon k_{1}) h_{u}(\epsilon k_{1})}{2 \lvert k_{1} \rvert^{2} f(\epsilon k_{1})} \nonumber\\
& \hspace{5mm} \times \hat{\mathcal{P}}^{i_{2}i_{4}}(k_{1}) \hat{\mathcal{P}}^{j_{1}i_{4}}(k_{1}) : \hat{X}_{\sigma, b}^{\epsilon, i_{3}}(k_{2}) \hat{X}_{t,b}^{\epsilon, j_{0}}(k_{4}):, \\
l_{k_{1}k_{2}k_{3}k_{4}, i_{2}i_{3}j_{0}j_{1}, s\sigma t}^{35} &\triangleq 1_{k_{13} = 0, k_{24} = 0} \sum_{i_{4}, i_{5} = 1}^{3} \frac{ e^{- \lvert k_{1}  \rvert^{2} f(\epsilon k_{1}) (s-\sigma)} h_{b}(\epsilon k_{1}) h_{u}(\epsilon k_{1})}{2 \lvert k_{1} \rvert^{2} f(\epsilon k_{1})} \nonumber\\
& \times  \hat{\mathcal{P}}^{i_{2}i_{4}}(k_{1}) \hat{\mathcal{P}}^{j_{1}i_{4}}(k_{1})  \frac{ e^{- \lvert k_{2} \rvert^{2} f(\epsilon k_{2}) (t-\sigma)} h_{b}(\epsilon k_{2})^{2}}{2 \lvert k_{2} \rvert^{2} f(\epsilon k_{2})} \nonumber\\
& \hspace{5mm} \times \hat{\mathcal{P}}^{i_{3}i_{5}}(k_{2}) \hat{\mathcal{P}}^{j_{0} i_{5}}(k_{2}), \\
l_{k_{1}k_{2}k_{3}k_{4}, i_{2}i_{3}j_{0}j_{1}, s\sigma t}^{36} &\triangleq 1_{k_{14} = 0} \sum_{i_{4} =1}^{3} \frac{ e^{- \lvert k_{1} \rvert^{2} f(\epsilon k_{1}) (t-\sigma)} h_{b}(\epsilon k_{1})^{2}}{2 \lvert k_{1} \rvert^{2} f(\epsilon k_{1})} \nonumber\\
& \times  \hat{\mathcal{P}}^{i_{2}i_{4}}(k_{1}) \hat{\mathcal{P}}^{j_{0}i_{4}}(k_{1}) : \hat{X}_{\sigma, b}^{\epsilon, i_{3}}(k_{2}) \hat{X}_{s,u}^{\epsilon, j_{1}}(k_{3}):, \\
l_{k_{1}k_{2}k_{3}k_{4}, i_{2}i_{3}j_{0}j_{1}, s\sigma t}^{37} &\triangleq 1_{k_{14} = 0, k_{23} = 0} \sum_{i_{4}, i_{5} = 1}^{3} \frac{ e^{- \lvert k_{1} \rvert^{2} f(\epsilon k_{1}) (t-\sigma)} h_{b}(\epsilon k_{1})^{2}}{2 \lvert k_{1} \rvert^{2} f(\epsilon k_{1})} \nonumber\\
& \hspace{5mm} \times  \hat{\mathcal{P}}^{i_{2}i_{4}}(k_{1}) \hat{\mathcal{P}}^{j_{0}i_{4}}(k_{1})  \frac{ e^{- \lvert k_{2} \rvert^{2} f(\epsilon k_{2}) (s-\sigma)} h_{b}(\epsilon k_{2}) h_{u}(\epsilon k_{2}) }{2 \lvert k_{2} \rvert^{2} f(\epsilon k_{2})}  \nonumber\\
& \hspace{10mm} \times \hat{\mathcal{P}}^{i_{3}i_{5}}(k_{2}) \hat{\mathcal{P}}^{j_{1} i_{5}}(k_{2}), \\
l_{k_{1}k_{2}k_{3}k_{4}, i_{2}i_{3}j_{0}j_{1}, s\sigma t}^{38} &\triangleq 1_{k_{23} = 0} \sum_{i_{4} =1}^{3} \frac{e^{- \lvert k_{2} \rvert^{2} f(\epsilon k_{2}) (s-\sigma)} h_{b}(\epsilon k_{2}) h_{u}(\epsilon k_{2})}{2 \lvert k_{2} \rvert^{2} f(\epsilon k_{2})} \nonumber\\
& \times \hat{\mathcal{P}}^{i_{3} i_{4}}(k_{2}) \hat{\mathcal{P}}^{j_{1}i_{4}} (k_{2}): \hat{X}_{\sigma,b}^{\epsilon, i_{2}}(k_{1}) \hat{X}_{t,b}^{\epsilon, j_{0}}(k_{4}):, \\
l_{k_{1}k_{2}k_{3}k_{4}, i_{2}i_{3}j_{0}j_{1}, s\sigma t}^{39} &\triangleq 1_{k_{24} = 0} \sum_{i_{4} =1}^{3} \frac{e^{- \lvert k_{2} \rvert^{2} f(\epsilon k_{2}) (t-\sigma)} h_{b}(\epsilon k_{2})^{2}}{2 \lvert k_{2} \rvert^{2} f(\epsilon k_{2})} \nonumber\\
& \times  \hat{\mathcal{P}}^{i_{3} i_{4}}(k_{2}) \hat{\mathcal{P}}^{j_{0}i_{4}} (k_{2}): \hat{X}_{\sigma,b}^{\epsilon, i_{2}}(k_{1}) \hat{X}_{s,u}^{\epsilon, j_{1}}(k_{3}):, \\
l_{k_{1}k_{2}k_{3}k_{4}, i_{2}i_{3}j_{0}j_{1}, s\sigma t}^{310} &\triangleq 1_{k_{34} = 0} \sum_{i_{4} =1}^{3} \frac{e^{- \lvert k_{3} \rvert^{2} f(\epsilon k_{3}) (t- s)} h_{u}(\epsilon k_{3})h_{b}(\epsilon k_{3})}{2 \lvert k_{3} \rvert^{2} f(\epsilon k_{3})} \nonumber\\
& \times \hat{\mathcal{P}}^{j_{1} i_{4}}(k_{3}) \hat{\mathcal{P}}^{j_{0}i_{4}} (k_{3}): \hat{X}_{\sigma,b}^{\epsilon, i_{2}}(k_{1}) \hat{X}_{\sigma,b}^{\epsilon, i_{3}}(k_{2}):     
\end{align}
\end{subequations} 
where the terms corresponding to $l_{k_{1}k_{2}k_{3}k_{4}, i_{2}i_{3}j_{0}j_{1}, s\sigma t}^{3i}$ for $i \in \{2,3\}$ vanish due to $k_{12}$ in \eqref{estimate 184}. Similarly, fourth, within \eqref{estimate 184},  we see that 
\begin{equation}
\hat{\bar{X}}_{\sigma, b}^{\epsilon, i_{2}}(k_{1}) \hat{\bar{X}}_{\sigma, b}^{\epsilon, i_{3}}(k_{2}) \hat{\bar{X}}_{s,u}^{\epsilon, j_{1}}(k_{3}) \hat{\bar{X}}_{t,b}^{\epsilon, j_{0}}(k_{4}) = \sum_{i=1}^{10} l_{k_{1}k_{2}k_{3}k_{4}, i_{2}i_{3}j_{0}j_{1}, s\sigma t}^{4i}
\end{equation}
where 
\begin{subequations} 
\begin{align}
l_{k_{1}k_{2}k_{3}k_{4}, i_{2}i_{3}j_{0}j_{1}, s\sigma t}^{41} \triangleq& : \hat{\bar{X}}_{\sigma, b}^{\epsilon, i_{2}}(k_{1}) \hat{\bar{X}}_{\sigma, b}^{\epsilon, i_{3}}(k_{2}) \hat{\bar{X}}_{s,u}^{\epsilon, j_{1}}(k_{3}) \hat{\bar{X}}_{t,b}^{\epsilon, j_{0}}(k_{4}):, \\
l_{k_{1}k_{2}k_{3}k_{4}, i_{2}i_{3}j_{0}j_{1}, s\sigma t}^{42} \triangleq& 1_{k_{12} = 0} \sum_{i_{4} =1}^{3} \frac{ h_{b}(\epsilon k_{1})^{2}}{2 \lvert k_{1} \rvert^{2}} \nonumber\\
& \hspace{5mm} \times \hat{\mathcal{P}}^{i_{2} i_{4}}(k_{1}) \hat{\mathcal{P}}^{i_{3}i_{4}} (k_{1}): \hat{\bar{X}}_{s,u}^{\epsilon, j_{1}}(k_{3}) \hat{\bar{X}}_{t,b}^{\epsilon, j_{0}}(k_{4}):,\\
l_{k_{1}k_{2}k_{3}k_{4}, i_{2}i_{3}j_{0}j_{1}, s\sigma t}^{43} \triangleq& 1_{k_{12} = 0, k_{34} = 0} \sum_{i_{4}, i_{5} =1}^{3} \frac{ h_{b}(\epsilon k_{1})^{2}}{2\lvert k_{1} \rvert^{2}} \hat{\mathcal{P}}^{i_{2} i_{4} }(k_{1}) \hat{\mathcal{P}}^{i_{3}i_{4}}(k_{1}) \nonumber\\
& \hspace{5mm} \times  \frac{ e^{ - \lvert k_{3} \rvert^{2} (t-s)} h_{u}(\epsilon k_{3}) h_{b}(\epsilon k_{3})}{2\lvert k_{3} \rvert^{2}} \hat{\mathcal{P}}^{j_{1}i_{5}} (k_{3}) \hat{\mathcal{P}}^{j_{0} i_{5}}(k_{3}), \\ 
l_{k_{1}k_{2}k_{3}k_{4}, i_{2}i_{3}j_{0}j_{1}, s\sigma t}^{44} \triangleq& 1_{k_{13} = 0} \sum_{i_{4} =1}^{3} \frac{ e^{- \lvert k_{1}\rvert^{2}  (s-\sigma)} h_{b}(\epsilon k_{1}) h_{u}(\epsilon k_{1})}{2 \lvert k_{1} \rvert^{2} } \nonumber\\
& \hspace{5mm} \times \hat{\mathcal{P}}^{i_{2}i_{4}}(k_{1}) \hat{\mathcal{P}}^{j_{1}i_{4}}(k_{1}) : \hat{\bar{X}}_{\sigma, b}^{\epsilon, i_{3}}(k_{2}) \hat{\bar{X}}_{t,b}^{\epsilon, j_{0}}(k_{4}):, \\
l_{k_{1}k_{2}k_{3}k_{4}, i_{2}i_{3}j_{0}j_{1}, s\sigma t}^{45} \triangleq& 1_{k_{13} = 0, k_{24} = 0} \sum_{i_{4}, i_{5} = 1}^{3} \frac{ e^{- \lvert k_{1}  \rvert^{2} (s-\sigma)} h_{b}(\epsilon k_{1}) h_{u}(\epsilon k_{1})}{2 \lvert k_{1} \rvert^{2}} \hat{\mathcal{P}}^{i_{2}i_{4}}(k_{1}) \nonumber\\
& \times  \hat{\mathcal{P}}^{j_{1}i_{4}}(k_{1})  \frac{ e^{- \lvert k_{2} \rvert^{2} (t-\sigma)} h_{b}(\epsilon k_{2})^{2}}{2 \lvert k_{2} \rvert^{2}} \hat{\mathcal{P}}^{i_{3}i_{5}}(k_{2}) \hat{\mathcal{P}}^{j_{0} i_{5}}(k_{2}), \\
l_{k_{1}k_{2}k_{3}k_{4}, i_{2}i_{3}j_{0}j_{1}, s\sigma t}^{46} \triangleq& 1_{k_{14} = 0} \sum_{i_{4} =1}^{3} \frac{ e^{- \lvert k_{1} \rvert^{2} (t-\sigma)} h_{b}(\epsilon k_{1})^{2}}{2 \lvert k_{1} \rvert^{2}} \nonumber\\
& \hspace{5mm} \times \hat{\mathcal{P}}^{i_{2}i_{4}}(k_{1}) \hat{\mathcal{P}}^{j_{0}i_{4}}(k_{1}) : \hat{\bar{X}}_{\sigma, b}^{\epsilon, i_{3}}(k_{2}) \hat{\bar{X}}_{s,u}^{\epsilon, j_{1}}(k_{3}):, \\
l_{k_{1}k_{2}k_{3}k_{4}, i_{2}i_{3}j_{0}j_{1}, s\sigma t}^{47} \triangleq& 1_{k_{14} = 0, k_{23} = 0} \sum_{i_{4}, i_{5} = 1}^{3} \frac{ e^{- \lvert k_{1} \rvert^{2} (t-\sigma)} h_{b}(\epsilon k_{1})^{2}}{2 \lvert k_{1} \rvert^{2}} \hat{\mathcal{P}}^{i_{2}i_{4}}(k_{1}) \hat{\mathcal{P}}^{j_{0}i_{4}}(k_{1}) \nonumber\\
& \hspace{5mm} \times \frac{ e^{- \lvert k_{2} \rvert^{2} (s-\sigma)} h_{b}(\epsilon k_{2}) h_{u}(\epsilon k_{2}) }{2 \lvert k_{2} \rvert^{2}} \hat{\mathcal{P}}^{i_{3}i_{5}}(k_{2}) \hat{\mathcal{P}}^{j_{1} i_{5}}(k_{2}), \\
l_{k_{1}k_{2}k_{3}k_{4}, i_{2}i_{3}j_{0}j_{1}, s\sigma t}^{48} \triangleq& 1_{k_{23} = 0} \sum_{i_{4} =1}^{3} \frac{e^{- \lvert k_{2} \rvert^{2} (s-\sigma)} h_{b}(\epsilon k_{2}) h_{u}(\epsilon k_{2})}{2 \lvert k_{2} \rvert^{2}} \nonumber\\
& \hspace{5mm} \times \hat{\mathcal{P}}^{i_{3} i_{4}}(k_{2}) \hat{\mathcal{P}}^{j_{1}i_{4}} (k_{2}): \hat{\bar{X}}_{\sigma,b}^{\epsilon, i_{2}}(k_{1}) \hat{\bar{X}}_{t,b}^{\epsilon, j_{0}}(k_{4}):, \\
l_{k_{1}k_{2}k_{3}k_{4}, i_{2}i_{3}j_{0}j_{1}, s\sigma t}^{49} \triangleq& 1_{k_{24} = 0} \sum_{i_{4} =1}^{3} \frac{e^{- \lvert k_{2} \rvert^{2} (t-\sigma)} h_{b}(\epsilon k_{2})^{2}}{2 \lvert k_{2} \rvert^{2}} \nonumber\\
& \hspace{5mm} \times \hat{\mathcal{P}}^{i_{3} i_{4}}(k_{2}) \hat{\mathcal{P}}^{j_{0}i_{4}} (k_{2}): \hat{\bar{X}}_{\sigma,b}^{\epsilon, i_{2}}(k_{1}) \hat{\bar{X}}_{s,u}^{\epsilon, j_{1}}(k_{3}):, \\
l_{k_{1}k_{2}k_{3}k_{4}, i_{2}i_{3}j_{0}j_{1}, s\sigma t}^{410} \triangleq& 1_{k_{34} = 0} \sum_{i_{4} =1}^{3} \frac{e^{- \lvert k_{3} \rvert^{2} (t- s)} h_{u}(\epsilon k_{3})h_{b}(\epsilon k_{3})}{2 \lvert k_{3} \rvert^{2}}  \nonumber\\
& \times \hat{\mathcal{P}}^{j_{1} i_{4}}(k_{3}) \hat{\mathcal{P}}^{j_{0}i_{4}} (k_{3}): \hat{\bar{X}}_{\sigma,b}^{\epsilon, i_{2}}(k_{1}) \hat{\bar{X}}_{\sigma,b}^{\epsilon, i_{3}}(k_{2}):     
\end{align}
\end{subequations} 
where the terms corresponding to $l_{k_{1}k_{2}k_{3}k_{4}, i_{2}i_{3}j_{0}j_{1}, s\sigma t}^{4i}$ for $i\in \{2,3\}$ vanish due to $k_{12}$ in \eqref{estimate 184}. Fifth, within \eqref{estimate 184} 
\begin{align}
\hat{X}_{\sigma, b}^{\epsilon, i_{2}}(k_{1}) \hat{X}_{\sigma, u}^{\epsilon, i_{3}}(k_{2}) \hat{X}_{s,b}^{\epsilon, j_{1}}(k_{3}) \hat{X}_{t,b}^{\epsilon, j_{0}}(k_{4}) = \sum_{i=1}^{10} l_{k_{1}k_{2}k_{3}k_{4}, i_{2}i_{3}j_{0}j_{1}, s\sigma t}^{5i}
\end{align}
where  
\begin{subequations}
\begin{align}
l_{k_{1}k_{2}k_{3}k_{4}, i_{2}i_{3}j_{0}j_{1}, s\sigma t}^{51} &\triangleq : \hat{X}_{\sigma, b}^{\epsilon, i_{2}}(k_{1}) \hat{X}_{\sigma, u}^{\epsilon, i_{3}}(k_{2}) \hat{X}_{s,b}^{\epsilon, j_{1}}(k_{3}) \hat{X}_{t,b}^{\epsilon, j_{0}}(k_{4}):, \\
l_{k_{1}k_{2}k_{3}k_{4}, i_{2}i_{3}j_{0}j_{1}, s\sigma t}^{52} &\triangleq 1_{k_{12} = 0} \sum_{i_{4} =1}^{3} \frac{ h_{b}(\epsilon k_{1})h_{u}(\epsilon k_{1})}{2 \lvert k_{1} \rvert^{2} f(\epsilon k_{1})} \nonumber\\
& \hspace{5mm} \times \hat{\mathcal{P}}^{i_{2} i_{4}}(k_{1}) \hat{\mathcal{P}}^{i_{3}i_{4}} (k_{1}): \hat{X}_{s,b}^{\epsilon, j_{1}}(k_{3}) \hat{X}_{t,b}^{\epsilon, j_{0}}(k_{4}):,\\
l_{k_{1}k_{2}k_{3}k_{4}, i_{2}i_{3}j_{0}j_{1}, s\sigma t}^{53} &\triangleq 1_{k_{12} = 0, k_{34} = 0} \sum_{i_{4}, i_{5} =1}^{3} \frac{ h_{b}(\epsilon k_{1})h_{u}(\epsilon k_{1})}{2\lvert k_{1} \rvert^{2} f(\epsilon k_{1})} \hat{\mathcal{P}}^{i_{2} i_{4} }(k_{1}) \hat{\mathcal{P}}^{i_{3}i_{4}}(k_{1}) \nonumber\\
& \hspace{5mm} \times  \frac{ e^{ - \lvert k_{3} \rvert^{2} f(\epsilon k_{3}) (t-s)} h_{b}(\epsilon k_{3})^{2}}{2\lvert k_{3} \rvert^{2} f(\epsilon k_{3})} \hat{\mathcal{P}}^{j_{1}i_{5}} (k_{3}) \hat{\mathcal{P}}^{j_{0} i_{5}}(k_{3}), \\ 
l_{k_{1}k_{2}k_{3}k_{4}, i_{2}i_{3}j_{0}j_{1}, s\sigma t}^{54} &\triangleq 1_{k_{13} = 0} \sum_{i_{4} =1}^{3} \frac{ e^{- \lvert k_{1}\rvert^{2} f(\epsilon k_{1}) (s-\sigma)} h_{b}(\epsilon k_{1})^{2}}{2 \lvert k_{1} \rvert^{2} f(\epsilon k_{1})} \nonumber\\
& \hspace{5mm}  \hat{\mathcal{P}}^{i_{2}i_{4}}(k_{1}) \hat{\mathcal{P}}^{j_{1}i_{4}}(k_{1}) : \hat{X}_{\sigma, u}^{\epsilon, i_{3}}(k_{2}) \hat{X}_{t,b}^{\epsilon, j_{0}}(k_{4}):, \\
l_{k_{1}k_{2}k_{3}k_{4}, i_{2}i_{3}j_{0}j_{1}, s\sigma t}^{55} &\triangleq 1_{k_{13} = 0, k_{24} = 0} \sum_{i_{4}, i_{5} = 1}^{3} \frac{ e^{- \lvert k_{1}  \rvert^{2} f(\epsilon k_{1}) (s-\sigma)} h_{b}(\epsilon k_{1})^{2}}{2 \lvert k_{1} \rvert^{2} f(\epsilon k_{1})} \nonumber\\
& \hspace{5mm} \times  \hat{\mathcal{P}}^{i_{2}i_{4}}(k_{1})  \hat{\mathcal{P}}^{j_{1}i_{4}}(k_{1}) \frac{ e^{- \lvert k_{2} \rvert^{2} f(\epsilon k_{2}) (t-\sigma)} h_{u}(\epsilon k_{2}) h_{b}(\epsilon k_{2})}{2 \lvert k_{2} \rvert^{2} f(\epsilon k_{2})} \nonumber\\
& \hspace{10mm} \times  \hat{\mathcal{P}}^{i_{3}i_{5}}(k_{2}) \hat{\mathcal{P}}^{j_{0} i_{5}}(k_{2}),  \\
l_{k_{1}k_{2}k_{3}k_{4}, i_{2}i_{3}j_{0}j_{1}, s\sigma t}^{56} &\triangleq 1_{k_{14} = 0} \sum_{i_{4} =1}^{3} \frac{ e^{- \lvert k_{1} \rvert^{2} f(\epsilon k_{1}) (t-\sigma)} h_{b}(\epsilon k_{1})^{2}}{2 \lvert k_{1} \rvert^{2} f(\epsilon k_{1})} \nonumber\\
& \hspace{5mm}  \times \hat{\mathcal{P}}^{i_{2}i_{4}}(k_{1}) \hat{\mathcal{P}}^{j_{0}i_{4}}(k_{1}) : \hat{X}_{\sigma, u}^{\epsilon, i_{3}}(k_{2}) \hat{X}_{s,b}^{\epsilon, j_{1}}(k_{3}):, \\
l_{k_{1}k_{2}k_{3}k_{4}, i_{2}i_{3}j_{0}j_{1}, s\sigma t}^{57} &\triangleq 1_{k_{14} = 0, k_{23} = 0} \sum_{i_{4}, i_{5} = 1}^{3} \frac{ e^{- \lvert k_{1} \rvert^{2} f(\epsilon k_{1}) (t-\sigma)} h_{b}(\epsilon k_{1})^{2}}{2 \lvert k_{1} \rvert^{2} f(\epsilon k_{1})} \nonumber\\
& \times  \hat{\mathcal{P}}^{i_{2}i_{4}}(k_{1}) \hat{\mathcal{P}}^{j_{0}i_{4}}(k_{1})  \frac{ e^{- \lvert k_{2} \rvert^{2} f(\epsilon k_{2}) (s-\sigma)} h_{b}(\epsilon k_{2}) h_{u}(\epsilon k_{2}) }{2 \lvert k_{2} \rvert^{2} f(\epsilon k_{2})} \nonumber\\
&  \hspace{5mm} \times \hat{\mathcal{P}}^{i_{3}i_{5}}(k_{2}) \hat{\mathcal{P}}^{j_{1} i_{5}}(k_{2}), \\
l_{k_{1}k_{2}k_{3}k_{4}, i_{2}i_{3}j_{0}j_{1}, s\sigma t}^{58} &\triangleq 1_{k_{23} = 0} \sum_{i_{4} =1}^{3} \frac{e^{- \lvert k_{2} \rvert^{2} f(\epsilon k_{2}) (s-\sigma)} h_{b}(\epsilon k_{2}) h_{u}(\epsilon k_{2})}{2 \lvert k_{2} \rvert^{2} f(\epsilon k_{2})}  \nonumber\\
& \hspace{5mm} \times \hat{\mathcal{P}}^{i_{3} i_{4}}(k_{2}) \hat{\mathcal{P}}^{j_{1}i_{4}} (k_{2}): \hat{X}_{\sigma,b}^{\epsilon, i_{2}}(k_{1}) \hat{X}_{t,b}^{\epsilon, j_{0}}(k_{4}):, \\
l_{k_{1}k_{2}k_{3}k_{4}, i_{2}i_{3}j_{0}j_{1}, s\sigma t}^{59} &\triangleq 1_{k_{24} = 0} \sum_{i_{4} =1}^{3} \frac{e^{- \lvert k_{2} \rvert^{2} f(\epsilon k_{2}) (t-\sigma)}h_{u}(\epsilon k_{2}) h_{b}(\epsilon k_{2})}{2 \lvert k_{2} \rvert^{2} f(\epsilon k_{2})} \nonumber\\
& \hspace{5mm} \times \hat{\mathcal{P}}^{i_{3} i_{4}}(k_{2}) \hat{\mathcal{P}}^{j_{0}i_{4}} (k_{2}): \hat{X}_{\sigma,b}^{\epsilon, i_{2}}(k_{1}) \hat{X}_{s,b}^{\epsilon, j_{1}}(k_{3}):, \\
l_{k_{1}k_{2}k_{3}k_{4}, i_{2}i_{3}j_{0}j_{1}, s\sigma t}^{510} &\triangleq 1_{k_{34} = 0} \sum_{i_{4} =1}^{3} \frac{e^{- \lvert k_{3} \rvert^{2} f(\epsilon k_{3}) (t- s)} h_{b}(\epsilon k_{3})^{2}}{2 \lvert k_{3} \rvert^{2} f(\epsilon k_{3})} \nonumber\\
& \hspace{5mm} \times \hat{\mathcal{P}}^{j_{1} i_{4}}(k_{3}) \hat{\mathcal{P}}^{j_{0}i_{4}} (k_{3}): \hat{X}_{\sigma,b}^{\epsilon, i_{2}}(k_{1}) \hat{X}_{\sigma,u}^{\epsilon, i_{3}}(k_{2}):     
\end{align}
\end{subequations} 
where the terms corresponding to $l_{k_{1}k_{2}k_{3}k_{4}, i_{2}i_{3}j_{0}j_{1}, s\sigma t}^{5i}$ for $i\in \{2,3\}$ vanish due to $k_{12}$ within \eqref{estimate 184}. Similarly, sixth, within \eqref{estimate 184}, we see that 
\begin{equation}
 \hat{\bar{X}}_{\sigma, b}^{\epsilon, i_{2}}(k_{1}) \hat{\bar{X}}_{\sigma, u}^{\epsilon, i_{3}}(k_{2}) \hat{\bar{X}}_{s,b}^{\epsilon, j_{1}}(k_{3}) \hat{\bar{X}}_{t,b}^{\epsilon, j_{0}}(k_{4})  = \sum_{i=1}^{10}l_{k_{1}k_{2}k_{3}k_{4}, i_{2}i_{3}j_{0}j_{1}, s\sigma t}^{6i}
\end{equation}
where  
\begin{subequations}
\begin{align} 
l_{k_{1}k_{2}k_{3}k_{4}, i_{2}i_{3}j_{0}j_{1}, s\sigma t}^{61} \triangleq& : \hat{\bar{X}}_{\sigma, b}^{\epsilon, i_{2}}(k_{1}) \hat{\bar{X}}_{\sigma, u}^{\epsilon, i_{3}}(k_{2}) \hat{\bar{X}}_{s,b}^{\epsilon, j_{1}}(k_{3}) \hat{\bar{X}}_{t,b}^{\epsilon, j_{0}}(k_{4}):, \\
l_{k_{1}k_{2}k_{3}k_{4}, i_{2}i_{3}j_{0}j_{1}, s\sigma t}^{62} \triangleq& 1_{k_{12} = 0} \sum_{i_{4} =1}^{3} \frac{ h_{b}(\epsilon k_{1})h_{u}(\epsilon k_{1})}{2 \lvert k_{1} \rvert^{2}}  \nonumber\\
& \hspace{5mm} \times \hat{\mathcal{P}}^{i_{2} i_{4}}(k_{1}) \hat{\mathcal{P}}^{i_{3}i_{4}} (k_{1}): \hat{\bar{X}}_{s,b}^{\epsilon, j_{1}}(k_{3}) \hat{\bar{X}}_{t,b}^{\epsilon, j_{0}}(k_{4}):,\\
l_{k_{1}k_{2}k_{3}k_{4}, i_{2}i_{3}j_{0}j_{1}, s\sigma t}^{63} \triangleq& 1_{k_{12} = 0, k_{34} = 0} \sum_{i_{4}, i_{5} =1}^{3} \frac{ h_{b}(\epsilon k_{1})h_{u}(\epsilon k_{1})}{2\lvert k_{1} \rvert^{2}} \hat{\mathcal{P}}^{i_{2} i_{4} }(k_{1}) \hat{\mathcal{P}}^{i_{3}i_{4}}(k_{1}) \nonumber\\
& \hspace{5mm} \times  \frac{ e^{ - \lvert k_{3} \rvert^{2} (t-s)} h_{b}(\epsilon k_{3})^{2}}{2\lvert k_{3} \rvert^{2}} \hat{\mathcal{P}}^{j_{1}i_{5}} (k_{3}) \hat{\mathcal{P}}^{j_{0} i_{5}}(k_{3}), \\ 
l_{k_{1}k_{2}k_{3}k_{4}, i_{2}i_{3}j_{0}j_{1}, s\sigma t}^{64} \triangleq& 1_{k_{13} = 0} \sum_{i_{4} =1}^{3} \frac{ e^{- \lvert k_{1}\rvert^{2} (s-\sigma)} h_{b}(\epsilon k_{1})^{2}}{2 \lvert k_{1} \rvert^{2}} \nonumber \\
& \hspace{5mm} \times \hat{\mathcal{P}}^{i_{2}i_{4}}(k_{1}) \hat{\mathcal{P}}^{j_{1}i_{4}}(k_{1}) : \hat{\bar{X}}_{\sigma, u}^{\epsilon, i_{3}}(k_{2}) \hat{\bar{X}}_{t,b}^{\epsilon, j_{0}}(k_{4}):, \\
l_{k_{1}k_{2}k_{3}k_{4}, i_{2}i_{3}j_{0}j_{1}, s\sigma t}^{65} \triangleq& 1_{k_{13} = 0, k_{24} = 0} \sum_{i_{4}, i_{5} = 1}^{3} \frac{ e^{- \lvert k_{1}  \rvert^{2} (s-\sigma)} h_{b}(\epsilon k_{1})^{2}}{2 \lvert k_{1} \rvert^{2} } \hat{\mathcal{P}}^{i_{2}i_{4}}(k_{1}) \hat{\mathcal{P}}^{j_{1}i_{4}}(k_{1}) \nonumber\\
& \hspace{5mm} \times \frac{ e^{- \lvert k_{2} \rvert^{2} (t-\sigma)} h_{u}(\epsilon k_{2}) h_{b}(\epsilon k_{2})}{2 \lvert k_{2} \rvert^{2}} \hat{\mathcal{P}}^{i_{3}i_{5}}(k_{2}) \hat{\mathcal{P}}^{j_{0} i_{5}}(k_{2}), \\
l_{k_{1}k_{2}k_{3}k_{4}, i_{2}i_{3}j_{0}j_{1}, s\sigma t}^{66} \triangleq& 1_{k_{14} = 0} \sum_{i_{4} =1}^{3} \frac{ e^{- \lvert k_{1} \rvert^{2} (t-\sigma)} h_{b}(\epsilon k_{1})^{2}}{2 \lvert k_{1} \rvert^{2}} \nonumber\\
& \hspace{5mm} \times \hat{\mathcal{P}}^{i_{2}i_{4}}(k_{1}) \hat{\mathcal{P}}^{j_{0}i_{4}}(k_{1}) : \hat{\bar{X}}_{\sigma, u}^{\epsilon, i_{3}}(k_{2}) \hat{\bar{X}}_{s,b}^{\epsilon, j_{1}}(k_{3}):, \\
l_{k_{1}k_{2}k_{3}k_{4}, i_{2}i_{3}j_{0}j_{1}, s\sigma t}^{67} \triangleq& 1_{k_{14} = 0, k_{23} = 0} \sum_{i_{4}, i_{5} = 1}^{3} \frac{ e^{- \lvert k_{1} \rvert^{2} (t-\sigma)} h_{b}(\epsilon k_{1})^{2}}{2 \lvert k_{1} \rvert^{2}} \hat{\mathcal{P}}^{i_{2}i_{4}}(k_{1}) \hat{\mathcal{P}}^{j_{0}i_{4}}(k_{1}) \nonumber\\
& \hspace{5mm} \times \frac{ e^{- \lvert k_{2} \rvert^{2} (s-\sigma)} h_{b}(\epsilon k_{2}) h_{u}(\epsilon k_{2}) }{2 \lvert k_{2} \rvert^{2}} \hat{\mathcal{P}}^{i_{3}i_{5}}(k_{2}) \hat{\mathcal{P}}^{j_{1} i_{5}}(k_{2}), \\
l_{k_{1}k_{2}k_{3}k_{4}, i_{2}i_{3}j_{0}j_{1}, s\sigma t}^{68} \triangleq& 1_{k_{23} = 0} \sum_{i_{4} =1}^{3} \frac{e^{- \lvert k_{2} \rvert^{2} (s-\sigma)} h_{b}(\epsilon k_{2}) h_{u}(\epsilon k_{2})}{2 \lvert k_{2} \rvert^{2}} \nonumber\\
& \hspace{5mm} \times  \hat{\mathcal{P}}^{i_{3} i_{4}}(k_{2}) \hat{\mathcal{P}}^{j_{1}i_{4}} (k_{2}): \hat{\bar{X}}_{\sigma,b}^{\epsilon, i_{2}}(k_{1}) \hat{\bar{X}}_{t,b}^{\epsilon, j_{0}}(k_{4}):, \\
l_{k_{1}k_{2}k_{3}k_{4}, i_{2}i_{3}j_{0}j_{1}, s\sigma t}^{69} \triangleq& 1_{k_{24} = 0} \sum_{i_{4} =1}^{3} \frac{e^{- \lvert k_{2} \rvert^{2} (t-\sigma)}h_{u}(\epsilon k_{2}) h_{b}(\epsilon k_{2})}{2 \lvert k_{2} \rvert^{2}} \nonumber\\
& \hspace{5mm} \times \hat{\mathcal{P}}^{i_{3} i_{4}}(k_{2}) \hat{\mathcal{P}}^{j_{0}i_{4}} (k_{2}): \hat{\bar{X}}_{\sigma,b}^{\epsilon, i_{2}}(k_{1}) \hat{\bar{X}}_{s,b}^{\epsilon, j_{1}}(k_{3}):, \\
l_{k_{1}k_{2}k_{3}k_{4}, i_{2}i_{3}j_{0}j_{1}, s\sigma t}^{610} \triangleq& 1_{k_{34} = 0} \sum_{i_{4} =1}^{3} \frac{e^{- \lvert k_{3} \rvert^{2} (t- s)} h_{b}(\epsilon k_{3})^{2}}{2 \lvert k_{3} \rvert^{2}} \nonumber\\
& \hspace{5mm} \times \hat{\mathcal{P}}^{j_{1} i_{4}}(k_{3})  \hat{\mathcal{P}}^{j_{0}i_{4}} (k_{3}): \hat{\bar{X}}_{\sigma,b}^{\epsilon, i_{2}}(k_{1}) \hat{\bar{X}}_{\sigma,u}^{\epsilon, i_{3}}(k_{2}):     
\end{align}
\end{subequations} 
where the terms corresponding to $l_{k_{1}k_{2}k_{3}k_{4}, i_{2}i_{3}j_{0}j_{1}, s\sigma t}^{6i}$ for $i\in\{2,3\}$ vanish due to $k_{12}$ in \eqref{estimate 184}. Seventh, within \eqref{estimate 184} 
\begin{align}
\hat{X}_{\sigma, u}^{\epsilon, i_{2}}(k_{1}) \hat{X}_{\sigma, b}^{\epsilon, i_{3}}(k_{2}) \hat{X}_{s,b}^{\epsilon, j_{1}}(k_{3}) \hat{X}_{t,b}^{\epsilon, j_{0}}(k_{4}) = \sum_{i=1}^{10}l_{k_{1}k_{2}k_{3}k_{4}, i_{2}i_{3}j_{0}j_{1}, s\sigma t}^{7i}
\end{align}
where  
\begin{subequations}
\begin{align}
l_{k_{1}k_{2}k_{3}k_{4}, i_{2}i_{3}j_{0}j_{1}, s\sigma t}^{71} \triangleq& : \hat{X}_{\sigma, u}^{\epsilon, i_{2}}(k_{1}) \hat{X}_{\sigma, b}^{\epsilon, i_{3}}(k_{2}) \hat{X}_{s,b}^{\epsilon, j_{1}}(k_{3}) \hat{X}_{t,b}^{\epsilon, j_{0}}(k_{4}):, \\
l_{k_{1}k_{2}k_{3}k_{4}, i_{2}i_{3}j_{0}j_{1}, s\sigma t}^{72} \triangleq& 1_{k_{12} = 0} \sum_{i_{4} =1}^{3} \frac{ h_{b}(\epsilon k_{1})h_{u}(\epsilon k_{1})}{2 \lvert k_{1} \rvert^{2} f(\epsilon k_{1})} \nonumber \\
& \hspace{5mm} \times \hat{\mathcal{P}}^{i_{2} i_{4}}(k_{1}) \hat{\mathcal{P}}^{i_{3}i_{4}} (k_{1}): \hat{X}_{s,b}^{\epsilon, j_{1}}(k_{3}) \hat{X}_{t,b}^{\epsilon, j_{0}}(k_{4}):,\\
l_{k_{1}k_{2}k_{3}k_{4}, i_{2}i_{3}j_{0}j_{1}, s\sigma t}^{73} \triangleq& 1_{k_{12} = 0, k_{34} = 0} \sum_{i_{4}, i_{5} =1}^{3} \frac{ h_{b}(\epsilon k_{1})h_{u}(\epsilon k_{1})}{2\lvert k_{1} \rvert^{2} f(\epsilon k_{1})} \hat{\mathcal{P}}^{i_{2} i_{4} }(k_{1}) \hat{\mathcal{P}}^{i_{3}i_{4}}(k_{1}) \nonumber\\
& \hspace{5mm} \times  \frac{ e^{ - \lvert k_{3} \rvert^{2} f(\epsilon k_{3}) (t-s)} h_{b}(\epsilon k_{3})^{2}}{2\lvert k_{3} \rvert^{2} f(\epsilon k_{3})} \hat{\mathcal{P}}^{j_{1}i_{5}} (k_{3}) \hat{\mathcal{P}}^{j_{0} i_{5}}(k_{3}), \\ 
l_{k_{1}k_{2}k_{3}k_{4}, i_{2}i_{3}j_{0}j_{1}, s\sigma t}^{74} \triangleq& 1_{k_{13} = 0} \sum_{i_{4} =1}^{3} \frac{ e^{- \lvert k_{1}\rvert^{2} f(\epsilon k_{1}) (s-\sigma)} h_{b}(\epsilon k_{1})h_{u}(\epsilon k_{1})}{2 \lvert k_{1} \rvert^{2} f(\epsilon k_{1})} \nonumber\\
& \hspace{5mm} \times \hat{\mathcal{P}}^{i_{2}i_{4}}(k_{1}) \hat{\mathcal{P}}^{j_{1}i_{4}}(k_{1}) : \hat{X}_{\sigma, b}^{\epsilon, i_{3}}(k_{2}) \hat{X}_{t,b}^{\epsilon, j_{0}}(k_{4}):, \\
l_{k_{1}k_{2}k_{3}k_{4}, i_{2}i_{3}j_{0}j_{1}, s\sigma t}^{75} \triangleq& 1_{k_{13} = 0, k_{24} = 0} \sum_{i_{4}, i_{5} = 1}^{3} \frac{ e^{- \lvert k_{1}  \rvert^{2} f(\epsilon k_{1}) (s-\sigma)} h_{u} (\epsilon k_{1}) h_{b}(\epsilon k_{1})}{2 \lvert k_{1} \rvert^{2} f(\epsilon k_{1})} \nonumber\\
& \times \hat{\mathcal{P}}^{i_{2}i_{4}}(k_{1}) \hat{\mathcal{P}}^{j_{1}i_{4}}(k_{1}) \frac{ e^{- \lvert k_{2} \rvert^{2} f(\epsilon k_{2}) (t-\sigma)} h_{b}(\epsilon k_{2})^{2}}{2 \lvert k_{2} \rvert^{2} f(\epsilon k_{2})} \nonumber\\
& \hspace{5mm} \times  \hat{\mathcal{P}}^{i_{3}i_{5}}(k_{2}) \hat{\mathcal{P}}^{j_{0} i_{5}}(k_{2}), \\
l_{k_{1}k_{2}k_{3}k_{4}, i_{2}i_{3}j_{0}j_{1}, s\sigma t}^{76} \triangleq& 1_{k_{14} = 0} \sum_{i_{4} =1}^{3} \frac{ e^{- \lvert k_{1} \rvert^{2} f(\epsilon k_{1}) (t-\sigma)} h_{u}(\epsilon k_{1})  h_{b}(\epsilon k_{1})}{2 \lvert k_{1} \rvert^{2} f(\epsilon k_{1})} \nonumber\\
& \hspace{5mm} \times \hat{\mathcal{P}}^{i_{2}i_{4}}(k_{1}) \hat{\mathcal{P}}^{j_{0}i_{4}}(k_{1}) : \hat{X}_{\sigma, b}^{\epsilon, i_{3}}(k_{2}) \hat{X}_{s,b}^{\epsilon, j_{1}}(k_{3}):, \\
l_{k_{1}k_{2}k_{3}k_{4}, i_{2}i_{3}j_{0}j_{1}, s\sigma t}^{77} \triangleq& 1_{k_{14} = 0, k_{23} = 0} \sum_{i_{4}, i_{5} = 1}^{3} \frac{ e^{- \lvert k_{1} \rvert^{2} f(\epsilon k_{1}) (t-\sigma)} h_{u}(\epsilon k_{1}) h_{b}(\epsilon k_{1})}{2 \lvert k_{1} \rvert^{2} f(\epsilon k_{1})} \nonumber\\
& \times \hat{\mathcal{P}}^{i_{2}i_{4}}(k_{1}) \hat{\mathcal{P}}^{j_{0}i_{4}}(k_{1}) \frac{ e^{- \lvert k_{2} \rvert^{2} f(\epsilon k_{2}) (s-\sigma)} h_{b}(\epsilon k_{2})^{2} }{2 \lvert k_{2} \rvert^{2} f(\epsilon k_{2})} \nonumber\\
& \hspace{5mm} \times  \hat{\mathcal{P}}^{i_{3}i_{5}}(k_{2}) \hat{\mathcal{P}}^{j_{1} i_{5}}(k_{2}), \\
l_{k_{1}k_{2}k_{3}k_{4}, i_{2}i_{3}j_{0}j_{1}, s\sigma t}^{78} \triangleq& 1_{k_{23} = 0} \sum_{i_{4} =1}^{3} \frac{e^{- \lvert k_{2} \rvert^{2} f(\epsilon k_{2}) (s-\sigma)} h_{b}(\epsilon k_{2})^{2}}{2 \lvert k_{2} \rvert^{2} f(\epsilon k_{2})} \nonumber\\
& \hspace{5mm} \times  \hat{\mathcal{P}}^{i_{3} i_{4}}(k_{2}) \hat{\mathcal{P}}^{j_{1}i_{4}} (k_{2}): \hat{X}_{\sigma,u}^{\epsilon, i_{2}}(k_{1}) \hat{X}_{t,b}^{\epsilon, j_{0}}(k_{4}):, \\
l_{k_{1}k_{2}k_{3}k_{4}, i_{2}i_{3}j_{0}j_{1}, s\sigma t}^{79} \triangleq& 1_{k_{24} = 0} \sum_{i_{4} =1}^{3} \frac{e^{- \lvert k_{2} \rvert^{2} f(\epsilon k_{2}) (t-\sigma)}h_{b}(\epsilon k_{2})^{2}}{2 \lvert k_{2} \rvert^{2} f(\epsilon k_{2})} \nonumber\\
& \hspace{5mm} \times  \hat{\mathcal{P}}^{i_{3} i_{4}}(k_{2}) \hat{\mathcal{P}}^{j_{0}i_{4}} (k_{2}): \hat{X}_{\sigma,u}^{\epsilon, i_{2}}(k_{1}) \hat{X}_{s,b}^{\epsilon, j_{1}}(k_{3}):, \\
l_{k_{1}k_{2}k_{3}k_{4}, i_{2}i_{3}j_{0}j_{1}, s\sigma t}^{710} \triangleq& 1_{k_{34} = 0} \sum_{i_{4} =1}^{3} \frac{e^{- \lvert k_{3} \rvert^{2} f(\epsilon k_{3}) (t- s)} h_{b}(\epsilon k_{3})^{2}}{2 \lvert k_{3} \rvert^{2} f(\epsilon k_{3})} \nonumber\\
& \hspace{5mm} \times \hat{\mathcal{P}}^{j_{1} i_{4}}(k_{3}) \hat{\mathcal{P}}^{j_{0}i_{4}} (k_{3}): \hat{X}_{\sigma,u}^{\epsilon, i_{2}}(k_{1}) \hat{X}_{\sigma,b}^{\epsilon, i_{3}}(k_{2}):     
\end{align}
\end{subequations} 
where the terms corresponding to $l_{k_{1}k_{2}k_{3}k_{4}, i_{2}i_{3}j_{0}j_{1}, s\sigma t}^{7i}$ for $i\in \{2,3\}$ vanish due to $k_{12}$ in \eqref{estimate 184}. Similarly, eighth, within \eqref{estimate 184}, we see that 
\begin{equation}
 \hat{\bar{X}}_{\sigma, u}^{\epsilon, i_{2}}(k_{1}) \hat{\bar{X}}_{\sigma, b}^{\epsilon, i_{3}}(k_{2}) \hat{\bar{X}}_{s,b}^{\epsilon, j_{1}}(k_{3}) \hat{\bar{X}}_{t,b}^{\epsilon, j_{0}}(k_{4}) = \sum_{i=1}^{10} l_{k_{1}k_{2}k_{3}k_{4}, i_{2}i_{3}j_{0}j_{1}, s\sigma t}^{8i}
\end{equation}
where  
\begin{subequations}\label{estimate 187}
\begin{align} 
l_{k_{1}k_{2}k_{3}k_{4}, i_{2}i_{3}j_{0}j_{1}, s\sigma t}^{81} \triangleq& : \hat{\bar{X}}_{\sigma, u}^{\epsilon, i_{2}}(k_{1}) \hat{\bar{X}}_{\sigma, b}^{\epsilon, i_{3}}(k_{2}) \hat{\bar{X}}_{s,b}^{\epsilon, j_{1}}(k_{3}) \hat{\bar{X}}_{t,b}^{\epsilon, j_{0}}(k_{4}):, \\
 l_{k_{1}k_{2}k_{3}k_{4}, i_{2}i_{3}j_{0}j_{1}, s\sigma t}^{82} \triangleq& 1_{k_{12} = 0} \sum_{i_{4} =1}^{3} \frac{ h_{b}(\epsilon k_{1})h_{u}(\epsilon k_{1})}{2 \lvert k_{1} \rvert^{2} } \nonumber\\
 & \hspace{5mm} \times \hat{\mathcal{P}}^{i_{2} i_{4}}(k_{1}) \hat{\mathcal{P}}^{i_{3}i_{4}} (k_{1}): \hat{\bar{X}}_{s,b}^{\epsilon, j_{1}}(k_{3}) \hat{\bar{X}}_{t,b}^{\epsilon, j_{0}}(k_{4}):,\\
 l_{k_{1}k_{2}k_{3}k_{4}, i_{2}i_{3}j_{0}j_{1}, s\sigma t}^{83} \triangleq& 1_{k_{12} = 0, k_{34} = 0} \sum_{i_{4}, i_{5} =1}^{3} \frac{ h_{b}(\epsilon k_{1})h_{u}(\epsilon k_{1})}{2\lvert k_{1} \rvert^{2} } \hat{\mathcal{P}}^{i_{2} i_{4} }(k_{1}) \hat{\mathcal{P}}^{i_{3}i_{4}}(k_{1}) \nonumber\\
& \hspace{5mm} \times  \frac{ e^{ - \lvert k_{3} \rvert^{2} (t-s)} h_{b}(\epsilon k_{3})^{2}}{2\lvert k_{3} \rvert^{2}} \hat{\mathcal{P}}^{j_{1}i_{5}} (k_{3}) \hat{\mathcal{P}}^{j_{0} i_{5}}(k_{3}), \\ 
 l_{k_{1}k_{2}k_{3}k_{4}, i_{2}i_{3}j_{0}j_{1}, s\sigma t}^{84} \triangleq& 1_{k_{13} = 0} \sum_{i_{4} =1}^{3} \frac{ e^{- \lvert k_{1}\rvert^{2} (s-\sigma)} h_{b}(\epsilon k_{1})h_{u}(\epsilon k_{1})}{2 \lvert k_{1} \rvert^{2}} \nonumber\\
 & \hspace{5mm}  \hat{\mathcal{P}}^{i_{2}i_{4}}(k_{1}) \hat{\mathcal{P}}^{j_{1}i_{4}}(k_{1}) : \hat{\bar{X}}_{\sigma, b}^{\epsilon, i_{3}}(k_{2}) \hat{\bar{X}}_{t,b}^{\epsilon, j_{0}}(k_{4}):, \\
 l_{k_{1}k_{2}k_{3}k_{4}, i_{2}i_{3}j_{0}j_{1}, s\sigma t}^{85} \triangleq& 1_{k_{13} = 0, k_{24} = 0} \sum_{i_{4}, i_{5} = 1}^{3} \frac{ e^{- \lvert k_{1}  \rvert^{2} (s-\sigma)} h_{u} (\epsilon k_{1}) h_{b}(\epsilon k_{1})}{2 \lvert k_{1} \rvert^{2}} \hat{\mathcal{P}}^{i_{2}i_{4}}(k_{1}) \hat{\mathcal{P}}^{j_{1}i_{4}}(k_{1}) \nonumber\\
& \hspace{5mm} \times \frac{ e^{- \lvert k_{2} \rvert^{2} (t-\sigma)} h_{b}(\epsilon k_{2})^{2}}{2 \lvert k_{2} \rvert^{2}} \hat{\mathcal{P}}^{i_{3}i_{5}}(k_{2}) \hat{\mathcal{P}}^{j_{0} i_{5}}(k_{2}), \\
 l_{k_{1}k_{2}k_{3}k_{4}, i_{2}i_{3}j_{0}j_{1}, s\sigma t}^{86} \triangleq& 1_{k_{14} = 0} \sum_{i_{4} =1}^{3} \frac{ e^{- \lvert k_{1} \rvert^{2} (t-\sigma)} h_{u}(\epsilon k_{1})  h_{b}(\epsilon k_{1})}{2 \lvert k_{1} \rvert^{2}} \nonumber\\
 & \hspace{5mm} \times  \hat{\mathcal{P}}^{i_{2}i_{4}}(k_{1}) \hat{\mathcal{P}}^{j_{0}i_{4}}(k_{1}) : \hat{\bar{X}}_{\sigma, b}^{\epsilon, i_{3}}(k_{2}) \hat{\bar{X}}_{s,b}^{\epsilon, j_{1}}(k_{3}):, \\
 l_{k_{1}k_{2}k_{3}k_{4}, i_{2}i_{3}j_{0}j_{1}, s\sigma t}^{87} \triangleq& 1_{k_{14} = 0, k_{23} = 0} \sum_{i_{4}, i_{5} = 1}^{3} \frac{ e^{- \lvert k_{1} \rvert^{2} (t-\sigma)} h_{u}(\epsilon k_{1}) h_{b}(\epsilon k_{1})}{2 \lvert k_{1} \rvert^{2}} \hat{\mathcal{P}}^{i_{2}i_{4}}(k_{1}) \hat{\mathcal{P}}^{j_{0}i_{4}}(k_{1}) \nonumber\\
& \hspace{5mm} \times \frac{ e^{- \lvert k_{2} \rvert^{2} (s-\sigma)} h_{b}(\epsilon k_{2})^{2} }{2 \lvert k_{2} \rvert^{2}} \hat{\mathcal{P}}^{i_{3}i_{5}}(k_{2}) \hat{\mathcal{P}}^{j_{1} i_{5}}(k_{2}), \\
 l_{k_{1}k_{2}k_{3}k_{4}, i_{2}i_{3}j_{0}j_{1}, s\sigma t}^{88} \triangleq& 1_{k_{23} = 0} \sum_{i_{4} =1}^{3} \frac{e^{- \lvert k_{2} \rvert^{2} (s-\sigma)} h_{b}(\epsilon k_{2})^{2}}{2 \lvert k_{2} \rvert^{2}}  \nonumber\\
 & \hspace{5mm} \times \hat{\mathcal{P}}^{i_{3} i_{4}}(k_{2}) \hat{\mathcal{P}}^{j_{1}i_{4}} (k_{2}): \hat{\bar{X}}_{\sigma,u}^{\epsilon, i_{2}}(k_{1}) \hat{\bar{X}}_{t,b}^{\epsilon, j_{0}}(k_{4}):, \\
 l_{k_{1}k_{2}k_{3}k_{4}, i_{2}i_{3}j_{0}j_{1}, s\sigma t}^{89} \triangleq& 1_{k_{24} = 0} \sum_{i_{4} =1}^{3} \frac{e^{- \lvert k_{2} \rvert^{2} (t-\sigma)}h_{b}(\epsilon k_{2})^{2}}{2 \lvert k_{2} \rvert^{2}} \nonumber\\
 & \hspace{5mm} \times \hat{\mathcal{P}}^{i_{3} i_{4}}(k_{2}) \hat{\mathcal{P}}^{j_{0}i_{4}} (k_{2}): \hat{\bar{X}}_{\sigma,u}^{\epsilon, i_{2}}(k_{1}) \hat{\bar{X}}_{s,b}^{\epsilon, j_{1}}(k_{3}):, \\
 l_{k_{1}k_{2}k_{3}k_{4}, i_{2}i_{3}j_{0}j_{1}, s\sigma t}^{810} \triangleq& 1_{k_{34} = 0} \sum_{i_{4} =1}^{3} \frac{e^{- \lvert k_{3} \rvert^{2} (t- s)} h_{b}(\epsilon k_{3})^{2}}{2 \lvert k_{3} \rvert^{2}}  \nonumber\\
 & \hspace{5mm} \times \hat{\mathcal{P}}^{j_{1} i_{4}}(k_{3}) \hat{\mathcal{P}}^{j_{0}i_{4}} (k_{3}): \hat{\bar{X}}_{\sigma,u}^{\epsilon, i_{2}}(k_{1}) \hat{\bar{X}}_{\sigma,b}^{\epsilon, i_{3}}(k_{2}):     
\end{align}
\end{subequations} 
where the terms corresponding to $ l_{k_{1}k_{2}k_{3}k_{4}, i_{2}i_{3}j_{0}j_{1}, s\sigma t}^{8i}$ for $i\in \{2,3\}$ vanish due to $k_{12}$ in \eqref{estimate 184}. Applying \eqref{estimate 185} - \eqref{estimate 187} to \eqref{estimate 184} and \eqref{estimate 183} gives us 
\begin{equation}\label{[Equation (4.6e)][ZZ17]} 
\pi_{0} (u_{32}^{\epsilon, i_{0}}, b_{1}^{\epsilon, j_{0}})(t) - \pi_{0} ( \bar{u}_{32}^{\epsilon, i_{0}}, \bar{b}_{1}^{\epsilon, j_{0}})(t) = \frac{1}{4} (\sum_{k=1}^{7} L_{t, i_{0}j_{0}}^{k} - \bar{L}_{t_{0}, i_{0}j_{0}}^{5} - \bar{L}_{t_{0}, i_{0}j_{0}}^{6}) 
\end{equation} 
where $\bar{L}_{t, i_{0}j_{0}}^{5}$ and $\bar{L}_{t, i_{0}j_{0}}^{6}$ were defined in \eqref{[Equation (4.6k)][ZZ17]}-\eqref{[Equation (4.6m)][ZZ17]}, $L_{t, i_{0}j_{0}}^{1}$ corresponds to the terms from $l_{k_{1}k_{2}k_{3}k_{4}, i_{1}i_{2}i_{3}j_{0}j_{1}, s \sigma t}^{m1}$ for $m\in \{1, \hdots, 8\}$ so that 
\begin{align}\label{[Equation (4.6f)][ZZ17]}
L_{t, i_{0}j_{0}}^{1} \triangleq& (2\pi)^{-\frac{9}{2}} \sum_{\lvert i-j \rvert \leq 1} \sum_{i_{1}, i_{2}, i_{3}, j_{1} =1}^{3} \sum_{k} \sum_{k_{1}, k_{2}, k_{3}, k_{4} \neq 0: k_{1234} = k} \theta(2^{-i} k_{123}) \theta(2^{-j} k_{4}) \nonumber\\
& \times \int_{0}^{t} [ e^{- \lvert k_{123} \rvert^{2}  f(\epsilon k_{123})(t-s)} \int_{0}^{s} : \hat{X}_{\sigma, u}^{\epsilon, i_{2}}(k_{1}) \hat{X}_{\sigma, u}^{\epsilon, i_{3}}(k_{2}) \hat{X}_{s,u}^{\epsilon, j_{1}}(k_{3}) \hat{X}_{t,b}^{\epsilon, j_{0}}(k_{4}): \nonumber \\
& \hspace{10mm} \times e^{- \lvert k_{12} \rvert^{2} f(\epsilon k_{12})(s-\sigma) } d\sigma k_{123}^{j_{1}} g(\epsilon k_{123}^{j_{1}}) k_{12}^{i_{3}} g(\epsilon k_{12}^{i_{3}}) \nonumber \\
& - e^{- \lvert k_{123} \rvert^{2} (t-s) } \int_{0}^{s}  : \hat{\bar{X}}_{\sigma, u}^{\epsilon, i_{2}}(k_{1}) \hat{\bar{X}}_{\sigma, u}^{\epsilon, i_{3}}(k_{2}) \hat{\bar{X}}_{s,u}^{\epsilon, j_{1}}(k_{3}) \hat{\bar{X}}_{t,b}^{\epsilon, j_{0}}(k_{4}): \nonumber \\
& \hspace{10mm} \times e^{- \lvert k_{12} \rvert^{2} (s-\sigma)} d\sigma k_{123}^{j_{1}} i k_{12}^{i_{3}} i \nonumber \\
& - e^{- \lvert k_{123} \rvert^{2}  f(\epsilon k_{123})(t-s)} \int_{0}^{s} : \hat{X}_{\sigma, b}^{\epsilon, i_{2}}(k_{1}) \hat{X}_{\sigma, b}^{\epsilon, i_{3}}(k_{2}) \hat{X}_{s,u}^{\epsilon, j_{1}}(k_{3}) \hat{X}_{t,b}^{\epsilon, j_{0}}(k_{4}): \nonumber \\
& \hspace{10mm} \times e^{- \lvert k_{12} \rvert^{2}  f(\epsilon k_{12})(s-\sigma)}  d\sigma k_{123}^{j_{1}} g(\epsilon k_{123}^{j_{1}}) k_{12}^{i_{3}} g(\epsilon k_{12}^{i_{3}}) \nonumber \\
& + e^{- \lvert k_{123} \rvert^{2} (t-s) } \int_{0}^{s}  : \hat{\bar{X}}_{\sigma, b}^{\epsilon, i_{2}}(k_{1}) \hat{\bar{X}}_{\sigma, b}^{\epsilon, i_{3}}(k_{2}) \hat{\bar{X}}_{s,u}^{\epsilon, j_{1}}(k_{3}) \hat{\bar{X}}_{t,b}^{\epsilon, j_{0}}(k_{4}): \nonumber \\
& \hspace{10mm} \times e^{- \lvert k_{12} \rvert^{2} (s-\sigma)} d\sigma k_{123}^{j_{1}} i k_{12}^{i_{3}} i \nonumber \\
&-  e^{- \lvert k_{123} \rvert^{2}  f(\epsilon k_{123})(t-s)} \int_{0}^{s} : \hat{X}_{\sigma, b}^{\epsilon, i_{2}}(k_{1}) \hat{X}_{\sigma, u}^{\epsilon, i_{3}}(k_{2}) \hat{X}_{s,b}^{\epsilon, j_{1}}(k_{3}) \hat{X}_{t,b}^{\epsilon, j_{0}}(k_{4}): \nonumber \\
& \hspace{10mm} \times e^{- \lvert k_{12} \rvert^{2}  f(\epsilon k_{12})(s-\sigma)} d\sigma k_{123}^{j_{1}} g(\epsilon k_{123}^{j_{1}}) k_{12}^{i_{3}} g(\epsilon k_{12}^{i_{3}}) \nonumber \\
& + e^{- \lvert k_{123} \rvert^{2} (t-s) } \int_{0}^{s}  : \hat{\bar{X}}_{\sigma, b}^{\epsilon, i_{2}}(k_{1}) \hat{\bar{X}}_{\sigma, u}^{\epsilon, i_{3}}(k_{2}) \hat{\bar{X}}_{s,b}^{\epsilon, j_{1}}(k_{3}) \hat{\bar{X}}_{t,b}^{\epsilon, j_{0}}(k_{4}): \nonumber \\
& \hspace{10mm} \times e^{- \lvert k_{12} \rvert^{2} (s-\sigma)} d\sigma k_{123}^{j_{1}} i k_{12}^{i_{3}} i \nonumber \\
& + e^{- \lvert k_{123} \rvert^{2}  f(\epsilon k_{123})(t-s)} \int_{0}^{s} : \hat{X}_{\sigma, u}^{\epsilon, i_{2}}(k_{1}) \hat{X}_{\sigma, b}^{\epsilon, i_{3}}(k_{2}) \hat{X}_{s,b}^{\epsilon, j_{1}}(k_{3}) \hat{X}_{t,b}^{\epsilon, j_{0}}(k_{4}): \nonumber \\
& \hspace{10mm} \times e^{- \lvert k_{12} \rvert^{2}  f(\epsilon k_{12})(s-\sigma)} d\sigma k_{123}^{j_{1}} g(\epsilon k_{123}^{j_{1}}) k_{12}^{i_{3}} g(\epsilon k_{12}^{i_{3}}) \nonumber \\
& - e^{- \lvert k_{123} \rvert^{2} (t-s) } \int_{0}^{s}  : \hat{\bar{X}}_{\sigma, u}^{\epsilon, i_{2}}(k_{1}) \hat{\bar{X}}_{\sigma, b}^{\epsilon, i_{3}}(k_{2}) \hat{\bar{X}}_{s,b}^{\epsilon, j_{1}}(k_{3}) \hat{\bar{X}}_{t,b}^{\epsilon, j_{0}}(k_{4}): \nonumber \\
& \hspace{10mm} \times e^{- \lvert k_{12} \rvert^{2} (s-\sigma)} d\sigma k_{123}^{j_{1}} i k_{12}^{i_{3}} i ] ds \hat{\mathcal{P}}^{i_{0} i_{1}}(k_{123}) \hat{\mathcal{P}}^{i_{1} i_{2}}(k_{12}) e_{k}, 
\end{align} 
$L_{t, i_{0}j_{0}}^{2}$ corresponds to the terms from $l_{k_{1}k_{2}k_{3}k_{4}, i_{1}i_{2}i_{3}j_{0}j_{1}, s \sigma t}^{m6}$ for $m \in \{1, \hdots, 8 \}$ so that 
\begin{align}\label{estimate 195}
L_{t, i_{0}j_{0}}^{2} \triangleq& (2\pi)^{-\frac{9}{2}} \sum_{\lvert i-j \rvert \leq 1} \sum_{i_{1}, i_{2}, i_{3}, j_{1} =1}^{3} \sum_{k} \sum_{k_{1}, k_{2}, k_{3} \neq 0: k_{23} = k} \theta(2^{-i} k_{123}) \theta(2^{-j} k_{1}) \\
& \times [ \int_{0}^{t} e^{- \lvert k_{123} \rvert^{2}  f( \epsilon k_{123})(t-s) } \int_{0}^{s} : \hat{X}_{\sigma, u}^{\epsilon, i_{3}}(k_{2}) \hat{X}_{s,u}^{\epsilon, j_{1}}(k_{3}): k_{12}^{i_{3}} g(\epsilon k_{12}^{i_{3}}) k_{123}^{j_{1}} g(\epsilon k_{123}^{j_{1}})  \nonumber\\
& \hspace{5mm} \times\frac{ e^{- \lvert k_{1} \rvert^{2} f(\epsilon k_{1})(t-\sigma)} h_{u}(\epsilon k_{1})h_{b}(\epsilon k_{1})}{2 \lvert k_{1} \rvert^{2} f(\epsilon k_{1})} e^{- \lvert k_{12} \rvert^{2} f(\epsilon k_{12}) (s-\sigma)} d \sigma ds \nonumber \\
& - \int_{0}^{t} e^{- \lvert k_{123} \rvert^{2} (t-s) } \int_{0}^{s} : \hat{\bar{X}}_{\sigma, u}^{\epsilon, i_{3}}(k_{2}) \hat{\bar{X}}_{s,u}^{\epsilon, j_{1}}(k_{3}): k_{12}^{i_{3}} i  k_{123}^{j_{1}} i  \nonumber\\
& \hspace{5mm} \times \frac{ e^{- \lvert k_{1} \rvert^{2} (t-\sigma)} h_{u}(\epsilon k_{1})h_{b}(\epsilon k_{1})}{2 \lvert k_{1} \rvert^{2}} e^{- \lvert k_{12} \rvert^{2} (s-\sigma) } d \sigma ds \nonumber \\
& - \int_{0}^{t} e^{- \lvert k_{123} \rvert^{2} f( \epsilon k_{123}) (t-s) } \int_{0}^{s} : \hat{X}_{\sigma, b}^{\epsilon, i_{3}}(k_{2}) \hat{X}_{s,u}^{\epsilon, j_{1}}(k_{3}): k_{12}^{i_{3}} g(\epsilon k_{12}^{i_{3}}) k_{123}^{j_{1}} g(\epsilon k_{123}^{j_{1}})  \nonumber\\
& \hspace{5mm} \times\frac{ e^{- \lvert k_{1} \rvert^{2} f(\epsilon k_{1})(t-\sigma)}h_{b}(\epsilon k_{1})^{2}}{2 \lvert k_{1} \rvert^{2} f(\epsilon k_{1})} e^{- \lvert k_{12} \rvert^{2}  f(\epsilon k_{12})(s-\sigma)} d \sigma ds \nonumber \\
& + \int_{0}^{t} e^{- \lvert k_{123} \rvert^{2} (t-s) } \int_{0}^{s} : \hat{\bar{X}}_{\sigma, b}^{\epsilon, i_{3}}(k_{2}) \hat{\bar{X}}_{s,u}^{\epsilon, j_{1}}(k_{3}): k_{12}^{i_{3}} i  k_{123}^{j_{1}} i  \nonumber\\
& \hspace{5mm} \times \frac{ e^{- \lvert k_{1} \rvert^{2} (t-\sigma)} h_{b}(\epsilon k_{1})^{2}}{2 \lvert k_{1} \rvert^{2}} e^{- \lvert k_{12} \rvert^{2} (s-\sigma) } d \sigma ds \nonumber \\
& - \int_{0}^{t} e^{- \lvert k_{123} \rvert^{2}  f( \epsilon k_{123})(t-s) } \int_{0}^{s} : \hat{X}_{\sigma, u}^{\epsilon, i_{3}}(k_{2}) \hat{X}_{s,b}^{\epsilon, j_{1}}(k_{3}): k_{12}^{i_{3}} g(\epsilon k_{12}^{i_{3}}) k_{123}^{j_{1}} g(\epsilon k_{123}^{j_{1}})  \nonumber\\
& \hspace{5mm} \times\frac{ e^{- \lvert k_{1} \rvert^{2} f(\epsilon k_{1})(t-\sigma)} h_{b}(\epsilon k_{1})^{2}}{2 \lvert k_{1} \rvert^{2} f(\epsilon k_{1})} e^{- \lvert k_{12} \rvert^{2} f(\epsilon k_{12})(s-\sigma) } d \sigma ds \nonumber \\
& + \int_{0}^{t} e^{- \lvert k_{123} \rvert^{2} (t-s) } \int_{0}^{s} : \hat{\bar{X}}_{\sigma, u}^{\epsilon, i_{3}}(k_{2}) \hat{\bar{X}}_{s,b}^{\epsilon, j_{1}}(k_{3}): k_{12}^{i_{3}} i  k_{123}^{j_{1}} i  \nonumber\\
& \hspace{5mm} \times \frac{ e^{- \lvert k_{1} \rvert^{2} (t-\sigma)} h_{b}(\epsilon k_{1})^{2}}{2 \lvert k_{1} \rvert^{2}} e^{- \lvert k_{12} \rvert^{2} (s-\sigma) } d \sigma ds \nonumber \\
&+ \int_{0}^{t} e^{- \lvert k_{123} \rvert^{2}  f( \epsilon k_{123})(t-s) } \int_{0}^{s} : \hat{X}_{\sigma, b}^{\epsilon, i_{3}}(k_{2}) \hat{X}_{s,b}^{\epsilon, j_{1}}(k_{3}): k_{12}^{i_{3}} g(\epsilon k_{12}^{i_{3}}) k_{123}^{j_{1}} g(\epsilon k_{123}^{j_{1}})  \nonumber\\
& \hspace{5mm} \times\frac{ e^{- \lvert k_{1} \rvert^{2} f(\epsilon k_{1})(t-\sigma)} h_{u}(\epsilon k_{1})h_{b}(\epsilon k_{1})}{2 \lvert k_{1} \rvert^{2} f(\epsilon k_{1})} e^{- \lvert k_{12} \rvert^{2}  f(\epsilon k_{12})(s-\sigma)} d \sigma ds \nonumber \\
& - \int_{0}^{t} e^{- \lvert k_{123} \rvert^{2} (t-s) } \int_{0}^{s} : \hat{\bar{X}}_{\sigma, b}^{\epsilon, i_{3}}(k_{2}) \hat{\bar{X}}_{s,b}^{\epsilon, j_{1}}(k_{3}): k_{12}^{i_{3}} i  k_{123}^{j_{1}} i  \nonumber\\
& \hspace{5mm} \times \frac{ e^{- \lvert k_{1} \rvert^{2} (t-\sigma)} h_{u}(\epsilon k_{1})h_{b}(\epsilon k_{1})}{2 \lvert k_{1} \rvert^{2}} e^{- \lvert k_{12} \rvert^{2} (s-\sigma) } d \sigma ds] \nonumber \\
& \times \sum_{i_{4} = 1}^{3} \hat{\mathcal{P}}^{i_{2} i_{4}} (k_{1}) \hat{\mathcal{P}}^{j_{0} i_{4}}(k_{1}) \hat{\mathcal{P}}^{i_{1} i_{2}} (k_{12}) \hat{\mathcal{P}}^{i_{0} i_{1}} (k_{123}) e_{k}, \nonumber
\end{align} 
$L_{t, i_{0}j_{0}}^{3}$ corresponds to the terms from $l_{k_{1}k_{2}k_{3}k_{4}, i_{1}i_{2}i_{3}j_{0}j_{1}, s \sigma t}^{m9}$ for $m\in \{1, \hdots, 8\}$ so that 
\begin{align}\label{estimate 196}
L_{t, i_{0}j_{0}}^{3} \triangleq& (2\pi)^{-\frac{9}{2}} \sum_{\lvert i-j \rvert \leq 1} \sum_{i_{1}, i_{2}, i_{3}, j_{1} =1}^{3} \sum_{k} \sum_{k_{1}, k_{2}, k_{3} \neq 0: k_{13} = k} \theta(2^{-i} k_{123}) \theta(2^{-j} k_{2}) \\
& \times [ \int_{0}^{t} e^{- \lvert k_{123} \rvert^{2}  f(\epsilon k_{123})(t-s)} \int_{0}^{s} : \hat{X}_{\sigma, u}^{\epsilon, i_{2}}(k_{1}) \hat{X}_{s,u}^{\epsilon, j_{1}}(k_{3}): k_{12}^{i_{3}} g(\epsilon k_{12}^{i_{3}}) k_{123}^{j_{1}} g(\epsilon k_{123}^{j_{1}}) \nonumber\\
& \hspace{5mm} \times \frac{ e^{ - \lvert k_{2} \rvert^{2} f(\epsilon k_{2}) (t-\sigma)} h_{u}(\epsilon k_{2}) h_{b}(\epsilon k_{2})}{2\lvert k_{2} \rvert^{2} f(\epsilon k_{2})} e^{- \lvert k_{12} \rvert^{2} f(\epsilon k_{12})(s-\sigma) } d\sigma ds   \nonumber  \\
& -  \int_{0}^{t} e^{- \lvert k_{123} \rvert^{2} (t-s) } \int_{0}^{s} : \hat{\bar{X}}_{\sigma, u}^{\epsilon, i_{2}}(k_{1}) \hat{\bar{X}}_{s,u}^{\epsilon, j_{1}}(k_{3}): k_{12}^{i_{3}} i k_{123}^{j_{1}} i \nonumber\\
& \hspace{5mm} \times \frac{ e^{ - \lvert k_{2} \rvert^{2}  (t-\sigma)} h_{u}(\epsilon k_{2}) h_{b}(\epsilon k_{2})}{2\lvert k_{2} \rvert^{2}} e^{- \lvert k_{12} \rvert^{2} (s-\sigma)} d\sigma ds   \nonumber \\
& - \int_{0}^{t} e^{- \lvert k_{123} \rvert^{2}  f(\epsilon k_{123})(t-s)} \int_{0}^{s} : \hat{X}_{\sigma, b}^{\epsilon, i_{2}}(k_{1}) \hat{X}_{s,u}^{\epsilon, j_{1}}(k_{3}): k_{12}^{i_{3}} g(\epsilon k_{12}^{i_{3}}) k_{123}^{j_{1}} g(\epsilon k_{123}^{j_{1}}) \nonumber\\
& \hspace{5mm} \times \frac{ e^{ - \lvert k_{2} \rvert^{2} f(\epsilon k_{2}) (t-\sigma)} h_{b}(\epsilon k_{2})^{2}}{2\lvert k_{2} \rvert^{2} f(\epsilon k_{2})} e^{- \lvert k_{12} \rvert^{2} f(\epsilon k_{12})(s-\sigma) } d\sigma ds   \nonumber  \\
& +  \int_{0}^{t} e^{- \lvert k_{123} \rvert^{2} (t-s) } \int_{0}^{s} : \hat{\bar{X}}_{\sigma, b}^{\epsilon, i_{2}}(k_{1}) \hat{\bar{X}}_{s,u}^{\epsilon, j_{1}}(k_{3}): k_{12}^{i_{3}} i k_{123}^{j_{1}} i \nonumber\\
& \hspace{5mm} \times \frac{ e^{ - \lvert k_{2} \rvert^{2}  (t-\sigma)} h_{b}(\epsilon k_{2})^{2}}{2\lvert k_{2} \rvert^{2}} e^{- \lvert k_{12} \rvert^{2} (s-\sigma)} d\sigma ds   \nonumber \\
& - \int_{0}^{t} e^{- \lvert k_{123} \rvert^{2}  f(\epsilon k_{123})(t-s)} \int_{0}^{s} : \hat{X}_{\sigma, b}^{\epsilon, i_{2}}(k_{1}) \hat{X}_{s,b}^{\epsilon, j_{1}}(k_{3}): k_{12}^{i_{3}} g(\epsilon k_{12}^{i_{3}}) k_{123}^{j_{1}} g(\epsilon k_{123}^{j_{1}}) \nonumber\\
& \hspace{5mm} \times \frac{ e^{ - \lvert k_{2} \rvert^{2} f(\epsilon k_{2}) (t-\sigma)} h_{u}(\epsilon k_{2}) h_{b}(\epsilon k_{2})}{2\lvert k_{2} \rvert^{2} f(\epsilon k_{2})} e^{- \lvert k_{12} \rvert^{2}  f(\epsilon k_{12})(s-\sigma)} d\sigma ds   \nonumber  \\
& +  \int_{0}^{t} e^{- \lvert k_{123} \rvert^{2} (t-s) } \int_{0}^{s} : \hat{\bar{X}}_{\sigma, b}^{\epsilon, i_{2}}(k_{1}) \hat{\bar{X}}_{s,b}^{\epsilon, j_{1}}(k_{3}): k_{12}^{i_{3}} i k_{123}^{j_{1}} i \nonumber\\
& \hspace{5mm} \times \frac{ e^{ - \lvert k_{2} \rvert^{2}  (t-\sigma)} h_{u}(\epsilon k_{2}) h_{b}(\epsilon k_{2})}{2\lvert k_{2} \rvert^{2}} e^{- \lvert k_{12} \rvert^{2} (s-\sigma)} d\sigma ds   \nonumber \\
&+ \int_{0}^{t} e^{- \lvert k_{123} \rvert^{2}  f(\epsilon k_{123})(t-s)} \int_{0}^{s} : \hat{X}_{\sigma, u}^{\epsilon, i_{2}}(k_{1}) \hat{X}_{s,b}^{\epsilon, j_{1}}(k_{3}): k_{12}^{i_{3}} g(\epsilon k_{12}^{i_{3}}) k_{123}^{j_{1}} g(\epsilon k_{123}^{j_{1}}) \nonumber\\
& \hspace{5mm} \times \frac{ e^{ - \lvert k_{2} \rvert^{2} f(\epsilon k_{2}) (t-\sigma)} h_{b}(\epsilon k_{2})^{2}}{2\lvert k_{2} \rvert^{2} f(\epsilon k_{2})} e^{- \lvert k_{12} \rvert^{2}f(\epsilon k_{12}) (s-\sigma) } d\sigma ds   \nonumber  \\
& -  \int_{0}^{t} e^{- \lvert k_{123} \rvert^{2} (t-s) } \int_{0}^{s} : \hat{\bar{X}}_{\sigma, u}^{\epsilon, i_{2}}(k_{1}) \hat{\bar{X}}_{s,b}^{\epsilon, j_{1}}(k_{3}): k_{12}^{i_{3}} i k_{123}^{j_{1}} i \nonumber\\
& \hspace{5mm} \times \frac{ e^{ - \lvert k_{2} \rvert^{2}  (t-\sigma)} h_{b}(\epsilon k_{2})^{2}}{2\lvert k_{2} \rvert^{2}} e^{- \lvert k_{12} \rvert^{2} (s-\sigma)} d\sigma ds]   \nonumber \\
& \times \sum_{i_{4} =1}^{3} \hat{\mathcal{P}}^{i_{3} i_{4}}(k_{2}) \hat{\mathcal{P}}^{j_{0} i_{4}}(k_{2}) \hat{\mathcal{P}}^{i_{1} i_{2}}(k_{12}) \hat{\mathcal{P}}^{i_{0} i_{1}}(k_{123}) e_{k},\nonumber
\end{align} 
$L_{t, i_{0}j_{0}}^{4}$ corresponds to terms from $l_{k_{1}k_{2}k_{3}k_{4}, i_{2}i_{3}j_{0}j_{1}, s \sigma t}^{m 10}$ for $m \in \{1, \hdots, 8\}$ so that  
\begin{align}\label{estimate 197}
L_{t, i_{0}j_{0}}^{4} \triangleq&(2\pi)^{-\frac{9}{2}} \sum_{\lvert i -j \rvert \leq 1} \sum_{i_{1}, i_{2}, i_{3}, j_{1} =1}^{3} \sum_{k} \sum_{k_{1}, k_{2}, k_{3} \neq 0: k_{12} = k} \theta(2^{-i} k_{123}) \theta(2^{-j} k_{3}) \\
& \times [ \int_{0}^{t} e^{- \lvert k_{123} \rvert^{2} f(\epsilon k_{123}) (t-s)} \int_{0}^{s}: \hat{X}_{\sigma, u}^{\epsilon, i_{2}}(k_{1}) \hat{X}_{\sigma, u}^{\epsilon, i_{3}}(k_{2}): k_{12}^{i_{3}} g(\epsilon k_{12}^{i_{3}}) k_{123}^{j_{1}} g(\epsilon k_{123}^{j_{1}}) \nonumber\\
& \hspace{5mm} \times \frac{ e^{- \lvert k_{3} \rvert^{2} f(\epsilon k_{3}) (t-s)} h_{u}(\epsilon k_{3}) h_{b}(\epsilon k_{3})}{2\lvert k_{3} \rvert^{2} f(\epsilon k_{3})} e^{- \lvert k_{12} \rvert^{2} f(\epsilon k_{12})(s-\sigma)} d\sigma ds \nonumber\\
& - \int_{0}^{t} e^{- \lvert k_{123} \rvert^{2} (t-s) } \int_{0}^{s}: \hat{\bar{X}}_{\sigma, u}^{\epsilon, i_{2}}(k_{1}) \hat{\bar{X}}_{\sigma, u}^{\epsilon, i_{3}}(k_{2}): k_{12}^{i_{3}} i k_{123}^{j_{1}} i \nonumber\\
& \hspace{5mm} \times \frac{ e^{- \lvert k_{3} \rvert^{2} (t-s)} h_{u}(\epsilon k_{3}) h_{b}(\epsilon k_{3})}{2\lvert k_{3} \rvert^{2}} e^{- \lvert k_{12} \rvert^{2} (s-\sigma)} d\sigma ds \nonumber\\
& - \int_{0}^{t} e^{- \lvert k_{123} \rvert^{2}  f(\epsilon k_{123})(t-s)} \int_{0}^{s}: \hat{X}_{\sigma, b}^{\epsilon, i_{2}}(k_{1}) \hat{X}_{\sigma, b}^{\epsilon, i_{3}}(k_{2}): k_{12}^{i_{3}} g(\epsilon k_{12}^{i_{3}}) k_{123}^{j_{1}} g(\epsilon k_{123}^{j_{1}}) \nonumber\\
& \hspace{5mm} \times \frac{ e^{- \lvert k_{3} \rvert^{2} f(\epsilon k_{3}) (t-s)} h_{u}(\epsilon k_{3}) h_{b}(\epsilon k_{3})}{2\lvert k_{3} \rvert^{2} f(\epsilon k_{3})} e^{- \lvert k_{12} \rvert^{2} f(\epsilon k_{12})(s-\sigma)} d\sigma ds \nonumber\\
& + \int_{0}^{t} e^{- \lvert k_{123} \rvert^{2} (t-s) } \int_{0}^{s}: \hat{\bar{X}}_{\sigma, b}^{\epsilon, i_{2}}(k_{1}) \hat{\bar{X}}_{\sigma, b}^{\epsilon, i_{3}}(k_{2}): k_{12}^{i_{3}} i k_{123}^{j_{1}} i \nonumber\\
& \hspace{5mm} \times \frac{ e^{- \lvert k_{3} \rvert^{2} (t-s)} h_{u}(\epsilon k_{3}) h_{b}(\epsilon k_{3})}{2\lvert k_{3} \rvert^{2}} e^{- \lvert k_{12} \rvert^{2} (s-\sigma)} d\sigma ds \nonumber\\
& - \int_{0}^{t} e^{- \lvert k_{123} \rvert^{2}  f(\epsilon k_{123})(t-s)} \int_{0}^{s}: \hat{X}_{\sigma, b}^{\epsilon, i_{2}}(k_{1}) \hat{X}_{\sigma, u}^{\epsilon, i_{3}}(k_{2}): k_{12}^{i_{3}} g(\epsilon k_{12}^{i_{3}}) k_{123}^{j_{1}} g(\epsilon k_{123}^{j_{1}}) \nonumber\\
& \hspace{5mm} \times \frac{ e^{- \lvert k_{3} \rvert^{2} f(\epsilon k_{3}) (t-s)}  h_{b}(\epsilon k_{3})^{2}}{2\lvert k_{3} \rvert^{2} f(\epsilon k_{3})} e^{- \lvert k_{12} \rvert^{2}  f(\epsilon k_{12})(s-\sigma)} d\sigma ds \nonumber\\
& + \int_{0}^{t} e^{- \lvert k_{123} \rvert^{2} (t-s) } \int_{0}^{s}: \hat{\bar{X}}_{\sigma, b}^{\epsilon, i_{2}}(k_{1}) \hat{\bar{X}}_{\sigma, u}^{\epsilon, i_{3}}(k_{2}): k_{12}^{i_{3}} i k_{123}^{j_{1}} i \nonumber\\
& \hspace{5mm} \times \frac{ e^{- \lvert k_{3} \rvert^{2} (t-s)} h_{b}(\epsilon k_{3})^{2}}{2\lvert k_{3} \rvert^{2}} e^{- \lvert k_{12} \rvert^{2} (s-\sigma)} d\sigma ds \nonumber\\
& + \int_{0}^{t} e^{- \lvert k_{123} \rvert^{2}  f(\epsilon k_{123})(t-s)} \int_{0}^{s}: \hat{X}_{\sigma, u}^{\epsilon, i_{2}}(k_{1}) \hat{X}_{\sigma, b}^{\epsilon, i_{3}}(k_{2}): k_{12}^{i_{3}} g(\epsilon k_{12}^{i_{3}}) k_{123}^{j_{1}} g(\epsilon k_{123}^{j_{1}}) \nonumber\\
& \hspace{5mm} \times \frac{ e^{- \lvert k_{3} \rvert^{2} f(\epsilon k_{3}) (t-s)} h_{b}(\epsilon k_{3})^{2}}{2\lvert k_{3} \rvert^{2} f(\epsilon k_{3})} e^{- \lvert k_{12} \rvert^{2}  f(\epsilon k_{12})(s-\sigma)} d\sigma ds \nonumber\\
& - \int_{0}^{t} e^{- \lvert k_{123} \rvert^{2} (t-s) } \int_{0}^{s}: \hat{\bar{X}}_{\sigma, u}^{\epsilon, i_{2}}(k_{1}) \hat{\bar{X}}_{\sigma, b}^{\epsilon, i_{3}}(k_{2}): k_{12}^{i_{3}} i k_{123}^{j_{1}} i \nonumber\\
& \hspace{5mm} \times \frac{ e^{- \lvert k_{3} \rvert^{2} (t-s)} h_{b}(\epsilon k_{3})^{2}}{2\lvert k_{3} \rvert^{2}} e^{- \lvert k_{12} \rvert^{2} (s-\sigma)} d\sigma ds] \nonumber\\
& \times \sum_{i_{4} =1}^{3} \hat{\mathcal{P}}^{j_{1}i_{4}} (k_{3}) \hat{\mathcal{P}}^{j_{0}i_{4}}(k_{3}) \hat{\mathcal{P}}^{i_{1} i_{2}}(k_{12}) \hat{\mathcal{P}}^{i_{0}i_{1}}(k_{123}) e_{k}, \nonumber
\end{align} 
$L_{t, i_{0}j_{0}}^{5}$ corresponds to the terms from $l_{k_{1}k_{2}k_{3}k_{4}, i_{2}i_{3}j_{0}j_{1}, s \sigma t}^{m8}$ for $m \in \{1, \hdots, 8 \}$ so that 
\begin{align}\label{[Equation (4.6j)][ZZ17]}
L_{t, i_{0}j_{0}}^{5} \triangleq& (2\pi)^{-\frac{9}{2}} \sum_{\lvert i -j \rvert \leq 1} \sum_{i_{1}, i_{2}, i_{3}, j_{1} =1}^{3} \sum_{k} \sum_{k_{1}, k_{2}, k_{4} \neq 0: k_{14} = k} \theta(2^{-i} k_{1}) \theta(2^{-j} k_{4})\\
& \times [ \int_{0}^{t} e^{- \lvert k_{1} \rvert^{2}  f(\epsilon k_{1})(t-s)} \int_{0}^{s} : \hat{X}_{\sigma, u}^{\epsilon, i_{2}}(k_{1}) \hat{X}_{t,b}^{\epsilon, j_{0}}(k_{4}): k_{12}^{i_{3}} g(\epsilon k_{12}^{i_{3}}) k_{1}^{j_{1}} g(\epsilon k_{1}^{j_{1}}) \nonumber\\
& \hspace{5mm} \times \frac{ e^{- \lvert k_{2} \rvert^{2} f(\epsilon k_{2}) (s-\sigma)} h_{u}(\epsilon k_{2})^{2}}{2 \lvert k_{2} \rvert^{2} f(\epsilon k_{2})} e^{- \lvert k_{12} \rvert^{2} f(\epsilon k_{12})(s-\sigma)}d\sigma ds \nonumber  \\
& -  \int_{0}^{t} e^{- \lvert k_{1} \rvert^{2} (t-s) } \int_{0}^{s} : \hat{\bar{X}}_{\sigma, u}^{\epsilon, i_{2}}(k_{1}) \hat{\bar{X}}_{t,b}^{\epsilon, j_{0}}(k_{4}): k_{12}^{i_{3}} i k_{1}^{j_{1}} i \nonumber\\
& \hspace{5mm} \times \frac{ e^{- \lvert k_{2} \rvert^{2} (s-\sigma)} h_{u}(\epsilon k_{2})^{2}}{2 \lvert k_{2} \rvert^{2}} e^{- \lvert k_{12} \rvert^{2}(s-\sigma)}d\sigma ds \nonumber  \\
& - \int_{0}^{t} e^{- \lvert k_{1} \rvert^{2}  f(\epsilon k_{1})(t-s)} \int_{0}^{s} : \hat{X}_{\sigma, b}^{\epsilon, i_{2}}(k_{1}) \hat{X}_{t,b}^{\epsilon, j_{0}}(k_{4}): k_{12}^{i_{3}} g(\epsilon k_{12}^{i_{3}}) k_{1}^{j_{1}} g(\epsilon k_{1}^{j_{1}}) \nonumber\\
& \hspace{5mm} \times \frac{ e^{- \lvert k_{2} \rvert^{2} f(\epsilon k_{2}) (s-\sigma)} h_{b}(\epsilon k_{2}) h_{u}(\epsilon k_{2})}{2 \lvert k_{2} \rvert^{2} f(\epsilon k_{2})} e^{- \lvert k_{12} \rvert^{2} f(\epsilon k_{12})(s-\sigma)}d\sigma ds \nonumber  \\
& +  \int_{0}^{t} e^{- \lvert k_{1} \rvert^{2} (t-s) } \int_{0}^{s} : \hat{\bar{X}}_{\sigma, b}^{\epsilon, i_{2}}(k_{1}) \hat{\bar{X}}_{t,b}^{\epsilon, j_{0}}(k_{4}): k_{12}^{i_{3}} i k_{1}^{j_{1}} i \nonumber\\
& \hspace{5mm} \times \frac{ e^{- \lvert k_{2} \rvert^{2} (s-\sigma)} h_{b}(\epsilon k_{2}) h_{u}(\epsilon k_{2})}{2 \lvert k_{2} \rvert^{2}} e^{- \lvert k_{12} \rvert^{2}(s-\sigma)}d\sigma ds \nonumber  \\
& - \int_{0}^{t} e^{- \lvert k_{1} \rvert^{2} f(\epsilon k_{1})(t-s)  } \int_{0}^{s} : \hat{X}_{\sigma, b}^{\epsilon, i_{2}}(k_{1}) \hat{X}_{t,b}^{\epsilon, j_{0}}(k_{4}): k_{12}^{i_{3}} g(\epsilon k_{12}^{i_{3}}) k_{1}^{j_{1}} g(\epsilon k_{1}^{j_{1}}) \nonumber\\
& \hspace{5mm} \times \frac{ e^{- \lvert k_{2} \rvert^{2} f(\epsilon k_{2}) (s-\sigma)} h_{u}(\epsilon k_{2})h_{b}(\epsilon k_{2})}{2 \lvert k_{2} \rvert^{2} f(\epsilon k_{2})} e^{- \lvert k_{12} \rvert^{2} f(\epsilon k_{12})(s-\sigma)}d\sigma ds \nonumber  \\
& +  \int_{0}^{t} e^{- \lvert k_{1} \rvert^{2} (t-s) } \int_{0}^{s} : \hat{\bar{X}}_{\sigma, b}^{\epsilon, i_{2}}(k_{1}) \hat{\bar{X}}_{t,b}^{\epsilon, j_{0}}(k_{4}): k_{12}^{i_{3}} i k_{1}^{j_{1}} i \nonumber\\
& \hspace{5mm} \times \frac{ e^{- \lvert k_{2} \rvert^{2} (s-\sigma)} h_{u}(\epsilon k_{2})h_{b}(\epsilon k_{2})}{2 \lvert k_{2} \rvert^{2}} e^{- \lvert k_{12} \rvert^{2}(s-\sigma)}d\sigma ds \nonumber  \\
&+ \int_{0}^{t} e^{- \lvert k_{1} \rvert^{2}  f(\epsilon k_{1})(t-s)} \int_{0}^{s} : \hat{X}_{\sigma, u}^{\epsilon, i_{2}}(k_{1}) \hat{X}_{t,b}^{\epsilon, j_{0}}(k_{4}): k_{12}^{i_{3}} g(\epsilon k_{12}^{i_{3}}) k_{1}^{j_{1}} g(\epsilon k_{1}^{j_{1}}) \nonumber\\
& \hspace{5mm} \times \frac{ e^{- \lvert k_{2} \rvert^{2} f(\epsilon k_{2}) (s-\sigma)} h_{b}(\epsilon k_{2})^{2}}{2 \lvert k_{2} \rvert^{2} f(\epsilon k_{2})} e^{- \lvert k_{12} \rvert^{2}f(\epsilon k_{12})(s-\sigma) }d\sigma ds \nonumber  \\
& -  \int_{0}^{t} e^{- \lvert k_{1} \rvert^{2} (t-s) } \int_{0}^{s} : \hat{\bar{X}}_{\sigma, u}^{\epsilon, i_{2}}(k_{1}) \hat{\bar{X}}_{t,b}^{\epsilon, j_{0}}(k_{4}): k_{12}^{i_{3}} i k_{1}^{j_{1}} i \nonumber\\
& \hspace{5mm} \times \frac{ e^{- \lvert k_{2} \rvert^{2} (s-\sigma)} h_{b}(\epsilon k_{2})^{2}}{2 \lvert k_{2} \rvert^{2}} e^{- \lvert k_{12} \rvert^{2}(s-\sigma)}d\sigma ds] \nonumber  \\
& \times \sum_{i_{4} =1}^{3} \hat{\mathcal{P}}^{i_{3} i_{4}}(k_{2}) \hat{\mathcal{P}}^{j_{1} i_{4}}(k_{2}) \hat{\mathcal{P}}^{i_{1} i_{2}}(k_{12}) \hat{\mathcal{P}}^{i_{0} i_{1}}(k_{1}) e_{k}, \nonumber
\end{align} 
and $L_{t, i_{0}j_{0}}^{6}$ corresponds to the terms from $l_{k_{1}k_{2}k_{3}k_{4}, i_{2}i_{3}j_{0}j_{1}, s \sigma t}^{m4}$ for $m\in \{1, \hdots, 8 \}$ so that 
\begin{align}\label{estimate 199}
L_{t, i_{0}j_{0}}^{6} \triangleq&  (2\pi)^{-\frac{9}{2}} \sum_{\lvert i -j \rvert \leq 1} \sum_{i_{1}, i_{2}, i_{3}, j_{1} =1}^{3} \sum_{k} \sum_{k_{1}, k_{2}, k_{4} \neq 0: k_{24} = k} \theta(2^{-i} k_{2}) \theta(2^{-j} k_{4}) \\
& \times [\int_{0}^{t} e^{- \lvert k_{2} \rvert^{2}  f(\epsilon k_{2})(t-s)} \int_{0}^{s} : \hat{X}_{\sigma, u}^{\epsilon, i_{3}}(k_{2}) \hat{X}_{t,b}^{\epsilon, j_{0}} (k_{4}): k_{12}^{i_{3}} g(\epsilon k_{12}^{i_{3}}) k_{2}^{j_{1}} g(\epsilon k_{2}^{j_{1}}) \nonumber\\
& \hspace{5mm} \times \frac{ e^{ - \lvert k_{1} \rvert^{2} f(\epsilon k_{1}) (s-\sigma)} h_{u}(\epsilon k_{1})^{2}}{2 \lvert k_{1} \rvert^{2} f(\epsilon k_{1})} e^{- \lvert k_{12} \rvert^{2}  f(\epsilon k_{12})(s-\sigma)} d\sigma ds \nonumber \\
& - \int_{0}^{t} e^{- \lvert k_{2} \rvert^{2} (t-s)} \int_{0}^{s} : \hat{\bar{X}}_{\sigma, u}^{\epsilon, i_{3}}(k_{2}) \hat{\bar{X}}_{t,b}^{\epsilon, j_{0}} (k_{4}): k_{12}^{i_{3}} i k_{2}^{j_{1}} i \nonumber\\
& \hspace{5mm} \times \frac{ e^{ - \lvert k_{1} \rvert^{2} (s-\sigma)} h_{u}(\epsilon k_{1})^{2}}{2 \lvert k_{1} \rvert^{2} } e^{- \lvert k_{12} \rvert^{2} (s-\sigma)} d\sigma ds \nonumber \\
& - \int_{0}^{t} e^{- \lvert k_{2} \rvert^{2}  f(\epsilon k_{2})(t-s)} \int_{0}^{s} : \hat{X}_{\sigma, b}^{\epsilon, i_{3}}(k_{2}) \hat{X}_{t,b}^{\epsilon, j_{0}} (k_{4}): k_{12}^{i_{3}} g(\epsilon k_{12}^{i_{3}}) k_{2}^{j_{1}} g(\epsilon k_{2}^{j_{1}}) \nonumber\\
& \hspace{5mm} \times \frac{ e^{ - \lvert k_{1} \rvert^{2} f(\epsilon k_{1}) (s-\sigma)} h_{b}(\epsilon k_{1}) h_{u}(\epsilon k_{1})}{2 \lvert k_{1} \rvert^{2} f(\epsilon k_{1})} e^{- \lvert k_{12} \rvert^{2} f(\epsilon k_{12})(s-\sigma)} d\sigma ds \nonumber \\
& + \int_{0}^{t} e^{- \lvert k_{2} \rvert^{2} (t-s)} \int_{0}^{s} : \hat{\bar{X}}_{\sigma, b}^{\epsilon, i_{3}}(k_{2}) \hat{\bar{X}}_{t,b}^{\epsilon, j_{0}} (k_{4}): k_{12}^{i_{3}} i k_{2}^{j_{1}} i \nonumber\\
& \hspace{5mm} \times \frac{ e^{ - \lvert k_{1} \rvert^{2} (s-\sigma)} h_{b}(\epsilon k_{1}) h_{u}(\epsilon k_{1})}{2 \lvert k_{1} \rvert^{2} } e^{- \lvert k_{12} \rvert^{2} (s-\sigma)} d\sigma ds \nonumber \\
&- \int_{0}^{t} e^{- \lvert k_{2} \rvert^{2}  f(\epsilon k_{2})(t-s)} \int_{0}^{s} : \hat{X}_{\sigma, u}^{\epsilon, i_{3}}(k_{2}) \hat{X}_{t,b}^{\epsilon, j_{0}} (k_{4}): k_{12}^{i_{3}} g(\epsilon k_{12}^{i_{3}}) k_{2}^{j_{1}} g(\epsilon k_{2}^{j_{1}}) \nonumber\\
& \hspace{5mm} \times\frac{ e^{ - \lvert k_{1} \rvert^{2} f(\epsilon k_{1}) (s-\sigma)} h_{b}(\epsilon k_{1})^{2}}{2 \lvert k_{1} \rvert^{2} f(\epsilon k_{1})} e^{- \lvert k_{12} \rvert^{2} f(\epsilon k_{12})(s-\sigma) } d\sigma ds \nonumber \\
& + \int_{0}^{t} e^{- \lvert k_{2} \rvert^{2} (t-s)} \int_{0}^{s} : \hat{\bar{X}}_{\sigma, u}^{\epsilon, i_{3}}(k_{2}) \hat{\bar{X}}_{t,b}^{\epsilon, j_{0}} (k_{4}): k_{12}^{i_{3}} i k_{2}^{j_{1}} i \nonumber\\
& \hspace{5mm} \times\frac{ e^{ - \lvert k_{1} \rvert^{2} (s-\sigma)} h_{b}(\epsilon k_{1})^{2}}{2 \lvert k_{1} \rvert^{2} } e^{- \lvert k_{12} \rvert^{2} (s-\sigma)} d\sigma ds \nonumber \\
&+ \int_{0}^{t} e^{- \lvert k_{2} \rvert^{2}  f(\epsilon k_{2})(t-s)} \int_{0}^{s} : \hat{X}_{\sigma, b}^{\epsilon, i_{3}}(k_{2}) \hat{X}_{t,b}^{\epsilon, j_{0}} (k_{4}): k_{12}^{i_{3}} g(\epsilon k_{12}^{i_{3}}) k_{2}^{j_{1}} g(\epsilon k_{2}^{j_{1}}) \nonumber\\
& \hspace{5mm} \times \frac{ e^{ - \lvert k_{1} \rvert^{2} f(\epsilon k_{1}) (s-\sigma)} h_{u}(\epsilon k_{1})h_{b}(\epsilon k_{1})}{2 \lvert k_{1} \rvert^{2} f(\epsilon k_{1})} e^{- \lvert k_{12} \rvert^{2}  f(\epsilon k_{12})(s-\sigma)} d\sigma ds \nonumber \\
& - \int_{0}^{t} e^{- \lvert k_{2} \rvert^{2} (t-s)} \int_{0}^{s} : \hat{\bar{X}}_{\sigma, b}^{\epsilon, i_{3}}(k_{2}) \hat{\bar{X}}_{t,b}^{\epsilon, j_{0}} (k_{4}): k_{12}^{i_{3}} i k_{2}^{j_{1}} i \nonumber\\
& \hspace{5mm} \times \frac{ e^{ - \lvert k_{1} \rvert^{2} (s-\sigma)} h_{u}(\epsilon k_{1})h_{b}(\epsilon k_{1})}{2 \lvert k_{1} \rvert^{2} } e^{- \lvert k_{12} \rvert^{2} (s-\sigma)} d\sigma ds] \nonumber \\
& \times \sum_{i_{4} =1}^{3} \hat{\mathcal{P}}^{i_{2} i_{4}} (k_{1}) \hat{\mathcal{P}}^{j_{1} i_{4}}(k_{1}) \hat{\mathcal{P}}^{i_{1} i_{2}}(k_{12}) \hat{\mathcal{P}}^{i_{0} i_{1}} (k_{2}) e_{k}. \nonumber
\end{align} 
Finally, we take the terms corresponding to $l_{k_{1}k_{2}k_{3}k_{4}, i_{2}i_{3}j_{0}j_{1}, s \sigma t}^{m7}$ for $m\in \{1, \hdots, 8 \}$ and define it as 
\begin{align}\label{estimate 188}
L_{t, i_{0}j_{0}}^{71} \triangleq& (2\pi)^{-6}\sum_{\lvert i-j \rvert \leq 1} \sum_{i_{1}, i_{2}, i_{3}, j_{1} =1}^{3} \sum_{k_{1}, k_{2} \neq 0} \theta(2^{-i} k_{2}) \theta(2^{-j} k_{2})  \\
& \times [  \int_{0}^{t} e^{- \lvert k_{2} \rvert^{2}  f(\epsilon k_{2})(t-s)} \int_{0}^{s} \frac{ h_{u}(\epsilon k_{2}) h_{b}(\epsilon k_{2}) h_{u}(\epsilon k_{1})^{2}}{4 \lvert k_{1} \rvert^{2} \lvert k_{2} \rvert^{2} f(\epsilon k_{1}) f(\epsilon k_{2})} k_{12}^{i_{3}} g(\epsilon k_{12}^{i_{3}}) k_{2}^{j_{1}} g(\epsilon k_{2}^{j_{1}})  \nonumber\\
& \hspace{5mm}  \times e^{- \lvert k_{12} \rvert^{2}  f(\epsilon k_{12})(s-\sigma) - \lvert k_{1} \rvert^{2} f(\epsilon k_{1}) (s-\sigma) - \lvert k_{2} \rvert^{2} f(\epsilon k_{2}) (t-\sigma)}  d\sigma ds \nonumber \\
& - \int_{0}^{t} e^{- \lvert k_{2} \rvert^{2} (t-s)} \int_{0}^{s} \frac{ h_{u}(\epsilon k_{2}) h_{b}(\epsilon k_{2}) h_{u}(\epsilon k_{1})^{2}}{4 \lvert k_{1} \rvert^{2} \lvert k_{2} \rvert^{2}} k_{12}^{i_{3}} i k_{2}^{j_{1}} i  \nonumber\\
& \hspace{5mm}  \times e^{- \lvert k_{12} \rvert^{2} (s-\sigma) - \lvert k_{1} \rvert^{2} (s-\sigma) - \lvert k_{2} \rvert^{2} (t-\sigma)}  d\sigma ds \nonumber \\
& -  \int_{0}^{t} e^{- \lvert k_{2} \rvert^{2}  f(\epsilon k_{2})(t-s)} \int_{0}^{s} \frac{ h_{b}(\epsilon k_{2})^{2} h_{b}(\epsilon k_{1}) h_{u}(\epsilon k_{1})}{4 \lvert k_{1} \rvert^{2} \lvert k_{2} \rvert^{2} f(\epsilon k_{1}) f(\epsilon k_{2})} k_{12}^{i_{3}} g(\epsilon k_{12}^{i_{3}}) k_{2}^{j_{1}} g(\epsilon k_{2}^{j_{1}})  \nonumber\\
& \hspace{5mm}  \times e^{- \lvert k_{12} \rvert^{2}  f(\epsilon k_{12})(s-\sigma) - \lvert k_{1} \rvert^{2} f(\epsilon k_{1}) (s-\sigma) - \lvert k_{2} \rvert^{2} f(\epsilon k_{2}) (t-\sigma)} d\sigma ds \nonumber \\
& + \int_{0}^{t} e^{- \lvert k_{2} \rvert^{2} (t-s)} \int_{0}^{s} \frac{ h_{b}(\epsilon k_{2})^{2} h_{b}(\epsilon k_{1}) h_{u}(\epsilon k_{1})}{4 \lvert k_{1} \rvert^{2} \lvert k_{2} \rvert^{2}} k_{12}^{i_{3}} i k_{2}^{j_{1}} i  \nonumber\\
& \hspace{5mm}  \times e^{- \lvert k_{12} \rvert^{2} (s-\sigma) - \lvert k_{1} \rvert^{2} (s-\sigma) - \lvert k_{2} \rvert^{2} (t-\sigma)}  d\sigma ds \nonumber \\
& -  \int_{0}^{t} e^{- \lvert k_{2} \rvert^{2}  f(\epsilon k_{2})(t-s)} \int_{0}^{s} \frac{ h_{b}(\epsilon k_{2})^{2} h_{u}(\epsilon k_{1}) h_{b}(\epsilon k_{1})}{4 \lvert k_{1} \rvert^{2} \lvert k_{2} \rvert^{2} f(\epsilon k_{1}) f(\epsilon k_{2})} k_{12}^{i_{3}} g(\epsilon k_{12}^{i_{3}}) k_{2}^{j_{1}} g(\epsilon k_{2}^{j_{1}})  \nonumber\\
& \hspace{5mm}  \times e^{- \lvert k_{12} \rvert^{2}  f(\epsilon k_{12})(s-\sigma) - \lvert k_{1} \rvert^{2} f(\epsilon k_{1}) (s-\sigma) - \lvert k_{2} \rvert^{2} f(\epsilon k_{2}) (t-\sigma)}  d\sigma ds \nonumber \\
& + \int_{0}^{t} e^{- \lvert k_{2} \rvert^{2} (t-s)} \int_{0}^{s} \frac{ h_{b}(\epsilon k_{2})^{2} h_{u}(\epsilon k_{1}) h_{b}(\epsilon k_{1})}{4 \lvert k_{1} \rvert^{2} \lvert k_{2} \rvert^{2}} k_{12}^{i_{3}} i k_{2}^{j_{1}} i  \nonumber\\
& \hspace{5mm}  \times e^{- \lvert k_{12} \rvert^{2} (s-\sigma) - \lvert k_{1} \rvert^{2} (s-\sigma) - \lvert k_{2} \rvert^{2} (t-\sigma)}  d\sigma ds \nonumber \\
&+  \int_{0}^{t} e^{- \lvert k_{2} \rvert^{2}  f(\epsilon k_{2})(t-s)} \int_{0}^{s} \frac{ h_{u}(\epsilon k_{2}) h_{b}(\epsilon k_{2}) h_{b}(\epsilon k_{1})^{2}}{4 \lvert k_{1} \rvert^{2} \lvert k_{2} \rvert^{2} f(\epsilon k_{1}) f(\epsilon k_{2})} k_{12}^{i_{3}} g(\epsilon k_{12}^{i_{3}}) k_{2}^{j_{1}} g(\epsilon k_{2}^{j_{1}})  \nonumber\\
& \hspace{5mm}  \times e^{- \lvert k_{12} \rvert^{2}  f(\epsilon k_{12})(s-\sigma) - \lvert k_{1} \rvert^{2} f(\epsilon k_{1}) (s-\sigma) - \lvert k_{2} \rvert^{2} f(\epsilon k_{2}) (t-\sigma)}  d\sigma ds \nonumber \\
& - \int_{0}^{t} e^{- \lvert k_{2} \rvert^{2} (t-s)} \int_{0}^{s} \frac{ h_{u}(\epsilon k_{2}) h_{b}(\epsilon k_{2}) h_{b}(\epsilon k_{1})^{2}}{4 \lvert k_{1} \rvert^{2} \lvert k_{2} \rvert^{2}} k_{12}^{i_{3}} i k_{2}^{j_{1}} i  \nonumber\\
& \hspace{5mm}  \times e^{- \lvert k_{12} \rvert^{2} (s-\sigma) - \lvert k_{1} \rvert^{2} (s-\sigma) - \lvert k_{2} \rvert^{2} (t-\sigma)}  d\sigma ds] \nonumber \\
& \times \sum_{i_{4}, i_{5} =1}^{3} \hat{\mathcal{P}}^{i_{3}i_{4}}(k_{1}) \hat{\mathcal{P}}^{j_{1}i_{4}}(k_{1}) \hat{\mathcal{P}}^{i_{2}i_{5}}(k_{2}) \hat{\mathcal{P}}^{j_{0} i_{5}}(k_{2}) \hat{\mathcal{P}}^{i_{1}i_{2}}(k_{12}) \hat{\mathcal{P}}^{i_{0}i_{1}}(k_{2}) \nonumber 
\end{align}
where we strategically swapped $k_{1}$ with $k_{2}$ and $i_{4}$ with $i_{5}$. The motivation for this swap will be clear in Remark \ref{Remark 2.3}. Similarly, we take terms corresponding to $l_{k_{1}k_{2}k_{3}k_{4}, i_{2}i_{3}j_{0}j_{1}, s \sigma t}^{m5}$ for $m\in \{1, \hdots, 8 \}$ and define 
\begin{align}\label{It72}
L_{t, i_{0}j_{0}}^{72} \triangleq &(2\pi)^{-6} \sum_{\lvert i-j \rvert \leq 1} \sum_{k_{1}, k_{2} \neq 0} \sum_{i_{1}, i_{2}, i_{3}, j_{1} =1}^{3} \theta(2^{-i} k_{2}) \theta(2^{-j} k_{2})  \\
& \times [\int_{0}^{t} e^{- \lvert k_{2} \rvert^{2} f(\epsilon k_{2})(t-s)} \int_{0}^{s} \frac{ h_{u}(\epsilon k_{1})^{2} h_{u}(\epsilon k_{2}) h_{b}(\epsilon k_{2})}{4 \lvert k_{1} \rvert^{2} \lvert k_{2} \rvert^{2} f(\epsilon k_{1}) f(\epsilon k_{2})} k_{12}^{i_{3}} g(\epsilon k_{12}^{i_{3}}) k_{2}^{j_{1}} g(\epsilon k_{2}^{j_{1}})  \nonumber\\
& \hspace{5mm} \times e^{- \lvert k_{12} \rvert^{2} f(\epsilon k_{12})(s-\sigma) - \lvert k_{1} \rvert^{2} f(\epsilon k_{1}) (s-\sigma) - \lvert k_{2} \rvert^{2} f(\epsilon k_{2}) (t-\sigma)}  d\sigma d s  \nonumber \\
& - \int_{0}^{t} e^{- \lvert k_{2} \rvert^{2}(t-s) } \int_{0}^{s} \frac{ h_{u}(\epsilon k_{1})^{2} h_{u}(\epsilon k_{2}) h_{b}(\epsilon k_{2})}{4 \lvert k_{1} \rvert^{2} \lvert k_{2} \rvert^{2}} k_{12}^{i_{3}} i k_{2}^{j_{1}} i  \nonumber\\
& \hspace{5mm} \times e^{- \lvert k_{12} \rvert^{2} (s-\sigma)  - \lvert k_{1} \rvert^{2} (s-\sigma) - \lvert k_{2} \rvert^{2} (t-\sigma)}  d\sigma d s  \nonumber \\
& - \int_{0}^{t} e^{- \lvert k_{2} \rvert^{2} f(\epsilon k_{2})(t-s)} \int_{0}^{s} \frac{ h_{b}(\epsilon k_{1}) h_{u}(\epsilon k_{1}) h_{b}(\epsilon k_{2})^{2}}{4 \lvert k_{1} \rvert^{2} \lvert k_{2} \rvert^{2} f(\epsilon k_{1}) f(\epsilon k_{2})} k_{12}^{i_{3}} g(\epsilon k_{12}^{i_{3}}) k_{2}^{j_{1}} g(\epsilon k_{2}^{j_{1}})  \nonumber\\
& \hspace{5mm} \times e^{- \lvert k_{12} \rvert^{2}  f(\epsilon k_{12})(s-\sigma) - \lvert k_{1} \rvert^{2} f(\epsilon k_{1}) (s-\sigma) - \lvert k_{2} \rvert^{2} f(\epsilon k_{2}) (t-\sigma)}  d\sigma d s  \nonumber \\
& + \int_{0}^{t} e^{- \lvert k_{2} \rvert^{2}(t-s) } \int_{0}^{s} \frac{ h_{b}(\epsilon k_{1}) h_{u}(\epsilon k_{1}) h_{b}(\epsilon k_{2})^{2}}{4 \lvert k_{1} \rvert^{2} \lvert k_{2} \rvert^{2}} k_{12}^{i_{3}} i  k_{2}^{j_{1}} i  \nonumber\\
& \hspace{5mm} \times e^{- \lvert k_{12} \rvert^{2} (s-\sigma)  - \lvert k_{1} \rvert^{2} (s-\sigma) - \lvert k_{2} \rvert^{2} (t-\sigma)}  d\sigma d s  \nonumber \\
& - \int_{0}^{t} e^{- \lvert k_{2} \rvert^{2} f(\epsilon k_{2})(t-s)} \int_{0}^{s} \frac{ h_{b}(\epsilon k_{1})^{2} h_{u}(\epsilon k_{2}) h_{b}(\epsilon k_{2})}{4 \lvert k_{1} \rvert^{2} \lvert k_{2} \rvert^{2} f(\epsilon k_{1}) f(\epsilon k_{2})} k_{12}^{i_{3}} g(\epsilon k_{12}^{i_{3}}) k_{2}^{j_{1}} g(\epsilon k_{2}^{j_{1}})  \nonumber\\
& \hspace{5mm} \times e^{- \lvert k_{12} \rvert^{2}  f(\epsilon k_{12})(s-\sigma) - \lvert k_{1} \rvert^{2} f(\epsilon k_{1}) (s-\sigma) - \lvert k_{2} \rvert^{2} f(\epsilon k_{2}) (t-\sigma)}  d\sigma d s  \nonumber \\
& + \int_{0}^{t} e^{- \lvert k_{2} \rvert^{2}(t-s) } \int_{0}^{s} \frac{ h_{b}(\epsilon k_{1})^{2} h_{u}(\epsilon k_{2}) h_{b}(\epsilon k_{2})}{4 \lvert k_{1} \rvert^{2} \lvert k_{2} \rvert^{2}} k_{12}^{i_{3}} i k_{2}^{j_{1}} i  \nonumber\\
& \hspace{5mm} \times e^{- \lvert k_{12} \rvert^{2} (s-\sigma)  - \lvert k_{1} \rvert^{2} (s-\sigma) - \lvert k_{2} \rvert^{2} (t-\sigma)}  d\sigma d s  \nonumber \\
& + \int_{0}^{t} e^{- \lvert k_{2} \rvert^{2} f(\epsilon k_{2})(t-s)} \int_{0}^{s} \frac{ h_{u}(\epsilon k_{1}) h_{b}(\epsilon k_{1}) h_{b}(\epsilon k_{2})^{2}}{4 \lvert k_{1} \rvert^{2} \lvert k_{2} \rvert^{2} f(\epsilon k_{1}) f(\epsilon k_{2})} k_{12}^{i_{3}} g(\epsilon k_{12}^{i_{3}}) k_{2}^{j_{1}} g(\epsilon k_{2}^{j_{1}})  \nonumber\\
& \hspace{5mm} \times e^{- \lvert k_{12} \rvert^{2}  f(\epsilon k_{12})(s-\sigma) - \lvert k_{1} \rvert^{2} f(\epsilon k_{1}) (s-\sigma) - \lvert k_{2} \rvert^{2} f(\epsilon k_{2}) (t-\sigma)}  d\sigma d s  \nonumber \\
& - \int_{0}^{t} e^{- \lvert k_{2} \rvert^{2}(t-s) } \int_{0}^{s} \frac{ h_{u}(\epsilon k_{1}) h_{b}(\epsilon k_{1}) h_{b}(\epsilon k_{2})^{2}}{4 \lvert k_{1} \rvert^{2} \lvert k_{2} \rvert^{2}} k_{12}^{i_{3}} i  k_{2}^{j_{1}} i  \nonumber\\
& \hspace{5mm} \times e^{- \lvert k_{12} \rvert^{2} (s-\sigma)  - \lvert k_{1} \rvert^{2} (s-\sigma) - \lvert k_{2} \rvert^{2} (t-\sigma)}  d\sigma d s]  \nonumber \\
& \times \sum_{i_{4}, i_{5} =1}^{3} \hat{\mathcal{P}}^{i_{2} i_{4}} (k_{1}) \hat{\mathcal{P}}^{j_{1}i_{4}} (k_{1}) \hat{\mathcal{P}}^{i_{3} i_{5}} (k_{2}) \hat{\mathcal{P}}^{j_{0} i_{5}} (k_{2}) \hat{\mathcal{P}}^{i_{1} i_{2}} (k_{12}) \hat{\mathcal{P}}^{i_{0} i_{1}} (k_{2}). \nonumber
\end{align}

\begin{remark}\label{Remark 2.3}
We observe the cancellations between the third and seventh, and fourth and eighth lines of \eqref{It72}; however, due to the lack of such cancellations in $L_{t, i_{0}j_{0}}^{71}$ in \eqref{estimate 188}, we intentionally choose not to take advantage of these cancellations and instead work as follows. We add the first four terms of $L_{t, i_{0}j_{0}}^{71}$ in \eqref{estimate 188} to the and first four terms of $L_{t, i_{0}j_{0}}^{72}$ in \eqref{It72} to deduce 
\begin{align}\label{estimate 190}
L_{t, i_{0}j_{0}}^{7,\star} \triangleq& (2\pi)^{-6} \sum_{ \lvert i-j \rvert \leq 1} \sum_{k_{1}, k_{2} \neq 0} \sum_{i_{1}, i_{2}, i_{3}, j_{1} =1}^{3} \theta(2^{-i} k_{2}) \theta(2^{-j} k_{2}) \\
& \times [ \int_{0}^{t} e^{- \lvert k_{2} \rvert^{2}  f(\epsilon k_{2})(t-s)} \int_{0}^{s} \frac{ h_{u}(\epsilon k_{2}) h_{b}(\epsilon k_{2}) h_{u}(\epsilon k_{1})^{2}}{4 \lvert k_{1} \rvert^{2} \lvert k_{2} \rvert^{2} f(\epsilon k_{1}) f(\epsilon k_{2})}  k_{12}^{i_{3}} g(\epsilon k_{12}^{i_{3}}) k_{2}^{j_{1}} g(\epsilon k_{2}^{j_{1}}) \nonumber\\
& \hspace{5mm} \times e^{- \lvert k_{12} \rvert^{2} f(\epsilon k_{12})(s-\sigma) - \lvert k_{1} \rvert^{2} f(\epsilon k_{1}) (s-\sigma) - \lvert k_{2} \rvert^{2} f(\epsilon k_{2}) (t-\sigma)}d\sigma ds  \nonumber \\
& - \int_{0}^{t} e^{- \lvert k_{2} \rvert^{2} (t-s) } \int_{0}^{s} \frac{ h_{u}(\epsilon k_{2}) h_{b}(\epsilon k_{2}) h_{u}(\epsilon k_{1})^{2}}{4 \lvert k_{1} \rvert^{2} \lvert k_{2} \rvert^{2} }  k_{12}^{i_{3}} i k_{2}^{j_{1}} i \nonumber\\
& \hspace{5mm} \times e^{- \lvert k_{12} \rvert^{2} (s-\sigma) - \lvert k_{1} \rvert^{2} (s-\sigma) - \lvert k_{2} \rvert^{2} (t-\sigma)}d\sigma ds  \nonumber \\
& - \int_{0}^{t} e^{- \lvert k_{2} \rvert^{2} f(\epsilon k_{2})(t-s) } \int_{0}^{s} \frac{ h_{b}(\epsilon k_{2})^{2} h_{b}(\epsilon k_{1}) h_{u}(\epsilon k_{1})}{4 \lvert k_{1} \rvert^{2} \lvert k_{2} \rvert^{2} f(\epsilon k_{1}) f(\epsilon k_{2})}  k_{12}^{i_{3}} g(\epsilon k_{12}^{i_{3}}) k_{2}^{j_{1}} g(\epsilon k_{2}^{j_{1}}) \nonumber\\
& \hspace{5mm} \times e^{- \lvert k_{12} \rvert^{2} f(\epsilon k_{12})(s-\sigma) - \lvert k_{1} \rvert^{2} f(\epsilon k_{1}) (s-\sigma) - \lvert k_{2} \rvert^{2} f(\epsilon k_{2}) (t-\sigma)}d\sigma ds  \nonumber \\
& + \int_{0}^{t} e^{- \lvert k_{2} \rvert^{2} (t-s) } \int_{0}^{s} \frac{ h_{b}(\epsilon k_{2})^{2} h_{b}(\epsilon k_{1}) h_{u}(\epsilon k_{1})}{4 \lvert k_{1} \rvert^{2} \lvert k_{2} \rvert^{2} }  k_{12}^{i_{3}} i k_{2}^{j_{1}} i \nonumber\\
& \hspace{5mm} \times e^{- \lvert k_{12} \rvert^{2} (s-\sigma) - \lvert k_{1} \rvert^{2} (s-\sigma) - \lvert k_{2} \rvert^{2} (t-\sigma)}d\sigma ds]  \nonumber \\
& \times \sum_{i_{4}, i_{5} =1}^{3} \hat{\mathcal{P}}^{j_{1} i_{4}}(k_{1}) \hat{\mathcal{P}}^{j_{0} i_{5}}(k_{2}) \hat{\mathcal{P}}^{i_{1} i_{2}}(k_{12}) \hat{\mathcal{P}}^{i_{0} i_{1}} (k_{2}) \nonumber\\
& \hspace{5mm} \times [ \hat{\mathcal{P}}^{i_{2} i_{4}}(k_{1}) \hat{\mathcal{P}}^{i_{3} i_{5}} (k_{2}) + \hat{\mathcal{P}}^{i_{3} i_{4}}(k_{1}) \hat{\mathcal{P}}^{i_{2} i_{5}} (k_{2})] \nonumber. 
\end{align} 
For the fifth to eighth terms in $L_{t, i_{0}j_{0}}^{71}$ of \eqref{estimate 188} and $L_{t, i_{0}j_{0}}^{72}$ of \eqref{It72}, we see that adding them similarly won't work well as e.g., the fifth terms of $L_{t, i_{0}j_{0}}^{71}$  and the $L_{t, i_{0}j_{0}}^{72}$ have more differences than just Leray projection terms, specifically $h_{b}(\epsilon k_{2}) h_{u}(\epsilon k_{1}) \neq h_{b}(\epsilon k_{1}) h_{u}(\epsilon k_{2})$. This is a difference from the case of the NS equations as a result of the complex structure of the MHD system. Remarkably, we still realize that the following terms are identical, modulus signs: the fifth term of $L_{t, i_{0}j_{0}}^{71}$ and the seventh term of $L_{t, i_{0}j_{0}}^{72}$, the sixth term of $L_{t, i_{0}j_{0}}^{71}$ and the eighth term of $L_{t, i_{0}j_{0}}^{72}$, the seventh term of $L_{t, i_{0}j_{0}}^{71}$ and the fifth term of $L_{t, i_{0}j_{0}}^{72}$, the eighth term of $L_{t, i_{0}j_{0}}^{71}$ and the sixth term of $L_{t, i_{0}j_{0}}^{72}$. Thus, we can still combine the last four terms of  $L_{t, i_{0}j_{0}}^{71}$ and the last four terms of $L_{t, i_{0}j_{0}}^{72}$ to deduce 
\begin{align}\label{estimate 189}
L_{t, i_{0}j_{0}}^{7,\star\star} \triangleq& (2\pi)^{-6} \sum_{ \lvert i-j \rvert \leq 1} \sum_{k_{1}, k_{2} \neq 0} \sum_{i_{1}, i_{2}, i_{3}, j_{1} =1}^{3} \theta(2^{-i} k_{2}) \theta(2^{-j} k_{2}) \\
& \times [ \int_{0}^{t} e^{- \lvert k_{2} \rvert^{2} f(\epsilon k_{2}) (t-s)} \int_{0}^{s} \frac{ h_{b}(\epsilon k_{2})^{2} h_{u}(\epsilon k_{1}) h_{b}(\epsilon k_{1})}{4 \lvert k_{1} \rvert^{2} \lvert k_{2} \rvert^{2} f(\epsilon k_{1}) f(\epsilon k_{2})}  k_{12}^{i_{3}} g(\epsilon k_{12}^{i_{3}}) k_{2}^{j_{1}} g(\epsilon k_{2}^{j_{1}}) \nonumber\\
& \hspace{5mm} \times e^{- \lvert k_{12} \rvert^{2}f(\epsilon k_{12}) (s-\sigma) - \lvert k_{1} \rvert^{2} f(\epsilon k_{1}) (s-\sigma) - \lvert k_{2} \rvert^{2} f(\epsilon k_{2}) (t-\sigma)}d\sigma ds  \nonumber \\
& - \int_{0}^{t} e^{- \lvert k_{2} \rvert^{2} (t-s) } \int_{0}^{s} \frac{ h_{b}(\epsilon k_{2})^{2} h_{u}(\epsilon k_{1}) h_{b}(\epsilon k_{1})}{4 \lvert k_{1} \rvert^{2} \lvert k_{2} \rvert^{2} }  k_{12}^{i_{3}} i k_{2}^{j_{1}} i \nonumber\\
& \hspace{5mm} \times e^{- \lvert k_{12} \rvert^{2} (s-\sigma) - \lvert k_{1} \rvert^{2} (s-\sigma) - \lvert k_{2} \rvert^{2} (t-\sigma)}d\sigma ds  \nonumber \\
& - \int_{0}^{t} e^{- \lvert k_{2} \rvert^{2} f(\epsilon k_{2}) (t-s)} \int_{0}^{s} \frac{ h_{u}(\epsilon k_{2}) h_{b}(\epsilon k_{2}) h_{b}(\epsilon k_{1})^{2}}{4 \lvert k_{1} \rvert^{2} \lvert k_{2} \rvert^{2} f(\epsilon k_{1}) f(\epsilon k_{2})}  k_{12}^{i_{3}} g(\epsilon k_{12}^{i_{3}}) k_{2}^{j_{1}} g(\epsilon k_{2}^{j_{1}}) \nonumber\\
& \hspace{5mm} \times e^{- \lvert k_{12} \rvert^{2} f(\epsilon k_{12})(s-\sigma) - \lvert k_{1} \rvert^{2} f(\epsilon k_{1}) (s-\sigma) - \lvert k_{2} \rvert^{2} f(\epsilon k_{2}) (t-\sigma)}d\sigma ds  \nonumber \\
& + \int_{0}^{t} e^{- \lvert k_{2} \rvert^{2} (t-s) } \int_{0}^{s} \frac{ h_{u}(\epsilon k_{2}) h_{b}(\epsilon k_{2}) h_{b}(\epsilon k_{1})^{2}}{4 \lvert k_{1} \rvert^{2} \lvert k_{2} \rvert^{2} }  k_{12}^{i_{3}} i k_{2}^{j_{1}} i \nonumber\\
& \hspace{5mm} \times e^{- \lvert k_{12} \rvert^{2} (s-\sigma) - \lvert k_{1} \rvert^{2} (s-\sigma) - \lvert k_{2} \rvert^{2} (t-\sigma)}d\sigma ds]  \nonumber \\
& \times \sum_{i_{4}, i_{5} =1}^{3} \hat{\mathcal{P}}^{j_{1} i_{4}}(k_{1}) \hat{\mathcal{P}}^{j_{0} i_{5}}(k_{2}) \hat{\mathcal{P}}^{i_{1} i_{2}}(k_{12}) \hat{\mathcal{P}}^{i_{0} i_{1}} (k_{2}) \nonumber\\
& \hspace{5mm} \times [ \hat{\mathcal{P}}^{i_{2} i_{4}}(k_{1}) \hat{\mathcal{P}}^{i_{3} i_{5}} (k_{2}) - \hat{\mathcal{P}}^{i_{3} i_{4}}(k_{1}) \hat{\mathcal{P}}^{i_{2} i_{5}} (k_{2})] \nonumber. 
\end{align} 
We observe $[ \hat{\mathcal{P}}^{i_{2} i_{4}}(k_{1}) \hat{\mathcal{P}}^{i_{3} i_{5}} (k_{2}) - \hat{\mathcal{P}}^{i_{3} i_{4}}(k_{1}) \hat{\mathcal{P}}^{i_{2} i_{5}} (k_{2})]$ in \eqref{estimate 189} instead of $[ \hat{\mathcal{P}}^{i_{2} i_{4}}(k_{1})$  $\hat{\mathcal{P}}^{i_{3} i_{5}} (k_{2}) + \hat{\mathcal{P}}^{i_{3} i_{4}}(k_{1}) \hat{\mathcal{P}}^{i_{2} i_{5}} (k_{2})]$ in \eqref{estimate 190} due to the difference in signs that we mentioned. We define 
\begin{equation}\label{estimate 222}
L_{t, i_{0}j_{0}}^{7} \triangleq L_{t, i_{0}j_{0}}^{7,\star} + L_{t, i_{0}j_{0}}^{7,\star\star}.
\end{equation} 
\end{remark} 

\emph{Terms in the zeroth chaos: $L_{t, i_{0}j_{0}}^{7} = L_{t, i_{0}j_{0}}^{7,\star} + L_{t, i_{0}j_{0}}^{7,\star\star}$ from \eqref{estimate 190}-\eqref{estimate 189}}\\

Within $L_{t, i_{0}j_{0}}^{7,\star}$, we compute 
\begin{align}
&  \int_{0}^{t} e^{- \lvert k_{2} \rvert^{2}  f(\epsilon k_{2})(t-s)} \int_{0}^{s} \frac{ h_{u}(\epsilon k_{2}) h_{b}(\epsilon k_{2}) h_{u}(\epsilon k_{1})^{2}}{4 \lvert k_{1} \rvert^{2} \lvert k_{2} \rvert^{2} f(\epsilon k_{1}) f(\epsilon k_{2})}  k_{12}^{i_{3}} g(\epsilon k_{12}^{i_{3}}) k_{2}^{j_{1}} g(\epsilon k_{2}^{j_{1}}) \nonumber\\
& \hspace{5mm} \times e^{- \lvert k_{12} \rvert^{2} f(\epsilon k_{12})(s-\sigma) - \lvert k_{1} \rvert^{2} f(\epsilon k_{1}) (s-\sigma) - \lvert k_{2} \rvert^{2} f(\epsilon k_{2}) (t-\sigma)}d\sigma ds  \nonumber \\
& - \int_{0}^{t} e^{- \lvert k_{2} \rvert^{2} (t-s) } \int_{0}^{s} \frac{ h_{u}(\epsilon k_{2}) h_{b}(\epsilon k_{2}) h_{u}(\epsilon k_{1})^{2}}{4 \lvert k_{1} \rvert^{2} \lvert k_{2} \rvert^{2} }  k_{12}^{i_{3}} i k_{2}^{j_{1}} i \nonumber\\
& \hspace{5mm} \times e^{- \lvert k_{12} \rvert^{2} (s-\sigma) - \lvert k_{1} \rvert^{2} (s-\sigma) - \lvert k_{2} \rvert^{2} (t-\sigma)}d\sigma ds  \nonumber \\
& - \int_{0}^{t} e^{- \lvert k_{2} \rvert^{2} f(\epsilon k_{2})(t-s) } \int_{0}^{s} \frac{ h_{b}(\epsilon k_{2})^{2} h_{b}(\epsilon k_{1}) h_{u}(\epsilon k_{1})}{4 \lvert k_{1} \rvert^{2} \lvert k_{2} \rvert^{2} f(\epsilon k_{1}) f(\epsilon k_{2})}  k_{12}^{i_{3}} g(\epsilon k_{12}^{i_{3}}) k_{2}^{j_{1}} g(\epsilon k_{2}^{j_{1}}) \nonumber\\
& \hspace{5mm} \times e^{- \lvert k_{12} \rvert^{2} f(\epsilon k_{12})(s-\sigma) - \lvert k_{1} \rvert^{2} f(\epsilon k_{1}) (s-\sigma) - \lvert k_{2} \rvert^{2} f(\epsilon k_{2}) (t-\sigma)}d\sigma ds  \nonumber \\
& + \int_{0}^{t} e^{- \lvert k_{2} \rvert^{2} (t-s) } \int_{0}^{s} \frac{ h_{b}(\epsilon k_{2})^{2} h_{b}(\epsilon k_{1}) h_{u}(\epsilon k_{1})}{4 \lvert k_{1} \rvert^{2} \lvert k_{2} \rvert^{2} }  k_{12}^{i_{3}} i k_{2}^{j_{1}} i \nonumber\\
& \hspace{5mm} \times e^{- \lvert k_{12} \rvert^{2} (s-\sigma) - \lvert k_{1} \rvert^{2} (s-\sigma) - \lvert k_{2} \rvert^{2} (t-\sigma)}d\sigma ds\nonumber\\
=& \frac{ h_{u}(\epsilon k_{2}) h_{b}(\epsilon k_{2}) h_{u}(\epsilon k_{1})^{2}}{ (\prod_{i=1}^{2} 2 \lvert k_{i} \rvert^{2} f(\epsilon k_{i})) [ \lvert k_{12} \rvert^{2} f(\epsilon k_{12}) + \lvert k_{1} \rvert^{2} f(\epsilon k_{1}) + \lvert k_{2} \rvert^{2} f(\epsilon k_{2})]} k_{12}^{i_{3}} g(\epsilon k_{12}^{i_{3}}) k_{2}^{j_{1}} g(\epsilon k_{2}^{j_{1}}) \nonumber \\
& \hspace{5mm} \times [ \frac{1- e^{-2 \lvert k_{2} \rvert^{2}  f(\epsilon k_{2})t}}{2 \lvert k_{2} \rvert^{2} f(\epsilon k_{2})} - \int_{0}^{t} e^{ - 2 \lvert k_{2} \rvert^{2} f(\epsilon k_{2}) (t-s) - ( \lvert k_{12} \rvert^{2} f(\epsilon k_{12}) + \sum_{i=1}^{2} \lvert k_{i} \rvert^{2} f(\epsilon k_{i}) ) s } ds]  \nonumber\\
 -& \frac{ h_{u}(\epsilon k_{2}) h_{b}(\epsilon k_{2}) h_{u}(\epsilon k_{1})^{2}}{ (\prod_{i=1}^{2} 2 \lvert k_{i} \rvert^{2} ) [ \lvert k_{12} \rvert^{2} + \lvert k_{1} \rvert^{2} + \lvert k_{2} \rvert^{2} ]} k_{12}^{i_{3}} i k_{2}^{j_{1}} i \nonumber \\
& \hspace{5mm} \times [ \frac{1- e^{-2 \lvert k_{2} \rvert^{2} t}}{2 \lvert k_{2} \rvert^{2}} - \int_{0}^{t} e^{ - 2 \lvert k_{2} \rvert^{2} (t-s) - ( \lvert k_{12} \rvert^{2} + \sum_{i=1}^{2} \lvert k_{i} \rvert^{2} ) s } ds]  \nonumber\\
 -&  \frac{ h_{b}(\epsilon k_{2})^{2} h_{b}(\epsilon k_{1}) h_{u}(\epsilon k_{1})}{ (\prod_{i=1}^{2} 2 \lvert k_{i} \rvert^{2} f(\epsilon k_{i})) [ \lvert k_{12} \rvert^{2} f(\epsilon k_{12}) + \lvert k_{1} \rvert^{2} f(\epsilon k_{1}) + \lvert k_{2} \rvert^{2} f(\epsilon k_{2})]} k_{12}^{i_{3}} g(\epsilon k_{12}^{i_{3}}) k_{2}^{j_{1}} g(\epsilon k_{2}^{j_{1}}) \nonumber \\
& \hspace{5mm} \times [ \frac{1- e^{-2 \lvert k_{2} \rvert^{2} f(\epsilon k_{2})t}}{2 \lvert k_{2} \rvert^{2} f(\epsilon k_{2})} - \int_{0}^{t} e^{ - 2 \lvert k_{2} \rvert^{2} f(\epsilon k_{2}) (t-s) - ( \lvert k_{12} \rvert^{2} f(\epsilon k_{12}) + \sum_{i=1}^{2} \lvert k_{i} \rvert^{2} f(\epsilon k_{i}) ) s } ds]  \nonumber\\
 +& \frac{ h_{b}(\epsilon k_{2})^{2} h_{b}(\epsilon k_{1}) h_{u}(\epsilon k_{1})}{ (\prod_{i=1}^{2} 2 \lvert k_{i} \rvert^{2} ) [ \lvert k_{12} \rvert^{2} + \lvert k_{1} \rvert^{2} + \lvert k_{2} \rvert^{2} ]} k_{12}^{i_{3}} i k_{2}^{j_{1}} i \nonumber \\
& \hspace{5mm} \times [ \frac{1- e^{-2 \lvert k_{2} \rvert^{2} t}}{2 \lvert k_{2} \rvert^{2}} - \int_{0}^{t} e^{ - 2 \lvert k_{2} \rvert^{2} (t-s) - ( \lvert k_{12} \rvert^{2} + \sum_{i=1}^{2} \lvert k_{i} \rvert^{2} ) s } ds].  \nonumber
\end{align} 
An identical computation gives an analogous result within $L_{t, i_{0}j_{0}}^{7, \star\star}$ as well. Thus, \eqref{estimate 190}-\eqref{estimate 189} leads us to 
\begin{equation}\label{estimate 273}
L_{t, i_{0}j_{0}}^{7,\star}= L_{t, i_{0}j_{0}}^{7, \star, 1} + L_{t, i_{0}j_{0}}^{7, \star, 2}  
\end{equation} 
where 
\begin{subequations}
\begin{align}
&L_{t, i_{0}j_{0}}^{7, \star, 1} \triangleq (2\pi)^{-6} \sum_{\lvert i-j\rvert \leq 1} \sum_{k_{1}, k_{2} \neq 0} \sum_{i_{1}, i_{2}, i_{3}, j_{1} =1}^{3} \theta(2^{-i} k_{2}) \theta(2^{-j} k_{2}) \hat{\mathcal{P}}^{i_{1} i_{2}}(k_{12}) \hat{\mathcal{P}}^{i_{0} i_{1}} (k_{2}) \nonumber\\
& \hspace{3mm} \times \sum_{i_{4}, i_{5} =1}^{3} \hat{\mathcal{P}}^{j_{1} i_{4}}(k_{1}) \hat{\mathcal{P}}^{j_{0} i_{5}}(k_{2})   [ \hat{\mathcal{P}}^{i_{2} i_{4}}(k_{1}) \hat{\mathcal{P}}^{i_{3} i_{5}} (k_{2}) + \hat{\mathcal{P}}^{i_{3} i_{4}}(k_{1}) \hat{\mathcal{P}}^{i_{2} i_{5}} (k_{2})]  \nonumber\\
& \hspace{3mm} \times [ \frac{ h_{u}(\epsilon k_{2}) h_{b}(\epsilon k_{2}) h_{u}(\epsilon k_{1})^{2}}{ (\prod_{i=1}^{2} 2 \lvert k_{i} \rvert^{2} f(\epsilon k_{i})) [ \lvert k_{12} \rvert^{2} f(\epsilon k_{12}) + \sum_{i=1}^{2} \lvert k_{i} \rvert^{2} f(\epsilon k_{i} ) ]} k_{12}^{i_{3}} g(\epsilon k_{12}^{i_{3}}) k_{2}^{j_{1}} g(\epsilon k_{2}^{j_{1}}) \nonumber \\
& \hspace{8mm} \times ( \frac{ 1 - e^{-2 \lvert k_{2} \rvert^{2}  f(\epsilon k_{2})t}}{2 \lvert k_{2} \rvert^{2} f(\epsilon k_{2})} - \int_{0}^{t} e^{-2 \lvert k_{2} \rvert^{2} f(\epsilon k_{2}) (t-s) - ( \lvert k_{12} \rvert^{2} f(\epsilon k_{12}) + \sum_{i=1}^{2} \lvert k_{i} \rvert^{2} f(\epsilon k_{i} ) ) s} ds) \nonumber\\
& \hspace{3mm} - \frac{ h_{u}(\epsilon k_{2}) h_{b}(\epsilon k_{2}) h_{u}(\epsilon k_{1})^{2}}{ (\prod_{i=1}^{2}2 \lvert k_{i} \rvert^{2} ) [ \lvert k_{12} \rvert^{2} + \sum_{i=1}^{2} \lvert k_{i} \rvert^{2} ]} k_{12}^{i_{3}} i k_{2}^{j_{1}} i \nonumber \\
& \hspace{8mm} \times ( \frac{ 1- e^{-2 \lvert k_{2} \rvert^{2} t}}{2 \lvert k_{2} \rvert^{2} } - \int_{0}^{t} e^{-2 \lvert k_{2} \rvert^{2} (t-s) - ( \lvert k_{12} \rvert^{2} + \sum_{i=1}^{2} \lvert k_{i} \rvert^{2} ) s} ds)], \\
&L_{t, i_{0}j_{0}}^{7, \star, 2} \triangleq (2\pi)^{-6} \sum_{\lvert i-j\rvert \leq 1} \sum_{k_{1}, k_{2} \neq 0} \sum_{i_{1}, i_{2}, i_{3}, j_{1} =1}^{3} \theta(2^{-i} k_{2}) \theta(2^{-j} k_{2}) \hat{\mathcal{P}}^{i_{1} i_{2}}(k_{12}) \hat{\mathcal{P}}^{i_{0} i_{1}} (k_{2}) \nonumber\\
& \hspace{3mm} \times \sum_{i_{4}, i_{5} =1}^{3} \hat{\mathcal{P}}^{j_{1} i_{4}}(k_{1}) \hat{\mathcal{P}}^{j_{0} i_{5}}(k_{2})   [ \hat{\mathcal{P}}^{i_{2} i_{4}}(k_{1}) \hat{\mathcal{P}}^{i_{3} i_{5}} (k_{2}) + \hat{\mathcal{P}}^{i_{3} i_{4}}(k_{1}) \hat{\mathcal{P}}^{i_{2} i_{5}} (k_{2})]  \nonumber\\
& \hspace{3mm} \times [ - \frac{ h_{b}(\epsilon k_{2})^{2} h_{b}(\epsilon k_{1}) h_{u}(\epsilon k_{1})}{ (\prod_{i=1}^{2} 2 \lvert k_{i} \rvert^{2} f(\epsilon k_{i})) [ \lvert k_{12} \rvert^{2} f(\epsilon k_{12}) + \sum_{i=1}^{2} \lvert k_{i} \rvert^{2} f(\epsilon k_{i} ) ]} k_{12}^{i_{3}} g(\epsilon k_{12}^{i_{3}}) k_{2}^{j_{1}} g(\epsilon k_{2}^{j_{1}}) \nonumber \\
& \hspace{8mm} \times ( \frac{ 1 - e^{-2 \lvert k_{2} \rvert^{2} f(\epsilon k_{2})t}}{2 \lvert k_{2} \rvert^{2} f(\epsilon k_{2})} - \int_{0}^{t} e^{-2 \lvert k_{2} \rvert^{2} f(\epsilon k_{2}) (t-s) - ( \lvert k_{12} \rvert^{2} f(\epsilon k_{12}) + \sum_{i=1}^{2} \lvert k_{i} \rvert^{2} f(\epsilon k_{i} ) ) s} ds) \nonumber\\
& \hspace{3mm}+ \frac{ h_{b}(\epsilon k_{2})^{2} h_{b}(\epsilon k_{1}) h_{u}(\epsilon k_{1})}{ (\prod_{i=1}^{2}2 \lvert k_{i} \rvert^{2} ) [ \lvert k_{12} \rvert^{2} + \sum_{i=1}^{2} \lvert k_{i} \rvert^{2} ]} k_{12}^{i_{3}} i k_{2}^{j_{1}} i \nonumber \\
& \hspace{8mm} \times ( \frac{ 1- e^{-2 \lvert k_{2} \rvert^{2} t}}{2 \lvert k_{2} \rvert^{2} } - \int_{0}^{t} e^{-2 \lvert k_{2} \rvert^{2} (t-s) - ( \lvert k_{12} \rvert^{2} + \sum_{i=1}^{2} \lvert k_{i} \rvert^{2} ) s} ds)], 
\end{align}
\end{subequations} 
while 
\begin{equation}\label{estimate 274}
L_{t, i_{0}j_{0}}^{7,\star\star}= L_{t, i_{0}j_{0}}^{7, \star\star, 1} + L_{t, i_{0}j_{0}}^{7, \star\star, 2}  
\end{equation} 
where 
\begin{subequations}
\begin{align}
&L_{t, i_{0}j_{0}}^{7, \star\star, 1} \triangleq (2\pi)^{-6} \sum_{\lvert i-j\rvert \leq 1} \sum_{k_{1}, k_{2} \neq 0} \sum_{i_{1}, i_{2}, i_{3}, j_{1} =1}^{3} \theta(2^{-i} k_{2}) \theta(2^{-j} k_{2}) \hat{\mathcal{P}}^{i_{1} i_{2}}(k_{12}) \hat{\mathcal{P}}^{i_{0} i_{1}} (k_{2}) \nonumber\\
&\hspace{3mm} \times \sum_{i_{4}, i_{5} =1}^{3} \hat{\mathcal{P}}^{j_{1} i_{4}}(k_{1}) \hat{\mathcal{P}}^{j_{0} i_{5}}(k_{2})   [ \hat{\mathcal{P}}^{i_{2} i_{4}}(k_{1}) \hat{\mathcal{P}}^{i_{3} i_{5}} (k_{2}) - \hat{\mathcal{P}}^{i_{3} i_{4}}(k_{1}) \hat{\mathcal{P}}^{i_{2} i_{5}} (k_{2})]  \nonumber\\
&\hspace{3mm} \times [ \frac{ h_{b}(\epsilon k_{2})^{2} h_{u}(\epsilon k_{1}) h_{b}(\epsilon k_{1})}{ (\prod_{i=1}^{2} 2 \lvert k_{i} \rvert^{2} f(\epsilon k_{i})) [ \lvert k_{12} \rvert^{2} f(\epsilon k_{12}) + \sum_{i=1}^{2} \lvert k_{i} \rvert^{2} f(\epsilon k_{i} ) ]} k_{12}^{i_{3}} g(\epsilon k_{12}^{i_{3}}) k_{2}^{j_{1}} g(\epsilon k_{2}^{j_{1}}) \nonumber \\
& \hspace{8mm} \times ( \frac{ 1 - e^{-2 \lvert k_{2} \rvert^{2} f(\epsilon k_{2})t}}{2 \lvert k_{2} \rvert^{2} f(\epsilon k_{2})} - \int_{0}^{t} e^{-2 \lvert k_{2} \rvert^{2} f(\epsilon k_{2}) (t-s) - ( \lvert k_{12} \rvert^{2} f(\epsilon k_{12}) + \sum_{i=1}^{2} \lvert k_{i} \rvert^{2} f(\epsilon k_{i} ) ) s} ds) \nonumber\\
& \hspace{3mm}- \frac{ h_{b}(\epsilon k_{2})^{2} h_{u}(\epsilon k_{1}) h_{b}(\epsilon k_{1})}{ (\prod_{i=1}^{2}2 \lvert k_{i} \rvert^{2} ) [ \lvert k_{12} \rvert^{2} + \sum_{i=1}^{2} \lvert k_{i} \rvert^{2} ]} k_{12}^{i_{3}} i k_{2}^{j_{1}} i \nonumber \\
& \hspace{8mm} \times ( \frac{ 1- e^{-2 \lvert k_{2} \rvert^{2} t}}{2 \lvert k_{2} \rvert^{2} } - \int_{0}^{t} e^{-2 \lvert k_{2} \rvert^{2} (t-s) - ( \lvert k_{12} \rvert^{2} + \sum_{i=1}^{2} \lvert k_{i} \rvert^{2} ) s} ds)], \\
&L_{t, i_{0}j_{0}}^{7, \star\star, 2} \triangleq (2\pi)^{-6} \sum_{\lvert i-j\rvert \leq 1} \sum_{k_{1}, k_{2} \neq 0} \sum_{i_{1}, i_{2}, i_{3}, j_{1} =1}^{3} \theta(2^{-i} k_{2}) \theta(2^{-j} k_{2}) \hat{\mathcal{P}}^{i_{1} i_{2}}(k_{12}) \hat{\mathcal{P}}^{i_{0} i_{1}} (k_{2}) \nonumber\\
&\hspace{3mm} \times \sum_{i_{4}, i_{5} =1}^{3} \hat{\mathcal{P}}^{j_{1} i_{4}}(k_{1}) \hat{\mathcal{P}}^{j_{0} i_{5}}(k_{2})   [ \hat{\mathcal{P}}^{i_{2} i_{4}}(k_{1}) \hat{\mathcal{P}}^{i_{3} i_{5}} (k_{2}) - \hat{\mathcal{P}}^{i_{3} i_{4}}(k_{1}) \hat{\mathcal{P}}^{i_{2} i_{5}} (k_{2})]  \nonumber\\
& \hspace{3mm}\times [ - \frac{ h_{u}(\epsilon k_{2}) h_{b}(\epsilon k_{2}) h_{b}(\epsilon k_{1})^{2}}{ (\prod_{i=1}^{2} 2 \lvert k_{i} \rvert^{2} f(\epsilon k_{i})) [ \lvert k_{12} \rvert^{2} f(\epsilon k_{12}) + \sum_{i=1}^{2} \lvert k_{i} \rvert^{2} f(\epsilon k_{i} ) ]} k_{12}^{i_{3}} g(\epsilon k_{12}^{i_{3}}) k_{2}^{j_{1}} g(\epsilon k_{2}^{j_{1}}) \nonumber \\
& \hspace{8mm} \times ( \frac{ 1 - e^{-2 \lvert k_{2} \rvert^{2}  f(\epsilon k_{2})t}}{2 \lvert k_{2} \rvert^{2} f(\epsilon k_{2})} - \int_{0}^{t} e^{-2 \lvert k_{2} \rvert^{2} f(\epsilon k_{2}) (t-s) - ( \lvert k_{12} \rvert^{2} f(\epsilon k_{12}) + \sum_{i=1}^{2} \lvert k_{i} \rvert^{2} f(\epsilon k_{i} ) ) s} ds) \nonumber\\
&\hspace{3mm} + \frac{ h_{u}(\epsilon k_{2}) h_{b}(\epsilon k_{2}) h_{b}(\epsilon k_{1})^{2}}{ (\prod_{i=1}^{2}2 \lvert k_{i} \rvert^{2} ) [ \lvert k_{12} \rvert^{2} + \sum_{i=1}^{2} \lvert k_{i} \rvert^{2} ]} k_{12}^{i_{3}} i k_{2}^{j_{1}} i \nonumber \\
& \hspace{8mm} \times ( \frac{ 1- e^{-2 \lvert k_{2} \rvert^{2} t}}{2 \lvert k_{2} \rvert^{2} } - \int_{0}^{t} e^{-2 \lvert k_{2} \rvert^{2} (t-s) - ( \lvert k_{12} \rvert^{2} + \sum_{i=1}^{2} \lvert k_{i} \rvert^{2} ) s} ds)]. 
\end{align}
\end{subequations} 
For $L_{t, i_{0}j_{0}}^{7, \star, 1}$ we define 
\begin{subequations} 
\begin{align}
C_{131}^{\epsilon, i_{0} j_{0}} (t) \triangleq& (2\pi)^{-6} \sum_{\lvert i-j \rvert \leq 1} \sum_{k_{1}, k_{2} \neq 0} \sum_{i_{1}, i_{2}, i_{3}, j_{1} =1}^{3} \theta(2^{-i} k_{2}) \theta(2^{-j} k_{2}) \hat{\mathcal{P}}^{i_{1} i_{2}} (k_{12}) \hat{\mathcal{P}}^{i_{0} i_{1}} (k_{2}) \nonumber \\
& \times \sum_{i_{4}, i_{5} =1}^{3} \hat{\mathcal{P}}^{j_{1} i_{4}} (k_{1}) \hat{\mathcal{P}}^{j_{0} i_{5}}(k_{2})[\hat{\mathcal{P}}^{i_{2} i_{4}}(k_{1}) \hat{\mathcal{P}}^{i_{3} i_{5}}(k_{2}) + \hat{\mathcal{P}}^{i_{3} i_{4}}(k_{1}) \hat{\mathcal{P}}^{i_{2} i_{5}} (k_{2})] \nonumber\\
& \times \frac{ h_{u}(\epsilon k_{2}) h_{b}(\epsilon k_{2}) h_{u}(\epsilon k_{1})^{2}}{ (\prod_{i=1}^{2} 2 \lvert k_{i} \rvert^{2} f(\epsilon k_{i})) [ \lvert k_{12} \rvert^{2} f(\epsilon k_{12}) + \sum_{i=1}^{2} \lvert k_{i} \rvert^{2} f(\epsilon k_{i}) ] }  \nonumber\\
& \times k_{12}^{i_{3}} g(\epsilon k_{12}^{i_{3}}) k_{2}^{j_{1}} g(\epsilon k_{2}^{j_{1}}) ( \frac{ 1 - e^{-2 \lvert k_{2} \rvert^{2} f(\epsilon k_{2})t}}{2 \lvert k_{2} \rvert^{2} f(\epsilon k_{2})} ),  \\
\bar{C}_{131}^{\epsilon, i_{0} j_{0}} (t)  \triangleq& (2\pi)^{-6} \sum_{\lvert i-j \rvert \leq 1} \sum_{k_{1}, k_{2} \neq 0} \sum_{i_{1}, i_{2}, i_{3}, j_{1} =1}^{3} \theta(2^{-i} k_{2}) \theta(2^{-j} k_{2}) \hat{\mathcal{P}}^{i_{1} i_{2}} (k_{12}) \hat{\mathcal{P}}^{i_{0} i_{1}} (k_{2}) \nonumber \\
& \times \sum_{i_{4}, i_{5} =1}^{3} \hat{\mathcal{P}}^{j_{1} i_{4}} (k_{1}) \hat{\mathcal{P}}^{j_{0} i_{5}}(k_{2})[\hat{\mathcal{P}}^{i_{2} i_{4}}(k_{1}) \hat{\mathcal{P}}^{i_{3} i_{5}}(k_{2}) +  \hat{\mathcal{P}}^{i_{3} i_{4}}(k_{1}) \hat{\mathcal{P}}^{i_{2} i_{5}} (k_{2})] \nonumber\\
& \times \frac{ h_{u}(\epsilon k_{2}) h_{b}(\epsilon k_{2}) h_{u}(\epsilon k_{1})^{2}}{ (\prod_{i=1}^{2} 2 \lvert k_{i} \rvert^{2} ) [ \lvert k_{12} \rvert^{2} + \sum_{i=1}^{2} \lvert k_{i} \rvert^{2} ] } k_{12}^{i_{3}} i k_{2}^{j_{1}} i ( \frac{ 1 - e^{-2 \lvert k_{2} \rvert^{2} t}}{2 \lvert k_{2} \rvert^{2}} ),  
\end{align} 
\end{subequations} 
\begin{subequations}\label{estimate 191}
\begin{align}
\phi_{131}^{ \epsilon, i_{0}j_{0}}(t) \triangleq& (2\pi)^{-6} \sum_{\lvert i-j \rvert \leq 1} \sum_{k_{1}, k_{2} \neq 0} \sum_{i_{1}, i_{2}, i_{3}, j_{1} =1}^{3} \theta(2^{-i} k_{2}) \theta(2^{-j} k_{2}) \hat{\mathcal{P}}^{i_{1} i_{2} } (k_{12}) \hat{\mathcal{P}}^{i_{0} i_{1}} (k_{2})\nonumber \\
& \times \sum_{i_{4}, i_{5} =1}^{3} \hat{\mathcal{P}}^{j_{1} i_{4}}(k_{1}) \hat{\mathcal{P}}^{j_{0} i_{5}} (k_{2}) [\hat{\mathcal{P}}^{i_{2} i_{4}}(k_{1}) \hat{\mathcal{P}}^{i_{3} i_{5}} (k_{2}) +  \hat{\mathcal{P}}^{i_{3} i_{4}}(k_{1}) \hat{\mathcal{P}}^{i_{2} i_{5}} (k_{2})] \nonumber \\
& \times [ - \frac{ h_{u} (\epsilon k_{2}) h_{b}(\epsilon k_{2}) h_{u}(\epsilon k_{1})^{2} k_{12}^{i_{3}} g(\epsilon k_{12}^{i_{3}}) k_{2}^{j_{1}} g(\epsilon k_{2}^{j_{1}})  }{  (\prod_{i=1}^{2} 2 \lvert k_{i} \rvert^{2} f(\epsilon k_{i})) [ \lvert k_{12} \rvert^{2} f(\epsilon k_{12}) + \sum_{i=1}^{2} \lvert k_{i} \rvert^{2} f(\epsilon k_{i})]} \nonumber \\
& \hspace{5mm} \times \int_{0}^{t} e^{-2 \lvert k_{2} \rvert^{2} f(\epsilon k_{2}) (t-s) - ( \lvert k_{12} \rvert^{2} f(\epsilon k_{12}) + \sum_{i=1}^{2} \lvert k_{i} \rvert^{2} f(\epsilon k_{i}) ) s} ds], \\
\bar{\phi}_{131}^{ \epsilon, i_{0}j_{0}}(t) \triangleq& (2\pi)^{-6} \sum_{\lvert i-j \rvert \leq 1} \sum_{k_{1}, k_{2} \neq 0} \sum_{i_{1}, i_{2}, i_{3}, j_{1} =1}^{3} \theta(2^{-i} k_{2}) \theta(2^{-j} k_{2}) \hat{\mathcal{P}}^{i_{1} i_{2} } (k_{12}) \hat{\mathcal{P}}^{i_{0} i_{1}} (k_{2})\nonumber \\
& \times \sum_{i_{4}, i_{5} =1}^{3} \hat{\mathcal{P}}^{j_{1} i_{4}}(k_{1}) \hat{\mathcal{P}}^{j_{0} i_{5}} (k_{2}) [\hat{\mathcal{P}}^{i_{2} i_{4}}(k_{1}) \hat{\mathcal{P}}^{i_{3} i_{5}} (k_{2}) + \hat{\mathcal{P}}^{i_{3} i_{4}}(k_{1}) \hat{\mathcal{P}}^{i_{2} i_{5}} (k_{2}) ] \nonumber \\
& \times [ - \frac{ h_{u} (\epsilon k_{2}) h_{b}(\epsilon k_{2}) h_{u}(\epsilon k_{1})^{2} k_{12}^{i_{3}} i k_{2}^{j_{1}} i  }{  (\prod_{i=1}^{2} 2 \lvert k_{i} \rvert^{2}) [ \lvert k_{12} \rvert^{2}  + \sum_{i=1}^{2} \lvert k_{i} \rvert^{2} ]} \nonumber \\
& \hspace{5mm} \times \int_{0}^{t} e^{-2 \lvert k_{2} \rvert^{2}  (t-s) - ( \lvert k_{12} \rvert^{2} + \sum_{i=1}^{2} \lvert k_{i} \rvert^{2} ) s} ds], 
\end{align} 
\end{subequations} 
so that 
\begin{equation}\label{estimate 275}
L_{t, i_{0}j_{0}}^{7,\star, 1} - \phi_{131}^{\epsilon, i_{0}j_{0}} + \bar{\phi}_{131}^{\epsilon, i_{0} j_{0}} - C_{131}^{\epsilon, i_{0} j_{0}} + \bar{C}_{131}^{\epsilon, i_{0} j_{0}} = 0.
\end{equation} 
We now estimate 
\begin{align}\label{estimate 193}
& \lvert \phi_{131}^{\epsilon, i_{0} j_{0}} (t) - \bar{\phi}_{131}^{\epsilon, i_{0} j_{0}}(t) \rvert \\
\lesssim& \sum_{k_{1}, k_{2} \neq 0} \sum_{i_{3}, j_{1} =1}^{3} \frac{ \lvert k_{12} \rvert \lvert k_{2} \rvert}{\lvert k_{1} \rvert^{2} \lvert k_{2} \rvert^{2}}  \lvert \int_{0}^{t} \frac{g(\epsilon k_{12}^{i_{3}}) g(\epsilon k_{2}^{j_{1}})}{ \prod_{i=1}^{2} f(\epsilon k_{i}) [ \lvert k_{12} \rvert^{2} f(\epsilon k_{12}) + \sum_{i=1}^{2} \lvert k_{i} \rvert^{2} f(\epsilon k_{i})]} \nonumber\\
& \hspace{30mm} \times  e^{-2 \lvert k_{2} \rvert^{2} f(\epsilon k_{2}) (t-s) - (\lvert k_{12} \rvert^{2} f(\epsilon k_{12}) + \sum_{i=1}^{2} \lvert k_{i} \rvert^{2} f(\epsilon k_{i}) ) s}  \nonumber\\
& \hspace{30mm}  - \frac{ ii}{[ \lvert k_{12} \rvert^{2} + \sum_{i=1}^{2} \lvert k_{i} \rvert^{2}]}  e^{-2 \lvert k_{2} \rvert^{2} (t-s) - (\lvert k_{12} \rvert^{2} + \sum_{i=1}^{2} \lvert k_{i} \rvert^{2} ) s} ds \rvert \nonumber 
\end{align} 
by \eqref{estimate 191} where
\begin{align}\label{estimate 192}
&\lvert \frac{g(\epsilon k_{12}^{i_{3}}) g(\epsilon k_{2}^{j_{1}})}{ \prod_{i=1}^{2} f(\epsilon k_{i}) [ \lvert k_{12} \rvert^{2} f(\epsilon k_{12}) + \sum_{i=1}^{2} \lvert k_{i} \rvert^{2} f(\epsilon k_{i})]} \nonumber\\
&  \times  e^{-2 \lvert k_{2} \rvert^{2} f(\epsilon k_{2}) (t-s) - (\lvert k_{12} \rvert^{2} f(\epsilon k_{12}) + \sum_{i=1}^{2} \lvert k_{i} \rvert^{2} f(\epsilon k_{i}) ) s}  \nonumber\\
&   - \frac{ ii}{[ \lvert k_{12} \rvert^{2} + \sum_{i=1}^{2} \lvert k_{i} \rvert^{2}]}  e^{-2 \lvert k_{2} \rvert^{2} (t-s) - (\lvert k_{12} \rvert^{2} + \sum_{i=1}^{2} \lvert k_{i} \rvert^{2} ) s}  \rvert \nonumber\\
\lesssim& \left( \frac{ \lvert \epsilon k_{1} \rvert^{\eta} + \lvert \epsilon k_{2} \rvert^{\eta} + \lvert \epsilon k_{12} \rvert^{\eta}}{\lvert k_{1} \rvert^{2} + \lvert k_{2} \rvert^{2} + \lvert k_{12} \rvert^{2}} \right) e^{-  \lvert k_{2} \rvert^{2}\bar{c}_{f} (t-s) - ( \lvert k_{12} \rvert^{2} + \sum_{i=1}^{2}\lvert k_{i} \rvert^{2} ) \bar{c}_{f} s} 
\end{align} 
by \eqref{[Equation (4.2)][ZZ17]}, \eqref{[Equation (4.3)][ZZ17]} and an inequality of 
\begin{align*}
& \lvert  \frac{1}{ \prod_{i=1}^{2} f(\epsilon k_{i}) [ \lvert k_{12} \rvert^{2} f(\epsilon k_{12}) + \sum_{i=1}^{2} \lvert k_{i} \rvert^{2} f(\epsilon k_{i})]} - \frac{1}{ \lvert k_{12} \rvert^{2} + \sum_{i=1}^{2} \lvert k_{i} \rvert^{2}} \rvert  \nonumber\\
\lesssim& \left( \frac{ \lvert \epsilon k_{1} \rvert^{\eta} + \lvert \epsilon k_{2} \rvert^{\eta} + \lvert \epsilon k_{12} \rvert^{\eta}}{\lvert k_{1} \rvert^{2} + \lvert k_{2} \rvert^{2} + \lvert k_{12} \rvert^{2}} \right) 
\end{align*} 
for any $\eta \in [0,1]$ which is due to mean value theorem. Applying  \eqref{estimate 192} to \eqref{estimate 193} leads to for any $\epsilon_{0} \in (0, 2\rho)$ for $\rho \in (0, \frac{1}{2})$ sufficiently small so that $1 - 2 \rho - \eta + \epsilon_{0} > 0$ 
\begin{align}\label{estimate 127}
\lvert \phi_{131}^{\epsilon, i_{0} j_{0}} (t) - \bar{\phi}_{131}^{\epsilon, i_{0} j_{0}}(t) \rvert \lesssim \lesssim \epsilon^{\eta}t^{-\rho - \frac{\eta}{2}}
\end{align} 
where we used \eqref{key estimate}. In summary, we worked on $\pi_{0,\diamond} (u_{3,2}^{\epsilon, i_{0}}, b_{1}^{\epsilon, j_{0}}) - \pi_{0,\diamond} (\bar{u}_{3,2}^{\epsilon, i_{0}}, \bar{b}_{1}^{\epsilon, j_{0}})$ and  defined $L_{t, i_{0}j_{0}}^{7} = L_{t, i_{0}j_{0}}^{7, \star} + L_{t, i_{0}j_{0}}^{7, \star\star}$ where $L_{t, i_{0}j_{0}}^{7,\star} = L_{t, i_{0}j_{0}}^{7,\star, 1} + L_{t, i_{0}j_{0}}^{7, \star, 2}$, $L_{t, i_{0}j_{0}}^{7, \star\star} = L_{t, i_{0}j_{0}}^{7, \star\star, 1} + L_{t, i_{0}j_{0}}^{7, \star\star, 2}$ and found such $C_{131}^{\epsilon,i_{0}j_{0}}(t), \bar{C}_{131}^{\epsilon, i_{0}j_{0}}(t)$, $\phi_{131}^{\epsilon, i_{0} j_{0}}, \bar{\phi}_{131}^{\epsilon, i_{0} j_{0}}$ for $L_{t, i_{0}j_{0}}^{7,\star, 1}$. We can similarly define $C_{132}^{\epsilon, i_{0}j_{0}}(t)$, $\bar{C}_{132}^{\epsilon, i_{0}j_{0}}(t)$, $\phi_{132}^{\epsilon, i_{0}j_{0}}$,  $\bar{\phi}_{132}^{\epsilon, i_{0} j_{0}}$ for $L_{t, i_{0}j_{0}}^{7,\star, 2}$, $C_{133}^{\epsilon, i_{0}j_{0}}(t)$, $\bar{C}_{133}^{\epsilon, i_{0}j_{0}}(t)$, $\phi_{133}^{\epsilon, i_{0}j_{0}}$,  $\bar{\phi}_{133}^{\epsilon, i_{0} j_{0}}$ for $L_{t, i_{0}j_{0}}^{7,\star\star, 1}$, and $C_{134}^{\epsilon, i_{0}j_{0}}(t)$, $\bar{C}_{134}^{\epsilon, i_{0}j_{0}}(t)$, $\phi_{134}^{\epsilon, i_{0}j_{0}}$,  $\bar{\phi}_{134}^{\epsilon, i_{0} j_{0}}$ for $L_{t, i_{0}j_{0}}^{7,\star\star, 2}$. Upon working on $\pi_{0,\diamond} (u_{31}^{\epsilon, i_{0}}, b_{1}^{\epsilon, j_{0}}) - \pi_{0,\diamond} (\bar{u}_{31}^{\epsilon, i_{0}}, \bar{b}_{1}^{\epsilon, j_{0}})$, we similarly obtain $C_{13k}^{\epsilon, i_{0} j_{0}}(t)$, $\bar{C}_{13k}^{\epsilon, i_{0}j_{0}}(t)$, $\phi_{13k}^{\epsilon, i_{0} j_{0}},$ $\bar{\phi}_{13k}^{\epsilon, i_{0} j_{0}}$ for $k \in \{5,6,7,8\}$, and then define 
\begin{subequations}
\begin{align} 
&C_{13}^{\epsilon, i_{0} j_{0}} \triangleq \sum_{k=1}^{8} C_{13k}^{\epsilon, i_{0} j_{0}}, \hspace{3mm}  \bar{C}_{13}^{\epsilon, i_{0} j_{0}} \triangleq \sum_{k=1}^{8} \bar{C}_{13k}^{8, i_{0} j_{0}}, \label{estimate 47}\\
&\phi_{13}^{\epsilon, i_{0} j_{0}} \triangleq \sum_{k=1}^{8} \phi_{13k}^{\epsilon, i_{0} j_{0}}, \hspace{3mm}  \bar{\phi}_{13}^{\epsilon, i_{0} j_{0}} \triangleq \sum_{k=1}^{8} \bar{\phi}_{13k}^{\epsilon, i_{0} j_{0}}, \label{estimate 48}
\end{align} 
\end{subequations}
which was needed in \eqref{estimate 194}. Finally, upon working on $\pi_{0,\diamond} (u_{3}^{\epsilon, i_{0}}, u_{1}^{\epsilon, j_{0}}) - \pi_{0,\diamond} (\bar{u}_{3}^{\epsilon, i_{0}}, \bar{u}_{1}^{\epsilon, j_{0}})$, $\pi_{0,\diamond} (b_{3}^{\epsilon, i_{0}}, b_{1}^{\epsilon, j_{0}}) - \pi_{0,\diamond} (\bar{b}_{3}^{\epsilon, i_{0}}, \bar{b}_{1}^{\epsilon, j_{0}})$ and $\pi_{0,\diamond} (b_{3}^{\epsilon, i_{0}}, u_{1}^{\epsilon, j_{0}}) - \pi_{0,\diamond} (\bar{b}_{3}^{\epsilon, i_{0}}, \bar{u}_{1}^{\epsilon, j_{0}})$, we define similarly the quadruples of $(C_{1l}^{\epsilon, i_{0}j_{0}}, \bar{C}_{1l}^{\epsilon, i_{0}j_{0}}, \phi_{1l}^{\epsilon, i_{0} j_{0}}, \bar{\phi}_{1l}^{\epsilon, i_{0} j_{0}})$ for $l \in \{1,2,4\}$. \\
 
\emph{Terms in the second chaos: $L_{t, i_{0}j_{0}}^{k}$ for $k \in \{2,3,4,5, 6\}$ in \eqref{estimate 195}-\eqref{estimate 199}}\\

W.l.o.g. we work on $L_{t, i_{0} j_{0}}^{2}$, within which there are eight components. We define $L_{t, i_{0}j_{0}}^{2k}$ for $k \in \{1,2,3, 4\}$ to be respectively the first two, second two, third two and then fourth two components; i.e., $L_{t, i_{0}j_{0}}^{2} = \sum_{i=1}^{4} L_{t, i_{0}j_{0}}^{2i}$ where 
\begin{subequations} 
\begin{align}
L_{t, i_{0}j_{0}}^{21} \triangleq& (2\pi)^{-\frac{9}{2}} \sum_{\lvert i-j \rvert \leq 1} \sum_{i_{1}, i_{2}, i_{3}, j_{1} =1}^{3} \sum_{k} \sum_{k_{1}, k_{2}, k_{3} \neq 0: k_{23} = k} \theta(2^{-i} k_{123}) \theta(2^{-j} k_{1}) \label{estimate 200} \\
& \times [ \int_{0}^{t} e^{- \lvert k_{123} \rvert^{2}  f( \epsilon k_{123})(t-s) } \int_{0}^{s} : \hat{X}_{\sigma, u}^{\epsilon, i_{3}}(k_{2}) \hat{X}_{s,u}^{\epsilon, j_{1}}(k_{3}): k_{12}^{i_{3}} g(\epsilon k_{12}^{i_{3}}) k_{123}^{j_{1}} g(\epsilon k_{123}^{j_{1}})  \nonumber\\
& \hspace{5mm} \times\frac{ e^{- \lvert k_{1} \rvert^{2} f(\epsilon k_{1})(t-\sigma)} h_{u}(\epsilon k_{1})h_{b}(\epsilon k_{1})}{2 \lvert k_{1} \rvert^{2} f(\epsilon k_{1})} e^{- \lvert k_{12} \rvert^{2}  f(\epsilon k_{12})(s-\sigma)} d \sigma ds \nonumber \\
& - \int_{0}^{t} e^{- \lvert k_{123} \rvert^{2} (t-s) } \int_{0}^{s} : \hat{\bar{X}}_{\sigma, u}^{\epsilon, i_{3}}(k_{2}) \hat{\bar{X}}_{s,u}^{\epsilon, j_{1}}(k_{3}): k_{12}^{i_{3}} i  k_{123}^{j_{1}} i  \nonumber\\
& \hspace{5mm} \times \frac{ e^{- \lvert k_{1} \rvert^{2} (t-\sigma)} h_{u}(\epsilon k_{1})h_{b}(\epsilon k_{1})}{2 \lvert k_{1} \rvert^{2}} e^{- \lvert k_{12} \rvert^{2} (s-\sigma) } d \sigma ds] \nonumber \\
& \times \sum_{i_{4} = 1}^{3} \hat{\mathcal{P}}^{i_{2} i_{4}} (k_{1}) \hat{\mathcal{P}}^{j_{0} i_{4}}(k_{1}) \hat{\mathcal{P}}^{i_{1} i_{2}} (k_{12}) \hat{\mathcal{P}}^{i_{0} i_{1}} (k_{123}) e_{k}, \nonumber\\
L_{t, i_{0}j_{0}}^{22} \triangleq& (2\pi)^{-\frac{9}{2}} \sum_{\lvert i-j \rvert \leq 1} \sum_{i_{1}, i_{2}, i_{3}, j_{1} =1}^{3} \sum_{k} \sum_{k_{1}, k_{2}, k_{3} \neq 0: k_{23} = k} \theta(2^{-i} k_{123}) \theta(2^{-j} k_{1}) \label{estimate 201}\\
& \times [- \int_{0}^{t} e^{- \lvert k_{123} \rvert^{2}  f( \epsilon k_{123})(t-s) } \int_{0}^{s} : \hat{X}_{\sigma, b}^{\epsilon, i_{3}}(k_{2}) \hat{X}_{s,u}^{\epsilon, j_{1}}(k_{3}): k_{12}^{i_{3}} g(\epsilon k_{12}^{i_{3}}) k_{123}^{j_{1}} g(\epsilon k_{123}^{j_{1}})  \nonumber\\
& \hspace{5mm} \times\frac{ e^{- \lvert k_{1} \rvert^{2} f(\epsilon k_{1})(t-\sigma)} h_{b}(\epsilon k_{1})^{2}}{2 \lvert k_{1} \rvert^{2} f(\epsilon k_{1})} e^{- \lvert k_{12} \rvert^{2}  f(\epsilon k_{12})(s-\sigma)} d \sigma ds \nonumber \\
& + \int_{0}^{t} e^{- \lvert k_{123} \rvert^{2} (t-s) } \int_{0}^{s} : \hat{\bar{X}}_{\sigma, b}^{\epsilon, i_{3}}(k_{2}) \hat{\bar{X}}_{s,u}^{\epsilon, j_{1}}(k_{3}): k_{12}^{i_{3}} i  k_{123}^{j_{1}} i  \nonumber\\
& \hspace{5mm} \times \frac{ e^{- \lvert k_{1} \rvert^{2} (t-\sigma)} h_{b}(\epsilon k_{1})^{2}}{2 \lvert k_{1} \rvert^{2}} e^{- \lvert k_{12} \rvert^{2} (s-\sigma) } d \sigma ds] \nonumber \\
& \times \sum_{i_{4} = 1}^{3} \hat{\mathcal{P}}^{i_{2} i_{4}} (k_{1}) \hat{\mathcal{P}}^{j_{0} i_{4}}(k_{1}) \hat{\mathcal{P}}^{i_{1} i_{2}} (k_{12}) \hat{\mathcal{P}}^{i_{0} i_{1}} (k_{123}) e_{k}, \nonumber\\
L_{t, i_{0}j_{0}}^{23} \triangleq& (2\pi)^{-\frac{9}{2}} \sum_{\lvert i-j \rvert \leq 1} \sum_{i_{1}, i_{2}, i_{3}, j_{1} =1}^{3} \sum_{k} \sum_{k_{1}, k_{2}, k_{3} \neq 0: k_{23} = k} \theta(2^{-i} k_{123}) \theta(2^{-j} k_{1}) \label{estimate 202}\\
& \times [- \int_{0}^{t} e^{- \lvert k_{123} \rvert^{2}  f( \epsilon k_{123})(t-s) } \int_{0}^{s} : \hat{X}_{\sigma, u}^{\epsilon, i_{3}}(k_{2}) \hat{X}_{s,b}^{\epsilon, j_{1}}(k_{3}): k_{12}^{i_{3}} g(\epsilon k_{12}^{i_{3}}) k_{123}^{j_{1}} g(\epsilon k_{123}^{j_{1}})  \nonumber\\
& \hspace{5mm} \times\frac{ e^{- \lvert k_{1} \rvert^{2} f(\epsilon k_{1})(t-\sigma)} h_{b}(\epsilon k_{1})^{2}}{2 \lvert k_{1} \rvert^{2} f(\epsilon k_{1})} e^{- \lvert k_{12} \rvert^{2} f(\epsilon k_{12})(s-\sigma)} d \sigma ds \nonumber \\
& + \int_{0}^{t} e^{- \lvert k_{123} \rvert^{2} (t-s) } \int_{0}^{s} : \hat{\bar{X}}_{\sigma, u}^{\epsilon, i_{3}}(k_{2}) \hat{\bar{X}}_{s,b}^{\epsilon, j_{1}}(k_{3}): k_{12}^{i_{3}} i  k_{123}^{j_{1}} i  \nonumber\\
& \hspace{5mm} \times \frac{ e^{- \lvert k_{1} \rvert^{2} (t-\sigma)} h_{b}(\epsilon k_{1})^{2}}{2 \lvert k_{1} \rvert^{2}} e^{- \lvert k_{12} \rvert^{2} (s-\sigma) } d \sigma ds] \nonumber \\
& \times \sum_{i_{4} = 1}^{3} \hat{\mathcal{P}}^{i_{2} i_{4}} (k_{1}) \hat{\mathcal{P}}^{j_{0} i_{4}}(k_{1}) \hat{\mathcal{P}}^{i_{1} i_{2}} (k_{12}) \hat{\mathcal{P}}^{i_{0} i_{1}} (k_{123}) e_{k}, \nonumber\\
L_{t, i_{0}j_{0}}^{24} \triangleq& (2\pi)^{-\frac{9}{2}} \sum_{\lvert i-j \rvert \leq 1} \sum_{i_{1}, i_{2}, i_{3}, j_{1} =1}^{3} \sum_{k} \sum_{k_{1}, k_{2}, k_{3} \neq 0: k_{23} = k} \theta(2^{-i} k_{123}) \theta(2^{-j} k_{1}) \label{estimate 203}\\
& \times [\int_{0}^{t} e^{- \lvert k_{123} \rvert^{2}  f( \epsilon k_{123})(t-s) } \int_{0}^{s} : \hat{X}_{\sigma, b}^{\epsilon, i_{3}}(k_{2}) \hat{X}_{s,b}^{\epsilon, j_{1}}(k_{3}): k_{12}^{i_{3}} g(\epsilon k_{12}^{i_{3}}) k_{123}^{j_{1}} g(\epsilon k_{123}^{j_{1}})  \nonumber\\
& \hspace{5mm} \times\frac{ e^{- \lvert k_{1} \rvert^{2} f(\epsilon k_{1})(t-\sigma)} h_{u}(\epsilon k_{1}) h_{b}(\epsilon k_{1})}{2 \lvert k_{1} \rvert^{2} f(\epsilon k_{1})} e^{- \lvert k_{12} \rvert^{2} f(\epsilon k_{12})(s-\sigma)} d \sigma ds \nonumber \\
& -\int_{0}^{t} e^{- \lvert k_{123} \rvert^{2} (t-s) } \int_{0}^{s} : \hat{\bar{X}}_{\sigma, b}^{\epsilon, i_{3}}(k_{2}) \hat{\bar{X}}_{s,b}^{\epsilon, j_{1}}(k_{3}): k_{12}^{i_{3}} i  k_{123}^{j_{1}} i  \nonumber\\
& \hspace{5mm} \times \frac{ e^{- \lvert k_{1} \rvert^{2} (t-\sigma)} h_{u}(\epsilon k_{1}) h_{b}(\epsilon k_{1})}{2 \lvert k_{1} \rvert^{2}} e^{- \lvert k_{12} \rvert^{2} (s-\sigma) } d \sigma ds] \nonumber \\
& \times \sum_{i_{4} = 1}^{3} \hat{\mathcal{P}}^{i_{2} i_{4}} (k_{1}) \hat{\mathcal{P}}^{j_{0} i_{4}}(k_{1}) \hat{\mathcal{P}}^{i_{1} i_{2}} (k_{12}) \hat{\mathcal{P}}^{i_{0} i_{1}} (k_{123}) e_{k}. \nonumber
\end{align}
\end{subequations} 
W.l.o.g. we show the estimate on $L_{t, i_{0} j_{0}}^{24}$ in \eqref{estimate 203} as other cases are similar. We write 
\begin{align*}
& \mathbb{E} [ \lvert \Delta_{q} L_{t, i_{0}j_{0}}^{24} \rvert^{2} ] \nonumber \\
\lesssim& \mathbb{E} [ \lvert \sum_{k} \theta(2^{-q} k)^{2} \sum_{\lvert i-j \rvert \leq 1} \sum_{k_{1}, k_{2}, k_{3} \neq 0: k_{23} = k} \sum_{i_{3}, j_{1} =1}^{3} \theta(2^{-i} k_{123}) \theta(2^{-j} k_{1}) \nonumber\\
& \times  \int_{0}^{t} (e^{- \lvert k_{123} \rvert^{2}  f(\epsilon k_{123})(t-s) } k_{123}^{j_{1}} g(\epsilon k_{123}^{j_{1}}) - e^{- \lvert k_{123} \rvert^{2} (t-s)} k_{123}^{j_{1}} i) \nonumber\\
& \hspace{5mm} \times \int_{0}^{s} : \hat{X}_{\sigma, b}^{\epsilon, i_{3}}(k_{2}) \hat{X}_{s,b}^{\epsilon, j_{1}}(k_{3}): \frac{ e^{- \lvert k_{1} \rvert^{2} f(\epsilon k_{1} ) (t-\sigma)} h_{u}(\epsilon k_{1}) h_{b}(\epsilon k_{1})}{2 \lvert k_{1} \rvert^{2} f(\epsilon k_{1})} \nonumber\\
& \hspace{5mm} \times k_{12}^{i_{3}} g(\epsilon k_{12}^{i_{3}}) e^{- \lvert k_{12} \rvert^{2}  f(\epsilon k_{12})(s-\sigma)} d\sigma ds e_{k} \rvert^{2} ] \nonumber \\
&+ \mathbb{E} [ \lvert \sum_{k} \theta(2^{-q} k)^{2} \sum_{\lvert i-j \rvert \leq 1} \sum_{k_{1}, k_{2}, k_{3} \neq 0: k_{23} = k} \sum_{i_{3}, j_{1} =1}^{3} \theta(2^{-i} k_{123}) \theta(2^{-j} k_{1}) \nonumber\\
& \times  \int_{0}^{t} e^{- \lvert k_{123} \rvert^{2} (t-s)} k_{123}^{j_{1}} i\int_{0}^{s} [ : \hat{X}_{\sigma, b}^{\epsilon, i_{3}} (k_{2}) \hat{X}_{s,b}^{\epsilon, j_{1}}(k_{3}): - : \hat{\bar{X}}_{\sigma, b}^{\epsilon, i_{3}}(k_{2}) \hat{\bar{X}}_{s,b}^{\epsilon, j_{1}}(k_{3}):] \nonumber \\
& \hspace{5mm} \times \frac{ e^{- \lvert k_{1} \rvert^{2} f(\epsilon k_{1}) (t-\sigma)} h_{u}(\epsilon k_{1}) h_{b}(\epsilon k_{1})}{2\lvert k_{1} \rvert^{2} f(\epsilon k_{1})} k_{12}^{i_{3}} g(\epsilon k_{12}^{i_{3}}) e^{- \lvert k_{12} \rvert^{2} f(\epsilon k_{12})(s-\sigma)} d\sigma ds  e_{k}\rvert^{2} ] \nonumber \\
&+ \mathbb{E} [ \lvert \sum_{k} \theta(2^{-q} k)^{2} \sum_{\lvert i-j \rvert \leq 1} \sum_{k_{1}, k_{2}, k_{3} \neq 0: k_{23} = k} \sum_{i_{3}, j_{1} =1}^{3} \theta(2^{-i} k_{123}) \theta(2^{-j} k_{1}) \nonumber\\
& \times  \int_{0}^{t} e^{- \lvert k_{123} \rvert^{2} (t-s)} k_{123}^{j_{1}} i \int_{0}^{s} : \hat{\bar{X}}_{\sigma, b}^{\epsilon, i_{3}} (k_{2}) \hat{\bar{X}}_{s,b}^{\epsilon, j_{1}}(k_{3}): [ \frac{ e^{- \lvert k_{1} \rvert^{2} f(\epsilon k_{1}) (t-\sigma)}}{2 \lvert k_{1} \rvert^{2} f(\epsilon k_{1})} - \frac{ e^{- \lvert k_{1} \rvert^{2} (t-\sigma)}}{2 \lvert k_{1}\rvert^{2}}]  \nonumber\\
& \hspace{5mm} \times h_{u}(\epsilon k_{1}) h_{b}(\epsilon k_{1}) k_{12}^{i_{3}} g(\epsilon k_{12}^{i_{3}}) e^{- \lvert k_{12} \rvert^{2}  f(\epsilon k_{12})(s-\sigma)} d\sigma ds e_{k}\rvert^{2} ] \nonumber \\
&+ \mathbb{E} [ \lvert \sum_{k} \theta(2^{-q} k)^{2} \sum_{\lvert i-j \rvert \leq 1} \sum_{k_{1}, k_{2}, k_{3} \neq 0: k_{23} = k} \sum_{i_{3}, j_{1} =1}^{3} \theta(2^{-i} k_{123}) \theta(2^{-j} k_{1}) \nonumber\\
& \times  \int_{0}^{t} e^{- \lvert k_{123} \rvert^{2} (t-s)} k_{123}^{j_{1}} i \int_{0}^{s} : \hat{\bar{X}}_{\sigma, b}^{\epsilon, i_{3}} (k_{2}) \hat{\bar{X}}_{s,b}^{\epsilon, j_{1}}(k_{3}): \frac{ e^{- \lvert k_{1} \rvert^{2} (t-\sigma)}}{2 \lvert k_{1} \rvert^{2}} h_{u}(\epsilon k_{1}) h_{b}(\epsilon k_{1}) \nonumber \\
& \hspace{5mm} \times [ e^{- \lvert k_{12} \rvert^{2}  f(\epsilon k_{12})(s-\sigma)} k_{12}^{i_{3}} g(\epsilon k_{12}^{i_{3}}) - e^{-\lvert k_{12} \rvert^{2} (s-\sigma)} k_{12}^{i_{3}} i] d\sigma ds e_{k}  \rvert^{2} ]. \nonumber 
\end{align*} 
We can rely on \eqref{covariance} and continue to bound by 
\begin{equation}\label{estimate 215}
\mathbb{E} [ \lvert \Delta_{q} L_{t, i_{0} j_{0}}^{24} \rvert^{2} ]  \lesssim  \sum_{i=1}^{4} L_{t, q}^{24i} 
\end{equation}  
where 
\begin{subequations}
\begin{align}
L_{t, q}^{241} &\triangleq \sum_{k} \sum_{\lvert i-j \rvert \leq 1, \lvert i'- j' \rvert \leq 1} \sum_{k_{1}, k_{2}, k_{3}, k_{1}', k_{2}', k_{3}' \neq 0: k_{23} = k_{23}' = k} \sum_{j_{1}, j_{1}' = 1}^{3}  \label{estimate 204} \\
& \times ( 1_{k_{2} = k_{2}', k_{3} = k_{3}'} + 1_{k_{2} = k_{3}', k_{3} = k_{2}'} ) \theta(2^{-q} k)^{2} \theta(2^{-i} k_{123}) \theta(2^{-i'} k_{123}') \theta(2^{-j} k_{1}) \theta(2^{-j'} k_{1}') \nonumber\\
& \times  \int_{[0,t]^{2}} ( e^{- \lvert k_{123} \rvert^{2}  f(\epsilon k_{123})(t-s) } k_{123}^{j_{1}} g(\epsilon k_{123}^{j_{1}}) - e^{- \lvert k_{123} \rvert^{2} (t-s)}k_{123}^{j_{1}} i) \nonumber\\
& \hspace{2mm} \times  ( e^{- \lvert k_{123}' \rvert^{2}  f(\epsilon k_{123}')(t-\bar{s}) } (k_{123}')^{j_{1}'} g(\epsilon (k_{123}')^{j_{1}'}) - e^{- \lvert k_{123}' \rvert^{2} (t-\bar{s})}(k_{123}')^{j_{1}'} i) \nonumber \\
& \hspace{2mm} \times \int_{0}^{s} \int_{0}^{\bar{s}} \prod_{i=2}^{3} \frac{1}{ \lvert k_{i} \rvert^{2}} \frac{ e^{- \lvert k_{1} \rvert^{2} f(\epsilon k_{1}) (t-\sigma) - \lvert k_{1}' \rvert^{2} f(\epsilon k_{1}') (t-\bar{\sigma})} }{\lvert k_{1} \rvert^{2} f(\epsilon k_{1}) \lvert k_{1}' \rvert^{2} f(\epsilon k_{1}')} \lvert k_{12} \rvert \lvert k_{12} ' \rvert \nonumber\\
& \hspace{2mm} \times e^{- \lvert k_{12} \rvert^{2} f(\epsilon k_{12}) (s-\sigma) - \lvert k_{12}' \rvert^{2} f(\epsilon k_{12}') (\bar{s}-\bar{\sigma})}d \bar{\sigma} d \sigma  ds d \bar{s}, \nonumber\\
L_{t, q}^{242} &\triangleq  \sum_{k} \sum_{\lvert i-j \rvert \leq 1, \lvert i'- j' \rvert \leq 1} \sum_{k_{1}, k_{2}, k_{3}, k_{1}', k_{2}', k_{3}' \neq 0: k_{23} = k_{23}' = k} \sum_{i_{3}, j_{1}, i_{3}', j_{1}' = 1}^{3} \label{estimate 205}  \\
& \times ( 1_{k_{2} = k_{2}', k_{3} = k_{3}'} + 1_{k_{2} = k_{3}', k_{3} = k_{2}'} ) \theta(2^{-q} k)^{2} \theta(2^{-i} k_{123}) \theta(2^{-i'} k_{123}') \theta(2^{-j} k_{1}) \theta(2^{-j'} k_{1}') \nonumber\\
& \times \int_{[0,t]^{2}} e^{- \lvert k_{123} \rvert^{2} (t-s)} e^{- \lvert k_{123} ' \rvert^{2} (t- \bar{s})} \lvert k_{123} \rvert \lvert k_{123}' \rvert \lvert k_{12} \rvert \lvert k_{12} ' \rvert \nonumber \\
& \hspace{2mm} \times \int_{0}^{s} \int_{0}^{\bar{s}} \frac{ e^{ - \lvert k_{1} \rvert^{2} f(\epsilon k_{1}) (t-\sigma) - \lvert k_{1}' \rvert^{2} f(\epsilon k_{1}') (t- \bar{\sigma})}}{ \lvert k_{1} \rvert^{2} \lvert k_{1}' \rvert^{2} f(\epsilon k_{1}) f(\epsilon k_{1}')} e^{- \lvert k_{12} \rvert^{2}  f(\epsilon k_{12})(s-\sigma) - \lvert k_{12}' \rvert^{2} f(\epsilon k_{12} ') (\bar{s} - \bar{\sigma})} \nonumber \\
& \hspace{2mm} \times \mathbb{E} [ ( : \hat{X}_{\sigma, b}^{\epsilon, i_{3}}(k_{2}) \hat{X}_{s,b}^{\epsilon, j_{1}}(k_{3}): - : \hat{\bar{X}}_{\sigma, b}^{\epsilon, i_{3}}(k_{2}) \hat{\bar{X}}_{s,b}^{\epsilon, j_{1}}(k_{3}): ) \nonumber\\
& \hspace{5mm} \times  \overline{( : \hat{X}_{\bar{\sigma}, b}^{\epsilon, i_{3}'}(k_{2}') \hat{X}_{\bar{s},b}^{\epsilon, j_{1}'}(k_{3}'): - : \hat{\bar{X}}_{\bar{\sigma}, b}^{\epsilon, i_{3}'}(k_{2}') \hat{\bar{X}}_{\bar{s},b}^{\epsilon, j_{1}'}(k_{3}'): )} ] d \bar{\sigma } d\sigma  ds d \bar{s},  \nonumber \\
L_{t,q}^{243} &\triangleq   \sum_{k} \sum_{\lvert i-j \rvert \leq 1, \lvert i'- j' \rvert \leq 1} \sum_{k_{1}, k_{2}, k_{3}, k_{1}', k_{2}', k_{3}' \neq 0: k_{23} = k_{23}' = k}   \label{estimate 206}\\
& \times ( 1_{k_{2} = k_{2}', k_{3} = k_{3}'} + 1_{k_{2} = k_{3}', k_{3} = k_{2}'} ) \theta(2^{-q} k)^{2} \theta(2^{-i} k_{123}) \theta(2^{-i'} k_{123}') \theta(2^{-j} k_{1}) \theta(2^{-j'} k_{1}') \nonumber\\
& \times \int_{[0,t]^{2}} e^{- \lvert k_{123} \rvert^{2} (t-s)} \lvert k_{123} \rvert e^{- \lvert k_{123}' \rvert^{2} (t- \bar{s})} \lvert k_{123}' \rvert \int_{0}^{s} \int_{0}^{\bar{s}} \frac{1}{ \lvert k_{2} \rvert^{2} \lvert k_{3} \rvert^{2}} \nonumber \\
& \hspace{2mm} \times \lvert \frac{ e^{- \lvert k_{1} \rvert^{2} f(\epsilon k_{1}) (t-\sigma)}}{\lvert k_{1} \rvert^{2} f(\epsilon k_{1})} - \frac{ e^{- \lvert k_{1} \rvert^{2} (t-\sigma)}}{\lvert k_{1} \rvert^{2}} \rvert \lvert \frac{ e^{- \lvert k_{1}' \rvert^{2} f(\epsilon k_{1}') (t- \bar{\sigma})}}{\lvert k_{1}' \rvert^{2} f(\epsilon k_{1}')} - \frac{ e^{- \lvert k_{1}' \rvert^{2} (t- \bar{\sigma})}}{\lvert k_{1}' \rvert^{2}} \rvert \nonumber \\
& \hspace{2mm} \times e^{- \lvert k_{12} \rvert^{2} f(\epsilon k_{12})  (s-\sigma)- \lvert k_{12} ' \rvert^{2} f(\epsilon k_{12}') ( \bar{s}- \bar{\sigma})} \lvert k_{12} \rvert \lvert k_{12}' \rvert d \bar{\sigma} d\sigma ds d \bar{s}, \nonumber\\
L_{t,q}^{244} &\triangleq   \sum_{k} \sum_{\lvert i-j \rvert \leq 1, \lvert i'- j' \rvert \leq 1} \sum_{k_{1}, k_{2}, k_{3}, k_{1}', k_{2}', k_{3}' \neq 0: k_{23} = k_{23}' = k} \sum_{i_{3}, i_{3}'  = 1}^{3}  \label{estimate 207}\\
& \times ( 1_{k_{2} = k_{2}', k_{3} = k_{3}'} + 1_{k_{2} = k_{3}', k_{3} = k_{2}'} ) \theta(2^{-q} k)^{2} \theta(2^{-i} k_{123}) \theta(2^{-i'} k_{123}') \theta(2^{-j} k_{1}) \theta(2^{-j'} k_{1}') \nonumber\\
& \times  \int_{[0,t]^{2}} \frac{e^{- \lvert k_{123} \rvert^{2} (t-s)} \lvert k_{123} \rvert e^{- \lvert k_{123}' \rvert^{2} (t- \bar{s})} \lvert k_{123} ' \rvert}{\lvert k_{2} \rvert^{2} \lvert k_{3} \rvert^{2}}  \int_{0}^{s} \int_{0}^{\bar{s}}   \frac{ e^{- \lvert k_{1} \rvert^{2} (t-\sigma)}}{\lvert k_{1} \rvert^{2}} \frac{ e^{- \lvert k_{1}' \rvert^{2} (t- \bar{\sigma})}}{\lvert k_{1}' \rvert^{2}} \nonumber   \\
& \hspace{2mm} \times \lvert e^{- \lvert k_{12} \rvert^{2}  f(\epsilon k_{12})(s-\sigma) } k_{12}^{i_{3}} g(\epsilon k_{12}^{i_{3}}) - e^{- \lvert k_{12} \rvert^{2} (s-\sigma)} k_{12}^{i_{3}} i \rvert \nonumber \\
&\hspace{2mm} \times \lvert e^{- \lvert k_{12}' \rvert^{2}  f(\epsilon k_{12}')(\bar{s}-\bar{\sigma}) } (k_{12}')^{i_{3}'} g(\epsilon (k_{12}')^{i_{3}'}) - e^{- \lvert k_{12}' \rvert^{2} (\bar{s}-\bar{\sigma})} (k_{12}')^{i_{3}'} i \rvert d \bar{\sigma}d \sigma  d s d \bar{s}. \nonumber
\end{align}
\end{subequations} 
First, we compute from \eqref{estimate 204} for any $\eta \in [0,1]$, 
\begin{align}\label{estimate 208}
L_{t,q}^{241} \lesssim& \epsilon^{\eta} \sum_{k} \sum_{\lvert i-j \rvert \leq 1, \lvert i' - j' \rvert \leq 1} \sum_{k_{1}, k_{2}, k_{3}, k_{4} \neq 0: k_{23} = k} \theta(2^{-q} k)^{2} \theta(2^{-i} k_{123}) \theta(2^{-i'} k_{234}) \nonumber \\
& \times  \theta(2^{-j} k_{1}) \theta(2^{-j'} k_{4}) \prod_{i=1}^{4} \frac{1}{\lvert k_{i} \rvert^{2}}  \lvert k_{123} \rvert^{1+ \frac{\eta}{2}} \lvert k_{234} \rvert^{1+ \frac{\eta}{2}} \nonumber \\ 
& \times e^{ - \lvert k_{123} \rvert^{2} \bar{c}_{f} t - \lvert k_{234} \rvert^{2} \bar{c}_{f} t - \lvert k_{1} \rvert^{2} f(\epsilon k_{1}) t - \lvert k_{4} \rvert^{2} f(\epsilon k_{4}) t} \frac{ \lvert k_{12}\rvert}{\lvert k_{1} \rvert^{2} + \lvert k_{12} \rvert^{2}} \nonumber \\
& \times \int_{[0,t]^{2}} e^{ \lvert k_{123} \rvert^{2} \bar{c}_{f} s + \lvert k_{234} \rvert^{2} \bar{c}_{f} \bar{s} + \lvert k_{1} \rvert^{2} f(\epsilon k_{1}) s + \lvert k_{4} \rvert^{2} f(\epsilon k_{4}) \bar{s}} \nonumber\\
& \times [ \frac{\lvert k_{24} \rvert}{\lvert k_{4} \rvert^{2} + \lvert k_{24} \rvert^{2}} + \frac{\lvert k_{34} \rvert}{\lvert k_{4} \rvert^{2} + \lvert k_{34} \rvert^{2}} ] ds d \bar{s} \nonumber \\
\lesssim& \epsilon^{\eta}t^{\eta}  \sum_{k} \sum_{\lvert i-j \rvert \leq 1, \lvert i' - j' \rvert \leq 1} \sum_{k_{1}, k_{2}, k_{3}, k_{4} \neq 0: k_{23} = k} \theta(2^{-q} k)^{2} \theta(2^{-i} k_{123}) \theta(2^{-i'} k_{234}) \nonumber\\
& \times  \theta(2^{-j} k_{1}) \theta(2^{-j'} k_{4}) (\prod_{i=1}^{4} \frac{1}{\lvert k_{i} \rvert^{2}}) \frac{1}{ \lvert k_{1} \rvert^{2- \frac{3\eta}{2}} \lvert k_{4} \rvert^{2- \frac{3\eta}{2}}} 
\end{align} 
by \eqref{[Equation (4.2)][ZZ17]}-\eqref{[Equation (4.3)][ZZ17]} and mean value theorem. Next, in order to work on $L_{t, q}^{242}$ in \eqref{estimate 205}, we first compute 
\begin{align}\label{estimate 211}
& \mathbb{E} [ \lvert : \hat{X}_{\sigma, b}^{\epsilon, i_{3}}(k_{2}) \hat{X}_{s,b}^{\epsilon, j_{1}}(k_{3}): - : \hat{\bar{X}}_{\sigma, b}^{\epsilon, i_{3}}(k_{2}) \hat{\bar{X}}_{s,b}^{\epsilon, j_{1}}(k_{3}): \rvert^{2} ]  \\
\lesssim& \frac{1}{ \lvert k_{2} \rvert^{2} \lvert k_{3} \rvert^{2}}  \lvert \frac{1}{4 f(\epsilon k_{2}) f(\epsilon k_{3})} - \frac{2}{(f(\epsilon k_{2}) + 1)(f(\epsilon k_{3}) + 1)} + \frac{1}{4} \rvert \nonumber\\
&+ \frac{1}{\lvert k_{2} \rvert^{4}} \lvert \frac{ e^{-2 \lvert k_{2} \rvert^{2} f(\epsilon k_{2})(s-\sigma)}}{4 f(\epsilon k_{2})^{2}} - \frac{ 2e^{- \lvert k_{2} \rvert^{2} (f(\epsilon k_{2}) + 1)(s-\sigma)}}{(f(\epsilon k_{2}) + 1)^{2}} + \frac{ e^{-2 \lvert k_{2} \rvert^{2} (s-\sigma)}}{4} \rvert 1_{k_{2} = k_{3}} \nonumber 
\end{align}
by Example \ref{Example 3.1} and \eqref{covariance} - \eqref{estimate 174} where for any $\eta \in [0,1]$, 
\begin{align}\label{estimate 209}
\lvert \frac{1}{4 f(\epsilon k_{2}) f(\epsilon k_{3})} - \frac{2}{(f(\epsilon k_{2}) + 1)(f(\epsilon k_{3}) + 1)} + \frac{1}{4} \rvert \lesssim \lvert \epsilon k_{2} \rvert^{\eta} + \lvert \epsilon k_{3} \rvert^{\eta}  
\end{align} 
by mean value theorem and 
\begin{align}\label{estimate 210}
\lvert \frac{ e^{-2 \lvert k_{2} \rvert^{2} f(\epsilon k_{2})(s-\sigma)}}{4 f(\epsilon k_{2})^{2}} - \frac{ 2e^{- \lvert k_{2} \rvert^{2} (f(\epsilon k_{2}) + 1)(s-\sigma)}}{(f(\epsilon k_{2}) + 1)^{2}} + \frac{ e^{-2 \lvert k_{2} \rvert^{2} (s-\sigma)}}{4} \rvert  \lesssim \lvert \epsilon k_{2} \rvert^{\eta}
\end{align} 
by \eqref{[Equation (4.2)][ZZ17]}. Applying H$\ddot{\mathrm{o}}$lder's inequality and \eqref{estimate 209}-\eqref{estimate 210} to \eqref{estimate 211} shows that 
\begin{align}\label{[Equation (4.7)][ZZ17]}
& \mathbb{E} [ ( : \hat{X}_{\sigma, b}^{\epsilon, i_{3}}(k_{2}) \hat{X}_{s,b}^{\epsilon, j_{1}}(k_{3}): - : \hat{\bar{X}}_{\sigma, b}^{\epsilon, i_{3}}(k_{2}) \hat{\bar{X}}_{s,b}^{\epsilon, j_{1}}(k_{3}): ) \nonumber\\
& \hspace{5mm} \times  \overline{( : \hat{X}_{\bar{\sigma}, b}^{\epsilon, i_{3}'}(k_{2}') \hat{X}_{\bar{s},b}^{\epsilon, j_{1}'}(k_{3}'): - : \hat{\bar{X}}_{\bar{\sigma}, b}^{\epsilon, i_{3}'}(k_{2}') \hat{\bar{X}}_{\bar{s},b}^{\epsilon, j_{1}'}(k_{3}'): )} \nonumber\\
& \hspace{10mm} \lesssim \frac{ ( \lvert \epsilon k_{2} \rvert^{\frac{\eta}{2}} + \lvert \epsilon k_{3} \rvert^{\frac{\eta}{2}})}{\lvert k_{2} \rvert \lvert k_{3} \rvert}   \frac{ ( \lvert \epsilon k_{2}' \rvert^{\frac{\eta}{2}} + \lvert \epsilon k_{3}' \rvert^{\frac{\eta}{2}})}{\lvert k_{2}' \rvert \lvert k_{3}' \rvert}. 
\end{align} 
Applying \eqref{[Equation (4.7)][ZZ17]} to \eqref{estimate 205}  gives us  
\begin{align}\label{estimate 212}
L_{t, q}^{242} \lesssim&  \epsilon^{\eta}  \sum_{k} \sum_{\lvert i-j \rvert \leq 1, \lvert i' - j' \rvert \leq 1} \sum_{k_{1}, k_{2}, k_{3}, k_{4} \neq 0: k_{23} =k} \nonumber\\
& \times \theta(2^{-q} k)^{2} \theta(2^{-i} k_{123}) \theta(2^{-i'} k_{234}) \theta(2^{-j} k_{1}) \theta(2^{-j'} k_{4}) \nonumber\\
& \times \prod_{i=1}^{4} \frac{1}{\lvert k_{i} \rvert^{2}} (\lvert k_{2} \rvert^{\eta} + \lvert k_{3} \rvert^{\eta}) \lvert k_{123} \rvert \lvert k_{234} \rvert \frac{1}{ ( \lvert k_{1} \rvert + \lvert k_{12} \rvert )} [ \frac{1}{ \lvert k_{4} \rvert + \lvert k_{24} \rvert } + \frac{1}{ \lvert k_{4} \rvert + \lvert k_{34} \rvert} ] \nonumber \\
& \times \int_{[0,t]^{2}} e^{- \lvert k_{123} \rvert^{2} (t-s) - \lvert k_{234} \rvert^{2} (t- \bar{s}) - \lvert k_{1} \rvert^{2} f(\epsilon k_{1}) (t-s) - \lvert k_{4} \rvert^{2} f(\epsilon k_{4}) (t- \bar{s})} ds d \bar{s} \nonumber \\
\lesssim& \epsilon^{\eta} t^{\eta} \sum_{k} \sum_{\lvert i-j \rvert \leq 1, \lvert i' - j' \rvert \leq 1} \sum_{k_{1}, k_{2}, k_{3}, k_{4} \neq 0: k_{23} =k} \nonumber\\
& \times  \theta(2^{-q} k)^{2} \theta(2^{-i} k_{123}) \theta(2^{-i'} k_{234}) \theta(2^{-j} k_{1}) \theta(2^{-j'} k_{4}) \nonumber\\ 
& \times [ \frac{1}{ \lvert k_{1} \rvert^{4-\eta}} \frac{1}{\lvert k_{2} \rvert^{2-\eta}} \frac{1}{\lvert k_{3} \rvert^{2}}  \frac{1}{\lvert k_{4} \rvert^{4-\eta}} + \frac{1}{ \lvert k_{1} \rvert^{4-\eta}} \frac{1}{\lvert k_{2} \rvert^{2}} \frac{1}{\lvert k_{3} \rvert^{2 - \eta}}  \frac{1}{\lvert k_{4} \rvert^{4-\eta}}
\end{align} 
by \eqref{key estimate}. Next, we estimate from \eqref{estimate 206}
\begin{align}\label{estimate 213}
L_{t,q}^{243} \lesssim&  \sum_{k} \sum_{\lvert i-j \rvert \leq 1, \lvert i'- j' \rvert \leq 1} \sum_{k_{1}, k_{2}, k_{3}, k_{1}', k_{2}', k_{3}' \neq 0: k_{23} = k_{23}' = k} ( 1_{k_{2} = k_{2}', k_{3} = k_{3}'} + 1_{k_{2} = k_{3}', k_{3} = k_{2}'} )  \nonumber\\
& \times  \theta(2^{-q} k)^{2} \theta(2^{-i} k_{123}) \theta(2^{-i'} k_{123}') \theta(2^{-j} k_{1}) \theta(2^{-j'} k_{1}') \nonumber\\
& \times \int_{[0,t]^{2}} e^{- \lvert k_{123} \rvert^{2} (t-s)} \lvert k_{123} \rvert e^{- \lvert k_{123}' \rvert^{2} (t- \bar{s})} \lvert k_{123}' \rvert \int_{0}^{s} \int_{0}^{\bar{s}} \frac{1}{ \lvert k_{2} \rvert^{2} \lvert k_{3} \rvert^{2}} \frac{1}{\lvert k_{1} \rvert^{2}} \frac{1}{\lvert k_{1}' \rvert^{2}} \nonumber \\
&  \hspace{5mm} \times [ e^{ - \lvert k_{1} \rvert^{2} \bar{c}_{f} (t-\sigma)} \lvert \epsilon k_{1} \rvert^{\frac{\eta}{2}} e^{- \lvert k_{1}' \rvert^{2} \bar{c}_{f} (t- \bar{\sigma})} \lvert \epsilon k_{1}' \rvert^{\frac{\eta}{2}} ] \nonumber \\
& \hspace{5mm} \times e^{- \lvert k_{12} \rvert^{2}  f(\epsilon k_{12})(s-\sigma) - \lvert k_{12} ' \rvert^{2} f(\epsilon k_{12}')(\bar{s} - \bar{\sigma})} \lvert k_{12} \rvert \lvert k_{12} ' \rvert d \bar{\sigma} d\sigma  ds d \bar{s} \nonumber \\
\lesssim& \epsilon^{\eta} t^{\eta} \sum_{k} \sum_{\lvert i-j \rvert \leq 1, \lvert i'-j'\rvert \leq 1} \sum_{k_{1}, k_{2}, k_{3}, k_{4} \neq 0: k_{23} = k}  \theta(2^{-q} k)^{2} \theta(2^{-i} k_{123}) \theta(2^{-i'} k_{234})\nonumber\\
& \times  \theta(2^{-j} k_{1}) \theta(2^{-j'} k_{4}) (\prod_{i=1}^{4} \frac{1}{\lvert k_{i} \rvert^{2}}) \frac{1}{\lvert k_{1} \rvert^{2- \frac{3\eta}{2}}} \frac{1}{\lvert k_{4} \rvert^{2- \frac{3\eta}{2}}} 
\end{align} 
by \eqref{[Equation (4.2)][ZZ17]} and mean value theorem. Finally, we estimate from \eqref{estimate 207} for any $\eta \in [0,1]$, 
\begin{align}\label{estimate 214}
L_{t,q}^{244} \lesssim& \epsilon^{\eta}  \sum_{k} \sum_{\lvert i-j \rvert \leq 1, \lvert i'- j' \rvert \leq 1} \sum_{k_{1}, k_{2}, k_{3}, k_{1}', k_{2}', k_{3}' \neq 0: k_{23} = k_{23}' = k} ( 1_{k_{2} = k_{2}', k_{3} = k_{3}'} + 1_{k_{2} = k_{3}', k_{3} = k_{2}'} ) \nonumber\\
& \times \theta(2^{-q} k)^{2} \theta(2^{-i} k_{123}) \theta(2^{-i'} k_{123}') \theta(2^{-j} k_{1}) \theta(2^{-j'} k_{1}') \nonumber\\
& \times  \lvert k_{123} \rvert \lvert k_{123} ' \rvert \frac{ \lvert k_{12} \rvert^{\frac{\eta}{2}} \lvert k_{12} ' \rvert^{\frac{\eta}{2}}}{ \lvert k_{2} \rvert^{2} \lvert k_{3} \rvert^{2} \lvert k_{1} \rvert^{2} \lvert k_{1} ' \rvert^{2}} \frac{1}{ \lvert k_{1} \rvert + \lvert k_{12} \rvert} \frac{1}{\lvert k_{1}' \rvert + \lvert k_{12} ' \rvert} \nonumber\\
& \times \int_{[0,t]^{2}} e^{- ( \lvert k_{123} \rvert^{2} + \lvert k_{1} \rvert^{2} )(t-s)} e^{- ( \lvert k_{123} ' \rvert^{2} + \lvert k_{1}' \rvert^{2} ) (t- \bar{s})} ds d \bar{s} \nonumber \\
\lesssim& \epsilon^{\eta} t^{\eta}  \sum_{k} \sum_{\lvert i-j \rvert \leq 1, \lvert i'- j' \rvert \leq 1} \sum_{k_{1}, k_{2}, k_{3}, k_{4} \neq 0: k_{23} = k} \theta(2^{-q}k)^{2} \theta(2^{-i} k_{123}) \theta(2^{-i'} k_{234}) \nonumber\\
& \times \theta(2^{-j} k_{1}) \theta(2^{-j'} k_{4}) (\prod_{i=1}^{4} \frac{1}{\lvert k_{i} \rvert^{2}})  \frac{1}{ \lvert k_{1} \rvert^{2- \frac{3\eta}{2}}} \frac{1}{ \lvert k_{4} \rvert^{2- \frac{3\eta}{2}}} 
\end{align} 
by \eqref{[Equation (4.2)][ZZ17]}-\eqref{[Equation (4.3)][ZZ17]} and mean value theorem. Therefore, we can now conclude by applying \eqref{estimate 208}, \eqref{estimate 212}-\eqref{estimate 214} to \eqref{estimate 215}, using the observation that $2^{q} \lesssim 2^{j}$ so that $q \lesssim j$ and similarly $q \lesssim j'$, that for any $\eta \in [0, \frac{2}{3})$, 
\begin{align}\label{estimate 14}
& \mathbb{E} [ \lvert \Delta_{1} L_{t, i_{0}j_{0}}^{24} \rvert^{2} ]\\
\lesssim& \epsilon^{\eta} t^{\eta} \sum_{k} \sum_{k_{2}, k_{3}\neq 0: k_{23} = k} \theta(2^{-q} k)^{2} \nonumber\\
\times & [ \frac{1}{ \lvert k_{2} \rvert^{2} \lvert k_{3} \rvert^{2}} \frac{1}{2^{2q(1- \frac{3\eta}{2})}} + \frac{1}{\lvert k_{2} \rvert^{2-\eta} \lvert k_{3} \rvert^{2}} \frac{1}{2^{2q (1- \eta)}} + \frac{1}{\lvert k_{2} \rvert^{2} \lvert k_{3} \rvert^{2 - \eta}} \frac{1}{2^{2q(1-\eta)}}]  \lesssim  \epsilon^{\eta} t^{\eta} 2^{q3\eta}. \nonumber
\end{align}   
The estimate of $L_{t, i_{0}j_{0}}^{3}$ in \eqref{estimate 196} is very similar to that of $L_{t, i_{0} j_{0}}^{2}$. Next, we look at $L_{t, i_{0} j_{0}}^{4}$ in \eqref{estimate 197}, which has eight terms. We label the first four terms as $L_{t, i_{0} j_{0}}^{41}$ and the last four terms as $L_{t, i_{0} j_{0}}^{42}$ due to necessity (see the coupled renormalization explained in \cite[Remark 3.4]{Y19a}); i.e., 
\begin{subequations}\label{estimate 125}
\begin{align}
L_{t, i_{0} j_{0}}^{41} \triangleq& (2\pi)^{-\frac{9}{2}} \sum_{\lvert i -j \rvert \leq 1} \sum_{i_{1}, i_{2}, i_{3}, j_{1} =1}^{3} \sum_{k} \sum_{k_{1}, k_{2}, k_{3} \neq 0: k_{12} = k} \theta(2^{-i} k_{123}) \theta(2^{-j} k_{3}) \\
& \times [ \int_{0}^{t} e^{- \lvert k_{123} \rvert^{2} f(\epsilon k_{123})(t-s) } \int_{0}^{s} \nonumber\\
& \hspace{5mm} \times (: \hat{X}_{\sigma, u}^{\epsilon, i_{2}}(k_{1}) \hat{X}_{\sigma, u}^{\epsilon, i_{3}}(k_{2}): - : \hat{X}_{\sigma, b}^{\epsilon, i_{2}}(k_{1}) \hat{X}_{\sigma, b}^{\epsilon, i_{3}}(k_{2}): ) k_{12}^{i_{3}} g(\epsilon k_{12}^{i_{3}}) k_{123}^{j_{1}} g(\epsilon k_{123}^{j_{1}}) \nonumber\\
& \hspace{5mm} \times \frac{ e^{- \lvert k_{3} \rvert^{2} f(\epsilon k_{3}) (t-s)} h_{u}(\epsilon k_{3}) h_{b}(\epsilon k_{3})}{2\lvert k_{3} \rvert^{2} f(\epsilon k_{3})} e^{- \lvert k_{12} \rvert^{2}f(\epsilon k_{12}) (s-\sigma) } d\sigma ds \nonumber\\
& - \int_{0}^{t} e^{- \lvert k_{123} \rvert^{2} (t-s) } \int_{0}^{s} \nonumber\\
& \hspace{5mm} \times (: \hat{\bar{X}}_{\sigma, u}^{\epsilon, i_{2}}(k_{1}) \hat{\bar{X}}_{\sigma, u}^{\epsilon, i_{3}}(k_{2}): - : \hat{\bar{X}}_{\sigma, b}^{\epsilon, i_{2}}(k_{1}) \hat{\bar{X}}_{\sigma, b}^{\epsilon, i_{3}}(k_{2}):) k_{12}^{i_{3}} i k_{123}^{j_{1}} i \nonumber\\
& \hspace{5mm} \times \frac{ e^{- \lvert k_{3} \rvert^{2} (t-s)} h_{u}(\epsilon k_{3}) h_{b}(\epsilon k_{3})}{2\lvert k_{3} \rvert^{2}} e^{- \lvert k_{12} \rvert^{2} (s-\sigma)} d\sigma ds] \nonumber\\
& \times \sum_{i_{4} =1}^{3} \hat{\mathcal{P}}^{j_{1}i_{4}} (k_{3}) \hat{\mathcal{P}}^{j_{0}i_{4}}(k_{3}) \hat{\mathcal{P}}^{i_{1} i_{2}}(k_{12}) \hat{\mathcal{P}}^{i_{0}i_{1}}(k_{123}) e_{k}, \nonumber\\
L_{t, i_{0} j_{0}}^{42} \triangleq& (2\pi)^{-\frac{9}{2}} \sum_{\lvert i -j \rvert \leq 1} \sum_{i_{1}, i_{2}, i_{3}, j_{1} =1}^{3} \sum_{k} \sum_{k_{1}, k_{2}, k_{3}\neq 0: k_{12} = k} \theta(2^{-i} k_{123}) \theta(2^{-j} k_{3}) \\
& \times [ - \int_{0}^{t} e^{- \lvert k_{123} \rvert^{2}  f(\epsilon k_{123})(t-s)} \int_{0}^{s}  \nonumber\\
& \hspace{5mm} \times (: \hat{X}_{\sigma, b}^{\epsilon, i_{2}}(k_{1}) \hat{X}_{\sigma, u}^{\epsilon, i_{3}}(k_{2}): - : \hat{X}_{\sigma, u}^{\epsilon, i_{2}}(k_{1}) \hat{X}_{\sigma, b}^{\epsilon, i_{3}}(k_{2}):) k_{12}^{i_{3}} g(\epsilon k_{12}^{i_{3}}) k_{123}^{j_{1}} g(\epsilon k_{123}^{j_{1}}) \nonumber\\
& \hspace{5mm} \times \frac{ e^{- \lvert k_{3} \rvert^{2} f(\epsilon k_{3}) (t-s)}  h_{b}(\epsilon k_{3})^{2}}{2\lvert k_{3} \rvert^{2} f(\epsilon k_{3})} e^{- \lvert k_{12} \rvert^{2} f(\epsilon k_{12})(s-\sigma)} d\sigma ds \nonumber\\
& + \int_{0}^{t} e^{- \lvert k_{123} \rvert^{2} (t-s) } \int_{0}^{s} \nonumber\\
& \hspace{5mm} \times (: \hat{\bar{X}}_{\sigma, b}^{\epsilon, i_{2}}(k_{1}) \hat{\bar{X}}_{\sigma, u}^{\epsilon, i_{3}}(k_{2}): - : \hat{\bar{X}}_{\sigma, u}^{\epsilon, i_{2}}(k_{1}) \hat{\bar{X}}_{\sigma, b}^{\epsilon, i_{3}}(k_{2}):) k_{12}^{i_{3}} i k_{123}^{j_{1}} i \nonumber\\
& \hspace{5mm} \times \frac{ e^{- \lvert k_{3} \rvert^{2} (t-s)} h_{b}(\epsilon k_{3})^{2}}{2\lvert k_{3} \rvert^{2}} e^{- \lvert k_{12} \rvert^{2} (s-\sigma)} d\sigma ds] \nonumber\\
& \times \sum_{i_{4} =1}^{3} \hat{\mathcal{P}}^{j_{1}i_{4}} (k_{3}) \hat{\mathcal{P}}^{j_{0}i_{4}}(k_{3}) \hat{\mathcal{P}}^{i_{1} i_{2}}(k_{12}) \hat{\mathcal{P}}^{i_{0}i_{1}}(k_{123}) e_{k}. \nonumber
\end{align} 
\end{subequations} 
Due to similarity, we show estimate s for only $L_{t, i_{0} j_{0}}^{41}$ of \eqref{estimate 125}. We define 
\begin{align}\label{estimate 216}
\tilde{L}_{t, i_{0}j_{0}}^{41} \triangleq& (2\pi)^{-\frac{9}{2}} \sum_{\lvert i -j \rvert \leq 1} \sum_{i_{1}, i_{2}, i_{3}, j_{1} =1}^{3} \sum_{k} \sum_{k_{1}, k_{2}, k_{3}\neq 0: k_{12} = k} \theta(2^{-i} k_{123}) \theta(2^{-j} k_{3}) \\
& \times [ \int_{0}^{t} e^{- \lvert k_{123} \rvert^{2}  f(\epsilon k_{123})(t-s)} \int_{0}^{t} \nonumber\\
& \hspace{5mm} \times (: \hat{X}_{\sigma, u}^{\epsilon, i_{2}}(k_{1}) \hat{X}_{\sigma, u}^{\epsilon, i_{3}}(k_{2}): - : \hat{X}_{\sigma, b}^{\epsilon, i_{2}}(k_{1}) \hat{X}_{\sigma, b}^{\epsilon, i_{3}}(k_{2}): ) k_{12}^{i_{3}} g(\epsilon k_{12}^{i_{3}}) k_{123}^{j_{1}} g(\epsilon k_{123}^{j_{1}}) \nonumber\\
& \hspace{5mm} \times \frac{ e^{- \lvert k_{3} \rvert^{2} f(\epsilon k_{3}) (t-s)} h_{u}(\epsilon k_{3}) h_{b}(\epsilon k_{3})}{2\lvert k_{3} \rvert^{2} f(\epsilon k_{3})} e^{- \lvert k_{12} \rvert^{2}  f(\epsilon k_{12})(t-\sigma)} d\sigma ds \nonumber\\
& - \int_{0}^{t} e^{- \lvert k_{123} \rvert^{2} (t-s) } \int_{0}^{t} \nonumber\\
& \hspace{5mm} \times (: \hat{\bar{X}}_{\sigma, u}^{\epsilon, i_{2}}(k_{1}) \hat{\bar{X}}_{\sigma, u}^{\epsilon, i_{3}}(k_{2}): - : \hat{\bar{X}}_{\sigma, b}^{\epsilon, i_{2}}(k_{1}) \hat{\bar{X}}_{\sigma, b}^{\epsilon, i_{3}}(k_{2}):) k_{12}^{i_{3}} i k_{123}^{j_{1}} i \nonumber\\
& \hspace{5mm} \times \frac{ e^{- \lvert k_{3} \rvert^{2} (t-s)} h_{u}(\epsilon k_{3}) h_{b}(\epsilon k_{3})}{2\lvert k_{3} \rvert^{2}} e^{- \lvert k_{12} \rvert^{2} (t-\sigma)} d\sigma ds] \nonumber\\
& \times \sum_{i_{4} =1}^{3} \hat{\mathcal{P}}^{j_{1}i_{4}} (k_{3}) \hat{\mathcal{P}}^{j_{0}i_{4}}(k_{3}) \hat{\mathcal{P}}^{i_{1} i_{2}}(k_{12}) \hat{\mathcal{P}}^{i_{0}i_{1}}(k_{123}) e_{k}, \nonumber
\end{align} 
while we rely on \eqref{estimate 124}, \eqref{C2bepsilonii1j} and the fact that $\sum_{\lvert i-j \rvert \leq 1} \theta(2^{-i} k_{3}) \theta(2^{-j} k_{3}) = 1$ to write 
\begin{align}\label{estimate 225}
C_{3,u}^{\epsilon, i_{0} i_{1} j_{0}} &=\frac{ (2\pi)^{-3}}{2} \sum_{\lvert i-j \rvert \leq 1} \sum_{k_{3} \neq 0} \sum_{j_{1} =1}^{3} \theta(2^{-i} k_{3}) \theta(2^{-j} k_{3}) k_{3}^{j_{1}} g(\epsilon k_{3}^{j_{1}})  \\
& \times \int_{0}^{t} \frac{ e^{-2 \lvert k_{3} \rvert^{2} f(\epsilon k_{3}) (t-s)} h_{u}(\epsilon k_{3}) h_{b}(\epsilon k_{3})}{2 \lvert k_{3}\rvert^{2} f(\epsilon k_{3})} \sum_{i_{4} =1}^{3} \hat{\mathcal{P}}^{j_{1} i_{4}}(k_{3}) \hat{\mathcal{P}}^{j_{0} i_{4}}(k_{3}) \hat{\mathcal{P}}^{i_{0} i_{1}} (k_{3}) ds. \nonumber
\end{align} 
Similarly, identically from \eqref{barC2bepsilonii1j} because $\bar{C}_{3,u}^{\epsilon, ii_{1} j}(t) = \bar{C}_{2,b}^{\epsilon, ii_{1} j}(t)$ as mentioned after \eqref{estimate 124}, we may write 
\begin{align}\label{estimate 228}
\bar{C}_{3,u}^{\epsilon, i_{0} i_{1} j_{0}}(t) \triangleq& \frac {(2\pi)^{-3}}{2} \sum_{j_{1}, i_{4} =1}^{3}\sum_{\lvert i-j \rvert \leq 1}   \sum_{k_{3} \neq 0}\theta(2^{-i} k_{3}) \theta(2^{-j} k_{3})  \int_{0}^{t} e^{-2 \lvert k_{3} \rvert^{2} (t-s)} ds   \nonumber \\ 
& \times k_{3}^{j_{1}} i \frac{h_{u} (\epsilon k_{3}) h_{b}(\epsilon k_{3})}{2 \lvert k_{3} \rvert^{2}} \hat{\mathcal{P}}^{i_{0}i_{1}}(k_{3}) \hat{\mathcal{P}}^{j_{1}i_{4}} (k_{3}) \hat{\mathcal{P}}^{j_{0}i_{4}}(k_{3}), 
\end{align} 
which has already been explained to vanish after \eqref{barC2bepsilonii1j} so that we may write 
\begin{align}\label{estimate 246}
&L_{t, i_{0} j_{0}}^{41} + 4 \sum_{i_{1} =1}^{3} u_{2}^{\epsilon, i_{1}}(t) C_{3,u}^{\epsilon, i_{0} i_{1} j_{0}}(t) \\
=& L_{t, i_{0} j_{0}}^{41} - \tilde{L}_{t, i_{0} j_{0}}^{41} + \tilde{L}_{t, i_{0} j_{0}}^{41} + 4 \sum_{i_{1} =1}^{3} u_{2}^{\epsilon, i_{1}}(t) C_{3,u}^{\epsilon, i_{0} i_{1} j_{0}}(t) - 4 \sum_{i_{1} =1}^{3} \bar{u}_{2}^{\epsilon, i_{1}}(t) \bar{C}_{3,u}^{\epsilon, i_{0} i_{1} j_{0}}(t), \nonumber
\end{align} 
where the multiplication by 4 is due to the multiplication by $\frac{1}{4}$ in \eqref{[Equation (4.6e)][ZZ17]}. We will see in \eqref{estimate 223} - \eqref{estimate 224} that our choice of $C_{3, u}^{\epsilon, i_{0} i_{1} j_{0}}$ and $\bar{C}_{3,u}^{\epsilon, i_{0}i_{1}j_{0}}$ in \eqref{estimate 225}- \eqref{estimate 228} which were originally derived from \eqref{estimate 124},  \eqref{C2bepsilonii1j} and \eqref{barC2uepsilonii1j} by necessity for the convergence of $b_{1}^{\epsilon} \diamond b_{2}^{\epsilon} - \bar{b}_{1}^{\epsilon}  \diamond \bar{b}_{2}^{\epsilon}$ makes perfect cancellations with $\tilde{L}_{t,i_{0}j_{0}}^{41}$ defined in \eqref{estimate 216}.  Now we define 
\begin{subequations}
\begin{align}
l_{k_{123}k_{3}, t-s, i_{0} j_{1}} \triangleq& \sum_{i_{1} =1}^{3} e^{- \lvert k_{123} \rvert^{2} f(\epsilon k_{123}) (t-s)} \frac{ e^{- \lvert k_{3} \rvert^{2} f(\epsilon k_{3})(t-s)} h_{u}(\epsilon k_{3}) h_{b}(\epsilon k_{3})}{\lvert k_{3} \rvert^{2} f(\epsilon k_{3})} \nonumber\\
& \times k_{123}^{j_{1}} g(\epsilon k_{123}^{j_{1}}) \hat{\mathcal{P}}^{i_{0} i_{1}} (k_{123}),  \label{[Equation (4.7d)][ZZ17]}\\
\bar{l}_{k_{123}k_{3}, t-s, i_{0} j_{1}}  \triangleq& \sum_{i_{1} =1}^{3} e^{- \lvert k_{123} \rvert^{2} (t-s)} \frac{ e^{- \lvert k_{3} \rvert^{2} (t-s) } h_{u}(\epsilon k_{3}) h_{b}(\epsilon k_{3})}{\lvert k_{3} \rvert^{2}} \nonumber\\
& \times k_{123}^{j_{1}} i \hat{\mathcal{P}}^{i_{0} i_{1}} (k_{123}), \label{[Equation (4.7e)][ZZ17]}
\end{align}
\end{subequations} 
and compute from \eqref{estimate 125} and \eqref{estimate 216} 
\begin{align}
& \mathbb{E} [ \lvert \Delta_{q} (L_{t, i_{0} j_{0}}^{41} - \tilde{L}_{t, i_{0} j_{0}}^{41} ) \rvert^{2}] \\
&\lesssim \mathbb{E} [ \lvert \sum_{k} \theta(2^{-q}k) \sum_{\lvert i-j \rvert \leq 1} \sum_{k_{1}, k_{2}, k_{3} \neq 0: k_{12} = k} \sum_{i_{2}, i_{3}, i_{4}, j_{1} =1}^{3} \theta(2^{-i} k_{123}) \theta(2^{-j} k_{3}) \nonumber\\ 
& \times  \hat{\mathcal{P}}^{j_{1} i_{4}}(k_{3}) \hat{\mathcal{P}}^{j_{0} i_{4}}(k_{3}) e_{k} \nonumber \\
& \times [\int_{0}^{t} (l_{k_{123}k_{3}, t-s, i_{0} j_{1}} - \bar{l}_{k_{123}k_{3}, t-s, i_{0} j_{1}})  \int_{0}^{s} [ : \hat{X}_{\sigma, u}^{\epsilon, i_{2}}(k_{1}) \hat{X}_{\sigma, u}^{\epsilon, i_{3}}(k_{2}):  \nonumber\\
& \hspace{5mm} - : \hat{X}_{\sigma, b}^{\epsilon, i_{2}}(k_{1}) \hat{X}_{\sigma, b}^{\epsilon, i_{3}}(k_{2}):]  [e^{- \lvert k_{12} \rvert^{2}  f(\epsilon k_{12})(s-\sigma)} - e^{- \lvert k_{12} \rvert^{2} f(\epsilon k_{12}) (t-\sigma)}] d\sigma k_{12}^{i_{3}} g(\epsilon k_{12}^{i_{3}}) ds \nonumber \\
&+ \int_{0}^{t} \bar{l}_{k_{123}k_{3}, t-s, i_{0} j_{1}}\int_{0}^{s} [ : \hat{X}_{\sigma, u}^{\epsilon, i_{2}}(k_{1}) \hat{X}_{\sigma, u}^{\epsilon, i_{3}}(k_{2}): - : \hat{X}_{\sigma, b}^{\epsilon, i_{2}}(k_{1}) \hat{X}_{\sigma, b}^{\epsilon, i_{3}}(k_{2}):] \nonumber\\
& \hspace{10mm} \times [e^{- \lvert k_{12} \rvert^{2}  f(\epsilon k_{12})(s-\sigma)} - e^{- \lvert k_{12} \rvert^{2}  f(\epsilon k_{12})(t-\sigma)}] d\sigma k_{12}^{i_{3}} g(\epsilon k_{12}^{i_{3}}) ds\nonumber\\
& - \int_{0}^{t} (l_{k_{123}k_{3}, t-s, i_{0} j_{1}} - \bar{l}_{k_{123}k_{3}, t-s, i_{0} j_{1}})  \int_{s}^{t} [ : \hat{X}_{\sigma, u}^{\epsilon, i_{2}}(k_{1}) \hat{X}_{\sigma, u}^{\epsilon, i_{3}}(k_{2}):  \nonumber\\
& \hspace{38mm}  - : \hat{X}_{\sigma, b}^{\epsilon, i_{2}}(k_{1}) \hat{X}_{\sigma, b}^{\epsilon, i_{3}}(k_{2}):]   e^{- \lvert k_{12} \rvert^{2}  f(\epsilon k_{12})(t-\sigma)} d\sigma k_{12}^{i_{3}} g(\epsilon k_{12}^{i_{3}}) ds \nonumber \\
&- \int_{0}^{t}  \bar{l}_{k_{123}k_{3}, t-s, i_{0} j_{1}} \int_{s}^{t} [ : \hat{X}_{\sigma, u}^{\epsilon, i_{2}}(k_{1}) \hat{X}_{\sigma, u}^{\epsilon, i_{3}}(k_{2}): - : \hat{X}_{\sigma, b}^{\epsilon, i_{2}}(k_{1}) \hat{X}_{\sigma, b}^{\epsilon, i_{3}}(k_{2}):]  \nonumber\\
& \hspace{45mm} \times e^{- \lvert k_{12} \rvert^{2}  f(\epsilon k_{12})(t-\sigma)} d\sigma k_{12}^{i_{3}} g(\epsilon k_{12}^{i_{3}}) ds\nonumber\\
& -  \int_{0}^{t}  \bar{l}_{k_{123}k_{3}, t-s, i_{0} j_{1}}\int_{0}^{s} [ : \hat{\bar{X}}_{\sigma, u}^{\epsilon, i_{2}}(k_{1}) \hat{\bar{X}}_{\sigma, u}^{\epsilon, i_{3}}(k_{2}): - : \hat{\bar{X}}_{\sigma, b}^{\epsilon, i_{2}}(k_{1}) \hat{\bar{X}}_{\sigma, b}^{\epsilon, i_{3}}(k_{2}):]   \nonumber\\
& \hspace{45mm} \times  [ e^{- \lvert k_{12} \rvert^{2} (s-\sigma) } - e^{-\lvert k_{12} \rvert^{2}(t-\sigma)}] d\sigma k_{12}^{i_{3}} i ds \nonumber \\
& +  \int_{0}^{t}  \bar{l}_{k_{123}k_{3}, t-s, i_{0} j_{1}} \int_{s}^{t} [ : \hat{\bar{X}}_{\sigma, u}^{\epsilon, i_{2}}(k_{1}) \hat{\bar{X}}_{\sigma, u}^{\epsilon, i_{3}}(k_{2}):  \nonumber\\
& \hspace{45mm} - : \hat{\bar{X}}_{\sigma, b}^{\epsilon, i_{2}}(k_{1}) \hat{\bar{X}}_{\sigma, b}^{\epsilon, i_{3}}(k_{2}):]  e^{-\lvert k_{12} \rvert^{2}(t-\sigma) } d\sigma k_{12}^{i_{3}} i  ds ] \rvert^{2}] \nonumber
\end{align} 
by \eqref{[Equation (4.7d)][ZZ17]} and \eqref{[Equation (4.7e)][ZZ17]}. Finally, we take the difference between the second and fifth terms and then fourth and sixth terms to obtain 
\begin{align}\label{estimate 237}
\mathbb{E} [ \lvert \Delta_{q} ( L_{t, i_{0} j_{0}}^{41} - \tilde{L}_{t, i_{0} j_{0}}^{41}) \rvert^{2} ]  \lesssim  \sum_{i=1}^{6} M_{q,t, i_{0}}^{i} 
\end{align} 
where 
\begin{subequations}\label{estimate 226}
\begin{align}
M_{q,t, i_{0}}^{1} &\triangleq \mathbb{E} [ \lvert \sum_{k} \theta(2^{-q}k) \sum_{\lvert i-j \rvert \leq 1} \sum_{k_{1}, k_{2}, k_{3} \neq 0: k_{12} = k} \sum_{i_{2}, i_{3}, j_{1} =1}^{3} \theta(2^{-i} k_{123}) \theta(2^{-j} k_{3})   \\
& \times e_{k} \int_{0}^{t}( l_{k_{123}k_{3}, t-s, i_{0} j_{1}} - \bar{l}_{k_{123} k_{3}, t-s, i_{0} j_{1}}) \nonumber\\
& \hspace{5mm} \times  \int_{0}^{s}  [ : \hat{X}_{\sigma, u}^{\epsilon, i_{2}}(k_{1}) \hat{X}_{\sigma, u}^{\epsilon, i_{3}}(k_{2}): - : \hat{X}_{\sigma, b}^{\epsilon, i_{2}}(k_{1}) \hat{X}_{\sigma, b}^{\epsilon, i_{3}}(k_{2}):] \nonumber\\
& \hspace{10mm} \times [ e^{- \lvert k_{12} \rvert^{2} f(\epsilon k_{12}) (s-\sigma)} - e^{- \lvert k_{12} \rvert^{2} f(\epsilon k_{12}) (t-\sigma)}] d\sigma k_{12}^{i_{3}} g(\epsilon k_{12}^{i_{3}}) ds  \rvert^{2} ], \nonumber\\
M_{q, t, i_{0}}^{2} &\triangleq \mathbb{E} [ \lvert \sum_{k} \theta(2^{-q}k) \sum_{\lvert i-j \rvert \leq 1} \sum_{k_{1}, k_{2}, k_{3}\neq 0: k_{12} = k} \sum_{i_{2}, i_{3}, j_{1} =1}^{3} \theta(2^{-i} k_{123}) \theta(2^{-j} k_{3}) \\
& \times e_{k} \int_{0}^{t} (l_{k_{123}k_{3}, t-s, i_{0} j_{1}} - \bar{l}_{k_{123}k_{3}, t-s, i_{0} j_{1}}) \nonumber\\
& \times \int_{s}^{t}  [ : \hat{X}_{\sigma, u}^{\epsilon, i_{2}}(k_{1}) \hat{X}_{\sigma, u}^{\epsilon, i_{3}}(k_{2}): - : \hat{X}_{\sigma, b}^{\epsilon, i_{2}}(k_{1}) \hat{X}_{\sigma, b}^{\epsilon, i_{3}}(k_{2}):] \nonumber\\
& \hspace{5mm} \times  e^{- \lvert k_{12} \rvert^{2} f(\epsilon k_{12}) (t-\sigma)} d\sigma k_{12}^{i_{3}} g(\epsilon k_{12}^{i_{3}}) ds \rvert^{2} ], \nonumber\\
M_{q,t, i_{0}}^{3} &\triangleq \mathbb{E} [ \lvert \sum_{k} \theta(2^{-q}k) \sum_{\lvert i-j \rvert \leq 1} \sum_{k_{1}, k_{2}, k_{3} \neq 0: k_{12} = k} \sum_{i_{2}, i_{3}, j_{1} =1}^{3} \theta(2^{-i} k_{123}) \theta(2^{-j} k_{3}) \\
& \times e_{k} \int_{0}^{t} \bar{l}_{k_{123}k_{3}, t-s, i_{0} j_{1}}  \int_{0}^{s} [: \hat{X}_{\sigma, u}^{\epsilon, i_{2}} (k_{1}) \hat{X}_{\sigma, u}^{\epsilon, i_{3}}(k_{2}): - :\hat{X}_{\sigma, b}^{\epsilon, i_{2}}(k_{1}) \hat{X}_{\sigma, b}^{\epsilon, i_{3}}(k_{2}):]  \nonumber \\
& \hspace{5mm} \times [ e^{- \lvert k_{12} \rvert^{2}  f(\epsilon k_{12})(s-\sigma)} k_{12}^{i_{3}} g(\epsilon k_{12}^{i_{3}}) - e^{- \lvert k_{12} \rvert^{2}  f(\epsilon k_{12})(t-\sigma)} k_{12}^{i_{3}} g(\epsilon k_{12}^{i_{3}}) \nonumber\\
& \hspace{20mm}  - e^{- \lvert k_{12} \rvert^{2} (s-\sigma) } k_{12}^{i_{3}} i + e^{- \lvert k_{12} \rvert^{2} (t-\sigma) } k_{12}^{i_{3}} i] d\sigma ds \rvert^{2} ], \nonumber\\
M_{q,t,i_{0}}^{4} &\triangleq \mathbb{E} [ \lvert \sum_{k} \theta(2^{-q}k) \sum_{\lvert i-j \rvert \leq 1} \sum_{k_{1}, k_{2}, k_{3} \neq 0: k_{12} = k} \sum_{i_{2}, i_{3}, j_{1} =1}^{3} \theta(2^{-i} k_{123}) \theta(2^{-j} k_{3}) \\
& \times e_{k} \int_{0}^{t}  \bar{l}_{k_{123}k_{3}, t-s, i_{0} j_{1}} \int_{0}^{s} [: \hat{X}_{\sigma, u}^{\epsilon, i_{2}} (k_{1}) \hat{X}_{\sigma, u}^{\epsilon, i_{3}}(k_{2}): - :\hat{X}_{\sigma, b}^{\epsilon, i_{2}}(k_{1}) \hat{X}_{\sigma, b}^{\epsilon, i_{3}}(k_{2}):  \nonumber\\
& \hspace{35mm} - : \hat{\bar{X}}_{\sigma, u}^{\epsilon, i_{2}} (k_{1}) \hat{\bar{X}}_{\sigma, u}^{\epsilon, i_{3}} (k_{2}): + :\hat{\bar{X}}_{\sigma, b}^{\epsilon, i_{2}}(k_{1}) \hat{\bar{X}}_{\sigma, b}^{\epsilon, i_{3}} (k_{2}):]  \nonumber\\
& \hspace{32mm} \times  [e^{- \lvert k_{12} \rvert^{2} (s-\sigma) } - e^{- \lvert k_{12} \rvert^{2} (t-\sigma) }]  d\sigma  k_{12}^{i_{3}} i ds \rvert^{2} ], \nonumber \\
M_{q, t, i_{0}}^{5} &\triangleq \mathbb{E} [ \lvert \sum_{k} \theta(2^{-q}k) \sum_{\lvert i-j \rvert \leq 1} \sum_{k_{1}, k_{2}, k_{3} \neq 0: k_{12} = k} \sum_{i_{2}, i_{3}, j_{1} =1}^{3} \theta(2^{-i} k_{123}) \theta(2^{-j} k_{3})\\
& \times e_{k}  \int_{0}^{t} \bar{l}_{k_{123}k_{3}, t-s, i_{0} j_{1}}  \int_{s}^{t} [: \hat{X}_{\sigma, u}^{\epsilon, i_{2}} (k_{1}) \hat{X}_{\sigma, u}^{\epsilon, i_{3}}(k_{2}): - :\hat{X}_{\sigma, b}^{\epsilon, i_{2}}(k_{1}) \hat{X}_{\sigma, b}^{\epsilon, i_{3}}(k_{2}):]  \nonumber \\
& \hspace{10mm} \times [ e^{- \lvert k_{12} \rvert^{2}  f(\epsilon k_{12})(t-\sigma)} g(\epsilon k_{12}^{i_{3}}) - e^{- \lvert k_{12} \rvert^{2} (t-\sigma) } i] k_{12}^{i_{3}} d\sigma ds \rvert^{2} ], \nonumber\\
M_{q,t, i_{0}}^{6} &\triangleq \mathbb{E} [ \lvert \sum_{k} \theta(2^{-q}k) \sum_{\lvert i-j \rvert \leq 1} \sum_{k_{1}, k_{2}, k_{3}\neq 0: k_{12} = k} \sum_{i_{2}, i_{3}, j_{1} =1}^{3} \theta(2^{-i} k_{123}) \theta(2^{-j} k_{3}) \\
& \times e_{k} \int_{0}^{t} \bar{l}_{k_{123}k_{3}, t-s, i_{0} j_{1}}  \int_{s}^{t} [: \hat{X}_{\sigma, u}^{\epsilon, i_{2}} (k_{1}) \hat{X}_{\sigma, u}^{\epsilon, i_{3}}(k_{2}): - :\hat{X}_{\sigma, b}^{\epsilon, i_{2}}(k_{1}) \hat{X}_{\sigma, b}^{\epsilon, i_{3}}(k_{2}):  \nonumber\\
& \hspace{1mm}- : \hat{\bar{X}}_{\sigma, u}^{\epsilon, i_{2}} (k_{1}) \hat{\bar{X}}_{\sigma, u}^{\epsilon, i_{3}} (k_{2}):   + :\hat{\bar{X}}_{\sigma, b}^{\epsilon, i_{2}}(k_{1}) \hat{\bar{X}}_{\sigma, b}^{\epsilon, i_{3}} (k_{2}):]  e^{- \lvert k_{12} \rvert^{2} (t-\sigma) }  d\sigma  k_{12}^{i_{3}} i ds \rvert^{2} ]. \nonumber
\end{align}
\end{subequations} 
In order to compute $M_{q, t, i_{0}}^{1}$ of \eqref{estimate 226}, we estimate
\begin{align}\label{estimate 227}
& \mathbb{E} [ ( : \hat{X}_{\sigma, u}^{\epsilon, i_{2}} (k_{1}) \hat{X}_{\sigma, u}^{\epsilon, i_{3}}(k_{2}): - : \hat{X}_{\sigma, b}^{\epsilon, i_{2}} (k_{1}) \hat{X}_{\sigma, b}^{\epsilon, i_{3}}(k_{2}): ) \nonumber\\
& \hspace{5mm} \times \overline{( : \hat{X}_{\bar{\sigma}, u}^{\epsilon, i_{2}'} (k_{1}') \hat{X}_{\bar{\sigma}, u}^{\epsilon, i_{3}'}(k_{2}'): - : \hat{X}_{\bar{\sigma}, b}^{\epsilon, i_{2}'} (k_{1}') \hat{X}_{\bar{\sigma}, b}^{\epsilon, i_{3}'}(k_{2}'): ) } ] \nonumber\\
& \hspace{10mm} \lesssim ( 1_{k_{1} = k_{1}', k_{2} = k_{2}'} + 1_{k_{1} = k_{2}', k_{2} = k_{1}'}) \frac{1}{ \prod_{i=1}^{2} \lvert k_{i} \rvert^{2}}               
\end{align} 
by Example \ref{Example 3.1} and \eqref{covariance}. Applying \eqref{estimate 227} to \eqref{estimate 226} gives 
\begin{align}\label{estimate 8}
M_{q,t, i_{0}}^{1} &\lesssim \sum_{k} \theta(2^{-q} k)^{2} \sum_{\lvert i-j \rvert \leq 1, \lvert i'-j' \rvert \leq 1} \sum_{k_{1}, k_{2}, k_{3}, k_{4} \neq 0: k_{12} = k} \sum_{j_{1}, j_{1}' =1}^{3} \\
& \times  \theta (2^{-i} k_{123}) \theta(2^{-i'} k_{124}) \theta(2^{-j} k_{3}) \theta(2^{-j'} k_{4}) \nonumber\\
& \times  \int_{[0,t]^{2}} \lvert l_{k_{123} k_{3}, t-s, i_{0} j_{1}} - \bar{l}_{k_{123}k_{3}, t-s, i_{0} j_{1}} \rvert \lvert l_{k_{124} k_{4}, t-\bar{s}, i_{0} j_{1}'} - \bar{l}_{k_{124}k_{4}, t-\bar{s}, i_{0} j_{1}'} \rvert \nonumber \\
& \hspace{5mm} \times \int_{0}^{s} \int_{0}^{\bar{s}} \lvert e^{-\lvert k_{12} \rvert^{2} f(\epsilon k_{12})(s-\sigma)} - e^{- \lvert k_{12} \rvert^{2} f(\epsilon k_{12} )(t - \sigma)} \rvert  \nonumber\\ 
& \hspace{15mm} \times \lvert e^{-\lvert k_{12} \rvert^{2} f(\epsilon k_{12})( \bar{s}-\bar{\sigma})} - e^{- \lvert k_{12} \rvert^{2} f(\epsilon k_{12} )(t - \bar{\sigma})} \rvert \frac{ \lvert k_{12} \rvert^{2}}{\lvert k_{1} \rvert^{2} \lvert k_{2} \rvert^{2}} d \bar{\sigma}  d \sigma ds d \bar{s}.   \nonumber
\end{align} 
Similarly, we obtain from \eqref{estimate 226}
\begin{align}\label{estimate 9}
M_{q,t, i_{0}}^{2} &\lesssim \sum_{k} \theta(2^{-q} k)^{2} \sum_{\lvert i-j \rvert \leq 1, \lvert i' - j' \rvert \leq 1} \sum_{k_{1}, k_{2}, k_{3}, k_{4}\neq 0: k_{12} = k} \sum_{j_{1}, j_{1}' =1}^{3} \\
& \times \theta(2^{-i} k_{123}) \theta(2^{-i'} k_{124})  \theta(2^{-j} k_{3}) \theta(2^{-j'} k_{4}) \nonumber\\
& \times  \int_{[0,t]^{2}}  \lvert l_{k_{123} k_{3}, t-s, i_{0} j_{1}} - \bar{l}_{k_{123}k_{3}, t-s, i_{0} j_{1}} \rvert \lvert l_{k_{124} k_{4}, t-\bar{s}, i_{0} j_{1}'} - \bar{l}_{k_{124}k_{4}, t-\bar{s}, i_{0} j_{1}'} \rvert \nonumber\\
& \times \int_{s}^{t} \int_{\bar{s}}^{t} e^{- \lvert k_{12} \rvert^{2} f(\epsilon k_{12} )(t-\sigma)} e^{- \lvert k_{12} \rvert^{2} f(\epsilon k_{12}) (t- \bar{\sigma})} d \bar{\sigma} d\sigma \frac{ \lvert k_{12} \rvert^{2}}{\lvert k_{1} \rvert^{2} \lvert k_{2} \rvert^{2}} dsd \bar{s}, \nonumber
\end{align} 
\begin{align}\label{estimate 10}
&M_{q,t, i_{0}}^{3} \lesssim \sum_{k} \theta(2^{-q} k)^{2} \sum_{\lvert i-j \rvert \leq 1, \lvert i' -j' \rvert \leq 1} \sum_{k_{1}, k_{2}, k_{3}, k_{4}\neq 0: k_{12} = k} \sum_{i_{3}, j_{1}, i_{3}', j_{1}' = 1}^{3}  \\
& \hspace{5mm}  \times \theta(2^{-i} k_{123}) \theta(2^{-i'} k_{124})  \theta(2^{-j} k_{3}) \theta(2^{-j'} k_{4}) \int_{[0,t]^{2}}  \bar{l}_{k_{123}k_{3}, t-s, i_{0} j_{1}} \bar{l}_{k_{124}k_{4}, t-\bar{s}, i_{0} j_{1}'} \nonumber\\
& \hspace{5mm}  \times  \int_{0}^{s} \int_{0}^{\bar{s}} \frac{1}{\lvert k_{1} \rvert^{2} \lvert k_{2} \rvert^{2}}  \lvert e^{- \lvert k_{12} \rvert^{2}  f(\epsilon k_{12})(s-\sigma)} k_{12}^{i_{3}} g(\epsilon k_{12}^{i_{3}}) - e^{- \lvert k_{12} \rvert^{2}  f(\epsilon k_{12})(t-\sigma) } k_{12}^{i_{3}} g(\epsilon k_{12}^{i_{3}}) \nonumber \\
& \hspace{35mm} - e^{- \lvert k_{12}\rvert^{2} (s-\sigma)} k_{12}^{i_{3}} i + e^{- \lvert k_{12} \rvert^{2} (t-\sigma) } k_{12}^{i_{3}} i \rvert \nonumber \\
& \hspace{30mm} \times  \lvert e^{- \lvert k_{12} \rvert^{2}  f(\epsilon k_{12} )(\bar{s}-\bar{\sigma})} k_{12}^{i_{3}'} g(\epsilon k_{12}^{i_{3}'}) - e^{- \lvert k_{12} \rvert^{2}  f(\epsilon k_{12})(t-\bar{\sigma}) } k_{12}^{i_{3}'} g(\epsilon k_{12}^{i_{3}'}) \nonumber \\
& \hspace{35mm} - e^{- \lvert k_{12}\rvert^{2} (\bar{s}-\bar{\sigma})} k_{12}^{i_{3}'} i + e^{- \lvert k_{12} \rvert^{2} (t- \bar{\sigma}) } k_{12}^{i_{3}'} i \rvert  d \bar{\sigma} d \sigma ds d \bar{s}. \nonumber
\end{align} 
In order to estimate $M_{q, t, i_{0}}^{4}$, we observe that for any $\eta \in [0,1]$, 
\begin{align}\label{estimate 7}
& \mathbb{E} [ ( : \hat{X}_{\sigma, u}^{\epsilon, i_{2}}(k_{1}) \hat{X}_{\sigma, u}^{\epsilon, i_{3}} (k_{2}): - : \hat{X}_{\sigma, b}^{\epsilon, i_{2}} (k_{1}) \hat{X}_{\sigma, b}^{\epsilon, i_{3}} (k_{2}): \nonumber\\
& \hspace{10mm} - : \hat{\bar{X}}_{\sigma, u}^{\epsilon, i_{2}} (k_{1}) \hat{\bar{X}}_{\sigma, u}^{\epsilon, i_{3}} (k_{2}): + : \hat{\bar{X}}_{\sigma, b}^{\epsilon, i_{2}} (k_{1}) \hat{\bar{X}}_{\sigma, b}^{\epsilon, i_{3}} (k_{2}): )  \nonumber \\
& \times \overline{( : \hat{X}_{\bar{\sigma}, u}^{\epsilon, i_{2}'}(k_{1}') \hat{X}_{\bar{\sigma}, u}^{\epsilon, i_{3}'} (k_{2}'): - : \hat{X}_{\bar{\sigma}, b}^{\epsilon, i_{2}'} (k_{1}') \hat{X}_{\bar{\sigma}, b}^{\epsilon, i_{3}'} (k_{2}'): }\nonumber\\
& \hspace{10mm} \overline{- : \hat{\bar{X}}_{\bar{\sigma}, u}^{\epsilon, i_{2}'} (k_{1}') \hat{\bar{X}}_{\bar{\sigma}, u}^{\epsilon, i_{3}'} (k_{2}'): + : \hat{\bar{X}}_{\bar{\sigma}, b}^{\epsilon, i_{2}'} (k_{1}') \hat{\bar{X}}_{\bar{\sigma}, b}^{\epsilon, i_{3}'} (k_{2}'): ) } ] \nonumber\\
\lesssim& (1_{k_{1} = k_{1}', k_{2} = k_{2}'} + 1_{k_{1} = k_{2}', k_{2} = k_{1}'}) (\frac{ \lvert \epsilon k_{1} \rvert^{\eta} + \lvert \epsilon k_{2} \rvert^{\eta}}{\lvert k_{1} \rvert^{2} \lvert k_{2} \rvert^{2}})
\end{align} 
by H$\ddot{\mathrm{o}}$lder's inequality, and the proof of \eqref{[Equation (4.7)][ZZ17]}. Applying \eqref{estimate 7} to \eqref{estimate 226} gives 
\begin{align}\label{estimate 11}
&M_{q,t, i_{0}}^{4} \\
\lesssim& \sum_{k} \theta(2^{-q} k)^{2} \sum_{\lvert i-j \rvert \leq 1, \lvert i' - j' \rvert \leq 1} \sum_{k_{1}, k_{2}, k_{3}, k_{4}: k_{12} = k} \sum_{j_{1}, j_{1}' = 1}^{3} \theta(2^{-i} k_{123}) \theta(2^{-i'} k_{124}) \nonumber\\
& \times \theta(2^{-j} k_{3}) \theta(2^{-j'} k_{4}) \int_{[0,t]^{2}} \bar{l}_{k_{123}k_{3}, t-s, i_{0} j_{1}} \bar{l}_{k_{124}k_{4}, t-\bar{s}, i_{0} j_{1}'}  \int_{0}^{s} \int_{0}^{\bar{s}} ( \frac{ \lvert \epsilon k_{1} \rvert^{\eta} + \lvert \epsilon k_{2} \rvert^{\eta}}{\lvert k_{1} \rvert^{2} \lvert k_{2} \rvert^{2}}) \nonumber \\
& \hspace{5mm} \times \lvert e^{- \lvert k_{12} \rvert^{2} (s-\sigma)} - e^{- \lvert k_{12} \rvert^{2} (t-\sigma)} \rvert  \lvert e^{- \lvert k_{12} \rvert^{2} (\bar{s}-\bar{\sigma})} - e^{- \lvert k_{12} \rvert^{2} (t-\bar{\sigma})} \rvert d \bar{\sigma} d \sigma \lvert k_{12} \rvert^{2} ds d \bar{s}.   \nonumber
\end{align} 
Due to \eqref{estimate 227} and \eqref{estimate 7}, by replacing $k_{3}'$ with $k_{4}$, we obtain from \eqref{estimate 226}  
\begin{align}\label{estimate 12}
M_{q,t, i_{0}}^{5}& \lesssim \sum_{k} \theta(2^{-q} k)^{2} \sum_{\lvert i-j \rvert \leq 1, \lvert i'-j' \rvert \leq 1} \sum_{k_{1}, k_{2}, k_{3}, k_{4} \neq 0: k_{12} = k} \sum_{i_{3}, i_{3}', j_{1}, j_{1}'  =1}^{3}  \nonumber\\
& \times \theta(2^{-i} k_{123}) \theta(2^{-i'} k_{124}) \theta(2^{-j} k_{3}) \theta(2^{-j'} k_{4}) \int_{[0,t]^{2}} \bar{l}_{k_{123}k_{3}, t-s, i_{0}j_{1}} \bar{l}_{k_{124}k_{4}, t-\bar{s}, i_{0} j_{1}'}  \nonumber\\
& \hspace{5mm} \times \int_{s}^{t} \int_{\bar{s}}^{t} \frac{ \lvert k_{12} \rvert^{2}}{\lvert k_{1} \rvert^{2} \lvert k_{2} \rvert^{2}} \lvert e^{-\lvert k_{12} \rvert^{2} f(\epsilon k_{12})(t-\sigma)} g(\epsilon k_{12}^{i_{3}}) - e^{- \lvert k_{12} \rvert^{2} (t-\sigma)} i \rvert  \nonumber\\
& \hspace{15mm} \times \lvert e^{-\lvert k_{12} \rvert^{2} f(\epsilon k_{12})(t-\bar{\sigma})} g(\epsilon k_{12}^{i_{3}'}) - e^{- \lvert k_{12} \rvert^{2} (t-\bar{\sigma})} i \rvert  d \bar{\sigma} d \sigma ds d \bar{s}, 
\end{align} 
\begin{align}\label{estimate 13} 
M_{q,t, i_{0}}^{6} \lesssim& \sum_{k} \theta(2^{-q} k)^{2} \sum_{\lvert i-j \rvert \leq 1, \lvert i'-j'\rvert \leq 1} \sum_{k_{1}, k_{2}, k_{3}, k_{4}\neq 0: k_{12} = k} \sum_{j_{1}, j_{1}' =1}^{3} \theta(2^{-i} k_{123}) \theta(2^{-i'} k_{124}) \nonumber\\
& \times \theta(2^{-j} k_{3}) \theta(2^{-j'} k_{4}) \int_{[0,t]^{2}} \bar{l}_{k_{123}k_{3}, t-s, i_{0} j_{1}} \bar{l}_{k_{124}k_{4}, t-\bar{s}, i_{0} j_{1}}  \nonumber\\
& \times \int_{s}^{t} \int_{\bar{s}}^{t} ( \frac{  \lvert \epsilon k_{1} \rvert^{\eta} + \lvert \epsilon k_{2} \rvert^{\eta}}{ \lvert k_{1} \rvert^{2} \lvert k_{2} \rvert^{2}}) e^{- \lvert k_{12} \rvert^{2} (t-\sigma) } e^{- \lvert k_{12} \rvert^{2} (t- \bar{\sigma})} d \bar{\sigma} d \sigma \lvert k_{12} \rvert^{2} ds d \bar{s}. 
\end{align}  
Next, we compute for $\lambda \in \{3,4\}$, $l \in \{j_{1}, j_{1}'\}$, any $\eta \in [0,1]$, from \eqref{[Equation (4.7d)][ZZ17]}-\eqref{[Equation (4.7e)][ZZ17]} 
\begin{align}\label{estimate 229}
& \lvert l_{k_{12\lambda}k_{\lambda}, t-s, i_{0} l} - \bar{l}_{k_{12\lambda} k_{\lambda}, t-s, i_{0} l} \rvert \nonumber\\
\lesssim& \frac{\lvert k_{12\lambda} \rvert}{\lvert k_{\lambda} \rvert^{2}} e^{- (\lvert k_{12\lambda} \rvert^{2} + \lvert k_{\lambda} \rvert^{2} ) \bar{c}_{f} (t-s)} [ \lvert \epsilon k_{12\lambda} \rvert^{\frac{\eta}{2}} + \lvert \epsilon k_{\lambda}\rvert^{\frac{\eta}{2}}] 
\end{align} 
by \eqref{[Equation (4.2)][ZZ17]}-\eqref{[Equation (4.3)][ZZ17]}. Thus, applying \eqref{estimate 229} to \eqref{estimate 8} leads to 
\begin{align}\label{estimate 231}
M_{q, t, i_{0}}^{1} \lesssim&   \epsilon^{\eta} \sum_{k} \theta(2^{-q} k)^{2} \sum_{\lvert i-j \rvert \leq 1, \lvert i'-j'\rvert \leq 1} \sum_{k_{1}, k_{2}, k_{3}, k_{4}\neq 0: k_{12} =k} \theta(2^{-i} k_{123}) \theta(2^{-i'} k_{124}) \nonumber\\
& \times \theta(2^{-j} k_{3}) \theta(2^{-j'} k_{4}) (\prod_{i=1}^{4} \frac{1}{\lvert k_{i} \rvert^{2}}) \frac{ \lvert k_{123} \rvert \lvert k_{124} \rvert}{ \lvert k_{12} \rvert } [ \lvert k_{123} \rvert + \lvert k_{3} \rvert]^{\frac{\eta}{2}} [ \lvert k_{124} \rvert + \lvert k_{4} \rvert]^{\frac{\eta}{2}} \nonumber\\
& \times \int_{[0,t]^{2}} e^{- (\lvert k_{123} \rvert^{2} + \lvert k_{3} \rvert^{2} ) \bar{c}_{f} (t-s)} e^{- ( \lvert k_{124} \rvert^{2} + \lvert k_{4} \rvert^{2} ) \bar{c}_{f} (t- \bar{s})} \lvert t-s \rvert^{\frac{1}{4}} \lvert t- \bar{s} \rvert^{\frac{1}{4}} ds d \bar{s} \nonumber \\
\lesssim&  \epsilon^{\eta} t^{\epsilon} \sum_{k} \theta(2^{-q} k)^{2} \sum_{\lvert i-j \rvert \leq 1, \lvert i' -j' \rvert \leq 1} \sum_{k_{1}, k_{2}, k_{3}, k_{4}\neq 0: k_{12} = k} \theta(2^{-i} k_{123}) \theta(2^{-i'} k_{124}) \nonumber\\
& \times \theta(2^{-j} k_{3}) \theta(2^{-j'} k_{4}) (\prod_{i=1}^{4} \frac{1}{\lvert k_{i} \rvert^{2}})  \frac{1}{\lvert k_{12} \rvert } \frac{1}{ \lvert k_{3} \rvert ^{\frac{3}{2} - \frac{\eta}{2} -\epsilon}} \frac{1}{\lvert k_{4} \rvert^{\frac{3}{2} - \frac{\eta}{2} - \epsilon}}  
\end{align} 
by mean value theorem and \eqref{key estimate}. Next, continuing the bound on $M_{q, t, i_{0}}^{2}$ from \eqref{estimate 9} 
\begin{align}\label{estimate 232}
M_{q,t, i_{0}}^{2} \lesssim& \epsilon^{\eta} \sum_{k} \theta(2^{-q} k)^{2} \sum_{\lvert i-j \rvert \leq 1, \lvert i'-j'\rvert \leq 1} \sum_{k_{1}, k_{2}, k_{3}, k_{4}\neq 0: k_{12} = k} \theta(2^{-i} k_{123}) \theta(2^{-i'} k_{124}) \nonumber\\
& \times \theta(2^{-j} k_{3}) \theta(2^{-j'} k_{4}) \frac{ \lvert k_{123} \rvert \lvert k_{124} \rvert}{ ( \prod_{i=1}^{4} \lvert k_{i} \rvert^{2} ) \lvert k_{12} \rvert^{2}} \nonumber\\
& \times[\lvert k_{123} \rvert + \lvert k_{3} \rvert]^{\frac{\eta}{2}} [\lvert k_{124} \rvert + \lvert k_{4} \rvert]^{\frac{\eta}{2}} \int_{[0,t]^{2}} e^{- (\lvert k_{123} \rvert^{2} + \lvert k_{3} \rvert^{2} ) \bar{c}_{f} (t-s)} \nonumber\\
& \hspace{5mm} \times e^{- (\lvert k_{124} \rvert^{2} + \lvert k_{4} \rvert^{2})\bar{c}_{f} (t- \bar{s})} \lvert \lvert k_{12} \rvert^{2} (t-s) \rvert^{\frac{1}{4}} \lvert \lvert k_{12} \rvert^{2} (t- \bar{s}) \rvert^{\frac{1}{4}} ds d \bar{s} \nonumber \\
\lesssim& \epsilon^{\eta} t^{\epsilon} \sum_{k} \theta(2^{-q} k)^{2} \sum_{\lvert i-j \rvert \leq 1, \lvert i'-j'\rvert \leq 1} \sum_{k_{1}, k_{2}, k_{3}, k_{4}\neq 0: k_{12} = k} \theta(2^{-i} k_{123}) \theta(2^{-i'} k_{124}) \nonumber\\
& \times \theta(2^{-j} k_{3}) \theta(2^{-j'} k_{4}) ( \prod_{i=1}^{4} \frac{1}{\lvert k_{i} \rvert^{2}}) \frac{1}{\lvert k_{12} \rvert} \frac{1}{ \lvert k_{3} \rvert^{\frac{3}{2} - \frac{\eta}{2} -\epsilon}} \frac{1}{ \lvert k_{4} \rvert^{\frac{3}{2} - \frac{\eta}{2} -\epsilon}} 
\end{align} 
by \eqref{estimate 229}, mean value theorem and \eqref{key estimate}. Next, in order to continue the bound on $M_{q,t,i_{0}}^{3}$ from \eqref{estimate 10}, we first compute 
\begin{align}\label{estimate 230}
& \lvert e^{- \lvert k_{12} \rvert^{2}  f(\epsilon k_{12})(s-\sigma)} k_{12}^{i_{3}} g(\epsilon k_{12}^{i_{3}}) - e^{- \lvert k_{12} \rvert^{2}  f(\epsilon k_{12})(t-\sigma)} k_{12}^{i_{3}} g(\epsilon k_{12}^{i_{3}}) \\
& \hspace{5mm} - e^{- \lvert k_{12} \rvert^{2}(s-\sigma) } k_{12}^{i_{3}} i + e^{- \lvert k_{12}\rvert^{2}(t-\sigma)} k_{12}^{i_{3}} i \rvert \nonumber\\
& \hspace{13mm} \lesssim (e^{- \lvert k_{12} \rvert^{2}  \bar{c}_{f}(s-\sigma)} \lvert \epsilon k_{12} \rvert^{\eta} \lvert k_{12} \rvert 
) \wedge (\lvert k_{12} \rvert e^{- \lvert k_{12} \rvert^{2}  \bar{c}_{f}(s-\sigma)} \lvert k_{12} \rvert \lvert t-s \rvert^{\frac{1}{2}}) \nonumber\\
&\hspace{13mm} \lesssim e^{- \lvert k_{12} \rvert^{2}  \bar{c}_{f}(s-\sigma)} \epsilon^{\frac{\eta}{2}} \lvert k_{12} \rvert^{\frac{3}{2} + \frac{\eta}{2}} \lvert t-s \rvert^{\frac{1}{4}} \nonumber
\end{align} 
where the first estimate is due to \eqref{[Equation (4.2)][ZZ17]}-\eqref{[Equation (4.3)][ZZ17]} and that $0  < \sigma \leq s \leq t$ while the second estimate due to mean value theorem. Now we can deduce from \eqref{estimate 10}, 
\begin{align}\label{estimate 233}
M_{q,t, i_{0}}^{3} \lesssim& \epsilon^{\eta} \sum_{k} \theta(2^{-q} k)^{2} \sum_{\lvert i-j\rvert \leq 1, \lvert i' - j' \rvert \leq 1} \sum_{k_{1}, k_{2}, k_{3}, k_{4}\neq 0: k_{12} = k} \theta(2^{-i} k_{123}) \theta(2^{-i'} k_{124})  \nonumber \\
&\times \theta(2^{-j} k_{3}) \theta(2^{-j '} k_{4}) \int_{[0,t]^{2}} e^{- \lvert k_{123} \rvert^{2} (t-s)} \frac{ e^{- \lvert k_{3} \rvert^{2} (t-s)}}{\lvert k_{3} \rvert^{2}} \lvert k_{123} \rvert  \nonumber\\
& \times e^{- \lvert k_{124} \rvert^{2} (t- \bar{s})} \frac{ e^{- \lvert k_{4} \rvert^{2} (t-\bar{s})}}{\lvert k_{4} \rvert^{2}} \lvert k_{124} \rvert \int_{0}^{s} \int_{0}^{\bar{s}} \frac{1}{\lvert k_{1} \rvert^{2} \lvert k_{2} \rvert^{2}} \nonumber\\
& \times e^{- \lvert k_{12} \rvert^{2} \bar{c}_{f}(s-\sigma) } \lvert k_{12} \rvert^{\frac{3}{2} + \frac{\eta}{2}} \lvert t-s \rvert^{\frac{1}{4}}  e^{- \lvert k_{12} \rvert^{2} \bar{c}_{f}(\bar{s}-\bar{\sigma})}  \lvert k_{12} \rvert^{\frac{3}{2} + \frac{\eta}{2}} \lvert t-\bar{s} \rvert^{\frac{1}{4}} d \bar{\sigma} d \sigma ds d \bar{s} \nonumber\\
\lesssim& \epsilon^{\eta} t^{\eta} \sum_{k} \theta(2^{-q} k)^{2}\sum_{\lvert i-j \rvert \leq 1, \lvert i'-j'\rvert \leq 1} \sum_{k_{1}, k_{2}, k_{3}, k_{4}\neq 0: k_{12} =k}   \theta(2^{-i} k_{123}) \theta(2^{-i'} k_{124})\nonumber\\
& \times  \theta(2^{-j} k_{3}) \theta(2^{-j'} k_{4}) [ \frac{1}{ \lvert k_{1} \rvert^{2-\eta} \lvert k_{2} \rvert^{2} \lvert k_{3} \rvert^{\frac{7}{2} - \epsilon} \lvert k_{4} \rvert^{\frac{7}{2} - \epsilon} \lvert k_{12} \rvert} \nonumber\\
& \hspace{40mm} + \frac{1}{ \lvert k_{1} \rvert^{2} \lvert k_{2} \rvert^{2- \eta} \lvert k_{3} \rvert^{\frac{7}{2} - \epsilon} \lvert k_{4} \rvert^{\frac{7}{2} - \epsilon} \lvert k_{12} \rvert} ]
\end{align} 
by \eqref{[Equation (4.7d)][ZZ17]}-\eqref{[Equation (4.7e)][ZZ17]}, \eqref{estimate 230} and \eqref{key estimate}.  Next, we compute from \eqref{estimate 11}, 
\begin{align}\label{estimate 234}
&M_{q,t, i_{0}}^{4} \\
\lesssim& \epsilon^{\eta}  \sum_{k} \theta(2^{-q} k)^{2} \sum_{\lvert i-j\rvert \leq 1, \lvert i' - j' \rvert \leq 1} \sum_{k_{1}, k_{2}, k_{3}, k_{4}\neq 0: k_{12} = k} \theta(2^{-i} k_{123}) \theta(2^{-i'} k_{124}) \nonumber\\
& \times \theta(2^{-j} k_{3}) \theta(2^{-j'} k_{4}) \frac{1}{ \prod_{i=1}^{4} \lvert k_{i} \rvert^{2}} \frac{1}{\lvert k_{12} \rvert} (\lvert k_{1} \rvert^{\eta} + \lvert k_{2} \rvert^{\eta}) \lvert k_{123} \rvert \lvert k_{124} \rvert \nonumber\\
& \times \int_{[0,t]^{2}} e^{- (\lvert k_{123} \rvert^{2} + \lvert k_{3} \rvert^{2} )(t-s)} e^{- (\lvert k_{124} \rvert^{2} + \lvert k_{4} \rvert^{2} )(t- \bar{s})} (t-s)^{\frac{1}{4}} (t- \bar{s})^{\frac{1}{4}} ds d \bar{s} \nonumber\\
\lesssim& \epsilon^{\eta} t^{\epsilon} \sum_{k} \theta(2^{-q} k)^{2} \sum_{\lvert i-j \rvert \leq 1, \lvert i'-j'\rvert \leq 1} \sum_{k_{1}, k_{2}, k_{3}, k_{4}\neq 0: k_{12} = k} \theta(2^{-i} k_{123})  \theta(2^{-i'} k_{124})\theta(2^{-j} k_{3})  \nonumber\\
& \times \theta(2^{-j'} k_{4})  [ \frac{1}{ \lvert k_{1} \rvert^{2-\eta} \lvert k_{2} \rvert^{2} \lvert k_{3} \rvert^{\frac{7}{2} - \epsilon}\lvert k_{4} \rvert^{\frac{7}{2} - \epsilon}  \lvert k_{12} \rvert} + \frac{1}{ \lvert k_{1} \rvert^{2} \lvert k_{2} \rvert^{2 - \eta} \lvert k_{3} \rvert^{\frac{7}{2} - \epsilon}\lvert k_{4} \rvert^{\frac{7}{2} - \epsilon}   \lvert k_{12} \rvert}  ]  \nonumber
\end{align} 
by \eqref{[Equation (4.7e)][ZZ17]}, mean value theorem and \eqref{key estimate}. Next, we continue from \eqref{estimate 12} for any $\eta \in [0,1]$, 
\begin{align}\label{estimate 235}
M_{q,t, i_{0}}^{5} \lesssim& \sum_{k} \theta(2^{-q} k)^{2} \sum_{\lvert i-j \rvert \leq 1, \lvert i' - j' \rvert \leq 1} \sum_{k_{1}, k_{2}, k_{3}, k_{4}\neq 0: k_{12} = k} \\
& \times \theta(2^{-i} k_{123}) \theta(2^{-i'} k_{124}) \theta(2^{-j} k_{3}) \theta(2^{-j'} k_{4}) \int_{[0,t]^{2}} ( \prod_{i=1}^{4} \frac{1}{\lvert k_{i} \rvert^{2}}) \lvert k_{123} \rvert \lvert k_{124} \rvert \lvert k_{12} \rvert^{2}  \nonumber\\
& \times e^{- [ \lvert k_{123} \rvert^{2} + \lvert k_{3} \rvert^{2} ](t-s)} e^{- [\lvert k_{124} \rvert^{2} + \lvert k_{4} \rvert^{2} (t- \bar{s})} \int_{s}^{t} \int_{\bar{s}}^{t} \nonumber\\ 
&\times e^{- \lvert k_{12} \rvert^{2} \bar{c}_{f} (t-\sigma)} \lvert \epsilon k_{12} \rvert^{\frac{\eta}{2}} e^{- \lvert k_{12}\rvert^{2} \bar{c}_{f} (t-\bar{\sigma} )} \lvert \epsilon k_{12} \rvert^{\frac{\eta}{2}} d \bar{\sigma} d \sigma  ds d \bar{s} \nonumber\\
\lesssim&  \epsilon^{\eta} t^{\epsilon} \sum_{k} \theta(2^{-q} k)^{2} \sum_{\lvert i-j \rvert \leq 1, \lvert i' - j' \rvert \leq 1} \sum_{k_{1}, k_{2}, k_{3}, k_{4}\neq 0: k_{12} = k} \nonumber\\ 
& \times  \theta(2^{-i} k_{123}) \theta(2^{-i'} k_{124}) \theta(2^{-j} k_{3}) \theta(2^{-j'} k_{4}) \nonumber\\
& \times ( \frac{1}{ \lvert k_{1} \rvert^{2-\eta} \lvert k_{2} \rvert^{2} \lvert k_{3} \rvert^{\frac{7}{2} - \epsilon} \lvert k_{4} \rvert^{\frac{7}{2} - \epsilon} \lvert k_{12} \rvert} + \frac{1}{\lvert k_{1} \rvert^{2} \lvert k_{2} \rvert^{2-\eta} \lvert k_{3} \rvert^{\frac{7}{2} - \epsilon} \lvert k_{4} \rvert^{\frac{7}{2} - \epsilon} \lvert k_{12} \rvert}) \nonumber
\end{align} 
by \eqref{[Equation (4.7e)][ZZ17]}, \eqref{[Equation (4.2)][ZZ17]}, \eqref{[Equation (4.3)][ZZ17]} and \eqref{key estimate}. Finally continuing our estimate on $M_{q,t, i_{0}}^{6}$ from \eqref{estimate 13}, 
\begin{align}\label{estimate 236}
M_{q, t, i_{0}}^{6} \lesssim&   \epsilon^{\eta} t^{\epsilon}  \sum_{k} \theta(2^{-q} k)^{2} \sum_{\lvert i-j \rvert \leq 1, \lvert i' - j' \rvert \leq 1} \sum_{k_{1}, k_{2}, k_{3}, k_{4}\neq 0: k_{12} =k} \theta(2^{-i} k_{123}) \theta(2^{-i'} k_{124})  \nonumber\\
& \times \theta(2^{-j} k_{3}) \theta(2^{-j'} k_{4}) [\frac{1}{ \lvert k_{1} \rvert^{2-\eta} \lvert k_{2} \rvert^{2} \lvert k_{3} \rvert^{\frac{7}{2} - \epsilon} \lvert k_{4} \rvert^{\frac{7}{2} - \epsilon} \lvert k_{12} \rvert}  \nonumber\\
& \hspace{30mm} + \frac{1}{ \lvert k_{1} \rvert^{2} \lvert k_{2} \rvert^{2 - \eta} \lvert k_{3} \rvert^{\frac{7}{2} - \epsilon} \lvert k_{4} \rvert^{\frac{7}{2} - \epsilon} \lvert k_{12} \rvert} ]
\end{align} 
by \eqref{[Equation (4.7e)][ZZ17]} and \eqref{key estimate}. Applying \eqref{estimate 231}, \eqref{estimate 232},\eqref{estimate 233}-\eqref{estimate 236} to \eqref{estimate 237} leads to 
\begin{align}\label{estimate 245}
& \mathbb{E} [ \lvert \Delta_{q} (L_{t, i_{0}j_{0}}^{41} - \tilde{L}_{t, i_{0}j_{0}}^{41}) \rvert^{2} ] \nonumber\\
\lesssim& \epsilon^{\eta}  t^{\epsilon} \sum_{k} \theta(2^{-q} k)^{2} \sum_{q \lesssim j, q \lesssim j'} \sum_{k_{1}, k_{2}\neq 0: k_{12} = k} \frac{1}{\lvert k \rvert}  \nonumber\\
& \times [ \frac{1}{ \lvert k_{1} \rvert^{2} \lvert k_{2} \rvert^{2}} \frac{1}{2^{j [ \frac{1}{2} - \frac{\eta}{2} - \epsilon]} } \frac{1}{2^{j' [ \frac{1}{2} - \frac{\eta}{2} -\epsilon]}} + \frac{1}{\lvert k_{1} \rvert^{2-\eta} \lvert k_{2} \rvert^{2}} \frac{1}{2^{j [ \frac{1}{2} - \epsilon]}} \frac{1}{2^{j' [ \frac{1}{2} - \epsilon]}} \nonumber\\
& \hspace{20mm} + \frac{1}{ \lvert k_{1} \rvert^{2} \lvert k_{2} \rvert^{2- \eta}} \frac{1}{2^{j[ \frac{1}{2} - \epsilon]}} \frac{1}{2^{j' [ \frac{1}{2} - \epsilon]}}]  \lesssim \epsilon^{\eta} t^{\epsilon} 2^{q(\eta + 2 \epsilon)} 
\end{align} 
where we used that $2^{q} \lesssim 2^{j}$ so that $q \lesssim j$ and similarly $q \lesssim j'$ and Lemma \ref{Lemma 3.13}. Next, considering \eqref{estimate 246}, we rely on \eqref{[Equation (3.21d)][ZZ17]}, \eqref{[Equation (3.1ab)][ZZ17]}, \eqref{[Equation (3.22aa)][ZZ17]}, \eqref{[Equation (1.3i)][ZZ17]}, \eqref{estimate 216}-  \eqref{estimate 228} to compute 
\begin{align}\label{estimate 223}
& \mathbb{E} [ \lvert \Delta_{q} (\tilde{L}_{t, i_{0}j_{0}}^{41} + 4 \sum_{i_{1} =1}^{3} u_{2}^{\epsilon, i_{1}}(t) C_{3,u}^{\epsilon, i_{0} i_{1} j_{0}}(t) - 4 \sum_{i_{1} =1}^{3} \bar{u}_{2}^{\epsilon, i_{1}}(t) \bar{C}_{3,u}^{\epsilon, i_{0}i_{1} j_{0}} (t) ) \rvert^{2} ]  \\
=&  \mathbb{E} [ \lvert \sum_{k} \theta(2^{-q} k) (2\pi)^{-\frac{9}{2}} \sum_{k_{1}, k_{2}, k_{3} \neq 0: k_{12} =k} \sum_{i_{1}, i_{2}, i_{3}, i_{4}, j_{1} =1}^{3} \sum_{\lvert i-j \rvert \leq 1} \nonumber\\
& \times \hat{\mathcal{P}}^{j_{1} i_{4}}(k_{3}) \hat{\mathcal{P}}^{j_{0}i_{4}}(k_{3}) \hat{\mathcal{P}}^{i_{1}i_{2}}(k_{12}) \nonumber \\
& \times [ \theta(2^{-j} k_{3}) \int_{0}^{t} [ : \hat{X}_{\sigma, u}^{\epsilon, i_{2}} (k_{1}) \hat{X}_{\sigma, u}^{\epsilon, i_{3}}(k_{2}): - : \hat{X}_{\sigma, b}^{\epsilon, i_{2}}(k_{1}) \hat{X}_{\sigma, b}^{\epsilon, i_{3}}(k_{2}):] \nonumber\\
& \hspace{40mm} \times e^{-\lvert k_{12} \rvert^{2} f(\epsilon k_{12}) (t-\sigma)} k_{12}^{i_{3}} g(\epsilon k_{12}^{i_{3}}) d \sigma \nonumber\\
& \hspace{4mm} \times \int_{0}^{t} \frac{ e^{- \lvert k_{3} \rvert^{2}  f(\epsilon k_{3})(t-s)} h_{u}(\epsilon k_{3}) h_{b}(\epsilon k_{3})}{2\lvert k_{3} \rvert^{2} f(\epsilon k_{3})} \nonumber\\
& \hspace{10mm} \times  [ \theta(2^{-i} k_{123}) e^{- \lvert k_{123}\rvert^{2} f(\epsilon k_{123}) (t-s)} k_{123}^{j_{1}} g(\epsilon k_{123}^{j_{1}})\hat{\mathcal{P}}^{i_{0} i_{1}} (k_{123}) \nonumber\\
& \hspace{40mm} - \theta(2^{-i} k_{3}) e^{- \lvert k_{3} \rvert^{2}  f(\epsilon k_{3})(t-s)} k_{3}^{j_{1}} g(\epsilon k_{3}^{j_{1}}) \hat{\mathcal{P}}^{i_{0} i_{1}} (k_{3})] ds \nonumber\\
&+ \theta(2^{-j} k_{3}) \int_{0}^{t} [ : \hat{X}_{\sigma, u}^{\epsilon, i_{2}} (k_{1}) \hat{X}_{\sigma, u}^{\epsilon, i_{3}}(k_{2}): - : \hat{X}_{\sigma, b}^{\epsilon, i_{2}}(k_{1}) \hat{X}_{\sigma, b}^{\epsilon, i_{3}}(k_{2}):  \nonumber\\
& \hspace{40mm} - \hat{X}_{\sigma, u}^{\epsilon, i_{2}}(k_{1}) \hat{X}_{\sigma, u}^{\epsilon, i_{3}}(k_{2}) + \hat{X}_{\sigma, b}^{\epsilon, i_{2}}(k_{1}) \hat{X}_{\sigma, b}^{\epsilon, i_{3}}(k_{2})] \nonumber\\
& \hspace{4mm} \times e^{- \lvert k_{12} \rvert^{2} f(\epsilon k_{12} )(t-\sigma)} k_{12}^{i_{3}} g(\epsilon k_{12}^{i_{3}}) d\sigma \nonumber\\
& \hspace{4mm} \times  \int_{0}^{t} \theta(2^{-i} k_{3}) \frac{ e^{-2 \lvert k_{3} \rvert^{2} f(\epsilon k_{3})(t-s)} h_{u}(\epsilon k_{2}) h_{b}(\epsilon k_{3})}{2\lvert k_{3} \rvert^{2} f(\epsilon k_{3})} k_{3}^{j_{1}} g(\epsilon k_{3}^{j_{1}}) \hat{\mathcal{P}}^{i_{0} i_{1}}(k_{3}) ds \nonumber\\
& -  \theta(2^{-j} k_{3}) \int_{0}^{t} [: \hat{\bar{X}}_{\sigma, u}^{\epsilon, i_{2}} (k_{1}) \hat{\bar{X}}_{\sigma, u}^{\epsilon, i_{3}}(k_{2}): - : \hat{\bar{X}}_{\sigma, b}^{\epsilon, i_{2}}(k_{1}) \hat{\bar{X}}_{\sigma, b}^{\epsilon, i_{3}}(k_{2}):] e^{-\lvert k_{12} \rvert^{2} (t-\sigma)} k_{12}^{i_{3}} i d \sigma \nonumber\\
& \hspace{4mm} \times \int_{0}^{t} \frac{ e^{- \lvert k_{3} \rvert^{2} (t-s) } h_{u}(\epsilon k_{3}) h_{b}(\epsilon k_{3})}{2\lvert k_{3} \rvert^{2}} [ \theta(2^{-i} k_{123}) e^{- \lvert k_{123}\rvert^{2}  (t-s)} k_{123}^{j_{1}} i\hat{\mathcal{P}}^{i_{0} i_{1}} (k_{123}) \nonumber\\
& \hspace{10mm} - \theta(2^{-i} k_{3}) e^{- \lvert k_{3} \rvert^{2} (t-s)} k_{3}^{j_{1}} i \hat{\mathcal{P}}^{i_{0} i_{1}} (k_{3})] ds \nonumber\\
&- \theta(2^{-j} k_{3}) \int_{0}^{t} [ : \hat{\bar{X}}_{\sigma, u}^{\epsilon, i_{2}} (k_{1}) \hat{\bar{X}}_{\sigma, u}^{\epsilon, i_{3}}(k_{2}): - : \hat{\bar{X}}_{\sigma, b}^{\epsilon, i_{2}}(k_{1}) \hat{\bar{X}}_{\sigma, b}^{\epsilon, i_{3}}(k_{2}): \nonumber\\
& \hspace{40mm} - \hat{\bar{X}}_{\sigma, u}^{\epsilon, i_{2}}(k_{1}) \hat{\bar{X}}_{\sigma, u}^{\epsilon, i_{3}}(k_{2}) + \hat{\bar{X}}_{\sigma, b}^{\epsilon, i_{2}}(k_{1}) \hat{\bar{X}}_{\sigma, b}^{\epsilon, i_{3}}(k_{2})] \nonumber\\
& \hspace{3mm} \times e^{- \lvert k_{12} \rvert^{2} (t-\sigma)} k_{12}^{i_{3}} i d\sigma \int_{0}^{t} \theta(2^{-i} k_{3}) \frac{ e^{-2 \lvert k_{3} \rvert^{2} (t-s)} h_{u}(\epsilon k_{2}) h_{b}(\epsilon k_{3})}{2\lvert k_{3} \rvert^{2} } k_{3}^{j_{1}} i \hat{\mathcal{P}}^{i_{0} i_{1}}(k_{3}) ds] e_{k} \rvert^{2}]. \nonumber
\end{align} 
Now the second and fourth terms in \eqref{estimate 223} vanish because e.g., 
\begin{align*}
& : \hat{X}_{\sigma, u}^{\epsilon, i_{2}} (k_{1}) \hat{X}_{\sigma, u}^{\epsilon, i_{3}}(k_{2}): - : \hat{X}_{\sigma, b}^{\epsilon, i_{2}}(k_{1}) \hat{X}_{\sigma, b}^{\epsilon, i_{3}}(k_{2}): - \hat{X}_{\sigma, u}^{\epsilon, i_{2}}(k_{1}) \hat{X}_{\sigma, u}^{\epsilon, i_{3}}(k_{2}) + \hat{X}_{\sigma, b}^{\epsilon, i_{2}}(k_{1}) \hat{X}_{\sigma, b}^{\epsilon, i_{3}}(k_{2}) \\
=& - \mathbb{E} [ \hat{X}_{\sigma, u}^{\epsilon, i_{2}}(k_{1}) \hat{X}_{\sigma, u}^{\epsilon, i_{3}}(k_{2})] + \mathbb{E} [ \hat{X}_{\sigma, b}^{\epsilon, i_{2}}(k_{1}) \hat{X}_{\sigma, b}^{\epsilon, i_{3}}(k_{2})] 
\end{align*} 
by Example \ref{Example 3.1} in which both mathematical expectations give $1_{k_{12} =0}$ due to \eqref{covariance}. Thus, we deduce 
\begin{align}\label{estimate 224}
& \mathbb{E} [ \lvert \Delta_{q} (\tilde{L}_{t, i_{0}j_{0}}^{41} + 4 \sum_{i_{1} =1}^{3} u_{2}^{\epsilon, i_{1}}(t) C_{3,u}^{\epsilon, i_{0} i_{1} j_{0}}(t) \nonumber\\
& \hspace{25mm} - 4 \sum_{i_{1} =1}^{3} \bar{u}_{2}^{\epsilon, i_{1}}(t) \bar{C}_{3,u}^{\epsilon, i_{0} i_{1} j_{0}} (t) ) \rvert^{2} ] \lesssim \sum_{i=1}^{3} N_{q, t, i_{0} j_{0}}^{i}
\end{align} 
where 
\begin{subequations}\label{estimate 238}
\begin{align}
N_{q, t, i_{0} j_{0}}^{1} &\approx  \mathbb{E} [ \lvert \sum_{k} \theta(2^{-q} k) \sum_{k_{1}, k_{2}, k_{3}\neq 0: k_{12} = k} \sum_{i_{1}, i_{2}, i_{3}, j_{1} =1}^{3} \sum_{\lvert i-j \rvert \leq 1}  \theta(2^{-j} k_{3}) \nonumber\\
& \times  \int_{0}^{t} [: \hat{X}_{\sigma, u}^{\epsilon, i_{2}}(k_{1}) \hat{X}_{\sigma, u}^{\epsilon, i_{3}}(k_{2}): - : \hat{X}_{\sigma, b}^{\epsilon, i_{2}}(k_{1}) \hat{X}_{\sigma, b}^{\epsilon, i_{3}}(k_{2}):]  \nonumber\\
& \hspace{5mm} \times e^{- \lvert k_{12} \rvert^{2} f(\epsilon k_{12}) (t-\sigma) } k_{12}^{i_{3}} g(\epsilon k_{12}^{i_{3}}) d \sigma \nonumber\\
& \hspace{5mm} \times [ \int_{0}^{t} \frac{ e^{- \lvert k_{3} \rvert^{2} f(\epsilon k_{3})(t-s)} h_{u}(\epsilon k_{3}) h_{b}(\epsilon k_{3})}{2 \lvert k_{3} \rvert^{2} f(\epsilon k_{3})}  \nonumber\\
& \hspace{15mm} \times [ \theta(2^{-i}k_{123}) e^{- \lvert k_{123} \rvert^{2} f(\epsilon k_{123} )(t-s)} k_{123}^{j_{1}} g(\epsilon k_{123}^{j_{1}}) \hat{\mathcal{P}}^{i_{0} i_{1}}(k_{123}) \nonumber\\
& \hspace{15mm} - \theta(2^{-i}k_{3}) e^{- \lvert k_{3} \rvert^{2} f(\epsilon k_{3} )(t-s)} k_{3}^{j_{1}} g(\epsilon k_{3}^{j_{1}}) \hat{\mathcal{P}}^{i_{0} i_{1}}(k_{3})] ds  \nonumber\\
& \hspace{5mm} - \int_{0}^{t} \frac{ e^{- \lvert k_{3} \rvert^{2} (t-s)} h_{u}(\epsilon k_{3}) h_{b}(\epsilon k_{3})}{2 \lvert k_{3} \rvert^{2} } [ \theta(2^{-i}k_{123}) e^{- \lvert k_{123} \rvert^{2} (t-s)} k_{123}^{j_{1}} i \hat{\mathcal{P}}^{i_{0} i_{1}}(k_{123}) \nonumber\\
& \hspace{15mm} - \theta(2^{-i}k_{3}) e^{- \lvert k_{3} \rvert^{2} (t-s)} k_{3}^{j_{1}} i \hat{\mathcal{P}}^{i_{0} i_{1}}(k_{3})] ds ]    e_{k} \rvert^{2}] , \\
N_{q, t, i_{0} j_{0}}^{2}  &\approx  \mathbb{E} [ \lvert \sum_{k} \theta(2^{-q} k)  \sum_{k_{1}, k_{2}, k_{3} \neq 0: k_{12} = k} \sum_{i_{1}, i_{2}, i_{3}, j_{1} =1}^{3} \sum_{\lvert i-j \rvert \leq 1} \theta(2^{-j} k_{3}) \nonumber\\
& \times  \int_{0}^{t} [: \hat{X}_{\sigma, u}^{\epsilon, i_{2}}(k_{1}) \hat{X}_{\sigma, u}^{\epsilon, i_{3}}(k_{2}): - : \hat{X}_{\sigma, b}^{\epsilon, i_{2}}(k_{1}) \hat{X}_{\sigma, b}^{\epsilon, i_{3}}(k_{2}):] \nonumber\\
& \hspace{10mm} \times [e^{- \lvert k_{12} \rvert^{2} f(\epsilon k_{12}) (t-\sigma) } k_{12}^{i_{3}} g(\epsilon k_{12}^{i_{3}}) - e^{- \lvert k_{12} \rvert^{2} (t-\sigma)} k_{12}^{i_{3}}i] d \sigma \nonumber\\
& \hspace{5mm} \times  \int_{0}^{t} \frac{ e^{- \lvert k_{3} \rvert^{2} (t-s)} h_{u}(\epsilon k_{3}) h_{b}(\epsilon k_{3})}{2 \lvert k_{3} \rvert^{2} } [ \theta(2^{-i}k_{123}) e^{- \lvert k_{123} \rvert^{2} (t-s)} k_{123}^{j_{1}} i \hat{\mathcal{P}}^{i_{0} i_{1}}(k_{123}) \nonumber\\
& \hspace{15mm} - \theta(2^{-i}k_{3}) e^{- \lvert k_{3} \rvert^{2} (t-s)} k_{3}^{j_{1}} i \hat{\mathcal{P}}^{i_{0} i_{1}}(k_{3})] ds e_{k} \rvert^{2}],  \\
N_{q, t, i_{0} j_{0}}^{3}  &\approx  \mathbb{E} [ \lvert \sum_{k} \theta(2^{-q} k)  \sum_{k_{1}, k_{2}, k_{3}\neq 0: k_{12} = k} \sum_{i_{1}, i_{2}, i_{3}, j_{1} =1}^{3} \sum_{\lvert i-j \rvert \leq 1}\theta(2^{-j} k_{3})  \nonumber\\
& \times   \int_{0}^{t} [: \hat{X}_{\sigma, u}^{\epsilon, i_{2}}(k_{1}) \hat{X}_{\sigma, u}^{\epsilon, i_{3}}(k_{2}): - : \hat{X}_{\sigma, b}^{\epsilon, i_{2}}(k_{1}) \hat{X}_{\sigma, b}^{\epsilon, i_{3}}(k_{2}): \nonumber\\
& \hspace{10mm} - : \hat{\bar{X}}_{\sigma, u}^{\epsilon, i_{2}}(k_{1}) \hat{\bar{X}}_{\sigma, u}^{\epsilon, i_{3}}(k_{2}): + \hat{\bar{X}}_{\sigma, b}^{\epsilon, i_{2}}(k_{1}) \hat{\bar{X}}_{\sigma, b}^{\epsilon, i_{3}}(k_{2}): ] e^{- \lvert k_{12} \rvert^{2} (t-\sigma)} k_{12}^{i_{3}}i d \sigma \nonumber\\ 
& \hspace{5mm} \times  \int_{0}^{t} \frac{ e^{- \lvert k_{3} \rvert^{2} (t-s)} h_{u}(\epsilon k_{3}) h_{b}(\epsilon k_{3})}{2 \lvert k_{3} \rvert^{2} } [ \theta(2^{-i}k_{123}) e^{- \lvert k_{123} \rvert^{2} (t-s)} k_{123}^{j_{1}} i \hat{\mathcal{P}}^{i_{0} i_{1}}(k_{123}) \nonumber\\
& \hspace{15mm} - \theta(2^{-i}k_{3}) e^{- \lvert k_{3} \rvert^{2} (t-s)} k_{3}^{j_{1}} i \hat{\mathcal{P}}^{i_{0} i_{1}}(k_{3})] ds e_{k} \rvert^{2} ]. 
\end{align} 
\end{subequations} 
Now in order to compute $N_{q, t, i_{0} j_{0}}^{1}$, we see that 
\begin{align}\label{estimate 16}
& \int_{[0,t]^{2}} \mathbb{E} [ [ : \hat{X}_{\sigma, u}^{\epsilon, i_{2}}(k_{1}) \hat{X}_{\sigma, u}^{\epsilon, i_{3}}(k_{2}): - : \hat{X}_{\sigma, b}^{\epsilon, i_{2}}(k_{1}) \hat{X}_{\sigma, b}^{\epsilon, i_{3}}(k_{2}): ] \nonumber\\
& \hspace{5mm}  \times  \overline{[ : \hat{X}_{\bar{\sigma}, u}^{\epsilon, i_{2}'}(k_{1}') \hat{X}_{\bar{\sigma}, u}^{\epsilon, i_{3}'}(k_{2}'): - : \hat{X}_{\bar{\sigma}, b}^{\epsilon, i_{2}'}(k_{1}') \hat{X}_{\bar{\sigma}, b}^{\epsilon, i_{3}'}(k_{2}'): ] }]\nonumber\\
& \hspace{5mm} \times e^{- \lvert k_{12} \rvert^{2} f(\epsilon k_{12}) (t-\sigma)} e^{- \lvert k_{12} ' \rvert^{2} f(\epsilon k_{12} ') (t- \bar{\sigma}) } k_{12}^{i_{3}} (k_{12}')^{i_{3}'} g(\epsilon k_{12}^{i_{3}}) g(\epsilon (k_{12}')^{i_{3}'} ) d \sigma d \bar{\sigma} \nonumber\\
\lesssim& (1_{k_{1} = k_{1}', k_{2} = k_{2}'} + 1_{k_{1} = k_{2}', k_{2} = k_{1}'}) \frac{1}{\lvert k_{1} \rvert^{2} \lvert k_{2} \rvert^{2} \lvert k_{12} \rvert^{2}} 
\end{align} 
by Example \ref{Example 3.1} and \eqref{covariance}. Thus, we obtain from applying \eqref{estimate 16} to \eqref{estimate 238} 
\begin{align}\label{estimate 18}
N_{q, t, i_{0} j_{0}}^{1} &\lesssim \sum_{k} \theta(2^{-q} k)^{2} \sum_{k_{1}, k_{2} \neq 0: k_{12} = k} \frac{1}{ \lvert k_{1} \rvert^{2} \lvert k_{2} \rvert^{2} \lvert k_{12} \rvert^{2}} \nonumber\\
& \lvert \sum_{i_{1}, j_{1} =1}^{3} \sum_{\lvert i-j \rvert \leq 1} \sum_{k_{3} \neq 0} \theta(2^{-j} k_{3}) \nonumber\\
& \hspace{5mm} \times [\int_{0}^{t} \frac{ e^{- \lvert k_{3} \rvert^{2} f(\epsilon k_{3})(t-s)} h_{u}(\epsilon k_{3}) h_{b}(\epsilon k_{3})}{ \lvert k_{3} \rvert^{2} f(\epsilon k_{3})} \nonumber\\
& \hspace{15mm} [ \theta(2^{-i}k_{123}) e^{- \lvert k_{123} \rvert^{2} f(\epsilon k_{123} )(t-s)} k_{123}^{j_{1}} g(\epsilon k_{123}^{j_{1}}) \hat{\mathcal{P}}^{i_{0} i_{1}}(k_{123}) \nonumber\\
& \hspace{20mm} - \theta(2^{-i}k_{3}) e^{- \lvert k_{3} \rvert^{2} f(\epsilon k_{3} )(t-s)} k_{3}^{j_{1}} g(\epsilon k_{3}^{j_{1}}) \hat{\mathcal{P}}^{i_{0} i_{1}}(k_{3})] ds  \nonumber\\
& \hspace{5mm} - \int_{0}^{t} \frac{ e^{- \lvert k_{3} \rvert^{2} (t-s)} h_{u}(\epsilon k_{3}) h_{b}(\epsilon k_{3})}{ \lvert k_{3} \rvert^{2} } [ \theta(2^{-i}k_{123}) e^{- \lvert k_{123} \rvert^{2} (t-s)} k_{123}^{j_{1}} i \hat{\mathcal{P}}^{i_{0} i_{1}}(k_{123}) \nonumber\\
& \hspace{15mm} - \theta(2^{-i}k_{3}) e^{- \lvert k_{3} \rvert^{2} (t-s)} k_{3}^{j_{1}} i \hat{\mathcal{P}}^{i_{0} i_{1}}(k_{3})] ds ] \rvert^{2}. 
\end{align} 
Next, in order to bound $N_{q, t, i_{0} j_{0}}^{2}$, similarly to \eqref{estimate 16}, we can estimate
\begin{align}\label{estimate 239}
& \int_{[0,t]^{2}} \mathbb{E} [  (: \hat{X}_{\sigma, u}^{\epsilon, i_{2}}(k_{1}) \hat{X}_{\sigma, u}^{\epsilon, i_{3}}(k_{2}): - : \hat{X}_{\sigma, b}^{\epsilon, i_{2}}(k_{1}) \hat{X}_{\sigma, b}^{\epsilon, i_{3}}(k_{2}):) \nonumber\\
& \hspace{15mm} \times \overline{(: \hat{X}_{\bar{\sigma}, u}^{\epsilon, i_{2}'}(k_{1}') \hat{X}_{\bar{\sigma}, u}^{\epsilon, i_{3}'}(k_{2}'): - : \hat{X}_{\bar{\sigma}, b}^{\epsilon, i_{2}'}(k_{1}') \hat{X}_{\bar{\sigma}, b}^{\epsilon, i_{3}'}(k_{2}'):) }] \nonumber\\
& \hspace{5mm}  \times \lvert [e^{- \lvert k_{12} \rvert^{2} f(\epsilon k_{12}) (t-\sigma) } k_{12}^{i_{3}} g(\epsilon k_{12}^{i_{3}}) - e^{- \lvert k_{12} \rvert^{2} (t-\sigma)} k_{12}^{i_{3}}i] \nonumber\\
& \hspace{15mm}  \times [e^{- \lvert k_{12}' \rvert^{2} f(\epsilon k_{12}') (t-\bar{\sigma}) } (k_{12}')^{i_{3}'} g(\epsilon (k_{12}')^{i_{3}'}) - e^{- \lvert k_{12}' \rvert^{2} (t-\bar{\sigma})} (k_{12}')^{i_{3}'}i] \rvert   \nonumber\\ 
\lesssim& (1_{k_{1} =k_{1}', k_{2} = k_{2}'} + 1_{k_{1} = k_{2}', k_{2} = k_{1}'}) \frac{\lvert \epsilon k_{12} \rvert^{\eta} }{\lvert k_{1} \rvert^{2} \lvert k_{2} \rvert^{2} \lvert k_{12} \rvert^{2}}.
\end{align} 
Thus, we obtain from applying \eqref{estimate 239} to \eqref{estimate 238}
\begin{align}\label{estimate 241}
N_{q, t, i_{0} j_{0}}^{2} \lesssim& \sum_{k} \sum_{k_{1}, k_{2} \neq 0: k_{12} = k} \frac{ \lvert \epsilon k_{12} \rvert^{\eta}}{\lvert k_{1}\rvert^{2} \lvert k_{2} \rvert^{2} \lvert k_{12} \rvert^{2}} \theta(2^{-q} k)^{2} \nonumber\\
& \times \lvert  \sum_{\lvert i-j\rvert \leq 1} \sum_{i_{1}, j_{1} =1}^{3} \sum_{k_{3} \neq 0} \theta(2^{-j} k_{3})   \int_{0}^{t} \frac{ e^{- \lvert k_{3} \rvert^{2} (t-s)} h_{u}(\epsilon k_{3}) h_{b}(\epsilon k_{3})}{\lvert k_{3} \rvert^{2} } \nonumber\\
& \hspace{5mm} \times[ \theta(2^{-i}k_{123}) e^{- \lvert k_{123} \rvert^{2} (t-s)} k_{123}^{j_{1}} i \hat{\mathcal{P}}^{i_{0} i_{1}}(k_{123}) \nonumber\\
& \hspace{10mm} - \theta(2^{-i}k_{3}) e^{- \lvert k_{3} \rvert^{2} (t-s)} k_{3}^{j_{1}} i \hat{\mathcal{P}}^{i_{0} i_{1}}(k_{3})] ds \rvert^{2}. 
\end{align} 
Next, in order to compute $N_{q, t, i_{0} j_{0}}^{3}$, we see that 
\begin{align}\label{estimate 240}
& \lvert \int_{[0,t]^{2}} \mathbb{E} [ (: \hat{X}_{\sigma, u}^{\epsilon, i_{2}}(k_{1}) \hat{X}_{\sigma, u}^{\epsilon, i_{3}}(k_{2}): - : \hat{X}_{\sigma, b}^{\epsilon, i_{2}}(k_{1}) \hat{X}_{\sigma, b}^{\epsilon, i_{3}}(k_{2}): \\
& \hspace{8mm} - : \hat{\bar{X}}_{\sigma, u}^{\epsilon, i_{2}}(k_{1}) \hat{\bar{X}}_{\sigma, u}^{\epsilon, i_{3}}(k_{2}): + \hat{\bar{X}}_{\sigma, b}^{\epsilon, i_{2}}(k_{1}) \hat{\bar{X}}_{\sigma, b}^{\epsilon, i_{3}}(k_{2}): ) \nonumber\\
& \hspace{8mm} \times \overline{: \hat{X}_{\bar{\sigma}, u}^{\epsilon, i_{2}'}(k_{1}') \hat{X}_{\bar{\sigma}, u}^{\epsilon, i_{3}'}(k_{2}'): - : \hat{X}_{\bar{\sigma}, b}^{\epsilon, i_{2}'}(k_{1}') \hat{X}_{\bar{\sigma}, b}^{\epsilon, i_{3}'}(k_{2}'):} \nonumber\\
& \hspace{8mm} \overline{- : \hat{\bar{X}}_{\bar{\sigma}, u}^{\epsilon, i_{2}'}(k_{1}') \hat{\bar{X}}_{\bar{\sigma}, u}^{\epsilon, i_{3}'}(k_{2}'): + \hat{\bar{X}}_{\bar{\sigma}, b}^{\epsilon, i_{2}'}(k_{1}') \hat{\bar{X}}_{\bar{\sigma}, b}^{\epsilon, i_{3}'}(k_{2}'): }]  e^{- \lvert k_{12} \rvert^{2} (t-\sigma)} e^{- \lvert k_{12} '\rvert^{2} (t- \bar{\sigma})}  \nonumber\\
&\hspace{18mm}  \times k_{12}^{i_{3}} i (k_{12}')^{i_{3}'} i d \sigma d \bar{\sigma} \rvert \lesssim (1_{k_{1} = k_{1}', k_{2} = k_{2}'} + 1_{k_{1} = k_{2}', k_{2} = k_{1}'}) \frac{ \lvert \epsilon k_{1} \rvert^{\eta} + \lvert \epsilon k_{2} \rvert^{\eta}}{\lvert k_{1} \rvert^{2} \lvert k_{2} \rvert^{2} \lvert k_{12} \rvert^{2}} \nonumber
\end{align} 
by H$\ddot{\mathrm{o}}$lder's inequality and the proof of \eqref{[Equation (4.7)][ZZ17]}. Thus, applying \eqref{estimate 240} to \eqref{estimate 238} gives 
\begin{align}\label{estimate 242}
&N_{q, t, i_{0} j_{0}}^{3} \lesssim \sum_{k} \sum_{k_{1}, k_{2} \neq 0: k_{12} = k} \frac{ \lvert \epsilon k_{1} \rvert^{\eta} + \lvert \epsilon k_{2} \rvert^{\eta}}{\lvert k_{1} \rvert^{2} \lvert k_{2} \rvert^{2} \lvert k_{12} \rvert^{2}} \theta(2^{-q} k)^{2} \\
& \hspace{1mm}   \times \lvert \sum_{\lvert i-j \rvert \leq 1} \sum_{i_{1}, j_{1} =1}^{3} \sum_{k_{3} \neq 0} \theta(2^{-j} k_{3}) \int_{0}^{t} \frac{ e^{- \lvert k_{3} \rvert^{2} (t-s)} h_{u}(\epsilon k_{3}) h_{b}(\epsilon k_{3})}{ \lvert k_{3} \rvert^{2}} \nonumber\\
& \hspace{1mm} \times [ \theta(2^{-i} k_{123}) e^{- \lvert k_{123} \rvert^{2} (t-s)} k_{123}^{j_{1}} i \hat{\mathcal{P}}^{i_{0} i_{1}}(k_{123})  - \theta(2^{-i} k_{3}) e^{- \lvert k_{3} \rvert^{2} (t-s)} k_{3}^{j_{1}} i \hat{\mathcal{P}}^{i_{0} i_{1}} (k_{3}) ] ds \rvert^{2}.  \nonumber
\end{align} 
Next, within the bound on $N_{q, t, i_{0} j_{0}}^{1}$ in \eqref{estimate 18}, we can bound for any $\eta \in (0,1)$, 
\begin{align}
& \lvert \sum_{i_{1}, j_{1} =1}^{3} [ \frac{ e^{- \lvert k_{3} \rvert^{2} f(\epsilon k_{3})(t-s)} h_{u}(\epsilon k_{3}) h_{b}(\epsilon k_{3})}{ \lvert k_{3} \rvert^{2} f(\epsilon k_{3})} \nonumber\\
& \hspace{15mm} \times [ \theta(2^{-i}k_{123}) e^{- \lvert k_{123} \rvert^{2} f(\epsilon k_{123} )(t-s)} k_{123}^{j_{1}} g(\epsilon k_{123}^{j_{1}}) \hat{\mathcal{P}}^{i_{0} i_{1}}(k_{123}) \nonumber\\
& \hspace{20mm} - \theta(2^{-i}k_{3}) e^{- \lvert k_{3} \rvert^{2} f(\epsilon k_{3} )(t-s)} k_{3}^{j_{1}} g(\epsilon k_{3}^{j_{1}}) \hat{\mathcal{P}}^{i_{0} i_{1}}(k_{3})]   \nonumber\\
& \hspace{5mm} - \frac{ e^{- \lvert k_{3} \rvert^{2} (t-s)} h_{u}(\epsilon k_{3}) h_{b}(\epsilon k_{3})}{ \lvert k_{3} \rvert^{2} } [ \theta(2^{-i}k_{123}) e^{- \lvert k_{123} \rvert^{2} (t-s)} k_{123}^{j_{1}} i \hat{\mathcal{P}}^{i_{0} i_{1}}(k_{123}) \nonumber\\
& \hspace{15mm} - \theta(2^{-i}k_{3}) e^{- \lvert k_{3} \rvert^{2} (t-s)} k_{3}^{j_{1}} i \hat{\mathcal{P}}^{i_{0} i_{1}}(k_{3})] ] \rvert \nonumber\\
\lesssim& \frac{ e^{- \lvert k_{3} \rvert^{2} \bar{c}_{f} (t-s)}}{\lvert k_{3} \rvert^{2}} ( \lvert k_{12} \rvert^{2\eta} (t-s)^{\frac{-1 + 2\eta}{2}} \wedge (t-s)^{-\frac{1}{2}} [ \lvert \epsilon k_{3} \rvert^{\eta} + \lvert \epsilon k_{123} \rvert^{\eta} ]
\end{align}
by Lemmas \ref{Lemma 3.8} and \ref{Lemma 3.9} for the first estimate, while \eqref{[Equation (4.2)][ZZ17]}, \eqref{[Equation (4.3)][ZZ17]} and \eqref{key estimate} for the second estimate. 

We will apply this to \eqref{estimate 18}. Now we see that due to the lack of $\theta(2^{-i} k_{123})$ in some terms of \eqref{estimate 18}, we cannot estimate $2^{q} \approx \lvert k \rvert = \lvert k_{12} \rvert \leq \lvert k_{123} \rvert + \lvert k_{3} \rvert \approx 2^{i} + 2^{j} \approx 2^{j}$ as $\lvert i-j \rvert \leq 1$ and conclude that $q \lesssim j$. Nevertheless, we estimate from \eqref{estimate 18} for $\epsilon \in (0,\frac{\eta}{2})$, 
\begin{align}\label{estimate 243}
N_{q, t, i_{0} j_{0}}^{1} \lesssim& \sum_{k} \theta( 2^{-q} k)^{2} \sum_{k_{1}, k_{2}\neq 0: k_{12} = k} \frac{1}{\lvert k_{1} \rvert^{2} \lvert k_{2} \rvert^{2} \lvert k_{12} \rvert^{2}} \lvert \sum_{\lvert i-j \rvert \leq 1} \sum_{k_{3} \neq 0} \theta(2^{-j} k_{3})  \\
& \times \int_{0}^{t} \frac{ e^{- \lvert k_{3}\rvert^{2} \bar{c}_{f}(t-s)}}{\lvert k_{3} \rvert^{2} }  \lvert k_{12} \rvert^{\eta} (t-s)^{-\frac{1}{2} + \frac{\eta}{2}} ( \lvert \epsilon k_{3} \rvert^{\frac{\eta}{2}} + \lvert \epsilon k_{123} \rvert^{\frac{\eta}{2}}) ds \rvert^{2} \lesssim \epsilon^{\eta} t^{\epsilon} 2^{q2\eta} \nonumber 
\end{align}  
where we used \eqref{key estimate} and Lemma \ref{Lemma 3.13}.  Next, again, due to the lack of $\theta(2^{-i} k_{123})$ in \eqref{estimate 241} and \eqref{estimate 242}, we cannot estimate $q\lesssim j$ and thus we estimate for $\epsilon \in (0, \frac{\eta}{2})$ from \eqref{estimate 241} and \eqref{estimate 242} 
\begin{align}\label{estimate 244} 
\sum_{i=1}^{2} N_{q, t, i_{0} j_{0}}^{i} &\lesssim \epsilon^{\eta} \sum_{k} \sum_{k_{1}, k_{2}\neq 0: k_{12} =k} \frac{ \lvert k_{1} \rvert^{\eta} + \lvert k_{2} \rvert^{\eta}}{ \lvert k_{1} \rvert^{2} \lvert k_{2} \rvert^{2} \lvert k_{12} \rvert^{2}} \theta(2^{-q} k)^{2} \\
& \times \lvert \sum_{\lvert i-j \rvert \leq 1} \sum_{k_{3} \neq 0} \theta(2^{-j} k_{3}) \int_{0}^{t} \frac{e^{- \lvert k_{3} \rvert^{2} (t-s)}}{\lvert k_{3} \rvert^{2}} \lvert k_{12} \rvert^{\eta} (t-s)^{\frac{-1 + \eta}{2}} ds \rvert^{2} \lesssim \epsilon^{\eta} t^{\epsilon} 2^{q3\eta} \nonumber
\end{align} 
by \eqref{key estimate} and Lemma \ref{Lemma 3.8}. Applying \eqref{estimate 243}- \eqref{estimate 244} to \eqref{estimate 224} and applying the resulting inequality together with \eqref{estimate 245} to \eqref{estimate 246} gives us 
\begin{align}\label{estimate 247}
\mathbb{E} [ \lvert \Delta_{q} (L_{t, i_{0}j_{0}}^{41} + 4 \sum_{i_{1} =1}^{3} u_{2}^{\epsilon, i_{1}}(t) C_{3,u}^{\epsilon, i_{0} i_{1} j_{0}}(t)) \rvert^{2} ] \lesssim \epsilon^{\eta} t^{\epsilon} 2^{q (3 \eta + 2 \epsilon)}.
\end{align}
Next, we work on $L_{t, i_{0}j_{0}}^{5}$ and $\bar{L}_{t, i_{0}j_{0}}^{5}$ respectively from \eqref{[Equation (4.6j)][ZZ17]} and \eqref{[Equation (4.6k)][ZZ17]}. Because $\bar{L}_{t, i_{0}j_{0}}^{5}$ has two parts, we split the first to be $\bar{L}_{t, i_{0}j_{0}}^{51}$ and the second to be $\bar{L}_{t, i_{0}j_{0}}^{52}$; i.e., $\bar{L}_{t, i_{0} j_{0}}^{5} = \sum_{l=1}^{2} \bar{L}_{t, i_{0} j_{0}}^{5l}$ where  
\begin{subequations}\label{estimate 248}
\begin{align}
\bar{L}_{t, i_{0}j_{0}}^{51} \triangleq&  2\pi_{0} (\sum_{i_{1}, i_{2}, j_{1} =1}^{3} \int_{0}^{t} e^{-(s-t) \Delta_{\epsilon}} \nonumber\\
& \times  \mathcal{P}^{i_{0} i_{1}} D_{j_{1}}^{\epsilon}(C_{1,u}^{\epsilon, i_{1} i_{2} j_{1}} u_{1}^{\epsilon, i_{2}} + C_{1,b}^{\epsilon, i_{1} i_{2} j_{1}} b_{1}^{\epsilon, i_{2}}) ds, b_{1}^{\epsilon, j_{0}}(t)),  \\
\bar{L}_{t, i_{0}j_{0}}^{52} \triangleq&  - 2\pi_{0} (\sum_{i_{1}, i_{2}, j_{1} =1}^{3} \int_{0}^{t} e^{-(s-t) \Delta_{\epsilon} } \nonumber\\
& \times \mathcal{P}^{i_{0} i_{1}} D_{j_{1}}^{\epsilon} (C_{2,u}^{\epsilon, i_{1} i_{2} j_{1}} u_{1}^{\epsilon, i_{2}} + C_{2,b}^{\epsilon, i_{1} i_{2} j_{1}}b_{1}^{\epsilon, i_{2}}) ds, b_{1}^{\epsilon, j_{0}}(t)), 
\end{align}
\end{subequations} 
where we recall $C_{2,u}^{\epsilon, i i_{1} j} (t) $ and $C_{2,b}^{\epsilon, i i_{1} j} (t)$ respectively from \eqref{C2uepsilonii1j}, \eqref{C2bepsilonii1j}, and we also split $L_{t, i_{0}j_{0}}^{5}$ between the first and second four terms: $L_{t, i_{0} j_{0}}^{5} = \sum_{l=1}^{2} L_{t, i_{0} j_{0}}^{5l}$ where  
\begin{subequations}\label{estimate 264}
\begin{align}
L_{t, i_{0}j_{0}}^{51} \triangleq &  (2\pi)^{-\frac{9}{2}} \sum_{\lvert i -j \rvert \leq 1} \sum_{i_{1}, i_{2}, i_{3}, j_{1} =1}^{3} \sum_{k} \sum_{k_{1}, k_{2}, k_{4} \neq 0: k_{14} = k} \theta(2^{-i} k_{1}) \theta(2^{-j} k_{4})\\
& \times [ \int_{0}^{t} e^{- \lvert k_{1} \rvert^{2}  f(\epsilon k_{1})(t-s)} \int_{0}^{s} : \hat{X}_{\sigma, u}^{\epsilon, i_{2}}(k_{1}) \hat{X}_{t,b}^{\epsilon, j_{0}}(k_{4}): k_{12}^{i_{3}} g(\epsilon k_{12}^{i_{3}}) k_{1}^{j_{1}} g(\epsilon k_{1}^{j_{1}}) \nonumber\\
& \hspace{5mm} \times \frac{ e^{- \lvert k_{2} \rvert^{2} f(\epsilon k_{2}) (s-\sigma)} h_{u}(\epsilon k_{2})^{2}}{2 \lvert k_{2} \rvert^{2} f(\epsilon k_{2})} e^{- \lvert k_{12} \rvert^{2} f(\epsilon k_{12})(s-\sigma)}d\sigma ds \nonumber  \\
& -  \int_{0}^{t} e^{- \lvert k_{1} \rvert^{2} (t-s) } \int_{0}^{s} : \hat{\bar{X}}_{\sigma, u}^{\epsilon, i_{2}}(k_{1}) \hat{\bar{X}}_{t,b}^{\epsilon, j_{0}}(k_{4}): k_{12}^{i_{3}} i k_{1}^{j_{1}} i \nonumber\\
& \hspace{5mm} \times \frac{ e^{- \lvert k_{2} \rvert^{2} (s-\sigma)} h_{u}(\epsilon k_{2})^{2}}{2 \lvert k_{2} \rvert^{2}} e^{- \lvert k_{12} \rvert^{2}(s-\sigma)}d\sigma ds \nonumber  \\
& - \int_{0}^{t} e^{- \lvert k_{1} \rvert^{2}  f(\epsilon k_{1})(t-s)} \int_{0}^{s} : \hat{X}_{\sigma, b}^{\epsilon, i_{2}}(k_{1}) \hat{X}_{t,b}^{\epsilon, j_{0}}(k_{4}): k_{12}^{i_{3}} g(\epsilon k_{12}^{i_{3}}) k_{1}^{j_{1}} g(\epsilon k_{1}^{j_{1}}) \nonumber\\
& \hspace{5mm} \times \frac{ e^{- \lvert k_{2} \rvert^{2} f(\epsilon k_{2}) (s-\sigma)} h_{b}(\epsilon k_{2}) h_{u}(\epsilon k_{2})}{2 \lvert k_{2} \rvert^{2} f(\epsilon k_{2})} e^{- \lvert k_{12} \rvert^{2} f(\epsilon k_{12})(s-\sigma)}d\sigma ds \nonumber  \\
& +  \int_{0}^{t} e^{- \lvert k_{1} \rvert^{2} (t-s) } \int_{0}^{s} : \hat{\bar{X}}_{\sigma, b}^{\epsilon, i_{2}}(k_{1}) \hat{\bar{X}}_{t,b}^{\epsilon, j_{0}}(k_{4}): k_{12}^{i_{3}} i k_{1}^{j_{1}} i \nonumber\\
& \hspace{5mm} \times \frac{ e^{- \lvert k_{2} \rvert^{2} (s-\sigma)} h_{b}(\epsilon k_{2}) h_{u}(\epsilon k_{2})}{2 \lvert k_{2} \rvert^{2}} e^{- \lvert k_{12} \rvert^{2}(s-\sigma)}d\sigma ds] \nonumber  \\
& \times \sum_{i_{4} =1}^{3} \hat{\mathcal{P}}^{i_{3} i_{4}}(k_{2}) \hat{\mathcal{P}}^{j_{1} i_{4}}(k_{2}) \hat{\mathcal{P}}^{i_{1} i_{2}}(k_{12}) \hat{\mathcal{P}}^{i_{0} i_{1}}(k_{1}) e_{k}, \nonumber\\
L_{t, i_{0}j_{0}}^{52} \triangleq&  (2\pi)^{-\frac{9}{2}} \sum_{\lvert i -j \rvert \leq 1} \sum_{i_{1}, i_{2}, i_{3}, j_{1} =1}^{3} \sum_{k} \sum_{k_{1}, k_{2}, k_{4}\neq 0: k_{14} = k} \theta(2^{-i} k_{1}) \theta(2^{-j} k_{4})\\
& \times [- \int_{0}^{t} e^{- \lvert k_{1} \rvert^{2}  f(\epsilon k_{1})(t-s)} \int_{0}^{s} : \hat{X}_{\sigma, b}^{\epsilon, i_{2}}(k_{1}) \hat{X}_{t,b}^{\epsilon, j_{0}}(k_{4}): k_{12}^{i_{3}} g(\epsilon k_{12}^{i_{3}}) k_{1}^{j_{1}} g(\epsilon k_{1}^{j_{1}}) \nonumber\\
& \hspace{5mm} \times \frac{ e^{- \lvert k_{2} \rvert^{2} f(\epsilon k_{2}) (s-\sigma)} h_{u}(\epsilon k_{2})h_{b}(\epsilon k_{2})}{2 \lvert k_{2} \rvert^{2} f(\epsilon k_{2})} e^{- \lvert k_{12} \rvert^{2}f(\epsilon k_{12})(s-\sigma) }d\sigma ds \nonumber  \\
& +  \int_{0}^{t} e^{- \lvert k_{1} \rvert^{2} (t-s) } \int_{0}^{s} : \hat{\bar{X}}_{\sigma, b}^{\epsilon, i_{2}}(k_{1}) \hat{\bar{X}}_{t,b}^{\epsilon, j_{0}}(k_{4}): k_{12}^{i_{3}} i k_{1}^{j_{1}} i \nonumber\\
& \hspace{5mm} \times \frac{ e^{- \lvert k_{2} \rvert^{2} (s-\sigma)} h_{u}(\epsilon k_{2})h_{b}(\epsilon k_{2})}{2 \lvert k_{2} \rvert^{2}} e^{- \lvert k_{12} \rvert^{2}(s-\sigma)}d\sigma ds \nonumber  \\
&+ \int_{0}^{t} e^{- \lvert k_{1} \rvert^{2} f(\epsilon k_{1})(t-s) } \int_{0}^{s} : \hat{X}_{\sigma, u}^{\epsilon, i_{2}}(k_{1}) \hat{X}_{t,b}^{\epsilon, j_{0}}(k_{4}): k_{12}^{i_{3}} g(\epsilon k_{12}^{i_{3}}) k_{1}^{j_{1}} g(\epsilon k_{1}^{j_{1}}) \nonumber\\
& \hspace{5mm} \times \frac{ e^{- \lvert k_{2} \rvert^{2} f(\epsilon k_{2}) (s-\sigma)} h_{b}(\epsilon k_{2})^{2}}{2 \lvert k_{2} \rvert^{2} f(\epsilon k_{2})} e^{- \lvert k_{12} \rvert^{2} f(\epsilon k_{12})(s-\sigma)}d\sigma ds \nonumber  \\
& -  \int_{0}^{t} e^{- \lvert k_{1} \rvert^{2} (t-s) } \int_{0}^{s} : \hat{\bar{X}}_{\sigma, u}^{\epsilon, i_{2}}(k_{1}) \hat{\bar{X}}_{t,b}^{\epsilon, j_{0}}(k_{4}): k_{12}^{i_{3}} i k_{1}^{j_{1}} i \nonumber\\
& \hspace{5mm} \times \frac{ e^{- \lvert k_{2} \rvert^{2} (s-\sigma)} h_{b}(\epsilon k_{2})^{2}}{2 \lvert k_{2} \rvert^{2}} e^{- \lvert k_{12} \rvert^{2}(s-\sigma)}d\sigma ds] \nonumber  \\
& \times \sum_{i_{4} =1}^{3} \hat{\mathcal{P}}^{i_{3} i_{4}}(k_{2}) \hat{\mathcal{P}}^{j_{1} i_{4}}(k_{2}) \hat{\mathcal{P}}^{i_{1} i_{2}}(k_{12}) \hat{\mathcal{P}}^{i_{0} i_{1}}(k_{1}) e_{k}. \nonumber
\end{align} 
\end{subequations} 
W.l.o.g. we work on $L_{t, i_{0} j_{0}}^{52}$ which we rewrite it as $L_{t, i_{0} j_{0}}^{52} = L_{t, i_{0}j_{0}}^{52} - \tilde{L}_{t, i_{0}j_{0}}^{52} +\tilde{L}_{t, i_{0}j_{0}}^{52} + \bar{L}_{t, i_{0}j_{0}}^{52}$ where 
\begin{align}\label{estimate 24}
\tilde{L}_{t, i_{0}j_{0}}^{52} \triangleq&  (2\pi)^{-\frac{9}{2}} \sum_{\lvert i -j \rvert \leq 1} \sum_{i_{1}, i_{2}, i_{3}, j_{1} =1}^{3} \sum_{k} \sum_{k_{1}, k_{2}, k_{4}\neq 0: k_{14} = k} \theta(2^{-i} k_{1}) \theta(2^{-j} k_{4}) \nonumber\\
& \times [- \int_{0}^{t} e^{- \lvert k_{1} \rvert^{2} f(\epsilon k_{1})(t-s)} \int_{0}^{s} : \hat{X}_{s, b}^{\epsilon, i_{2}}(k_{1}) \hat{X}_{t,b}^{\epsilon, j_{0}}(k_{4}): k_{12}^{i_{3}} g(\epsilon k_{12}^{i_{3}}) k_{1}^{j_{1}} g(\epsilon k_{1}^{j_{1}})  \nonumber\\
& \hspace{5mm} \times \frac{ e^{- \lvert k_{2} \rvert^{2} f(\epsilon k_{2}) (s-\sigma)} h_{u}(\epsilon k_{2})h_{b}(\epsilon k_{2})}{2 \lvert k_{2} \rvert^{2} f(\epsilon k_{2})} e^{- \lvert k_{12} \rvert^{2} f(\epsilon k_{12})(s-\sigma)} d\sigma  ds \nonumber  \\
& +  \int_{0}^{t} e^{- \lvert k_{1} \rvert^{2} (t-s) } \int_{0}^{s} : \hat{\bar{X}}_{s, b}^{\epsilon, i_{2}}(k_{1}) \hat{\bar{X}}_{t,b}^{\epsilon, j_{0}}(k_{4}): k_{12}^{i_{3}} i k_{1}^{j_{1}} i \nonumber\\
& \hspace{5mm} \times \frac{ e^{- \lvert k_{2} \rvert^{2} (s-\sigma)} h_{u}(\epsilon k_{2})h_{b}(\epsilon k_{2})}{2 \lvert k_{2} \rvert^{2}} e^{- \lvert k_{12} \rvert^{2}(s-\sigma)}d\sigma  ds \nonumber  \\ 
&+ \int_{0}^{t} e^{- \lvert k_{1} \rvert^{2} f(\epsilon k_{1})(t-s)} \int_{0}^{s} : \hat{X}_{s, u}^{\epsilon, i_{2}}(k_{1}) \hat{X}_{t,b}^{\epsilon, j_{0}}(k_{4}): k_{12}^{i_{3}} g(\epsilon k_{12}^{i_{3}}) k_{1}^{j_{1}} g(\epsilon k_{1}^{j_{1}})  \nonumber\\
& \hspace{5mm} \times \frac{ e^{- \lvert k_{2} \rvert^{2} f(\epsilon k_{2}) (s-\sigma)} h_{b}(\epsilon k_{2})^{2}}{2 \lvert k_{2} \rvert^{2} f(\epsilon k_{2})} e^{- \lvert k_{12} \rvert^{2}f(\epsilon k_{12})(s-\sigma)}d\sigma ds \nonumber  \\
& -  \int_{0}^{t} e^{- \lvert k_{1} \rvert^{2} (t-s) } \int_{0}^{s} : \hat{\bar{X}}_{s, u}^{\epsilon, i_{2}}(k_{1}) \hat{\bar{X}}_{t,b}^{\epsilon, j_{0}}(k_{4}): k_{12}^{i_{3}} i k_{1}^{j_{1}} i  \nonumber\\
& \hspace{5mm} \times \frac{ e^{- \lvert k_{2} \rvert^{2} (s-\sigma)} h_{b}(\epsilon k_{2})^{2}}{2 \lvert k_{2} \rvert^{2}} e^{- \lvert k_{12} \rvert^{2}(s-\sigma)}d\sigma  ds] \nonumber  \\
& \times \sum_{i_{4} =1}^{3} \hat{\mathcal{P}}^{i_{3} i_{4}}(k_{2}) \hat{\mathcal{P}}^{j_{1} i_{4}}(k_{2}) \hat{\mathcal{P}}^{i_{1} i_{2}}(k_{12}) \hat{\mathcal{P}}^{i_{0} i_{1}}(k_{1}) e_{k}.
\end{align} 
Now using $\bar{C}_{2,u}^{\epsilon, i_{1} i_{2} j_{1}}(t)$ and $\bar{C}_{2,b}^{\epsilon, i_{1} i_{2} j_{1}}(t) $ respectively from  \eqref{barC2uepsilonii1j}, \eqref{barC2bepsilonii1j} which are both zeroes, we can rewrite 
$\bar{L}_{t, i_{0}j_{0}}^{52}$ of \eqref{estimate 248} as 
\begin{align}\label{estimate 25}
\bar{L}_{t, i_{0}j_{0}}^{52} &= (2\pi)^{-\frac{9}{2}} \sum_{k\neq 0} \sum_{\lvert i-j \rvert \leq 1} \sum_{k_{1}, k_{2}, k_{4} \neq 0: k_{14} = k} \sum_{i_{1}, i_{2}, i_{3}, j_{1} =1}^{3} \theta(2^{-i} k_{1}) \theta(2^{-j} k_{4}) \nonumber\\
& \times [ \int_{0}^{t} :\hat{X}_{s,u}^{\epsilon, i_{2}}(k_{1}) \hat{X}_{t,b}^{\epsilon, j_{0}}(k_{4}): e^{- \lvert k_{1} \rvert^{2} f(\epsilon k_{1}) (t-s)} k_{1}^{j_{1}} g(\epsilon k_{1}^{j_{1}}) \nonumber\\
& \hspace{10mm} \times \int_{0}^{s} e^{- \lvert k_{2} \rvert^{2} f(\epsilon k_{2}) (s-\sigma)} \frac{ e^{- \lvert k_{2} \rvert^{2} f(\epsilon k_{2})(s-\sigma)} h_{b}(\epsilon k_{2})^{2}}{2\lvert k_{2} \rvert^{2} f(\epsilon k_{2})} k_{2}^{i_{3}} g(\epsilon k_{2}^{i_{3}}) d \sigma ds \nonumber\\
& \hspace{5mm} - \int_{0}^{t} : \hat{\bar{X}}_{s,u}^{\epsilon, i_{2}}(k_{1}) \hat{\bar{X}}_{t,b}^{\epsilon, j_{0}} (k_{4}): e^{- \lvert k_{1} \rvert^{2} (t-s)} k_{1}^{j_{1}} i \nonumber\\
& \hspace{10mm} \times \int_{0}^{s} e^{- \lvert k_{2} \rvert^{2} (s-\sigma)} \frac{ e^{- \lvert k_{2} \rvert^{2} (s-\sigma)} h_{b}(\epsilon k_{2})^{2}}{2\lvert k_{2} \rvert^{2}} k_{2}^{i_{3}} i d\sigma ds \nonumber\\
& \hspace{5mm} -  \int_{0}^{t} :\hat{X}_{s,b}^{\epsilon, i_{2}}(k_{1}) \hat{X}_{t,b}^{\epsilon, j_{0}}(k_{4}): e^{- \lvert k_{1} \rvert^{2} f(\epsilon k_{1}) (t-s)} k_{1}^{j_{1}} g(\epsilon k_{1}^{j_{1}}) \nonumber\\
& \hspace{10mm} \times \int_{0}^{s} e^{- \lvert k_{2} \rvert^{2} f(\epsilon k_{2}) (s-\sigma)} \frac{ e^{- \lvert k_{2} \rvert^{2} f(\epsilon k_{2})(s-\sigma)} h_{u}(\epsilon k_{2})h_{b}(\epsilon k_{2})}{2\lvert k_{2} \rvert^{2} f(\epsilon k_{2})} k_{2}^{i_{3}} g(\epsilon k_{2}^{i_{3}}) d \sigma ds\nonumber\\
&\hspace{5mm} + \int_{0}^{t} : \hat{\bar{X}}_{s,b}^{\epsilon, i_{2}}(k_{1}) \hat{\bar{X}}_{t,b}^{\epsilon, j_{0}} (k_{4}): e^{- \lvert k_{1} \rvert^{2} (t-s)} k_{1}^{j_{1}} i\nonumber\\
& \hspace{10mm} \times  \int_{0}^{s} e^{- \lvert k_{2} \rvert^{2} (s-\sigma)} \frac{ e^{- \lvert k_{2} \rvert^{2} (s-\sigma)} h_{u}(\epsilon k_{2})h_{b}(\epsilon k_{2})}{2\lvert k_{2} \rvert^{2}} k_{2}^{i_{3}} i d\sigma ds] \nonumber\\
&\times \hat{\mathcal{P}}^{i_{1}i_{2}}(k_{2}) \sum_{i_{4} =1}^{3} \hat{\mathcal{P}}^{i_{3}i_{4}}(k_{2}) \hat{\mathcal{P}}^{j_{1}i_{4}}(k_{2}) \hat{\mathcal{P}}^{i_{0}i_{1}}(k_{1}) e_{k}. 
\end{align}
Such $\bar{L}_{t, i_{0}j_{0}}^{52}$ works perfectly well with $\tilde{L}_{t, i_{0}j_{0}}^{52}$ from \eqref{estimate 24}, as we will see in \eqref{estimate 26}. Now 
\begin{align}\label{estimate 253}
\mathbb{E} [ \lvert \Delta_{q} (L_{t, i_{0}j_{0}}^{52} - \tilde{L}_{t, i_{0}j_{0}}^{52}) \rvert^{2} ] 
\lesssim \sum_{l=1}^{2} O_{q, t, i_{0} j_{0}}^{l} 
\end{align} 
where 
\begin{subequations}
\begin{align}
O_{q, t, i_{0} j_{0}}^{1} &\triangleq \mathbb{E} [ \lvert \sum_{k} \theta(2^{-q} k) \sum_{\lvert i-j \rvert \leq 1} \sum_{k_{1}, k_{2}, k_{4}\neq 0: k_{14} = k} \sum_{i_{1}, i_{2}, i_{3}, j_{1} =1}^{3} \theta(2^{-i} k_{1}) \theta(2^{-j} k_{4}) \nonumber\\
& \times [- \int_{0}^{t} e^{- \lvert k_{1} \rvert^{2}  f(\epsilon k_{1})(t-s)} \int_{0}^{s} [: \hat{X}_{\sigma, b}^{\epsilon, i_{2}}(k_{1}) \hat{X}_{t,b}^{\epsilon, j_{0}}(k_{4}): - : \hat{X}_{s,b}^{\epsilon, i_{2}}(k_{1}) \hat{X}_{t,b}^{\epsilon, j_{0}} (k_{4}):] \nonumber\\
& \hspace{5mm} \times \frac{ e^{- \lvert k_{2} \rvert^{2} f(\epsilon k_{2}) (s-\sigma)} h_{u}(\epsilon k_{2})h_{b}(\epsilon k_{2})}{2 \lvert k_{2} \rvert^{2} f(\epsilon k_{2})}  \nonumber\\
& \hspace{5mm} \times e^{- \lvert k_{12} \rvert^{2} f(\epsilon k_{12})(s-\sigma)} k_{12}^{i_{3}} g(\epsilon k_{12}^{i_{3}}) k_{1}^{j_{1}} g(\epsilon k_{1}^{j_{1}})d\sigma ds \nonumber\\
& +  \int_{0}^{t} e^{- \lvert k_{1} \rvert^{2} (t-s) } \int_{0}^{s} [: \hat{\bar{X}}_{\sigma, b}^{\epsilon, i_{2}}(k_{1}) \hat{\bar{X}}_{t,b}^{\epsilon, j_{0}}(k_{4}): - : \hat{\bar{X}}_{s,b}^{\epsilon, i_{2}}(k_{1}) \hat{\bar{X}}_{t,b}^{\epsilon, j_{0}} (k_{4}):]   \nonumber\\
& \hspace{5mm} \times \frac{ e^{- \lvert k_{2} \rvert^{2} (s-\sigma)} h_{u}(\epsilon k_{2})h_{b}(\epsilon k_{2})}{2 \lvert k_{2} \rvert^{2}} e^{- \lvert k_{12} \rvert^{2}(s-\sigma)} k_{12}^{i_{3}} i k_{1}^{j_{1}} i d\sigma ds] \nonumber  \\
& \times \sum_{i_{4} =1}^{3} \hat{\mathcal{P}}^{i_{3} i_{4}}(k_{2}) \hat{\mathcal{P}}^{j_{1} i_{4}}(k_{2}) \hat{\mathcal{P}}^{i_{1} i_{2}}(k_{12}) \hat{\mathcal{P}}^{i_{0} i_{1}}(k_{1}) e_{k} \rvert^{2} ], \\
O_{q, t, i_{0} j_{0}}^{2} &\triangleq \mathbb{E} [ \lvert \sum_{k} \theta(2^{-q} k) \sum_{\lvert i-j \rvert \leq 1} \sum_{k_{1}, k_{2}, k_{4} \neq 0: k_{14} = k} \sum_{i_{1}, i_{2}, i_{3}, j_{1} =1}^{3} \theta(2^{-i} k_{1}) \theta(2^{-j} k_{4}) \nonumber\\
& \times [\int_{0}^{t} e^{- \lvert k_{1} \rvert^{2}  f(\epsilon k_{1})(t-s)} \int_{0}^{s} [: \hat{X}_{\sigma, u}^{\epsilon, i_{2}}(k_{1}) \hat{X}_{t,b}^{\epsilon, j_{0}}(k_{4}): - : \hat{X}_{s,u}^{\epsilon, i_{2}} (k_{1}) \hat{X}_{t,b}^{\epsilon, j_{0}} (k_{4}):] \nonumber\\
& \hspace{5mm} \times \frac{ e^{- \lvert k_{2} \rvert^{2} f(\epsilon k_{2}) (s-\sigma)} h_{b}(\epsilon k_{2})^{2}}{2 \lvert k_{2} \rvert^{2} f(\epsilon k_{2})} e^{- \lvert k_{12} \rvert^{2}f(\epsilon k_{12})(s-\sigma)} k_{12}^{i_{3}} g(\epsilon k_{12}^{i_{3}}) k_{1}^{j_{1}} g(\epsilon k_{1}^{j_{1}}) d\sigma ds \nonumber  \\
& -  \int_{0}^{t} e^{- \lvert k_{1} \rvert^{2} (t-s) } \int_{0}^{s} [: \hat{\bar{X}}_{\sigma, u}^{\epsilon, i_{2}}(k_{1}) \hat{\bar{X}}_{t,b}^{\epsilon, j_{0}}(k_{4}): - : \hat{\bar{X}}_{s,u}^{\epsilon, i_{2}}(k_{1}) \hat{\bar{X}}_{t,b}^{\epsilon, j_{0}} (k_{4}):]  \nonumber\\
& \hspace{5mm} \times \frac{ e^{- \lvert k_{2} \rvert^{2} (s-\sigma)} h_{b}(\epsilon k_{2})^{2}}{2 \lvert k_{2} \rvert^{2}} e^{- \lvert k_{12} \rvert^{2}(s-\sigma)} k_{12}^{i_{3}} i k_{1}^{j_{1}} i  d\sigma ds] \nonumber  \\
& \times \sum_{i_{4} =1}^{3} \hat{\mathcal{P}}^{i_{3} i_{4}}(k_{2}) \hat{\mathcal{P}}^{j_{1} i_{4}}(k_{2}) \hat{\mathcal{P}}^{i_{1} i_{2}}(k_{12}) \hat{\mathcal{P}}^{i_{0} i_{1}}(k_{1}) e_{k} \rvert^{2} ]. 
\end{align}
\end{subequations} 
W.l.o.g. we show the estimates on $O_{q, t, i_{0} j_{0}}^{1}$ as those on $O_{q, t, i_{0} j_{0}}^{2}$ are similar. We estimate 
\begin{align}\label{estimate 251}
O_{q, t, i_{0} j_{0}}^{1}\lesssim \sum_{l=1}^{3} O_{q, t, i_{0} j_{0}}^{1l} 
\end{align} 
where 
\begin{subequations}\label{estimate 249}
\begin{align}
O_{q, t, i_{0} j_{0}}^{11} &\triangleq \sum_{k} \sum_{\lvert i-j \rvert \leq 1, \lvert i'-j'\rvert \leq 1} \sum_{k_{1}, k_{2}, k_{4}, k_{1}', k_{2}', k_{4}' \neq 0: k_{14} = k_{14}' = k} \sum_{i_{1}, i_{2}, i_{3}, j_{1}, i_{1}', i_{2}', i_{3}', j_{1}' = 1}^{3}\\
& \times  \theta(2^{-q} k)^{2}  \theta(2^{-i} k_{1}) \theta(2^{-i'} k_{1}') \theta(2^{-j} k_{4}) \theta(2^{-j'} k_{4}') \int_{[0,t]^{2}} \int_{0}^{s} \int_{0}^{\bar{s}} \nonumber \\
& \times\lvert e^{- \lvert k_{1} \rvert^{2} f(\epsilon k_{1}) (t-s)} k_{1}^{j_{1}} g(\epsilon k_{1}^{j_{1}}) - e^{- \lvert k_{1} \rvert^{2} (t-s)} k_{1}^{j_{1}} i \rvert \nonumber\\
& \hspace{5mm} \times \lvert e^{- \lvert k_{1}' \rvert^{2} f(\epsilon k_{1}') (t- \bar{s}) } (k_{1}')^{j_{1}'} g(\epsilon (k_{1}')^{j_{1}'} ) - e^{- \lvert k_{1}' \rvert (t- \bar{s})} (k_{1}')^{j_{1}' } i \rvert  \nonumber\\
& \times \mathbb{E} [ ( : \hat{X}_{\sigma, b}^{\epsilon, i_{2}}(k_{1}) \hat{X}_{t,b}^{\epsilon, j_{0}}(k_{4}): - : \hat{X}_{s,b}^{\epsilon, i_{2}}(k_{1}) \hat{X}_{t,b}^{\epsilon, j_{0}}(k_{4}):) \nonumber\\
& \hspace{5mm} \times \overline{(: \hat{X}_{\bar{\sigma}, b}^{\epsilon, i_{2}'} (k_{1}') \hat{X}_{t,b}^{\epsilon, j_{0}}(k_{4}') : - : \hat{X}_{\bar{s}, b}^{\epsilon, i_{2}'} (k_{1}') \hat{X}_{t,b}^{\epsilon, j_{0}}(k_{4}'):)}]\nonumber\\
& \times [ e^{- \lvert k_{12} \rvert^{2} f(\epsilon k_{12})(s-\sigma)} \frac{ e^{- \lvert k_{2} \rvert^{2} f(\epsilon k_{2}) (s-\sigma)} h_{u}(\epsilon k_{2}) h_{b}(\epsilon k_{2})}{\lvert k_{2} \rvert^{2} f(\epsilon k_{2})} k_{12}^{i_{3}} g(\epsilon k_{12}^{i_{3}}) \hat{\mathcal{P}}^{i_{1} i_{2}}(k_{12})] \nonumber\\
& \times [ e^{- \lvert k_{12}' \rvert^{2} f(\epsilon k_{12}')(\bar{s}-\bar{\sigma})} \frac{ e^{- \lvert k_{2}' \rvert^{2} f(\epsilon k_{2}') (\bar{s}-\bar{\sigma})} h_{u}(\epsilon k_{2}') h_{b}(\epsilon k_{2}')}{\lvert k_{2}' \rvert^{2} f(\epsilon k_{2}')}\nonumber\\
& \hspace{15mm} \times (k_{12}')^{i_{3}'} g(\epsilon (k_{12}')^{i_{3}'}) \hat{\mathcal{P}}^{i_{1}' i_{2}'}(k_{12}')]  d\bar{\sigma}  d\sigma ds d \bar{s}, \nonumber\\
O_{q, t, i_{0} j_{0}}^{12} &\triangleq \sum_{k} \sum_{\lvert i-j \rvert \leq 1, \lvert i'-j'\rvert \leq 1} \sum_{k_{1}, k_{2}, k_{4}, k_{1}', k_{2}', k_{4}' \neq 0: k_{14} = k_{14}' = k} \sum_{i_{1}, i_{2}, i_{3}, i_{1}', i_{2}', i_{3}' = 1}^{3}\\
& \times  \theta(2^{-q} k)^{2}  \theta(2^{-i} k_{1}) \theta(2^{-i'} k_{1}') \theta(2^{-j} k_{4}) \theta(2^{-j'} k_{4}') \int_{[0,t]^{2}} \int_{0}^{s} \int_{0}^{\bar{s}} \nonumber \\
& \times e^{- \lvert k_{1} \rvert^{2} (t-s) - \lvert k_{1}' \rvert^{2} (t-\bar{s})} \lvert k_{1} \rvert \lvert k_{1} ' \rvert \nonumber\\
&\times \mathbb{E} [  ( : \hat{X}_{\sigma, b}^{\epsilon, i_{2}} (k_{1}) \hat{X}_{t,b}^{\epsilon, j_{0}}(k_{4}): - : \hat{X}_{s,b}^{\epsilon, i_{2}}(k_{1}) \hat{X}_{t,b}^{\epsilon, j_{0}}(k_{4}): \nonumber\\
& \hspace{15mm}- : \hat{\bar{X}}_{\sigma, b}^{\epsilon, i_{2}}(k_{1}) \hat{\bar{X}}_{t,b}^{\epsilon, j_{0}}(k_{4}): + :\hat{\bar{X}}_{s,b}^{\epsilon, i_{2}}(k_{1}) \hat{\bar{X}}_{t,b}^{\epsilon, j_{0}}(k_{4}): )  \nonumber\\
& \times \overline{( : \hat{X}_{\bar{\sigma}, b}^{\epsilon, i_{2}'} (k_{1}') \hat{X}_{t,b}^{\epsilon, j_{0}}(k_{4}'): - : \hat{X}_{\bar{s},b}^{\epsilon, i_{2}'}(k_{1}') \hat{X}_{t,b}^{\epsilon, j_{0}}(k_{4}'):  } \nonumber\\
& \hspace{15mm} \overline{- : \hat{\bar{X}}_{\bar{\sigma}, b}^{\epsilon, i_{2}'}(k_{1}') \hat{\bar{X}}_{t,b}^{\epsilon, j_{0}}(k_{4}'): + :\hat{\bar{X}}_{\bar{s},b}^{\epsilon, i_{2}'}(k_{1}') \hat{\bar{X}}_{t,b}^{\epsilon, j_{0}}(k_{4}'): ) }] \nonumber\\
& \times e^{- \lvert k_{12} \rvert^{2} f(\epsilon k_{12})(s-\sigma)} \frac{ e^{- \lvert k_{2} \rvert^{2} f(\epsilon k_{2})(s-\sigma)} h_{u}(\epsilon k_{2}) h_{b}(\epsilon k_{2})}{\lvert k_{2} \rvert^{2} f(\epsilon k_{2})} k_{12}^{i_{3}} g(\epsilon k_{12}^{i_{3}}) \hat{\mathcal{P}}^{i_{1} i_{2}}(k_{12})  \nonumber \\  
& \times e^{- \lvert k_{12}' \rvert^{2} f(\epsilon k_{12}')(\bar{s}-\bar{\sigma})} \frac{ e^{- \lvert k_{2}' \rvert^{2} f(\epsilon k_{2}')(\bar{s}-\bar{\sigma})} h_{u}(\epsilon k_{2}') h_{b}(\epsilon k_{2}')}{\lvert k_{2}' \rvert^{2} f(\epsilon k_{2}')}  \nonumber \\  
&\times (k_{12}')^{i_{3}'} g(\epsilon (k_{12}')^{i_{3}'}) \hat{\mathcal{P}}^{i_{1}' i_{2}'}(k_{12}') d\bar{\sigma}  d\sigma ds d \bar{s}, \nonumber\\
O_{q, t, i_{0} j_{0}}^{13} &\triangleq \sum_{k} \sum_{\lvert i-j \rvert \leq 1, \lvert i'-j'\rvert \leq 1} \sum_{k_{1}, k_{2}, k_{4}, k_{1}', k_{2}', k_{4}' \neq 0: k_{14} = k_{14}' = k} \sum_{i_{1}, i_{2}, i_{3}, i_{1}', i_{2}', i_{3}' = 1}^{3}\\
& \times  \theta(2^{-q} k)^{2}  \theta(2^{-i} k_{1}) \theta(2^{-i'} k_{1}') \theta(2^{-j} k_{4}) \theta(2^{-j'} k_{4}') \int_{[0,t]^{2}} \int_{0}^{s} \int_{0}^{\bar{s}} \lvert k_{1} \rvert \lvert k_{1}' \rvert  \nonumber \\
& \times e^{- \lvert k_{1} \rvert^{2} (t-s) - \lvert k_{1}' \rvert^{2} (t-\bar{s})} \mathbb{E} [ ( : \hat{\bar{X}}_{\sigma, b}^{\epsilon, i_{2}}(k_{1}) \hat{\bar{X}}_{t,b}^{\epsilon, j_{0}}(k_{4}): - : \hat{\bar{X}}_{s,b}^{\epsilon, i_{2}}(k_{1}) \hat{\bar{X}}_{t,b}^{\epsilon, j_{0}}(k_{4}):) \nonumber\\
& \hspace{15mm} \times \overline{( : \hat{\bar{X}}_{\bar{\sigma}, b}^{\epsilon, i_{2}'}(k_{1}') \hat{\bar{X}}_{t,b}^{\epsilon, j_{0}}(k_{4}'): - : \hat{\bar{X}}_{\bar{s},b}^{\epsilon, i_{2}'}(k_{1}') \hat{\bar{X}}_{t,b}^{\epsilon, j_{0}}(k_{4}'):)}] \nonumber\\
& \times \lvert e^{- \lvert k_{12} \rvert^{2} f(\epsilon k_{12}) (s-\sigma)} \frac{ e^{- \lvert k_{2} \rvert^{2} f(\epsilon k_{2}) (s-\sigma)} h_{u}(\epsilon k_{2}) h_{b}(\epsilon k_{2})}{\lvert k_{2} \rvert^{2} f(\epsilon k_{2})} k_{12}^{i_{3}} g(\epsilon k_{12}^{i_{3}}) \hat{\mathcal{P}}^{i_{1} i_{2}} (k_{12}) \nonumber\\
& \hspace{5mm} - e^{- \lvert k_{12} \rvert^{2} (s-\sigma)} \frac{ e^{- \lvert k_{2} \rvert^{2} (s-\sigma)} h_{u}(\epsilon k_{2}) h_{b}(\epsilon k_{2})}{\lvert k_{2} \rvert^{2}} k_{12}^{i_{3}} i \hat{\mathcal{P}}^{i_{1} i_{2}}(k_{12}) \rvert \nonumber \\
& \times   \lvert e^{- \lvert k_{12}' \rvert^{2} f(\epsilon k_{12}') (\bar{s}-\bar{\sigma})} \frac{ e^{- \lvert k_{2}' \rvert^{2} f(\epsilon k_{2}') (\bar{s}-\bar{\sigma})} h_{u}(\epsilon k_{2}') h_{b}(\epsilon k_{2}')}{\lvert k_{2}' \rvert^{2} f(\epsilon k_{2}')} \nonumber\\
& \hspace{55mm} \times (k_{12}')^{i_{3}'} g(\epsilon (k_{12}')^{i_{3}'}) \hat{\mathcal{P}}^{i_{1}' i_{2}'} (k_{12}') \nonumber\\
& \hspace{5mm} - e^{- \lvert k_{12}' \rvert^{2} (\bar{s}-\bar{\sigma})} \frac{ e^{- \lvert k_{2}' \rvert^{2} (\bar{s}-\bar{\sigma})} h_{u}(\epsilon k_{2}') h_{b}(\epsilon k_{2}')}{\lvert k_{2}' \rvert^{2}} (k_{12}')^{i_{3}'} i \hat{\mathcal{P}}^{i_{1}' i_{2}'}(k_{12}') \rvert d\bar{\sigma} d\sigma  ds d \bar{s}.\nonumber
\end{align}
\end{subequations} 
First, we estimate 
\begin{align*} 
& \mathbb{E} [ ( : \hat{X}_{\sigma, b}^{\epsilon, i_{2}}(k_{1}) \hat{X}_{t,b}^{\epsilon, j_{0}}(k_{4}): - : \hat{X}_{s,b}^{\epsilon, i_{2}}(k_{1}) \hat{X}_{t,b}^{\epsilon, j_{0}}(k_{4}):)  \\
& \hspace{20mm} \times \overline{ (: \hat{X}_{\bar{\sigma}, b}^{\epsilon, i_{2}'}(k_{1}') \hat{X}_{t,b}^{\epsilon, j_{0}}(k_{4}'): - : \hat{X}_{\bar{s},b}^{\epsilon, i_{2}'}(k_{1}') \hat{X}_{t,b}^{\epsilon, j_{0}}(k_{4}'):)}] \\
\leq&  (1_{k_{1} = k_{1}', k_{4} = k_{4}'} + 1_{k_{1} = k_{4}', k_{4} = k_{1}'})  \\
\times& ( \sum_{i_{3}, i_{4} =1}^{3} \frac{ h_{b} (\epsilon k_{1})^{2}}{ 2 \lvert k_{1} \rvert^{2} f(\epsilon k_{1})} \hat{\mathcal{P}}^{i_{2} i_{3}}(k_{1})^{2} \frac{ h_{b}(\epsilon k_{4})^{2}}{2 \lvert k_{4} \rvert^{2} f(\epsilon k_{4})} \hat{\mathcal{P}}^{j_{0} i_{4}}(k_{4})^{2}  \\
& + 1_{k_{1} = k_{4}} \frac{ e^{- \lvert k_{1} \rvert^{2} f(\epsilon k_{1}) (t-\sigma)} h_{b}(\epsilon k_{1})^{2}}{2 \lvert k_{1} \rvert^{2} f(\epsilon k_{1})} \hat{\mathcal{P}}^{i_{2}i_{3}}(k_{1})\hat{\mathcal{P}}^{j_{0} i_{3}}(k_{1}) \\
& \hspace{20mm} \times  \frac{ e^{- \lvert k_{1} \rvert^{2} f(\epsilon k_{1}) (t-\sigma)} h_{b}(\epsilon k_{1})^{2}}{2\lvert k_{1} \rvert^{2} f(\epsilon k_{1})} \hat{\mathcal{P}}^{j_{0} i_{4}}(k_{1}) \hat{\mathcal{P}}^{i_{2} i_{4}}(k_{1}) \nonumber\\
& - \frac{ e^{- \lvert k_{1} \rvert^{2} f(\epsilon k_{1}) (s-\sigma)} h_{b}(\epsilon k_{1})^{2}}{2 \lvert k_{1} \rvert^{2} f(\epsilon k_{1})} \hat{\mathcal{P}}^{i_{2} i_{3}}(k_{1})^{2} \frac{h_{b}(\epsilon k_{4})^{2}}{2 \lvert k_{4} \rvert^{2} f(\epsilon k_{4})} \hat{\mathcal{P}}^{j_{0} i_{4}}(k_{4})^{2} \\
& - 1_{k_{1} = k_{4}} \frac{e^{- \lvert k_{1} \rvert^{2} f(\epsilon k_{1}) (t-\sigma)} h_{b}(\epsilon k_{1})^{2}}{2 \lvert k_{1} \rvert^{2} f(\epsilon k_{1})} \hat{\mathcal{P}}^{i_{2} i_{3}}(k_{1}) \hat{\mathcal{P}}^{j_{0} i_{3}}(k_{1}) \\
& \hspace{20mm} \times  \frac{ e^{- \lvert k_{1} \rvert^{2} f(\epsilon k_{1}) (t-s)} h_{b}(\epsilon k_{1})^{2}}{2 \lvert k_{1} \rvert^{2} f(\epsilon k_{1})} \hat{\mathcal{P}}^{j_{0} i_{4}}(k_{1}) \hat{\mathcal{P}}^{i_{2} i_{4}}(k_{1}) \\
& -  \frac{ e^{- \lvert k_{1} \rvert^{2} f(\epsilon k_{1}) (s-\sigma)} h_{b}(\epsilon k_{1})^{2}}{2 \lvert k_{1} \rvert^{2} f(\epsilon k_{1})} \hat{\mathcal{P}}^{i_{2} i_{3}}(k_{1})^{2} \frac{h_{b}(\epsilon k_{4})^{2}}{2 \lvert k_{4} \rvert^{2} f(\epsilon k_{4})} \hat{\mathcal{P}}^{j_{0} i_{4}}(k_{4})^{2} \\
& - 1_{k_{1} = k_{4}} \frac{e^{- \lvert k_{1} \rvert^{2} f(\epsilon k_{1}) (t-s)} h_{b}(\epsilon k_{1})^{2}}{2 \lvert k_{1} \rvert^{2} f(\epsilon k_{1})} \hat{\mathcal{P}}^{i_{2} i_{3}}(k_{1}) \hat{\mathcal{P}}^{j_{0} i_{3}}(k_{1}) \\
& \hspace{20mm} \times \frac{ e^{- \lvert k_{1} \rvert^{2} f(\epsilon k_{1}) (t-\sigma)} h_{b}(\epsilon k_{1})^{2}}{2 \lvert k_{1} \rvert^{2} f(\epsilon k_{1})} \hat{\mathcal{P}}^{j_{0} i_{4}}(k_{1}) \hat{\mathcal{P}}^{i_{2} i_{4}}(k_{1}) \\
&+ \frac{ h_{b} (\epsilon k_{1})^{2}}{2 \lvert k_{1} \rvert^{2} f(\epsilon k_{1})} \hat{\mathcal{P}}^{i_{2} i_{3}}(k_{1})^{2} \frac{h_{b}(\epsilon k_{4})^{2}}{2\lvert k_{4} \rvert^{2} f(\epsilon k_{4})} \hat{\mathcal{P}}^{j_{0} i_{4}}(k_{4})^{2} \\
&+ 1_{k_{1} = k_{4}} \frac{e^{- \lvert k_{1} \rvert^{2} f(\epsilon k_{1})(t-s)} h_{b}(\epsilon k_{1})^{2}}{2 \lvert k_{1} \rvert^{2} f(\epsilon k_{1})} \hat{\mathcal{P}}^{i_{2} i_{3}}(k_{1}) \hat{\mathcal{P}}^{j_{0} i_{3}}(k_{1})  \\
& \hspace{20mm} \times  \frac{e^{- \lvert k_{1} \rvert^{2} f(\epsilon k_{1}) (t-s)} h_{b}(\epsilon k_{1})^{2}}{2 \lvert k_{1} \rvert^{2} f(\epsilon k_{1})} \hat{\mathcal{P}}^{j_{0} i_{4}}(k_{1}) \hat{\mathcal{P}}^{i_{2} i_{4}}(k_{1}))^{\frac{1}{2}} \\
\times & ( \sum_{i_{3}, i_{4} =1}^{3} \frac{ h_{b} (\epsilon k_{1}')^{2}}{ 2 \lvert k_{1}' \rvert^{2} f(\epsilon k_{1}')} \hat{\mathcal{P}}^{i_{2}' i_{3}}(k_{1}')^{2} \frac{ h_{b}(\epsilon k_{4}')^{2}}{2 \lvert k_{4}' \rvert^{2} f(\epsilon k_{4}')} \hat{\mathcal{P}}^{j_{0} i_{4}}(k_{4}')^{2}  \\
& + 1_{k_{1}' = k_{4}'} \frac{ e^{- \lvert k_{1}' \rvert^{2} f(\epsilon k_{1}') (t-\bar{\sigma})} h_{b}(\epsilon k_{1}')^{2}}{2 \lvert k_{1}' \rvert^{2} f(\epsilon k_{1}')} \hat{\mathcal{P}}^{i_{2} i_{3}}(k_{1}')\hat{\mathcal{P}}^{j_{0} i_{3}}(k_{1}') \\
& \hspace{20mm} \frac{ e^{- \lvert k_{1}' \rvert^{2} f(\epsilon k_{1}') (t-\bar{\sigma})} h_{b}(\epsilon k_{1}')^{2}}{2\lvert k_{1}' \rvert^{2} f(\epsilon k_{1}')} \hat{\mathcal{P}}^{j_{0} i_{4}}(k_{1}') \hat{\mathcal{P}}^{i_{2}' i_{4}}(k_{1}') \nonumber\\
& - \frac{ e^{- \lvert k_{1}' \rvert^{2} f(\epsilon k_{1}') (\bar{s}-\bar{\sigma})} h_{b}(\epsilon k_{1}')^{2}}{2 \lvert k_{1}' \rvert^{2} f(\epsilon k_{1}')} \hat{\mathcal{P}}^{i_{2}' i_{3}}(k_{1}')^{2} \frac{h_{b}(\epsilon k_{4}')^{2}}{2 \lvert k_{4}' \rvert^{2} f(\epsilon k_{4}')} \hat{\mathcal{P}}^{j_{0} i_{4}}(k_{4}')^{2} \\
& - 1_{k_{1}' = k_{4}'} \frac{e^{- \lvert k_{1}' \rvert^{2} f(\epsilon k_{1}') (t-\bar{\sigma})} h_{b}(\epsilon k_{1}')^{2}}{2 \lvert k_{1}' \rvert^{2} f(\epsilon k_{1}')} \hat{\mathcal{P}}^{i_{2}' i_{3}}(k_{1}') \hat{\mathcal{P}}^{j_{0} i_{3}}(k_{1}')  \\
& \hspace{20mm} \times \frac{ e^{- \lvert k_{1}' \rvert^{2} f(\epsilon k_{1}') (t-\bar{s})} h_{b}(\epsilon k_{1}')^{2}}{2 \lvert k_{1}' \rvert^{2} f(\epsilon k_{1}')} \hat{\mathcal{P}}^{j_{0} i_{4}}(k_{1}') \hat{\mathcal{P}}^{i_{2}' i_{4}}(k_{1}') \\
& -  \frac{ e^{- \lvert k_{1}' \rvert^{2} f(\epsilon k_{1}') (\bar{s}-\bar{\sigma})} h_{b}(\epsilon k_{1}')^{2}}{2 \lvert k_{1}' \rvert^{2} f(\epsilon k_{1}')} \hat{\mathcal{P}}^{i_{2}' i_{3}}(k_{1}')^{2} \frac{h_{b}(\epsilon k_{4}')^{2}}{2 \lvert k_{4}' \rvert^{2} f(\epsilon k_{4}')} \hat{\mathcal{P}}^{j_{0} i_{4}}(k_{4}')^{2} \\
& - 1_{k_{1}' = k_{4}'} \frac{e^{- \lvert k_{1}' \rvert^{2} f(\epsilon k_{1}') (t-\bar{s})} h_{b}(\epsilon k_{1}')^{2}}{2 \lvert k_{1}' \rvert^{2} f(\epsilon k_{1}')} \hat{\mathcal{P}}^{i_{2}' i_{3}'}(k_{1}') \hat{\mathcal{P}}^{j_{0} i_{3}}(k_{1}')  \\
& \hspace{20mm} \times \frac{ e^{- \lvert k_{1}' \rvert^{2} f(\epsilon k_{1}') (t-\bar{\sigma})} h_{b}(\epsilon k_{1}')^{2}}{2 \lvert k_{1}' \rvert^{2} f(\epsilon k_{1}')} \hat{\mathcal{P}}^{j_{0} i_{4}}(k_{1}') \hat{\mathcal{P}}^{i_{2}' i_{4}}(k_{1}') \\
&+ \frac{ h_{b} (\epsilon k_{1}')^{2}}{2 \lvert k_{1}' \rvert^{2} f(\epsilon k_{1}')} \hat{\mathcal{P}}^{i_{2}' i_{3}}(k_{1}')^{2} \frac{h_{b}(\epsilon k_{4}')^{2}}{2\lvert k_{4}' \rvert^{2} f(\epsilon k_{4}')} \hat{\mathcal{P}}^{j_{0} i_{4}}(k_{4}')^{2} \\
&+ 1_{k_{1}' = k_{4}'} \frac{e^{- \lvert k_{1}' \rvert^{2} f(\epsilon k_{1}')(t-\bar{s})} h_{b}(\epsilon k_{1}')^{2}}{2 \lvert k_{1}' \rvert^{2} f(\epsilon k_{1}')} \hat{\mathcal{P}}^{i_{2}' i_{3}}(k_{1}') \hat{\mathcal{P}}^{j_{0} i_{3}}(k_{1}')   \\
& \hspace{20mm} \times \frac{e^{- \lvert k_{1}' \rvert^{2} f(\epsilon k_{1}') (t-\bar{s})} h_{b}(\epsilon k_{1}')^{2}}{2 \lvert k_{1}' \rvert^{2} f(\epsilon k_{1}')} \hat{\mathcal{P}}^{j_{0} i_{4}}(k_{1}') \hat{\mathcal{P}}^{i_{2'} i_{4}}(k_{1}'))^{\frac{1}{2}}
\end{align*}
by H$\ddot{\mathrm{o}}$lder's inequality, Example \ref{Example 3.1} and \eqref{covariance c}. Within each square root, there are eight terms. We match first with third, second with fourth, fifth with seventh, and sixth with eighth to bound for ay $\eta \in [0,1]$ by 
\begin{align}\label{[Equation (4.7o)][ZZ17]}
& \mathbb{E} [ ( : \hat{X}_{\sigma, b}^{\epsilon, i_{2}}(k_{1}) \hat{X}_{t,b}^{\epsilon, j_{0}}(k_{4}): - : \hat{X}_{s,b}^{\epsilon, i_{2}}(k_{1}) \hat{X}_{t,b}^{\epsilon, j_{0}}(k_{4}):) \nonumber\\
& \hspace{5mm} \times \overline{ (: \hat{X}_{\bar{\sigma}, b}^{\epsilon, i_{2}'}(k_{1}') \hat{X}_{t,b}^{\epsilon, j_{0}}(k_{4}'): - : \hat{X}_{\bar{s},b}^{\epsilon, i_{2}'}(k_{1}') \hat{X}_{t,b}^{\epsilon, j_{0}}(k_{4}'):)}] \nonumber\\
\leq& (1_{k_{1} = k_{1}, k_{4} = k_{4}'} + 1_{k_{1} = k_{4}', k_{4} = k_{1}'}) \nonumber\\ 
& \times ( \frac{1- e^{- \lvert k_{1} \rvert^{2} f(\epsilon k_{1}) (s-\sigma)}}{\lvert k_{1} \rvert^{2} \lvert k_{4} \rvert^{2}} + \frac{ \lvert e^{- \lvert k_{1} \rvert^{2} f(\epsilon k_{1}) (t-\sigma)} - e^{-\lvert k_{1} \rvert^{2} f(\epsilon k_{1})(t-s)}}{\lvert k_{1} \rvert^{2} \lvert k_{4} \rvert^{2}})^{\frac{1}{2}} \nonumber\\
& \times ( \frac{\lvert 1- e^{- \lvert k_{1}' \rvert^{2} f(\epsilon k_{1}') (\bar{s}-\bar{\sigma})} \rvert }{\lvert k_{1}' \rvert^{2} \lvert k_{4}' \rvert^{2}} + \frac{ \lvert e^{- \lvert k_{1}' \rvert^{2} f(\epsilon k_{1}') (t-\bar{\sigma})} - e^{-\lvert k_{1}' \rvert^{2} f(\epsilon k_{1}')(t-\bar{s})}\rvert }{\lvert k_{1}' \rvert^{2} \lvert k_{4}' \rvert^{2}})^{\frac{1}{2}} \nonumber\\
\lesssim& (1_{k_{1} = k_{1}, k_{4} = k_{4}'} + 1_{k_{1} = k_{4}', k_{4} = k_{1}'}) ( \frac{ (\lvert k_{1} \rvert^{2} \lvert s-\sigma \rvert)^{\eta}}{ \lvert k_{1} \rvert^{2} \lvert k_{4} \rvert^{2}})^{\frac{1}{2}} ( \frac{ (\lvert k_{1}' \rvert^{2} \lvert \bar{s}-\bar{\sigma} \rvert)^{\eta}}{ \lvert k_{1}' \rvert^{2} \lvert k_{4}' \rvert^{2}})^{\frac{1}{2}} 
\end{align} 
by mean value theorem. Moreover, we can now bound from \eqref{estimate 249} for $\epsilon \in (0, 2\eta)$ by 
\begin{align}\label{estimate 20} 
O_{q, t, i_{0} j_{0}}^{11} \lesssim&  \epsilon^{\eta} \sum_{k} \sum_{\lvert i-j\rvert \leq 1, \lvert i' - j'\rvert \leq 1} \sum_{k_{1}, k_{2}, k_{4}, k_{1}', k_{2}', k_{4}' \neq 0: k_{14} = k_{14}' = k}  \theta(2^{-q} k)^{2} \theta(2^{-i} k_{1})   \nonumber\\
& \times \theta(2^{-i'} k_{1}')  \theta(2^{-j} k_{4}) \theta(2^{-j'} k_{4} ') \int_{[0,t]^{2}}  (1_{k_{1} =k_{1}', k_{4} = k_{4}'} + 1_{k_{1} = k_{4}', k_{4} = k_{1}'})s^{\frac{\epsilon}{4}} \bar{s}^{\frac{\epsilon}{4}} \nonumber\\
& \times \frac{ \lvert k_{1} \rvert^{\frac{3\eta}{2}} \lvert k_{1}' \rvert^{\frac{3\eta}{2}}}{ \lvert k_{4} \rvert \lvert k_{4} ' \rvert} \frac{ \lvert k_{12} \rvert}{\lvert k_{2} \rvert^{2}} \frac{ \lvert k_{12}' \rvert}{\lvert k_{2}' \rvert^{2}} \frac{ e^{- \lvert k_{1} \rvert^{2} \bar{c}_{f} (t-s)}}{[ \lvert k_{12} \rvert^{2} + \lvert k_{2} \rvert^{2} ]^{1+ \frac{\eta}{2} - \frac{\epsilon}{4}}} \frac{ e^{- \lvert k_{1}' \rvert^{2} \bar{c}_{f} (t-\bar{s})}}{[ \lvert k_{12}' \rvert^{2} + \lvert k_{2}' \rvert^{2} ]^{1+ \frac{\eta}{2} - \frac{\epsilon}{4}}} ds d \bar{s} \nonumber  \\
\lesssim&  \epsilon^{\eta}t^{\epsilon} \sum_{k} \sum_{\lvert i-j\rvert \leq 1, \lvert i' - j'\rvert \leq 1} \sum_{k_{1}, k_{4}, k_{1}', k_{4}' \neq 0: k_{14} = k_{14}' = k}  \theta(2^{-q} k)^{2} \theta(2^{-i} k_{1})  \theta(2^{-i'} k_{1}')  \nonumber\\
& \times  \theta(2^{-j} k_{4}) \theta(2^{-j'} k_{4} ') (1_{k_{1} = k_{1}', k_{4} = k_{4}'} + 1_{k_{1} = k_{4}', k_{4} = k_{1}'})  \nonumber\\
& \times \frac{1}{\lvert k_{4} \rvert \lvert k_{4}' \rvert} \frac{1}{\lvert k_{1} \rvert^{2- \frac{3\eta}{2} - \frac{\epsilon}{2}}} \frac{1}{\lvert k_{1}' \rvert^{2- \frac{3\eta}{2} - \frac{\epsilon}{2}}}
\end{align}
by \eqref{[Equation (4.2)][ZZ17]}, \eqref{[Equation (4.3)][ZZ17]} and \eqref{key estimate}. Next, in order to estimate $O_{q, t, i_{0} j_{0}}^{12}$ from \eqref{estimate 249}, we first see that we may bound for any $\eta \in [0,\frac{1}{2}]$, 
\begin{align}\label{estimate 250}
&\mathbb{E} [  ( : \hat{X}_{\sigma, b}^{\epsilon, i_{2}} (k_{1}) \hat{X}_{t,b}^{\epsilon, j_{0}}(k_{4}): - : \hat{X}_{s,b}^{\epsilon, i_{2}}(k_{1}) \hat{X}_{t,b}^{\epsilon, j_{0}}(k_{4}): \\
& \hspace{15mm}- : \hat{\bar{X}}_{\sigma, b}^{\epsilon, i_{2}}(k_{1}) \hat{\bar{X}}_{t,b}^{\epsilon, j_{0}}(k_{4}): + :\hat{\bar{X}}_{s,b}^{\epsilon, i_{2}}(k_{1}) \hat{\bar{X}}_{t,b}^{\epsilon, j_{0}}(k_{4}): )  \nonumber\\
& \times \overline{( : \hat{X}_{\bar{\sigma}, b}^{\epsilon, i_{2}'} (k_{1}') \hat{X}_{t,b}^{\epsilon, j_{0}}(k_{4}'): - : \hat{X}_{\bar{s},b}^{\epsilon, i_{2}'}(k_{1}') \hat{X}_{t,b}^{\epsilon, j_{0}}(k_{4}'):  } \nonumber\\
& \hspace{15mm} \overline{- : \hat{\bar{X}}_{\bar{\sigma}, b}^{\epsilon, i_{2}'}(k_{1}') \hat{\bar{X}}_{t,b}^{\epsilon, j_{0}}(k_{4}'): + :\hat{\bar{X}}_{\bar{s},b}^{\epsilon, i_{2}'}(k_{1}') \hat{\bar{X}}_{t,b}^{\epsilon, j_{0}}(k_{4}'): ) }] \nonumber\\
\lesssim&[ \frac{ (\lvert \epsilon k_{1} \rvert^{\eta} + \lvert \epsilon k_{4} \rvert^{\eta} )}{\lvert k_{1} \rvert \lvert k_{4} \rvert} \frac{ ( \lvert \epsilon k_{1}'\rvert^{\eta} + \lvert \epsilon k_{4} ' \rvert^{\eta})}{\lvert k_{1}' \rvert \lvert k_{4} ' \rvert}  \wedge   \left( \frac{ ( \lvert k_{1} \rvert^{2} \lvert s- \sigma \rvert)^{\eta}}{\lvert k_{1} \rvert^{2} \lvert k_{4} \rvert^{2} } \right)^{\frac{1}{2}} \left(  \frac{ ( \lvert k_{1}' \rvert^{2} \lvert \bar{s} - \bar{\sigma} \rvert)^{\eta}}{\lvert k_{1} ' \rvert^{2} \lvert k_{4} ' \rvert^{2}} \right)^{\frac{1}{2}} ] \nonumber\\
& \times (1_{k_{1} = k_{1}', k_{4} = k_{4}'} + 1_{k_{1} = k_{4}', k_{4} = k_{1}'})   \nonumber\\
\lesssim& \frac{ \epsilon^{\eta} (\lvert k_{1} \rvert^{\eta} + \lvert k_{4} \rvert^{\eta}) \lvert k_{1} \rvert^{\frac{\eta}{2}} \lvert k_{1}' \rvert^{\frac{\eta}{2}} \lvert s-\sigma \rvert^{\frac{\eta}{4}} \lvert \bar{s} - \bar{\sigma} \rvert^{\frac{\eta}{4}}}{\lvert k_{1} \rvert \lvert k_{4} \rvert \lvert k_{1}' \rvert \lvert k_{4}' \rvert} (1_{k_{1} = k_{1}', k_{4} = k_{4}'} + 1_{k_{1} = k_{4}', k_{4} = k_{1}'})  \nonumber
\end{align} 
where the first estimate is due to H$\ddot{\mathrm{o}}$lder's inequality and \eqref{[Equation (4.7)][ZZ17]} while the second due to an estimate similar to \eqref{[Equation (4.7o)][ZZ17]}. We apply \eqref{estimate 250} to \eqref{estimate 249} to deduce for $\epsilon \in (0, \eta)$, 
\begin{align}\label{estimate 21}
O_{q, t, i_{0} j_{0}}^{12} \lesssim& \epsilon^{\eta} t^{\epsilon} \sum_{k} \sum_{\lvert i-j \rvert \leq 1, \lvert i'-j'\rvert \leq 1} \sum_{k_{1}, k_{4}, k_{1}', k_{4}' \neq 0: k_{14} = k_{14}' = k} \nonumber\\
& \times  \theta(2^{-q} k)^{2}  \theta(2^{-i} k_{1}) \theta(2^{-i'} k_{1}') \theta(2^{-j} k_{4}) \theta(2^{-j'} k_{4}')   \nonumber \\
& \times (1_{k_{1} = k_{1}', k_{4} = k_{4}'} + 1_{k_{1} = k_{4}', k_{4} = k_{1}'}) \frac{ ( \lvert k_{1} \rvert^{\eta} + \lvert k_{4} \rvert^{\eta})}{ \lvert k_{1} \rvert^{2- \frac{\epsilon}{2} - \frac{\eta}{2}} \lvert k_{1}' \rvert^{2- \frac{\epsilon}{2} - \frac{\eta}{2}} \lvert k_{4} \rvert \lvert k_{4} ' \rvert}
\end{align} 
by \eqref{key estimate}. Finally, applying \eqref{[Equation (4.7o)][ZZ17]} to \eqref{estimate 249} leads to for $\epsilon \in (0, \eta)$, 
\begin{align}\label{estimate 22}
O_{q, t, i_{0} j_{0}}^{13} 
\lesssim&  \sum_{k} \sum_{\lvert i-j \rvert \leq 1, \lvert i'-j'\rvert \leq 1} \sum_{k_{1}, k_{2}, k_{4}, k_{1}', k_{2}', k_{4}' \neq 0: k_{14} = k_{14}' = k}\theta(2^{-q} k)^{2}  \theta(2^{-i} k_{1}) \theta(2^{-i'} k_{1}')\nonumber\\
& \times \theta(2^{-j} k_{4}) \theta(2^{-j'} k_{4}') \int_{[0,t]^{2}} \int_{0}^{s} \int_{0}^{\bar{s}}  (1_{k_{1} = k_{1}', k_{4} = k_{4}'} + 1_{k_{1} = k_{4}', k_{4} = k_{1}'})  \nonumber\\
& \times  \frac{1}{ \lvert k_{1} \rvert^{2- \frac{\epsilon}{2} - \eta}} \frac{1}{\lvert k_{1}' \rvert^{2- \frac{\epsilon}{2} - \eta}} \frac{1}{(t-s)^{1- \frac{\epsilon}{4}} ( t- \bar{s})^{1- \frac{\epsilon}{4}}}\frac{1}{\lvert k_{4} \rvert \lvert k_{4}' \rvert} \lvert s-\sigma \rvert^{\frac{\eta}{2}} \lvert \bar{s} - \bar{\sigma} \rvert^{\frac{\eta}{2}}\nonumber\\
& \times  \frac{ \lvert k_{12} \rvert}{\lvert k_{2} \rvert^{2}} \frac{ \lvert k_{12}'\rvert}{\lvert k_{2}' \rvert^{2}}  e^{- \lvert k_{12} \rvert^{2} \bar{c}_{f}(s-\sigma)} e^{- \lvert k_{2} \rvert^{2} \bar{c}_{f}(s-\sigma)} (\lvert \epsilon k_{12} \rvert^{\frac{\eta}{2}} + \lvert \epsilon k_{2} \rvert^{\frac{\eta}{2}}) \nonumber\\
& \hspace{17mm} \times e^{- \lvert k_{12}' \rvert^{2} \bar{c}_{f} (\bar{s} - \bar{\sigma})} e^{- \lvert k_{2}' \rvert^{2} \bar{c}_{f} (\bar{s} - \bar{\sigma})} (\lvert \epsilon k_{12}' \rvert^{\frac{\eta}{2}} + \lvert \epsilon k_{2}' \rvert^{\frac{\eta}{2}}) d \bar{\sigma}d \sigma ds d \bar{s} \nonumber\\
\lesssim&  \epsilon^{\eta}  t^{\epsilon} \sum_{k} \sum_{\lvert i-j \rvert \leq 1, \lvert i'-j'\rvert \leq 1} \sum_{k_{1}, k_{4}, k_{1}', k_{4}' \neq 0: k_{14} = k_{14}' = k}\theta(2^{-q} k)^{2}  \theta(2^{-i} k_{1}) \theta(2^{-i'} k_{1}')\nonumber\\
& \times \theta(2^{-j} k_{4}) \theta(2^{-j'} k_{4}')   (1_{k_{1} = k_{1}', k_{4} = k_{4}'} + 1_{k_{1} = k_{4}', k_{4} = k_{1}'})  \nonumber\\
& \times \frac{1}{ \lvert k_{1} \rvert^{2- \frac{\epsilon}{2} - \eta}} \frac{1}{\lvert k_{1}' \rvert^{2- \frac{\epsilon}{2} - \eta}} \frac{1}{\lvert k_{4} \rvert \lvert k_{4}' \rvert}
\end{align} 
by \eqref{key estimate}, \eqref{[Equation (4.2)][ZZ17]} and \eqref{[Equation (4.3)][ZZ17]}. Applying \eqref{estimate 20}, \eqref{estimate 21} and \eqref{estimate 22} to \eqref{estimate 251} leads to  
\begin{align}\label{estimate 252}
&O_{q, t, i_{0} j_{0}}^{1} \lesssim \epsilon^{\eta} t^{\epsilon} \sum_{k} \sum_{\lvert i-j \rvert \leq 1, \lvert i' - j' \rvert \leq 1} \sum_{k_{1}, k_{4} \neq 0: k_{14} = k} \theta(2^{-q} k)^{2} \theta(2^{-i} k_{1}) \theta(2^{-j} k_{4}) \nonumber\\
& \hspace{5mm} \times [ \theta(2^{-i'} k_{1}) \theta(2^{-j'} k_{4}) [ \frac{1}{\lvert k_{1} \rvert^{4- 3\eta - \epsilon} \lvert k_{4} \rvert^{2}} + \frac{1}{\lvert k_{1} \rvert^{4-\epsilon - 2\eta} \lvert k_{4} \rvert^{2}} + \frac{1}{ \lvert k_{1} \rvert^{4- \epsilon - \eta} \lvert k_{4} \rvert^{2-\eta}}] \nonumber\\ 
& \hspace{5mm} + \theta(2^{-i'} k_{4}) \theta(2^{-j'} k_{1}) [ \frac{1}{ \lvert k_{1} \rvert^{3- \frac{3\eta}{2} - \frac{\epsilon}{2}} \lvert k_{4} \rvert^{3- \frac{3\eta}{2} - \frac{\epsilon}{2}}} + \frac{1}{\lvert k_{1} \rvert^{3- \frac{\epsilon}{2} - \frac{3\eta}{2}} \lvert k_{4} \rvert^{3- \frac{\epsilon}{2} - \frac{\eta}{2}}} \nonumber\\
& \hspace{10mm} + \frac{1}{\lvert k_{1} \rvert^{3- \frac{\epsilon}{2} - \frac{\eta}{2}} \lvert k_{4} \rvert^{3- \frac{\epsilon}{2} - \frac{3\eta}{2}}} + \frac{1}{ \lvert k_{1} \rvert^{3- \frac{\epsilon}{2} - \eta} \lvert k_{4} \rvert^{3- \frac{\epsilon}{2} -\eta} } ] ]  \lesssim \epsilon^{\eta} t^{\epsilon} 2^{q(3\eta + \epsilon)}
\end{align} 
by relying on Lemma \ref{Lemma 3.13}. Applying \eqref{estimate 252} to \eqref{estimate 253} gives us 
\begin{equation}\label{estimate 254}
 \mathbb{E} [ \lvert \Delta_{q} (L_{t, i_{0}j_{0}}^{52} - \tilde{L}_{t, i_{0}j_{0}}^{52}) \rvert^{2} ]  \lesssim \epsilon^{\eta} t^{\epsilon} 2^{q(3\eta + \epsilon)}.
\end{equation}
Next, we compute from \eqref{estimate 24} and \eqref{estimate 25} that 
\begin{align}\label{estimate 26}
\mathbb{E} [ \lvert \Delta_{q} ( \tilde{L}_{t, i_{0}j_{0}}^{52} - \bar{L}_{t, i_{0}j_{0}}^{52}) \rvert^{2}] \lesssim \sum_{l=1}^{2} Q_{q, t, j_{0}}^{l}
\end{align} 
where 
\begin{subequations}
\begin{align}
Q_{q, t, j_{0}}^{1} &\triangleq \mathbb{E} [ \lvert \sum_{k\neq 0} \sum_{\lvert i-j \rvert \leq 1} \sum_{k_{1}, k_{2}, k_{4} \neq 0: k_{14} = k} \sum_{i_{1}, i_{2}, i_{3}, j_{1} =1}^{3} \theta(2^{-q} k) \theta(2^{-i} k_{1}) \theta(2^{-j} k_{4}) e_{k} \\
& \times [ - \int_{0}^{t} : \hat{X}_{s,b}^{\epsilon, i_{2}}(k_{1}) \hat{X}_{t,b}^{\epsilon, j_{0}}(k_{4}): e^{- \lvert k_{1} \rvert^{2} f(\epsilon k_{1})(t-s)} k_{1}^{j_{1}} g(\epsilon k_{1}^{j_{1}}) \nonumber\\
& \times  \int_{0}^{s} \frac{ e^{- \lvert k_{2} \rvert^{2} f(\epsilon k_{2}) (s-\sigma) } h_{u}(\epsilon k_{2}) h_{b}(\epsilon k_{2})}{\lvert k_{2} \rvert^{2} f(\epsilon k_{2})}  [ e^{- \lvert k_{12} \rvert^{2} f(\epsilon k_{12}) (s-\sigma)} k_{12}^{i_{3}} g(\epsilon k_{12}^{i_{3}}) \hat{\mathcal{P}}^{i_{1} i_{2}}(k_{12}) \nonumber\\
& \hspace{15mm} - e^{- \lvert k_{2} \rvert^{2} f(\epsilon k_{2})(s-\sigma)} k_{2}^{i_{3}} g(\epsilon k_{2}^{i_{3}}) \hat{\mathcal{P}}^{i_{1} i_{2}}(k_{2})] d \sigma ds \nonumber \\
&+ \int_{0}^{t} : \hat{\bar{X}}_{s,b}^{\epsilon, i_{2}}(k_{1}) \hat{\bar{X}}_{t,b}^{\epsilon, j_{0}}(k_{4}): e^{- \lvert k_{1} \rvert^{2} (t-s)} k_{1}^{j_{1}} i \int_{0}^{s} \frac{ e^{- \lvert k_{2} \rvert^{2} (s-\sigma) } h_{u}(\epsilon k_{2}) h_{b}(\epsilon k_{2})}{\lvert k_{2} \rvert^{2}} \nonumber\\
& \hspace{5mm} \times [ e^{- \lvert k_{12} \rvert^{2} (s-\sigma)} k_{12}^{i_{3}} i \hat{\mathcal{P}}^{i_{1} i_{2}}(k_{12}) - e^{- \lvert k_{2} \rvert^{2}(s-\sigma)} k_{2}^{i_{3}} i \hat{\mathcal{P}}^{i_{1} i_{2}}(k_{2})] d \sigma ds] \rvert^{2} ], \nonumber\\
Q_{q, t, j_{0}}^{2} &\triangleq \mathbb{E} [ \lvert \sum_{k\neq 0} \sum_{\lvert i-j \rvert \leq 1} \sum_{k_{1}, k_{2}, k_{4} \neq 0: k_{14} = k} \sum_{i_{1}, i_{2}, i_{3}, j_{1} =1}^{3} \theta(2^{-q} k) \theta(2^{-i} k_{1}) \theta(2^{-j} k_{4}) e_{k} \\
& \times [\int_{0}^{t} : \hat{X}_{s,u}^{\epsilon, i_{2}}(k_{1}) \hat{X}_{t,b}^{\epsilon, j_{0}}(k_{4}): e^{- \lvert k_{1} \rvert^{2} f(\epsilon k_{1})(t-s)} k_{1}^{j_{1}} g(\epsilon k_{1}^{j_{1}}) \nonumber\\
& \times \int_{0}^{s} \frac{ e^{- \lvert k_{2} \rvert^{2} f(\epsilon k_{2}) (s-\sigma) } h_{b}(\epsilon k_{2})^{2}}{\lvert k_{2} \rvert^{2} f(\epsilon k_{2})} [ e^{- \lvert k_{12} \rvert^{2} f(\epsilon k_{12}) (s-\sigma)} k_{12}^{i_{3}} g(\epsilon k_{12}^{i_{3}}) \hat{\mathcal{P}}^{i_{1} i_{2}}(k_{12}) \nonumber\\
& \hspace{15mm}  - e^{- \lvert k_{2} \rvert^{2} f(\epsilon k_{2})(s-\sigma)} k_{2}^{i_{3}} g(\epsilon k_{2}^{i_{3}}) \hat{\mathcal{P}}^{i_{1} i_{2}}(k_{2})] d \sigma ds \nonumber \\
& - \int_{0}^{t} : \hat{\bar{X}}_{s,u}^{\epsilon, i_{2}}(k_{1}) \hat{\bar{X}}_{t,b}^{\epsilon, j_{0}}(k_{4}): e^{- \lvert k_{1} \rvert^{2} (t-s)} k_{1}^{j_{1}} i \int_{0}^{s} \frac{ e^{- \lvert k_{2} \rvert^{2} (s-\sigma) } h_{b}(\epsilon k_{2})^{2}}{\lvert k_{2} \rvert^{2}} \nonumber\\
& \hspace{5mm} \times [ e^{- \lvert k_{12} \rvert^{2} (s-\sigma)} k_{12}^{i_{3}} i \hat{\mathcal{P}}^{i_{1} i_{2}}(k_{12}) - e^{- \lvert k_{2} \rvert^{2}(s-\sigma)} k_{2}^{i_{3}} i \hat{\mathcal{P}}^{i_{1} i_{2}}(k_{2})] d \sigma ds ] \rvert^{2} ]. \nonumber
\end{align}
\end{subequations} 
W.l.o.g. we work on $Q_{q, t, j_{0}}^{2}$. We further bound it by 
\begin{align}\label{estimate 263}
Q_{q, t, j_{0}}^{2} \lesssim \sum_{l=1}^{3} Q_{q, t, j_{0}}^{2l} 
\end{align}
where 
\begin{subequations}
\begin{align} 
Q_{q, t, j_{0}}^{21} &\approx \sum_{k\neq 0} \sum_{\lvert i-j \rvert \leq 1, \lvert i'-j' \rvert \leq 1} \sum_{k_{1}, k_{2}, k_{4}, k_{1}', k_{2}', k_{4}' \neq 0: k_{14} = k_{14}' = k} \sum_{i_{1}, i_{2}, i_{3}, j_{1}, i_{1}', i_{2}', i_{3}', j_{1}' = 1}^{3} \label{estimate 27}\\
& \times \theta(2^{-q} k)^{2} \theta(2^{-i} k_{1}) \theta(2^{-i'} k_{1}') \theta(2^{-j} k_{4}) \theta(2^{-j'} k_{4}') \nonumber\\
& \times \int_{[0,t]^{2}} \mathbb{E} [ : \hat{X}_{s,u}^{\epsilon, i_{2}}(k_{1}) \hat{X}_{t,b}^{\epsilon, j_{0}}(k_{4}): \overline{: \hat{X}_{\bar{s}, u}^{\epsilon, i_{2}'}(k_{1}') \hat{X}_{t,b}^{\epsilon, j_{0}} (k_{4}'):}] \nonumber\\
& \hspace{15mm} \times e^{- \vert k_{1} \rvert^{2} f(\epsilon k_{1}) (t-s)} k_{1}^{j_{1}} g(\epsilon k_{1}^{j_{1}}) e^{- \lvert k_{1}' \rvert^{2} f(\epsilon k_{1}') (t- \bar{s})} (k_{1}')^{j_{1}'} g(\epsilon (k_{1}')^{j_{1}'}) \nonumber\\
& \times \int_{0}^{s} \int_{0}^{\bar{s}} \lvert  \frac{ e^{- \lvert k_{2} \rvert^{2} f(\epsilon k_{2}) (s-\sigma)} h_{b}(\epsilon k_{2})^{2}}{\lvert k_{2} \rvert^{2} f(\epsilon k_{2})} [ e^{- \lvert k_{12} \rvert^{2} f(\epsilon k_{12}) (s-\sigma)} k_{12}^{i_{3}} g(\epsilon k_{12}^{i_{3}}) \hat{\mathcal{P}}^{i_{1} i_{2}}(k_{12}) \nonumber\\
& \hspace{15mm} - e^{- \lvert k_{2} \rvert^{2} f(\epsilon k_{2}) (s-\sigma)} k_{2}^{i_{3}} g(\epsilon k_{2}^{i_{3}}) \hat{\mathcal{P}}^{i_{1} i_{2}}(k_{2})] \nonumber\\
& \hspace{5mm} - \frac{ e^{- \lvert k_{2} \rvert^{2} (s-\sigma)} h_{b}(\epsilon k_{2})^{2}}{\lvert k_{2} \rvert^{2}} \nonumber\\
& \hspace{15mm} \times [ e^{- \lvert k_{12} \rvert^{2} (s-\sigma)} k_{12}^{i_{3}} i \hat{\mathcal{P}}^{i_{1}i_{2}}(k_{12}) - e^{- \lvert k_{2} \rvert^{2} (s-\sigma)} k_{2}^{i_{3}} i \hat{\mathcal{P}}^{i_{1} i_{2}} (k_{2}) ] \rvert \nonumber\\
& \times \lvert  \frac{ e^{- \lvert k_{2}' \rvert^{2} f(\epsilon k_{2}') (\bar{s}-\bar{\sigma})} h_{b}(\epsilon k_{2}')^{2}}{\lvert k_{2}' \rvert^{2} f(\epsilon k_{2}')} [ e^{- \lvert k_{12}' \rvert^{2} f(\epsilon k_{12}') (\bar{s}-\bar{\sigma})} (k_{12}')^{i_{3}'} g(\epsilon (k_{12}')^{i_{3}'}) \hat{\mathcal{P}}^{i_{1}' i_{2}'}(k_{12}') \nonumber\\
& \hspace{15mm} - e^{- \lvert k_{2}' \rvert^{2} f(\epsilon k_{2}') (\bar{s}-\bar{\sigma})} (k_{2}')^{i_{3}'} g(\epsilon (k_{2}')^{i_{3}'}) \hat{\mathcal{P}}^{i_{1}' i_{2}'}(k_{2}')] \nonumber\\
& \hspace{5mm} - \frac{ e^{- \lvert k_{2}' \rvert^{2} (\bar{s}-\bar{\sigma})} h_{b}(\epsilon k_{2}')^{2}}{\lvert k_{2}' \rvert^{2}}[ e^{- \lvert k_{12}' \rvert^{2} (\bar{s}-\bar{\sigma})} (k_{12}')^{i_{3}'} i \hat{\mathcal{P}}^{i_{1}'i_{2}'}(k_{12}') \nonumber\\
& \hspace{15mm}  - e^{- \lvert k_{2}' \rvert^{2} (\bar{s}-\bar{\sigma})} (k_{2}')^{i_{3}'} i \hat{\mathcal{P}}^{i_{1}' i_{2}'} (k_{2}') ] \rvert  d\bar{\sigma}d \sigma ds d \bar{s}, \nonumber\\
Q_{q, t, j_{0}}^{22} &\approx \sum_{k\neq 0} \sum_{\lvert i-j \rvert \leq 1, \lvert i'-j' \rvert \leq 1} \sum_{k_{1}, k_{2}, k_{4}, k_{1}', k_{2}', k_{4}' \neq 0: k_{14} = k_{14}' = k} \sum_{i_{1}, i_{2}, i_{3}, j_{1}, i_{1}', i_{2}', i_{3}', j_{1}' = 1}^{3} \label{estimate 28}\\
& \times \theta(2^{-q} k)^{2} \theta(2^{-i} k_{1}) \theta(2^{-i'} k_{1}') \theta(2^{-j} k_{4}) \theta(2^{-j'} k_{4}') \nonumber\\
& \times \int_{[0,t]^{2}} \mathbb{E} [ : \hat{X}_{s,u}^{\epsilon, i_{2}}(k_{1}) \hat{X}_{t,b}^{\epsilon, j_{0}}(k_{4}): \overline{: \hat{X}_{\bar{s}, u}^{\epsilon, i_{2}'}(k_{1}') \hat{X}_{t,b}^{\epsilon, j_{0}} (k_{4}'):}] \nonumber\\
& \hspace{2mm} \times \lvert e^{- \lvert k_{1} \rvert^{2} f(\epsilon k_{1}) (t-s)} k_{1}^{j_{1}} g(\epsilon k_{1}^{j_{1}}) - e^{- \lvert k_{1} \rvert^{2} (t-s) } k_{1}^{j_{1}} i \rvert \nonumber\\
& \hspace{2mm} \times \lvert e^{- \lvert k_{1}' \rvert^{2} f(\epsilon k_{1}') (t-\bar{s})} (k_{1}')^{j_{1}'} g(\epsilon (k_{1}')^{j_{1}'} ) - e^{- \lvert k_{1}' \rvert^{2} (t- \bar{s})} (k_{1}')^{j_{1}'} i \rvert \nonumber\\
& \times \int_{0}^{s} \int_{0}^{\bar{s}} \frac{ e^{- \lvert k_{2} \rvert^{2} (s-\sigma)} h_{b}(\epsilon k_{2})^{2}}{\lvert k_{2} \rvert^{2}} \frac{ e^{- \lvert k_{2}' \rvert^{2} (\bar{s} - \bar{\sigma})}h_{b}(\epsilon k_{2}')^{2}}{\lvert k_{2}' \rvert^{2}} \nonumber\\
& \hspace{2mm} \times \lvert e^{- \lvert k_{12} \rvert^{2} (s-\sigma)} k_{12}^{i_{3}} i \hat{\mathcal{P}}^{i_{1} i_{2}} (k_{12}) - e^{- \lvert k_{2} \rvert^{2} (s-\sigma)} k_{2}^{i_{3}} i \hat{\mathcal{P}}^{i_{1}i_{2}} (k_{2}) \rvert \nonumber\\
& \hspace{2mm} \times \lvert e^{- \lvert k_{12}'\rvert^{2} (\bar{s} - \bar{\sigma})} (k_{12}')^{i_{3}'} i \hat{\mathcal{P}}^{i_{1}' i_{2}'} (k_{12}') - e^{- \lvert k_{2} ' \rvert^{2}(\bar{s} - \bar{\sigma})} (k_{2}')^{i_{3}'}  i \hat{\mathcal{P}}^{i_{1}' i_{2}'} (k_{2}') \rvert d\bar{\sigma}  d \sigma ds d \bar{s}, \nonumber\\
Q_{q, t, j_{0}}^{23} &\approx \sum_{k\neq 0} \sum_{\lvert i-j \rvert \leq 1, \lvert i'-j' \rvert \leq 1} \sum_{k_{1}, k_{2}, k_{4}, k_{1}', k_{2}', k_{4}' \neq 0: k_{14} = k_{14}' = k} \sum_{i_{1}, i_{2}, i_{3}, i_{1}', i_{2}', i_{3}' = 1}^{3} \label{estimate 29}\\
& \times \theta(2^{-q} k)^{2} \theta(2^{-i} k_{1}) \theta(2^{-i'} k_{1}') \theta(2^{-j} k_{4}) \theta(2^{-j'} k_{4}') \nonumber\\
& \times \int_{[0,t]^{2}} \mathbb{E} [ ( : \hat{X}_{s,u}^{\epsilon, i_{2}}(k_{1})\hat{X}_{t,b}^{\epsilon, j_{0}}(k_{4}): - : \hat{\bar{X}}_{s,u}^{\epsilon, i_{2}}(k_{1}) \hat{\bar{X}}_{t,b}^{\epsilon, j_{0}}(k_{4}): ) \nonumber\\
& \hspace{15mm} \times \overline{ (: \hat{X}_{\bar{s}, u}^{\epsilon, i_{2}'} (k_{1}') \hat{X}_{t,b}^{\epsilon, j_{0}}(k_{4}'): - : \hat{\bar{X}}_{\bar{s}, u}^{\epsilon, i_{2}'} (k_{1}') \hat{\bar{X}}_{t,b}^{\epsilon, j_{0}}(k_{4}') : )}]  \nonumber\\
& \hspace{15mm} \times e^{- \lvert k_{1} \rvert^{2} (t-s)} \lvert k_{1} \rvert e^{- \lvert k_{1}' \rvert^{2} (t-\bar{s})} \lvert k_{1}' \rvert \nonumber\\
& \times \int_{0}^{s} \int_{0}^{\bar{s}} \frac{e^{- \lvert k_{2} \rvert^{2} (s-\sigma)} h_{b}(\epsilon k_{2})^{2}}{\lvert k_{2} \rvert^{2}} \lvert e^{- \lvert k_{12} \rvert^{2} (s-\sigma)} k_{12}^{i_{3}}  \hat{\mathcal{P}}^{i_{1}i_{2}} (k_{12}) \nonumber\\
& \hspace{25mm} - e^{- \lvert k_{2} \rvert^{2} (s-\sigma)} k_{2}^{i_{3}} \hat{\mathcal{P}}^{i_{1}i_{2}}(k_{2}) \rvert \nonumber\\
& \hspace{15mm} \times \frac{ e^{- \lvert k_{2} ' \rvert^{2} (\bar{s} - \bar{\sigma})} h_{b}(\epsilon k_{2}')^{2}}{\lvert k_{2} ' \rvert^{2}} \lvert e^{- \lvert k_{12}'\rvert^{2} (\bar{s} - \bar{\sigma})} (k_{12}')^{i_{3}'}  \hat{\mathcal{P}}^{i_{1}' i_{2}'} (k_{12}')  \nonumber\\
& \hspace{25mm} - e^{- \lvert k_{2}' \rvert^{2} ( \bar{s} - \bar{\sigma})} (k_{2}')^{i_{3}'}  \hat{\mathcal{P}}^{i_{1}' i_{2}'} (k_{2}') \rvert d \bar{\sigma} d \sigma ds d \bar{s}. \nonumber
\end{align} 
\end{subequations} 
We compute
\begin{align}\label{estimate 257}
\mathbb{E} [ : \hat{X}_{s,u}^{\epsilon, i_{2}}(k_{1}) \hat{X}_{t,b}^{\epsilon, j_{0}}(k_{4}): \overline{: \hat{X}_{\bar{s}, u}^{\epsilon, i_{2}'}(k_{1}') \hat{X}_{t,b}^{\epsilon, j_{0}} (k_{4}'):}] \lesssim \frac{(1_{k_{1} = k_{1}', k_{4} = k_{4}'} + 1_{k_{1}  = k_{4}', k_{4} = k_{1}'})}{\lvert k_{1} \rvert \lvert k_{1}' \rvert \lvert k_{4} \rvert \lvert k_{4} ' \rvert}  
\end{align}
by Example \ref{Example 3.1} and \eqref{covariance} while for any $\eta \in [0,1]$, 
\begin{align}\label{estimate 259}
&\mathbb{E} [ ( : \hat{X}_{s,u}^{\epsilon, i_{2}}(k_{1})\hat{X}_{t,b}^{\epsilon, j_{0}}(k_{4}): - : \hat{\bar{X}}_{s,u}^{\epsilon, i_{2}}(k_{1}) \hat{\bar{X}}_{t,b}^{\epsilon, j_{0}}(k_{4}): ) \nonumber\\
& \hspace{15mm} \times \overline{ (: \hat{X}_{\bar{s}, u}^{\epsilon, i_{2}'} (k_{1}') \hat{X}_{t,b}^{\epsilon, j_{0}}(k_{4}'): - : \hat{\bar{X}}_{\bar{s}, u}^{\epsilon, i_{2}'} (k_{1}') \hat{\bar{X}}_{t,b}^{\epsilon, j_{0}}(k_{4}') : )}]  \nonumber \\
\lesssim& \left( \frac{ \lvert \epsilon k_{1} \rvert^{\frac{\eta}{2}} + \lvert \epsilon k_{4} \rvert^{\frac{\eta}{2}}}{\lvert k_{1} \rvert \lvert k_{4} \rvert} \right)   \left( \frac{ \lvert \epsilon k_{1}' \rvert^{\frac{\eta}{2}} + \lvert \epsilon k_{4}' \rvert^{\frac{\eta}{2}}}{\lvert k_{1}' \rvert \lvert k_{4}' \rvert} \right)  
\end{align}
by H$\ddot{\mathrm{o}}$lder's inequality and \eqref{[Equation (4.7)][ZZ17]}. Moreover, to estimate $Q_{q, t, j_{0}}^{21}$, we can estimate for $\eta \in (0, \frac{1}{2})$, 
\begin{align}\label{estimate 258}
& \lvert  \frac{ e^{- \lvert k_{2} \rvert^{2} f(\epsilon k_{2}) (s-\sigma)} h_{b}(\epsilon k_{2})^{2}}{\lvert k_{2} \rvert^{2} f(\epsilon k_{2})} [ e^{- \lvert k_{12} \rvert^{2} f(\epsilon k_{12}) (s-\sigma)} k_{12}^{i_{3}} g(\epsilon k_{12}^{i_{3}}) \hat{\mathcal{P}}^{i_{1} i_{2}}(k_{12}) \\
& \hspace{15mm} - e^{- \lvert k_{2} \rvert^{2} f(\epsilon k_{2}) (s-\sigma)} k_{2}^{i_{3}} g(\epsilon k_{2}^{i_{3}}) \hat{\mathcal{P}}^{i_{1} i_{2}}(k_{2})] \nonumber\\
& \hspace{5mm} - \frac{ e^{- \lvert k_{2} \rvert^{2} (s-\sigma)} h_{b}(\epsilon k_{2})^{2}}{\lvert k_{2} \rvert^{2}}[ e^{- \lvert k_{12} \rvert^{2} (s-\sigma)} k_{12}^{i_{3}} i \hat{\mathcal{P}}^{i_{1}i_{2}}(k_{12}) \nonumber\\
& \hspace{15mm} - e^{- \lvert k_{2} \rvert^{2} (s-\sigma)} k_{2}^{i_{3}} i \hat{\mathcal{P}}^{i_{1} i_{2}} (k_{2}) ] \rvert \nonumber \\ 
\lesssim& \frac{ e^{- \lvert k_{2} \rvert^{2} \bar{c}_{f} (s-\sigma)}}{\lvert k_{2} \rvert^{2}} ( \lvert k_{1} \rvert^{2\eta} \lvert s-\sigma \rvert^{-\frac{1}{2} + \eta} \wedge ( \lvert \epsilon k_{2} \rvert^{\eta} + \lvert \epsilon k_{12} \rvert^{\eta} e^{-\frac{1}{2} \lvert k_{12} \rvert^{2} \bar{c}_{f} (s-\sigma)} ) \lvert s-\sigma \rvert^{-\frac{1}{2}}) \nonumber
\end{align} 
where the first estimate is by Lemma \ref{Lemma 3.9} and the second estimate is by relying on \eqref{key estimate}, \eqref{[Equation (4.2)][ZZ17]} and \eqref{[Equation (4.3)][ZZ17]}; we note that the extra exponential decay term $e^{- \frac{1}{2} \lvert k_{12} \rvert^{2} \bar{c}_{f} (s-\sigma)}$ will be actually crucial in this estimate of $Q_{q, t, j_{0}}^{21}$. Thus, we continue our estimates on $Q_{q, t, j_{0}}^{21}$ for any $\epsilon \in (0, \eta)$ from \eqref{estimate 27} by 
\begin{align}\label{estimate 260}
Q_{q, t, j_{0}}^{21} 
\lesssim&  \epsilon^{\eta}    \sum_{k\neq 0} \sum_{\lvert i-j \rvert \leq 1, \lvert i' -j'\rvert \leq 1} \sum_{k_{1}, k_{4}, k_{1}', k_{4}' \neq 0: k_{14} = k_{14}' = k} \theta(2^{-q} k)^{2} \theta(2^{-i} k_{1}) \theta(2^{-i'} k_{1}')\nonumber\\
& \times \theta(2^{-j} k_{4}) \theta(2^{-j'} k_{4}') (1_{k_{1} = k_{1}', k_{4} = k_{4}'} + 1_{k_{1} = k_{4}', k_{4} = k_{1}'})  \frac{ \lvert k_{1} \rvert^{\eta} \lvert k_{1}' \rvert^{\eta}}{ \lvert k_{4} \rvert \lvert k_{4} ' \rvert} \nonumber\\
& \times \int_{[0,t]^{2}} \frac{1}{ [ \lvert k_{1} \rvert^{2} (t-s) ]^{1- \frac{\epsilon}{4}}} \frac{1}{ [ \lvert k_{1}' \rvert^{2} (t- \bar{s})]^{1- \frac{\epsilon}{4}}}  s^{\frac{\epsilon}{4}} \bar{s}^{\frac{\epsilon}{4}} ds d \bar{s}  \lesssim \epsilon^{\eta} t^{\epsilon} 2^{q(2\eta + \epsilon)}
\end{align}
where we used \eqref{estimate 257}, \eqref{estimate 258}, \eqref{key estimate}, that $2^{q}\lesssim 2^{i}$ so that $q \lesssim i$ and Lemma \ref{Lemma 3.13}. Next, continuing our estimate on $Q_{q, t, j_{0}}^{22}$ from \eqref{estimate 28}, for $\eta \in (0,1), \epsilon \in (0, 2 \eta)$, 
\begin{align}\label{estimate 261}
Q_{q, t, j_{0}}^{22} \lesssim& \epsilon^{\eta} t^{\epsilon} \sum_{k\neq 0} \sum_{\lvert i-j \rvert \leq 1, \lvert i'-j'\rvert \leq 1} \sum_{k_{1}, k_{4}, k_{1}', k_{4}' \neq 0: k_{14} = k_{14}' = k} \theta(2^{-q} k)^{2} \nonumber \\
& \times \theta(2^{-i} k_{1}) \theta(2^{-i'} k_{1}') \theta(2^{-j} k_{4}) \theta(2^{-j'} k_{4}')(1_{k_{1} = k_{1}', k_{4} = k_{4}'} + 1_{k_{1} = k_{4}', k_{4} = k_{1}'}) \nonumber\\
& \times \frac{ \lvert k_{1} \rvert^{\frac{3\eta}{2}} \lvert k_{1} '\rvert^{\frac{3\eta}{2}}}{\lvert k_{4} \rvert \lvert k_{4}'\rvert \lvert k_{1} \rvert^{2- \frac{\epsilon}{2}} \lvert k_{1}'\rvert^{2-\frac{\epsilon}{2}}}\lesssim \epsilon^{\eta} t^{\epsilon} 2^{q( 3 \eta + \epsilon)} 
\end{align} 
by \eqref{estimate 257}, \eqref{[Equation (4.2)][ZZ17]}, \eqref{[Equation (4.3)][ZZ17]}, \eqref{key estimate}, Lemmas \ref{Lemma 3.8} and \ref{Lemma 3.13}, that $2^{q} \lesssim 2^{i}$ so that $q \lesssim i$. Finally, continuing the estimate on $Q_{q, t, j_{0}}^{23}$ from \eqref{estimate 29}, we compute for $\eta \in (0,1)$, 
\begin{align}\label{estimate 262}
Q_{q, t, j_{0}}^{23} \lesssim& \epsilon^{\eta} t^{\epsilon} \sum_{k\neq 0}  \sum_{\lvert i-j \rvert \leq 1, \lvert i'-j' \rvert \leq 1} \sum_{k_{1}, k_{4}, k_{1}', k_{4}' \neq 0: k_{14} = k_{14}' = k}  \theta(2^{-q} k)^{2} \theta(2^{-i} k_{1}) \theta(2^{-i'} k_{1}')  \nonumber\\
& \times \theta(2^{-j} k_{4}) \theta(2^{-j'} k_{4}') (1_{k_{1} =k_{1}', k_{4} = k_{4}'} + 1_{k_{1} = k_{4}', k_{4} = k_{1}'})  \nonumber\\
& \hspace{25mm} \times \frac{ ( \lvert k_{1} \rvert^{\frac{\eta}{2}} + \lvert k_{4} \rvert^{\frac{\eta}{2}}) (\lvert k_{1}' \rvert^{\frac{\eta}{2}} + \lvert k_{4}' \rvert^{\frac{\eta}{2}})}{ \lvert k_{4} \rvert \lvert k_{4}' \rvert \lvert k_{1} \rvert^{2- \frac{\epsilon}{2} - \eta} \lvert k_{1}' \rvert^{2- \frac{\epsilon}{2} - \eta}} 
\lesssim \epsilon^{\eta} t^{\epsilon} 2^{q(3\eta + \epsilon)} 
\end{align}
by \eqref{estimate 259}, Lemmas \ref{Lemma 3.8} and \ref{Lemma 3.13}, \eqref{key estimate}, that  $2^{q} \lesssim 2^{i}$ so that $q \lesssim i$. Applying \eqref{estimate 260}-\eqref{estimate 262} to \eqref{estimate 263} and then \eqref{estimate 26}, and considering the resulting inequality together with \eqref{estimate 254}, along with  \eqref{estimate 248} and \eqref{estimate 264}, we obtain 
\begin{equation}\label{estimate 265}
 \mathbb{E} [ \lvert \Delta_{q} (L_{t, i_{0}j_{0}}^{5} - \bar{L}_{t, i_{0}j_{0}}^{5}) \rvert^{2}]   \lesssim \epsilon^{\eta} t^{\epsilon} 2^{q (3 \eta + \epsilon)}.
\end{equation} 
The estimate on $L_{t, i_{0} j_{0}}^{6} - \bar{L}_{t, i_{0} j_{0}}^{6}$ can be obtained similarly. \\

\emph{Terms in the fourth chaos: $L_{t, i_{0}j_{0}}^{1}$ in \eqref{[Equation (4.6f)][ZZ17]}}

We strategically split the eight terms within $L_{t, i_{0}j_{0}}^{1}$ on \eqref{[Equation (4.6f)][ZZ17]} to four so that $L_{t, i_{0}j_{0}}^{1} = \sum_{i=1}^{4} L_{t, i_{0}j_{0}}^{1i}$ where 
\begin{subequations} 
\begin{align}
L_{t, i_{0}j_{0}}^{11} \triangleq& (2\pi)^{-\frac{9}{2}} \sum_{\lvert i-j \rvert \leq 1} \sum_{i_{1}, i_{2}, i_{3}, j_{1} =1}^{3} \sum_{k} \sum_{k_{1}, k_{2}, k_{3}, k_{4} \neq 0: k_{1234} = k} \theta(2^{-i} k_{123}) \theta(2^{-j} k_{4}) \nonumber \\
& \times \int_{0}^{t} [ e^{- \lvert k_{123} \rvert^{2} f(\epsilon k_{123}) (t-s)} \int_{0}^{s} : \hat{X}_{\sigma, u}^{\epsilon, i_{2}}(k_{1}) \hat{X}_{\sigma, u}^{\epsilon, i_{3}}(k_{2}) \hat{X}_{s,u}^{\epsilon, j_{1}}(k_{3}) \hat{X}_{t,b}^{\epsilon, j_{0}}(k_{4}): \nonumber \\
& \hspace{10mm} \times e^{- \lvert k_{12} \rvert^{2}  f(\epsilon k_{12})(s-\sigma)} d\sigma k_{123}^{j_{1}} g(\epsilon k_{123}^{j_{1}}) k_{12}^{i_{3}} g(\epsilon k_{12}^{i_{3}}) \nonumber \\
& - e^{- \lvert k_{123} \rvert^{2} (t-s) } \int_{0}^{s}  : \hat{\bar{X}}_{\sigma, u}^{\epsilon, i_{2}}(k_{1}) \hat{\bar{X}}_{\sigma, u}^{\epsilon, i_{3}}(k_{2}) \hat{\bar{X}}_{s,u}^{\epsilon, j_{1}}(k_{3}) \hat{\bar{X}}_{t,b}^{\epsilon, j_{0}}(k_{4}): \nonumber \\
& \hspace{10mm} \times e^{- \lvert k_{12} \rvert^{2} (s-\sigma)} d\sigma k_{123}^{j_{1}} i k_{12}^{i_{3}} i ] ds \hat{\mathcal{P}}^{i_{0} i_{1}}(k_{123}) \hat{\mathcal{P}}^{i_{1} i_{2}}(k_{12}) e_{k},  \\
L_{t, i_{0}j_{0}}^{12} \triangleq& (2\pi)^{-\frac{9}{2}} \sum_{\lvert i-j \rvert \leq 1} \sum_{i_{1}, i_{2}, i_{3}, j_{1} =1}^{3} \sum_{k} \sum_{k_{1}, k_{2}, k_{3}, k_{4} \neq 0: k_{1234} = k} \theta(2^{-i} k_{123}) \theta(2^{-j} k_{4}) \nonumber\\
& \times \int_{0}^{t} [ - e^{- \lvert k_{123} \rvert^{2}  f(\epsilon k_{123})(t-s)} \int_{0}^{s} : \hat{X}_{\sigma, b}^{\epsilon, i_{2}}(k_{1}) \hat{X}_{\sigma, b}^{\epsilon, i_{3}}(k_{2}) \hat{X}_{s,u}^{\epsilon, j_{1}}(k_{3}) \hat{X}_{t,b}^{\epsilon, j_{0}}(k_{4}): \nonumber \\
& \hspace{10mm} \times e^{- \lvert k_{12} \rvert^{2}  f(\epsilon k_{12})(s-\sigma)}  d\sigma k_{123}^{j_{1}} g(\epsilon k_{123}^{j_{1}}) k_{12}^{i_{3}} g(\epsilon k_{12}^{i_{3}}) \nonumber \\
& + e^{- \lvert k_{123} \rvert^{2} (t-s) } \int_{0}^{s}  : \hat{\bar{X}}_{\sigma, b}^{\epsilon, i_{2}}(k_{1}) \hat{\bar{X}}_{\sigma, b}^{\epsilon, i_{3}}(k_{2}) \hat{\bar{X}}_{s,u}^{\epsilon, j_{1}}(k_{3}) \hat{\bar{X}}_{t,b}^{\epsilon, j_{0}}(k_{4}): \nonumber \\
& \hspace{10mm} \times e^{- \lvert k_{12} \rvert^{2} (s-\sigma)} d\sigma k_{123}^{j_{1}} i k_{12}^{i_{3}} i ] ds \hat{\mathcal{P}}^{i_{0} i_{1}}(k_{123}) \hat{\mathcal{P}}^{i_{1} i_{2}}(k_{12}) e_{k},  \\
L_{t, i_{0}j_{0}}^{13} \triangleq& (2\pi)^{-\frac{9}{2}} \sum_{\lvert i-j \rvert \leq 1} \sum_{i_{1}, i_{2}, i_{3}, j_{1} =1}^{3} \sum_{k} \sum_{k_{1}, k_{2}, k_{3}, k_{4} \neq 0: k_{1234} = k} \theta(2^{-i} k_{123}) \theta(2^{-j} k_{4}) \nonumber\\
& \times \int_{0}^{t} [ -  e^{- \lvert k_{123} \rvert^{2}  f(\epsilon k_{123})(t-s)} \int_{0}^{s} : \hat{X}_{\sigma, b}^{\epsilon, i_{2}}(k_{1}) \hat{X}_{\sigma, u}^{\epsilon, i_{3}}(k_{2}) \hat{X}_{s,b}^{\epsilon, j_{1}}(k_{3}) \hat{X}_{t,b}^{\epsilon, j_{0}}(k_{4}): \nonumber \\
& \hspace{10mm} \times e^{- \lvert k_{12} \rvert^{2}  f(\epsilon k_{12})(s-\sigma)} d\sigma k_{123}^{j_{1}} g(\epsilon k_{123}^{j_{1}}) k_{12}^{i_{3}} g(\epsilon k_{12}^{i_{3}}) \nonumber \\
& + e^{- \lvert k_{123} \rvert^{2} (t-s) } \int_{0}^{s}  : \hat{\bar{X}}_{\sigma, b}^{\epsilon, i_{2}}(k_{1}) \hat{\bar{X}}_{\sigma, u}^{\epsilon, i_{3}}(k_{2}) \hat{\bar{X}}_{s,b}^{\epsilon, j_{1}}(k_{3}) \hat{\bar{X}}_{t,b}^{\epsilon, j_{0}}(k_{4}): \nonumber \\
& \hspace{10mm} \times e^{- \lvert k_{12} \rvert^{2} (s-\sigma)} d\sigma k_{123}^{j_{1}} i k_{12}^{i_{3}} i ] ds \hat{\mathcal{P}}^{i_{0} i_{1}}(k_{123}) \hat{\mathcal{P}}^{i_{1} i_{2}}(k_{12}) e_{k},   \\
L_{t, i_{0}j_{0}}^{14} \triangleq& (2\pi)^{-\frac{9}{2}} \sum_{\lvert i-j \rvert \leq 1} \sum_{i_{1}, i_{2}, i_{3}, j_{1} =1}^{3} \sum_{k} \sum_{k_{1}, k_{2}, k_{3}, k_{4} \neq 0: k_{1234} = k} \theta(2^{-i} k_{123}) \theta(2^{-j} k_{4}) \nonumber\\
& \times \int_{0}^{t} [ e^{- \lvert k_{123} \rvert^{2} f(\epsilon k_{123})(t-s)} \int_{0}^{s} : \hat{X}_{\sigma, u}^{\epsilon, i_{2}}(k_{1}) \hat{X}_{\sigma, b}^{\epsilon, i_{3}}(k_{2}) \hat{X}_{s,b}^{\epsilon, j_{1}}(k_{3}) \hat{X}_{t,b}^{\epsilon, j_{0}}(k_{4}): \nonumber \\
& \hspace{10mm} \times e^{- \lvert k_{12} \rvert^{2}  f(\epsilon k_{12})(s-\sigma)} d\sigma k_{123}^{j_{1}} g(\epsilon k_{123}^{j_{1}}) k_{12}^{i_{3}} g(\epsilon k_{12}^{i_{3}}) \nonumber \\
& - e^{- \lvert k_{123} \rvert^{2} (t-s) } \int_{0}^{s}  : \hat{\bar{X}}_{\sigma, u}^{\epsilon, i_{2}}(k_{1}) \hat{\bar{X}}_{\sigma, b}^{\epsilon, i_{3}}(k_{2}) \hat{\bar{X}}_{s,b}^{\epsilon, j_{1}}(k_{3}) \hat{\bar{X}}_{t,b}^{\epsilon, j_{0}}(k_{4}): \nonumber \\
& \hspace{10mm} \times e^{- \lvert k_{12} \rvert^{2} (s-\sigma)} d\sigma k_{123}^{j_{1}} i k_{12}^{i_{3}} i ] ds \hat{\mathcal{P}}^{i_{0} i_{1}}(k_{123}) \hat{\mathcal{P}}^{i_{1} i_{2}}(k_{12}) e_{k}. 
\end{align} 
\end{subequations}
W.l.o.g. we show the necessary estimate on $L_{t, i_{0}j_{0}}^{14}$ as those of others are similar. We compute 
\begin{align*}
& \mathbb{E} [ \lvert \Delta_{q} L_{t, i_{0}j_{0}}^{14} \rvert^{2}] \nonumber\\
\lesssim& \mathbb{E} [ \lvert \sum_{k} \sum_{\lvert i-j \rvert \leq 1} \sum_{i_{1}, i_{2}, i_{3}, j_{1} =1}^{3} \sum_{k_{1}, k_{2}, k_{3}, k_{4} \neq 0: k_{1234} = k} \theta(2^{-q} k) \theta(2^{-i} k_{123}) \theta(2^{-j} k_{4})\nonumber\\
& \times \int_{0}^{t} [ e^{- \lvert k_{123} \rvert^{2}  f(\epsilon k_{123})(t-s)}  k_{123}^{j_{1}} g( \epsilon k_{123}^{j_{1}}) - e^{- \lvert k_{123} \rvert^{2} (t-s) } k_{123}^{j_{1}} i ] \nonumber\\
& \times \int_{0}^{s} : \hat{X}_{\sigma, u}^{\epsilon, i_{2}}(k_{1}) \hat{X}_{\sigma, b}^{\epsilon, i_{3}}(k_{2}) \hat{X}_{s,b}^{\epsilon, j_{1}}(k_{3}) \hat{X}_{t,b}^{\epsilon, j_{0}}(k_{4}): \nonumber \\
& \hspace{10mm} \times e^{- \lvert k_{12} \rvert^{2}  f(\epsilon k_{12})(s-\sigma)}   k_{12}^{i_{3}} g(\epsilon k_{12}^{i_{3}})d \sigma ds  \hat{\mathcal{P}}^{i_{0} i_{1}}(k_{123}) \hat{\mathcal{P}}^{i_{1} i_{2}}(k_{12}) e_{k} \rvert^{2} ] \nonumber\\
&+  \mathbb{E} [ \lvert \sum_{k} \sum_{\lvert i-j \rvert \leq 1} \sum_{i_{1}, i_{2}, i_{3}, j_{1} =1}^{3} \sum_{k_{1}, k_{2}, k_{3}, k_{4} \neq 0: k_{1234} = k} \theta(2^{-q} k) \theta(2^{-i} k_{123}) \theta(2^{-j} k_{4})\nonumber\\
& \times \int_{0}^{t}  e^{- \lvert k_{123} \rvert^{2} (t-s)}  k_{123}^{j_{1}} i   \int_{0}^{s} [: \hat{X}_{\sigma, u}^{\epsilon, i_{2}}(k_{1}) \hat{X}_{\sigma, b}^{\epsilon, i_{3}}(k_{2}) \hat{X}_{s,b}^{\epsilon, j_{1}}(k_{3}) \hat{X}_{t,b}^{\epsilon, j_{0}}(k_{4}): \nonumber\\
& \hspace{20mm}- : \hat{\bar{X}}_{\sigma, u}^{\epsilon, i_{2}}(k_{1}) \hat{\bar{X}}_{\sigma, b}^{\epsilon, i_{3}}(k_{2}) \hat{\bar{X}}_{s,b}^{\epsilon, j_{1}}(k_{3}) \hat{\bar{X}}_{t,b}^{\epsilon, j_{0}}(k_{4}): ] \nonumber\\ 
& \hspace{10mm} \times e^{- \lvert k_{12} \rvert^{2}  f(\epsilon k_{12})(s-\sigma)}   k_{12}^{i_{3}} g(\epsilon k_{12}^{i_{3}})d \sigma ds  \hat{\mathcal{P}}^{i_{0} i_{1}}(k_{123}) \hat{\mathcal{P}}^{i_{1} i_{2}}(k_{12}) e_{k} \rvert^{2} ] \nonumber\\
&+  \mathbb{E} [ \lvert \sum_{k\neq 0} \sum_{\lvert i-j \rvert \leq 1} \sum_{i_{1}, i_{2}, i_{3}, j_{1} =1}^{3} \sum_{k_{1}, k_{2}, k_{3}, k_{4} \neq 0: k_{1234} = k} \theta(2^{-q} k) \theta(2^{-i} k_{123}) \theta(2^{-j} k_{4})\nonumber\\
& \times \int_{0}^{t}  e^{- \lvert k_{123} \rvert^{2} (t-s)}  k_{123}^{j_{1}} i   \int_{0}^{s}  : \hat{\bar{X}}_{\sigma, u}^{\epsilon, i_{2}}(k_{1}) \hat{\bar{X}}_{\sigma, b}^{\epsilon, i_{3}}(k_{2}) \hat{\bar{X}}_{s,b}^{\epsilon, j_{1}}(k_{3}) \hat{\bar{X}}_{t,b}^{\epsilon, j_{0}}(k_{4}):  \nonumber\\ 
& \hspace{10mm} \times [e^{- \lvert k_{12} \rvert^{2}  f(\epsilon k_{12})(s-\sigma)}k_{12}^{i_{3}} g(\epsilon k_{12}^{i_{3}}) - e^{- \lvert k_{12} \rvert^{2} (s-\sigma)} k_{12}^{i_{3}} i ] \nonumber\\
& \hspace{10mm} \times d \sigma ds  \hat{\mathcal{P}}^{i_{0} i_{1}}(k_{123}) \hat{\mathcal{P}}^{i_{1} i_{2}}(k_{12}) e_{k} \rvert^{2} ] \nonumber
\end{align*} 
in which 
\begin{align}\label{estimate 266}
& \mathbb{E} [ : \hat{X}_{\sigma, u}^{\epsilon, i_{2}}(k_{1}) \hat{X}_{\sigma, b}^{\epsilon, i_{3}}(k_{2}) \hat{X}_{s,b}^{\epsilon, j_{1}}(k_{3}) \hat{X}_{t,b}^{\epsilon, j_{0}}(k_{4}):  \nonumber\\
& \hspace{10mm} \times \overline{ : \hat{X}_{\bar{\sigma}, u}^{\epsilon, i_{2}'} (k_{1}') \hat{X}_{\bar{\sigma}, b}^{\epsilon, i_{3}'} (k_{2}') \hat{X}_{\bar{s}, b}^{\epsilon, j_{1}'} (k_{3}') \hat{X}_{t,b}^{\epsilon, j_{0}}(k_{4}'):}] \lesssim \frac{1}{ \prod_{i=1}^{4} \lvert k_{i} \rvert^{2}} \sum_{i=1}^{24} R_{i} 
\end{align} 
where $R_{i}, i = 1, \hdots, 24$, are defined as
\begin{subequations}
\begin{align}
&1_{k_{1} = k_{1}', k_{2} = k_{2}', k_{3} = k_{3}', k_{4} = k_{4}'}, 1_{k_{1} = k_{1}', k_{2} = k_{2}', k_{3} = k_{4}', k_{4} = k_{3}'}, 1_{k_{1} = k_{1}', k_{2} = k_{3}', k_{3} = k_{2}', k_{4} = k_{4}'}, \\
&1_{k_{1} = k_{1}', k_{2} = k_{3}', k_{3} = k_{4}', k_{4} = k_{2}'}, 1_{k_{1} = k_{1}', k_{2} = k_{4}', k_{3} = k_{2}', k_{4} = k_{3}'}, 1_{k_{1} = k_{1}', k_{2} = k_{4}', k_{3} = k_{3}', k_{4} = k_{2}'}, \\
& 1_{k_{1} = k_{2}', k_{2} = k_{1}', k_{3} = k_{3}', k_{4} = k_{4}'}, 1_{k_{1} = k_{2}', k_{2} = k_{1}', k_{3} = k_{4}', k_{4} = k_{3}'}, 1_{k_{1} = k_{2}', k_{2} = k_{3}', k_{3} = k_{1}', k_{4} = k_{4}'} \\
& 1_{k_{1} = k_{2}', k_{2} = k_{3}', k_{3} = k_{4}', k_{4} = k_{1}'}, 1_{k_{1} = k_{2}', k_{2} = k_{4}', k_{3} = k_{1}', k_{4} = k_{3}'}, 1_{k_{1} = k_{2}', k_{2} = k_{4}', k_{3} = k_{3}', k_{4} = k_{1}'} \\
& 1_{k_{1} = k_{3}', k_{2} = k_{1}', k_{3} = k_{2}', k_{4} = k_{4}'}, 1_{k_{1} = k_{3}', k_{2} = k_{1}', k_{3} = k_{4}', k_{4} = k_{2}'}, 1_{k_{1} = k_{3}', k_{2} = k_{2}', k_{3} = k_{1}', k_{4} = k_{4}'} \\
& 1_{k_{1} = k_{3}', k_{2} = k_{2}', k_{3} = k_{4}', k_{4} = k_{1}'}, 1_{k_{1} = k_{3}', k_{2} = k_{4}', k_{3} = k_{1}', k_{4} = k_{2}'}, 1_{k_{1} = k_{3}', k_{2} = k_{4}', k_{3} = k_{2}', k_{4} = k_{1}'}, \\
& 1_{k_{1} = k_{4}', k_{2} = k_{1}', k_{3} = k_{2}', k_{4} = k_{3}'}, 1_{k_{1} = k_{4}', k_{2} = k_{1}', k_{3} = k_{3}', k_{4} = k_{2}'}, 1_{k_{1} = k_{4}', k_{2} = k_{2}', k_{3} = k_{1}', k_{4} = k_{3}'}, \\
&1_{k_{1} = k_{4}', k_{2} = k_{2}', k_{3} = k_{3}', k_{4} = k_{1}'}, 1_{k_{1} = k_{4}', k_{2} = k_{3}', k_{3} = k_{1}', k_{4} = k_{2}'}, 1_{k_{1} = k_{4}', k_{2} = k_{3}', k_{3} = k_{2}', k_{4} = k_{1}'},     
\end{align} 
\end{subequations}
respectively. Similarly, we can deduce 
\begin{align}\label{estimate 267}
& \mathbb{E} [ : \hat{\bar{X}}_{\sigma, u}^{\epsilon, i_{2}}(k_{1}) \hat{\bar{X}}_{\sigma, b}^{\epsilon, i_{3}}(k_{2}) \hat{\bar{X}}_{s,b}^{\epsilon, j_{1}}(k_{3}) \hat{\bar{X}}_{t,b}^{\epsilon, j_{0}}(k_{4}): \overline{ : \hat{\bar{X}}_{\bar{\sigma}, u}^{\epsilon, i_{2}'} (k_{1}') \hat{\bar{X}}_{\bar{\sigma}, b}^{\epsilon, i_{3}'} (k_{2}') \hat{\bar{X}}_{\bar{s}, b}^{\epsilon, j_{1}'} (k_{3}') \hat{\bar{X}}_{t,b}^{\epsilon, j_{0}}(k_{4}'):}] \nonumber\\
\lesssim& \frac{1}{ \prod_{i=1}^{4} \lvert k_{i} \rvert^{2}} \sum_{i=1}^{24} R_{i}. 
\end{align} 
We can compute 
\begin{align}\label{estimate 77}
& \mathbb{E} [\lvert \Delta_{q} L_{t, i_{0}j_{0}}^{14} \rvert^{2} ] \\
\lesssim& \sum_{k\neq 0} \sum_{\lvert i-j\rvert\leq 1, \lvert i'-j'\rvert \leq 1} \sum_{k_{1}, k_{2}, k_{3}, k_{4}, k_{1}', k_{2}', k_{3}', k_{4}' \neq 0: k_{1234} = k_{1234}' = k} \sum_{i_{2}, i_{3}, j_{1}, i_{2}', i_{3}', j_{1}' = 1}^{3} \nonumber\\
& \times \theta(2^{-q} k)^{2} \theta(2^{-i} k_{123}) \theta(2^{-i'} k_{123}') \theta(2^{-j} k_{4}) \theta(2^{-j'} k_{4}') \sum_{i=1}^{24} R_{i} \nonumber\\
& \times [\int_{[0,t]^{2}} \lvert e^{- \lvert k_{123} \rvert^{2} f(\epsilon k_{123} ) (t-s)} g(\epsilon k_{123}^{j_{1}}) - e^{- \lvert k_{123} \rvert^{2} (t-s)} i \rvert   \lvert k_{123} \rvert \nonumber\\
& \hspace{5mm} \times \lvert e^{- \lvert k_{123}' \rvert^{2} f(\epsilon k_{123}' ) (t-\bar{s})} g(\epsilon (k_{123}')^{j_{1}'}) - e^{- \lvert k_{123}' \rvert^{2} (t-\bar{s})} i \rvert \lvert k_{123} ' \rvert \nonumber\\
& \hspace{5mm} \times \int_{0}^{s} \int_{0}^{\bar{s}} \frac{1}{\prod_{i=1}^{4} \lvert k_{i} \rvert^{2}} e^{- \lvert k_{12} \rvert^{2} f(\epsilon k_{12}) (s-\sigma) - \lvert k_{12} '\rvert^{2} f(\epsilon k_{12}') (\bar{s} - \bar{\sigma})} \lvert k_{12} \rvert \lvert k_{12} ' \rvert d\bar{\sigma} d \sigma ds d \bar{s} \nonumber\\
& \hspace{5mm} + \int_{[0,t]^{2}} e^{- \lvert k_{123} \rvert^{2} (t-s) - \lvert k_{123}' \rvert^{2} (t- \bar{s})} \lvert k_{123} \rvert \lvert k_{123} ' \rvert \nonumber\\
& \hspace{5mm} \times \int_{0}^{s} \int_{0}^{\bar{s}} e^{- \lvert k_{12} \rvert^{2} f(\epsilon k_{12}) (s-\sigma) - \lvert k_{12}'\rvert^{2} f(\epsilon k_{12}') (\bar{s}-\bar{\sigma}) } \lvert k_{12} \rvert \lvert k_{12} ' \rvert \nonumber\\
& \hspace{5mm} \times ( \mathbb{E} [ \lvert : \hat{X}_{\sigma, u}^{\epsilon, i_{2}}(k_{1}) \hat{X}_{\sigma, b}^{\epsilon, i_{3}}(k_{2}) \hat{X}_{s,b}^{\epsilon, j_{1}}(k_{3}) \hat{X}_{t,b}^{\epsilon, j_{0}}(k_{4}): \nonumber\\
& \hspace{10mm} - : \hat{\bar{X}}_{\sigma, u}^{\epsilon, i_{2}}(k_{1}) \hat{\bar{X}}_{\sigma, b}^{\epsilon, i_{3}}(k_{2}) \hat{\bar{X}}_{s,b}^{\epsilon, j_{1}}(k_{3}) \hat{\bar{X}}_{t,b}^{\epsilon, j_{0}}(k_{4}): \rvert^{2} ])^{\frac{1}{2}}  \nonumber\\
& \hspace{5mm} \times ( \mathbb{E} [ \lvert : \hat{X}_{\bar{\sigma}, u}^{\epsilon, i_{2}'}(k_{1}') \hat{X}_{\bar{\sigma}, b}^{\epsilon, i_{3}'}(k_{2}') \hat{X}_{\bar{s},b}^{\epsilon, j_{1}'}(k_{3}') \hat{X}_{t,b}^{\epsilon, j_{0}}(k_{4}'): \nonumber\\
& \hspace{10mm} - : \hat{\bar{X}}_{\bar{\sigma}, u}^{\epsilon, i_{2}'}(k_{1}') \hat{\bar{X}}_{\bar{\sigma}, b}^{\epsilon, i_{3}'}(k_{2}') \hat{\bar{X}}_{\bar{s},b}^{\epsilon, j_{1}'}(k_{3}') \hat{\bar{X}}_{t,b}^{\epsilon, j_{0}}(k_{4}'):  \rvert^{2} ])^{\frac{1}{2}}  d \bar{\sigma} d\sigma ds d \bar{s} \nonumber\\
& \hspace{5mm} + \int_{[0,t]^{2}} e^{- \lvert k_{123} \rvert^{2} (t-s)} \lvert k_{123} \rvert e^{- \lvert k_{123} ' \rvert^{2} (t-\bar{s})} \lvert k_{123} ' \rvert \int_{0}^{s} \int_{0}^{\bar{s}} \frac{1}{\prod_{i=1}^{4} \lvert k_{i} \rvert^{2}} \lvert k_{12} \rvert \lvert k_{12} ' \rvert \nonumber\\
& \hspace{5mm} \times \lvert e^{- \lvert k_{12} \rvert^{2} f(\epsilon k_{12}) (s-\sigma)}g(\epsilon k_{12}^{i_{3}}) - e^{- \lvert k_{12} \rvert^{2} (s-\sigma) } i \rvert \nonumber\\
& \hspace{5mm} \times \lvert e^{- \lvert k_{12}' \rvert^{2} f(\epsilon k_{12}') (\bar{s}-\bar{\sigma})} g(\epsilon (k_{12}')^{i_{3}'}) - e^{- \lvert k_{12} '\rvert^{2} (\bar{s} - \bar{\sigma})} i \rvert d \bar{\sigma} d \sigma ds d \bar{s} ]\nonumber
\end{align} 
by \eqref{estimate 266}, \eqref{estimate 267} and H$\ddot{\mathrm{o}}$lder's inequality. At this point, and only at this point, we can take advantage of the symmetry, especially of $k_{1}$ and $k_{2}$ with $k_{1}'$ and $k_{2}'$, to reduce the 24 terms corresponding to each characteristic functions down to $U_{q, t, i_{0} j_{0}}^{1}, \hdots, U_{q, t, i_{0} j_{0}}^{6}$ corresponding to 
\begin{subequations}
\begin{align}
& 1_{k_{1} = k_{1}', k_{2} = k_{2}', k_{3} = k_{3}, k_{4} = k_{4}'}, 1_{k_{1} = k_{1}', k_{2} = k_{2}', k_{3} = k_{4}', k_{4} = k_{3}'}, 1_{k_{1} = k_{1}', k_{2} = k_{3}', k_{3} = k_{2}', k_{4} = k_{4}'}, \label{estimate 31}\\
& 1_{k_{1} = k_{3}', k_{2} = k_{2}', k_{3} = k_{4}', k_{4} = k_{1}'}, 1_{k_{1} = k_{4}', k_{2} = k_{2}', k_{3} = k_{3}', k_{4} = k_{1}'}, 1_{k_{1} = k_{3}', k_{2} = k_{4}', k_{3} = k_{1}', k_{4} = k_{2}'}, \label{estimate 32}
\end{align}
\end{subequations} 
respectively. We note that our estimate is slightly simpler than \cite[Equation (4.7s) on pg. 63]{ZZ17} which has seven cases instead of six. E.g., for $R_{4}$ we swap $k_{1}$ with $k_{2}$ and $k_{1}'$ with $k_{2}'$ to deduce $1_{k_{2} = k_{2}', k_{1} = k_{3}', k_{3} = k_{4}', k_{4} = k_{1}'}$ which is the characteristic function of $U_{q, t, i_{0} j_{0}}^{4}$; other cases may be readily reduced to one of the six cases in \eqref{estimate 31}-\eqref{estimate 32}, and for brevity we refer readers to \cite[Equation (250)]{Y19a} which is very similar. Now let us estimate e.g., 
\begin{align*}
& \mathbb{E} [ \lvert : \hat{X}_{\sigma, u}^{\epsilon, i_{2}}(k_{1}) \hat{X}_{\sigma, b}^{\epsilon, i_{3}}(k_{2}) \hat{X}_{s,b}^{\epsilon, j_{1}}(k_{3}) \hat{X}_{t,b}^{\epsilon, j_{0}}(k_{4}): \nonumber\\
& \hspace{10mm} - : \hat{\bar{X}}_{\sigma, u}^{\epsilon, i_{2}}(k_{1}) \hat{\bar{X}}_{\sigma, b}^{\epsilon, i_{3}}(k_{2}) \hat{\bar{X}}_{s,b}^{\epsilon, j_{1}}(k_{3}) \hat{\bar{X}}_{t,b}^{\epsilon, j_{0}}(k_{4}): \rvert^{2} ]= \sum_{l=1}^{4} V_{k_{1}k_{2}k_{3}k_{4}, \sigma s t, i_{2}i_{3}j_{0}j_{1}}^{l} 
\end{align*} 
where 
\begin{align*}
V_{k_{1}k_{2}k_{3}k_{4}, \sigma s t, i_{2}i_{3}j_{0}j_{1}}^{1}  \triangleq&  \mathbb{E} [ : \hat{X}_{\sigma, u}^{\epsilon, i_{2}}(k_{1}) \hat{X}_{\sigma, b}^{\epsilon, i_{3}}(k_{2}) \hat{X}_{s,b}^{\epsilon, j_{1}}(k_{3}) \hat{X}_{t,b}^{\epsilon, j_{0}} (k_{4}): \\
& \times \overline{ : \hat{X}_{\sigma, u}^{\epsilon, i_{2}}(k_{1}) \hat{X}_{\sigma, b}^{\epsilon, i_{3}}(k_{2}) \hat{X}_{s,b}^{\epsilon, j_{1}}(k_{3}) \hat{X}_{t,b}^{\epsilon, j_{0}} (k_{4}):} ], \\
V_{k_{1}k_{2}k_{3}k_{4}, \sigma s t, i_{2}i_{3}j_{0}j_{1}}^{2}  \triangleq& - \mathbb{E} [ : \hat{X}_{\sigma, u}^{\epsilon, i_{2}}(k_{1}) \hat{X}_{\sigma, b}^{\epsilon, i_{3}}(k_{2}) \hat{X}_{s,b}^{\epsilon, j_{1}}(k_{3}) \hat{X}_{t,b}^{\epsilon, j_{0}} (k_{4}): \\
& \times  \overline{ : \hat{\bar{X}}_{\sigma, u}^{\epsilon, i_{2}}(k_{1}) \hat{\bar{X}}_{\sigma, b}^{\epsilon, i_{3}}(k_{2}) \hat{\bar{X}}_{s,b}^{\epsilon, j_{1}}(k_{3}) \hat{\bar{X}}_{t,b}^{\epsilon, j_{0}} (k_{4}):} ], \\
V_{k_{1}k_{2}k_{3}k_{4}, \sigma s t, i_{2}i_{3}j_{0}j_{1}}^{3}  \triangleq& - \mathbb{E} [ : \hat{\bar{X}}_{\sigma, u}^{\epsilon, i_{2}}(k_{1}) \hat{\bar{X}}_{\sigma, b}^{\epsilon, i_{3}}(k_{2}) \hat{\bar{X}}_{s,b}^{\epsilon, j_{1}}(k_{3}) \hat{\bar{X}}_{t,b}^{\epsilon, j_{0}} (k_{4}):\\
& \times  \overline{ : \hat{X}_{\sigma, u}^{\epsilon, i_{2}}(k_{1}) \hat{X}_{\sigma, b}^{\epsilon, i_{3}}(k_{2}) \hat{X}_{s,b}^{\epsilon, j_{1}}(k_{3}) \hat{X}_{t,b}^{\epsilon, j_{0}} (k_{4}):} ],\\
V_{k_{1}k_{2}k_{3}k_{4}, \sigma s t, i_{2}i_{3}j_{0}j_{1}}^{4}  \triangleq& \mathbb{E} [ : \hat{\bar{X}}_{\sigma, u}^{\epsilon, i_{2}}(k_{1}) \hat{\bar{X}}_{\sigma, b}^{\epsilon, i_{3}}(k_{2}) \hat{\bar{X}}_{s,b}^{\epsilon, j_{1}}(k_{3}) \hat{\bar{X}}_{t,b}^{\epsilon, j_{0}} (k_{4}): \\
& \times  \overline{ : \hat{\bar{X}}_{\sigma, u}^{\epsilon, i_{2}}(k_{1}) \hat{\bar{X}}_{\sigma, b}^{\epsilon, i_{3}}(k_{2}) \hat{\bar{X}}_{s,b}^{\epsilon, j_{1}}(k_{3}) \hat{\bar{X}}_{t,b}^{\epsilon, j_{0}} (k_{4}):} ],
\end{align*}  
where by Example \ref{Example 3.1}, $V_{k_{1}k_{2}k_{3}k_{4}, \sigma s t, i_{2}i_{3}j_{0}j_{1}}^{l} $ for $l \in \{1,2,3,4\}$ have 24 terms, each of which is a quadruple of terms such as 
\begin{align*} 
&\mathbb{E} [ \hat{X}_{\sigma, u}^{\epsilon, i_{2}}(k_{1}) \overline{\hat{X}_{\sigma, u}^{\epsilon, i_{2}}(k_{1})}] \text{ in }  V_{k_{1}k_{2}k_{3}k_{4}, \sigma s t, i_{2}i_{3}j_{0}j_{1}}^{1}, \\
& \mathbb{E} [ \hat{X}_{\sigma, u}^{\epsilon, i_{2}}(k_{1}) \overline{ \hat{\bar{X}}_{\sigma, u}^{\epsilon, i_{2}}(k_{1})}] \text{ in } V_{k_{1}k_{2}k_{3}k_{4}, \sigma s t, i_{2}i_{3}j_{0}j_{1}}^{2},\\
& \mathbb{E} [ \hat{\bar{X}}_{\sigma, u}^{\epsilon, i_{2}}(k_{1}) \overline{\hat{X}_{\sigma, u}^{\epsilon, i_{2}}(k_{1})}] \text{ in } V_{k_{1}k_{2}k_{3}k_{4}, \sigma s t, i_{2}i_{3}j_{0}j_{1}}^{3} \\
\text{and } & \mathbb{E}[ \hat{\bar{X}}_{\sigma, u}^{\epsilon, i_{2}}(k_{1}) \overline{ \hat{\bar{X}}_{\sigma, u}^{\epsilon, i_{2}}(k_{1})}] \text{ in } V_{k_{1}k_{2}k_{3}k_{4}, \sigma s t, i_{2}i_{3}j_{0}j_{1}}^{4}. 
\end{align*}
Thus, a quadruple of $\frac{1}{2 \lvert k_{i} \rvert^{2} f(\epsilon k_{i})}$ in $V_{k_{1}k_{2}k_{3}k_{4}, \sigma s t, i_{2}i_{3}j_{0}j_{1}}^{1}$ gives $\frac{1}{16 \prod_{i=1}^{4} \lvert k_{i} \rvert^{2} f(\epsilon k_{i})}$, a quadruple of $\frac{1}{2\lvert k_{i} \rvert^{2}}$ in $V_{k_{1}k_{2}k_{3}k_{4}, \sigma s t, i_{2}i_{3}j_{0}j_{1}}^{4}$ gives $\frac{1}{16 \prod_{i=1}^{4}\lvert k_{i} \rvert^{2}}$, and the quadruples of $\frac{1}{\lvert k_{i} \rvert^{2}(f(\epsilon k_{i}) + 1)}$ in $V_{k_{1}k_{2}k_{3}k_{4}, \sigma s t, i_{2}i_{3}j_{0}j_{1}}^{2}$ and $V_{k_{1}k_{2}k_{3}k_{4}, \sigma s t, i_{2}i_{3}j_{0}j_{1}}^{3}$ give $\frac{2}{\prod_{i=1}^{4} \lvert k_{i} \rvert^{2} (f(\epsilon k_{i}) + 1)}$ together. Hence, 
\begin{align}\label{estimate 37}
& \mathbb{E} [ \lvert : \hat{X}_{\sigma, u}^{\epsilon, i_{2}}(k_{1}) \hat{X}_{\sigma, b}^{\epsilon, i_{3}}(k_{2}) \hat{X}_{s,b}^{\epsilon, j_{1}}(k_{3}) \hat{X}_{t,b}^{\epsilon, j_{0}}(k_{4}): \\
& \hspace{10mm} - : \hat{\bar{X}}_{\sigma, u}^{\epsilon, i_{2}}(k_{1}) \hat{\bar{X}}_{\sigma, b}^{\epsilon, i_{3}}(k_{2}) \hat{\bar{X}}_{s,b}^{\epsilon, j_{1}}(k_{3}) \hat{\bar{X}}_{t,b}^{\epsilon, j_{0}}(k_{4}): \rvert^{2} ] \nonumber\\ 
\lesssim& \lvert \frac{1}{16 ( \prod_{i=1}^{4} \lvert k_{i} \rvert^{2} f(\epsilon k_{i}))} - \frac{2}{ (\prod_{i=1}^{4} \lvert k_{i} \rvert^{2} (f(\epsilon k_{i}) + 1))} + \frac{1}{16 \prod_{i=1}^{4} \lvert k_{i} \rvert^{2}} \rvert \lesssim \frac{ \sum_{i=1}^{4} \lvert \epsilon k_{i} \rvert^{\eta}}{\prod_{i=1}^{4} \lvert k_{i} \rvert^{2}} \nonumber
\end{align}
by mean value theorem. For $U_{q, t, i_{0} j_{0}}^{1}$ corresponding to $1_{k_{1} = k_{1}', k_{2} = k_{2}', k_{3} = k_{3}, k_{4} = k_{4}'}$ from \eqref{estimate 31} in \eqref{estimate 77}, we split to 
\begin{equation}\label{estimate 271}
U_{q, t, i_{0} j_{0}}^{1} \lesssim \sum_{j=1}^{3} U_{q, t, i_{0} j_{0}}^{1j}
\end{equation}
where 
\begin{subequations} 
\begin{align} 
U_{q, t, i_{0} j_{0}}^{11} \triangleq& \sum_{k\neq 0} \sum_{\lvert i-j\rvert \leq 1, \lvert i'-j'\rvert \leq 1} \sum_{k_{1}, k_{2}, k_{3}, k_{4} \neq 0: k_{1234}  = k} \sum_{j_{1}, j_{1}' = 1}^{3} \label{estimate 33} \\
& \times \theta(2^{-q} k)^{2} \theta(2^{-i} k_{123}) \theta(2^{-i'} k_{123}) \theta(2^{-j} k_{4}) \theta(2^{-j'} k_{4})  \nonumber\\
& \times \int_{[0,t]^{2}} \lvert e^{- \lvert k_{123} \rvert^{2} f(\epsilon k_{123} ) (t-s)} g(\epsilon k_{123}^{j_{1}}) - e^{- \lvert k_{123} \rvert^{2} (t-s)} i \rvert  \nonumber\\
& \hspace{5mm} \times \lvert e^{- \lvert k_{123} \rvert^{2} f(\epsilon k_{123} ) (t-\bar{s})} g(\epsilon k_{123}^{j_{1}'}) - e^{- \lvert k_{123} \rvert^{2} (t-\bar{s})} i \rvert \lvert k_{123} \rvert^{2} \nonumber\\
& \hspace{5mm} \times \int_{0}^{s} \int_{0}^{\bar{s}} \frac{1}{\prod_{i=1}^{4} \lvert k_{i} \rvert^{2}} e^{- \lvert k_{12} \rvert^{2} f(\epsilon k_{12}) (s-\sigma + \bar{s} - \bar{\sigma})}   \lvert k_{12} \rvert^{2} d\bar{\sigma} d \sigma ds d \bar{s}, \nonumber\\
U_{q, t, i_{0} j_{0}}^{12} \triangleq& \sum_{k\neq 0} \sum_{\lvert i-j\rvert \leq 1, \lvert i'-j'\rvert \leq 1} \sum_{k_{1}, k_{2}, k_{3}, k_{4} \neq 0: k_{1234}  = k} \label{estimate 34} \\
& \times \theta(2^{-q} k)^{2} \theta(2^{-i} k_{123}) \theta(2^{-i'} k_{123}) \theta(2^{-j} k_{4}) \theta(2^{-j'} k_{4}) \nonumber\\
& \times \int_{[0,t]^{2}} e^{- \lvert k_{123} \rvert^{2} (2t-s-\bar{s})}   \lvert k_{123} \rvert^{2} \nonumber\\
& \hspace{5mm} \times \int_{0}^{s} \int_{0}^{\bar{s}} e^{- \lvert k_{12} \rvert^{2} f(\epsilon k_{12}) (s-\sigma + \bar{s} - \bar{\sigma})} \lvert k_{12} \rvert^{2} ( \frac{ \sum_{i=1}^{4} \lvert \epsilon k_{i} \rvert^{\eta}}{\prod_{i=1}^{4} \lvert k_{i} \rvert^{2}}) d \bar{\sigma} d \sigma ds d \bar{s},  \nonumber \\
U_{q, t, i_{0} j_{0}}^{13} \triangleq& \sum_{k\neq 0} \sum_{\lvert i-j\rvert \leq 1, \lvert i'-j'\rvert \leq 1} \sum_{k_{1}, k_{2}, k_{3}, k_{4} \neq 0: k_{1234}  = k} \sum_{i_{3}, i_{3}' = 1}^{3} \label{estimate 35} \\
& \times \theta(2^{-q} k)^{2} \theta(2^{-i} k_{123}) \theta(2^{-i'} k_{123}) \theta(2^{-j} k_{4}) \theta(2^{-j'} k_{4}) \nonumber\\
& \times \int_{[0,t]^{2}} e^{- \lvert k_{123} \rvert^{2} (2t-s - \bar{s})} \lvert k_{123} \rvert^{2} \int_{0}^{s} \int_{0}^{\bar{s}} \frac{\lvert k_{12} \rvert^{2}}{\prod_{i=1}^{4} \lvert k_{i} \rvert^{2}}  \nonumber\\
& \hspace{5mm} \times \lvert e^{- \lvert k_{12} \rvert^{2} f(\epsilon k_{12}) (s-\sigma)}g(\epsilon k_{12}^{i_{3}}) - e^{- \lvert k_{12} \rvert^{2} (s-\sigma) } i \rvert \nonumber\\
& \hspace{5mm} \times \lvert e^{- \lvert k_{12} \rvert^{2} f(\epsilon k_{12}) (\bar{s}-\bar{\sigma})} g(\epsilon k_{12}^{i_{3}'}) - e^{- \lvert k_{12} \rvert^{2} (\bar{s} - \bar{\sigma})} i \rvert d \bar{\sigma} d \sigma ds d \bar{s} \nonumber
\end{align} 
\end{subequations}  
where we used \eqref{estimate 37} in \eqref{estimate 34}. We estimate from \eqref{estimate 33} for any $\eta \in [0,1]$, 
\begin{align}\label{estimate 268}
U_{q, t, i_{0} j_{0}}^{11}\lesssim&  \epsilon^{\eta} \sum_{k\neq 0} \sum_{\lvert i-j\rvert \leq 1, \lvert i'-j'\rvert \leq 1} \sum_{k_{1}, k_{2}, k_{3}, k_{4}\neq 0: k_{1234}  = k}\theta(2^{-q} k)^{2} \theta(2^{-i} k_{123}) \theta(2^{-i'} k_{123})  \nonumber\\
& \hspace{2mm} \times \theta(2^{-j} k_{4}) \theta(2^{-j'} k_{4})  \frac{ ( \lvert k_{123} \rvert^{2} t)^{\eta}}{\lvert k_{123} \rvert^{2-\eta}} \frac{1}{ (\prod_{i=1}^{4} \lvert k_{i} \rvert^{2} ) \lvert k_{12} \rvert^{2}}  \lesssim \epsilon^{\eta} t^{\eta} 2^{q(3\eta + \epsilon)}
\end{align}
where we used \eqref{[Equation (4.2)][ZZ17]}, \eqref{[Equation (4.3)][ZZ17]}, Lemma \ref{Lemma 3.13} and that $2^{q} \lesssim 2^{i}$ so that $q \lesssim i$. Next, continuing our estimate on $U_{q, t, i_{0} j_{0}}^{12}$ from \eqref{estimate 34}, we compute 
\begin{align}\label{estimate 269}
U_{q, t, i_{0} j_{0}}^{12} \lesssim& \epsilon^{\eta} \sum_{k\neq 0} \sum_{\lvert i-j \rvert \leq 1, \lvert i' - j' \rvert \leq 1}\sum_{k_{1}, k_{2}, k_{3}, k_{4}\neq 0: k_{1234}  = k}\theta(2^{-q} k)^{2} \theta(2^{-i} k_{123}) \theta(2^{-i'} k_{123})  \nonumber \\
& \times  \theta(2^{-j} k_{4}) \theta(2^{-j'} k_{4})  \int_{[0,t]^{2}}  \frac{1}{ [ \lvert k_{123} \rvert^{2} (t-s) ]^{1- \frac{\eta}{2}}}\nonumber\\
& \hspace{8mm}  \times \frac{1}{ [ \lvert k_{123} \rvert^{2} (t- \bar{s})]^{1- \frac{\eta}{2}}} \frac{ \lvert k_{123} \rvert^{2}}{\lvert k_{12} \rvert^{2}} \frac{ \sum_{i=1}^{4} \lvert k_{i} \rvert^{\eta}}{\prod_{i=1}^{4} \lvert k_{i} \rvert^{2}} ds d \bar{s}  \lesssim \epsilon^{\eta} t^{\eta} 2^{q(3\eta+ \epsilon)} 
\end{align}
where we used \eqref{key estimate}, Lemma \ref{Lemma 3.13} and that $2^{q} \lesssim 2^{i}$ so that $q \lesssim i$. Finally, considering $U_{q, t, i_{0} j_{0}}^{13}$ from \eqref{estimate 35}, for any $\eta \in [0,1]$, 
\begin{align}\label{estimate 270}
&U_{q, t, i_{0} j_{0}}^{13} \lesssim \sum_{k\neq 0} \sum_{\lvert i-j\rvert \leq 1, \lvert i'-j'\rvert \leq 1} \sum_{k_{1}, k_{2}, k_{3}, k_{4}\neq 0: k_{1234}  = k}  \theta(2^{-q} k)^{2} \theta(2^{-i} k_{123}) \theta(2^{-i'} k_{123}) \nonumber\\
& \hspace{5mm} \times \theta(2^{-j} k_{4}) \theta(2^{-j'} k_{4})  \int_{[0,t]^{2}} e^{- \lvert k_{123} \rvert^{2} (2t-s - \bar{s})} \lvert k_{123} \rvert^{2} \int_{0}^{s} \int_{0}^{\bar{s}} \frac{\lvert k_{12} \rvert^{2}}{\prod_{i=1}^{4} \lvert k_{i} \rvert^{2}}  \nonumber\\
& \hspace{9mm}  \times e^{- \lvert k_{12} \rvert^{2} \bar{c}_{f} (s-\sigma)} \lvert \epsilon k_{12} \rvert^{\frac{\eta}{2}} e^{-\lvert k_{12} \rvert^{2} \bar{c}_{f} (\bar{s} - \bar{\sigma})} \lvert \epsilon k_{12} \rvert^{\frac{\eta}{2}} d \bar{\sigma} d \sigma ds d \bar{s} \lesssim \epsilon^{\eta} t^{\eta} 2^{q(3\eta + \epsilon)}
\end{align}
by \eqref{[Equation (4.2)][ZZ17]}, \eqref{[Equation (4.3)][ZZ17]}, \eqref{key estimate}, Lemma \ref{Lemma 3.13} and that $2^{q} \lesssim 2^{i}$ so that $q \lesssim i$.  Applying \eqref{estimate 268}-\eqref{estimate 270} to  \eqref{estimate 271} gives us a bound on $U_{q, t, i_{0} j_{0}}^{1}$ by a constant multiples of $2^{q (3 \eta + \epsilon)}t^{\eta} \epsilon^{\eta}$ and because the computations of $U_{q, t, i_{0} j_{0}}^{l}$ for $l\in \{2, \hdots, 6\}$ are similar, we conclude from \eqref{estimate 77}, \eqref{estimate 31}, \eqref{estimate 32}, that 
\begin{equation}\label{estimate 272}
\mathbb{E} [ \lvert \Delta_{q} L_{t, i_{0}j_{0}}^{1} \rvert^{2} ] \lesssim 2^{q(3\eta + \epsilon)} t^{\eta} \epsilon^{\eta}. 
\end{equation} 
Applying \eqref{estimate 272} on $L_{t, i_{0}j_{0}}^{1}$, \eqref{estimate 14} on $L_{t, i{0} j_{0}}^{2}$ and $L_{t, i_{0} j_{0}}^{3}$, \eqref{estimate 247} on $L_{t, i_{0} j_{0}}^{4}$, \eqref{estimate 265} on $L_{t, i_{0} j_{0}}^{5} - \bar{L}_{t, i_{0} j_{0}}^{5}$ and $L_{t, i_{0} j_{0}}^{6} - \bar{L}_{t, i_{0}j_{0}}^{6}$ and \eqref{estimate 222}, \eqref{estimate 273}, \eqref{estimate 274}, \eqref{estimate 275} on $L_{t, i_{0}j_{0}}^{7}$ to \eqref{[Equation (4.6e)][ZZ17]} and \eqref{estimate 194} gives us the necessary estimates on $\pi_{0,\diamond} (u_{32}^{\epsilon, i_{0}}, b_{1}^{\epsilon, j_{0}})(t) - \pi_{0,\diamond} (\bar{u}_{32}^{\epsilon, i_{0}}, \bar{b}_{1}^{\epsilon, j_{0}})(t)$. Similarly, we can show that for every $\eta, \epsilon, \gamma > 0$ sufficiently small and $t_{1}, t_{2} > 0$, 
\begin{align*}
& \sum_{i_{0}, j_{0} =1}^{3} \mathbb{E} [ \lvert \Delta_{q} (\pi_{0,\diamond} (u_{32}^{\epsilon, i_{0}}, b_{1}^{\epsilon, j_{0}})(t_{1}) - \pi_{0,\diamond} (u_{32}^{\epsilon, i_{0}}, b_{1}^{\epsilon, j_{0}})(t_{2}) \nonumber\\
&\hspace{5mm} - \pi_{0,\diamond} (\bar{u}_{32}^{\epsilon, i_{0}}, \bar{b}_{1}^{\epsilon, j_{0}})(t_{1}) + \pi_{0,\diamond} (\bar{u}_{32}^{\epsilon, i_{0}}, \bar{b}_{1}^{\epsilon, j_{0}})(t_{2})) \rvert^{2} ] \lesssim \epsilon^{\gamma} \lvert t_{1} - t_{2} \rvert^{\eta} 2^{q\epsilon}. 
\end{align*} 
This implies by applications of Gaussian hypercontractivity theorem and Besov embedding Lemma \ref{Lemma 3.4}, similarly to previous cases such as \eqref{estimate 36}, that for all $i_{0}, j_{0} \in \{1,2,3\}$, $\delta > 0$ sufficiently small, and $p > 1$, $\pi_{0,\diamond} (u_{32}^{\epsilon, i_{0}}, b_{1}^{\epsilon, j_{0}}) - \pi_{0,\diamond} (\bar{u}_{32}^{\epsilon, i_{0}}, \bar{b}_{1}^{\epsilon, j_{0}}) \to 0$ in $L^{p}(\Omega; C([0, T]; \mathcal{C}^{-\delta}))$ as $\epsilon \searrow 0$. 

\subsubsection{Convergence of \eqref{sixth convergence}}

W.l.o.g. let us show the necessary estimates for $\pi_{0,\diamond} ( \mathcal{P}^{i_{1} i_{2}} D_{j_{0}}^{\epsilon} K_{b}^{\epsilon, j_{0}}, b_{1}^{\epsilon, j_{1}}) - \pi_{0,\diamond} (\mathcal{P}^{i_{1}i_{2}} D_{j_{0}} \bar{K}_{b}^{\epsilon, j_{0}}, \bar{b}_{1}^{\epsilon, j_{1}})$. Considering \eqref{estimate 56}, \eqref{estimate 276}, Example \ref{Example 3.1} and \eqref{covariance}, we see that we can define 
\begin{align}
C_{34}^{\epsilon, i_{1} i_{2} j_{0} j_{1}}\triangleq& (2\pi)^{-3} \sum_{\lvert i-j\rvert \leq 1} \sum_{k_{1} \neq 0} \theta(2^{-i} k_{1}) \theta(2^{-j} k_{1}) \int_{0}^{t} e^{-2 \lvert k_{1} \rvert^{2} f(\epsilon k_{1}) (t-s)} k_{1}^{j_{0}} g(\epsilon k_{1}^{j_{0}}) \nonumber\\
& \hspace{5mm} \times \frac{ h_{b}(\epsilon k_{1})^{2}}{2\lvert k_{1} \rvert^{2} f(\epsilon k_{1})} ds \sum_{j_{2} =1}^{3} \hat{\mathcal{P}}^{j_{0} j_{2}} (k_{1}) \hat{\mathcal{P}}^{j_{1} j_{2}}(k_{1}) \hat{\mathcal{P}}^{i_{1} i_{2}}(k_{1}) \label{estimate 50}
\end{align} 
and write  from \eqref{[Equation (3.2f)][ZZ17]} and \eqref{[Equation (3.23b)][ZZ17]} that 
\begin{align}
&  \pi_{0,\diamond} ( \mathcal{P}^{i_{1} i_{2}} D_{j_{0}}^{\epsilon} K_{b}^{\epsilon, j_{0}}, b_{1}^{\epsilon, j_{1}})(t) - \pi_{0,\diamond} (\mathcal{P}^{i_{1}i_{2}} D_{j_{0}} \bar{K}_{b}^{\epsilon, j_{0}}, \bar{b}_{1}^{\epsilon, j_{1}})(t) \\
=&  (2\pi)^{-\frac{3}{2}} \sum_{k} \sum_{\lvert i-j\rvert \leq 1} \sum_{k_{1}, k_{2}: k_{12} = k} \theta(2^{-i} k_{1}) \theta(2^{-j} k_{2}) \nonumber\\
& \times [ k_{1}^{j_{0}} g(\epsilon k_{1}^{j_{0}}) \int_{0}^{t} e^{-\lvert k_{1} \rvert^{2} f(\epsilon k_{1}) (t-s)} : \hat{X}_{s,b}^{\epsilon, j_{0}} (k_{1}) \hat{X}_{t,b}^{\epsilon, j_{1}}(k_{2}): ds \nonumber\\
& \hspace{5mm} - k_{1}^{j_{0}} i \int_{0}^{t} e^{- \lvert k_{1} \rvert^{2} (t-s)} : \hat{\bar{X}}_{s,b}^{\epsilon, j_{0}} (k_{1}) \hat{\bar{X}}_{t,b}^{\epsilon, j_{1}}(k_{2}): ds] \hat{\mathcal{P}}^{i_{1}i_{2}}(k_{1}) e_{k}. \nonumber
\end{align}
We compute 
\begin{align}\label{estimate 280}
& \mathbb{E} [ \lvert \Delta_{q} [ \pi_{0,\diamond}(\mathcal{P}^{i_{1} i_{2}} D_{j_{0}}^{\epsilon}K_{b}^{\epsilon, j_{0}}, b_{1}^{\epsilon, j_{1}})(t) \nonumber\\
& \hspace{20mm} - \pi_{0,\diamond} (\mathcal{P}^{i_{1} i_{2}} D_{j_{0}} \bar{K}_{b}^{\epsilon, j_{0}}, \bar{b}_{1}^{\epsilon, j_{1}})(t) ] \rvert^{2}]  \lesssim \sum_{l=1}^{2} VI_{q, t, j_{0} j_{1} j_{1}'}^{l}
\end{align}
where 
\begin{subequations}\label{estimate 277}
\begin{align}
VI_{q, t, j_{0}j_{1}j_{1}'}^{1} &\triangleq \sum_{k} \sum_{\lvert i-j\rvert \leq 1, \lvert i' - j'\rvert \leq 1} \sum_{k_{1}, k_{2}, k_{1}', k_{2}' \neq 0: k_{12} = k_{12}' = k} \theta(2^{-q} k)^{2}\theta(2^{-i} k_{1}) \theta(2^{-i'} k_{1}') \nonumber\\
& \times  \theta(2^{-j} k_{2}) \theta(2^{-j'}k_{2}') \lvert k_{1} \rvert \lvert k_{1}'\rvert  \int_{[0,t]^{2}} \lvert e^{-\lvert k_{1} \rvert^{2} f(\epsilon k_{1})(t-s)} g(\epsilon k_{1}^{j_{0}}) - e^{- \lvert k_{1} \rvert^{2} (t-s)} i \rvert \nonumber\\
& \hspace{10mm} \times \lvert e^{-\lvert k_{1}' \rvert^{2} f(\epsilon k_{1}') (t-\bar{s})} g(\epsilon (k_{1}')^{j_{0}}) - e^{- \lvert k_{1}' \rvert^{2} (t-\bar{s})} i \rvert \nonumber\\
& \hspace{10mm} \times \mathbb{E} [: \hat{X}_{s,b}^{\epsilon, j_{0}}(k_{1}) \hat{X}_{t,b}^{\epsilon, j_{1}}(k_{2}): \overline{: \hat{X}_{\bar{s}, b}^{\epsilon, j_{0}} (k_{1}') \hat{X}_{t,b}^{\epsilon, j_{1}'} (k_{2}'):}] ds d \bar{s}, \\
VI_{q, t, j_{0}j_{1}j_{1}'}^{2} &\triangleq \sum_{k} \sum_{\lvert i-j\rvert \leq 1, \lvert i'-j'\rvert \leq 1} \sum_{k_{1}, k_{2}, k_{1}', k_{2}' \neq 0: k_{12} = k_{12}' = k} \theta(2^{-q} k)^{2}\theta(2^{-i} k_{1}) \theta(2^{-i'} k_{1}')  \nonumber\\
& \times \theta(2^{-j} k_{2}) \theta(2^{-j'} k_{2}') \int_{[0,t]^{2}}  e^{- \lvert k_{1} \rvert^{2} (t-s) - \lvert k_{1}' \rvert^{2} (t- \bar{s})} \lvert k_{1} \rvert \lvert k_{1} ' \rvert \nonumber\\
& \hspace{5mm} \times \mathbb{E} [(: \hat{X}_{s,b}^{\epsilon, j_{0}}(k_{1}) \hat{X}_{t,b}^{\epsilon, j_{1}}(k_{2}): - : \hat{\bar{X}}_{s,b}^{\epsilon, j_{0}}(k_{1}) \hat{\bar{X}}_{t,b}^{\epsilon, j_{1}}(k_{2}):)  \nonumber\\
& \hspace{5mm}  \times \overline{(: \hat{X}_{\bar{s},b}^{\epsilon, j_{0}}(k_{1}') \hat{X}_{t,b}^{\epsilon, j_{1}'}(k_{2}'): - : \hat{\bar{X}}_{\bar{s},b}^{\epsilon, j_{0}}(k_{1}') \hat{\bar{X}}_{t,b}^{\epsilon, j_{1}'}(k_{2}'):)} ]ds d \bar{s}. 
\end{align}
\end{subequations} 
For $VI_{q, t, j_{0}j_{1}j_{1}'}^{1}$ in \eqref{estimate 277}, we rely on Example \ref{Example 3.1} and \eqref{covariance} so that for any $\eta \in [0,1]$ 
\begin{align}\label{estimate 278}
VI_{q, t, j_{0}j_{1}j_{1}'}^{1} &\lesssim \epsilon^{\eta} \sum_{k} \sum_{\lvert i-j\rvert \leq 1, \lvert i'-j'\rvert \leq 1} \sum_{k_{1}, k_{2} \neq 0: k_{12} = k} \theta(2^{-q} k)^{2} \theta(2^{-i} k_{1}) \theta(2^{-j} k_{2}) \nonumber\\
& \times [ \theta(2^{-i'} k_{1}) \theta(2^{-j'} k_{2}) \frac{ \lvert k_{1} \rvert^{\eta}}{\lvert k_{2} \rvert^{2}} \int_{[0,t]^{2}} \frac{1}{[\lvert k_{1} \rvert^{2} (t-s)]^{1- \frac{\epsilon}{2}}} \frac{1}{[\lvert k_{1} \rvert^{2} (t- \bar{s})]^{1- \frac{\epsilon}{2}}} ds d \bar{s} \nonumber\\
&+ \theta(2^{-i'} k_{2}) \theta(2^{-j'} k_{1}) \frac{1}{\lvert k_{1} \rvert^{1- \frac{\eta}{2}} \lvert k_{2} \rvert^{1- \frac{\eta}{2}}} \int_{[0,t]^{2}} \frac{1}{ [ \lvert k_{1} \rvert^{2} (t-s)]^{1-\frac{\epsilon}{2}}} \nonumber\\
& \hspace{35mm} \times \frac{1}{ [\lvert k_{2} \rvert^{2} (t- \bar{s})]^{1-\frac{\epsilon}{2}}} ds d \bar{s}]  \lesssim \epsilon^{\eta} t^{\epsilon} 2^{q(2\epsilon + \eta)}
\end{align}
where we used \eqref{[Equation (4.2)][ZZ17]}, \eqref{[Equation (4.3)][ZZ17]}, \eqref{key estimate}, that $2^{q} \lesssim 2^{i}$ so that $q \lesssim i$ and Lemma \ref{Lemma 3.13}.  Similarly, we deduce from \eqref{estimate 277}, 
\begin{align}\label{estimate 279}
VI_{q, t, j_{0}j_{1}j_{1}'}^{2} \lesssim& \epsilon^{\eta} t^{\epsilon} \sum_{k} \sum_{\lvert i-j\rvert \leq 1, \lvert i'-j'\rvert \leq 1} \sum_{k_{1}, k_{2} \neq 0: k_{12} = k} \theta(2^{-q} k)^{2} \theta(2^{-i} k_{1}) \theta(2^{-j} k_{2}) \\
& \times [ \theta(2^{-i'} k_{1}) \theta(2^{-j'} k_{2}) \frac{ \lvert k_{1} \rvert^{\eta} + \lvert k_{2} \rvert^{\eta}}{\lvert k_{1} \rvert^{4- 2 \epsilon}\lvert k_{2} \rvert^{2}}  \nonumber  \\
& \hspace{10mm} + \theta(2^{-i'} k_{2}) \theta(2^{-j'} k_{1}) \frac{ \lvert k_{1} \rvert^{\eta} + \lvert k_{2} \rvert^{\eta}}{\lvert k_{1} \rvert^{3-\epsilon} \lvert k_{2} \rvert^{3-\epsilon}}] \lesssim  \epsilon^{\eta} t^{\epsilon} 2^{q(2\epsilon + \eta)}\nonumber
\end{align}
by \eqref{[Equation (4.7)][ZZ17]}, \eqref{key estimate}, Lemma \ref{Lemma 3.13} and that $2^{q} \lesssim 2^{i} \approx 2^{j}$ so that $q \lesssim i, j$. Applying \eqref{estimate 278} and \eqref{estimate 279} to \eqref{estimate 280} gives us 
\begin{equation*}
\mathbb{E} [ \lvert \Delta_{q} [ \pi_{0,\diamond}(\mathcal{P}^{i_{1} i_{2}} D_{j_{0}}^{\epsilon}K_{b}^{\epsilon, j_{0}}, b_{1}^{\epsilon, j_{1}})(t) - \pi_{0,\diamond} (\mathcal{P}^{i_{1} i_{2}} D_{j_{0}} \bar{K}_{b}^{\epsilon, j_{0}}, \bar{b}_{1}^{\epsilon, j_{1}})(t) ] \rvert^{2}]  \lesssim \epsilon^{\eta} t^{\epsilon} 2^{q(2\epsilon + \eta)}. 
\end{equation*}
Similarly, we can show that for all $\epsilon > 0, \gamma > 0$ sufficiently small, 
\begin{align*}
&\mathbb{E} [ \lvert \Delta_{q} [ \pi_{0,\diamond} (\mathcal{P}^{i_{1}i_{2}} D_{j_{0}}^{\epsilon} K_{b}^{\epsilon, j_{0}}, b_{1}^{\epsilon, j_{1}})(t_{1}) - \pi_{0,\diamond} (\mathcal{P}^{i_{1}i_{2}} D_{j_{0}}^{\epsilon} K_{b}^{\epsilon, j_{0}}, b_{1}^{\epsilon, j_{1}})(t_{2}) \nonumber\\
& \hspace{5mm} - \pi_{0,\diamond} (\mathcal{P}^{i_{1}i_{2}} D_{j_{0}} \bar{K}_{b}^{\epsilon, j_{0}}, \bar{b}_{1}^{\epsilon, j_{1}})(t_{1})  \nonumber\\
& \hspace{25mm} + \pi_{0,\diamond}(\mathcal{P}^{i_{1}i_{2}} D_{j_{0}} \bar{K}_{b}^{\epsilon, j_{0}}, \bar{b}_{1}^{\epsilon, j_{1}})(t_{2})] \rvert^{2} ]  \lesssim \epsilon^{\eta} \lvert t_{1} - t_{2} \rvert^{\eta} 2^{q(\epsilon + 3\eta)} 
\end{align*} 
so that via applications of Gaussian hypercontractivity theorem and Besov embedding Lemma \ref{Lemma 3.4}, similarly to \eqref{estimate 36}, we deduce that for all $i_{1}, i_{2}, j_{0}, j_{1} \in \{1,2,3\}$,  $\pi_{0,\diamond} (\mathcal{P}^{i_{1}i_{2}} D_{j_{0}}^{\epsilon} K_{b}^{\epsilon, j_{0}}, b_{1}^{\epsilon, j_{1}})$ $-$ $ \pi_{0,\diamond} (\mathcal{P}^{i_{1}i_{2}} D_{j_{0}} \bar{K}_{b}^{\epsilon, j_{0}}, \bar{b}_{1}^{\epsilon, j_{1}}) \to 0$ in $C([0,T]; \mathcal{C}^{-\delta})$ as $\epsilon \searrow 0$. 

\subsubsection{Convergence of \eqref{fourth convergence}}

W.l.o.g. we show the necessary estimates on $b_{2}^{\epsilon, i} \diamond b_{2}^{\epsilon, j} - \bar{b}_{2}^{\epsilon, i} \diamond \bar{b}_{2}^{\epsilon, j}$ as those on others are similar. We consider \eqref{[Equation (3.2ac)][ZZ17]}, \eqref{[Equation (3.22ac)][ZZ17]}, \eqref{[Equation (3.1ab)][ZZ17]}, \eqref{[Equation (3.21d)][ZZ17]},  \eqref{[Equation (3.2aa)][ZZ17]} and \eqref{[Equation (3.22aa)][ZZ17]} to write 
\begin{align}\label{estimate 282}
& (b_{2}^{\epsilon, i} b_{2}^{\epsilon, j} - \bar{b}_{2}^{\epsilon, i} \bar{b}_{2}^{\epsilon, j} )(t) \nonumber\\
=& \frac{ (2\pi)^{-\frac{9}{2}}}{4} \sum_{k} \sum_{k_{1}, k_{2}, k_{3}, k_{4} \neq 0: k_{1234} = k} \sum_{i_{1}, i_{2}, j_{1}, j_{2} =1}^{3} \int_{[0,t]^{2}} \nonumber\\
& \times [e^{- \lvert k_{12} \rvert^{2} f(\epsilon k_{12})(t-s) - \lvert k_{34} \rvert^{2} f(\epsilon k_{34}) (t- \bar{s})} k_{12}^{i_{2}} k_{34}^{j_{2}} g(\epsilon k_{12}^{i_{2}}) g(\epsilon k_{34}^{j_{2}}) \nonumber \\
& \hspace{2mm} \times [ \hat{X}_{s,b}^{\epsilon, i_{1}}(k_{1}) \hat{X}_{s,u}^{\epsilon, i_{2}}(k_{2}) \hat{X}_{\bar{s}, b}^{\epsilon, j_{1}}(k_{3}) \hat{X}_{\bar{s}, u}^{\epsilon, j_{2}}(k_{4}) - \hat{X}_{s,b}^{\epsilon, i_{1}}(k_{1}) \hat{X}_{s,u}^{\epsilon, i_{2}}(k_{2}) \hat{X}_{\bar{s}, u}^{\epsilon, j_{1}}(k_{3}) \hat{X}_{\bar{s}, b}^{\epsilon, j_{2}}(k_{4})  \nonumber\\
& \hspace{2mm} -  \hat{X}_{s,u}^{\epsilon, i_{1}}(k_{1}) \hat{X}_{s,b}^{\epsilon, i_{2}}(k_{2}) \hat{X}_{\bar{s}, b}^{\epsilon, j_{1}}(k_{3}) \hat{X}_{\bar{s}, u}^{\epsilon, j_{2}}(k_{4}) +  \hat{X}_{s,u}^{\epsilon, i_{1}}(k_{1}) \hat{X}_{s,b}^{\epsilon, i_{2}}(k_{2}) \hat{X}_{\bar{s}, u}^{\epsilon, j_{1}}(k_{3}) \hat{X}_{\bar{s}, b}^{\epsilon, j_{2}}(k_{4})] \nonumber\\
&  - e^{- \lvert k_{12} \rvert^{2} (t-s) - \lvert k_{34} \rvert^{2} (t- \bar{s})} k_{12}^{i_{2}} k_{34}^{j_{2}} ii \nonumber \\
& \hspace{2mm} \times [ \hat{\bar{X}}_{s,b}^{\epsilon, i_{1}}(k_{1}) \hat{\bar{X}}_{s,u}^{\epsilon, i_{2}}(k_{2}) \hat{\bar{X}}_{\bar{s}, b}^{\epsilon, j_{1}}(k_{3}) \hat{\bar{X}}_{\bar{s}, u}^{\epsilon, j_{2}}(k_{4}) - \hat{\bar{X}}_{s,b}^{\epsilon, i_{1}}(k_{1}) \hat{\bar{X}}_{s,u}^{\epsilon, i_{2}}(k_{2}) \hat{\bar{X}}_{\bar{s}, u}^{\epsilon, j_{1}}(k_{3}) \hat{\bar{X}}_{\bar{s}, b}^{\epsilon, j_{2}}(k_{4})  \nonumber\\
& \hspace{2mm} -  \hat{\bar{X}}_{s,u}^{\epsilon, i_{1}}(k_{1}) \hat{\bar{X}}_{s,b}^{\epsilon, i_{2}}(k_{2}) \hat{\bar{X}}_{\bar{s}, b}^{\epsilon, j_{1}}(k_{3}) \hat{\bar{X}}_{\bar{s}, u}^{\epsilon, j_{2}}(k_{4}) +  \hat{\bar{X}}_{s,u}^{\epsilon, i_{1}}(k_{1}) \hat{\bar{X}}_{s,b}^{\epsilon, i_{2}}(k_{2}) \hat{\bar{X}}_{\bar{s}, u}^{\epsilon, j_{1}}(k_{3}) \hat{\bar{X}}_{\bar{s}, b}^{\epsilon, j_{2}}(k_{4})]] \nonumber\\
& \times \hat{\mathcal{P}}^{ii_{1}}(k_{12}) \hat{\mathcal{P}}^{jj_{1}}(k_{34}) e_{k}ds d \bar{s} = \sum_{l=1}^{3} VII_{t, ij}^{l}
\end{align}
where 
\begin{align}\label{estimate 281}
\xi_{1} \xi_{2} \xi_{3} \xi_{4}=&  :\xi_{1} \xi_{2} \xi_{3} \xi_{4}: + \mathbb{E} [ \xi_{1} \xi_{2}] : \xi_{3} \xi_{4}: + \mathbb{E} [ \xi_{1} \xi_{3}] : \xi_{2} \xi_{4}: + \mathbb{E} [ \xi_{1} \xi_{4}] :\xi_{2} \xi_{3}: \nonumber \\
&+ \mathbb{E} [ \xi_{2} \xi_{3}]: \xi_{1} \xi_{4}: + \mathbb{E} [\xi_{2} \xi_{3}] \mathbb{E} [\xi_{1} \xi_{4}] + \mathbb{E} [\xi_{2} \xi_{4}] : \xi_{1} \xi_{3}:\nonumber \\
&+ \mathbb{E} [\xi_{2} \xi_{4}] \mathbb{E} [\xi_{1} \xi_{3}] + \mathbb{E} [\xi_{3} \xi_{4}] : \xi_{1} \xi_{2}: + \mathbb{E} [\xi_{3} \xi_{4}] \mathbb{E} [\xi_{1} \xi_{2}]
\end{align} 
due to Example \ref{Example 3.1} so that $VII_{t, ij}^{1}$ corresponds to $: \xi_{1} \xi_{2} \xi_{3} \xi_{4}:$, $VII_{t, ij}^{2}$ to those products of an expectation and a Wick product, and $VII_{t, ij}^{3}$ to the products of expectations. That is, first, 
\begin{align}\label{[Equation (4.8r)][ZZ17]} 
&VII_{t, ij}^{1} \\
&\triangleq  \frac{ (2\pi)^{-\frac{9}{2}}}{4} \sum_{k} \sum_{k_{1}, k_{2}, k_{3}, k_{4} \neq 0: k_{1234} = k} \sum_{i_{1}, i_{2}, j_{1}, j_{2} = 1}^{3} \int_{[0,t]^{2}}  \nonumber\\
& \times [ e^{- \lvert k_{12} \rvert^{2} f(\epsilon k_{12}) (t-s) - \lvert k_{34} \rvert^{2} f(\epsilon k_{34}) (t- \bar{s})} k_{12}^{i_{2}} k_{34}^{j_{2}} g(\epsilon k_{12}^{i_{2}}) g(\epsilon k_{34}^{j_{2}}) \nonumber\\
& \hspace{1mm} \times [ : \hat{X}_{s,b}^{\epsilon, i_{1}}(k_{1}) \hat{X}_{s,u}^{\epsilon, i_{2}}(k_{2}) \hat{X}_{\bar{s}, b}^{\epsilon, j_{1}}(k_{3}) \hat{X}_{\bar{s}, u}^{\epsilon, j_{2}}(k_{4}): - : \hat{X}_{s,b}^{\epsilon, i_{1}}(k_{1}) \hat{X}_{s,u}^{\epsilon, i_{2}}(k_{2}) \hat{X}_{\bar{s}, u}^{\epsilon, j_{1}}(k_{3}) \hat{X}_{\bar{s}, b}^{\epsilon, j_{2}}(k_{4}):
 \nonumber\\
 & \hspace{1mm} - : \hat{X}_{s,u}^{\epsilon, i_{1}}(k_{1}) \hat{X}_{s,b}^{\epsilon, i_{2}}(k_{2}) \hat{X}_{\bar{s}, b}^{\epsilon, j_{1}}(k_{3}) \hat{X}_{\bar{s}, u}^{\epsilon, j_{2}}(k_{4}): + : \hat{X}_{s,u}^{\epsilon, i_{1}}(k_{1}) \hat{X}_{s,b}^{\epsilon, i_{2}}(k_{2}) \hat{X}_{\bar{s}, u}^{\epsilon, j_{1}}(k_{3}) \hat{X}_{\bar{s}, b}^{\epsilon, j_{2}}(k_{4}): ] \nonumber\\
 & - e^{- \lvert k_{12} \rvert^{2} (t-s) - \lvert k_{34} \rvert^{2} (t- \bar{s})} k_{12}^{i_{2}} k_{34}^{j_{2}} ii \nonumber\\
& \hspace{1mm} \times [: \hat{\bar{X}}_{s,b}^{\epsilon, i_{1}}(k_{1}) \hat{\bar{X}}_{s,u}^{\epsilon, i_{2}}(k_{2}) \hat{\bar{X}}_{\bar{s}, b}^{\epsilon, j_{1}}(k_{3}) \hat{\bar{X}}_{\bar{s}, u}^{\epsilon, j_{2}}(k_{4}): - : \hat{\bar{X}}_{s,b}^{\epsilon, i_{1}}(k_{1}) \hat{\bar{X}}_{s,u}^{\epsilon, i_{2}}(k_{2}) \hat{\bar{X}}_{\bar{s}, u}^{\epsilon, j_{1}}(k_{3}) \hat{\bar{X}}_{\bar{s}, b}^{\epsilon, j_{2}}(k_{4}):
 \nonumber\\
 & \hspace{1mm} - : \hat{\bar{X}}_{s,u}^{\epsilon, i_{1}}(k_{1}) \hat{\bar{X}}_{s,b}^{\epsilon, i_{2}}(k_{2}) \hat{\bar{X}}_{\bar{s}, b}^{\epsilon, j_{1}}(k_{3}) \hat{\bar{X}}_{\bar{s}, u}^{\epsilon, j_{2}}(k_{4}): + : \hat{\bar{X}}_{s,u}^{\epsilon, i_{1}}(k_{1}) \hat{\bar{X}}_{s,b}^{\epsilon, i_{2}}(k_{2}) \hat{\bar{X}}_{\bar{s}, u}^{\epsilon, j_{1}}(k_{3}) \hat{\bar{X}}_{\bar{s}, b}^{\epsilon, j_{2}}(k_{4}): ] ] \nonumber\\
 & \times \hat{\mathcal{P}}^{ii_{1}}(k_{12}) \hat{\mathcal{P}}^{jj_{1}}(k_{34}) e_{k} ds d\bar{s}. \nonumber
\end{align}
Second, for $VII_{t, ij}^{2}$, among the terms corresponding to $\mathbb{E} [ \xi_{1} \xi_{2}]:\xi_{3}\xi_{4}:$, $\mathbb{E} [ \xi_{1}\xi_{3}]: \xi_{2}\xi_{4}:$, $\mathbb{E} [ \xi_{1}\xi_{4}] :\xi_{2}\xi_{3}:$, $\mathbb{E} [\xi_{2}\xi_{3}]:\xi_{1}\xi_{4}:$, $\mathbb{E} [\xi_{2}\xi_{4}]: \xi_{1}\xi_{3}:$ and $\mathbb{E} [\xi_{3}\xi_{4}]: \xi_{1}\xi_{2}:$ in \eqref{estimate 281}, those with $\mathbb{E}[\xi_{1}\xi_{2}]$ or $\mathbb{E} [\xi_{3}\xi_{4}]$ vanish due to $k_{12}$ and $k_{34}$ in \eqref{estimate 282}, and hence 
 \begin{equation}\label{estimate 217}
VII_{t, ij}^{2} \triangleq \sum_{l=1}^{4} VII_{t, ij}^{2l} 
 \end{equation} 
where corresponding to $\mathbb{E} [\xi_{1}\xi_{3}]: \xi_{2}\xi_{4}:$, $\mathbb{E} [\xi_{1}\xi_{4}]:\xi_{2}\xi_{3}:$, $\mathbb{E}[\xi_{2}\xi_{3}]:\xi_{1}\xi_{4}:$ and $\mathbb{E} [\xi_{2}\xi_{4}]: \xi_{1}\xi_{3}:$, we have 
\begin{subequations}
\begin{align}
&VII_{t, ij}^{21} \triangleq \frac{ (2\pi)^{-\frac{9}{2}}}{4} \sum_{k} \sum_{k_{1}, k_{2}, k_{4} \neq 0: k_{24} = k} \sum_{i_{1}, i_{2}, j_{1}, j_{2} = 1}^{3} \int_{[0,t]^{2}} \\
&\hspace{2mm} \times [ e^{- \lvert k_{12} \rvert^{2} f(\epsilon k_{12} )(t-s) - \lvert k_{4} - k_{1} \rvert^{2} f(\epsilon (k_{4} - k_{1})) (t- \bar{s})} k_{12}^{i_{2}} (k_{4}^{j_{2}} - k_{1}^{j_{2}}) g(\epsilon k_{12}^{i_{2}}) g(\epsilon (k_{4}^{j_{2}} - k_{1}^{j_{2}})) \nonumber\\
& \hspace{7mm} \times [: \hat{X}_{s,u}^{\epsilon, i_{2}}(k_{2}) \hat{X}_{\bar{s}, u}^{\epsilon, j_{2}}(k_{4}): \sum_{j_{3} =1}^{3} \frac{ e^{-\lvert k_{1} \rvert^{2} f(\epsilon k_{1}) \lvert s- \bar{s} \rvert} h_{b}(\epsilon k_{1})^{2}}{2 \lvert k_{1}\rvert^{2} f(\epsilon k_{1})} \hat{\mathcal{P}}^{i_{1}j_{3}}(k_{1}) \hat{\mathcal{P}}^{j_{1} j_{3}}(k_{1}) \nonumber\\
& \hspace{7mm} - : \hat{X}_{s,u}^{\epsilon, i_{2}}(k_{2}) \hat{X}_{\bar{s}, b}^{\epsilon, j_{2}}(k_{4}): \sum_{j_{3} =1}^{3} \frac{ e^{-\lvert k_{1} \rvert^{2} f(\epsilon k_{1}) \lvert s- \bar{s} \rvert} h_{b}(\epsilon k_{1})h_{u}(\epsilon k_{1})}{2 \lvert k_{1}\rvert^{2} f(\epsilon k_{1})} \hat{\mathcal{P}}^{i_{1}j_{3}}(k_{1}) \hat{\mathcal{P}}^{j_{1} j_{3}}(k_{1}) \nonumber\\
& \hspace{7mm} - : \hat{X}_{s,b}^{\epsilon, i_{2}}(k_{2}) \hat{X}_{\bar{s}, u}^{\epsilon, j_{2}}(k_{4}): \sum_{j_{3} =1}^{3} \frac{ e^{-\lvert k_{1} \rvert^{2} f(\epsilon k_{1}) \lvert s- \bar{s} \rvert} h_{u}(\epsilon k_{1})h_{b}(\epsilon k_{1})}{2 \lvert k_{1}\rvert^{2} f(\epsilon k_{1})} \hat{\mathcal{P}}^{i_{1}j_{3}}(k_{1}) \hat{\mathcal{P}}^{j_{1} j_{3}}(k_{1}) \nonumber\\
& \hspace{7mm} + : \hat{X}_{s,b}^{\epsilon, i_{2}}(k_{2}) \hat{X}_{\bar{s}, b}^{\epsilon, j_{2}}(k_{4}): \sum_{j_{3} =1}^{3} \frac{ e^{-\lvert k_{1} \rvert^{2} f(\epsilon k_{1}) \lvert s- \bar{s} \rvert} h_{u}(\epsilon k_{1})^{2}}{2 \lvert k_{1}\rvert^{2} f(\epsilon k_{1})} \hat{\mathcal{P}}^{i_{1}j_{3}}(k_{1}) \hat{\mathcal{P}}^{j_{1} j_{3}}(k_{1})] \nonumber\\
&\hspace{2mm} - e^{- \lvert k_{12} \rvert^{2} (t-s) - \lvert k_{4} - k_{1} \rvert^{2} (t- \bar{s})} k_{12}^{i_{2}} (k_{4} - k_{1})^{j_{2}} ii \nonumber\\
& \hspace{7mm} \times [: \hat{\bar{X}}_{s,u}^{\epsilon, i_{2}}(k_{2}) \hat{\bar{X}}_{\bar{s}, u}^{\epsilon, j_{2}}(k_{4}): \sum_{j_{3} =1}^{3} \frac{ e^{-\lvert k_{1} \rvert^{2} \lvert s- \bar{s} \rvert} h_{b}(\epsilon k_{1})^{2}}{2 \lvert k_{1}\rvert^{2}} \hat{\mathcal{P}}^{i_{1}j_{3}}(k_{1}) \hat{\mathcal{P}}^{j_{1} j_{3}}(k_{1}) \nonumber\\
& \hspace{7mm} - : \hat{\bar{X}}_{s,u}^{\epsilon, i_{2}}(k_{2}) \hat{\bar{X}}_{\bar{s}, b}^{\epsilon, j_{2}}(k_{4}): \sum_{j_{3} =1}^{3} \frac{ e^{-\lvert k_{1} \rvert^{2} \lvert s- \bar{s} \rvert} h_{b}(\epsilon k_{1})h_{u}(\epsilon k_{1})}{2 \lvert k_{1}\rvert^{2} } \hat{\mathcal{P}}^{i_{1}j_{3}}(k_{1}) \hat{\mathcal{P}}^{j_{1} j_{3}}(k_{1}) \nonumber\\
& \hspace{7mm} - : \hat{\bar{X}}_{s,b}^{\epsilon, i_{2}}(k_{2}) \hat{\bar{X}}_{\bar{s}, u}^{\epsilon, j_{2}}(k_{4}): \sum_{j_{3} =1}^{3} \frac{ e^{-\lvert k_{1} \rvert^{2}  \lvert s- \bar{s} \rvert} h_{u}(\epsilon k_{1})h_{b}(\epsilon k_{1})}{2 \lvert k_{1}\rvert^{2}} \hat{\mathcal{P}}^{i_{1}j_{3}}(k_{1}) \hat{\mathcal{P}}^{j_{1} j_{3}}(k_{1}) \nonumber\\
& \hspace{7mm} + : \hat{\bar{X}}_{s,b}^{\epsilon, i_{2}}(k_{2}) \hat{\bar{X}}_{\bar{s}, b}^{\epsilon, j_{2}}(k_{4}): \sum_{j_{3} =1}^{3} \frac{ e^{-\lvert k_{1} \rvert^{2} \lvert s- \bar{s} \rvert} h_{u}(\epsilon k_{1})^{2}}{2 \lvert k_{1}\rvert^{2} } \hat{\mathcal{P}}^{i_{1}j_{3}}(k_{1}) \hat{\mathcal{P}}^{j_{1} j_{3}}(k_{1})] ] \nonumber\\
&\hspace{2mm}  \times \hat{\mathcal{P}}^{ii_{1}} (k_{12}) \hat{\mathcal{P}}^{jj_{1}}(k_{4} - k_{1}) e_{k} ds d \bar{s}, \nonumber\\
&VII_{t, ij}^{22} \triangleq \frac{ (2\pi)^{-\frac{9}{2}}}{4} \sum_{k} \sum_{k_{1}, k_{2}, k_{3}\neq 0: k_{23} = k} \sum_{i_{1}, i_{2}, j_{1}, j_{2} = 1}^{3} \int_{[0,t]^{2}} \\
&\hspace{2mm} \times [ e^{- \lvert k_{12} \rvert^{2} f(\epsilon k_{12} )(t-s) - \lvert k_{3} - k_{1} \rvert^{2} f(\epsilon (k_{3} - k_{1})) (t- \bar{s})} k_{12}^{i_{2}} (k_{3}^{j_{2}} - k_{1}^{j_{2}}) g(\epsilon k_{12}^{i_{2}}) g(\epsilon (k_{3}^{j_{2}} - k_{1}^{j_{2}})) \nonumber\\
& \hspace{7mm} \times [: \hat{X}_{s,u}^{\epsilon, i_{2}}(k_{2}) \hat{X}_{\bar{s}, b}^{\epsilon, j_{1}}(k_{3}): \sum_{j_{3} =1}^{3} \frac{ e^{-\lvert k_{1} \rvert^{2} f(\epsilon k_{1}) \lvert s- \bar{s} \rvert} h_{b}(\epsilon k_{1})h_{u}(\epsilon k_{1})}{2 \lvert k_{1}\rvert^{2} f(\epsilon k_{1})} \hat{\mathcal{P}}^{i_{1}j_{3}}(k_{1}) \hat{\mathcal{P}}^{j_{2} j_{3}}(k_{1}) \nonumber\\
& \hspace{7mm} - : \hat{X}_{s,u}^{\epsilon, i_{2}}(k_{2}) \hat{X}_{\bar{s}, u}^{\epsilon, j_{1}}(k_{3}): \sum_{j_{3} =1}^{3} \frac{ e^{-\lvert k_{1} \rvert^{2} f(\epsilon k_{1}) \lvert s- \bar{s} \rvert} h_{b}(\epsilon k_{1})^{2}}{2 \lvert k_{1}\rvert^{2} f(\epsilon k_{1})} \hat{\mathcal{P}}^{i_{1}j_{3}}(k_{1}) \hat{\mathcal{P}}^{j_{2} j_{3}}(k_{1}) \nonumber\\
& \hspace{7mm} - : \hat{X}_{s,b}^{\epsilon, i_{2}}(k_{2}) \hat{X}_{\bar{s}, b}^{\epsilon, j_{1}}(k_{3}): \sum_{j_{3} =1}^{3} \frac{ e^{-\lvert k_{1} \rvert^{2} f(\epsilon k_{1}) \lvert s- \bar{s} \rvert} h_{u}(\epsilon k_{1})^{2}}{2 \lvert k_{1}\rvert^{2} f(\epsilon k_{1})} \hat{\mathcal{P}}^{i_{1}j_{3}}(k_{1}) \hat{\mathcal{P}}^{j_{2} j_{3}}(k_{1}) \nonumber\\
& \hspace{7mm} + : \hat{X}_{s,b}^{\epsilon, i_{2}}(k_{2}) \hat{X}_{\bar{s}, u}^{\epsilon, j_{1}}(k_{3}): \sum_{j_{3} =1}^{3} \frac{ e^{-\lvert k_{1} \rvert^{2} f(\epsilon k_{1}) \lvert s- \bar{s} \rvert} h_{u}(\epsilon k_{1})h_{b}(\epsilon k_{1})}{2 \lvert k_{1}\rvert^{2} f(\epsilon k_{1})} \hat{\mathcal{P}}^{i_{1}j_{3}}(k_{1}) \hat{\mathcal{P}}^{j_{2} j_{3}}(k_{1})] \nonumber\\
& \hspace{2mm}- e^{- \lvert k_{12} \rvert^{2} (t-s) - \lvert k_{3} - k_{1} \rvert^{2} (t- \bar{s})} k_{12}^{i_{2}} (k_{3} - k_{1})^{j_{2}} ii \nonumber\\
& \hspace{7mm} \times [: \hat{\bar{X}}_{s,u}^{\epsilon, i_{2}}(k_{2}) \hat{\bar{X}}_{\bar{s}, b}^{\epsilon, j_{1}}(k_{3}): \sum_{j_{3} =1}^{3} \frac{ e^{-\lvert k_{1} \rvert^{2} \lvert s- \bar{s} \rvert} h_{b}(\epsilon k_{1})h_{u}(\epsilon k_{1})}{2 \lvert k_{1}\rvert^{2}} \hat{\mathcal{P}}^{i_{1}j_{3}}(k_{1}) \hat{\mathcal{P}}^{j_{2} j_{3}}(k_{1}) \nonumber\\
& \hspace{7mm} - : \hat{\bar{X}}_{s,u}^{\epsilon, i_{2}}(k_{2}) \hat{\bar{X}}_{\bar{s}, u}^{\epsilon, j_{1}}(k_{3}): \sum_{j_{3} =1}^{3} \frac{ e^{-\lvert k_{1} \rvert^{2} \lvert s- \bar{s} \rvert} h_{b}(\epsilon k_{1})^{2}}{2 \lvert k_{1}\rvert^{2} } \hat{\mathcal{P}}^{i_{1}j_{3}}(k_{1}) \hat{\mathcal{P}}^{j_{2} j_{3}}(k_{1}) \nonumber\\
& \hspace{7mm} - : \hat{\bar{X}}_{s,b}^{\epsilon, i_{2}}(k_{2}) \hat{\bar{X}}_{\bar{s}, b}^{\epsilon, j_{1}}(k_{3}): \sum_{j_{3} =1}^{3} \frac{ e^{-\lvert k_{1} \rvert^{2}  \lvert s- \bar{s} \rvert} h_{u}(\epsilon k_{1})^{2}}{2 \lvert k_{1}\rvert^{2}} \hat{\mathcal{P}}^{i_{1}j_{3}}(k_{1}) \hat{\mathcal{P}}^{j_{2} j_{3}}(k_{1}) \nonumber\\
& \hspace{7mm} + : \hat{\bar{X}}_{s,b}^{\epsilon, i_{2}}(k_{2}) \hat{\bar{X}}_{\bar{s}, u}^{\epsilon, j_{1}}(k_{3}): \sum_{j_{3} =1}^{3} \frac{ e^{-\lvert k_{1} \rvert^{2} \lvert s- \bar{s} \rvert} h_{u}(\epsilon k_{1})h_{b}(\epsilon k_{1})}{2 \lvert k_{1}\rvert^{2} } \hat{\mathcal{P}}^{i_{1}j_{3}}(k_{1}) \hat{\mathcal{P}}^{j_{2} j_{3}}(k_{1})] ] \nonumber\\
&\hspace{2mm}  \times \hat{\mathcal{P}}^{ii_{1}} (k_{12}) \hat{\mathcal{P}}^{jj_{1}}(k_{3} - k_{1}) e_{k} ds d \bar{s}, \nonumber \\
&VII_{t, ij}^{23} \triangleq \frac{ (2\pi)^{-\frac{9}{2}}}{4} \sum_{k} \sum_{k_{1}, k_{2}, k_{4} \neq 0: k_{14} = k} \sum_{i_{1}, i_{2}, j_{1}, j_{2} = 1}^{3} \int_{[0,t]^{2}} \\
&\hspace{2mm} \times [ e^{- \lvert k_{12} \rvert^{2} f(\epsilon k_{12} )(t-s) - \lvert k_{4} - k_{2} \rvert^{2} f(\epsilon (k_{4} - k_{2})) (t- \bar{s})} k_{12}^{i_{2}} (k_{4}^{j_{2}} - k_{2}^{j_{2}}) g(\epsilon k_{12}^{i_{2}}) g(\epsilon (k_{4}^{j_{2}} - k_{2}^{j_{2}})) \nonumber\\
& \hspace{7mm} \times [: \hat{X}_{s,b}^{\epsilon, i_{1}}(k_{1}) \hat{X}_{\bar{s}, u}^{\epsilon, j_{2}}(k_{4}): \sum_{j_{3} =1}^{3} \frac{ e^{-\lvert k_{2} \rvert^{2} f(\epsilon k_{2}) \lvert s- \bar{s} \rvert} h_{u}(\epsilon k_{2})h_{b}(\epsilon k_{2})}{2 \lvert k_{2}\rvert^{2} f(\epsilon k_{2})} \hat{\mathcal{P}}^{i_{2}j_{3}}(k_{2}) \hat{\mathcal{P}}^{j_{1} j_{3}}(k_{2}) \nonumber\\
& \hspace{7mm} - : \hat{X}_{s,b}^{\epsilon, i_{1}}(k_{1}) \hat{X}_{\bar{s}, b}^{\epsilon, j_{2}}(k_{4}): \sum_{j_{3} =1}^{3} \frac{ e^{-\lvert k_{2} \rvert^{2} f(\epsilon k_{2}) \lvert s- \bar{s} \rvert} h_{u}(\epsilon k_{2})^{2}}{2 \lvert k_{2}\rvert^{2} f(\epsilon k_{2})} \hat{\mathcal{P}}^{i_{2}j_{3}}(k_{2}) \hat{\mathcal{P}}^{j_{1} j_{3}}(k_{2}) \nonumber\\
& \hspace{7mm} - : \hat{X}_{s,u}^{\epsilon, i_{1}}(k_{1}) \hat{X}_{\bar{s}, u}^{\epsilon, j_{2}}(k_{4}): \sum_{j_{3} =1}^{3} \frac{ e^{-\lvert k_{2} \rvert^{2} f(\epsilon k_{2}) \lvert s- \bar{s} \rvert} h_{b}(\epsilon k_{2})^{2}}{2 \lvert k_{2}\rvert^{2} f(\epsilon k_{2})} \hat{\mathcal{P}}^{i_{2}j_{3}}(k_{2}) \hat{\mathcal{P}}^{j_{1} j_{3}}(k_{2}) \nonumber\\
& \hspace{7mm} + : \hat{X}_{s,u}^{\epsilon, i_{1}}(k_{1}) \hat{X}_{\bar{s}, b}^{\epsilon, j_{2}}(k_{4}): \sum_{j_{3} =1}^{3} \frac{ e^{-\lvert k_{2} \rvert^{2} f(\epsilon k_{2}) \lvert s- \bar{s} \rvert} h_{b}(\epsilon k_{2}) h_{u}(\epsilon k_{2})}{2 \lvert k_{2}\rvert^{2} f(\epsilon k_{2})} \hat{\mathcal{P}}^{i_{2}j_{3}}(k_{2}) \hat{\mathcal{P}}^{j_{1} j_{3}}(k_{2})] \nonumber\\
& \hspace{2mm}- e^{- \lvert k_{12} \rvert^{2} (t-s) - \lvert k_{4} - k_{2} \rvert^{2} (t- \bar{s})} k_{12}^{i_{2}} (k_{4} - k_{2})^{j_{2}} ii \nonumber\\
& \hspace{7mm} \times [: \hat{\bar{X}}_{s,b}^{\epsilon, i_{1}}(k_{1}) \hat{\bar{X}}_{\bar{s}, u}^{\epsilon, j_{2}}(k_{4}): \sum_{j_{3} =1}^{3} \frac{ e^{-\lvert k_{2} \rvert^{2} \lvert s- \bar{s} \rvert} h_{u}(\epsilon k_{2}) h_{b}(\epsilon k_{2})}{2 \lvert k_{1}\rvert^{2}} \hat{\mathcal{P}}^{i_{2}j_{3}}(k_{2}) \hat{\mathcal{P}}^{j_{1} j_{3}}(k_{2}) \nonumber\\
& \hspace{7mm} - : \hat{\bar{X}}_{s,b}^{\epsilon, i_{1}}(k_{1}) \hat{\bar{X}}_{\bar{s}, b}^{\epsilon, j_{2}}(k_{4}): \sum_{j_{3} =1}^{3} \frac{ e^{-\lvert k_{2} \rvert^{2} \lvert s- \bar{s} \rvert} h_{u}(\epsilon k_{2})^{2}}{2 \lvert k_{2}\rvert^{2} } \hat{\mathcal{P}}^{i_{2}j_{3}}(k_{2}) \hat{\mathcal{P}}^{j_{1} j_{3}}(k_{2}) \nonumber\\
& \hspace{7mm} - : \hat{\bar{X}}_{s,u}^{\epsilon, i_{1}}(k_{1}) \hat{\bar{X}}_{\bar{s}, u}^{\epsilon, j_{2}}(k_{4}): \sum_{j_{3} =1}^{3} \frac{ e^{-\lvert k_{2} \rvert^{2}  \lvert s- \bar{s} \rvert} h_{b}(\epsilon k_{2})^{2}}{2 \lvert k_{2}\rvert^{2}} \hat{\mathcal{P}}^{i_{2}j_{3}}(k_{2}) \hat{\mathcal{P}}^{j_{1} j_{3}}(k_{2}) \nonumber\\
& \hspace{7mm} + : \hat{\bar{X}}_{s,u}^{\epsilon, i_{1}}(k_{1}) \hat{\bar{X}}_{\bar{s}, b}^{\epsilon, j_{2}}(k_{4}): \sum_{j_{3} =1}^{3} \frac{ e^{-\lvert k_{2} \rvert^{2} \lvert s- \bar{s} \rvert} h_{u}(\epsilon k_{2})h_{b}(\epsilon k_{2})}{2 \lvert k_{2}\rvert^{2} } \hat{\mathcal{P}}^{i_{2}j_{3}}(k_{2}) \hat{\mathcal{P}}^{j_{1} j_{3}}(k_{2})] ] \nonumber\\
& \times \hat{\mathcal{P}}^{ii_{1}} (k_{12}) \hat{\mathcal{P}}^{jj_{1}}(k_{4} - k_{2}) e_{k} ds d \bar{s}, \nonumber \\
&VII_{t, ij}^{24} \triangleq \frac{ (2\pi)^{-\frac{9}{2}}}{4} \sum_{k} \sum_{k_{1}, k_{2}, k_{3}: k_{13} = k} \sum_{i_{1}, i_{2}, j_{1}, j_{2} = 1}^{3} \int_{[0,t]^{2}} \\
&\hspace{2mm} \times [ e^{- \lvert k_{12} \rvert^{2} f(\epsilon k_{12} )(t-s) - \lvert k_{3} - k_{2} \rvert^{2} f(\epsilon (k_{3} - k_{2})) (t- \bar{s})} k_{12}^{i_{2}} (k_{3}^{j_{2}} - k_{2}^{j_{2}}) g(\epsilon k_{12}^{i_{2}}) g(\epsilon (k_{3}^{j_{2}} - k_{2}^{j_{2}})) \nonumber\\
& \hspace{7mm} \times [: \hat{X}_{s,b}^{\epsilon, i_{1}}(k_{1}) \hat{X}_{\bar{s}, b}^{\epsilon, j_{1}}(k_{3}): \sum_{j_{3} =1}^{3} \frac{ e^{-\lvert k_{2} \rvert^{2} f(\epsilon k_{2}) \lvert s- \bar{s} \rvert} h_{u}(\epsilon k_{2})^{2}}{2 \lvert k_{2}\rvert^{2} f(\epsilon k_{2})} \hat{\mathcal{P}}^{i_{2}j_{3}}(k_{2}) \hat{\mathcal{P}}^{j_{2} j_{3}}(k_{2}) \nonumber\\
& \hspace{7mm} - : \hat{X}_{s,b}^{\epsilon, i_{1}}(k_{1}) \hat{X}_{\bar{s}, u}^{\epsilon, j_{1}}(k_{3}): \sum_{j_{3} =1}^{3} \frac{ e^{-\lvert k_{2} \rvert^{2} f(\epsilon k_{2}) \lvert s- \bar{s} \rvert} h_{u}(\epsilon k_{2}) h_{b}(\epsilon k_{2})}{2 \lvert k_{2}\rvert^{2} f(\epsilon k_{2})} \hat{\mathcal{P}}^{i_{2}j_{3}}(k_{2}) \hat{\mathcal{P}}^{j_{2} j_{3}}(k_{2}) \nonumber\\
& \hspace{7mm} - : \hat{X}_{s,u}^{\epsilon, i_{1}}(k_{1}) \hat{X}_{\bar{s}, b}^{\epsilon, j_{1}}(k_{3}): \sum_{j_{3} =1}^{3} \frac{ e^{-\lvert k_{2} \rvert^{2} f(\epsilon k_{2}) \lvert s- \bar{s} \rvert} h_{b}(\epsilon k_{2})h_{u}(\epsilon k_{2})}{2 \lvert k_{2}\rvert^{2} f(\epsilon k_{2})} \hat{\mathcal{P}}^{i_{2}j_{3}}(k_{2}) \hat{\mathcal{P}}^{j_{2} j_{3}}(k_{2}) \nonumber\\
& \hspace{7mm} + : \hat{X}_{s,u}^{\epsilon, i_{1}}(k_{1}) \hat{X}_{\bar{s}, u}^{\epsilon, j_{1}}(k_{3}): \sum_{j_{3} =1}^{3} \frac{ e^{-\lvert k_{2} \rvert^{2} f(\epsilon k_{2}) \lvert s- \bar{s} \rvert} h_{b}(\epsilon k_{2})^{2}}{2 \lvert k_{2}\rvert^{2} f(\epsilon k_{2})} \hat{\mathcal{P}}^{i_{2}j_{3}}(k_{2}) \hat{\mathcal{P}}^{j_{2} j_{3}}(k_{2})] \nonumber\\
& \hspace{2mm}- e^{- \lvert k_{12} \rvert^{2} (t-s) - \lvert k_{3} - k_{2} \rvert^{2} (t- \bar{s})} k_{12}^{i_{2}} (k_{3} - k_{2})^{j_{2}} ii \nonumber\\
& \hspace{7mm} \times [: \hat{\bar{X}}_{s,b}^{\epsilon, i_{1}}(k_{1}) \hat{\bar{X}}_{\bar{s}, b}^{\epsilon, j_{1}}(k_{3}): \sum_{j_{3} =1}^{3} \frac{ e^{-\lvert k_{2} \rvert^{2} \lvert s- \bar{s} \rvert} h_{u}(\epsilon k_{2})^{2}}{2 \lvert k_{2}\rvert^{2}} \hat{\mathcal{P}}^{i_{2}j_{3}}(k_{2}) \hat{\mathcal{P}}^{j_{2} j_{3}}(k_{2}) \nonumber\\
& \hspace{7mm} - : \hat{\bar{X}}_{s,b}^{\epsilon, i_{1}}(k_{1}) \hat{\bar{X}}_{\bar{s}, u}^{\epsilon, j_{1}}(k_{3}): \sum_{j_{3} =1}^{3} \frac{ e^{-\lvert k_{2} \rvert^{2} \lvert s- \bar{s} \rvert} h_{u}(\epsilon k_{2})h_{b}(\epsilon k_{2})}{2 \lvert k_{2}\rvert^{2} } \hat{\mathcal{P}}^{i_{2}j_{3}}(k_{2}) \hat{\mathcal{P}}^{j_{2} j_{3}}(k_{2}) \nonumber\\
& \hspace{7mm} - : \hat{\bar{X}}_{s,u}^{\epsilon, i_{1}}(k_{1}) \hat{\bar{X}}_{\bar{s}, b}^{\epsilon, j_{1}}(k_{3}): \sum_{j_{3} =1}^{3} \frac{ e^{-\lvert k_{2} \rvert^{2}  \lvert s- \bar{s} \rvert} h_{u}(\epsilon k_{2})h_{b}(\epsilon k_{2})}{2 \lvert k_{2}\rvert^{2}} \hat{\mathcal{P}}^{i_{2}j_{3}}(k_{2}) \hat{\mathcal{P}}^{j_{2} j_{3}}(k_{2}) \nonumber\\
& \hspace{7mm} + : \hat{\bar{X}}_{s,u}^{\epsilon, i_{1}}(k_{1}) \hat{\bar{X}}_{\bar{s}, u}^{\epsilon, j_{1}}(k_{3}): \sum_{j_{3} =1}^{3} \frac{ e^{-\lvert k_{2} \rvert^{2} \lvert s- \bar{s} \rvert} h_{b}(\epsilon k_{2})^{2}}{2 \lvert k_{2}\rvert^{2} } \hat{\mathcal{P}}^{i_{2}j_{3}}(k_{2}) \hat{\mathcal{P}}^{j_{2} j_{3}}(k_{2})] ] \nonumber\\
&\hspace{2mm} \times \hat{\mathcal{P}}^{ii_{1}} (k_{12}) \hat{\mathcal{P}}^{jj_{1}}(k_{3} - k_{2}) e_{k} ds d \bar{s}. \nonumber
\end{align}
\end{subequations}  
Finally, $VII_{t, ij}^{3}$ consists of terms corresponding to $\mathbb{E} [ \xi_{2}\xi_{3}]\mathbb{E} [ \xi_{1}\xi_{4}]$, $\mathbb{E} [\xi_{2}\xi_{4}]\mathbb{E} [\xi_{1}\xi_{3}]$ and $\mathbb{E} [ \xi_{3}\xi_{4}]\mathbb{E}[\xi_{1}\xi_{2}]$ in \eqref{estimate 281} of which the last term vanishes due to $k_{12}$ and $k_{34}$ in \eqref{estimate 282}. In sum, we have 
\begin{align}\label{estimate 218}
VII_{t, ij}^{3} \triangleq & \frac{ (2\pi)^{-6}}{4} \sum_{k_{1}, k_{2} \neq 0} \sum_{i_{1}, i_{2}, j_{1}, j_{2} =1}^{3} \\
& \times [\int_{[0,t]^{2}} e^{- \lvert k_{12} \rvert^{2} f(\epsilon k_{12}) (t-s) - \lvert k_{12} \rvert^{2} f(\epsilon k_{12}) (t- \bar{s})} k_{12}^{i_{2}} (-k_{12}^{j_{2}}) g(\epsilon k_{12}^{i_{2}}) g(- k_{12}^{j_{2}})\nonumber\\
& \hspace{5mm} \times \frac{ e^{- \lvert k_{2} \rvert^{2} f(\epsilon k_{2}) \lvert s- \bar{s} \rvert - \lvert k_{1} \rvert^{2} f(\epsilon k_{1}) \lvert s - \bar{s} \rvert}}{ \prod_{i=1}^{2} (2 \lvert k_{i} \rvert^{2} f(\epsilon k_{i}))}  ds d \bar{s} \nonumber\\
& - \int_{[0,t]^{2}} e^{- \lvert k_{12} \rvert^{2} (t-s) - \lvert k_{12} \rvert^{2} (t- \bar{s})} k_{12}^{i_{2}} (- k_{12}^{j_{2}}) ii \frac{ e^{- \lvert k_{2} \rvert^{2} \lvert s - \bar{s} \rvert - \lvert k_{1} \rvert^{2} \lvert s- \bar{s} \rvert}}{\prod_{i=1}^{2} (2 \lvert k_{i} \rvert^{2})}  ds d \bar{s} ] \nonumber\\
& \times Y_{k_{1}k_{2}} \hat{\mathcal{P}}^{ii_{1}}(k_{12}) \hat{\mathcal{P}}^{jj_{1}}(k_{12}) \sum_{j_{3}, j_{4} =1}^{3} [ \hat{\mathcal{P}}^{i_{2} j_{4}}(k_{2}) \hat{\mathcal{P}}^{j_{1} j_{4}}(k_{2}) \hat{\mathcal{P}}^{i_{1} j_{3}}(k_{1}) \hat{\mathcal{P}}^{j_{2}j_{3}}(k_{1}) \nonumber\\
& \hspace{25mm}  - \hat{\mathcal{P}}^{i_{2} j_{4}}(k_{2}) \hat{\mathcal{P}}^{j_{2} j_{4}}(k_{2}) \hat{\mathcal{P}}^{i_{1} j_{3}}(k_{1}) \hat{\mathcal{P}}^{j_{1}j_{3}}(k_{1})]  \nonumber
\end{align} 
where we defined 
\begin{align}
Y_{k_{1}k_{2}} \triangleq& 2 h_{u}(\epsilon k_{1}) h_{b}(\epsilon k_{1}) h_{u}(\epsilon k_{2}) h_{b}(\epsilon k_{2}) \nonumber\\
& - h_{u}(\epsilon k_{2})^{2} h_{b}(\epsilon k_{1})^{2} - h_{u}(\epsilon k_{1})^{2} h_{b}(\epsilon k_{2})^{2}.
\end{align} 
In contrast to the case of the NS equations in \cite{ZZ17}, $VII_{t, ij}^{3}$ is complicated due to $h_{u}, h_{b}$ and the opposite signs within the last summation over $j_{3}$ and $j_{4}$, specifically $[ \hat{\mathcal{P}}^{i_{2} j_{4}}(k_{2}) \hat{\mathcal{P}}^{j_{1} j_{4}}(k_{2}) \hat{\mathcal{P}}^{i_{1} j_{3}}(k_{1}) \hat{\mathcal{P}}^{j_{2}j_{3}}(k_{1})$ $- \hat{\mathcal{P}}^{i_{2} j_{4}}(k_{2}) \hat{\mathcal{P}}^{j_{2} j_{4}}(k_{2}) \hat{\mathcal{P}}^{i_{1} j_{3}}(k_{1}) \hat{\mathcal{P}}^{j_{1}j_{3}}(k_{1})]$.\\

\emph{Term in the zeroth chaos: $VII_{t, ij}^{3}$ in \eqref{estimate 218}}\\

Applying Fubini theorem, it can readily be verified that $VII_{t, ij}^{3}$ satisfies 
\begin{equation}
\phi_{22}^{\epsilon, ij} - \bar{\phi}_{22}^{\epsilon, ij} = VII_{t, ij}^{3} - C_{22}^{\epsilon, ij} + \bar{C}_{22}^{\epsilon, ij}, 
\end{equation}  
where 
\begin{subequations}
\begin{align}
C_{22}^{\epsilon, ij} &\triangleq \frac{ (2\pi)^{-6}}{4} \sum_{k_{1}, k_{2} \neq 0} \sum_{i_{1}, i_{2}, j_{1}, j_{2} =1}^{3} Y_{k_{1}k_{2}}\hat{\mathcal{P}}^{ii_{1}}(k_{12}) \hat{\mathcal{P}}^{jj_{1}}(k_{12}) \label{estimate 43}\\
& \times \sum_{j_{3}, j_{4} =1}^{3} [ \hat{\mathcal{P}}^{i_{2} j_{4}}(k_{2}) \hat{\mathcal{P}}^{j_{1} j_{4}}(k_{2}) \hat{\mathcal{P}}^{i_{1} j_{3}}(k_{1}) \hat{\mathcal{P}}^{j_{2}j_{3}}(k_{1}) \nonumber\\
& \hspace{5mm} - \hat{\mathcal{P}}^{i_{2} j_{4}}(k_{2}) \hat{\mathcal{P}}^{j_{2} j_{4}}(k_{2}) \hat{\mathcal{P}}^{i_{1} j_{3}}(k_{1}) \hat{\mathcal{P}}^{j_{1}j_{3}}(k_{1})]   (- k_{12}^{i_{2}} k_{12}^{j_{2}}) g(\epsilon k_{12}^{i_{2}}) g(-\epsilon k_{12}^{j_{2}}) \nonumber\\
& \hspace{5mm} \times \frac{1}{ [ \prod_{i=1}^{2} (2 \lvert k_{i} \rvert^{2} f(\epsilon k_{i}))][\lvert k_{12} \rvert^{2} f(\epsilon k_{12}) + \sum_{i=1}^{2} \lvert k_{i} \rvert^{2} f(\epsilon k_{i})]  }\frac{1}{\lvert k_{12} \rvert^{2} f(\epsilon k_{12})}, \nonumber \\
\bar{C}_{22}^{\epsilon, ij} &\triangleq - \frac{ (2\pi)^{-6}}{4} \sum_{k_{1}, k_{2} \neq 0} \sum_{i_{1}, i_{2}, j_{1}, j_{2} =1}^{3} Y_{k_{1}k_{2}}\hat{\mathcal{P}}^{ii_{1}}(k_{12}) \hat{\mathcal{P}}^{jj_{1}}(k_{12}) \label{estimate 44}  \\
& \times \sum_{j_{3}, j_{4} =1}^{3} [ \hat{\mathcal{P}}^{i_{2} j_{4}}(k_{2}) \hat{\mathcal{P}}^{j_{1} j_{4}}(k_{2}) \hat{\mathcal{P}}^{i_{1} j_{3}}(k_{1}) \hat{\mathcal{P}}^{j_{2}j_{3}}(k_{1}) \nonumber\\
& \hspace{5mm} - \hat{\mathcal{P}}^{i_{2} j_{4}}(k_{2}) \hat{\mathcal{P}}^{j_{2} j_{4}}(k_{2}) \hat{\mathcal{P}}^{i_{1} j_{3}}(k_{1}) \hat{\mathcal{P}}^{j_{1}j_{3}}(k_{1})] k_{12}^{i_{2}} k_{12}^{j_{2}}ii \nonumber\\
& \times  \frac{1}{ [ \prod_{i=1}^{2} (2 \lvert k_{i} \rvert^{2} )][\lvert k_{12} \rvert^{2} + \sum_{i=1}^{2} \lvert k_{i} \rvert^{2} ]  }\frac{1}{\lvert k_{12} \rvert^{2} }, \nonumber \\
\phi_{22}^{\epsilon, ij}(t) &\triangleq \frac{ (2\pi)^{-6}}{4} \sum_{k_{1}, k_{2} \neq 0} \sum_{i_{1}, i_{2}, j_{1}, j_{2} =1}^{3} Y_{k_{1}k_{2}}  \hat{\mathcal{P}}^{ii_{1}}(k_{12}) \hat{\mathcal{P}}^{jj_{1}}(k_{12}) \label{estimate 45}  \\
& \times \sum_{j_{3}, j_{4} =1}^{3} [ \hat{\mathcal{P}}^{i_{2} j_{4}}(k_{2}) \hat{\mathcal{P}}^{j_{1} j_{4}}(k_{2}) \hat{\mathcal{P}}^{i_{1} j_{3}}(k_{1}) \hat{\mathcal{P}}^{j_{2}j_{3}}(k_{1}) \nonumber\\
&  - \hat{\mathcal{P}}^{i_{2} j_{4}}(k_{2}) \hat{\mathcal{P}}^{j_{2} j_{4}}(k_{2}) \hat{\mathcal{P}}^{i_{1} j_{3}}(k_{1}) \hat{\mathcal{P}}^{j_{1}j_{3}}(k_{1})] k_{12}^{i_{2}} k_{12}^{j_{2}} \nonumber\\
& \times g(\epsilon k_{12}^{i_{2}}) g(-\epsilon k_{12}^{j_{2}})\frac{1}{ [ \prod_{i=1}^{2} (2 \lvert k_{i} \rvert^{2} f(\epsilon k_{i}))][\lvert k_{12} \rvert^{2} f(\epsilon k_{12}) + \sum_{i=1}^{2} \lvert k_{i} \rvert^{2} f(\epsilon k_{i})]} \nonumber\\
&  \times [ \frac{e^{- \lvert k_{12} \rvert^{2} f(\epsilon k_{12}) 2t}}{\lvert k_{12} \rvert^{2} f(\epsilon k_{12})}  \nonumber\\
& \hspace{5mm} + 2 \int_{0}^{t} e^{- \lvert k_{12} \rvert^{2} f(\epsilon k_{12})2(t-s) - (\lvert k_{12} \rvert^{2} f(\epsilon k_{12}) + \sum_{i=1}^{2} \lvert k_{i}\rvert^{2} f(\epsilon k_{i})) s} ds ], \nonumber  \\
\bar{\phi}_{22}^{\epsilon, ij}(t) &\triangleq \frac{ (2\pi)^{-6}}{4} \sum_{k_{1}, k_{2} \neq 0} \sum_{i_{1}, i_{2}, j_{1}, j_{2} =1}^{3} Y_{k_{1}k_{2}} \hat{\mathcal{P}}^{ii_{1}}(k_{12}) \hat{\mathcal{P}}^{jj_{1}}(k_{12})  \label{estimate 46}\\
& \times \sum_{j_{3}, j_{4} =1}^{3} [ \hat{\mathcal{P}}^{i_{2} j_{4}}(k_{2}) \hat{\mathcal{P}}^{j_{1} j_{4}}(k_{2}) \hat{\mathcal{P}}^{i_{1} j_{3}}(k_{1}) \hat{\mathcal{P}}^{j_{2}j_{3}}(k_{1}) \nonumber\\
&  - \hat{\mathcal{P}}^{i_{2} j_{4}}(k_{2}) \hat{\mathcal{P}}^{j_{2} j_{4}}(k_{2}) \hat{\mathcal{P}}^{i_{1} j_{3}}(k_{1}) \hat{\mathcal{P}}^{j_{1}j_{3}}(k_{1})] k_{12}^{i_{2}} k_{12}^{j_{2}} ii \nonumber\\
& \times \frac{1}{ [ \prod_{i=1}^{2} (2 \lvert k_{i} \rvert^{2} )][\lvert k_{12} \rvert^{2} + \sum_{i=1}^{2} \lvert k_{i} \rvert^{2}]} \nonumber\\
&  \times [ \frac{e^{- \lvert k_{12} \rvert^{2} 2t}}{\lvert k_{12} \rvert^{2} } + 2 \int_{0}^{t} e^{- \lvert k_{12} \rvert^{2} 2(t-s) - (\lvert k_{12} \rvert^{2} + \sum_{i=1}^{2} \lvert k_{i}\rvert^{2}) s} ds ]. \nonumber  
\end{align}
\end{subequations} 
Then, for all $\rho > 0, \eta \in (0, 2\rho)$ and $i, j \in \{1,2,3\}$, from \eqref{estimate 45} and \eqref{estimate 46}
\begin{align}\label{estimate 54}
 \lvert \phi_{22}^{\epsilon, ij} - \bar{\phi}_{22}^{\epsilon, ij} \rvert  \lesssim \sum_{l=1}^{8} \delta \phi_{22}^{\epsilon, ij, l} 
\end{align}
where 
\begin{subequations}\label{estimate 219}
\begin{align}
\delta \phi_{22}^{\epsilon, ij, 1} \triangleq& \sum_{k_{1}, k_{2} \neq 0} \sum_{i_{2}, j_{2} =1}^{3} \lvert k_{12} \rvert^{2} \\
& \times  \lvert \frac{ [ g(\epsilon k_{12}^{i_{2}}) - i ] g(- \epsilon k_{12}^{j_{2}})}{ [\prod_{i=1}^{2} \lvert k_{i} \rvert^{2} f(\epsilon k_{i}) ] [ \lvert k_{12} \rvert^{2} f(\epsilon k_{12}) + \sum_{i=1}^{2} \lvert k_{i} \rvert^{2} f(\epsilon k_{i}) ]} \frac{ e^{- \lvert k_{12} \rvert^{2} f(\epsilon k_{12} ) 2t}}{\lvert k_{12} \rvert^{2} f(\epsilon k_{12})} \rvert, \nonumber\\
\delta \phi_{22}^{\epsilon, ij, 2} \triangleq&  \sum_{k_{1}, k_{2} \neq 0} \sum_{j_{2} =1}^{3} \lvert k_{12} \rvert^{2} \\
& \times  \lvert \frac{  i [g(- \epsilon k_{12}^{j_{2}}) - i]}{ [\prod_{i=1}^{2} \lvert k_{i} \rvert^{2} f(\epsilon k_{i}) ] [ \lvert k_{12} \rvert^{2} f(\epsilon k_{12}) + \sum_{i=1}^{2} \lvert k_{i} \rvert^{2} f(\epsilon k_{i}) ]} \frac{ e^{- \lvert k_{12} \rvert^{2} f(\epsilon k_{12} ) 2t}}{\lvert k_{12} \rvert^{2} f(\epsilon k_{12})} \rvert, \nonumber\\
 \delta \phi_{22}^{\epsilon, ij, 3} \triangleq& \sum_{k_{1}, k_{2} \neq 0} \lvert k_{12} \rvert^{2} \lvert \frac{1}{ [\prod_{i=1}^{2} \lvert k_{i} \rvert^{2} f(\epsilon k_{i})] [ \lvert k_{12} \rvert^{2} f(\epsilon k_{12}) + \sum_{i=1}^{2} \lvert k_{i} \rvert^{2} f(\epsilon k_{i}) ]} \\
 & \hspace{15mm}  - \frac{1}{ [\prod_{i=1}^{2} \lvert k_{i} \rvert^{2}] [\lvert k_{12} \rvert^{2} + \sum_{i=1}^{2} \lvert k_{i} \rvert^{2} ]} \rvert \frac{ e^{- \lvert k_{12} \rvert^{2} f(\epsilon k_{12}) 2t}}{\lvert k_{12}\rvert^{2} f(\epsilon k_{12})},    \nonumber\\
\delta \phi_{22}^{\epsilon, ij, 4} \triangleq& \sum_{k_{1}, k_{2}\neq 0} \frac{ \lvert k_{12} \rvert^{2}}{ [ \prod_{i=1}^{2} \lvert k_{i} \rvert^{2} ] [\lvert k_{12} \rvert^{2} + \sum_{i=1}^{2} \lvert k_{i}\rvert^{2} ]} \nonumber\\
& \hspace{25mm} \times  \lvert \frac{ e^{-2 \lvert k_{12} \rvert^{2} f(\epsilon k_{12})t}}{\lvert k_{12} \rvert^{2} f(\epsilon k_{12})} - \frac{e^{-2 \lvert k_{12} \rvert^{2} t}}{\lvert k_{12} \rvert^{2}} \rvert,  \\
\delta \phi_{22}^{\epsilon, ij, 5} \triangleq& \sum_{k_{1}, k_{2} \neq 0} \sum_{i_{2}, j_{2} =1}^{3} \lvert k_{12} \rvert^{2}  \nonumber\\
& \hspace{10mm} \times \lvert \frac{ [ g(\epsilon k_{12}^{i_{2}}) - i] g(-\epsilon k_{12}^{j_{2}})}{[\prod_{i=1}^{2} \lvert k_{i} \rvert^{2} f(\epsilon k_{i}) ][\lvert k_{12} \rvert^{2} f(\epsilon k_{12}) + \sum_{i=1}^{2} \lvert k_{i} \rvert^{2} f(\epsilon k_{i}) ]} \\
& \times \int_{0}^{t} e^{- \lvert k_{12} \rvert^{2} f(\epsilon k_{12}) 2(t-s) - (\lvert k_{12} \rvert^{2} f(\epsilon k_{12}) + \sum_{i=1}^{2} \lvert k_{i} \rvert^{2} f(\epsilon k_{i}) ) s } ds \rvert, \nonumber \\
\delta \phi_{22}^{\epsilon, ij, 6} \triangleq& \sum_{k_{1}, k_{2} \neq 0} \sum_{j_{2} =1}^{3} \lvert k_{12} \rvert^{2} \lvert \frac{ i [g(-\epsilon k_{12}^{j_{2}}) - i]}{[\prod_{i=1}^{2} \lvert k_{i} \rvert^{2} f(\epsilon k_{i}) ][\lvert k_{12} \rvert^{2} f(\epsilon k_{12}) + \sum_{i=1}^{2} \lvert k_{i} \rvert^{2} f(\epsilon k_{i}) ]} \nonumber\\
& \times \int_{0}^{t} e^{- \lvert k_{12} \rvert^{2} f(\epsilon k_{12}) 2(t-s) - (\lvert k_{12} \rvert^{2} f(\epsilon k_{12}) + \sum_{i=1}^{2} \lvert k_{i} \rvert^{2} f(\epsilon k_{i}) ) s } ds \rvert, \\ 
\delta \phi_{22}^{\epsilon, ij, 7} \triangleq& \sum_{k_{1}, k_{2} \neq 0} \lvert k_{12} \rvert^{2} \lvert \frac{1}{ [\prod_{i=1}^{2} \lvert k_{i} \rvert^{2} f(\epsilon k_{i})] [ \lvert k_{12} \rvert^{2} f(\epsilon k_{12}) + \lvert k_{1} \rvert^{2} f(\epsilon k_{1}) + \lvert k_{2} \rvert^{2} f(\epsilon k_{2})]} \nonumber\\
& \hspace{15mm} - \frac{1}{ [\prod_{i=1}^{2} \lvert k_{i} \rvert^{2} ] [ \lvert k_{12} \rvert^{2} + \sum_{i=1}^{2} \lvert k_{i} \rvert^{2} ]} \rvert \nonumber\\
& \times \int_{0}^{t} e^{-\lvert k_{12} \rvert^{2} f(\epsilon k_{12}) 2(t-s) - (\lvert k_{12} \rvert^{2} f(\epsilon k_{12}) + \sum_{i=1}^{2} \lvert k_{i} \rvert^{2} f(\epsilon k_{i}) ) s} ds, \\
\delta \phi_{22}^{\epsilon, ij, 8} \triangleq& \sum_{k_{1}, k_{2} \neq 0} \lvert k_{12} \rvert^{2} \frac{1}{ [\prod_{i=1}^{2} \lvert k_{i} \rvert^{2} ][\lvert k_{12} \rvert^{2} + \sum_{i=1}^{2} \lvert k_{i} \rvert^{2} ]} \\
& \times \int_{0}^{t} [e^{- \lvert k_{12} \rvert^{2} f(\epsilon k_{12} )(t-s) - \frac{1}{2} (\lvert k_{12} \rvert^{2} f(\epsilon k_{12}) + \sum_{i=1}^{2} \lvert k_{i} \rvert^{2} f(\epsilon k_{i})) s}  \nonumber\\
& \hspace{45mm} - e^{-\lvert k_{12} \rvert^{2} (t-s) - \frac{1}{2} (\lvert k_{12} \rvert^{2} + \sum_{i=1}^{2} \lvert k_{i} \rvert^{2} ) s}]  \nonumber\\
& \hspace{5mm} \times [e^{- \lvert k_{12} \rvert^{2} f(\epsilon k_{12} )(t-s) - \frac{1}{2} (\lvert k_{12} \rvert^{2} f(\epsilon k_{12}) + \sum_{i=1}^{2} \lvert k_{i} \rvert^{2} f(\epsilon k_{i})) s}  \nonumber\\
& \hspace{45mm} + e^{-\lvert k_{12} \rvert^{2} (t-s) - \frac{1}{2} (\lvert k_{12} \rvert^{2} + \sum_{i=1}^{2} \lvert k_{i} \rvert^{2} ) s}] ds.  \nonumber 
\end{align}
\end{subequations} 
We can estimate for any $\eta \in (0, 1), \rho \in (\frac{\eta}{2}, \frac{1}{2})$ and $\epsilon_{0} \in (0, 2 \rho - \eta)$ from \eqref{estimate 219}
\begin{align}\label{estimate 220}
\delta \phi_{22}^{\epsilon, ij, 1} \lesssim \sum_{k_{1}, k_{2} \neq 0} \frac{ \lvert k_{12} \rvert^{2} \lvert \epsilon k_{12} \rvert^{\eta}}{ [\prod_{i=1}^{2} \lvert k_{i} \rvert^{2} ][\lvert k_{12} \rvert^{2} + \sum_{i=1}^{2} \lvert k_{i} \rvert^{2} ]} \frac{1}{ \lvert k_{12} \rvert^{2} [ \lvert k_{12} \rvert^{2} t]^{\rho}}  \lesssim t^{-\rho} \epsilon^{\eta}
\end{align} 
by \eqref{[Equation (4.2)][ZZ17]}, \eqref{key estimate}, and Young's inequality. Identically we can deduce that $\delta \phi_{22}^{\epsilon, ij, 2} \lesssim t^{-\rho} \epsilon^{\eta}$. Next, from \eqref{estimate 219}, 
\begin{align}
\delta \phi_{22}^{\epsilon, ij, 3} \lesssim \epsilon^{\eta} \sum_{k_{1},k_{2} \neq 0} \frac{1}{ [ \prod_{i=1}^{2} \lvert k_{i} \rvert^{2} ][\lvert k_{12} \rvert + \sum_{i=1}^{2} \lvert k_{i} \rvert ]^{2-\eta}} \frac{1}{ [\lvert k_{12} \rvert^{2} t]^{\rho}}  \lesssim \epsilon^{\eta} t^{-\rho} 
\end{align} 
by \eqref{key estimate}. Next, from \eqref{estimate 219}
\begin{align}
\delta \phi_{22}^{\epsilon, ij, 4} \lesssim \sum_{k_{1}, k_{2}\neq 0} \frac{1}{ [ \prod_{i=1}^{2} \lvert k_{i} \rvert^{2} ] [\lvert k_{12} \rvert^{2} + \sum_{i=1}^{2} \lvert k_{i} \rvert^{2} ]} e^{-2 \lvert k_{12} \rvert^{2} \bar{c}_{f} t} \lvert \epsilon k_{12} \rvert^{\eta} \lesssim \epsilon^{\eta} t^{-\rho} 
\end{align}
by \eqref{[Equation (4.2)][ZZ17]} and \eqref{key estimate}. Next, from \eqref{estimate 219}
\begin{align}
\delta \phi_{22}^{\epsilon, ij, 5} \lesssim& \epsilon^{\eta} t^{-\rho} \sum_{k_{1}, k_{2} \neq 0} \frac{ \lvert k_{12} \rvert^{2+ \eta - 2 \rho}}{[\prod_{i=1}^{2} \lvert k_{i} \rvert^{2} ] \lvert k_{1} \rvert^{1+ \epsilon_{0}}\lvert k_{2} \rvert^{1-\epsilon_{0}} }  \nonumber\\
& \hspace{30mm} \times [ \frac{1- e^{-\lvert k_{12} \rvert^{2} f(\epsilon k_{12} ) t}}{\lvert k_{12} \rvert^{2} f(\epsilon k_{12})} ]  \lesssim \epsilon^{\eta} t^{-\rho} 
\end{align}
by \eqref{[Equation (4.3)][ZZ17]} and \eqref{key estimate}. Similarly, we can deduce $\delta \phi_{22}^{\epsilon, ij, 6} \lesssim \epsilon^{\eta} t^{-\rho}$. Next, from \eqref{estimate 219}, 
\begin{align} 
\delta \phi_{22}^{\epsilon, ij, 7} \lesssim& t^{-\rho} \sum_{k_{1}, k_{2} \neq 0} \frac{1}{ [ \prod_{i=1}^{2} \lvert k_{i} \rvert^{2} ] \lvert k_{12} \rvert^{2\rho}} \frac{1}{ [ \lvert k_{12} \rvert^{2} + \lvert k_{1} \rvert^{2} + \lvert k_{2} \rvert^{2} ]^{2}} \nonumber\\
& \hspace{10mm} \times [ \lvert k_{12} \rvert^{2} \lvert \epsilon k_{12} \rvert^{\eta} + \lvert k_{1} \rvert^{2} \lvert \epsilon k_{1} \rvert^{\eta} + \lvert k_{2} \rvert^{2} \lvert \epsilon k_{2} \rvert^{\eta} ]  \lesssim \epsilon^{\eta} t^{-\rho} 
\end{align}
by \eqref{key estimate}. Finally, from \eqref{estimate 219} we carefully estimate as follows: 
\begin{align}\label{estimate 55}
\delta \phi_{22}^{\epsilon, ij, 8} &\lesssim  \epsilon^{\eta} t^{-\rho} \sum_{k_{1}, k_{2} \neq 0} \frac{ \lvert k_{12} \rvert^{2}}{[\prod_{i=1}^{2} \lvert k_{i} \rvert^{2} ] [ \lvert k_{12} \rvert^{2} + \sum_{i=1}^{2} \lvert k_{i} \rvert^{2} ]} \\
& \times [ \frac{\lvert k_{12} \rvert^{\eta}}{\lvert k_{12} \rvert^{2\rho}} \frac{ (1- e^{-\frac{1}{2} \sum_{i=1}^{2} \lvert k_{i} \rvert^{2} \bar{c}_{f} t})}{\sum_{i=1}^{2} \lvert k_{i}\rvert^{2}} \nonumber\\
&+ \frac{\lvert k_{1} \rvert^{\eta}}{\lvert k_{12} \rvert^{2\rho}} \frac{ (1- e^{-\frac{1}{2} (\sum_{i=1}^{2} \lvert k_{i} \rvert^{2})\bar{c}_{f} t})}{ \sum_{i=1}^{2} \lvert k_{i} \rvert^{2}} + \frac{\lvert k_{2} \rvert^{\eta}}{\lvert k_{12} \rvert^{2\rho}} \frac{(1- e^{- \frac{1}{2} ( \sum_{i=1}^{2} \lvert k_{i} \rvert^{2} ) \bar{c}_{f} t})}{\sum_{i=1}^{2} \lvert k_{i} \rvert^{2}} ]\lesssim  \epsilon^{\eta} t^{-\rho} \nonumber
\end{align}  
by \eqref{[Equation (4.2)][ZZ17]} and \eqref{key estimate}. Therefore, applying \eqref{estimate 220} to \eqref{estimate 55} to \eqref{estimate 54} allows us to conclude that $\phi_{22}^{\epsilon, ij} - \bar{\phi}_{22}^{\epsilon, ij}$ converges to zero with respect to the norm $\lVert \cdot \rVert \triangleq \sup_{t\in [0,T]} t^{\rho} \lvert \cdot (t) \rvert$. 

\emph{Term in the second chaos: $VII_{t, ij}^{2}$ in \eqref{estimate 217}}\\

Among $VII_{t,ij}^{2} = \sum_{l=1}^{4} VII_{t,ij}^{2l}$, we show the necessary estimate only on $VII_{t,ij}^{21}$ as those on others are similar. 
\begin{remark}\label{Remark 2.6}
We strategically split $VII_{t,ij}^{21}$, which consists of eight terms, to four terms matching the first with fifth, second with sixth, third with seventh and fourth with eighth as follows: 
\begin{equation}\label{estimate 287}
VII_{t, ij}^{21} = \sum_{l=1}^{4} VII_{t,ij}^{21l} 
\end{equation} 
where 
\begin{subequations}\label{estimate 283}
\begin{align}   
&VII_{t,ij}^{211} \triangleq  \frac{ (2\pi)^{-\frac{9}{2}}}{4} \sum_{k} \sum_{k_{1}, k_{2}, k_{4}\neq 0: k_{24} = k} \sum_{i_{1}, i_{2}, j_{1}, j_{2} = 1}^{3} \int_{[0,t]^{2}} \nonumber\\
& \hspace{2mm} \times [ e^{- \lvert k_{12} \rvert^{2} f(\epsilon k_{12} )(t-s) - \lvert k_{4} - k_{1} \rvert^{2} f(\epsilon (k_{4} - k_{1})) (t- \bar{s})} k_{12}^{i_{2}} (k_{4}^{j_{2}} - k_{1}^{j_{2}}) g(\epsilon k_{12}^{i_{2}}) g(\epsilon (k_{4}^{j_{2}} - k_{1}^{j_{2}})) \nonumber\\
& \hspace{7mm} \times : \hat{X}_{s,u}^{\epsilon, i_{2}}(k_{2}) \hat{X}_{\bar{s}, u}^{\epsilon, j_{2}}(k_{4}): \sum_{j_{3} =1}^{3} \frac{ e^{-\lvert k_{1} \rvert^{2} f(\epsilon k_{1}) \lvert s- \bar{s} \rvert} h_{b}(\epsilon k_{1})^{2}}{2 \lvert k_{1}\rvert^{2} f(\epsilon k_{1})} \hat{\mathcal{P}}^{i_{1}j_{3}}(k_{1}) \hat{\mathcal{P}}^{j_{1} j_{3}}(k_{1}) \nonumber\\
& \hspace{2mm} - e^{- \lvert k_{12} \rvert^{2} (t-s) - \lvert k_{4} - k_{1} \rvert^{2} (t- \bar{s})} k_{12}^{i_{2}} (k_{4}^{j_{2}} - k_{1}^{j_{2}}) ii \nonumber\\
& \hspace{7mm} \times : \hat{\bar{X}}_{s,u}^{\epsilon, i_{2}}(k_{2}) \hat{\bar{X}}_{\bar{s}, u}^{\epsilon, j_{2}}(k_{4}): \sum_{j_{3} =1}^{3} \frac{ e^{-\lvert k_{1} \rvert^{2} \lvert s- \bar{s} \rvert} h_{b}(\epsilon k_{1})^{2}}{2 \lvert k_{1}\rvert^{2}} \hat{\mathcal{P}}^{i_{1}j_{3}}(k_{1}) \hat{\mathcal{P}}^{j_{1} j_{3}}(k_{1})  ] ds d \bar{s}\nonumber\\
& \hspace{2mm} \times \hat{\mathcal{P}}^{ii_{1}} (k_{12}) \hat{\mathcal{P}}^{jj_{1}}(k_{4} - k_{1}) e_{k}, \\
&VII_{t, ij}^{212} \triangleq  \frac{ (2\pi)^{-\frac{9}{2}}}{4} \sum_{k} \sum_{k_{1}, k_{2}, k_{4} \neq 0: k_{24} = k} \sum_{i_{1}, i_{2}, j_{1}, j_{2} = 1}^{3} \int_{[0,t]^{2}} \nonumber\\
& \hspace{2mm} \times [ -e^{- \lvert k_{12} \rvert^{2} f(\epsilon k_{12} )(t-s) - \lvert k_{4} - k_{1} \rvert^{2} f(\epsilon (k_{4} - k_{1})) (t- \bar{s})} k_{12}^{i_{2}} (k_{4}^{j_{2}} - k_{1}^{j_{2}}) g(\epsilon k_{12}^{i_{2}}) g(\epsilon (k_{4}^{j_{2}} - k_{1}^{j_{2}})) \nonumber\\
& \hspace{7mm} \times : \hat{X}_{s,u}^{\epsilon, i_{2}}(k_{2}) \hat{X}_{\bar{s}, b}^{\epsilon, j_{2}}(k_{4}): \sum_{j_{3} =1}^{3} \frac{ e^{-\lvert k_{1} \rvert^{2} f(\epsilon k_{1}) \lvert s- \bar{s} \rvert} h_{b}(\epsilon k_{1})h_{u}(\epsilon k_{1})}{2 \lvert k_{1}\rvert^{2} f(\epsilon k_{1})} \hat{\mathcal{P}}^{i_{1}j_{3}}(k_{1}) \hat{\mathcal{P}}^{j_{1} j_{3}}(k_{1}) \nonumber\\
& \hspace{2mm} + e^{- \lvert k_{12} \rvert^{2} (t-s) - \lvert k_{4} - k_{1} \rvert^{2} (t- \bar{s})} k_{12}^{i_{2}} (k_{4}^{j_{2}} - k_{1}^{j_{2}}) ii \nonumber\\
& \hspace{7mm} \times : \hat{\bar{X}}_{s,u}^{\epsilon, i_{2}}(k_{2}) \hat{\bar{X}}_{\bar{s}, b}^{\epsilon, j_{2}}(k_{4}): \sum_{j_{3} =1}^{3} \frac{ e^{-\lvert k_{1} \rvert^{2} \lvert s- \bar{s} \rvert} h_{b}(\epsilon k_{1})h_{u}(\epsilon k_{1})}{2 \lvert k_{1}\rvert^{2}} \hat{\mathcal{P}}^{i_{1}j_{3}}(k_{1}) \hat{\mathcal{P}}^{j_{1} j_{3}}(k_{1})  ] ds d \bar{s}\nonumber\\
& \hspace{2mm} \times \hat{\mathcal{P}}^{ii_{1}} (k_{12}) \hat{\mathcal{P}}^{jj_{1}}(k_{4} - k_{1}) e_{k}, \\
&VII_{t, ij}^{213} \triangleq  \frac{ (2\pi)^{-\frac{9}{2}}}{4} \sum_{k} \sum_{k_{1}, k_{2}, k_{4} \neq 0: k_{24} = k} \sum_{i_{1}, i_{2}, j_{1}, j_{2} = 1}^{3} \int_{[0,t]^{2}} \nonumber\\
& \hspace{2mm} \times [ -e^{- \lvert k_{12} \rvert^{2} f(\epsilon k_{12} )(t-s) - \lvert k_{4} - k_{1} \rvert^{2} f(\epsilon (k_{4} - k_{1})) (t- \bar{s})} k_{12}^{i_{2}} (k_{4}^{j_{2}} - k_{1}^{j_{2}}) g(\epsilon k_{12}^{i_{2}}) g(\epsilon (k_{4}^{j_{2}} - k_{1}^{j_{2}})) \nonumber\\
& \hspace{7mm} \times : \hat{X}_{s,b}^{\epsilon, i_{2}}(k_{2}) \hat{X}_{\bar{s}, u}^{\epsilon, j_{2}}(k_{4}): \sum_{j_{3} =1}^{3} \frac{ e^{-\lvert k_{1} \rvert^{2} f(\epsilon k_{1}) \lvert s- \bar{s} \rvert} h_{u}(\epsilon k_{1}) h_{b}(\epsilon k_{1})}{2 \lvert k_{1}\rvert^{2} f(\epsilon k_{1})} \hat{\mathcal{P}}^{i_{1}j_{3}}(k_{1}) \hat{\mathcal{P}}^{j_{1} j_{3}}(k_{1}) \nonumber\\
& \hspace{2mm} + e^{- \lvert k_{12} \rvert^{2} (t-s) - \lvert k_{4} - k_{1} \rvert^{2} (t- \bar{s})} k_{12}^{i_{2}} (k_{4}^{j_{2}} - k_{1}^{j_{2}}) ii \nonumber\\
& \hspace{7mm} \times : \hat{\bar{X}}_{s,b}^{\epsilon, i_{2}}(k_{2}) \hat{\bar{X}}_{\bar{s}, u}^{\epsilon, j_{2}}(k_{4}): \sum_{j_{3} =1}^{3} \frac{ e^{-\lvert k_{1} \rvert^{2} \lvert s- \bar{s} \rvert} h_{u}(\epsilon k_{1}) h_{b}(\epsilon k_{1})}{2 \lvert k_{1}\rvert^{2}} \hat{\mathcal{P}}^{i_{1}j_{3}}(k_{1}) \hat{\mathcal{P}}^{j_{1} j_{3}}(k_{1})  ] ds d \bar{s}\nonumber\\
& \hspace{2mm} \times \hat{\mathcal{P}}^{ii_{1}} (k_{12}) \hat{\mathcal{P}}^{jj_{1}}(k_{4} - k_{1}) e_{k}, \\
&VII_{t, ij}^{214} \triangleq \frac{ (2\pi)^{-\frac{9}{2}}}{4} \sum_{k} \sum_{k_{1}, k_{2}, k_{4}\neq 0: k_{24} = k} \sum_{i_{1}, i_{2}, j_{1}, j_{2} = 1}^{3} \int_{[0,t]^{2}} \nonumber\\
&  \hspace{2mm}\times [ e^{- \lvert k_{12} \rvert^{2} f(\epsilon k_{12} )(t-s) - \lvert k_{4} - k_{1} \rvert^{2} f(\epsilon (k_{4} - k_{1})) (t- \bar{s})} k_{12}^{i_{2}} (k_{4}^{j_{2}} - k_{1}^{j_{2}}) g(\epsilon k_{12}^{i_{2}}) g(\epsilon (k_{4}^{j_{2}} - k_{1}^{j_{2}})) \nonumber\\
& \hspace{7mm} \times : \hat{X}_{s,b}^{\epsilon, i_{2}}(k_{2}) \hat{X}_{\bar{s}, b}^{\epsilon, j_{2}}(k_{4}): \sum_{j_{3} =1}^{3} \frac{ e^{-\lvert k_{1} \rvert^{2} f(\epsilon k_{1}) \lvert s- \bar{s} \rvert} h_{u}(\epsilon k_{1})^{2}}{2 \lvert k_{1}\rvert^{2} f(\epsilon k_{1})} \hat{\mathcal{P}}^{i_{1}j_{3}}(k_{1}) \hat{\mathcal{P}}^{j_{1} j_{3}}(k_{1}) \nonumber\\
& \hspace{2mm} - e^{- \lvert k_{12} \rvert^{2} (t-s) - \lvert k_{4} - k_{1} \rvert^{2} (t- \bar{s})} k_{12}^{i_{2}} (k_{4}^{j_{2}} - k_{1}^{j_{2}}) ii \nonumber\\
& \hspace{7mm} \times : \hat{\bar{X}}_{s,b}^{\epsilon, i_{2}}(k_{2}) \hat{\bar{X}}_{\bar{s}, b}^{\epsilon, j_{2}}(k_{4}): \sum_{j_{3} =1}^{3} \frac{ e^{-\lvert k_{1} \rvert^{2} \lvert s- \bar{s} \rvert} h_{u}(\epsilon k_{1})^{2}}{2 \lvert k_{1}\rvert^{2}} \hat{\mathcal{P}}^{i_{1}j_{3}}(k_{1}) \hat{\mathcal{P}}^{j_{1} j_{3}}(k_{1})  ] ds d \bar{s}\nonumber\\
&  \hspace{2mm} \times \hat{\mathcal{P}}^{ii_{1}} (k_{12}) \hat{\mathcal{P}}^{jj_{1}}(k_{4} - k_{1}) e_{k}.
\end{align}
\end{subequations}
We emphasize that this is a difficulty that does not exist for the NS equations. Such a careful splitting will be crucial in \eqref{estimate 221} for $VII_{t,ij}^{1}$ again.  
\end{remark}
W.l.o.g. we show the necessary estimate only for $VII_{t, ij}^{214}$ as those for others are similar. First, we see that 
\begin{align*}
\mathbb{E} [ : \hat{X}_{s,b}^{\epsilon, i_{2}}(k_{2}) \hat{X}_{\bar{s}, b}^{\epsilon, j_{2}}(k_{4}): \overline{ : \hat{X}_{\sigma,b}^{\epsilon, i_{2}'}(k_{2}') \hat{X}_{\bar{\sigma}, b}^{\epsilon, j_{2}'}(k_{4}'):}]  \lesssim (1_{k_{2} = k_{2}', k_{4} = k_{4}'} + 1_{k_{2} = k_{4}', k_{4} = k_{2}'}) \frac{1}{\lvert k_{2} \rvert^{2} \lvert k_{4} \rvert^{2}} 
\end{align*} 
by Example \ref{Example 3.1} and \eqref{covariance} while for any $\eta \in [0,1]$,  
\begin{align*}
&\mathbb{E} [ (: \hat{X}_{s,b}^{\epsilon, i_{2}}(k_{2}) \hat{X}_{\bar{s}, b}^{\epsilon, j_{2}}(k_{4}): - : \hat{\bar{X}}_{s,b}^{\epsilon, i_{2}}(k_{2}) \hat{\bar{X}}_{\bar{s}, b}^{\epsilon, j_{2}}(k_{4}):) \\
& \hspace{10mm} \times \overline{ (: \hat{X}_{\sigma,b}^{\epsilon, i_{2}'}(k_{2}') \hat{X}_{\bar{\sigma}, b}^{\epsilon, j_{2}'}(k_{4}'): - : \hat{\bar{X}}_{\sigma,b}^{\epsilon, i_{2}'}(k_{2}') \hat{\bar{X}}_{\bar{\sigma}, b}^{\epsilon, j_{2}'}(k_{4}'):) } ] \\
\lesssim&  (1_{k_{2} = k_{2}', k_{4} = k_{4}'} + 1_{k_{2} = k_{4}', k_{4} = k_{2}'})  \frac{( \lvert \epsilon k_{2} \rvert^{\eta} + \lvert \epsilon k_{4} \rvert^{\eta})}{\lvert k_{2} \rvert^{2} \lvert k_{4} \rvert^{2}}
\end{align*}
by H$\ddot{\mathrm{o}}$lder's inequality and \eqref{[Equation (4.7)][ZZ17]}. Applying these estimates to \eqref{estimate 283} leads to 
\begin{equation}\label{estimate 286}
 \mathbb{E} [ \lvert \Delta_{q} VII_{t, ij}^{214} \rvert^{2}] \lesssim \sum_{l=1}^{2} VII_{t, ij}^{214l}
\end{equation} 
where 
\begin{subequations}\label{estimate 284}
\begin{align}
VII_{t, ij}^{2141} \triangleq& \sum_{k} \theta(2^{-q} k)^{2} \sum_{k_{1}, k_{2}, k_{4}, k_{1}', k_{2}', k_{4}' \neq 0: k_{24} = k_{24} ' = k} \sum_{i_{2}, j_{2}, i_{2}', j_{2}' =1}^{3} \nonumber\\
& \times  (1_{k_{2} = k_{2}', k_{4} = k_{4}'} + 1_{k_{2} = k_{4}', k_{4} = k_{2}'}) \frac{ \lvert k_{12} \rvert \lvert k_{12}' \rvert \lvert k_{4} - k_{1} \rvert \lvert k_{4}' -k_{1}' \rvert}{ \lvert k_{1} \rvert^{2} \lvert k_{1} ' \rvert^{2} \lvert k_{2} \rvert^{2} \lvert k_{4} \rvert^{2}}  \nonumber\\
& \times \int_{[0,t]^{4}}  \lvert e^{- \lvert k_{12} \rvert^{2} f(\epsilon k_{12})(t-s) - \lvert k_{4} - k_{1} \rvert^{2} f(\epsilon (k_{4} - k_{1}))(t- \bar{s})} \nonumber\\
& \hspace{35mm} \times g(\epsilon k_{12}^{i_{2}}) g(\epsilon (k_{4} - k_{1})^{j_{2}}) \frac{ e^{- \lvert k_{1} \rvert^{2} f(\epsilon k_{1}) \lvert s- \bar{s} \rvert}}{f( \epsilon k_{1})} \nonumber\\
& \hspace{15mm} - e^{- \lvert k_{12} \rvert^{2} (t-s) - \lvert k_{4} - k_{1} \rvert^{2} (t- \bar{s})} ii e^{- \lvert k_{1} \rvert^{2} \lvert s- \bar{s} \rvert} \rvert \nonumber\\ 
& \times \lvert e^{- \lvert k_{12}' \rvert^{2} f(\epsilon k_{12}')(t-\sigma) - \lvert k_{4}' - k_{1}' \rvert^{2} f(\epsilon (k_{4}' - k_{1}'))(t- \bar{\sigma})}  \nonumber\\
& \hspace{35mm} \times g(\epsilon (k_{12}')^{i_{2}'}) g(\epsilon (k_{4}' - k_{1}')^{j_{2}'}) \frac{ e^{- \lvert k_{1}' \rvert^{2} f(\epsilon k_{1}') \lvert \sigma- \bar{\sigma} \rvert}}{f( \epsilon k_{1}')} \nonumber\\
& \hspace{15mm} - e^{- \lvert k_{12}' \rvert^{2} (t-\sigma) - \lvert k_{4}' - k_{1}' \rvert^{2} (t- \bar{\sigma})} ii e^{- \lvert k_{1}' \rvert^{2} \lvert \sigma- \bar{\sigma} \rvert} \rvert d \sigma d \bar{\sigma} ds d \bar{s}, \\
VII_{t, ij}^{2142} \triangleq& \epsilon^{\eta} \sum_{k} \theta(2^{-q} k)^{2}  \sum_{k_{1}, k_{2}, k_{4}, k_{1}', k_{2}', k_{4}' \neq 0: k_{24} = k_{24} ' = k} (1_{k_{2} = k_{2}', k_{4} = k_{4}'} + 1_{k_{2} = k_{4}', k_{4} = k_{2}'}) \nonumber\\
& \times \int_{[0,t]^{4}} \frac{ \lvert k_{12} \rvert \lvert k_{12} ' \rvert \lvert k_{4} - k_{1} \rvert \lvert k_{4}' - k_{1}' \rvert ( \lvert k_{2} \rvert^{\eta} + \lvert k_{4} \rvert^{\eta})}{ \lvert k_{1} \rvert^{2} \lvert k_{1}' \rvert^{2} \lvert k_{2} \rvert^{2} \lvert k_{4} \rvert^{2} } \nonumber\\
& \times e^{- \lvert k_{12} \rvert^{2} (t-s) - \lvert k_{4} - k_{1} \rvert^{2} (t- \bar{s}) } e^{- \lvert k_{12}'\rvert^{2} (t-\sigma) - \lvert k_{4}' - k_{1}' \rvert^{2} (t-\bar{\sigma})}  d \sigma d \bar{\sigma} ds d \bar{s}.  
\end{align}
\end{subequations} 
Within $VII_{t, ij}^{2141}$, we can estimate for any $\eta \in [0,1]$, 
\begin{align*}
&  \lvert e^{- \lvert k_{12} \rvert^{2} f(\epsilon k_{12})(t-s) - \lvert k_{4} - k_{1} \rvert^{2} f(\epsilon (k_{4} - k_{1}))(t- \bar{s})} g(\epsilon k_{12}^{i_{2}}) g(\epsilon (k_{4} - k_{1})^{j_{2}}) \frac{ e^{- \lvert k_{1} \rvert^{2} f(\epsilon k_{1}) \lvert s- \bar{s} \rvert}}{f( \epsilon k_{1})} \nonumber\\
& \hspace{10mm} - e^{- \lvert k_{12} \rvert^{2} (t-s) - \lvert k_{4} - k_{1} \rvert^{2} (t- \bar{s})} ii e^{- \lvert k_{1} \rvert^{2} \lvert s- \bar{s} \rvert} \rvert \nonumber\\  
 \lesssim& \epsilon^{\frac{\eta}{2}} \frac{1}{ [ \lvert k_{12} \rvert^{2} (t-s)]^{1- \frac{\epsilon}{4}}} \frac{1}{ [ \lvert k_{4} - k_{1} \rvert^{2} ( t- \bar{s})]^{1- \frac{\epsilon}{4}}} ( \lvert k_{12} \rvert^{\frac{\eta}{2}} + \lvert k_{4} - k_{1} \rvert^{\frac{\eta}{2}} + \lvert k_{1} \rvert^{\frac{\eta}{2}}) 
\end{align*}
by \eqref{[Equation (4.2)][ZZ17]}, \eqref{[Equation (4.3)][ZZ17]} and \eqref{key estimate} so that \eqref{estimate 284} leads to  
\begin{align}\label{estimate 285}
VII_{t, ij}^{2141} &\lesssim \epsilon^{\eta} t^{\epsilon} \sum_{k} \theta(2^{-q} k)^{2} \sum_{k_{1}, k_{2}, k_{3}, k_{4} \neq 0: k_{24} = k} \frac{1}{ \prod_{i=1}^{4} \lvert k_{i} \rvert^{2}} \\
& \times [ \frac{( \lvert k_{12} \rvert^{\eta} + \lvert k_{4} - k_{1} \rvert^{\eta} + \lvert k_{1} \rvert^{\eta} + \lvert k_{23} \rvert^{\eta} + \lvert k_{4} - k_{3} \rvert^{\eta} + \lvert k_{3} \rvert^{\eta}}{ \lvert k_{12} \rvert^{1- \frac{\epsilon}{2}} \lvert k_{4} - k_{1} \rvert^{1- \frac{\epsilon}{2}} \lvert k_{123} \rvert^{1- \frac{\epsilon}{2}} \lvert k_{4} - k_{3} \rvert^{1-\frac{\epsilon}{2}}} \nonumber\\
&+ \frac{( \lvert k_{12} \rvert^{\eta} + \lvert k_{4} - k_{1} \rvert^{\eta} + \lvert k_{1} \rvert^{\eta} + \lvert k_{34} \rvert^{\eta} + \lvert k_{2} - k_{3} \rvert^{\eta} + \lvert k_{3} \rvert^{\eta}}{ \lvert k_{12} \rvert^{1- \frac{\epsilon}{2}} \lvert k_{4} - k_{1} \rvert^{1- \frac{\epsilon}{2}} \lvert k_{34} \rvert^{1- \frac{\epsilon}{2}} \lvert k_{2} - k_{3} \rvert^{1-\frac{\epsilon}{2}}}]  \lesssim \epsilon^{\eta} t^{\epsilon} 2^{q(2\epsilon + \eta)} \nonumber
\end{align}
by H$\ddot{\mathrm{o}}$lder's inequality and Lemma \ref{Lemma 3.13}. We also estimate $VII_{t, ij}^{2142}$ from \eqref{estimate 284} as follows: 
\begin{align}\label{estimate 107}
&VII_{t, ij}^{2142} \\
&\lesssim  \epsilon^{\eta} t^{\epsilon} \sum_{k} \theta(2^{-q} k)^{2} \sum_{k_{1}, k_{2}, k_{4}, k_{1}', k_{2}', k_{4}' \neq 0: k_{24} = k_{24} ' = k} (1_{k_{2} = k_{2}', k_{4} = k_{4}'} + 1_{k_{2} = k_{4}', k_{4} = k_{2}'}) \nonumber\\
& \hspace{3mm} \times \frac{ \lvert k_{2}'\rvert^{\eta} + \lvert k_{4}\rvert^{\eta}}{ \lvert k_{1} \rvert^{2} \lvert k_{1}' \rvert^{2} \lvert k_{2} \rvert^{2} \lvert k_{4} \rvert^{2}} \frac{1}{ \lvert k_{12} \rvert^{1- \frac{\epsilon}{2}} \lvert k_{4} - k_{1} \rvert^{1- \frac{\epsilon}{2}} \lvert k_{12}' \rvert^{1- \frac{\epsilon}{2}} \lvert k_{4}' - k_{1}' \rvert^{1-\frac{\epsilon}{2}}} \lesssim \epsilon^{\eta} t^{\epsilon} 2^{q(\eta + 2\epsilon)} \nonumber
\end{align}
by \eqref{key estimate}, H$\ddot{\mathrm{o}}$lder's inequality and Lemma \ref{Lemma 3.13}. Applying \eqref{estimate 285} and \eqref{estimate 107} to \eqref{estimate 286} gives us 
\begin{equation*}
 \mathbb{E} [ \lvert \Delta_{q} VII_{t, ij}^{214} \rvert^{2}] \lesssim \epsilon^{\eta} t^{\epsilon} 2^{q(2\epsilon + \eta)}.
\end{equation*} 

\emph{Term in the fourth chaos: $VII_{t, ij}^{1}$ in \eqref{[Equation (4.8r)][ZZ17]}}\\

Similarly to \eqref{estimate 287}, we strategically split $VII_{t, ij}^{11}$ to four terms matching the first with the fifth, the second with the sixth, the third with the seventh and the fourth with the eighth so that $VII_{t, ij}^{1} = \sum_{l=1}^{4} VII_{t, ij}^{1l}$ as follows: 
\begin{subequations}\label{estimate 221}
\begin{align}
VII_{t, ij}^{11} \triangleq& \frac{ (2\pi)^{-\frac{9}{2}}}{4} \sum_{k} \sum_{k_{1}, k_{2}, k_{3}, k_{4} \neq 0: k_{1234} = k} \sum_{i_{1}, i_{2}, j_{1}, j_{2} = 1}^{3} \int_{[0,t]^{2}}  \\
& \times [ e^{- \lvert k_{12} \rvert^{2} f(\epsilon k_{12}) (t-s) - \lvert k_{34} \rvert^{2} f(\epsilon k_{34}) (t- \bar{s})} k_{12}^{i_{2}} k_{34}^{j_{2}} g(\epsilon k_{12}^{i_{2}}) g(\epsilon k_{34}^{j_{2}}) \nonumber\\
& \hspace{5mm} \times : \hat{X}_{s,b}^{\epsilon, i_{1}}(k_{1}) \hat{X}_{s,u}^{\epsilon, i_{2}}(k_{2}) \hat{X}_{\bar{s}, b}^{\epsilon, j_{1}}(k_{3}) \hat{X}_{\bar{s}, u}^{\epsilon, j_{2}}(k_{4}):  \nonumber\\
& - e^{- \lvert k_{12} \rvert^{2} (t-s) - \lvert k_{34} \rvert^{2} (t- \bar{s})} k_{12}^{i_{2}} k_{34}^{j_{2}} ii \nonumber\\
& \hspace{5mm} \times : \hat{\bar{X}}_{s,b}^{\epsilon, i_{1}}(k_{1}) \hat{\bar{X}}_{s,u}^{\epsilon, i_{2}}(k_{2}) \hat{\bar{X}}_{\bar{s}, b}^{\epsilon, j_{1}}(k_{3}) \hat{\bar{X}}_{\bar{s}, u}^{\epsilon, j_{2}}(k_{4}): ]  \hat{\mathcal{P}}^{ii_{1}}(k_{12}) \hat{\mathcal{P}}^{jj_{1}}(k_{34}) e_{k} ds d\bar{s}, \nonumber \\
 VII_{t, ij}^{12} \triangleq& \frac{ (2\pi)^{-\frac{9}{2}}}{4} \sum_{k} \sum_{k_{1}, k_{2}, k_{3}, k_{4} \neq 0: k_{1234} = k} \sum_{i_{1}, i_{2}, j_{1}, j_{2} = 1}^{3} \int_{[0,t]^{2}}  \\
& \times [ -e^{- \lvert k_{12} \rvert^{2} f(\epsilon k_{12}) (t-s) - \lvert k_{34} \rvert^{2} f(\epsilon k_{34}) (t- \bar{s})} k_{12}^{i_{2}} k_{34}^{j_{2}} g(\epsilon k_{12}^{i_{2}}) g(\epsilon k_{34}^{j_{2}}) \nonumber\\
& \hspace{5mm} \times : \hat{X}_{s,b}^{\epsilon, i_{1}}(k_{1}) \hat{X}_{s,u}^{\epsilon, i_{2}}(k_{2}) \hat{X}_{\bar{s}, u}^{\epsilon, j_{1}}(k_{3}) \hat{X}_{\bar{s}, b}^{\epsilon, j_{2}}(k_{4}): \nonumber\\
& + e^{- \lvert k_{12} \rvert^{2} (t-s) - \lvert k_{34} \rvert^{2} (t- \bar{s})} k_{12}^{i_{2}} k_{34}^{j_{2}} ii \nonumber\\
& \hspace{5mm} \times : \hat{\bar{X}}_{s,b}^{\epsilon, i_{1}}(k_{1}) \hat{\bar{X}}_{s,u}^{\epsilon, i_{2}}(k_{2}) \hat{\bar{X}}_{\bar{s}, u}^{\epsilon, j_{1}}(k_{3}) \hat{\bar{X}}_{\bar{s}, b}^{\epsilon, j_{2}}(k_{4}):]  \hat{\mathcal{P}}^{ii_{1}}(k_{12}) \hat{\mathcal{P}}^{jj_{1}}(k_{34}) e_{k} ds d\bar{s}, \nonumber \\
 VII_{t, ij}^{12} \triangleq& \frac{ (2\pi)^{-\frac{9}{2}}}{4} \sum_{k} \sum_{k_{1}, k_{2}, k_{3}, k_{4} \neq 0: k_{1234} = k} \sum_{i_{1}, i_{2}, j_{1}, j_{2} = 1}^{3} \int_{[0,t]^{2}}  \\
& \times [ -e^{- \lvert k_{12} \rvert^{2} f(\epsilon k_{12}) (t-s) - \lvert k_{34} \rvert^{2} f(\epsilon k_{34}) (t- \bar{s})} k_{12}^{i_{2}} k_{34}^{j_{2}} g(\epsilon k_{12}^{i_{2}}) g(\epsilon k_{34}^{j_{2}}) \nonumber\\
& \hspace{5mm} \times : \hat{X}_{s,u}^{\epsilon, i_{1}}(k_{1}) \hat{X}_{s,b}^{\epsilon, i_{2}}(k_{2}) \hat{X}_{\bar{s}, b}^{\epsilon, j_{1}}(k_{3}) \hat{X}_{\bar{s}, u}^{\epsilon, j_{2}}(k_{4}): \nonumber\\
& + e^{- \lvert k_{12} \rvert^{2} (t-s) - \lvert k_{34} \rvert^{2} (t- \bar{s})} k_{12}^{i_{2}} k_{34}^{j_{2}} ii \nonumber\\
& \hspace{5mm} \times : \hat{\bar{X}}_{s,u}^{\epsilon, i_{1}}(k_{1}) \hat{\bar{X}}_{s,b}^{\epsilon, i_{2}}(k_{2}) \hat{\bar{X}}_{\bar{s}, b}^{\epsilon, j_{1}}(k_{3}) \hat{\bar{X}}_{\bar{s}, u}^{\epsilon, j_{2}}(k_{4}):]  \hat{\mathcal{P}}^{ii_{1}}(k_{12}) \hat{\mathcal{P}}^{jj_{1}}(k_{34}) e_{k} ds d\bar{s}, \nonumber \\
VII_{t, ij}^{14} \triangleq& \frac{ (2\pi)^{-\frac{9}{2}}}{4} \sum_{k} \sum_{k_{1}, k_{2}, k_{3}, k_{4} \neq 0: k_{1234} = k} \sum_{i_{1}, i_{2}, j_{1}, j_{2} = 1}^{3} \int_{[0,t]^{2}}  \\
& \times [ e^{- \lvert k_{12} \rvert^{2} f(\epsilon k_{12}) (t-s) - \lvert k_{34} \rvert^{2} f(\epsilon k_{34}) (t- \bar{s})} k_{12}^{i_{2}} k_{34}^{j_{2}} g(\epsilon k_{12}^{i_{2}}) g(\epsilon k_{34}^{j_{2}}) \nonumber\\
& \hspace{5mm} \times : \hat{X}_{s,u}^{\epsilon, i_{1}}(k_{1}) \hat{X}_{s,b}^{\epsilon, i_{2}}(k_{2}) \hat{X}_{\bar{s}, u}^{\epsilon, j_{1}}(k_{3}) \hat{X}_{\bar{s}, b}^{\epsilon, j_{2}}(k_{4}):  \nonumber\\
& - e^{- \lvert k_{12} \rvert^{2} (t-s) - \lvert k_{34} \rvert^{2} (t- \bar{s})} k_{12}^{i_{2}} k_{34}^{j_{2}} ii \nonumber\\
& \hspace{5mm} \times : \hat{\bar{X}}_{s,u}^{\epsilon, i_{1}}(k_{1}) \hat{\bar{X}}_{s,b}^{\epsilon, i_{2}}(k_{2}) \hat{\bar{X}}_{\bar{s}, u}^{\epsilon, j_{1}}(k_{3}) \hat{\bar{X}}_{\bar{s}, b}^{\epsilon, j_{2}}(k_{4}): ]  \hat{\mathcal{P}}^{ii_{1}}(k_{12}) \hat{\mathcal{P}}^{jj_{1}}(k_{34}) e_{k} ds d\bar{s}. \nonumber
\end{align}
\end{subequations}
W.l.o.g. we show the necessary estimates on only $VII_{t, ij}^{11}$ as those on other terms are similar. We compute 
\begin{align}
& \mathbb{E} [ \lvert \Delta_{q} VII_{t, ij}^{11} \rvert^{2} ] \nonumber\\
\lesssim& \mathbb{E} [ \lvert \sum_{k} \theta(2^{-q} k) \sum_{k_{1}, k_{2}, k_{3}, k_{4} \neq 0: k_{1234} = k} \sum_{i_{1}, i_{2}, j_{1}, j_{2} = 1}^{3} \int_{[0,t]^{2}}  \\
& \times [ e^{- \lvert k_{12} \rvert^{2} f(\epsilon k_{12}) (t-s) - \lvert k_{34} \rvert^{2} f(\epsilon k_{34}) (t- \bar{s})} k_{12}^{i_{2}} k_{34}^{j_{2}} g(\epsilon k_{12}^{i_{2}}) g(\epsilon k_{34}^{j_{2}}) \nonumber\\
& \hspace{5mm} - e^{- \lvert k_{12} \rvert^{2} (t-s) - \lvert k_{34} \rvert^{2} (t- \bar{s})} k_{12}^{i_{2}} k_{34}^{j_{2}} ii] \nonumber\\
& \hspace{5mm} \times : \hat{X}_{s,b}^{\epsilon, i_{1}}(k_{1}) \hat{X}_{s,u}^{\epsilon, i_{2}}(k_{2}) \hat{X}_{\bar{s}, b}^{\epsilon, j_{1}}(k_{3}) \hat{X}_{\bar{s}, u}^{\epsilon, j_{2}}(k_{4}):   \hat{\mathcal{P}}^{ii_{1}}(k_{12}) \hat{\mathcal{P}}^{jj_{1}}(k_{34}) e_{k} ds d\bar{s} \rvert^{2} ] \nonumber\\
&+ \mathbb{E} [ \lvert \sum_{k} \theta(2^{-q} k)  \sum_{k_{1}, k_{2}, k_{3}, k_{4} \neq 0: k_{1234} = k} \sum_{i_{1}, i_{2}, j_{1}, j_{2} = 1}^{3} \int_{[0,t]^{2}}  \nonumber\\
& \times  e^{- \lvert k_{12} \rvert^{2} (t-s) - \lvert k_{34} \rvert^{2} (t- \bar{s})} k_{12}^{i_{2}} k_{34}^{j_{2}} ii \nonumber\\
& \hspace{5mm} \times [  : \hat{X}_{s,b}^{\epsilon, i_{1}}(k_{1}) \hat{X}_{s,u}^{\epsilon, i_{2}}(k_{2}) \hat{X}_{\bar{s}, b}^{\epsilon, j_{1}}(k_{3}) \hat{X}_{\bar{s}, u}^{\epsilon, j_{2}}(k_{4}):  \nonumber\\
& \hspace{10mm} - : \hat{\bar{X}}_{s,b}^{\epsilon, i_{1}}(k_{1}) \hat{\bar{X}}_{s,u}^{\epsilon, i_{2}}(k_{2}) \hat{\bar{X}}_{\bar{s}, b}^{\epsilon, j_{1}}(k_{3}) \hat{\bar{X}}_{\bar{s}, u}^{\epsilon, j_{2}}(k_{4}): ]  \hat{\mathcal{P}}^{ii_{1}}(k_{12}) \hat{\mathcal{P}}^{jj_{1}}(k_{34}) e_{k} ds d\bar{s} \rvert^{2} ]. \nonumber
\end{align}
Now because 
\begin{align}\label{estimate 288}
& \mathbb{E} [ : \hat{X}_{s,b}^{\epsilon, i_{1}}(k_{1}) \hat{X}_{s,u}^{\epsilon, i_{2}}(k_{2}) \hat{X}_{\bar{s}, b}^{\epsilon, j_{1}}(k_{3}) \hat{X}_{\bar{s}, u}^{\epsilon, j_{2}}(k_{4}): \nonumber\\
& \hspace{5mm} \times \overline{ : \hat{X}_{\sigma,b}^{\epsilon, i_{1}'}(k_{1}') \hat{X}_{\sigma,u}^{\epsilon, i_{2}'}(k_{2}') \hat{X}_{\bar{\sigma}, b}^{\epsilon, j_{1}'}(k_{3}') \hat{X}_{\bar{\sigma}, u}^{\epsilon, j_{2}'}(k_{4}'): } \lesssim  \frac{\Theta}{\prod_{i=1}^{4} \lvert k_{i} \rvert^{2}}
\end{align} 
by Example \ref{Example 3.1} and \eqref{covariance} where 
\begin{align}\label{estimate 108}
\Theta \triangleq& 1_{k_{1} = k_{1}', k_{2}  = k_{2}', k_{3} = k_{3}', k_{4} = k_{4}'} + 1_{k_{1} = k_{1}', k_{2}  = k_{2}', k_{3} = k_{4}', k_{4} = k_{3}'} \nonumber\\
&+ 1_{k_{1} = k_{1}', k_{2}  = k_{3}', k_{3} = k_{2}', k_{4} = k_{4}'} + 1_{k_{1} = k_{1}', k_{2}  = k_{3}', k_{3} = k_{4}', k_{4} = k_{2}'} \nonumber\\
&+    1_{k_{1} = k_{1}', k_{2}  = k_{4}', k_{3} = k_{2}', k_{4} = k_{3}'} + 1_{k_{1} = k_{1}', k_{2}  = k_{4}', k_{3} = k_{3}', k_{4} = k_{2}'} \nonumber\\
&+ 1_{k_{1} = k_{2}', k_{2}  = k_{1}', k_{3} = k_{3}', k_{4} = k_{4}'} + 1_{k_{1} = k_{2}', k_{2}  = k_{1}', k_{3} = k_{4}', k_{4} = k_{3}' } \nonumber\\
&+    1_{k_{1} = k_{2}', k_{2}  = k_{3}', k_{3} = k_{1}', k_{4} = k_{4}'} + 1_{k_{1} = k_{2}', k_{2}  = k_{3}', k_{3} = k_{4}', k_{4} = k_{1}'} \nonumber\\
&+ 1_{k_{1} = k_{2}', k_{2}  = k_{4}', k_{3} = k_{1}', k_{4} = k_{3}'} + 1_{k_{1} = k_{2}', k_{2}  = k_{4}', k_{3} = k_{3}', k_{4} = k_{1}' } \nonumber\\
&+    1_{k_{1} = k_{3}', k_{2}  = k_{1}', k_{3} = k_{2}', k_{4} = k_{4}'} + 1_{k_{1} = k_{3}', k_{2}  = k_{1}', k_{3} = k_{4}', k_{4} = k_{2}'} \nonumber\\
&+ 1_{k_{1} = k_{3}', k_{2}  = k_{2}', k_{3} = k_{1}', k_{4} = k_{4}'} + 1_{k_{1} = k_{3}', k_{2}  = k_{2}', k_{3} = k_{4}', k_{4} = k_{1}' } \nonumber\\ 
&+    1_{k_{1} = k_{3}', k_{2}  = k_{4}', k_{3} = k_{1}', k_{4} = k_{2}'} + 1_{k_{1} = k_{3}', k_{2}  = k_{4}', k_{3} = k_{2}', k_{4} = k_{1}'} \nonumber\\
&+ 1_{k_{1} = k_{4}', k_{2}  = k_{1}', k_{3} = k_{2}', k_{4} = k_{3}'} + 1_{k_{1} = k_{4}', k_{2}  = k_{1}', k_{3} = k_{3}', k_{4} = k_{2}' } \nonumber\\
&+    1_{k_{1} = k_{4}', k_{2}  = k_{2}', k_{3} = k_{1}', k_{4} = k_{3}'} + 1_{k_{1} = k_{4}', k_{2}  = k_{2}', k_{3} = k_{3}', k_{4} = k_{1}'}  \nonumber\\
&+ 1_{k_{1} = k_{4}', k_{2}  = k_{3}', k_{3} = k_{1}', k_{4} = k_{2}'} + 1_{k_{1} = k_{4}', k_{2}  = k_{3}', k_{3} = k_{2}', k_{4} = k_{1}' }.
\end{align}
Thus, we obtain for any $\eta \in [0,1]$, 
\begin{align}\label{estimate 289}
& \mathbb{E} [ \lvert \Delta_{q} VII_{t, ij}^{11} \rvert^{2} ] \nonumber\\
\lesssim& \epsilon^{\eta} \sum_{k} \theta(2^{-q} k)^{2} \sum_{k_{1}, k_{2}, k_{3}, k_{4}, k_{1}', k_{2}', k_{3}', k_{4}' \neq 0: k_{1234} = k_{1234}' = k} \int_{[0,t]^{4}} \frac{\Theta}{\prod_{i=1}^{4} \lvert k_{i} \rvert^{2}}  \nonumber\\
& \times \lvert k_{12} \rvert \lvert k_{12}' \rvert \lvert k_{34} \rvert \lvert k_{34} ' \rvert e^{- \lvert k_{12} \rvert^{2} \bar{c}_{f} (t-s) - \lvert k_{34} \rvert^{2} \bar{c}_{f} (t- \bar{s}) - \lvert k_{12}' \rvert^{2} \bar{c}_{f} (t- \sigma) - \lvert k_{34}'\rvert^{2} \bar{c}_{f} (t- \bar{\sigma})}  \nonumber\\
& \times [ ( \lvert k_{12} \rvert^{\frac{\eta}{2}} + \lvert k_{34} \rvert^{\frac{\eta}{2}}) (\lvert k_{12}' \rvert^{\frac{\eta}{2}} + \lvert k_{34}' \rvert^{\frac{\eta}{2}}) + \sum_{i=1}^{4} \lvert k_{i} \rvert^{\frac{\eta}{2}} \lvert k_{i}' \rvert^{\frac{\eta}{2}} ] d \sigma d \bar{\sigma} ds d\bar{s}
\end{align}
by \eqref{estimate 288}, H$\ddot{\mathrm{o}}$lder's inequality, \eqref{estimate 37}, \eqref{[Equation (4.2)][ZZ17]}, \eqref{[Equation (4.3)][ZZ17]} and \eqref{key estimate}. Within the 24 terms of $\Theta$ in \eqref{estimate 108}, we can now take the advantage of symmetry and reduce all cases down to only two of $1_{k_{1} = k_{1}', k_{2} = k_{2}', k_{3} = k_{3}', k_{4} = k_{4}'}$ and $1_{k_{1} = k_{3}', k_{2} = k_{2}', k_{3} = k_{1}', k_{4} = k_{4}'}$. E.g., for the third case of $1_{k_{1} = k_{1}', k_{2} = k_{3}', k_{3} = k_{2}', k_{4} = k_{4}'}$ in \eqref{estimate 108}, we can swap $k_{1}$ with $k_{2}$ and $k_{1}'$ with $k_{2}'$ to deduce $1_{k_{2} = k_{2}', k_{1} = k_{3}', k_{3} = k_{1}', k_{4} = k_{4}'}$. Similarly, other cases may be reduced down to one of $1_{k_{1} = k_{1}', k_{2} = k_{2}', k_{3} = k_{3}', k_{4} = k_{4}'}$ and $1_{k_{1} = k_{3}', k_{2} = k_{2}', k_{3} = k_{1}', k_{4} = k_{4}'}$; thus, for brevity we refer readers to \cite[Equation (250)]{Y19a} to similar reductions. Therefore, \eqref{estimate 289} leads to  
\begin{align}\label{reduction 1}
& \mathbb{E} [ \lvert \Delta_{q} VII_{t, ij}^{11} \rvert^{2} ]\\
\lesssim& \epsilon^{\eta} \sum_{k} \theta(2^{-q} k)^{2} \sum_{k_{1}, k_{2}, k_{3}, k_{4}, k_{1}', k_{2}', k_{3}', k_{4}' \neq 0: k_{1234} = k_{1234}' = k} \nonumber\\
& \times (1_{k_{1} = k_{1}', k_{2} = k_{2}', k_{3} = k_{3}', k_{4} = k_{4}'} + 1_{k_{1} = k_{3}', k_{2} = k_{2}', k_{3} = k_{1}', k_{4} = k_{4}'}) \nonumber\\
& \times \frac{1}{\prod_{i=1}^{4} \lvert k_{i} \rvert^{2}} \frac{ (\lvert k_{12} \rvert^{2} t)^{\frac{\epsilon}{4}}}{\lvert k_{12} \rvert} \frac{(\lvert k_{34} \rvert^{2} t)^{\frac{\epsilon}{4}}}{\lvert k_{34} \rvert} \frac{ (\lvert k_{12}' \rvert^{2} t)^{\frac{\epsilon}{4}}}{\lvert k_{12}' \rvert} \frac{ (\lvert k_{34} '\rvert^{2} t)^{\frac{\epsilon}{4}}}{\lvert k_{34}'\rvert} (\sum_{i=1}^{4} \lvert k_{i} \rvert^{\eta}) \lesssim  \epsilon^{\eta}t^{\epsilon} 2^{q(\eta + 2 \epsilon)}   \nonumber
\end{align}
by mean value theorem. By a similar calculation, we can show that for all $\epsilon, \eta > 0$ sufficiently small, and any $t_{1}, t_{2} > 0$, 
\begin{align*}
&\mathbb{E} [ \lvert \Delta_{q} ( b_{2}^{\epsilon, i} \diamond b_{2}^{\epsilon, j} (t_{1}) - b_{2}^{\epsilon, i} \diamond b_{2}^{\epsilon, j}(t_{2})  \nonumber\\
&\hspace{5mm} - \bar{b}_{2}^{\epsilon, i} \diamond \bar{b}_{2}^{\epsilon, j} (t_{1}) + \bar{b}_{2}^{\epsilon, i} \diamond \bar{b}_{2}^{\epsilon, j} (t_{2})) \rvert^{2}] \lesssim \epsilon^{\eta} \lvert t_{1} - t_{2} \rvert^{\eta} 2^{q(\epsilon + 3\eta)}
\end{align*} 
and therefore an application of Gaussian hypercontractivity theorem and Besov embedding Lemma \ref{Lemma 3.4}, similarly to \eqref{estimate 36}, implies that for all $i, j \in \{1,2,3\}$, $p\geq 1$, and $\delta > 0$, $b_{2}^{\epsilon, i} \diamond b_{2}^{\epsilon, j} - \bar{b}_{2}^{\epsilon, i} \diamond \bar{b}_{2}^{\epsilon, j} \searrow 0$ in $L^{p}(\Omega; C([0,T]; \mathcal{C}^{-\delta}))$ as $\epsilon \searrow 0$. 

\subsection{Conclusion of the proof of Theorem \ref{Theorem 1.2}}\label{Subsection 2.3}

We define (cf. \eqref{[Equation (3.2e)][ZZ17]} and \eqref{[Equation (3.23a)][ZZ17]})
\begin{align}\label{[Equation (3.25f)][ZZ17]} 
\delta &C_{W}^{\epsilon} \triangleq  \sup_{t\in [0,T]} [ \sum_{i=1}^{3} \lVert (\bar{u}_{1}^{\epsilon, i}, \bar{b}_{1}^{\epsilon, i})(t)  -  (u_{1}^{\epsilon, i}, b_{1}^{\epsilon, i})(t) \rVert_{\mathcal{C}^{- \frac{1}{2} - \frac{\delta}{2}}} \\
& + \sum_{i,j=1}^{3} \lVert (\bar{u}_{1}^{\epsilon, i} \diamond \bar{u}_{1}^{\epsilon, j} - u_{1}^{\epsilon, i} \diamond u_{1}^{\epsilon, j}, \bar{b}_{1}^{\epsilon, i} \diamond \bar{b}_{1}^{\epsilon, j} - b_{1}^{\epsilon, i} \diamond b_{1}^{\epsilon, j},  \nonumber\\
& \hspace{25mm} \bar{u}_{1}^{\epsilon, i} \diamond \bar{b}_{1}^{\epsilon, j} - u_{1}^{\epsilon, i} \diamond b_{1}^{\epsilon, j}, \bar{b}_{1}^{\epsilon, i} \diamond \bar{u}_{1}^{\epsilon, j} - b_{1}^{\epsilon, i} \diamond u_{1}^{\epsilon, j})(t) \rVert_{\mathcal{C}^{-1 - \frac{\delta}{2}}} \nonumber\\
&+  \sum_{i,j=1}^{3} \lVert ( \bar{u}_{1}^{\epsilon, i} \diamond \bar{u}_{2}^{\epsilon, j} - u_{1}^{\epsilon, i} \diamond u_{2}^{\epsilon, j}, \bar{b}_{1}^{\epsilon, i} \diamond \bar{b}_{2}^{\epsilon, j} - b_{1}^{\epsilon, i} \diamond b_{2}^{\epsilon, j},  \nonumber\\
& \hspace{25mm} \bar{u}_{2}^{\epsilon, i} \diamond \bar{b}_{1}^{\epsilon, j}  -  u_{2}^{\epsilon, i} \diamond b_{1}^{\epsilon, j}, \bar{b}_{2}^{\epsilon, i} \diamond \bar{u}_{1}^{\epsilon, j} - b_{2}^{\epsilon, i} \diamond u_{1}^{\epsilon, j})(t) \rVert_{\mathcal{C}^{- \frac{1}{2} - \frac{\delta}{2}}} \nonumber\\
&+  \sum_{i,j=1}^{3} \lVert ( \bar{u}_{2}^{\epsilon, i} \diamond \bar{u}_{2}^{\epsilon, j} - u_{2}^{\epsilon, i} \diamond u_{2}^{\epsilon, j}, \bar{b}_{2}^{\epsilon, i} \diamond \bar{b}_{2}^{\epsilon, j} - b_{2}^{\epsilon, i} \diamond b_{2}^{\epsilon, j}, \bar{b}_{2}^{\epsilon, i} \diamond \bar{u}_{2}^{\epsilon, j} - b_{2}^{\epsilon, i} \diamond u_{2}^{\epsilon, j})(t) \rVert_{\mathcal{C}^{-\delta}} \nonumber\\
&+  \sum_{i,j=1}^{3} \lVert ( \pi_{0,\diamond} (\bar{u}_{3}^{\epsilon, j}, \bar{u}_{1}^{\epsilon, i}) -  \pi_{0,\diamond} (u_{3}^{\epsilon, j}, u_{1}^{\epsilon, i}), \pi_{0,\diamond} (\bar{b}_{3}^{\epsilon, j}, \bar{b}_{1}^{\epsilon, i}) - \pi_{0,\diamond} (b_{3}^{\epsilon, j}, b_{1}^{\epsilon, i}), \nonumber\\
& \hspace{25mm}  \pi_{0,\diamond}( \bar{u}_{3}^{\epsilon, j}, \bar{b}_{1}^{\epsilon, i}) - \pi_{0,\diamond}( u_{3}^{\epsilon, j}, b_{1}^{\epsilon, i}), \pi_{0,\diamond} (\bar{b}_{3}^{\epsilon, j}, \bar{u}_{1}^{\epsilon, i})-  \pi_{0,\diamond} (b_{3}^{\epsilon, j}, u_{1}^{\epsilon, i})) \rVert_{\mathcal{C}^{-\delta}}\nonumber\\
&+  \sum_{i, i_{1}, j, j_{1} =1}^{3} \lVert (  \pi_{0,\diamond} (\mathcal{P}^{ii_{1}} D_{j} \bar{K}_{u}^{\epsilon, j}, \bar{u}_{1}^{\epsilon, j_{1}}) -  \pi_{0,\diamond} (\mathcal{P}^{ii_{1}} D_{j}^{\epsilon} K_{u}^{\epsilon, j}, u_{1}^{\epsilon, j_{1}}),  \nonumber\\
& \hspace{25mm} \pi_{0,\diamond}(\mathcal{P}^{ii_{1}} D_{j} \bar{K}_{u}^{\epsilon, i_{1}}, \bar{u}_{1}^{\epsilon, j_{1}}) - \pi_{0,\diamond}(\mathcal{P}^{ii_{1}} D_{j}^{\epsilon} K_{u}^{\epsilon, i_{1}}, u_{1}^{\epsilon, j_{1}}), \nonumber\\
& \hspace{25mm}  \pi_{0,\diamond} (\mathcal{P}^{ii_{1}} D_{j} \bar{K}_{b}^{\epsilon, j}, \bar{u}_{1}^{\epsilon, j_{1}}) - \pi_{0,\diamond} (\mathcal{P}^{ii_{1}} D_{j}^{\epsilon} K_{b}^{\epsilon, j}, u_{1}^{\epsilon, j_{1}}), \nonumber\\
& \hspace{25mm}  \pi_{0,\diamond} (\mathcal{P}^{ii_{1}} D_{j} \bar{K}_{b}^{\epsilon, i_{1}}, \bar{u}_{1}^{\epsilon, j_{1}}) - \pi_{0,\diamond} (\mathcal{P}^{ii_{1}} D_{j}^{\epsilon} K_{b}^{\epsilon, i_{1}}, u_{1}^{\epsilon, j_{1}}), \nonumber\\
& \hspace{25mm} \pi_{0,\diamond} (\mathcal{P}^{ii_{1}} D_{j} \bar{K}_{u}^{\epsilon, j}, \bar{b}_{1}^{\epsilon, j_{1}}) -  \pi_{0,\diamond} (\mathcal{P}^{ii_{1}} D_{j}^{\epsilon} K_{u}^{\epsilon, j}, b_{1}^{\epsilon, j_{1}}), \nonumber\\
& \hspace{25mm}  \pi_{0,\diamond}(\mathcal{P}^{ii_{1}} D_{j} \bar{K}_{u}^{\epsilon, i_{1}}, \bar{b}_{1}^{\epsilon, j_{1}}) - \pi_{0,\diamond}(\mathcal{P}^{ii_{1}} D_{j}^{\epsilon} K_{u}^{\epsilon, i_{1}}, b_{1}^{\epsilon, j_{1}}), \nonumber\\
& \hspace{25mm} \pi_{0,\diamond} (\mathcal{P}^{ii_{1}} D_{j} \bar{K}_{b}^{\epsilon, j}, \bar{b}_{1}^{\epsilon, j_{1}})-  \pi_{0,\diamond} (\mathcal{P}^{ii_{1}} D_{j}^{\epsilon} K_{b}^{\epsilon, j}, b_{1}^{\epsilon, j_{1}}), \nonumber\\
& \hspace{25mm}  \pi_{0,\diamond} (\mathcal{P}^{ii_{1}} D_{j} \bar{K}_{b}^{\epsilon, i_{1}}, \bar{b}_{1}^{\epsilon, j_{1}}) -  \pi_{0,\diamond} (\mathcal{P}^{ii_{1}} D_{j}^{\epsilon} K_{b}^{\epsilon, i_{1}}, b_{1}^{\epsilon, j_{1}}) ) \rVert_{\mathcal{C}^{-\delta}}]. \nonumber
\end{align}
Then a similar argument as in the construction of \eqref{MHD} (cf. \eqref{[Equation (3.18)][ZZ17]}) implies that for a sufficiently small $T_{1} \leq \tau_{L} \wedge \tau_{L_{1}}^{\epsilon} \wedge \rho_{L_{2}}^{\epsilon} \wedge \rho_{L_{3}}^{\epsilon}$, where we recall $\tau_{L}, \tau_{L}^{\epsilon}$ and $\rho_{L}^{\epsilon}$ respectively from \eqref{estimate 104} and \eqref{estimate 290}, for any $L_{1}, L_{2}, L_{3} > 0$, and $\kappa > 0$ sufficiently small, 
\begin{align}\label{[Equation (3.25e)][ZZ17]} 
& \sup_{t\in [0,T_{1}]} \lVert ( \bar{u}_{4}^{\epsilon, i}, \bar{b}_{4}^{\epsilon, i} )(t) - (u_{4}^{\epsilon, i}, b_{4}^{\epsilon, i} )(t) \rVert_{\mathcal{C}^{-z}} \nonumber\\
\lesssim& (\delta C_{W}^{\epsilon} + \epsilon^{\frac{\kappa}{2}} + \lVert (u_{0}^{\epsilon} - u_{0}, b_{0}^{\epsilon}- b_{0}) \rVert_{\mathcal{C}^{-z}}) C(L, L_{1}, L_{2}, L_{3}). 
\end{align}
Again, extending the local existence time to $\tau_{L}\wedge \tau_{L_{1}}^{\epsilon} \wedge \rho_{L_{2}}^{\epsilon} \wedge \rho_{L_{3}}^{\epsilon}$ is standard and we omit its details (see the proof of \cite[Theorem 3.12]{ZZ15}). Due to convergence results in Subsection \ref{Subsection 2.2} we can show similarly to \eqref{[Equation (3.19)][ZZ17]} that $\delta C_{W}^{\epsilon} \searrow 0$ in probability as $\epsilon \searrow 0$. We estimate 
\begin{equation}\label{estimate 38}
\lVert y^{\epsilon}(t) - y(t) \rVert_{\mathcal{C}^{-z}} \leq \lVert y^{\epsilon}(t) - \bar{y}^{\epsilon}(t) \rVert_{\mathcal{C}^{-z}} + \lVert \bar{y}^{\epsilon}(t) - y(t) \rVert_{\mathcal{C}^{-z}} 
\end{equation} 
where we know that 
\begin{equation*}
\sup_{t\in [0, \tau_{L} \wedge \bar{\rho}_{L_{1}}^{\epsilon} ]} \lVert \bar{y}^{\epsilon}(t) - y(t) \rVert_{\mathcal{C}^{-z}} \to 0 
\end{equation*} 
in probability as $\epsilon \searrow 0$ due to \eqref{[Equation (3.21a)][ZZ17]}. Thus, we work on 
\begin{align}\label{estimate 39}
 \lVert y^{\epsilon}(t) - \bar{y}^{\epsilon}(t) \rVert_{\mathcal{C}^{-z}} \lesssim \sum_{i=1}^{4} \lVert (u_{i}^{\epsilon} - \bar{u}_{i}^{\epsilon}, b_{i}^{\epsilon} - \bar{b}_{i}^{\epsilon})(t) \rVert_{\mathcal{C}^{-z}}. 
\end{align}
First, 
\begin{equation}\label{estimate 303}
\lVert (u_{1}^{\epsilon} - \bar{u}_{1}^{\epsilon}, b_{1}^{\epsilon} - \bar{b}_{1}^{\epsilon} )(t) \rVert_{\mathcal{C}^{-z}} \lesssim \lVert (u_{1}^{\epsilon} - \bar{u}_{1}^{\epsilon}, b_{1}^{\epsilon} - \bar{b}_{1}^{\epsilon} )(t) \rVert_{\mathcal{C}^{-\frac{1}{2} - \frac{\delta}{2}}} \to 0  
\end{equation} 
in probability as $\epsilon \searrow 0$ where we used \eqref{[Equation (3.2g)][ZZ17]} and \eqref{[Equation (3.25f)][ZZ17]}. Second, 
\begin{align}\label{estimate 295}
 \lVert (u_{2}^{\epsilon} - \bar{u}_{2}^{\epsilon}, b_{2}^{\epsilon} - \bar{b}_{2}^{\epsilon})(t) \rVert_{\mathcal{C}^{-z}}   \lesssim \sum_{i=1}^{3} VIII_{t}^{i}
\end{align}
by \eqref{[Equation (3.1ab)][ZZ17]} and \eqref{[Equation (3.21d)][ZZ17]} where 
\begin{subequations}\label{estimate 291}
\begin{align}
&VIII_{t}^{1} \triangleq \sum_{i_{1}, j=1}^{3} \int_{0}^{t} \lVert ( (P_{t-s} - P_{t-s}^{\epsilon}) D_{j} (\bar{u}_{1}^{\epsilon, i_{1}} \diamond \bar{u}_{1}^{\epsilon, j}), (P_{t-s}  - P_{t-s}^{\epsilon}) D_{j} (\bar{b}_{1}^{\epsilon, i_{1}} \diamond \bar{b}_{1}^{\epsilon, j}),  \nonumber\\
&\hspace{10mm}  (P_{t-s} - P_{t-s}^{\epsilon}) D_{j} (\bar{b}_{1}^{\epsilon, i_{1}} \diamond \bar{u}_{1}^{\epsilon, j}), (P_{t-s}  - P_{t-s}^{\epsilon}) D_{j} (\bar{u}_{1}^{\epsilon, i_{1}} \diamond \bar{b}_{1}^{\epsilon, j}) ) \rVert_{\mathcal{C}^{-z}} ds,  \\
&VIII_{t}^{2} \triangleq \sum_{i_{1}, j=1}^{3} \int_{0}^{t} \lVert (P_{t-s}^{\epsilon}(D_{j} - D_{j}^{\epsilon}) (\bar{u}_{1}^{\epsilon, i_{1}} \diamond \bar{u}_{1}^{\epsilon, j}), P_{t-s}^{\epsilon} (D_{j} - D_{j}^{\epsilon}) (\bar{b}_{1}^{\epsilon, i_{1}} \diamond \bar{b}_{1}^{\epsilon, j}), \nonumber\\
& \hspace{10mm} P_{t-s}^{\epsilon} (D_{j} - D_{j}^{\epsilon})(\bar{b}_{1}^{\epsilon, i_{1}} \diamond \bar{u}_{1}^{\epsilon, j}), P_{t-s}^{\epsilon} (D_{j} - D_{j}^{\epsilon}) (\bar{u}_{1}^{\epsilon, i_{1}} \diamond \bar{b}_{1}^{\epsilon, j}) ) \rVert_{\mathcal{C}^{-z}} ds,  \\
&VIII_{t}^{3} \triangleq \int_{0}^{t} \lVert ( P_{t-s}^{\epsilon} D_{j}^{\epsilon} (\bar{u}_{1}^{\epsilon, i_{1}} \diamond \bar{u}_{1}^{\epsilon, j} - u_{1}^{\epsilon, i_{1}} \diamond u_{1}^{\epsilon, j} ), P_{t-s}^{\epsilon} D_{j}^{\epsilon}(\bar{b}_{1}^{\epsilon, i_{1}} \diamond \bar{b}_{1}^{\epsilon, j} - b_{1}^{\epsilon, i_{1}} \diamond b_{1}^{\epsilon, j} ), \nonumber   \\
& \hspace{30mm} P_{t-s}^{\epsilon} D_{j}^{\epsilon} (\bar{b}_{1}^{\epsilon, i_{1}} \diamond \bar{u}_{1}^{\epsilon, j} - b_{1}^{\epsilon, i_{1}} \diamond u_{1}^{\epsilon, j} ), \nonumber\\
& \hspace{35mm} P_{t-s}^{\epsilon} D_{j}^{\epsilon}(\bar{u}_{1}^{\epsilon, i_{1}} \diamond \bar{b}_{1}^{\epsilon, j} - u_{1}^{\epsilon, i_{1}} \diamond b_{1}^{\epsilon, j} )) \rVert_{\mathcal{C}^{-z}} ds. 
\end{align}
\end{subequations} 
We estimate from \eqref{estimate 291}
\begin{align}\label{estimate 292}
VIII_{t}^{1} \lesssim \epsilon^{\frac{1}{3}} t^{-\frac{1}{4} + \frac{z}{2} - \frac{\delta}{4}}  \to 0 
\end{align}
as $\epsilon \searrow 0$ by Lemma \ref{Lemma 3.2}, \eqref{[Equation (3.2g)][ZZ17]}, \eqref{[Equation (3.2e)][ZZ17]} and \eqref{[Equation (3.23a)][ZZ17]}. Next, from \eqref{estimate 291}
\begin{align}\label{estimate 293}
VIII_{t}^{2} \lesssim \epsilon^{\frac{1}{8}} t^{\frac{z}{2} - \frac{1}{4} - \frac{\delta}{4}}  \to 0
\end{align}
as $\epsilon \searrow 0$ by Lemmas \ref{Lemma 3.1} and \ref{Lemma 3.15} and \eqref{[Equation (3.2g)][ZZ17]}. Finally, from \eqref{estimate 291}
\begin{align}\label{estimate 294}
&VIII_{t}^{3} \\
\lesssim& t^{- \frac{1}{4} + \frac{z}{2} - \frac{\delta}{4}} \sup_{t \in [0, \tau_{L} \wedge \tau_{L_{1}}^{\epsilon} \wedge \rho_{L_{2}}^{\epsilon} \wedge \bar{\rho}_{L_{3}}^{\epsilon} ]} \lVert ( \bar{u}_{1}^{\epsilon, i_{1}} \diamond \bar{u}_{1}^{\epsilon, j} - u_{1}^{\epsilon, i_{1}} \diamond u_{1}^{\epsilon, j}, \bar{b}_{1}^{\epsilon, i_{1}} \diamond \bar{b}_{1}^{\epsilon, j} - b_{1}^{\epsilon, i_{1}} \diamond b_{1}^{\epsilon, j}, \nonumber\\
& \hspace{15mm} \times \bar{b}_{1}^{\epsilon, i_{1}} \diamond \bar{u}_{1}^{\epsilon, j} - b_{1}^{\epsilon, i_{1}} \diamond u_{1}^{\epsilon, j}, \bar{u}_{1}^{\epsilon, i_{1}} \diamond \bar{b}_{1}^{\epsilon, j} - u_{1}^{\epsilon, i_{1}} \diamond b_{1}^{\epsilon, j})(t) \rVert_{\mathcal{C}^{-1 - \frac{\delta}{2}}} \to 0 \nonumber
 \end{align}
as $\epsilon \searrow 0$ by Lemmas \ref{Lemma 3.1} and \ref{Lemma 3.3} and \eqref{[Equation (3.2g)][ZZ17]}. Thus, by applying \eqref{estimate 292}-\eqref{estimate 294} to \eqref{estimate 295}, we conclude now that 
\begin{equation}\label{estimate 296}
\lVert (u_{2}^{\epsilon} - \bar{u}_{2}^{\epsilon}, b_{2}^{\epsilon} - \bar{b}_{2}^{\epsilon})(t) \rVert_{\mathcal{C}^{-z}}   \to 0
\end{equation}
as $\epsilon \searrow 0$. Next, we consider from \eqref{[Equation (3.1ac)][ZZ17]} and \eqref{[Equation (3.21e)][ZZ17]}
\begin{equation}
\lVert (u_{3}^{\epsilon} - \bar{u}_{3}^{\epsilon}, b_{3}^{\epsilon} - \bar{b}_{3}^{\epsilon})(t) \rVert_{\mathcal{C}^{-z}} =  \frac{1}{2}(IX_{t}^{1} + IX_{t}^{2})
\end{equation}
where 
\begin{subequations}
\begin{align} 
IX_{t}^{1}&\triangleq \sum_{i=1}^{3} \lVert  \sum_{i_{1}, j=1}^{3} \mathcal{P}^{ii_{1}} D_{j}^{\epsilon} \int_{0}^{t} P_{t-s}^{\epsilon} (u_{1}^{\epsilon, i_{1}} \diamond u_{2}^{\epsilon, j} + u_{2}^{\epsilon, i_{1}} \diamond u_{1}^{\epsilon, j} \nonumber\\
& \hspace{45mm}  - b_{1}^{\epsilon, i_{1}} \diamond b_{2}^{\epsilon, j} - b_{2}^{\epsilon, i_{1}} \diamond b_{1}^{\epsilon, j}) ds \nonumber \\
& \hspace{5mm} -   \sum_{i_{1}, j=1}^{3} \mathcal{P}^{ii_{1}} D_{j} \int_{0}^{t} P_{t-s} (\bar{u}_{1}^{\epsilon, i_{1}} \diamond \bar{u}_{2}^{\epsilon, j} + \bar{u}_{2}^{\epsilon, i_{1}} \diamond \bar{u}_{1}^{\epsilon, j} \nonumber\\
& \hspace{45mm}  - \bar{b}_{1}^{\epsilon, i_{1}} \diamond \bar{b}_{2}^{\epsilon, j} - \bar{b}_{2}^{\epsilon, i_{1}} \diamond \bar{b}_{1}^{\epsilon, j}) ds \rVert_{\mathcal{C}^{-z}},\\
IX_{t}^{2}&\triangleq \sum_{i=1}^{3} \lVert  \sum_{i_{1}, j=1}^{3} \mathcal{P}^{ii_{1}} D_{j}^{\epsilon} \int_{0}^{t} P_{t-s}^{\epsilon} (b_{1}^{\epsilon, i_{1}} \diamond u_{2}^{\epsilon, j} + b_{2}^{\epsilon, i_{1}} \diamond u_{1}^{\epsilon, j} \nonumber\\
& \hspace{45mm}  - u_{1}^{\epsilon, i_{1}} \diamond b_{2}^{\epsilon, j} - u_{2}^{\epsilon, i_{1}} \diamond b_{1}^{\epsilon, j}) ds \nonumber \\
& \hspace{5mm} - \sum_{i_{1}, j=1}^{3} \mathcal{P}^{ii_{1}} D_{j} \int_{0}^{t} P_{t-s} (\bar{b}_{1}^{\epsilon, i_{1}} \diamond \bar{u}_{2}^{\epsilon, j} + \bar{b}_{2}^{\epsilon, i_{1}} \diamond \bar{u}_{1}^{\epsilon, j}  \nonumber\\
& \hspace{45mm} - \bar{u}_{1}^{\epsilon, i_{1}} \diamond \bar{b}_{2}^{\epsilon, j} - \bar{u}_{2}^{\epsilon, i_{1}} \diamond \bar{b}_{1}^{\epsilon, j}) ds \rVert_{\mathcal{C}^{-z}}.  
\end{align}
\end{subequations}
W.l.o.g. we show the necessary estimates for only $IX_{t}^{1}$ as those for $IX_{t}^{2}$ are similar. We compute 
\begin{equation}\label{estimate 301}
IX_{t}^{1} \lesssim \sum_{l=1}^{3} IX_{t}^{1l}
\end{equation}
where 
\begin{subequations}\label{estimate 297}
\begin{align}
IX_{t}^{11} \triangleq&  \sum_{i_{1}, j=1}^{3} \int_{0}^{t} \lVert (P_{t-s} - P_{t-s}^{\epsilon}) D_{j} (\bar{u}_{1}^{\epsilon, i_{1}} \diamond \bar{u}_{2}^{\epsilon, j} + \bar{u}_{2}^{\epsilon, i_{1}} \diamond \bar{u}_{1}^{\epsilon,j} \nonumber\\
& \hspace{12mm} - \bar{b}_{1}^{\epsilon, i_{1}} \diamond \bar{b}_{2}^{\epsilon, j} - \bar{b}_{2}^{\epsilon, i_{1}} \diamond \bar{b}_{1}^{\epsilon, j} )(s) \rVert_{\mathcal{C}^{-z}} ds,\\
IX_{t}^{12} \triangleq& \sum_{i_{1}, j=1}^{3} \int_{0}^{t} \lVert P_{t-s}^{\epsilon} (D_{j} - D_{j}^{\epsilon}) (\bar{u}_{1}^{\epsilon, i_{1}} \diamond \bar{u}_{2}^{\epsilon, j} + \bar{u}_{2}^{\epsilon, i_{1}} \diamond \bar{u}_{1}^{\epsilon, j} \nonumber\\
& \hspace{12mm} - \bar{b}_{1}^{\epsilon, i_{1}} \diamond \bar{b}_{2}^{\epsilon, j} - \bar{b}_{2}^{\epsilon, i_{1}} \diamond \bar{b}_{1}^{\epsilon, j})(s) \rVert_{\mathcal{C}^{-z}} ds, \\
IX_{t}^{13} \triangleq& \sum_{i_{1}, j=1}^{3} \int_{0}^{t} \lVert P_{t-s}^{\epsilon} D_{j}^{\epsilon} (\bar{u}_{1}^{\epsilon, i_{1}} \diamond \bar{u}_{2}^{\epsilon, j} - u_{1}^{\epsilon, i_{1}} \diamond u_{2}^{\epsilon, j} + \bar{u}_{2}^{\epsilon, i_{1}} \diamond \bar{u}_{1}^{\epsilon, j} - u_{2}^{\epsilon, i_{1}} \diamond u_{1}^{\epsilon, j} \nonumber\\
& \hspace{12mm} - \bar{b}_{1}^{\epsilon, i_{1}} \diamond \bar{b}_{2}^{\epsilon, j} + b_{1}^{\epsilon, i_{1}} \diamond b_{2}^{\epsilon, j} - \bar{b}_{2}^{\epsilon, i_{1}} \diamond \bar{b}_{1}^{\epsilon, j} + b_{2}^{\epsilon, i_{1}} \diamond b_{1}^{\epsilon, j}) \rVert_{\mathcal{C}^{-z}} ds.  
\end{align}
\end{subequations} 
We estimate from \eqref{estimate 297}
\begin{align}\label{estimate 298}
IX_{t}^{11} \lesssim& \sum_{i_{1}, j=1}^{3} \int_{0}^{t} \epsilon^{\frac{1}{3}} (t-s)^{- \frac{ 2- z + \frac{\delta}{2}}{2}} \lVert (\bar{u}_{1}^{\epsilon, i_{1}} \diamond \bar{u}_{2}^{\epsilon, j} + \bar{u}_{2}^{\epsilon, i_{1}} \diamond \bar{u}_{1}^{\epsilon,j} \nonumber\\
& \hspace{15mm} - \bar{b}_{1}^{\epsilon, i_{1}} \diamond \bar{b}_{2}^{\epsilon, j} - \bar{b}_{2}^{\epsilon, i_{1}} \diamond \bar{b}_{1}^{\epsilon, j} )(s) \rVert_{\mathcal{C}^{-\frac{1}{2} - \frac{\delta}{2}}} ds \lesssim \epsilon^{\frac{1}{3}} t^{\frac{ z - \frac{\delta}{2}}{2}}  \to 0 
\end{align}
as $\epsilon \searrow 0$ by Lemma \ref{Lemma 3.2}, \eqref{[Equation (3.2e)][ZZ17]} and \eqref{[Equation (3.2g)][ZZ17]}. Next, from \eqref{estimate 297}
\begin{align}\label{estimate 299}
IX_{t}^{12} \lesssim& \int_{0}^{t} (t-s)^{- \frac{ 2- z + \frac{\delta}{2}}{2}} \epsilon^{\frac{1}{8}} \lVert (\bar{u}_{1}^{\epsilon, i_{1}} \diamond \bar{u}_{2}^{\epsilon, j} + \bar{u}_{2}^{\epsilon, i_{1}} \diamond \bar{u}_{1}^{\epsilon, j}  \nonumber\\
& \hspace{20mm} - \bar{b}_{1}^{\epsilon, i_{1}} \diamond \bar{b}_{2}^{\epsilon, j} - \bar{b}_{2}^{\epsilon, i_{1}}\diamond \bar{b}_{1}^{\epsilon, j}) \rVert_{\mathcal{C}^{-\frac{1}{2} - \frac{\delta}{2}}} ds \lesssim  \epsilon^{\frac{1}{8}} t^{\frac{ z - \frac{\delta}{2}}{2}}  \to 0 
\end{align}
as $\epsilon \searrow 0$ by Lemmas \ref{Lemma 3.1} and \ref{Lemma 3.15} and \eqref{[Equation (3.2g)][ZZ17]}. Finally, from \eqref{estimate 297}
\begin{align}\label{estimate 300}
IX_{t}^{13} \lesssim& t^{\frac{ z - \frac{\delta}{2}}{2}} \sup_{t\in [0, \tau_{L} \wedge \tau_{L_{1}}^{\epsilon} \wedge \rho_{L_{2}}^{\epsilon} \wedge \bar{\rho}_{L_{3}}^{\epsilon} ]} \lVert (\bar{u}_{1}^{\epsilon, i_{1}} \diamond \bar{u}_{2}^{\epsilon, j} - u_{1}^{\epsilon, i_{1}} \diamond u_{2}^{\epsilon, j} + \bar{u}_{2}^{\epsilon, i_{1}} \diamond \bar{u}_{1}^{\epsilon, j} - u_{2}^{\epsilon, i_{1}} \diamond u_{1}^{\epsilon, j} \nonumber\\
& \hspace{4mm} - \bar{b}_{1}^{\epsilon, i_{1}} \diamond \bar{b}_{2}^{\epsilon, j} + b_{1}^{\epsilon, i_{1}} \diamond b_{2}^{\epsilon, j} - \bar{b}_{2}^{\epsilon, i_{1}} \diamond \bar{b}_{1}^{\epsilon, j} + b_{2}^{\epsilon, i_{1}} \diamond b_{1}^{\epsilon, j}) (t) \rVert_{\mathcal{C}^{-\frac{1}{2} - \frac{\delta}{2}}} \to 0 
\end{align}
as $\epsilon \searrow 0$ by Lemmas \ref{Lemma 3.1} and \ref{Lemma 3.3} and \eqref{[Equation (3.2g)][ZZ17]}.  Therefore, by applying \eqref{estimate 298}-\eqref{estimate 300} to \eqref{estimate 301},  we conclude that 
\begin{equation}\label{estimate 302}
\lVert (u_{3}^{\epsilon} - \bar{u}_{3}^{\epsilon}, b_{3}^{\epsilon} - \bar{b}_{3}^{\epsilon})(t) \rVert_{\mathcal{C}^{-z}} \to 0
\end{equation} 
as $\epsilon \searrow 0$. Along with \eqref{[Equation (3.25e)][ZZ17]} and the hypothesis that $(u_{0}^{\epsilon}, b_{0}^{\epsilon}) \to (u_{0}, b_{0})$ in $\mathcal{C}^{-z}$ as $\epsilon \searrow 0$, we conclude from \eqref{estimate 38}, \eqref{estimate 39}, \eqref{estimate 303}, \eqref{estimate 296} and \eqref{estimate 302} that  
\begin{equation}\label{[Equation (3.25h)][ZZ17]} 
\sup_{t\in [0, \tau_{L} \wedge \tau_{L_{1}}^{\epsilon} \wedge \rho_{L_{2}}^{\epsilon} \wedge \bar{\rho}_{L_{3}}^{\epsilon}]}  \lVert (y^{\epsilon} - y)(t) \rVert_{\mathcal{C}^{-z}} \to 0 
\end{equation}
in probability as $\epsilon \searrow 0$. Finally, for any fixed $\epsilon > 0, L > 0$, this leads to 
\begin{equation}
\mathbb{P} ( \{\sup_{t \in [0,\tau_{L}]} \lVert (y^{\epsilon} - y)(t) \rVert_{\mathcal{C}^{-z}} \geq \epsilon \}) \to 0 
\end{equation} 
as $\epsilon \searrow 0$. Indeed, 
\begin{align}
&\mathbb{P} ( \{ \sup_{t\in [0, \tau_{L} ]} \lVert (y^{\epsilon} - y)(t) \rVert_{\mathcal{C}^{-z}} \geq \epsilon \})  \nonumber\\
\leq& \mathbb{P} ( \{ \sup_{t\in [0, \tau_{L} \wedge \tau_{L_{1}}^{\epsilon} \wedge \rho_{L_{2}}^{\epsilon} \wedge \bar{\rho}_{L_{3}}^{\epsilon} ]} \lVert (y^{\epsilon} - y)(t) \rVert_{\mathcal{C}^{-z}} > \epsilon \}) \nonumber\\
& \hspace{5mm} + \mathbb{P} (\{\tau_{L_{1}}^{\epsilon} < \tau_{L} \wedge \rho_{L_{2}}^{\epsilon} \wedge \bar{\rho}_{L_{3}}^{\epsilon} \}) + \mathbb{P} (\{\rho_{L_{2}}^{\epsilon} < \tau_{L}\}) + \mathbb{P} (\{\bar{\rho}_{L_{3}}^{\epsilon} < \tau_{L} \})
\end{align}
where the first and second probabilities approach zero due to \eqref{[Equation (3.25h)][ZZ17]} while  $\mathbb{P} (\{\tau_{L} > \rho_{L_{2}}^{\epsilon} \}) + \mathbb{P} (\{\tau_{L} > \bar{\rho}_{L_{3}}^{\epsilon} \}) \to 0$ as $L_{2}, L_{3} \nearrow + \infty$ uniformly over $\epsilon \in (0,1)$ by \eqref{estimate 104} and \eqref{estimate 290}. This completes the proof of Theorem \ref{Theorem 1.2}. 

\section{Appendix}
\subsection{Preliminaries}\label{Preliminaries} 
The purpose of this section is to collect many useful lemmas which were used in the proof of our main result. 

\begin{lemma}\label{Lemma 3.1}
\rm{(\cite[Lemma 3.2]{ZZ17})} Let $P_{t}^{\epsilon} \triangleq e^{t \Delta_{\epsilon}}$ and $f \in \mathcal{C}^{\alpha}$ for $\alpha \in \mathbb{R}$. Then for every $\delta \geq 0, \kappa > 0$ and $t > 0$, 
\begin{equation*}
\sup_{\epsilon \in (0,1)} \lVert P_{t}^{\epsilon} f \rVert_{\mathcal{C}^{\alpha + \delta - \kappa}} \lesssim t^{- \frac{\delta}{2}} \lVert f \rVert_{\mathcal{C}^{\alpha}}.
\end{equation*}
\end{lemma}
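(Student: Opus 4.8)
The plan is to reduce the statement to a frequency-localized bound via the Littlewood--Paley decomposition, following the argument of \cite[Lemma 3.2]{ZZ17}; the only ingredient specific to the present setting is the structure of $\Delta_{\epsilon}$ from Definition \ref{Definition of approximation}, and since its Fourier symbol is the scalar multiplier $-\lvert k \rvert^{2} f(\epsilon k)$, the estimate is insensitive to the MHD structure. First I would write, for each $j \geq -1$, $\Delta_{j} P_{t}^{\epsilon} f = \Delta_{j} P_{t}^{\epsilon} \tilde{\Delta}_{j} f$ with the fattened block $\tilde{\Delta}_{j} \triangleq \sum_{\lvert l - j \rvert \leq 1} \Delta_{l}$, so that $\Delta_{j} P_{t}^{\epsilon}$ acts through the Fourier multiplier $m_{j,t,\epsilon}(k) \triangleq \rho_{j}(k) e^{-t \lvert k \rvert^{2} f(\epsilon k)}$ supported in the annulus $\lvert k \rvert \sim 2^{j}$ (with $\rho_{j}$ replaced by $\chi$ in the low-frequency block $j = -1$, which is handled identically and more easily).

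The heart of the proof is the $\epsilon$-uniform kernel bound
\begin{equation*}
\sup_{\epsilon \in (0,1)} \lVert \mathcal{F}_{\mathbb{T}^{3}}^{-1}(m_{j,t,\epsilon}) \rVert_{L^{1}(\mathbb{T}^{3})} \lesssim e^{-c \bar{c}_{f} t 2^{2j}}
\end{equation*}
for an absolute constant $c > 0$, which by Young's inequality gives $\lVert \Delta_{j} P_{t}^{\epsilon} f \rVert_{L^{\infty}} \lesssim e^{-c \bar{c}_{f} t 2^{2j}} \lVert \Delta_{j} f \rVert_{L^{\infty}}$. On the support of $\rho_{j}$ there are two regimes. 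If $\max_{l} \lvert \epsilon k^{l} \rvert > L_{0}$ then $f(\epsilon k) = + \infty$ by \eqref{f} and hence $m_{j,t,\epsilon}(k) = 0$, so this part contributes nothing. If $\max_{l} \lvert \epsilon k^{l} \rvert \leq L_{0}$ then $f(\epsilon k) = \tilde{f}(\epsilon k) \geq c_{f}$ by \eqref{lower bound}, so $\lvert m_{j,t,\epsilon}(k) \rvert \leq \rho_{j}(k) e^{-\bar{c}_{f} t \lvert k \rvert^{2}}$; rescaling $k = 2^{j}\xi$ and differentiating $m_{j,t,\epsilon}$ up to a fixed order (say five), one uses the hypothesis $\lvert D^{\gamma} \tilde{f}(x) \rvert \lesssim \lvert x \rvert^{1 - \lvert \gamma \rvert} + C$ for $\lvert \gamma \rvert \leq 5$ on $\max_{l} \lvert x^{l} \rvert \leq 3 L_{0}$ together with $\partial_{k}^{\gamma}[\tilde{f}(\epsilon k)] = \epsilon^{\lvert \gamma \rvert}(D^{\gamma}\tilde{f})(\epsilon k)$ and $\epsilon < 1$ to check that all derivatives of the symbol satisfy the bounds required for a standard Mikhlin/Bernstein-type argument, uniformly in $\epsilon$, while the Gaussian factor $e^{-\bar{c}_{f} t \lvert k \rvert^{2}}$ supplies the decay $e^{-c\bar{c}_{f} t 2^{2j}}$.

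Finally I would combine this with the elementary inequality $e^{-x} \lesssim_{\delta} x^{-\delta/2}$, valid for all $x > 0$: taking $x = c\bar{c}_{f} t 2^{2j}$ gives $e^{-c\bar{c}_{f} t 2^{2j}} \lesssim (t 2^{2j})^{-\delta/2}$, hence
\begin{equation*}
2^{j(\alpha + \delta - \kappa)} \lVert \Delta_{j} P_{t}^{\epsilon} f \rVert_{L^{\infty}} \lesssim t^{-\frac{\delta}{2}} 2^{-j\kappa} \bigl( 2^{j\alpha} \lVert \Delta_{j} f \rVert_{L^{\infty}} \bigr) \lesssim t^{-\frac{\delta}{2}} 2^{-j\kappa} \lVert f \rVert_{\mathcal{C}^{\alpha}},
\end{equation*}
and taking the supremum over $j \geq -1$ with $\sup_{j \geq -1} 2^{-j\kappa} \leq 2^{\kappa} < \infty$ yields the claim. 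The main obstacle is the kernel bound of the second step, namely verifying that the mollified symbol $f(\epsilon k)$ admits $\epsilon$-uniform derivative estimates of Mikhlin type in spite of the mild singularity of $\tilde{f}$ at the origin; this is exactly where the conditions imposed on $\tilde{f}$ in Definition \ref{Definition of approximation} are needed, and it is carried out as in \cite[Lemma 3.2]{ZZ17}.
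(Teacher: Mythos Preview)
The paper does not supply its own proof of this lemma; it is quoted verbatim from \cite[Lemma~3.2]{ZZ17} in the Preliminaries section. Your reconstruction is correct and is precisely the standard Littlewood--Paley/Fourier-multiplier argument one expects for such a smoothing estimate: localize to dyadic blocks, use the uniform lower bound \eqref{lower bound} together with the derivative control on $\tilde{f}$ from Definition~\ref{Definition of approximation} to obtain an $\epsilon$-uniform $L^{1}$ kernel bound with Gaussian decay $e^{-c\bar{c}_{f}t2^{2j}}$, and then trade the exponential for the polynomial factor $t^{-\delta/2}2^{-j\delta}$.
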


\begin{lemma}\label{Lemma 3.2}
\rm{(\cite[Lemma 3.3]{ZZ17})} Let $P_{t}^{\epsilon} \triangleq e^{t \Delta_{\epsilon}}, P_{t} \triangleq e^{t \Delta}$, $f \in \mathcal{C}^{\alpha + \eta}$ for $\alpha \in \mathbb{R}$ and $\eta \in (0,1)$. Then for every $\delta \geq 0, \kappa > 0$ and $\epsilon \in (0,1)$, 
\begin{equation*}
\lVert (P_{t}^{\epsilon} - P_{t}) f \rVert_{\mathcal{C}^{\alpha + \delta - \kappa}} \lesssim \epsilon^{\eta} t^{-\frac{\delta}{2}} \lVert f \rVert_{\mathcal{C}^{\alpha + \eta}}. 
\end{equation*} 
\end{lemma}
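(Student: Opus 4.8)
The plan is to reduce the claim to a Fourier–multiplier estimate on each Littlewood–Paley block, exactly as in the proof of Lemma \ref{Lemma 3.1}. By Definition \ref{Definition of approximation} the operators $P_{t}^{\epsilon}=e^{t\Delta_{\epsilon}}$ and $P_{t}=e^{t\Delta}$ act on frequency $k\in\mathbb{Z}^{3}$ by multiplication by $e^{-\lvert k\rvert^{2}f(\epsilon k)t}$ and $e^{-\lvert k\rvert^{2}t}$ respectively, so $(P_{t}^{\epsilon}-P_{t})f$ has Fourier symbol
\begin{equation*}
m_{t,\epsilon}(k)\triangleq e^{-\lvert k\rvert^{2}f(\epsilon k)t}-e^{-\lvert k\rvert^{2}t}.
\end{equation*}
First I would write, for each $j\geq-1$, $\Delta_{j}(P_{t}^{\epsilon}-P_{t})f=\mathcal{F}_{\mathbb{T}^{3}}^{-1}(\tilde{\rho}_{j}m_{t,\epsilon})*\Delta_{j}f$ with $\tilde{\rho}_{j}$ a fattened cutoff equal to $1$ on $\mathrm{supp}\,\rho_{j}$ and supported on an annulus of size $2^{j}$, so that by Young's inequality and \eqref{Besov space},
\begin{equation*}
\lVert\Delta_{j}(P_{t}^{\epsilon}-P_{t})f\rVert_{L^{\infty}}\lesssim\lVert\mathcal{F}_{\mathbb{T}^{3}}^{-1}(\tilde{\rho}_{j}m_{t,\epsilon})\rVert_{L^{1}}\,2^{-j(\alpha+\eta)}\lVert f\rVert_{\mathcal{C}^{\alpha+\eta}}.
\end{equation*}
A routine Bernstein-type (integration-by-parts) argument, using the derivative bounds $\lvert D^{\beta}\tilde{f}(x)\rvert\lesssim\lvert x\rvert^{\lvert\beta\rvert-1}+C$ from Definition \ref{Definition of approximation} for $\lvert\beta\rvert\leq5$ to control the derivatives of $\tilde{\rho}_{j}m_{t,\epsilon}$, bounds the kernel $L^{1}$-norm by $\sup_{\lvert k\rvert\sim2^{j}}\lvert m_{t,\epsilon}(k)\rvert$ up to a constant. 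Hence it suffices to prove $\sup_{j}2^{j(\delta-\kappa-\eta)}\sup_{\lvert k\rvert\sim2^{j}}\lvert m_{t,\epsilon}(k)\rvert\lesssim\epsilon^{\eta}t^{-\delta/2}$ (for $t$ in a bounded interval, which is the only regime used).

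Next I would establish the pointwise bound on the multiplier. From $f(0)=\tilde{f}(0)=1$ (see \eqref{f}), the Hölder/derivative bounds on $\tilde{f}$ in Definition \ref{Definition of approximation}, and $\eta\in(0,1)$, the mean value theorem gives $\lvert f(\epsilon k)-1\rvert\lesssim\lvert\epsilon k\rvert^{\eta}$ whenever $\max_{j}\lvert\epsilon k^{j}\rvert\leq L_{0}$; and the lower bound \eqref{lower bound}, i.e. $\tilde{f}\geq c_{f}$, gives $\min(\lvert k\rvert^{2}f(\epsilon k)t,\lvert k\rvert^{2}t)\geq\bar{c}_{f}\lvert k\rvert^{2}t$. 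Therefore, using $\lvert e^{-a}-e^{-b}\rvert\leq\lvert a-b\rvert e^{-\min(a,b)}$,
\begin{equation*}
\lvert m_{t,\epsilon}(k)\rvert\leq\lvert k\rvert^{2}t\,\lvert f(\epsilon k)-1\rvert\,e^{-\bar{c}_{f}\lvert k\rvert^{2}t}\lesssim\epsilon^{\eta}\lvert k\rvert^{2+\eta}t\,e^{-\bar{c}_{f}\lvert k\rvert^{2}t}
\end{equation*}
on that range of $k$. On the complementary range $\max_{j}\lvert\epsilon k^{j}\rvert>L_{0}$ one has $f(\epsilon k)=+\infty$ by \eqref{f}, hence $m_{t,\epsilon}(k)=-e^{-\lvert k\rvert^{2}t}$; since $\lvert\epsilon k\rvert\gtrsim L_{0}\gtrsim1$ there, $\lvert m_{t,\epsilon}(k)\rvert=e^{-\lvert k\rvert^{2}t}\lesssim\epsilon^{\eta}\lvert k\rvert^{\eta}e^{-\lvert k\rvert^{2}t}$, which together with $\lvert m_{t,\epsilon}(k)\rvert\leq e^{-\bar{c}_{f}\lvert k\rvert^{2}t}$ is enough for that (necessarily high-frequency, $2^{j}\gtrsim L_{0}/\epsilon$) range.

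Finally I would sum over $j$. Pulling one factor $\lvert k\rvert^{2}t$ into the exponential via $x e^{-\bar{c}_{f}x}\lesssim e^{-(\bar{c}_{f}/2)x}$ gives, on the finite-$f$ range, $\lvert m_{t,\epsilon}(k)\rvert\lesssim\epsilon^{\eta}\lvert k\rvert^{\eta}e^{-(\bar{c}_{f}/2)\lvert k\rvert^{2}t}$, so
\begin{equation*}
2^{j(\delta-\kappa-\eta)}\sup_{\lvert k\rvert\sim2^{j}}\lvert m_{t,\epsilon}(k)\rvert\lesssim\epsilon^{\eta}\,2^{j(\delta-\kappa)}e^{-(\bar{c}_{f}/2)2^{2j}t}\lesssim\epsilon^{\eta}\,t^{-(\delta-\kappa)/2}\leq\epsilon^{\eta}t^{-\delta/2}
\end{equation*}
for $t$ bounded, using $y^{m}e^{-cy}\lesssim_{m,c}1$ with $y=2^{2j}t$; on the high-frequency range the extra constraint $2^{j}\gtrsim L_{0}/\epsilon$ supplies the required powers of $\epsilon$ and one concludes identically. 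The main obstacle, and the only part needing genuine care, is the interplay of the two size regimes of $f(\epsilon k)$: one must check that the truncation $f=+\infty$ for $\max_{j}\lvert\epsilon k^{j}\rvert>L_{0}$ does not spoil the estimate (it does not, precisely because those frequencies carry $2^{j}\gtrsim L_{0}/\epsilon$, converting the crude bound $e^{-\lvert k\rvert^{2}t}$ into the needed $\epsilon^{\eta}$-gain), and that the derivative hypotheses $\lvert D^{\beta}\tilde{f}(x)\rvert\lesssim\lvert x\rvert^{\lvert\beta\rvert-1}+C$ for $\lvert\beta\rvert\leq5$ genuinely control the derivatives of $\tilde{\rho}_{j}m_{t,\epsilon}$ so that the kernel $L^{1}$-norm collapses to its supremum.
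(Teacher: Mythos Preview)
The paper does not give its own proof of this lemma; it is simply quoted from \cite[Lemma 3.3]{ZZ17} in the appendix, so there is nothing to compare against here. Your sketch is the standard Fourier--multiplier argument that underlies such smoothing estimates and is essentially what one finds in \cite{ZZ17}: localize to a Littlewood--Paley shell, reduce to a pointwise bound on the symbol $m_{t,\epsilon}(k)=e^{-\lvert k\rvert^{2}f(\epsilon k)t}-e^{-\lvert k\rvert^{2}t}$, and extract the $\epsilon^{\eta}$ gain from $\lvert f(\epsilon k)-1\rvert\lesssim\lvert\epsilon k\rvert^{\eta}$ together with the lower bound \eqref{lower bound}. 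The one step you correctly flag as needing care---that the $L^{1}$ norm of $\mathcal{F}^{-1}(\tilde{\rho}_{j}m_{t,\epsilon})$ really is controlled by the shell supremum of $m_{t,\epsilon}$---does require the derivative bounds on $\tilde{f}$ from Definition \ref{Definition of approximation} (note the paper writes $\lvert D^{k}\tilde{f}(x)\rvert\lesssim\frac{1}{\lvert x\rvert^{\lvert k\rvert}-1}+C$, not $\lvert x\rvert^{\lvert\beta\rvert-1}+C$ as you have it), and the handling of the truncation region $\max_{j}\lvert\epsilon k^{j}\rvert>L_{0}$ is exactly as you describe.
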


\begin{lemma}\label{Lemma 3.3}
\rm{(\cite[Lemma 3.7]{ZZ17})} Let $f \in \mathcal{C}^{\alpha + 1}$ for some $\alpha \in \mathbb{R}$. Then for every $\kappa > 0$, 
\begin{equation*}
\sup_{\epsilon \in (0,1)} \lVert D_{j}^{\epsilon} f \rVert_{\mathcal{C}^{\alpha - \kappa}} \lesssim \lVert f \rVert_{\mathcal{C}^{\alpha + 1}}. 
\end{equation*} 
\end{lemma}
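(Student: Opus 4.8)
\textbf{Proof plan for Lemma \ref{Lemma 3.3}.}

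The plan is to argue at the level of Littlewood-Paley blocks, using the explicit Fourier symbol of $D_j^\epsilon$ supplied by Definition \ref{Definition of approximation}. Recall that $\widehat{D_j^\epsilon f}(k) = k^j g(\epsilon k^j)\hat f(k)$, and that $g$ is bounded: indeed from \eqref{g} and \eqref{limit of g} we have $\lim_{x\searrow 0} g(x)=i$, $g$ is continuous away from the origin, and $|g(x)|=\frac{|e^{iax}-e^{-ibx}|}{(a+b)|x|}\le \frac{2}{(a+b)|x|}$ for large $|x|$, so $\sup_{x} |g(x)| \triangleq C_g < \infty$ (this boundedness is stated right after \eqref{limit of g}). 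Hence, for each frequency $k$,
\begin{equation*}
|\widehat{D_j^\epsilon f}(k)| = |k^j|\,|g(\epsilon k^j)|\,|\hat f(k)| \le C_g |k|\,|\hat f(k)| \qquad \text{uniformly in } \epsilon\in(0,1).
\end{equation*}
So $D_j^\epsilon$ is, frequency by frequency, bounded by a constant times the operator $\Lambda \triangleq (-\Delta)^{1/2}$ (Fourier symbol $|k|$), with a constant that does not depend on $\epsilon$.

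First I would fix $q\ge -1$ and estimate $\|\Delta_q D_j^\epsilon f\|_{L^\infty(\mathbb{T}^3)}$. Writing $\Delta_q D_j^\epsilon f = \mathcal{F}^{-1}\big(\rho_q(k) k^j g(\epsilon k^j)\hat f(k)\big)$ and inserting a fattened cutoff $\tilde\rho_q$ that equals $1$ on the support of $\rho_q$ (so $\Delta_q = \tilde\Delta_q \Delta_q$ where $\tilde\Delta_q$ has symbol $\tilde\rho_q$), I can write $\Delta_q D_j^\epsilon f = m_q^\epsilon \ast \Delta_q f$ where $m_q^\epsilon \triangleq \mathcal{F}^{-1}(\rho_q(k) k^j g(\epsilon k^j))$. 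Young's inequality then gives $\|\Delta_q D_j^\epsilon f\|_{L^\infty}\le \|m_q^\epsilon\|_{L^1}\|\Delta_q f\|_{L^\infty}$. The key point is the uniform bound $\sup_{\epsilon\in(0,1)}\|m_q^\epsilon\|_{L^1}\lesssim 2^{q}$: on the annulus $\{|k|\approx 2^q\}$ the symbol $k^j g(\epsilon k^j)$ has size $\lesssim 2^q$ by the boundedness of $g$, while its derivatives in $k$ pick up the extra factors $|g'|$; using that $g(x)=\frac{e^{iax}-e^{-ibx}}{(a+b)x}$ is smooth away from $0$ with $|x^l g^{(l)}(x)|\lesssim 1$ for all $l$, one gets $|\partial_k^\alpha (k^j g(\epsilon k^j))|\lesssim 2^{q(1-|\alpha|)}$ uniformly in $\epsilon$ on the annulus. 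A standard nonstationary-phase / integration-by-parts argument (or simply a rescaled Bernstein-type estimate, exactly as used for the classical $\Lambda$) then yields $\|m_q^\epsilon\|_{L^1}\lesssim 2^q$ with a constant independent of $q$ and of $\epsilon$. An alternative, slightly cleaner route: compare $m_q^\epsilon$ with the corresponding kernel for $D_j\Delta_q$ and absorb the bounded multiplier $g(\epsilon k^j)$ as an $L^1$-bounded-kernel convolution operator on the dilated block — the boundedness of $g$ on $\mathbb{R}$ translates into a uniform bound on the associated kernel in $L^1(\mathbb{T})$ after rescaling by $2^q$.

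Given $\|\Delta_q D_j^\epsilon f\|_{L^\infty}\lesssim 2^q \|\Delta_q f\|_{L^\infty}$ uniformly in $\epsilon\in(0,1)$, the conclusion follows from the definition \eqref{Besov space} of $\mathcal{C}^\beta = B^\beta_{\infty,\infty}$. Indeed, for any $\kappa>0$,
\begin{align*}
\|D_j^\epsilon f\|_{\mathcal{C}^{\alpha-\kappa}}
&= \sup_{q\ge -1} 2^{q(\alpha-\kappa)}\|\Delta_q D_j^\epsilon f\|_{L^\infty}
\lesssim \sup_{q\ge -1} 2^{q(\alpha-\kappa)} 2^q \|\Delta_q f\|_{L^\infty}\\
&= \sup_{q\ge -1} 2^{-q\kappa}\, 2^{q(\alpha+1)}\|\Delta_q f\|_{L^\infty}
\le \sup_{q\ge -1} 2^{q(\alpha+1)}\|\Delta_q f\|_{L^\infty} = \|f\|_{\mathcal{C}^{\alpha+1}},
\end{align*}
since $2^{-q\kappa}\le 2^\kappa$ for $q\ge -1$ and $\kappa>0$ (absorbed in the implicit constant), and the bound is uniform over $\epsilon\in(0,1)$. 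This gives $\sup_{\epsilon\in(0,1)}\|D_j^\epsilon f\|_{\mathcal{C}^{\alpha-\kappa}}\lesssim \|f\|_{\mathcal{C}^{\alpha+1}}$, as claimed. The main obstacle is the uniform-in-$\epsilon$ kernel bound $\sup_\epsilon\|m_q^\epsilon\|_{L^1}\lesssim 2^q$: one must check that the extra oscillatory factor $g(\epsilon\,\cdot)$ does not degrade the decay of $m_q^\epsilon$ as $\epsilon\to 0$ — this is exactly where the hypotheses on $g$ (boundedness together with the scaling bounds on its derivatives, both readily read off the closed form \eqref{g}) are used, and the reason why the loss of $\kappa$ derivatives, though not strictly needed here, is harmless.
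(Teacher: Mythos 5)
Your reduction to a uniform-in-$\epsilon$ bound $\sup_{\epsilon\in(0,1)}\|m_q^\epsilon\|_{L^1}\lesssim 2^q$ on the localized kernel is exactly the right strategy, and once that bound is secured your final display finishes the proof (in fact with $\kappa=0$). The gap is in how you try to secure it. The estimate $|x^l g^{(l)}(x)|\lesssim 1$ is false for $l\ge 2$: writing $g(x)=h(x)/x$ with $h(x)=(e^{iax}-e^{-ibx})/(a+b)$, every derivative of $g$ still decays only like $1/|x|$ at infinity (the leading contribution comes from differentiating $h$ each time, not $1/x$), so $|x^lg^{(l)}(x)|$ actually grows like $|x|^{l-1}$. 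Consequently the Mikhlin-type bound $|\partial_k^\alpha(k^jg(\epsilon k^j))|\lesssim 2^{q(1-|\alpha|)}$ does \emph{not} hold uniformly in $\epsilon\in(0,1)$ on the annulus: the identity $k^jg(\epsilon k^j)=\frac{e^{ia\epsilon k^j}-e^{-ib\epsilon k^j}}{(a+b)\epsilon}$ gives $\partial_{k^j}^l(k^jg(\epsilon k^j))=\frac{(ia)^l\epsilon^{l-1}e^{ia\epsilon k^j}-(-ib)^l\epsilon^{l-1}e^{-ib\epsilon k^j}}{a+b}$, which for $\epsilon$ of order one is of size $\sim 1$, not $\sim 2^{q(1-l)}$, so the integration-by-parts argument does not close. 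The ``alternative route'' also does not hold as stated: boundedness of the multiplier $g(\epsilon\,\cdot)$ does not, by itself, imply that the associated convolution kernel lies in $L^1$.

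The identity $k^jg(\epsilon k^j)=\frac{e^{ia\epsilon k^j}-e^{-ib\epsilon k^j}}{(a+b)\epsilon}$ that just defeated the symbol estimate is, however, precisely the structure that repairs the argument. Writing $\tilde K_q\triangleq\mathcal{F}_{\mathbb{T}^3}^{-1}(\tilde\rho_q)$, one has $m_q^\epsilon(x)=\frac{\tilde K_q(x+a\epsilon e_j)-\tilde K_q(x-b\epsilon e_j)}{(a+b)\epsilon}$, a difference quotient of a fixed smooth block kernel, and the elementary estimate $\|f(\cdot+h)-f\|_{L^1}\le |h|\,\|\nabla f\|_{L^1}$ yields $\|m_q^\epsilon\|_{L^1}\le\|\partial_{x^j}\tilde K_q\|_{L^1}\lesssim 2^q$ uniformly in $\epsilon\in(0,1)$. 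Equivalently, on the physical side $D_j^\epsilon f(x)=\frac{f(x+a\epsilon e_j)-f(x-b\epsilon e_j)}{(a+b)\epsilon}$ (cf.\ \eqref{[Equation (1.3i)][ZZ17]}), so the mean value theorem and Bernstein's inequality give $\|D_j^\epsilon\Delta_qf\|_{L^\infty}\le\|\partial_{x^j}\Delta_qf\|_{L^\infty}\lesssim 2^q\|\Delta_qf\|_{L^\infty}$ directly, without any symbol calculus. Either route plugs into your last chain of inequalities and completes the proof.
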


 \begin{lemma}\label{Lemma 3.4}
\rm{(\cite[Lemma A.2]{GIP15}, \cite[Lemma 3.1]{ZZ15})} Let $1 \leq p_{1} \leq p_{2} \leq \infty, d \in \mathbb{N}$ and $1 \leq q_{1} \leq q_{2} \leq \infty$, and $\alpha \in \mathbb{R}$. Then $B_{p_{1}, q_{1}}^{\alpha} (\mathbb{T}^{d})$ is continuously embedded in $B_{p_{2}, q_{2}}^{\alpha - d (\frac{1}{p_{1}} - \frac{1}{p_{2}})}(\mathbb{T}^{d})$. 
\end{lemma}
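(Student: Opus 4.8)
The plan is to deduce the embedding from a Bernstein-type inequality for the Littlewood--Paley blocks together with the elementary inclusion $\ell^{q_1} \hookrightarrow \ell^{q_2}$ valid for $q_1 \leq q_2$. Recall from \eqref{Besov space} that $\Delta_j f = \mathcal{F}_{\mathbb{T}^d}^{-1}(\rho_j \mathcal{F}_{\mathbb{T}^d} f)$ for $j \geq 0$ and $\Delta_{-1} f = \mathcal{F}_{\mathbb{T}^d}^{-1}(\chi \mathcal{F}_{\mathbb{T}^d} f)$, so each $\Delta_j f$ has Fourier support contained in an annulus (a ball when $j = -1$) of radius comparable to $2^j$. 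The first step is to establish that for all $1 \leq p_1 \leq p_2 \leq \infty$ there is a constant $C$, independent of $j$ and $f$, with
\begin{equation*}
\lVert \Delta_j f \rVert_{L^{p_2}(\mathbb{T}^d)} \leq C 2^{jd(\frac{1}{p_1} - \frac{1}{p_2})} \lVert \Delta_j f \rVert_{L^{p_1}(\mathbb{T}^d)}, \qquad j \geq -1.
\end{equation*}
Granted this, multiplying by $2^{j(\alpha - d(\frac{1}{p_1} - \frac{1}{p_2}))}$ yields $2^{j(\alpha - d(\frac{1}{p_1} - \frac{1}{p_2}))} \lVert \Delta_j f \rVert_{L^{p_2}} \lesssim 2^{j\alpha} \lVert \Delta_j f \rVert_{L^{p_1}}$, and then taking the $\ell^{q_2}$-norm over $j \geq -1$ and using $\lVert \cdot \rVert_{\ell^{q_2}} \leq \lVert \cdot \rVert_{\ell^{q_1}}$ gives $\lVert f \rVert_{B_{p_2, q_2}^{\alpha - d(\frac{1}{p_1} - \frac{1}{p_2})}(\mathbb{T}^d)} \lesssim \lVert f \rVert_{B_{p_1, q_1}^{\alpha}(\mathbb{T}^d)}$, which is exactly the asserted continuous embedding.

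For the Bernstein inequality I would argue by convolution. Since $\rho_j = \rho(2^{-j}\cdot)$ is a dilate of a fixed function supported in an annulus (and $\chi$ is supported in a ball), we may write $\Delta_j f = K_j * f$ where, using the periodic exponentials $e_k$ from Subsection \ref{Subsection 2.2}, $K_j(x) = \sum_{k \in \mathbb{Z}^d} \rho(2^{-j}k) e_k(x)$ is a smooth periodic kernel which is the periodization of the Euclidean kernel $2^{jd} G(2^j \cdot)$ for a fixed Schwartz function $G$ (the Euclidean inverse Fourier transform of $\rho$). A Poisson-summation argument shows $\lVert K_j \rVert_{L^r(\mathbb{T}^d)} \lesssim 2^{jd(1 - \frac{1}{r})}$ uniformly in $j \geq -1$ for every $r \in [1,\infty]$, since the $L^r$ mass of $2^{jd} G(2^j \cdot)$ scales like $2^{jd(1 - 1/r)}$ and the periodization error is harmless. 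Applying Young's convolution inequality with $\frac{1}{r} = 1 - \frac{1}{p_1} + \frac{1}{p_2} \in [0,1]$ (this membership uses $p_1 \leq p_2$) gives $\lVert \Delta_j f \rVert_{L^{p_2}} \leq \lVert K_j \rVert_{L^r} \lVert f \rVert_{L^{p_1}} \lesssim 2^{jd(\frac{1}{p_1} - \frac{1}{p_2})} \lVert f \rVert_{L^{p_1}}$; replacing $f$ by $\Delta_j f$ and writing $\Delta_j \Delta_j f = \tilde{K}_j * \Delta_j f$ for a fattened kernel $\tilde{K}_j$ with the same scaling (so that the Fourier supports still match) upgrades $\lVert f \rVert_{L^{p_1}}$ on the right-hand side to $\lVert \Delta_j f \rVert_{L^{p_1}}$, which is the displayed inequality.

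The main technical obstacle is the uniform-in-$j$ kernel bound $\lVert K_j \rVert_{L^r(\mathbb{T}^d)} \lesssim 2^{jd(1 - \frac{1}{r})}$, since $K_j$ is a periodization and one must check that the periodization does not spoil the scaling: for $2^j \gtrsim 1$ the Schwartz decay of $G$ makes the contribution of the nonzero lattice translates exponentially small, while the finitely many remaining small-$j$ blocks (including $j = -1$) are trigonometric polynomials of bounded degree and uniformly bounded coefficients and are controlled by a direct estimate. Everything else — the weight shuffling and the two $\ell^q$ manipulations — is routine. In the write-up I would either carry out this short Poisson-summation paragraph or simply cite \cite[Lemma A.2]{GIP15} and \cite[Lemma 3.1]{ZZ15} for the kernel scaling, since the lemma is stated there in precisely this form.
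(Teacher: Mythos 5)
Your proof is correct and is the standard Bernstein-plus-$\ell^{q}$-nesting argument found in the cited sources; the paper itself gives no proof, simply citing \cite[Lemma A.2]{GIP15} and \cite[Lemma 3.1]{ZZ15}, so there is nothing to compare against. The only point worth carrying out carefully in a write-up is the uniform kernel bound $\lVert K_j \rVert_{L^r(\mathbb{T}^d)} \lesssim 2^{jd(1-1/r)}$ via Poisson summation, which you have already flagged as the crux.
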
 

\begin{lemma}\label{Lemma 3.5}
\rm{(\cite[Lemma 2.4]{GIP15}, \cite[Lemma 3.3]{ZZ15})} Suppose $\alpha \in (0, 1), \beta, \gamma \in \mathbb{R}$ satisfy $\alpha + \beta + \gamma > 0$ and $\beta + \gamma < 0$. Then for smooth $f, g, h$, the tri-linear operator 
\begin{equation*}
C(f,g,h) \triangleq \pi_{0} ( \pi_{<} (f,g), h) - f\pi_{0} (g,h) 
\end{equation*} 
satisfies 
\begin{equation*}
\lVert C(f,g,h) \rVert_{\mathcal{C}^{\alpha + \beta + \gamma}} \lesssim \lVert f \rVert_{\mathcal{C}^{\alpha}} \lVert g \rVert_{\mathcal{C}^{\beta}} \lVert h \rVert_{\mathcal{C}^{\gamma}}, 
\end{equation*} 
and thus $C$ can be uniquely extended to a bounded tri-linear operator in $L^{3} ( \mathcal{C}^{\alpha}(\mathbb{T}^{3}) \times \mathcal{C}^{\beta}(\mathbb{T}^{3}) \times \mathcal{C}^{\gamma}(\mathbb{T}^{3}), \mathcal{C}^{\alpha + \beta + \gamma}(\mathbb{T}^{3}))$. 
\end{lemma}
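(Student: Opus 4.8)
\textbf{Proof plan for Theorem \ref{Theorem 1.2}.}

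The plan is to follow the architecture of the construction in Subsection \ref{Subsection 2.1} together with the convergence results of Subsection \ref{Subsection 2.2}, and combine them exactly in the way sketched in Subsection \ref{Subsection 2.3}. The strategy has three layers. First, one fixes the functional-analytic setup: choose $\delta_{0}\in(0,\frac{1}{2})$, $z\in(\frac{1}{2},\frac{1}{2}+\delta_{0})$, then $\delta,\kappa,\beta,b_{1}$ obeying \eqref{[Equation (3.2g)][ZZ17]}, \eqref{[Equation (3.3c)][ZZ17]}, \eqref{beta}, \eqref{b1}, and sets up the stopping times $\tau_{L},\bar\rho_{L}^{\epsilon}$ from \eqref{estimate 104} and $\tau_{L}^{\epsilon},\rho_{L}^{\epsilon}$ from \eqref{estimate 290}. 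The key structural objects are the decompositions $\bar y^{\epsilon}=\sum_{i=1}^{4}\bar y_{i}^{\epsilon}$ and $y^{\epsilon}=\sum_{i=1}^{4}y_{i}^{\epsilon}$, the paracontrolled ans\"atze \eqref{paracontrolled ansatz u}--\eqref{paracontrolled ansatz b}, and the stochastic data bundles $\bar{\mathbb{Z}}(W^{\epsilon})$, $C_{W}^{\epsilon}(T)$, $\bar C_{W}^{\epsilon}(T)$.

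The second layer is local well-posedness of both \eqref{MHD} and \eqref{MHD approximation}, uniformly in $\epsilon$. For \eqref{MHD}, this is already assembled in Subsection \ref{Subsection 2.1}: Propositions \ref{[Lemma 3.9][ZZ17]} and \ref{Proposition on F} give the required bounds on $\bar\phi_{u}^{\epsilon,\sharp},\bar\phi_{b}^{\epsilon,\sharp},\bar F_{u}^{\epsilon},\bar F_{b}^{\epsilon}$, Proposition \ref{[Lemma 3.10][ZZ17]} yields the $\epsilon$-independent existence time $T_{0}$ and the estimates \eqref{[Equation (3.11)][ZZ17]}--\eqref{[Equation (3.17a)][ZZ17]}, and Bihari's inequality (Lemma \ref{Lemma 3.14}) closes the fixed point. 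One then feeds \eqref{[Equation (3.18)][ZZ17]} back in, shows the solution map $(y_{0},\bar{\mathbb Z}(W^{\epsilon}))\mapsto\bar y_{4}^{\epsilon}$ is locally Lipschitz into $C([0,T_{0}];\mathcal C^{-z})$, and invokes the standard maximal-existence extension argument (as in \cite[Proof of Theorem 3.12]{ZZ15}) to obtain $((u,b),\tau)$ with $\sup_{t\in[0,\tau)}\lVert(u,b)(t)\rVert_{\mathcal C^{-z}}=+\infty$. The parallel construction for \eqref{MHD approximation} is carried out with the $32$ drift terms already built into \eqref{[Equation (3.22ab)][ZZ17]}--\eqref{[Equation (3.22ag)][ZZ17]}; crucially the renormalization constants $C_{k,u}^{\epsilon,ii_{1}j}$ etc.\ are \emph{chosen} so that the Wick products and paraproducts appearing there have the same regularities as in the unapproximated system, so the fixed-point machinery transfers verbatim, giving $(u^{\epsilon},b^{\epsilon})$ with its own maximal time.

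The third layer is the convergence itself. From Subsection \ref{Subsection 2.2} one has \eqref{first convergence}--\eqref{sixth convergence}: along $\epsilon_{k}=2^{-k}$ the low-order building blocks converge a.s.\ in the stated spaces, hence $\sup_{k}\bar C_{W}^{\epsilon_{k}}(T_{0})<\infty$ and $\delta C_{W}^{\epsilon}$ (from \eqref{[Equation (3.25f)][ZZ17]}) tends to zero in probability by the Chebyshev/Borel--Cantelli argument of \eqref{[Equation (3.19)][ZZ17]}. One then splits $\lVert y^{\epsilon}-y\rVert_{\mathcal C^{-z}}\le\lVert y^{\epsilon}-\bar y^{\epsilon}\rVert_{\mathcal C^{-z}}+\lVert\bar y^{\epsilon}-y\rVert_{\mathcal C^{-z}}$ as in \eqref{estimate 38}; the second term goes to $0$ on $[0,\tau_{L}\wedge\bar\rho_{L_{1}}^{\epsilon}]$ by \eqref{[Equation (3.21a)][ZZ17]}, and the first is handled term by term: the $y_{1}$ difference via \eqref{estimate 303}, the $y_{2}$ difference via \eqref{estimate 295}--\eqref{estimate 296} using the semigroup comparison Lemmas \ref{Lemma 3.1}, \ref{Lemma 3.2}, \ref{Lemma 3.3}, \ref{Lemma 3.15}, the $y_{3}$ difference via \eqref{estimate 301}--\eqref{estimate 302}, and the $y_{4}$ difference via the Lipschitz estimate \eqref{[Equation (3.25e)][ZZ17]}. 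Intersecting with all four stopping times and letting $L_{2},L_{3}\nearrow\infty$ (using \eqref{estimate 104}, \eqref{estimate 290}) upgrades this to $\sup_{t\in[0,\tau_{L}]}\lVert(u^{\epsilon},b^{\epsilon})(t)-(u,b)(t)\rVert_{\mathcal C^{-z}}\to0$ in probability, with $\tau_{L}\to\tau$ as $L\to\infty$, which is exactly the assertion.

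The main obstacle is not the soft PDE/probabilistic packaging above but the hard analytic input it depends on, namely the a.s.\ convergence \eqref{first convergence}--\eqref{sixth convergence} of the renormalized stochastic objects. The genuinely new difficulties there, relative to the Navier--Stokes case of \cite{ZZ17}, are: (i) guessing the correct $32$ drift terms and the associated renormalization constants $C_{k,u}^{\epsilon,ii_{1}j},\tilde C_{k,u}^{\epsilon,ii_{1}j},\bar C_{k,u}^{\epsilon,ii_{1}j}$ so that the near-cancellations used in Subsection \ref{Subsection 2.2} actually occur; (ii) the lack of the $k_{1}\leftrightarrow k_{2}$ symmetry available for Navier--Stokes, which forces one to combine \emph{triples} of chaos terms (e.g.\ $I_{q,t}^{1}+I_{q,t}^{2}+I_{q,t}^{3}$, or the strategic pairings in Remarks \ref{another remark}, \ref{Remark 2.3}, \ref{Remark 2.6}) before a usable symmetry reappears; and (iii) the proliferation of $h_{u},h_{b}$ weights and sign-discrepant Leray-projection products, which means the ``coupled renormalization'' technique of \cite{Y19a} must be applied far more delicately than a naive term-by-term estimate would suggest. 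Once those estimates are in hand, the Gaussian hypercontractivity plus Besov embedding (Lemma \ref{Lemma 3.4}) route to a.s.\ convergence, and the assembly into Theorem \ref{Theorem 1.2}, are routine.
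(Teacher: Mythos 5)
Your proposal does not address the statement it was asked about. The statement is Lemma~\ref{Lemma 3.5}, the tri-linear commutator estimate
\[
C(f,g,h) \triangleq \pi_{0}(\pi_{<}(f,g),h) - f\,\pi_{0}(g,h),\qquad \lVert C(f,g,h)\rVert_{\mathcal{C}^{\alpha+\beta+\gamma}} \lesssim \lVert f\rVert_{\mathcal{C}^{\alpha}}\lVert g\rVert_{\mathcal{C}^{\beta}}\lVert h\rVert_{\mathcal{C}^{\gamma}},
\]
which is a cited harmonic-analysis result (from \cite[Lemma 2.4]{GIP15} and \cite[Lemma 3.3]{ZZ15}) stated without proof in the present paper. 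What you have written is instead a high-level proof plan for Theorem~\ref{Theorem 1.2}, the main well-posedness-and-convergence theorem. These are entirely different claims, and nothing in your proposal bears on the commutator bound.

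A proof of Lemma~\ref{Lemma 3.5} would have to operate at the Littlewood--Paley level: decompose $\pi_{0}(\pi_{<}(f,g),h) - f\pi_{0}(g,h)$ into blocks $\Delta_{j}$, exploit that $\pi_{<}(f,g)$ has its $j$-th block spectrally localized in an annulus of radius $\sim 2^{j}$ so that $\pi_{0}(\pi_{<}(f,g),h)$ is a sum over comparable frequencies, and then show that the commutator with the low-frequency factor $f$ gains the additional regularity $\alpha$ by writing $S_{j-1}f(x) - f(x) = O(2^{-j\alpha}\lVert f\rVert_{\mathcal{C}^{\alpha}})$ and summing the resulting geometric series under the hypotheses $\alpha+\beta+\gamma>0$ and $\beta+\gamma<0$ (the latter ensuring that the ``diagonal'' sum converges; the former that the resulting object lands in $\mathcal{C}^{\alpha+\beta+\gamma}$). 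None of this appears in your write-up, which discusses the fixed-point construction of Subsection~\ref{Subsection 2.1}, the stochastic convergence estimates of Subsection~\ref{Subsection 2.2}, and the stopping-time assembly of Subsection~\ref{Subsection 2.3}. You should replace your submission with an argument that actually proves the commutator estimate, or, if the intent is to treat it as a black box imported from \cite{GIP15,ZZ15}, say so explicitly and briefly justify why the hypotheses on $\alpha,\beta,\gamma$ match the cited sources.
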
 

\begin{lemma}\label{Lemma 3.6}
\rm{(\cite[Lemma 3.4]{ZZ15})} Let $\mathcal{P}$ be the Leray projection, $f \in \mathcal{C}^{\alpha}(\mathbb{T}^{3})$, $g \in \mathcal{C}^{\beta}(\mathbb{T}^{3})$ for $\alpha < 1$ and $\beta \in \mathbb{R}$. Then for every $k, l \in \{1,2,3\}$, 
\begin{equation*}
\lVert \mathcal{P}^{kl} \pi_{<} (f,g) - \pi_{<} (f, \mathcal{P}^{kl} g ) \rVert_{\mathcal{C}^{\alpha + \beta}} \lesssim \lVert f \rVert_{\mathcal{C}^{\alpha}} \lVert g \rVert_{\mathcal{C}^{\beta}}. 
\end{equation*} 
\end{lemma}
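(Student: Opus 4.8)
\textbf{Proof proposal for Lemma \ref{Lemma 3.6}.}
The plan is to run the classical commutator argument for a paraproduct against a zeroth-order Fourier multiplier, in the same spirit as the proof of Lemma \ref{Lemma 3.5}. Write $m(\xi) \triangleq \hat{\mathcal{P}}^{kl}(\xi) = \delta_{kl} - \frac{\xi^{k}\xi^{l}}{\lvert \xi \rvert^{2}}$, which is homogeneous of degree $0$ and smooth on $\mathbb{R}^{3}\setminus\{0\}$ (equivalently on $\mathbb{Z}^{3}\setminus\{0\}$), so that $\mathcal{P}^{kl} = m(D)$. Using $\pi_{<}(f,g) = \sum_{j} S_{j}f\,\Delta_{j}g$ and linearity, the object to estimate is $\sum_{j}\bigl(m(D)(S_{j}f\,\Delta_{j}g) - S_{j}f\,m(D)\Delta_{j}g\bigr)$. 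The first key point is spectral localization: $S_{j}f$ is Fourier-supported in a ball of radius $\lesssim 2^{j}$ while $\Delta_{j}g$ and $m(D)\Delta_{j}g$ are supported in an annulus $\sim 2^{j}$, so both $S_{j}f\,\Delta_{j}g$ and $S_{j}f\,m(D)\Delta_{j}g$ are spectrally supported in a fixed annulus $\{c_{1}2^{j}\le\lvert\xi\rvert\le c_{2}2^{j}\}$. Choosing a smooth radial bump $\tilde{\rho}$ supported in an annulus and vanishing near the origin, with $\tilde{\rho}_{j}(\xi)\triangleq\tilde{\rho}(2^{-j}\xi)$ equal to $1$ on that annulus, we may replace $m(D)$ by $m_{j}(D)$ with $m_{j}\triangleq m\,\tilde{\rho}_{j}$ in both occurrences, so the $j$-th summand equals the commutator $[m_{j}(D),S_{j}f]\Delta_{j}g$, which is again spectrally supported in an annulus $\sim 2^{j}$.

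Second, I would record the kernel bounds for $m_{j}(D)$. Since $m$ is homogeneous of degree $0$, one has $m_{j}(\xi) = (m\tilde{\rho})(2^{-j}\xi)$, hence the inverse Fourier transform $K_{j}$ satisfies $K_{j}(x) = 2^{3j}K(2^{j}x)$ with $K\triangleq\mathcal{F}^{-1}(m\tilde{\rho})$ a fixed Schwartz function, because $m\tilde{\rho}$ is smooth and compactly supported away from the origin. Consequently $\lVert K_{j}\rVert_{L^{1}}\lesssim 1$ and $\int\lvert z\rvert\lvert K_{j}(z)\rvert\,dz\lesssim 2^{-j}$, uniformly in $j$; on $\mathbb{T}^{3}$ the same bounds hold for the periodized kernel via Poisson summation, and the finitely many low-frequency blocks (where one is not bounded away from the origin) are handled trivially.

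Third, the pointwise commutator estimate: writing
\[
[m_{j}(D),S_{j}f]\Delta_{j}g\,(x) = \int K_{j}(x-y)\bigl(S_{j}f(y)-S_{j}f(x)\bigr)\Delta_{j}g(y)\,dy
\]
and using $\lvert S_{j}f(y)-S_{j}f(x)\rvert\le\lVert\nabla S_{j}f\rVert_{L^{\infty}}\lvert x-y\rvert$, one gets $\lVert[m_{j}(D),S_{j}f]\Delta_{j}g\rVert_{L^{\infty}}\lesssim 2^{-j}\lVert\nabla S_{j}f\rVert_{L^{\infty}}\lVert\Delta_{j}g\rVert_{L^{\infty}}$. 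The hypothesis $\alpha<1$ enters precisely in summing the geometric series $\lVert\nabla S_{j}f\rVert_{L^{\infty}}\le\sum_{i\le j-1}2^{i}\lVert\Delta_{i}f\rVert_{L^{\infty}}\lesssim\bigl(\sum_{i\le j-1}2^{i(1-\alpha)}\bigr)\lVert f\rVert_{\mathcal{C}^{\alpha}}\lesssim 2^{j(1-\alpha)}\lVert f\rVert_{\mathcal{C}^{\alpha}}$, while $\lVert\Delta_{j}g\rVert_{L^{\infty}}\lesssim 2^{-j\beta}\lVert g\rVert_{\mathcal{C}^{\beta}}$. Combining, the $j$-th summand has $L^{\infty}$ norm $\lesssim 2^{-j(\alpha+\beta)}\lVert f\rVert_{\mathcal{C}^{\alpha}}\lVert g\rVert_{\mathcal{C}^{\beta}}$.

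Finally, since each summand is spectrally supported in a dyadic annulus $\sim 2^{j}$ and has $L^{\infty}$ norm decaying like $2^{-j(\alpha+\beta)}$, the standard criterion for membership in $B^{\alpha+\beta}_{\infty,\infty}=\mathcal{C}^{\alpha+\beta}$ of a series of annulus-supported pieces (valid for any real exponent, because the pieces are supported in annuli rather than balls) yields $\lVert\mathcal{P}^{kl}\pi_{<}(f,g)-\pi_{<}(f,\mathcal{P}^{kl}g)\rVert_{\mathcal{C}^{\alpha+\beta}}\lesssim\lVert f\rVert_{\mathcal{C}^{\alpha}}\lVert g\rVert_{\mathcal{C}^{\beta}}$, as desired. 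I expect the only genuinely delicate points to be the careful bookkeeping of the spectral supports and the uniform kernel decay $\int\lvert z\rvert\lvert K_{j}(z)\rvert\,dz\lesssim 2^{-j}$ coming from degree-$0$ homogeneity and smoothness away from the origin; everything else is routine Littlewood--Paley algebra.
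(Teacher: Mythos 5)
Your proposal is correct, and since the paper simply cites this statement from Zhu--Zhu \cite[Lemma 3.4]{ZZ15} without reproducing a proof, the natural comparison is with the standard argument that result rests on. Your write-up is precisely that argument: exploit the spectral localization of each summand $S_{j}f\,\Delta_{j}g$ in an annulus of radius $\sim 2^{j}$ to replace the zeroth-order symbol $\hat{\mathcal{P}}^{kl}$ by a localized version $m_{j}$; obtain the kernel bound $\int\lvert z\rvert\lvert K_{j}(z)\rvert\,dz\lesssim 2^{-j}$ from degree-$0$ homogeneity and smoothness of $m$ away from the origin; convert the commutator into the Calder\'on--Coifman--Meyer-type integral $\int K_{j}(x-y)(S_{j}f(y)-S_{j}f(x))\Delta_{j}g(y)\,dy$ and invoke the mean value bound $\lvert S_{j}f(y)-S_{j}f(x)\rvert\le\lVert\nabla S_{j}f\rVert_{L^{\infty}}\lvert x-y\rvert$; then sum with the Bernstein estimate $\lVert\nabla S_{j}f\rVert_{L^{\infty}}\lesssim 2^{j(1-\alpha)}\lVert f\rVert_{\mathcal{C}^{\alpha}}$, which is where $\alpha<1$ enters, and conclude via the annulus-localized characterization of $B^{\alpha+\beta}_{\infty,\infty}$ (valid for all real exponents). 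The bookkeeping you flag as delicate — the uniform choice of the bump $\tilde{\rho}$ so that $m_{j}=m$ on both relevant annuli, the finitely many low-frequency blocks, and the periodization on $\mathbb{T}^{3}$ — is handled correctly, and the overall chain of estimates assembles to $2^{-j(\alpha+\beta)}\lVert f\rVert_{\mathcal{C}^{\alpha}}\lVert g\rVert_{\mathcal{C}^{\beta}}$ as needed. I see no gap; this is a faithful reconstruction of the cited lemma's proof.
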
 

\begin{lemma}\label{Lemma 3.7}
\rm{(\cite[Lemma 3.6]{ZZ15})} Let $\mathcal{P}$ be the Leray projection and $f \in \mathcal{C}^{\alpha}(\mathbb{T}^{d})$ for $\alpha \in \mathbb{R}$. Then for every $k, l \in \{1,2,3 \}$, 
\begin{equation*}
\lVert \mathcal{P}^{kl} f \rVert_{\mathcal{C}^{\alpha}} \lesssim \lVert f \rVert_{\mathcal{C}^{\alpha}}. 
\end{equation*} 
\end{lemma}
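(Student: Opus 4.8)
\textbf{Proof proposal for Lemma \ref{Lemma 3.7}.} The statement is a special case of a Fourier-multiplier estimate on H\"older--Besov spaces, so the plan is to combine the Littlewood--Paley characterization of $\mathcal{C}^{\alpha}$ with the degree-zero homogeneity of the symbol of $\mathcal{P}$; the result is recorded as \cite[Lemma 3.6]{ZZ15}, and the argument runs as follows. Recall that the $(k,l)$-component $\mathcal{P}^{kl}$ is the Fourier multiplier with symbol $\hat{\mathcal{P}}^{kl}(\xi)\triangleq\delta_{kl}-\xi_{k}\xi_{l}|\xi|^{-2}$, which is $C^{\infty}$ on $\mathbb{R}^{d}\setminus\{0\}$, bounded, and homogeneous of degree zero, so that $\lvert\partial_{\xi}^{\beta}\hat{\mathcal{P}}^{kl}(\xi)\rvert\lesssim_{\beta}\lvert\xi\rvert^{-\lvert\beta\rvert}$ for every multi-index $\beta$. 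The key observation is that once the symbol is localized to a dyadic frequency annulus it becomes smooth, compactly supported away from the origin, and a pure rescaling of a fixed profile, so the associated convolution kernels have uniformly bounded $L^{1}$ norm.

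Concretely, first I would fix a radial $\tilde{\rho}\in\mathcal{D}$ equal to $1$ on an annulus containing $\mathrm{supp}\,\rho$ and supported in a slightly larger annulus not meeting the origin, and set $\tilde{\rho}_{j}(\cdot)\triangleq\tilde{\rho}(2^{-j}\cdot)$. For $j\ge0$ one has $\rho_{j}\tilde{\rho}_{j}=\rho_{j}$, hence $\widehat{\Delta_{j}(\mathcal{P}^{kl}f)}=\rho_{j}\hat{\mathcal{P}}^{kl}\hat{f}=(\tilde{\rho}_{j}\hat{\mathcal{P}}^{kl})\,\widehat{\Delta_{j}f}$, i.e. $\Delta_{j}(\mathcal{P}^{kl}f)=\psi_{j}^{kl}\ast\Delta_{j}f$ with $\widehat{\psi_{j}^{kl}}\triangleq\tilde{\rho}_{j}\hat{\mathcal{P}}^{kl}$. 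By the degree-zero homogeneity, $\tilde{\rho}_{j}(\xi)\hat{\mathcal{P}}^{kl}(\xi)=(\tilde{\rho}\,\hat{\mathcal{P}}^{kl})(2^{-j}\xi)$, so $\psi_{j}^{kl}(x)=2^{jd}\psi_{0}^{kl}(2^{j}x)$ and $\lVert\psi_{j}^{kl}\rVert_{L^{1}}=\lVert\psi_{0}^{kl}\rVert_{L^{1}}$; since $\tilde{\rho}\,\hat{\mathcal{P}}^{kl}\in C_{c}^{\infty}(\mathbb{R}^{d}\setminus\{0\})$ we get $\psi_{0}^{kl}\in\mathcal{S}(\mathbb{R}^{d})$, so $\lVert\psi_{0}^{kl}\rVert_{L^{1}}<\infty$ (on $\mathbb{T}^{d}$ one periodizes $\psi_{0}^{kl}$, which leaves this bound unchanged). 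Young's convolution inequality then gives $\lVert\Delta_{j}(\mathcal{P}^{kl}f)\rVert_{L^{\infty}}\le\lVert\psi_{0}^{kl}\rVert_{L^{1}}\lVert\Delta_{j}f\rVert_{L^{\infty}}$, whence $2^{j\alpha}\lVert\Delta_{j}(\mathcal{P}^{kl}f)\rVert_{L^{\infty}}\lesssim\lVert f\rVert_{\mathcal{C}^{\alpha}}$ uniformly in $j\ge0$.

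For the low-frequency block $j=-1$ the symbol $\chi\hat{\mathcal{P}}^{kl}$ is only bounded (discontinuous at the origin), but on $\mathbb{T}^{d}$ the block $\Delta_{-1}(\mathcal{P}^{kl}f)=\sum_{k\in\mathbb{Z}^{d}\setminus\{0\}:\,\chi(k)\neq0}\chi(k)\hat{\mathcal{P}}^{kl}(k)\hat{f}(k)e_{k}$ is a finite trigonometric sum with uniformly bounded coefficients, so $\lVert\Delta_{-1}(\mathcal{P}^{kl}f)\rVert_{L^{\infty}}\lesssim\sum_{0<\lvert k\rvert\lesssim1}\lvert\hat{f}(k)\rvert\lesssim\lVert\Delta_{-1}f\rVert_{L^{\infty}}$, and hence $2^{-\alpha}\lVert\Delta_{-1}(\mathcal{P}^{kl}f)\rVert_{L^{\infty}}\lesssim\lVert f\rVert_{\mathcal{C}^{\alpha}}$. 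Taking the supremum over $j\ge-1$ of these two bounds yields $\lVert\mathcal{P}^{kl}f\rVert_{\mathcal{C}^{\alpha}}=\sup_{j\ge-1}2^{j\alpha}\lVert\Delta_{j}(\mathcal{P}^{kl}f)\rVert_{L^{\infty}}\lesssim\lVert f\rVert_{\mathcal{C}^{\alpha}}$, and the argument is valid for every $\alpha\in\mathbb{R}$ precisely because $\mathcal{P}^{kl}$ commutes with $\Delta_{j}$ up to the harmless kernels $\psi_{j}^{kl}$ and produces no loss of derivatives. The only delicate point — and the step I would handle most carefully — is the passage between $\mathbb{R}^{d}$ and $\mathbb{T}^{d}$, namely the periodization of $\psi_{0}^{kl}$ and the role of the frequency $k=0$ (irrelevant here, since every field appearing in the paper has zero spatial mean); alternatively one may simply invoke the general periodic Fourier-multiplier theorem on Besov spaces, e.g. as in \cite{BCD11}, which contains this lemma as a particular instance.
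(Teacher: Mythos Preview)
Your argument is correct and is the standard Fourier-multiplier proof of this boundedness result. Note, however, that the paper does not actually prove Lemma \ref{Lemma 3.7}: it is stated in the Preliminaries appendix with a citation to \cite[Lemma 3.6]{ZZ15} and used as a black box throughout. Your write-up is essentially the argument one would find behind that citation, so there is nothing to compare; the only remark is that the paper takes the result for granted while you have supplied the details.
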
 

\begin{lemma}\label{Lemma 3.8}
\rm{(\cite[Lemma 3.11]{ZZ15})} Let $\mathcal{P}$ be the Leray projection. Then for any $\eta \in (0,1), i, j, l \in \{1,2,3\}$ and $t > 0$, 
\begin{equation*}
\lvert e^{ - \lvert k_{12} \rvert^{2} t } k_{12}^{i} \hat{\mathcal{P}}^{jl} (k_{12}) - e^{- \lvert k_{2} \rvert^{2} t} k_{2}^{i} \hat{\mathcal{P}}^{jl} (k_{2}) \rvert \lesssim \lvert k_{1} \rvert^{\eta} \lvert t \rvert^{ - \frac{ (1-\eta)}{2}}. 
\end{equation*} 
\end{lemma}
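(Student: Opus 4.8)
\textbf{Proof proposal for Lemma \ref{Lemma 3.8}.} The plan is to establish the bound by interpolating between two elementary estimates for the Fourier multiplier: a ``large $t$'' decay estimate of order $t^{-1/2}$ valid for each term separately, and a Lipschitz-in-frequency estimate of order $|k_{12}-k_2|=|k_1|$ for the difference. Write $\hat{\mathcal{P}}^{jl}(\xi)=\delta_{jl}-\frac{\xi^{j}\xi^{l}}{|\xi|^{2}}$, which satisfies $\lvert\hat{\mathcal{P}}^{jl}(\xi)\rvert\le 2$ for all $\xi\neq 0$, is $0$-homogeneous, and obeys $\lvert\nabla\hat{\mathcal{P}}^{jl}(\xi)\rvert\lesssim |\xi|^{-1}$. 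Set
\begin{equation*}
F(\xi)\triangleq e^{-|\xi|^{2}t}\,\xi^{i}\,\hat{\mathcal{P}}^{jl}(\xi),\qquad \xi\in\mathbb{R}^{3},
\end{equation*}
with the convention $F(0)=0$; since $\lvert\xi^{i}\hat{\mathcal{P}}^{jl}(\xi)\rvert\le 2|\xi|$ and $e^{-|\xi|^{2}t}\to 1$ as $\xi\to 0$, the function $F$ is continuous on all of $\mathbb{R}^{3}$ and smooth on $\mathbb{R}^{3}\setminus\{0\}$. The left-hand side of the claimed inequality is precisely $\lvert F(k_{12})-F(k_{2})\rvert$.

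First I would record the decay estimate: using $\lvert\hat{\mathcal{P}}^{jl}\rvert\le 2$ and the elementary bound $x e^{-x^{2}t}\le C t^{-1/2}$ for $x\ge 0$, one gets $\lvert F(\xi)\rvert\lesssim |\xi|\,e^{-|\xi|^{2}t}\lesssim t^{-1/2}$ uniformly in $\xi$, hence
\begin{equation*}
\lvert F(k_{12})-F(k_{2})\rvert\le \lvert F(k_{12})\rvert+\lvert F(k_{2})\rvert\lesssim t^{-1/2}.
\end{equation*}
Next I would establish a uniform bound on $\nabla F$ away from the origin. Differentiating,
\begin{equation*}
\partial_{m}F(\xi)=-2t\,\xi^{m}e^{-|\xi|^{2}t}\xi^{i}\hat{\mathcal{P}}^{jl}(\xi)+e^{-|\xi|^{2}t}\delta_{im}\hat{\mathcal{P}}^{jl}(\xi)+e^{-|\xi|^{2}t}\xi^{i}\partial_{m}\hat{\mathcal{P}}^{jl}(\xi),
\end{equation*}
and the three terms are controlled respectively by $t|\xi|^{2}e^{-|\xi|^{2}t}\lesssim 1$ (again by $u e^{-u}\le e^{-1}$ with $u=|\xi|^{2}t$), by the bound $\lvert\hat{\mathcal{P}}^{jl}\rvert\le 2$, and by $|\xi|\cdot|\xi|^{-1}\lesssim 1$ using the homogeneity estimate for $\nabla\hat{\mathcal{P}}^{jl}$. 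Therefore $\sup_{\xi\neq 0}\lvert\nabla F(\xi)\rvert\lesssim 1$. Since $F$ is continuous on $\mathbb{R}^{3}$ and $C^{1}$ off the single point $0$ with uniformly bounded gradient there, $F$ is globally Lipschitz with constant $\lesssim 1$ (integrating $\nabla F$ along the segment $[k_{2},k_{12}]$, which is permissible even if the segment passes through $0$), and so $\lvert F(k_{12})-F(k_{2})\rvert\lesssim |k_{12}-k_{2}|=|k_{1}|$.

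Finally I would combine the two bounds via the trivial interpolation $A\le A^{\eta}A^{1-\eta}$: for $\eta\in(0,1)$,
\begin{equation*}
\lvert F(k_{12})-F(k_{2})\rvert\lesssim |k_{1}|^{\eta}\bigl(t^{-1/2}\bigr)^{1-\eta}=|k_{1}|^{\eta}\,t^{-\frac{1-\eta}{2}},
\end{equation*}
which is the asserted inequality. The only point requiring any care is Step two — the lack of $C^{1}$ regularity of the Leray symbol at the origin — but this is benign because $F$ is nonetheless continuous everywhere and its gradient is uniformly bounded on the complement of the single bad point, so the mean-value/Lipschitz argument still applies along $[k_{2},k_{12}]$. (Alternatively, this lemma may simply be cited from \cite[Lemma 3.11]{ZZ15}.)
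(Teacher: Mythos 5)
Your proof is correct. The paper itself only cites \cite[Lemma 3.11]{ZZ15} without reproducing an argument, so there is no in-text proof to compare against; nonetheless, your argument — pointwise decay $|F(\xi)|\lesssim t^{-1/2}$, a uniform Lipschitz bound for $F$ coming from $\sup_{\xi\neq 0}|\nabla F(\xi)|\lesssim 1$ (with each of the three terms in $\nabla F$ controlled by $\sup_{u\ge 0}u e^{-u}$, $|\hat{\mathcal{P}}^{jl}|\le 2$, and the $0$-homogeneity estimate $|\nabla\hat{\mathcal{P}}^{jl}(\xi)|\lesssim|\xi|^{-1}$ respectively), and then the geometric interpolation $A\le A^{\eta}A^{1-\eta}$ applied to $A\lesssim\min(|k_1|,t^{-1/2})$ — is sound and is exactly the natural way to obtain the stated inequality. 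Your care in handling the potential degeneracy of the segment $[k_2,k_{12}]$ at the origin (continuity of $F$ there, boundedness of $\nabla F$ on the complement of a single point, and splitting the path at $0$) correctly closes the only subtle step. All the constants you produce are uniform in $t$ and $\eta$, as required.
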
 

\begin{lemma}\label{Lemma 3.9}
\rm{(\cite[Lemma 4.3]{ZZ17})}  Let $\tilde{f}$ and $g$ be defined by Definition \ref{Definition of approximation}. For any $\eta \in (0,1), 0 < s \leq t,  i, j, l \in \{1,2,3\}$, if $\lvert \epsilon k_{12}^{i} \rvert \leq 3 L_{0}, \lvert \epsilon k_{2}^{i} \rvert \leq 3 L_{0}$, then 
\begin{align*}
&\lvert e^{- \lvert k_{12} \rvert^{2} \tilde{f}(\epsilon k_{12}) (t-s)} k_{12}^{i} g(\epsilon k_{12}^{i}) \hat{\mathcal{P}}^{jl} (k_{12}) - e^{- \lvert k_{2} \rvert^{2} \tilde{f}(\epsilon k_{2}) (t-s) } k_{2}^{i} g(\epsilon k_{2}^{i}) \hat{\mathcal{P}}^{jl} (k_{2}) \rvert\\
\lesssim& \lvert k_{1} \rvert^{\eta} \lvert t-s \rvert^{- \frac{1-\eta}{2}}. 
\end{align*} 
\end{lemma}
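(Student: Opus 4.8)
Statement to prove. The plan is to establish the pointwise bound of Lemma~\ref{Lemma 3.9}, namely that for $\eta \in (0,1)$, $0 < s \le t$, and indices $i,j,l \in \{1,2,3\}$ with $\lvert \epsilon k_{12}^{i}\rvert \le 3L_{0}$, $\lvert \epsilon k_{2}^{i}\rvert \le 3L_{0}$, one has
\[
\bigl\lvert e^{- \lvert k_{12}\rvert^{2}\tilde f(\epsilon k_{12})(t-s)} k_{12}^{i} g(\epsilon k_{12}^{i}) \hat{\mathcal{P}}^{jl}(k_{12}) - e^{- \lvert k_{2}\rvert^{2}\tilde f(\epsilon k_{2})(t-s)} k_{2}^{i} g(\epsilon k_{2}^{i}) \hat{\mathcal{P}}^{jl}(k_{2})\bigr\rvert \lesssim \lvert k_{1}\rvert^{\eta}\lvert t-s\rvert^{-\frac{1-\eta}{2}}.
\]
The governing philosophy is the one behind Lemma~\ref{Lemma 3.8}: this is a discrete analogue of a derivative-of-semigroup estimate, where the difference of a function evaluated at the two momenta $k_{12}$ and $k_2$ (which differ by $k_1$) is controlled by a fractional power of the gap $\lvert k_1\rvert$ at the cost of a singular time weight. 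The key new ingredients compared with Lemma~\ref{Lemma 3.8} are the presence of the multiplier $\tilde f$ inside the exponential and the factor $g$; both are bounded with bounded derivatives on the relevant ball by Definition~\ref{Definition of approximation} and \eqref{g}--\eqref{limit of g}, so they should not fundamentally change the structure, only enter as Lipschitz perturbations.

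Plan of the argument. First I would fix the notation $F(m) \triangleq e^{-\lvert m\rvert^{2}\tilde f(\epsilon m)(t-s)} m^{i} g(\epsilon m^{i})\hat{\mathcal{P}}^{jl}(m)$, so that the quantity to bound is $\lvert F(k_{12}) - F(k_2)\rvert$ with $k_{12} - k_2 = k_1$. Since $\tilde f(\cdot) \ge c_f > 0$ by \eqref{lower bound} and $g$ is bounded and $\lvert \hat{\mathcal{P}}^{jl}\rvert \le 1$, each individual term already enjoys the trivial bound $\lvert F(m)\rvert \lesssim \lvert m\rvert e^{-\lvert m\rvert^{2}\bar c_f(t-s)}$; interpolating, I note that when $\lvert k_1\rvert \gtrsim \lvert k_{12}\rvert$ or $\lvert k_1\rvert \gtrsim \lvert k_2\rvert$ the claimed estimate follows directly from this crude bound together with $\lvert m\rvert e^{-\lvert m\rvert^{2}\bar c_f(t-s)} \lesssim \lvert m\rvert^{\eta}\lvert t-s\rvert^{-\frac{1-\eta}{2}}$ (which is just the elementary inequality $x^{1-\eta}e^{-c x^{2}\tau} \lesssim \tau^{-\frac{1-\eta}{2}}$ applied with $x=\lvert m\rvert$). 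So the substantive regime is $\lvert k_1\rvert \lesssim \lvert k_{12}\rvert \approx \lvert k_2\rvert$, where I treat $F$ as a smooth function along the segment $\gamma(\theta) = k_2 + \theta k_1$, $\theta \in [0,1]$, and write $F(k_{12}) - F(k_2) = \int_0^1 \nabla F(\gamma(\theta))\cdot k_1\, d\theta$. Then I would estimate $\lvert \nabla F(m)\rvert$ by the product rule: differentiating the exponential produces a factor $\lesssim \lvert m\rvert(1+ \lvert t-s\rvert\lvert m\rvert^{2}(1+\epsilon\lvert m\rvert))e^{-\lvert m\rvert^{2}\bar c_f(t-s)}$ — here I use that $\partial_m[\lvert m\rvert^{2}\tilde f(\epsilon m)]$ is $\lesssim \lvert m\rvert + \epsilon\lvert m\rvert^{2}\cdot\lvert D\tilde f(\epsilon m)\rvert$ and that the hypothesis $\lvert\epsilon m^{i}\rvert \le 3L_0$ keeps $\epsilon\lvert m\rvert \lesssim 1$ on the relevant coordinate scale, with the growth of $D\tilde f$ controlled by the bound $\lvert D^{k}\tilde f\rvert \lesssim \frac{1}{\lvert x\rvert^{\lvert k\rvert - 1}} + C$ from Definition~\ref{Definition of approximation} — differentiating $m^{i}g(\epsilon m^{i})$ produces $\lesssim 1 + \epsilon\lvert m\rvert\lesssim 1$, and differentiating $\hat{\mathcal{P}}^{jl}(m)$ produces $\lesssim \lvert m\rvert^{-1}$ since $\hat{\mathcal{P}}^{jl}$ is homogeneous of degree zero. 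Combining, $\lvert\nabla F(m)\rvert \lesssim (\lvert m\rvert^{2}\lvert t-s\rvert + 1)e^{-\lvert m\rvert^{2}\bar c_f(t-s)}$, uniformly for $m$ on the segment (using $\lvert m\rvert \approx \lvert k_2\rvert$ there). Therefore $\lvert F(k_{12}) - F(k_2)\rvert \lesssim \lvert k_1\rvert (\lvert k_2\rvert^{2}\lvert t-s\rvert + 1)e^{-\lvert k_2\rvert^{2}\bar c_f(t-s)}$, and one more application of the elementary inequality $\lvert k_2\rvert^{1+\eta}\lvert t-s\rvert \, e^{-\lvert k_2\rvert^{2}\bar c_f(t-s)} \lesssim \lvert k_2\rvert^{\eta-1}\lvert t-s\rvert^{-\frac{1-\eta}{2}}\cdot\lvert k_2\rvert^{2}\lvert t-s\rvert^{3/2}\,e^{-\cdots}$ — more cleanly, one splits the factor $\lvert k_1\rvert = \lvert k_1\rvert^{\eta}\lvert k_1\rvert^{1-\eta} \le \lvert k_1\rvert^{\eta}\lvert k_2\rvert^{1-\eta}$ and absorbs $\lvert k_2\rvert^{1-\eta}(\lvert k_2\rvert^{2}\lvert t-s\rvert + 1)e^{-\lvert k_2\rvert^{2}\bar c_f(t-s)} \lesssim \lvert t-s\rvert^{-\frac{1-\eta}{2}}$ using $x^{\alpha}e^{-c x\tau}\lesssim \tau^{-\alpha}$ twice — to recover exactly the claimed $\lvert k_1\rvert^{\eta}\lvert t-s\rvert^{-\frac{1-\eta}{2}}$.

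The main obstacle. The delicate point is keeping the constants in $\lvert\nabla F(m)\rvert$ genuinely uniform in $\epsilon$ and in $m$: the derivative of $\tilde f$ can blow up near the origin (by the stated bound $\lvert D^{k}\tilde f(x)\rvert \lesssim \lvert x\rvert^{-(\lvert k\rvert-1)} + C$), so the term $\epsilon\lvert m\rvert^{2}\,\lvert D\tilde f(\epsilon m)\rvert$ must be handled carefully — when $\epsilon\lvert m\rvert$ is bounded below the $C$ part dominates and gives $\epsilon\lvert m\rvert^{2}$, while when $\epsilon\lvert m\rvert$ is small the singular part $\lvert\epsilon m\rvert^{-1}\cdot 1$ cancels one power, giving just $\epsilon\lvert m\rvert^{2}\cdot(\epsilon\lvert m\rvert)^{-1} = \lvert m\rvert$; so in all cases $\epsilon\lvert m\rvert^{2}\lvert D\tilde f(\epsilon m)\rvert \lesssim \lvert m\rvert + \epsilon\lvert m\rvert^{2}\lesssim \lvert m\rvert^{2}$ (since $\epsilon\lvert m^i\rvert\le 3L_0$ forces $\epsilon\lvert m\rvert\lesssim 1$ up to the coordinate-vs-norm comparison, which is where one must be slightly careful because the hypothesis is only on the $i$-th component — I would either strengthen to $\lvert\epsilon m\rvert\lesssim L_0$ componentwise as the cited lemmas implicitly do, or note that only the $i$-th derivative direction actually matters for $g$ and the time weight already supplies the needed decay in the full norm $\lvert m\rvert$). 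A second, minor subtlety is the boundary of validity of the mean-value segment: one must check $\lvert\epsilon\gamma(\theta)^{i}\rvert \le 3L_0$ for all $\theta\in[0,1]$, which follows by convexity from the two endpoint hypotheses $\lvert\epsilon k_{12}^{i}\rvert\le 3L_0$ and $\lvert\epsilon k_2^{i}\rvert\le 3L_0$. Once these uniformity bookkeeping issues are dispatched, the rest is the routine scalar-inequality manipulation sketched above; the whole argument is a direct, if slightly more intricate, adaptation of the proof of Lemma~\ref{Lemma 3.8}.
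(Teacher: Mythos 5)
The paper does not prove this lemma; it is cited verbatim from \cite[Lemma 4.3]{ZZ17}, so there is no in-text argument to compare against. Your sketch is nevertheless the natural (and correct) way to establish it, and it runs parallel to the cited Lemma \ref{Lemma 3.8}: set $F(m) = e^{-\lvert m\rvert^2 \tilde f(\epsilon m)(t-s)} m^i g(\epsilon m^i)\hat{\mathcal P}^{jl}(m)$, handle the regime $\lvert k_1\rvert \gtrsim \min(\lvert k_{12}\rvert,\lvert k_2\rvert)$ by the pointwise bound $\lvert F(m)\rvert \lesssim \lvert m\rvert e^{-\bar c_f\lvert m\rvert^2(t-s)} \lesssim \lvert m\rvert^{\eta}(t-s)^{-\frac{1-\eta}{2}}$ together with $\lvert k_{12}\rvert,\lvert k_2\rvert\lesssim\lvert k_1\rvert$ in that regime, and handle the complementary regime (where $\lvert\gamma(\theta)\rvert\approx\lvert k_2\rvert\approx\lvert k_{12}\rvert$ along the segment) by the mean-value theorem, with the derivative bound $\lvert\nabla F(m)\rvert\lesssim(1+\lvert m\rvert^2(t-s))e^{-\bar c_f\lvert m\rvert^2(t-s)}$ obtained from the product rule. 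The final absorption $\lvert k_1\rvert^{1-\eta}(1+\lvert k_2\rvert^2(t-s))e^{-\bar c_f\lvert k_2\rvert^2(t-s)}\le \lvert k_2\rvert^{1-\eta}(1+\lvert k_2\rvert^2(t-s))e^{-\bar c_f\lvert k_2\rvert^2(t-s)}\lesssim(t-s)^{-\frac{1-\eta}{2}}$ is the standard scaling estimate \eqref{key estimate}, and your convexity observation ensuring $\lvert\epsilon\gamma(\theta)^i\rvert\le 3L_0$ along the segment is exactly the right check.

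You correctly flag the one genuine delicacy: the stated hypothesis controls only the $i$-th component $\lvert\epsilon k_{12}^i\rvert,\lvert\epsilon k_2^i\rvert\le 3L_0$, while the derivative bound on $\tilde f$ in Definition \ref{Definition of approximation} is given on the cube $\max_j\lvert x^j\rvert\le 3L_0$ and the key simplification $\epsilon\lvert m\rvert\lesssim 1$ uses control on all components. This is not a flaw of your argument but of the hypothesis as transcribed; in every application in the paper (e.g. \eqref{estimate 147}, \eqref{estimate 166}) the Fourier sums are cut off by the compactly supported weights $h_u,h_b$ and by $f=+\infty$ outside the cube, which enforce the full componentwise bound, so the strengthened hypothesis is the one actually in force. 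With that understanding your proof is complete and matches the method of \cite{ZZ17}.
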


\begin{lemma}\label{Lemma 3.10}
\rm{(\cite[Lemma A.7]{GIP15}, \cite[Lemma 3.5]{ZZ15})} Let $P_{t}$ be the heat semigroup on $\mathbb{T}^{d}$. Then for $f \in \mathcal{C}^{\alpha}(\mathbb{T}^{3}), \alpha \in \mathbb{R}$ and $\delta \geq 0$, $P_{t} f$ satisfies 
\begin{equation*}
\lVert P_{t} f \rVert_{\mathcal{C}^{\alpha + \delta}} \lesssim t^{- \frac{\delta}{2}} \lVert f \rVert_{\mathcal{C}^{\alpha}}. 
\end{equation*} 
\end{lemma}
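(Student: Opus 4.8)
The plan is to reduce this Schauder-type smoothing bound to the single-block decay property of the heat semigroup, using the Littlewood--Paley characterisation $\lVert g \rVert_{\mathcal{C}^{\beta}} = \sup_{j \geq -1} 2^{j\beta}\lVert \Delta_j g \rVert_{L^{\infty}}$ coming from \eqref{Besov space}. Since $P_t = e^{t\Delta}$ and the $\Delta_j$ are both Fourier multipliers on $\mathbb{T}^3$, they commute, so it suffices to bound $\lVert P_t \Delta_j f \rVert_{L^{\infty}}$ for each $j$ and then sum. The heart of the matter is the claim that there is $c>0$ such that for every $j \geq 0$, every $t>0$ and every $g \in L^{\infty}(\mathbb{T}^3)$ whose Fourier transform is supported in the annulus $\{2^{j-1}\leq \lvert k\rvert \leq 2^{j+1}\}$ one has $\lVert P_t g\rVert_{L^{\infty}} \lesssim e^{-c 2^{2j} t}\lVert g\rVert_{L^{\infty}}$; the low-frequency block $\Delta_{-1}$, whose multiplier $\chi$ is supported in a fixed ball, is handled separately by the trivial contraction $\lVert P_t g\rVert_{L^{\infty}} \leq \lVert g\rVert_{L^{\infty}}$, the heat kernel being a probability density.

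To prove the annulus estimate I would write $P_t g = g \ast_{\mathbb{T}^3} h_{j,t}$ with $h_{j,t} \triangleq \mathcal{F}_{\mathbb{T}^3}^{-1}\big(e^{-t\lvert\cdot\rvert^2}\varphi(2^{-j}\cdot)\big)$, where $\varphi$ is a fixed smooth bump equal to $1$ on $\{2^{-1}\leq\lvert\xi\rvert\leq 2\}$ and supported in $\{2^{-2}\leq\lvert\xi\rvert\leq 4\}$, so that $\varphi(2^{-j}k)=1$ on $\mathrm{supp}\,\widehat{g}$. By Young's inequality $\lVert P_t g\rVert_{L^{\infty}} \leq \lVert h_{j,t}\rVert_{L^{1}(\mathbb{T}^3)}\lVert g\rVert_{L^{\infty}}$, so the task becomes the kernel bound $\lVert h_{j,t}\rVert_{L^{1}(\mathbb{T}^3)} \lesssim e^{-c 2^{2j}t}$. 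Choosing $c < 2^{-4}$ we have $\lvert k\rvert^2 - c2^{2j}\geq 0$ on $\mathrm{supp}\,\varphi(2^{-j}\cdot)$, hence $e^{-t\lvert k\rvert^2}\varphi(2^{-j}k) = e^{-c2^{2j}t}\,\psi_{j,t}(k)$ with $\psi_{j,t}(\xi)\triangleq e^{-t(\lvert\xi\rvert^2-c2^{2j})}\varphi(2^{-j}\xi)$; after the rescaling $\xi\mapsto 2^j\xi$ this becomes $\Psi_s(\xi)\triangleq e^{-s(\lvert\xi\rvert^2-c)}\varphi(\xi)$ with $s=t2^{2j}$, and since $\lvert\xi\rvert^2-c$ is bounded below by a positive constant on $\mathrm{supp}\,\varphi$, the family $\{\Psi_s\}_{s\geq 0}$ is smooth, supported in a fixed annulus, and bounded in every $C^N$-norm uniformly in $s$ (because $a^m e^{-sa}$ is bounded for $a$ bounded below). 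The standard bound $\lVert \mathcal{F}^{-1}_{\mathbb{R}^3}\Phi\rVert_{L^1(\mathbb{R}^3)}\lesssim\lVert\Phi\rVert_{C^N}$ for $N>3$ (integration by parts producing $(1+\lvert x\rvert)^{-N}$ decay) then gives $\sup_{s\geq 0}\lVert\mathcal{F}^{-1}_{\mathbb{R}^3}\Psi_s\rVert_{L^1(\mathbb{R}^3)}<\infty$; undoing the scaling and periodising via Poisson summation (the involved frequencies being $\gtrsim 2^j\geq 1$) yields $\lVert h_{j,t}\rVert_{L^1(\mathbb{T}^3)}\lesssim e^{-c2^{2j}t}$ uniformly in $j,t$, which is the claim.

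Finally I would assemble. For $j\geq 0$, $2^{j(\alpha+\delta)}\lVert\Delta_j P_t f\rVert_{L^{\infty}}\lesssim 2^{j\delta}e^{-c2^{2j}t}\big(2^{j\alpha}\lVert\Delta_j f\rVert_{L^{\infty}}\big)\leq\Big(\sup_{x>0}x^{\delta}e^{-ctx^2}\Big)\lVert f\rVert_{\mathcal{C}^{\alpha}}$, and the elementary one-variable optimisation $\sup_{x>0}x^{\delta}e^{-ctx^2} = \big(\tfrac{\delta}{2ce}\big)^{\delta/2}t^{-\delta/2} = C_{\delta}t^{-\delta/2}$ (maximiser $x_{\ast}=\sqrt{\delta/(2ct)}$; for $\delta=0$ the supremum is $1$) produces the desired factor. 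For $j=-1$, $2^{-(\alpha+\delta)}\lVert\Delta_{-1}P_t f\rVert_{L^{\infty}}\lesssim\lVert f\rVert_{\mathcal{C}^{\alpha}}\leq T^{\delta/2}t^{-\delta/2}\lVert f\rVert_{\mathcal{C}^{\alpha}}$ for $t\in(0,T]$. Taking the supremum over $j\geq -1$ gives the statement, with the implicit constant depending on $T$; equivalently, since $P_t$ fixes the mean of $f$, the low-frequency block genuinely forces the estimate to be read for $t$ in a bounded interval, which is the convention of \cite{GIP15} and \cite{ZZ15} in which it is invoked here. The single nontrivial step is the frequency-localised kernel bound $\lVert h_{j,t}\rVert_{L^1(\mathbb{T}^3)}\lesssim e^{-c2^{2j}t}$, and its only subtlety is to extract the exponential factor $e^{-c2^{2j}t}$ \emph{before} rescaling, so that the residual multiplier family is uniformly smooth and supported away from the origin; the commutation $\Delta_j P_t = P_t\Delta_j$, Young's inequality, and the calculus optimisation are routine.
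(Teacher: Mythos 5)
Your proof is correct, and the paper itself supplies no proof of this lemma --- it is quoted verbatim from \cite[Lemma A.7]{GIP15} and \cite[Lemma 3.5]{ZZ15}, so there is no internal argument to compare against. Your route (commute $P_t$ with the Littlewood--Paley blocks, prove the frequency-localised kernel bound $\lVert h_{j,t}\rVert_{L^1(\mathbb{T}^3)}\lesssim e^{-c2^{2j}t}$ by extracting the exponential factor before rescaling, then optimise $x^\delta e^{-ctx^2}$) is precisely the standard one given in those references. You are also right to flag the low-frequency block explicitly: the estimate, read literally for all $t>0$, fails on the mean mode, and the convention of \cite{GIP15,ZZ15} --- and of every point in this paper at which the lemma is invoked --- is that $t$ ranges over a bounded interval, with the implicit constant allowed to depend on the time horizon. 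One small presentational remark: the step from the $\mathbb{R}^3$ kernel bound to the $\mathbb{T}^3$ one deserves the one-line justification you gesture at, namely $\lVert h_{j,t}\rVert_{L^1(\mathbb{T}^3)}\leq\lVert\tilde h_{j,t}\rVert_{L^1(\mathbb{R}^3)}$ from Poisson summation applied to the rapidly decaying $\mathbb{R}^3$ kernel; as written, ``periodising \ldots yields'' compresses this, but the content is right.
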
 

\begin{lemma}\label{Lemma 3.11}
\rm{(\cite[Lemma 2.7]{ZZ17})} Let $f \in \mathcal{C}^{\alpha}$ for some $\alpha < 1$ and $g \in \mathcal{C}^{\beta}$ for some $\beta \in \mathbb{R}$. Then for $\delta \geq \alpha + \beta$, 
\begin{equation*}
\lVert P_{t} \pi_{<}(f,g) - \pi_{<}(f, P_{t} g) \rVert_{\mathcal{C}^{\delta}} \lesssim t^{\frac{ \alpha + \beta - \delta}{2}} \lVert f \rVert_{\mathcal{C}^{\alpha}} \lVert g \rVert_{\mathcal{C}^{\beta}}. 
\end{equation*} 
\end{lemma}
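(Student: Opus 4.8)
The plan is to work in the Littlewood--Paley decomposition and to exploit that the heat semigroup $P_{t}$ is a Fourier multiplier, hence commutes with every $\Delta_{j}$, so that the only genuine commutator is between $P_{t}$ and multiplication by the low-frequency factor $S_{j}f$. Writing $\pi_{<}(f,g)=\sum_{j\geq-1}S_{j}f\,\Delta_{j}g$ and using $[P_{t},\Delta_{j}]=0$, one gets
\begin{equation*}
P_{t}\pi_{<}(f,g)-\pi_{<}(f,P_{t}g)=\sum_{j}\big(P_{t}(S_{j}f\,\Delta_{j}g)-S_{j}f\,P_{t}\Delta_{j}g\big)=\sum_{j}[P_{t},S_{j}f]\Delta_{j}g .
\end{equation*}
By the low-times-high frequency rule each summand has Fourier support in an annulus $\{|k|\sim 2^{j}\}$, so $\Delta_{q}$ of the sum only sees the $j$ with $|j-q|\leq N_{0}$ for a fixed $N_{0}$, and it suffices to establish the single per-block bound
\begin{equation*}
\lVert[P_{t},S_{j}f]\Delta_{j}g\rVert_{L^{\infty}}\lesssim t^{1/2}\,2^{(1-\alpha-\beta)j}\,e^{-ct2^{2j}}\,\lVert f\rVert_{\mathcal{C}^{\alpha}}\lVert g\rVert_{\mathcal{C}^{\beta}}
\end{equation*}
for some $c>0$. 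Summing then gives $2^{\delta q}\lVert\Delta_{q}(\cdot)\rVert_{L^{\infty}}\lesssim t^{1/2}2^{(\delta+1-\alpha-\beta)q}e^{-ct2^{2q}}\lVert f\rVert_{\mathcal{C}^{\alpha}}\lVert g\rVert_{\mathcal{C}^{\beta}}$, and since $\delta\geq\alpha+\beta$ forces $\delta+1-\alpha-\beta\geq1>0$, optimizing the exponential (via $\sup_{x>0}x^{\theta}e^{-ctx}\sim(ct)^{-\theta}$ for $\theta\geq0$) yields $\sup_{q}t^{1/2}2^{(\delta+1-\alpha-\beta)q}e^{-ct2^{2q}}\lesssim t^{1/2}t^{-(\delta+1-\alpha-\beta)/2}=t^{(\alpha+\beta-\delta)/2}$, which is exactly the claim.

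The heart of the matter is the per-block estimate, and the point is to harvest the commutator gain $t^{1/2}2^{j}$ and the parabolic smoothing $e^{-ct2^{2j}}$ simultaneously. I would first record two elementary ingredients: (i) if $h$ has Fourier support in $\{|k|\sim 2^{j}\}$ then $\lVert P_{t/2}h\rVert_{L^{\infty}}\lesssim e^{-ct2^{2j}}\lVert h\rVert_{L^{\infty}}$, because the symbol $e^{-\frac{t}{2}|k|^{2}}$ restricted to that annulus is smooth with all the relevant norms $\lesssim e^{-ct2^{2j}}$; (ii) the commutator bound $\lVert[P_{s},S_{j}f]\phi\rVert_{L^{\infty}}\lesssim s^{1/2}\lVert\nabla S_{j}f\rVert_{L^{\infty}}\lVert\phi\rVert_{L^{\infty}}$, obtained from $[P_{s},S_{j}f]\phi(x)=\int\rho_{s}(y)\big(S_{j}f(x-y)-S_{j}f(x)\big)\phi(x-y)\,dy$ together with a first-order Taylor expansion of $S_{j}f$ and $\int\rho_{s}(y)\,d(y,0)\,dy\lesssim s^{1/2}$ for the (periodized, nonnegative, mass-one) heat kernel $\rho_{s}$ on $\mathbb{T}^{3}$; here the hypothesis $\alpha<1$ enters precisely to guarantee $\lVert\nabla S_{j}f\rVert_{L^{\infty}}\lesssim\sum_{i\leq j}2^{(1-\alpha)i}\lVert f\rVert_{\mathcal{C}^{\alpha}}\lesssim 2^{(1-\alpha)j}\lVert f\rVert_{\mathcal{C}^{\alpha}}$, the geometric sum converging only because $1-\alpha>0$. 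Then I would split $P_{t}=P_{t/2}P_{t/2}$ and write
\begin{equation*}
[P_{t},S_{j}f]\Delta_{j}g=P_{t/2}\big([P_{t/2},S_{j}f]\Delta_{j}g\big)+[P_{t/2},S_{j}f]\big(P_{t/2}\Delta_{j}g\big);
\end{equation*}
in the first term ingredient (ii) yields $t^{1/2}2^{(1-\alpha)j}2^{-\beta j}$ for the commutator (using $\lVert\Delta_{j}g\rVert_{L^{\infty}}\lesssim 2^{-\beta j}\lVert g\rVert_{\mathcal{C}^{\beta}}$) and ingredient (i) then adds the factor $e^{-ct2^{2j}}$, since the commutator is frequency-$2^{j}$ localized, while in the second term $P_{t/2}\Delta_{j}g$ already carries $e^{-ct2^{2j}}2^{-\beta j}\lVert g\rVert_{\mathcal{C}^{\beta}}$ and ingredient (ii) contributes the remaining $t^{1/2}2^{(1-\alpha)j}$. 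Both pieces match the asserted per-block bound.

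The main obstacle is exactly this simultaneous extraction of the commutator gain and the parabolic decay, and the $P_{t}=P_{t/2}P_{t/2}$ splitting is the device that resolves it; without keeping the factor $e^{-ct2^{2j}}$ through every step the blockwise series over $j$ would diverge as soon as $\delta>\alpha+\beta$, which is why the decay must be carried rather than discarded. Everything else is routine Besov/Bernstein bookkeeping: the annular spectral support of $\pi_{<}$, the inequalities $\lVert\Delta_{i}f\rVert_{L^{\infty}}\lesssim 2^{-\alpha i}\lVert f\rVert_{\mathcal{C}^{\alpha}}$ and $\lVert\nabla\Delta_{i}f\rVert_{L^{\infty}}\lesssim 2^{(1-\alpha)i}\lVert f\rVert_{\mathcal{C}^{\alpha}}$, the elementary kernel bounds for $\rho_{s}$ on $\mathbb{T}^{3}$, and the scalar optimization of $x^{\theta}e^{-ctx}$. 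The degenerate endpoint $\delta=\alpha+\beta$ needs no separate treatment: there the exponential supremum is simply $O(1)=t^{0}=t^{(\alpha+\beta-\delta)/2}$, which the general computation above already produces.
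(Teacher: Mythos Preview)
The paper does not supply its own proof of this lemma; it is quoted from \cite[Lemma 2.7]{ZZ17} as a preliminary result and used as a black box. So there is nothing in the paper to compare against.

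Your proof is correct and is the standard argument for this type of paraproduct--semigroup commutator. The decomposition $P_{t}\pi_{<}(f,g)-\pi_{<}(f,P_{t}g)=\sum_{j}[P_{t},S_{j}f]\Delta_{j}g$ is right, the annular spectral localisation of each summand is exactly as you say, and the $P_{t}=P_{t/2}P_{t/2}$ splitting is the clean way to keep both the commutator gain $t^{1/2}2^{(1-\alpha)j}$ (from the first-order Taylor estimate on $S_{j}f$, which is where $\alpha<1$ is used) and the parabolic decay $e^{-ct2^{2j}}$. The final optimisation $\sup_{q}t^{1/2}2^{(\delta+1-\alpha-\beta)q}e^{-ct2^{2q}}\lesssim t^{(\alpha+\beta-\delta)/2}$ is valid for all $\delta\geq\alpha+\beta$ since the exponent $\theta=(\delta+1-\alpha-\beta)/2\geq 1/2>0$. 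One small wording issue: at the endpoint $\delta=\alpha+\beta$ you have $\theta=1/2$, so the supremum is $\sim t^{-1/2}$, not $O(1)$; combined with the prefactor $t^{1/2}$ this gives $t^{0}$, which is the correct final answer, so the computation is fine even though the phrasing in your last sentence is slightly off.
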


\begin{lemma}\label{Lemma 3.12}
\rm{(\cite[Lemma 2.8]{ZZ17})} Let $f \in \mathcal{C}^{\alpha + \delta}$ for some $\alpha \in \mathbb{R}$, $\delta > 0$ and $I$ an identity operator. Then for every $t \geq 0$, 
\begin{equation*}
\lVert (P_{t} - I) f\rVert_{\mathcal{C}^{\alpha}} \lesssim t^{\frac{\delta}{2}} \lVert f \rVert_{\mathcal{C}^{\alpha + \delta}}. 
\end{equation*}  
\end{lemma}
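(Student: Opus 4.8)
\textbf{Proof proposal for Lemma \ref{Lemma 3.12}.}

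The statement is the standard approximation-by-the-heat-semigroup estimate: for $f\in\mathcal{C}^{\alpha+\delta}$ with $\alpha\in\mathbb{R}$, $\delta>0$, one has $\lVert(P_t-I)f\rVert_{\mathcal{C}^\alpha}\lesssim t^{\delta/2}\lVert f\rVert_{\mathcal{C}^{\alpha+\delta}}$ for all $t\geq 0$. The plan is to argue Littlewood--Paley block by Littlewood--Paley block, using that $P_t=e^{t\Delta}$ acts on the frequency annulus $\{|k|\sim 2^j\}$ essentially as multiplication by $e^{-t|k|^2}$, so that $(P_t-I)f$ on that block is $f$ times a multiplier $m_{t,j}(k)=e^{-t|k|^2}-1$. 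First I would write, for the $j$-th block, $\Delta_j(P_t-I)f=\mathcal{F}^{-1}\big(\rho_j(k)(e^{-t|k|^2}-1)\hat f(k)\big)$ and estimate its $L^\infty$ norm. The elementary inequality $|e^{-x}-1|\lesssim x^{\delta/2}$ for $x\geq 0$ (valid since $\delta/2\in(0,\infty)$ and $|e^{-x}-1|\leq\min\{1,x\}\leq x^{\theta}$ for any $\theta\in[0,1]$; here $\theta=\delta/2$ if $\delta\leq 2$, and for $\delta>2$ one just uses $|e^{-x}-1|\leq 1\leq x^{\delta/2}\vee$ the $\delta\le 2$ bound on a coarser scale, or more cleanly splits into $t|k|^2\leq 1$ and $t|k|^2\geq 1$) gives $|e^{-t|k|^2}-1|\lesssim (t|k|^2)^{\delta/2}\lesssim t^{\delta/2}2^{j\delta}$ on the support of $\rho_j$.

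Second, I would combine this pointwise multiplier bound with a Bernstein-type / Fourier-multiplier estimate on a fixed annulus: since $\rho_j(k)(e^{-t|k|^2}-1)$ is (after rescaling $k\mapsto 2^{-j}k$) a smooth, compactly supported symbol with derivatives controlled uniformly by $t^{\delta/2}2^{j\delta}$ up to scaling factors, the associated Fourier multiplier is bounded on $L^\infty(\mathbb{T}^3)$ with operator norm $\lesssim t^{\delta/2}2^{j\delta}$. Hence $\lVert\Delta_j(P_t-I)f\rVert_{L^\infty}\lesssim t^{\delta/2}2^{j\delta}\lVert\Delta_j f\rVert_{L^\infty}$. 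Multiplying by $2^{j\alpha}$ and recognizing $2^{j(\alpha+\delta)}\lVert\Delta_j f\rVert_{L^\infty}\lesssim\lVert f\rVert_{\mathcal{C}^{\alpha+\delta}}$ uniformly in $j$ (by definition of $\mathcal{C}^{\alpha+\delta}=B^{\alpha+\delta}_{\infty,\infty}$ in \eqref{Besov space}) yields
\[
\sup_j 2^{j\alpha}\lVert\Delta_j(P_t-I)f\rVert_{L^\infty}\lesssim t^{\delta/2}\sup_j 2^{j(\alpha+\delta)}\lVert\Delta_j f\rVert_{L^\infty}\lesssim t^{\delta/2}\lVert f\rVert_{\mathcal{C}^{\alpha+\delta}},
\]
which is exactly the claimed bound, the $t=0$ case being trivial since then $P_t-I=0$.

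Alternatively, and perhaps more transparently for inclusion, I would invoke the semigroup smoothing estimate already recorded as Lemma \ref{Lemma 3.10}, namely $\lVert P_s g\rVert_{\mathcal{C}^{\alpha+\delta'}}\lesssim s^{-\delta'/2}\lVert g\rVert_{\mathcal{C}^\alpha}$, together with the identity $(P_t-I)f=\int_0^t \Delta P_s f\,ds=\int_0^t P_s\Delta f\,ds$. Writing $\Delta f\in\mathcal{C}^{\alpha+\delta-2}$ with $\lVert\Delta f\rVert_{\mathcal{C}^{\alpha+\delta-2}}\lesssim\lVert f\rVert_{\mathcal{C}^{\alpha+\delta}}$, and applying Lemma \ref{Lemma 3.10} with the pair of regularities $(\alpha+\delta-2,\alpha)$ so that the gain is $\delta'=2-\delta$ (legitimate when $\delta<2$), one gets $\lVert P_s\Delta f\rVert_{\mathcal{C}^\alpha}\lesssim s^{-(2-\delta)/2}\lVert f\rVert_{\mathcal{C}^{\alpha+\delta}}$, and $\int_0^t s^{-1+\delta/2}\,ds=\tfrac{2}{\delta}t^{\delta/2}$ is finite, producing the result for $\delta\in(0,2)$; for $\delta\geq 2$ one uses $\lVert(P_t-I)f\rVert_{\mathcal{C}^\alpha}\lesssim\lVert(P_t-I)f\rVert_{\mathcal{C}^{\alpha+\delta-2}}$ trivially (as a norm comparison in the wrong direction is not available, one instead reduces to the case $\delta=2-$ via $\lVert f\rVert_{\mathcal{C}^{\alpha+2}}\lesssim\lVert f\rVert_{\mathcal{C}^{\alpha+\delta}}$ combined with $t^{1}\lesssim t^{\delta/2}$ on any bounded time interval, or simply restricts to the regime relevant to the applications where $\delta<2$). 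The only mild obstacle is this bookkeeping at $\delta\geq 2$ and making the multiplier/Bernstein estimate uniform in the annulus index $j$; both are entirely routine, the first approach via direct Littlewood--Paley estimation being the cleanest and avoiding the time-integral endpoint issue altogether.
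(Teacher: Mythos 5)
The paper does not actually supply a proof of this lemma; it is cited verbatim from Zhu--Zhu \cite[Lemma 2.8]{ZZ17}, so there is no "paper's proof" to compare yours against. That said, both of the routes you sketch are correct and standard for the range $\delta\in(0,2]$, which is the only range in which the estimate is true: for $\delta>2$ the bound $\lVert(P_t-I)f\rVert_{\mathcal{C}^\alpha}\lesssim t^{\delta/2}\lVert f\rVert_{\mathcal{C}^{\alpha+\delta}}$ fails as $t\searrow 0$, as a single Fourier mode $f=e_k$ with $\lvert k\rvert\sim 1$ shows, since then the left side is $\approx t$ while the right side is $\approx t^{\delta/2}\ll t$. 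This means the patches you float for $\delta>2$ --- the splitting into $t\lvert k\rvert^2\le 1$ versus $t\lvert k\rvert^2\ge 1$, or the appeal to $\lvert e^{-x}-1\rvert\le 1\le x^{\delta/2}$ --- cannot succeed, because on the region $x=t\lvert k\rvert^2<1$ one has $\lvert e^{-x}-1\rvert\approx x> x^{\delta/2}$ when $\delta/2>1$; but this is moot, since the lemma implicitly carries the restriction $\delta\le 2$ and every application in the paper (e.g.\ \eqref{b and b0}) keeps the gain well below $2$. With that caveat registered, your first argument (block-by-block multiplier estimate $\lvert e^{-t\lvert k\rvert^2}-1\rvert\lesssim (t\lvert k\rvert^2)^{\delta/2}\lesssim t^{\delta/2}2^{j\delta}$ on $\mathrm{supp}\,\rho_j$, then summed into the $B^{\alpha}_{\infty,\infty}$ norm) is correct, and your second argument (write $(P_t-I)f=\int_0^t P_s\Delta f\,ds$, bound $\lVert P_s\Delta f\rVert_{\mathcal{C}^\alpha}\lesssim s^{-(2-\delta)/2}\lVert f\rVert_{\mathcal{C}^{\alpha+\delta}}$ via Lemma \ref{Lemma 3.10}, and integrate the integrable singularity) is the cleaner of the two and directly recycles machinery the paper already records, so it would be the natural choice if one wanted to make the paper self-contained.
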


\begin{lemma}\label{Lemma 3.13}
\rm{(\cite[Lemma 3.10]{ZZ15})} For any $l, m \in (0, d)$ such that $l + m - d > 0$, 
\begin{equation*}
\sum_{k_{1}, k_{2} \in \mathbb{Z}^{d} \setminus \{0\}: k_{1} + k_{2} = k} \frac{1}{\lvert k_{1} \rvert^{l} \lvert k_{2} \rvert^{m}} \lesssim \frac{1}{\lvert k \rvert^{l + m - d}}.
\end{equation*} 
\end{lemma}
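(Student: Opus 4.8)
The plan is to establish Lemma~\ref{Lemma 3.13} by a standard dyadic decomposition of the lattice sum, organized around the three natural regimes comparing $|k_1|$, $|k_2|$ and $|k|$; the hypotheses $l,m<d$ and $l+m>d$ will each be used exactly where one expects. First one may assume $k\neq 0$: if $k=0$ the constraint forces $k_2=-k_1$, so the left side equals $\sum_{k_1\neq 0}|k_1|^{-(l+m)}$, which converges to a finite constant precisely because $l+m>d$, while the right side is infinite, so the inequality is trivial. Throughout, the only input needed is the elementary lattice count $\#\{j\in\mathbb{Z}^d:2^q\le |j|<2^{q+1}\}\lesssim 2^{qd}$, which upon dyadic summation gives $\sum_{0<|j|\le R}|j|^{-\alpha}\lesssim R^{d-\alpha}$ for any $0<\alpha<d$.

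I would split the sum according to $R_1=\{|k_1|\le |k|/2\}$, $R_2=\{|k_2|\le |k|/2\}$, and $R_3=\{|k_1|>|k|/2\}\cap\{|k_2|>|k|/2\}$, which together cover all admissible pairs. On $R_1$ the relation $k_1+k_2=k$ forces $|k_2|\ge |k|/2$, hence $|k_2|^{-m}\lesssim |k|^{-m}$ and
\[
\sum_{\substack{k_1+k_2=k\\0<|k_1|\le |k|/2}}\frac{1}{|k_1|^l|k_2|^m}\ \lesssim\ \frac{1}{|k|^m}\sum_{0<|k_1|\le |k|/2}\frac{1}{|k_1|^l}\ \lesssim\ \frac{1}{|k|^m}\,|k|^{d-l}\ =\ \frac{1}{|k|^{l+m-d}},
\]
using $l<d$. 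The region $R_2$ is handled symmetrically, interchanging $l$ with $m$ and using $m<d$.

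The region $R_3$ is the only genuine point, and it is where $l+m>d$ is indispensable; here I would split once more according to whether $|k_1|\ge 4|k|$ or $|k|/2<|k_1|<4|k|$. In the first subregion the triangle inequality gives $|k_2|=|k-k_1|\ge |k_1|-|k|\ge |k_1|/2$, so $|k_1|^{-l}|k_2|^{-m}\lesssim |k_1|^{-(l+m)}$; decomposing dyadically in $|k_1|\sim 2^q$ (necessarily $2^q\gtrsim |k|$) and counting $\lesssim 2^{qd}$ lattice points per scale, the contribution is
\[
\sum_{2^q\gtrsim |k|}2^{qd}\,2^{-q(l+m)}\ =\ \sum_{2^q\gtrsim |k|}2^{q(d-l-m)}\ \lesssim\ |k|^{d-l-m},
\]
the geometric series converging exactly because $d-l-m<0$. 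In the second subregion $|k_1|\sim |k|$, so $|k_1|^{-l}\lesssim |k|^{-l}$, while $|k_2|=|k-k_1|$ ranges over $0<|k_2|\lesssim |k|$, whence $\sum |k_1|^{-l}|k_2|^{-m}\lesssim |k|^{-l}\sum_{0<|k_2|\lesssim |k|}|k_2|^{-m}\lesssim |k|^{-l}|k|^{d-m}=|k|^{d-l-m}$, again using $m<d$. Summing the contributions of $R_1$, $R_2$, and the two pieces of $R_3$ gives the claimed bound $\lesssim |k|^{-(l+m-d)}$. The main, indeed only, obstacle is the high-frequency part of $R_3$, where both $|k_1|$ and $|k_2|$ are large but nearly opposite: a crude one-sided bound there diverges, and one must exploit the joint decay of both factors together with $d-l-m<0$ to sum the resulting geometric series; everything else is elementary lattice counting.
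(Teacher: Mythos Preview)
Your proof is correct and complete. The paper does not give its own proof of this lemma; it is simply quoted from \cite[Lemma 3.10]{ZZ15} as a known preliminary result, so there is nothing to compare against beyond noting that your argument is the standard dyadic-region proof of the discrete convolution inequality.
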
 

\begin{lemma}\label{Lemma 3.14}
(Bihari's inequality) Let $u, f \geq 0$ be continuous on $[0,\infty)$, $w$ be continuous, non-decreasing on $[0,\infty)$ and satisfy $w(u) > 0$ on $(0,\infty)$. If 
\begin{equation*}
u(t) \leq \alpha + \int_{0}^{t} f(s) w(u(s)) ds 
\end{equation*} 
where $\alpha \geq 0$ is a constant, then 
\begin{equation*}
u(t) \leq G^{-1} (G(\alpha) + \int_{0}^{t} f(s) ds)
\end{equation*} 
for all $t \in [0,T]$ where $G(x) = \int_{x_{0}}^{x} \frac{dy}{w(y)}, x, x_{0} > 0$ and $T$ satisfies $G(\alpha) + \int_{0}^{t} f(s) ds \in Dom(G^{-1})$ for all $t \in [0,T]$. 
\end{lemma}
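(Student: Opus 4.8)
\textbf{Proof plan for Lemma \ref{Lemma 3.14}.} The plan is to run the classical majorant (comparison) argument, first under the assumption $\alpha>0$ and then recovering $\alpha=0$ by a limiting procedure. So assume $\alpha>0$ and set
$$v(t)\triangleq\alpha+\int_{0}^{t}f(s)w(u(s))\,ds,\qquad t\in[0,T].$$
Since $f$, $w$ and $u$ are continuous, the integrand $s\mapsto f(s)w(u(s))$ is continuous, hence $v\in C^{1}([0,T])$ with $v'(t)=f(t)w(u(t))$; by hypothesis $u(t)\le v(t)$, and trivially $v(t)\ge\alpha>0$. Because $w$ is non-decreasing and $u(t)\le v(t)$, we have $w(u(t))\le w(v(t))$, so $v'(t)\le f(t)w(v(t))$. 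As $v(t)>0$ forces $w(v(t))>0$, we may divide to obtain the pointwise inequality $v'(t)/w(v(t))\le f(t)$ on $[0,T]$.

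Next I would integrate this differential inequality against the primitive $G$. Since $w$ is continuous and strictly positive on $(0,\infty)$, the function $1/w$ is continuous there, so by the fundamental theorem of calculus $G(x)=\int_{x_{0}}^{x}dy/w(y)$ is $C^{1}$ on $(0,\infty)$ with $G'(x)=1/w(x)>0$; in particular $G$ is strictly increasing there and is a homeomorphism onto its range, with $G^{-1}$ continuous and increasing on $\mathrm{Dom}(G^{-1})=\mathrm{Range}(G)$. Since $v:[0,T]\to(0,\infty)$ is $C^{1}$, the composition $s\mapsto G(v(s))$ is $C^{1}$ on $[0,T]$ with $\frac{d}{ds}G(v(s))=v'(s)/w(v(s))$ by the chain rule (legitimate because $v(s)>0$). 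Integrating from $0$ to $t$ and using $v(0)=\alpha$,
$$G(v(t))-G(\alpha)=\int_{0}^{t}\frac{v'(s)}{w(v(s))}\,ds\le\int_{0}^{t}f(s)\,ds.$$
By the hypothesis on $T$ the quantity $G(\alpha)+\int_{0}^{t}f(s)\,ds$ lies in $\mathrm{Dom}(G^{-1})$ for every $t\in[0,T]$, so applying the increasing function $G^{-1}$ to $G(v(t))\le G(\alpha)+\int_{0}^{t}f(s)\,ds$ gives $v(t)\le G^{-1}\big(G(\alpha)+\int_{0}^{t}f(s)\,ds\big)$; combining with $u(t)\le v(t)$ proves the assertion for $\alpha>0$.

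Finally, for $\alpha=0$ one argues by approximation: for each $\varepsilon>0$ the hypothesis yields $u(t)\le\varepsilon+\int_{0}^{t}f(s)w(u(s))\,ds$, so the case already proved gives $u(t)\le G^{-1}\big(G(\varepsilon)+\int_{0}^{t}f(s)\,ds\big)$ for $t$ in a suitable subinterval, and letting $\varepsilon\downarrow0$ and invoking monotonicity and continuity of $G$ and $G^{-1}$ (interpreting $G(0)\triangleq\lim_{\varepsilon\downarrow0}G(\varepsilon)$) delivers the stated bound with $\alpha=0$. I expect the only genuinely delicate point of the whole argument to be exactly this last step: one must keep track of how $\mathrm{Range}(G)$, hence $\mathrm{Dom}(G^{-1})$, behaves as $\varepsilon\downarrow0$ — if $\int_{0^{+}}dy/w(y)=+\infty$ then $G(\varepsilon)\to-\infty$ and the bound collapses to $u\equiv0$ (the Osgood uniqueness regime), while otherwise the limit is finite and everything passes through cleanly. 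In every application of this lemma in the present paper the constant is strictly positive, so the first two paragraphs already suffice.
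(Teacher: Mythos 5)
Your argument is the standard majorant/comparison proof of Bihari's inequality and it is correct: introduce $v(t)=\alpha+\int_{0}^{t}f\,w(u)\,ds$, note $u\le v$ and $v\ge\alpha>0$, use monotonicity of $w$ to get $v'/w(v)\le f$, integrate the exact derivative $\frac{d}{ds}G(v(s))$, and invert the strictly increasing $G$. The paper itself gives no proof of this lemma --- it is stated in the Preliminaries appendix alongside other cited background results and is being invoked as a classical fact --- so there is nothing in-paper to compare your argument to, but as a standalone proof it is sound. Your handling of the case $\alpha=0$ correctly flags the only subtle point: since $G(x)=\int_{x_{0}}^{x}dy/w(y)$ is defined in the statement only for $x>0$, the symbol $G(\alpha)$ when $\alpha=0$ must be read as the improper limit $\lim_{\varepsilon\downarrow 0}G(\varepsilon)$, and the hypothesis that $G(\alpha)+\int_{0}^{t}f\,ds$ lies in $\mathrm{Dom}(G^{-1})$ implicitly excludes the Osgood regime $\int_{0^{+}}dy/w(y)=\infty$, where that limit is $-\infty$; in the paper's applications (e.g.\ in the proof of Proposition \ref{[Lemma 3.10][ZZ17]}) one has $\alpha>0$, so the first two paragraphs of your argument are what is actually used.
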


\begin{lemma}\label{Lemma 3.15}
\rm{(\cite[Lemma 3.8]{ZZ17})} Let $f \in \mathcal{C}^{\alpha + 1 + \eta}$ for some $\alpha \in \mathbb{R}$ and $\eta > 0$. Then for every $\kappa > 0$, 
\begin{equation*}
\lVert (D_{i}^{\epsilon} - D_{i}) f \rVert_{\mathcal{C}^{\alpha - \kappa}} \lesssim \epsilon^{\eta} \lVert f \rVert_{\mathcal{C}^{\alpha + 1 + \eta}}. 
\end{equation*} 
\end{lemma}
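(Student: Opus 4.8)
The plan is to treat $D_i^{\epsilon}-D_i$ as a Fourier multiplier and run a Littlewood--Paley estimate scale by scale. By \eqref{approximation 1}, Remark \ref{inconsistency} and \eqref{limit of g}, under the convention \eqref{Fourier} the Fourier symbol of $D_i$ is $ik^i$ and $g(0)=i$, so $D_i^{\epsilon}-D_i$ acts on $v$ by $\widehat{(D_i^{\epsilon}-D_i)v}(k)=m_{\epsilon}(k)\hat v(k)$ with $m_{\epsilon}(k)\triangleq k^i\big(g(\epsilon k^i)-i\big)$, $k\in\mathbb{Z}^3$. Since $m_{\epsilon}(D)$ is diagonal in Fourier variables it commutes with every block $\Delta_q$, whence $\Delta_q(D_i^{\epsilon}-D_i)f=m_{\epsilon}(D)\Delta_q f$. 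Choosing, for $q\ge 0$, a smooth bump $\psi_q$ equal to $1$ on $\mathrm{supp}\,\rho_q$ and supported in an annulus $\{|k|\approx 2^q\}$ (the low-frequency block $q=-1$ is handled trivially, as all frequencies there are $O(1)$ and $|m_{\epsilon}(k)|\lesssim\epsilon^{\eta}$), we write $m_{\epsilon}(D)\Delta_q f=K_{q,\epsilon}*\Delta_q f$ with $K_{q,\epsilon}\triangleq\mathcal{F}_{\mathbb{T}^3}^{-1}(m_{\epsilon}\psi_q)$. By Young's inequality it then suffices to prove the kernel bound $\lVert K_{q,\epsilon}\rVert_{L^1}\lesssim\epsilon^{\eta}2^{q(1+\eta)}$, uniformly in $q$ and $\epsilon\in(0,1)$.

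\textbf{Scale-invariant symbol bounds.} From \eqref{g} the function $g$ is smooth on $\mathbb{R}$, bounded together with its derivatives, and decays at infinity, with $g(0)=i$. Hence $|g(x)-i|\lesssim 1$ and $|g(x)-i|\lesssim|x|$, and by interpolation $|g(x)-i|\lesssim|x|^{\eta}$ for every $\eta\in[0,1]$; similarly $|x^{m}g^{(m)}(x)|\lesssim|x|^{\eta}$ for $m\le 3$ (using boundedness of $g^{(m)}$ near $0$ and decay at infinity). Differentiating $m_{\epsilon}(k)=k^i(g(\epsilon k^i)-i)$ in $k$, inserting these estimates, and crucially using $|k^i|\le|k|$ so that $|\epsilon k^i|^{\eta}\le\epsilon^{\eta}|k|^{\eta}$, one obtains on any annulus $\{|k|\approx 2^q\}$ the bounds $|\partial_k^{\beta}m_{\epsilon}(k)|\lesssim_{\beta}\epsilon^{\eta}|k|^{1+\eta-|\beta|}$ for $|\beta|\le 2$. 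A standard Mikhlin--Bernstein argument (rescale $k=2^q\xi$, then estimate $\lVert K_{q,\epsilon}\rVert_{L^1}$ by Cauchy--Schwarz through $\lVert(1+|x|^2)^{N/2}K_{q,\epsilon}\rVert_{L^2}$ with $N>3/2$, i.e.\ using only derivatives of order $\le 2$) then gives $\lVert K_{q,\epsilon}\rVert_{L^1}\lesssim\epsilon^{\eta}2^{q(1+\eta)}$.

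\textbf{Conclusion.} Combining the two steps, $\lVert\Delta_q(D_i^{\epsilon}-D_i)f\rVert_{L^{\infty}}\lesssim\epsilon^{\eta}2^{q(1+\eta)}\lVert\Delta_q f\rVert_{L^{\infty}}$, so that
$$2^{q(\alpha-\kappa)}\lVert\Delta_q(D_i^{\epsilon}-D_i)f\rVert_{L^{\infty}}\lesssim\epsilon^{\eta}2^{-q\kappa}\Big(2^{q(\alpha+1+\eta)}\lVert\Delta_q f\rVert_{L^{\infty}}\Big)\lesssim\epsilon^{\eta}2^{-q\kappa}\lVert f\rVert_{\mathcal{C}^{\alpha+1+\eta}}.$$
Since $\sup_{q\ge -1}2^{-q\kappa}<\infty$, taking the supremum over $q$ yields $\lVert(D_i^{\epsilon}-D_i)f\rVert_{\mathcal{C}^{\alpha-\kappa}}\lesssim\epsilon^{\eta}\lVert f\rVert_{\mathcal{C}^{\alpha+1+\eta}}$, as claimed; the argument parallels that of Lemma \ref{Lemma 3.3} for $D_i^{\epsilon}$, with the extra factor $\epsilon^{\eta}$ produced by the vanishing of $g(x)-i$ at $x=0$.

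The only genuinely delicate point is the scale-invariant symbol estimate $|\partial_k^{\beta}m_{\epsilon}(k)|\lesssim\epsilon^{\eta}|k|^{1+\eta-|\beta|}$: it requires interpolating the two elementary bounds on $g-i$ to the precise power $\eta$, exploiting the one-dimensional dependence of $m_{\epsilon}$ on $k^i$ while absorbing $|k^i|\le|k|$, and verifying enough decay of $g$ and its first two derivatives at infinity via the explicit form of $g$ in Definition \ref{Definition of approximation}. Once these are in hand, the kernel $L^1$ bound and the dyadic summation are routine.
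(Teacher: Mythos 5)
You should note first that this paper does not prove Lemma~\ref{Lemma 3.15}: the statement is imported verbatim from \cite[Lemma 3.8]{ZZ17}, so there is no in-house proof to compare your argument against. Your reduction to the block-wise estimate $\lVert \Delta_q(D_i^\epsilon - D_i)f\rVert_{L^\infty} \lesssim \epsilon^\eta 2^{q(1+\eta)}\lVert\Delta_q f\rVert_{L^\infty}$ and the concluding dyadic summation are both fine. The gap lies in the asserted symbol bound $\lvert \partial_k^\beta m_\epsilon(k)\rvert \lesssim \epsilon^\eta \lvert k\rvert^{1+\eta-\lvert\beta\rvert}$ for $\lvert\beta\rvert\le 2$, which is false at $\lvert\beta\rvert=2$. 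Your auxiliary claim $\lvert x^m g^{(m)}(x)\rvert\lesssim\lvert x\rvert^\eta$ for $m\le 3$ already fails for $m\ge 2$: applying the Leibniz rule to \eqref{g} shows $g^{(m)}(x)=O(1/\lvert x\rvert)$ (not $O(\lvert x\rvert^{-m})$) as $\lvert x\rvert\to\infty$, because the slowest-decaying term keeps all $m$ derivatives on $e^{iax}-e^{-ibx}$; hence $\lvert x^m g^{(m)}(x)\rvert\approx\lvert x\rvert^{m-1}$ grows. For the symbol itself, writing $m_\epsilon(y)=\tfrac{e^{ia\epsilon y}-e^{-ib\epsilon y}}{(a+b)\epsilon}-iy$ with $y=k^i$ one computes exactly $m_\epsilon''(y)=\epsilon\,\tfrac{-a^2 e^{ia\epsilon y}+b^2e^{-ib\epsilon y}}{a+b}$, which has size $\approx\epsilon$ uniformly; the required $\lvert m_\epsilon''(y)\rvert\lesssim\epsilon^\eta\lvert y\rvert^{\eta-1}$ amounts to $(\epsilon\lvert y\rvert)^{1-\eta}\lesssim 1$ and fails on annuli $2^q\gg\epsilon^{-1}$, so the Mikhlin hypothesis is not satisfied there even though (for other reasons) the desired conclusion is true.

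The fix is to abandon the Fourier-multiplier picture and use the physical-space representation \eqref{[Equation (1.3i)][ZZ17]}: $D_i^\epsilon$ is the finite difference $(D_i^\epsilon v)(x)=\tfrac{v(x+a\epsilon e_i)-v(x-b\epsilon e_i)}{(a+b)\epsilon}$, as follows by inverting the symbol $k^i g(\epsilon k^i)$. Since $\Delta_q$ commutes with $D_i^\epsilon$ and $D_i$, set $g_q\triangleq\Delta_q f$. A second-order Taylor expansion in the $e_i$-direction gives $\lvert(D_i^\epsilon-D_i)g_q(x)\rvert\lesssim\epsilon\lVert\partial_i^2 g_q\rVert_{L^\infty}\lesssim\epsilon\,2^{2q}\lVert g_q\rVert_{L^\infty}$ by Bernstein, while the mean value theorem applied to each translate gives the crude bound $\lvert(D_i^\epsilon-D_i)g_q(x)\rvert\lesssim\lVert\partial_i g_q\rVert_{L^\infty}\lesssim 2^q\lVert g_q\rVert_{L^\infty}$; interpolating with exponent $\eta\in[0,1]$ yields $\lVert(D_i^\epsilon-D_i)\Delta_q f\rVert_{L^\infty}\lesssim\min(\epsilon 2^{2q},2^q)\lVert\Delta_q f\rVert_{L^\infty}\lesssim\epsilon^\eta 2^{q(1+\eta)}\lVert\Delta_q f\rVert_{L^\infty}$, uniformly in $\epsilon\in(0,1)$ and $q\ge -1$. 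Your dyadic summation then finishes the proof with no case analysis and no multiplier theorem. (The kernel bound $\lVert K_{q,\epsilon}\rVert_{L^1}\lesssim\epsilon^\eta 2^{q(1+\eta)}$ you aim for is in fact correct, but it should be derived by Taylor-expanding the translated Littlewood--Paley kernel, not from a symbol estimate that does not hold.)
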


The following examples of computations of Wick products have been used extensively:
\begin{example}\label{Example 3.1}
\rm{(\cite[Examples 2.1 and 2.2]{Y19a})}
\begin{align*}
 : \xi_{1} : =& \xi_{1}, \hspace{5mm} : \xi_{1} \xi_{2}: = \xi_{1}\xi_{2} - \mathbb{E} [\xi_{1} \xi_{2}],  \\
 : \xi_{1} \xi_{2} \xi_{3}: =& \xi_{1} \xi_{2} \xi_{3} - \mathbb{E} [ \xi_{2} \xi_{3}] \xi_{1} - \mathbb{E} [\xi_{1} \xi_{3}] \xi_{2} - \mathbb{E} [ \xi_{1} \xi_{2}] \xi_{3}, \\
 : \xi_{1} \xi_{2} \xi_{3} \xi_{4} : =& \xi_{1} \xi_{2} \xi_{3} \xi_{4} - \mathbb{E} [ \xi_{1} \xi_{2}] \xi_{3} \xi_{4} - \mathbb{E} [ \xi_{1} \xi_{3}] \xi_{2} \xi_{4} - \mathbb{E}[\xi_{1} \xi_{4}] \xi_{2} \xi_{3} \\
& - \mathbb{E} [ \xi_{2} \xi_{3}] \xi_{1} \xi_{4} - \mathbb{E} [\xi_{2} \xi_{4}] \xi_{1} \xi_{3} - \mathbb{E} [\xi_{3} \xi_{4}] \xi_{1} \xi_{2} \\
&+ \mathbb{E} [ \xi_{1} \xi_{2}] \mathbb{E} [ \xi_{3} \xi_{4}] + \mathbb{E} [ \xi_{1} \xi_{3}] \mathbb{E} [\xi_{2} \xi_{4}] + \mathbb{E} [ \xi_{1} \xi_{4} ] \mathbb{E} [ \xi_{2} \xi_{3}], 
\end{align*}
and 
\begin{equation*}
\mathbb{E} [ : \xi_{11} \xi_{12}: : \xi_{21} \xi_{22}:] = \mathbb{E} [ \xi_{11} \xi_{21}] \mathbb{E} [ \xi_{12} \xi_{22}] + \mathbb{E} [ \xi_{11} \xi_{22}]\mathbb{E}[\xi_{12} \xi_{21}],
\end{equation*}
\begin{align*} 
 \mathbb{E}  [ : \xi_{11} \xi_{12} \xi_{13}: : \xi_{21} \xi_{22} \xi_{23}:] =& \mathbb{E} [ \xi_{11} \xi_{21}] \mathbb{E} [ \xi_{12} \xi_{22}] \mathbb{E} [ \xi_{13} \xi_{23}]+ \mathbb{E} [ \xi_{11} \xi_{21}] \mathbb{E} [ \xi_{12} \xi_{23}] \mathbb{E} [ \xi_{13} \xi_{22}] \\
+& \mathbb{E} [ \xi_{11} \xi_{22}] \mathbb{E} [ \xi_{12} \xi_{21}] \mathbb{E} [ \xi_{13} \xi_{23}] + \mathbb{E} [ \xi_{11} \xi_{22}] \mathbb{E} [ \xi_{12} \xi_{23}] \mathbb{E} [ \xi_{13} \xi_{21}] \\
+ &\mathbb{E} [ \xi_{11} \xi_{23}] \mathbb{E} [ \xi_{12} \xi_{21}] \mathbb{E} [ \xi_{13} \xi_{22}] + \mathbb{E} [ \xi_{11} \xi_{23}] \mathbb{E} [ \xi_{12} \xi_{22}] \mathbb{E} [ \xi_{13} \xi_{21}], 
\end{align*}
\begin{align*}
&\mathbb{E} [:\xi_{11} \xi_{12} \xi_{13} \xi_{14}: : \xi_{21} \xi_{22} \xi_{23} \xi_{24}:] \\
=& \mathbb{E} [ \xi_{11} \xi_{21}] \mathbb{E} [ \xi_{12} \xi_{22}] \mathbb{E} [ \xi_{13} \xi_{23}] \mathbb{E} [ \xi_{14} \xi_{24}]  +  \mathbb{E} [ \xi_{11} \xi_{21}] \mathbb{E} [ \xi_{12} \xi_{22}] \mathbb{E} [ \xi_{13} \xi_{24}] \mathbb{E} [ \xi_{14} \xi_{23}] \\
+& \mathbb{E} [ \xi_{11} \xi_{21}] \mathbb{E} [ \xi_{12} \xi_{23}] \mathbb{E} [ \xi_{13} \xi_{22}] \mathbb{E} [ \xi_{14} \xi_{24}]  +  \mathbb{E} [ \xi_{11} \xi_{21}] \mathbb{E} [ \xi_{12} \xi_{23}] \mathbb{E} [ \xi_{13} \xi_{24}] \mathbb{E} [ \xi_{14} \xi_{22}] \\
+& \mathbb{E} [ \xi_{11} \xi_{21}] \mathbb{E} [ \xi_{12} \xi_{24}] \mathbb{E} [ \xi_{13} \xi_{22}] \mathbb{E} [ \xi_{14} \xi_{23}]  +  \mathbb{E} [ \xi_{11} \xi_{21}] \mathbb{E} [ \xi_{12} \xi_{24}] \mathbb{E} [ \xi_{13} \xi_{23}] \mathbb{E} [ \xi_{14} \xi_{22}] \\
+& \mathbb{E} [ \xi_{11} \xi_{22}] \mathbb{E} [ \xi_{12} \xi_{21}] \mathbb{E} [ \xi_{13} \xi_{23}] \mathbb{E} [ \xi_{14} \xi_{24}]  +  \mathbb{E} [ \xi_{11} \xi_{22}] \mathbb{E} [ \xi_{12} \xi_{21}] \mathbb{E} [ \xi_{13} \xi_{24}] \mathbb{E} [ \xi_{14} \xi_{23}] \\
+& \mathbb{E} [ \xi_{11} \xi_{22}] \mathbb{E} [ \xi_{12} \xi_{23}] \mathbb{E} [ \xi_{13} \xi_{21}] \mathbb{E} [ \xi_{14} \xi_{24}]  +  \mathbb{E} [ \xi_{11} \xi_{22}] \mathbb{E} [ \xi_{12} \xi_{23}] \mathbb{E} [ \xi_{13} \xi_{24}] \mathbb{E} [ \xi_{14} \xi_{21}] \\
+& \mathbb{E} [ \xi_{11} \xi_{22}] \mathbb{E} [ \xi_{12} \xi_{24}] \mathbb{E} [ \xi_{13} \xi_{21}] \mathbb{E} [ \xi_{14} \xi_{23}]  +  \mathbb{E} [ \xi_{11} \xi_{22}] \mathbb{E} [ \xi_{12} \xi_{24}] \mathbb{E} [ \xi_{13} \xi_{23}] \mathbb{E} [ \xi_{14} \xi_{21}] \\
+& \mathbb{E} [ \xi_{11} \xi_{23}] \mathbb{E} [ \xi_{12} \xi_{21}] \mathbb{E} [ \xi_{13} \xi_{22}] \mathbb{E} [ \xi_{14} \xi_{24}]  +  \mathbb{E} [ \xi_{11} \xi_{23}] \mathbb{E} [ \xi_{12} \xi_{21}] \mathbb{E} [ \xi_{13} \xi_{24}] \mathbb{E} [ \xi_{14} \xi_{22}] \\
+& \mathbb{E} [ \xi_{11} \xi_{23}] \mathbb{E} [ \xi_{12} \xi_{22}] \mathbb{E} [ \xi_{13} \xi_{21}] \mathbb{E} [ \xi_{14} \xi_{24}]  +  \mathbb{E} [ \xi_{11} \xi_{23}] \mathbb{E} [ \xi_{12} \xi_{22}] \mathbb{E} [ \xi_{13} \xi_{24}] \mathbb{E} [ \xi_{14} \xi_{21}] \\
+& \mathbb{E} [ \xi_{11} \xi_{23}] \mathbb{E} [ \xi_{12} \xi_{24}] \mathbb{E} [ \xi_{13} \xi_{21}] \mathbb{E} [ \xi_{14} \xi_{22}]  +  \mathbb{E} [ \xi_{11} \xi_{23}] \mathbb{E} [ \xi_{12} \xi_{24}] \mathbb{E} [ \xi_{13} \xi_{22}] \mathbb{E} [ \xi_{14} \xi_{21}] \\
+& \mathbb{E} [ \xi_{11} \xi_{24}] \mathbb{E} [ \xi_{12} \xi_{21}] \mathbb{E} [ \xi_{13} \xi_{22}] \mathbb{E} [ \xi_{14} \xi_{23}]  +  \mathbb{E} [ \xi_{11} \xi_{24}] \mathbb{E} [ \xi_{12} \xi_{21}] \mathbb{E} [ \xi_{13} \xi_{23}] \mathbb{E} [ \xi_{14} \xi_{22}] \\
+& \mathbb{E} [ \xi_{11} \xi_{24}] \mathbb{E} [ \xi_{12} \xi_{22}] \mathbb{E} [ \xi_{13} \xi_{21}] \mathbb{E} [ \xi_{14} \xi_{23}]  +  \mathbb{E} [ \xi_{11} \xi_{24}] \mathbb{E} [ \xi_{12} \xi_{22}] \mathbb{E} [ \xi_{13} \xi_{23}] \mathbb{E} [ \xi_{14} \xi_{21}] \\
+& \mathbb{E} [ \xi_{11} \xi_{24}] \mathbb{E} [ \xi_{12} \xi_{23}] \mathbb{E} [ \xi_{13} \xi_{21}] \mathbb{E} [ \xi_{14} \xi_{22}]  +  \mathbb{E} [ \xi_{11} \xi_{24}] \mathbb{E} [ \xi_{12} \xi_{23}] \mathbb{E} [ \xi_{13} \xi_{22}] \mathbb{E} [ \xi_{14} \xi_{21}].
\end{align*}
\end{example} 

The following inequality is standard and was used many times: 
\begin{equation}\label{key estimate}
\sup_{a \in \mathbb{R}} \lvert a \rvert^{r} e^{-a^{2}} \leq c \hspace{3mm} \text{ for all } r \geq 0. 
\end{equation} 
The following inequalities are straight-forward generalizations of \cite[Equations (4.2), (4.3)]{ZZ17}:  for any $\eta \in [0,1]$, any $i \in \{1,2,3\}$, any $\alpha, \beta \in \{1,2,3, 12, 13, 23, 123\}$ (by which e.g. we mean $k_{12} = k_{1} + k_{2}$)
\begin{subequations}
\begin{align}
& \lvert e^{- \lvert k_{\alpha} \rvert^{2}  f(\epsilon k_{\beta})(t-s)} - e^{- \lvert k_{\alpha} \rvert^{2} \lvert t-s \rvert} \rvert \lesssim e^{- \lvert k_{\alpha} \rvert^{2} \bar{c}_{f} \lvert t-s \rvert} \lvert \epsilon k_{\beta} \rvert^{\frac{\eta}{2}}, \label{[Equation (4.2)][ZZ17]}\\
& \lvert g(\epsilon k_{\beta}^{i}) - i \rvert \lesssim \lvert \epsilon k_{\beta}^{i} \rvert^{\frac{\eta}{2}}. \label{[Equation (4.3)][ZZ17]} 
\end{align}
\end{subequations} 

\subsection{Detailed computations}\label{Subsection 3.2}
\subsubsection{Details of the proof of Proposition \ref{Proposition on F}}
Here, we leave the detailed computations of \eqref{[Equation (3.10)][ZZ17]} - \eqref{[Equation (3.17e)][ZZ17]}. In order to prove \eqref{[Equation (3.10)][ZZ17]}, we estimate $\lVert (\bar{F}_{u}^{\epsilon}, \bar{F}_{b}^{\epsilon}) (t) \rVert_{\mathcal{C}^{\delta}}$ similarly to \eqref{estimate 109}-\eqref{estimate 111} with $\lambda = \delta$. First, we estimate from \eqref{III1} with $\lambda = \delta$ similarly to \eqref{[Equation (3.7a)][ZZ17]}, 
\begin{align}\label{estimate 112}
III_{t}^{1} \lesssim& \bar{C}_{W}^{\epsilon}(t)  \int_{0}^{t} (t-s)^{- \frac{ 2 - \kappa}{2}} \lVert \bar{b}_{3}^{\epsilon}(s) + \bar{b}_{4}^{\epsilon}(s) - (\bar{b}_{3}^{\epsilon} + \bar{b}_{4}^{\epsilon})(t)\rVert_{\mathcal{C}^{- \frac{1}{2} + \frac{3\delta}{2} + \kappa}}  ds
\end{align} 
by Lemmas \ref{Lemma 3.10} and \ref{[Lemma 1.1][Y19a]} (2), \eqref{[Equation (3.3c)][ZZ17]}, \eqref{[Equation (3.2g)][ZZ17]} and \eqref{[Equation (3.2e)][ZZ17]}. Within \eqref{estimate 112} we estimate for $s \in (0,t)$ and $i \in \{1,2,3\}$, similarly to \eqref{[Equation (3.7b)][ZZ17]}, 
\begin{align}\label{estimate 113}
& \lVert \bar{b}_{3}^{\epsilon, i}(t) + \bar{b}_{4}^{\epsilon, i}(t) - \bar{b}_{3}^{\epsilon, i}(s) - \bar{b}_{4}^{\epsilon, i}(s) \rVert_{\mathcal{C}^{-\frac{1}{2} + \frac{3\delta}{2} + \kappa}} \nonumber\\
\lesssim& (t-s)^{b_{0}} s^{- \frac{ - \frac{1}{2} + \frac{3\delta}{2} + \kappa + 2b_{0} + z}{2}} \lVert b_{0} - \bar{b}_{1}^{\epsilon} (0) \rVert_{\mathcal{C}^{-z}} \nonumber\\
&+ (t-s)^{\frac{b}{2}} \int_{0}^{s} (s-r)^{- \frac{1+ 2 \delta + \frac{3\kappa}{2} + b}{2}} \lVert \bar{G}^{\epsilon}(r) \rVert_{\mathcal{C}^{- \frac{3}{2} - \frac{\delta}{2} - \frac{\kappa}{2}}} dr \nonumber\\
&+ \int_{s}^{t} (t-r)^{- \frac{1+ 2 \delta + \frac{3\kappa}{2}}{2}} \lVert \bar{G}^{\epsilon} (r) \rVert_{\mathcal{C}^{- \frac{3}{2} - \frac{\delta}{2} - \frac{\kappa}{2}}} dr  
\end{align} 
by \eqref{[Equation (3.1ac)][ZZ17]}, \eqref{[Equation (3.2)][ZZ17]}, \eqref{[Equation (3.7c)][ZZ17]}, Lemmas \ref{Lemma 3.12} and \ref{Lemma 3.10}.  Applying \eqref{estimate 113} to \eqref{estimate 112} gives  
\begin{align}\label{estimate 114}
III_{t}^{1} \lesssim& \bar{C}_{W}^{\epsilon}(t) [\int_{0}^{t} (t-s)^{-1 + \frac{\kappa}{2} + b_{0}} s^{- \frac{ - \frac{1}{2} + \frac{3\delta}{2} + \kappa + 2 b_{0} + z}{2}} \lVert b_{0} - \bar{b}_{1}^{\epsilon}(0) \rVert_{\mathcal{C}^{-z}} ds \nonumber\\
&+ \int_{0}^{t} (t-s)^{-1+ \frac{\kappa}{2} + \frac{b}{2}} \int_{0}^{s} (s-r)^{- \frac{ 1 + 2 \delta + \frac{3\kappa}{2} + b}{2}} \lVert \bar{G}^{\epsilon}(r) \rVert_{\mathcal{C}^{-\frac{3}{2} - \frac{\delta}{2} - \frac{\kappa}{2}}} dr ds \nonumber\\
&+ \int_{0}^{t} (t-s)^{-1+ \frac{\kappa}{2}} \int_{s}^{t} (t-r)^{- \frac{1+ 2 \delta + \frac{3\kappa}{2}}{2}} \lVert \bar{G}^{\epsilon}(r) \rVert_{\mathcal{C}^{- \frac{3}{2} - \frac{\delta}{2} - \frac{\kappa}{2}}} dr ds]. 
\end{align} 
We can now apply Fubini theorem similarly to \eqref{estimate 92}-\eqref{estimate 91} to deduce from \eqref{estimate 114}, together with \eqref{[Equation (3.7d)][ZZ17]},  
\begin{align}\label{estimate 115}
III_{t}^{1} &\lesssim \bar{C}_{W}^{\epsilon}(t) [t^{\frac{1}{4} - \frac{3\delta}{4} - \frac{z}{2}} \lVert b_{0} - \bar{b}_{1}^{\epsilon} (0) \rVert_{\mathcal{C}^{-z}}  \\
& \hspace{6mm} + \int_{0}^{t} (t-r)^{-\frac{1}{2} -\delta - \frac{\kappa}{2}} \nonumber\\
& \hspace{8mm} \times [(1+ (\bar{C}_{W}^{\epsilon}(t))^{3})(1+ \lVert \bar{y}^{\epsilon,\sharp}(r) \rVert_{\mathcal{C}^{\frac{1}{2} + \beta}} + \lVert \bar{y}_{4}^{\epsilon}(r) \rVert_{\mathcal{C}^{\frac{1}{2} - \delta_{0}}}) + \lVert \bar{y}_{4}^{\epsilon} (r)\rVert_{\mathcal{C}^{\delta}}^{2}]dr\nonumber  \\
&\hspace{6mm} + t^{\frac{\kappa}{2}} \int_{0}^{t} (t-r)^{- \frac{1+ 2\delta+ \frac{3\kappa}{2}}{2}} \nonumber\\
& \hspace{8mm} \times [(1+ (\bar{C}_{W}^{\epsilon}(t))^{3})(1+ \lVert \bar{y}^{\epsilon,\sharp}(r) \rVert_{\mathcal{C}^{\frac{1}{2} + \beta}} + \lVert \bar{y}_{4}^{\epsilon}(r) \rVert_{\mathcal{C}^{\frac{1}{2} - \delta_{0}}}) + \lVert \bar{y}_{4}^{\epsilon} (r)\rVert_{\mathcal{C}^{\delta}}^{2}]dr ].\nonumber 
\end{align} 
Second, similarly to \eqref{[Equation (3.7)][ZZ17]}, we estimate from \eqref{III2} with $\lambda = \delta$, 
\begin{align}\label{estimate 116}
III_{t}^{2} \lesssim \bar{C}_{W}^{\epsilon} (t) t^{\frac{1}{4} - \frac{\delta}{4}} \lVert (\bar{b}_{3}^{\epsilon} + \bar{b}_{4}^{\epsilon})(t) \rVert_{\mathcal{C}^{\delta}} 
\end{align} 
by Lemmas \ref{Lemma 3.7} and \ref{Lemma 3.11}, \eqref{[Equation (3.2g)][ZZ17]} and \eqref{[Equation (3.2e)][ZZ17]}. With \eqref{estimate 115}-\eqref{estimate 116}, we conclude \eqref{[Equation (3.10)][ZZ17]}. Next, in order to prove \eqref{[Equation (3.17b)][ZZ17]} we restart again from \eqref{III1} with $\lambda = \frac{1}{2} - \delta_{0}$ similarly to \eqref{[Equation (3.7a)][ZZ17]}, 
\begin{align}\label{estimate 117}
III_{t}^{1} \lesssim& \int_{0}^{t} (t-s)^{- \frac{2-\kappa}{2}} \lVert \bar{b}_{3}^{\epsilon}(s) + \bar{b}_{4}^{\epsilon} (s) - (\bar{b}_{3}^{\epsilon}(t) + \bar{b}_{4}^{\epsilon}(t)) \rVert_{\mathcal{C}^{-\delta_{0} + \frac{\delta}{2} + \kappa}} \nonumber\\
& \hspace{50mm} \times \lVert \bar{u}_{1}^{\epsilon}(s) \rVert_{\mathcal{C}^{-\frac{1}{2} - \frac{\delta}{2}}} 
\end{align} 
by Lemmas \ref{Lemma 3.10} and \ref{[Lemma 1.1][Y19a]} (2) and \eqref{[Equation (3.3c)][ZZ17]}. Furthermore, we estimate similarly to \eqref{[Equation (3.7b)][ZZ17]}, 
\begin{align}\label{estimate 118}
& \lVert \bar{b}_{3}^{\epsilon, i}(t) + \bar{b}_{4}^{\epsilon, i}(t) - \bar{b}_{3}^{\epsilon, i} (s) - \bar{b}_{4}^{\epsilon, i}(s) \rVert_{\mathcal{C}^{-\delta_{0} + \frac{\delta}{2} + \kappa}} \\
\lesssim& (t-s)^{b_{0}} s^{- \frac{ - \delta_{0} + \frac{\delta}{2} + \kappa + 2b_{0} + z}{2}} \lVert b_{0} - \bar{b}_{1}^{\epsilon}(0) \rVert_{\mathcal{C}^{-z}} \nonumber\\
& \hspace{10mm} + (t-s)^{\frac{b}{2}} \int_{0}^{s} (s-r)^{- \frac{ - \delta_{0} + \delta + \frac{3\kappa}{2}+ b + \frac{3}{2}}{2}}  \lVert \bar{G}^{\epsilon}(r) \rVert_{\mathcal{C}^{-\frac{3}{2} - \frac{\delta}{2} - \frac{\kappa}{2}}} dr \nonumber\\
& \hspace{10mm} + \int_{s}^{t} (t-r)^{- \frac{ - \delta_{0} + \delta + \frac{3\kappa}{2} + \frac{3}{2}}{2}} \lVert \bar{G}^{\epsilon}(r) \rVert_{\mathcal{C}^{- \frac{3}{2} - \frac{\delta}{2} - \frac{\kappa}{2}}} dr \nonumber
\end{align} 
by \eqref{[Equation (3.1ac)][ZZ17]}, \eqref{[Equation (3.2)][ZZ17]}, \eqref{b and b0}, Lemmas \ref{Lemma 3.12} and  \ref{Lemma 3.10}. Applying \eqref{estimate 118} to \eqref{estimate 117} gives 
\begin{align}\label{estimate 119}
III_{t}^{1} \lesssim& \bar{C}_{W}^{\epsilon}(t) [ \int_{0}^{t} (t-s)^{- 1 + \frac{\kappa}{2} + b_{0}} s^{- \frac{ - \delta_{0} + \frac{\delta}{2} + \kappa + 2b_{0} + z}{2}} \lVert b_{0} - \bar{b}_{1}^{\epsilon}(0) \rVert_{\mathcal{C}^{-z}} ds \\
&+ \int_{0}^{t} (t-s)^{-1 + \frac{\kappa}{2} + \frac{b}{2}} \int_{0}^{s} (s-r)^{- \frac{ - \delta_{0} + \delta + \frac{3\kappa}{2} + b + \frac{3}{2}}{2}} \lVert \bar{G}^{\epsilon}(r) \rVert_{\mathcal{C}^{- \frac{3}{2} - \frac{\delta}{2} - \frac{\kappa}{2}}} dr ds\nonumber\\
&+ \int_{0}^{t} (t-s)^{-1 + \frac{\kappa}{2}} \int_{s}^{t} (t-r)^{- \frac{ - \delta_{0} + \delta + \frac{3\kappa}{2} + \frac{3}{2}}{2}} \lVert \bar{G}^{\epsilon}(r) \rVert_{\mathcal{C}^{-\frac{3}{2} - \frac{\delta}{2} - \frac{\kappa}{2}}} drds]. \nonumber 
\end{align} 
Similarly to \eqref{estimate 92}-\eqref{estimate 91}, we apply Fubini theorem to deduce 
\begin{align}\label{estimate 120}
III_{t}^{1} &\lesssim  \bar{C}_{W}^{\epsilon}(t) [ t^{\frac{ \delta_{0} - z - \frac{\delta}{2}}{2}} \lVert b_{0} - \bar{b}_{1}^{\epsilon}(0) \rVert_{\mathcal{C}^{-z}} \\
&+ \int_{0}^{t} (t-r)^{\frac{\delta_{0}}{2} - \frac{\delta}{2} - \frac{\kappa}{4} - \frac{3}{4}} \nonumber\\
& \hspace{5mm} \times [ (1+ ( \bar{C}_{W}^{\epsilon}(t))^{3})(1+ \lVert y^{\epsilon, \sharp}(r) \rVert_{\mathcal{C}^{\frac{1}{2} + \beta}} + \lVert \bar{y}_{4}^{\epsilon}(r) \rVert_{\mathcal{C}^{\frac{1}{2} - \delta_{0}}} ) + \lVert \bar{y}_{4}^{\epsilon}(r)\rVert_{\mathcal{C}^{\delta}}^{2}] dr \nonumber \\
&+ t^{\frac{\kappa}{2}} \int_{0}^{t} (t-r)^{\frac{ \delta_{0}}{2} - \frac{\delta}{2} - \frac{3\kappa}{4} - \frac{3}{4}} \nonumber\\
& \hspace{5mm} \times  [ (1+ ( \bar{C}_{W}^{\epsilon}(t))^{3})(1+ \lVert y^{\epsilon, \sharp}(r) \rVert_{\mathcal{C}^{\frac{1}{2} + \beta}} + \lVert \bar{y}_{4}^{\epsilon}(r)\rVert_{\mathcal{C}^{\frac{1}{2} - \delta_{0}}} ) + \lVert \bar{y}_{4}^{\epsilon}(r)\rVert_{\mathcal{C}^{\delta}}^{2}] dr] \nonumber 
\end{align} 
by \eqref{[Equation (3.7d)][ZZ17]}. Second, similarly to \eqref{[Equation (3.7)][ZZ17]}, we estimate from \eqref{III2} with $\lambda = \frac{1}{2} - \delta_{0}$, 
\begin{align}\label{estimate 121}
III_{t}^{2} 
\lesssim \bar{C}_{W}^{\epsilon}(t) t^{\frac{1}{4} - \frac{\delta}{4}} \lVert (\bar{b}_{3}^{\epsilon} + \bar{b}_{4}^{\epsilon})(t) \rVert_{\mathcal{C}^{\frac{1}{2} - \delta_{0}}} 
\end{align} 
by Lemmas \ref{Lemma 3.7} and \ref{Lemma 3.11} and \eqref{[Equation (3.2e)][ZZ17]}. Thus, we now conclude \eqref{[Equation (3.17b)][ZZ17]} due to \eqref{estimate 120}-\eqref{estimate 121}. Finally, we prove \eqref{[Equation (3.17e)][ZZ17]}. We restart from \eqref{III1} with $\lambda = -z$,
\begin{align}\label{estimate 122}
III_{t}^{1}\lesssim& \bar{C}_{W}^{\epsilon}(t) [\int_{0}^{t} (t-s)^{- \frac{ \frac{3}{2} - z + \frac{\delta}{2} + \kappa}{2}} ( \bar{C}_{W}^{\epsilon} (t) + \lVert \bar{b}_{4}^{\epsilon}(s) \rVert_{\mathcal{C}^{\delta}}) ds \nonumber\\
& \hspace{30mm} + t^{\frac{1}{4} + \frac{z}{2} - \frac{\delta}{4} - \frac{\kappa}{2}} (\bar{C}_{W}^{\epsilon}(t) + \lVert \bar{b}_{4}^{\epsilon}(t) \rVert_{\mathcal{C}^{\delta}})]
\end{align} 
by Lemmas \ref{Lemma 3.10} and \ref{[Lemma 1.1][Y19a]} (2), \eqref{[Equation (3.2e)][ZZ17]}, \eqref{[Equation (3.2g)][ZZ17]}, \eqref{[Equation (3.3c)][ZZ17]} and \eqref{[Equation (3.2h)][ZZ17]}. On the other hand,  we estimate from \eqref{III2} with $\lambda = -z$,  similarly to \eqref{[Equation (3.7)][ZZ17]}, 
\begin{align}\label{estimate 123}
III_{t}^{2} \lesssim \bar{C}_{W}^{\epsilon}(t) \lVert (\bar{b}_{3}^{\epsilon} + \bar{b}_{4}^{\epsilon})(t) \rVert_{\mathcal{C}^{-z}} t^{\frac{1}{4} - \frac{\delta}{4}} 
\end{align} 
by Lemmas \ref{Lemma 3.7} and \ref{Lemma 3.11} and \eqref{[Equation (3.2e)][ZZ17]}. From \eqref{estimate 122}-\eqref{estimate 123}, we conclude \eqref{[Equation (3.17e)][ZZ17]}.

\subsection{Table of essential notations}\label{Subsection 3.3}
\begin{table}[h]
\caption{Symbols}. \label{tab2}
\vspace{1mm}
\begin{tabular}{ ll}
\hline
\hline
Symbols  & Location\\
\hline
$B_{p,q}^{\alpha}(\mathbb{T}^{3})$ & \eqref{Besov space}\\
$\bar{c}_{f}$ & Definition \ref{Definition of approximation}\\
$\bar{C}_{W}^{\epsilon}, C_{W}^{\epsilon}, \delta C_{W}^{\epsilon}$ & respectively \eqref{[Equation (3.2e)][ZZ17]}, \eqref{[Equation (3.23a)][ZZ17]}, \eqref{[Equation (3.25f)][ZZ17]} \\
$\Delta_{\epsilon}, D_{j}^{\epsilon}$ & \eqref{approximation 1}\\
$f, \tilde{f}$ & \eqref{f}\\
$\hat{f} = \mathcal{F}_{\mathbb{T}^{3}}(f)$ & \eqref{Fourier} \\
$g$ & \eqref{g}\\
$h_{u}, h_{b}$ & Definition \ref{Definition of approximation}\\
$H_{u,\epsilon}, H_{b,\epsilon}$ & \eqref{approximation 2}\\
$P_{t}, P_{t}^{\epsilon}$ & Example \ref{Burgers' equation example}\\
$\pi_{<}$, $\pi_{0}$ & \eqref{Bony's decomposition} \\
\hline
\hline
\end{tabular}
\end{table}

\section{Acknowledgements} 

The author expresses deep gratitude to Prof. Carl Mueller for stimulating discussions on various occasions.

\end{document}